\renewcommand{\nomname}{Index of symbols}
\newcommand{\term}[1]{\emph{#1}\index{#1}}
\setlist[description]{font=\normalfont\scshape}
\xpatchcmd{\proof}{\itshape}{\normalfont\bfseries}{}{}
\newtheoremstyle{repeat}{}{}{\itshape}{}{\bfseries}{.}{.5em}{#3, repeated}
\newtheorem{theorem}{Theorem}[section]
\newtheorem{proposition}[theorem]{Proposition}
\newtheorem{lemma}[theorem]{Lemma}
\newtheorem{corollary}[theorem]{Corollary}
\newtheorem{claim}{Claim}[theorem]
\theoremstyle{definition}
\newtheorem{definition}[theorem]{Definition}
\newtheorem{remark}[theorem]{Remark}
\newtheorem{convention}[theorem]{Convention}
\newtheorem{example}[theorem]{Example}
\theoremstyle{repeat}
\newtheorem*{repeated-theorem}{Repeat}
\newcommand{\E}{\mathcal{E}}
\newcommand{\F}{\mathcal{F}}
\renewcommand{\L}{\mathcal{L}}
\newcommand{\MM}{\mathfrak{M}}
\newcommand{\U}{\mathcal{U}}
\newcommand{\N}{\mathbb{N}}
\newcommand{\Z}{\mathbb{Z}}
\newcommand{\Q}{\mathbb{Q}}
\newcommand{\R}{\mathbb{R}}
\DeclareMathOperator{\tp}{tp}
\DeclareMathOperator{\Aut}{Aut}
\renewcommand{\S}{\operatorname{S}}
\DeclareMathOperator{\Th}{Th}
\DeclareMathOperator{\dom}{dom}
\renewcommand{\d}{\operatorname{d}}
\DeclareMathOperator{\Diag}{\operatorname{Diag}}
\DeclareMathOperator{\Obs}{Obs}
\DeclareMathOperator{\Mor}{Mor}
\DeclareMathOperator{\cf}{cf}
\DeclareMathOperator{\cod}{cod}
\DeclareMathOperator{\linspan}{span}
\newcommand{\dotminus}{\mathbin{\text{\@dotminus}}}
\newcommand{\@dotminus}{%
  \ooalign{\hidewidth\raise1ex\hbox{.}\hidewidth\cr$\m@th-$\cr}%
}
\renewcommand{\phi}{\varphi}
\newcommand{\equivls}{\equiv^\textup{Ls}}
\newcommand{\op}{{\textup{op}}}
\newcommand{\pc}{{\textup{pc}}}
\renewcommand{\u}{{\textup{u}}}
\newcommand{\qf}{{\textup{qf}}}
\newcommand{\fo}{{\textup{fo}}}
\newcommand{\eq}{{\textup{eq}}}
\newcommand{\heq}{{\textup{heq}}}
\newcommand{\pos}{{\textup{pos}}}
\newcommand{\OP}{$\mathsf{OP}$\xspace}
\newcommand{\TP}[1][]{\ifthenelse{\equal{#1}{}}{$\mathsf{TP}$}{$\mathsf{TP_{#1}}$}\xspace}
\newcommand{\SOP}[1][]{\ifthenelse{\equal{#1}{}}{$\mathsf{SOP}$}{$\mathsf{SOP_{#1}}$}\xspace}
\newcommand{\IP}{$\mathsf{IP}$\xspace}
\newcommand{\NTP}[1][]{\ifthenelse{\equal{#1}{}}{$\mathsf{NTP}$}{$\mathsf{NTP_{#1}}$}\xspace}
\newcommand{\NSOP}[1][]{\ifthenelse{\equal{#1}{}}{$\mathsf{NSOP}$}{$\mathsf{NSOP_{#1}}$}\xspace}
\def\Ind#1#2{#1\setbox0=\hbox{$#1x$}\kern\wd0\hbox to 0pt{\hss$#1\mid$\hss}
\lower.9\ht0\hbox to 0pt{\hss$#1\smile$\hss}\kern\wd0}
\def\ind{\mathop{\mathpalette\Ind{}}}
\def\Notind#1#2{#1\setbox0=\hbox{$#1x$}\kern\wd0\hbox to 0pt{\mathchardef
\nn="3236\hss$#1\nn$\kern1.4\wd0\hss}\hbox to 0pt{\hss$#1\mid$\hss}\lower.9\ht0
\hbox to 0pt{\hss$#1\smile$\hss}\kern\wd0}
\def\nind{\mathop{\mathpalette\Notind{}}}
\title{Positive Logic:\\An Introduction for Model Theorists}
\author{Mark Kamsma}
\date{\today}
\begin{document}

\maketitle

\chapter*{Abstract}
\thispagestyle{empty}
\phantomsection\addcontentsline{toc}{chapter}{Abstract}

Positive logic is a generalisation of full first-order logic that does not have negation built in. Still, many model-theoretic ideas, tools and techniques work perfectly fine in positive logic. Importantly, there is a compactness theorem. With some care, many classical results hold in the generality of positive logic without giving up any strength.

In these self-contained notes we give an introduction to model theory in positive logic. We give a complete treatment of the basics of positive model theory and then we move on to deeper model-theoretic concepts. First, we discuss countable categoricity, where we work towards a theorem that characterises countably categorical positive theories. After that, we briefly discuss how the convenient formalism of monster models goes through in positive logic as usual. This is helpful in the remainder of the notes, where we discuss simple and stable theories. The main aim in those chapters is to develop dividing independence and prove Kim-Pillay style theorems. For a smoother treatment we assume thickness, which is the relatively mild assumption that being an indiscernible sequence is type-definable. We finish by discussing two big applications of positive logic: hyperimaginaries and continuous logic. For the former we define an $(-)^\heq$ construction, analogous to the $(-)^\eq$ construction for imaginaries in full first-order logic. Where the $(-)^\heq$ construction is problematic in full first-order logic, it does stay within the framework in positive logic and it preserves many nice properties. For the latter we explain how continuous logic can be studied as a special case of positive logic, making it so that all abstract model-theoretic results in positive logic apply to continuous theories.

In the appendix we provide a quick guide to the material covered in these notes, including very brief proof sketches.

\setcounter{tocdepth}{1} 
\tableofcontents

\newpage
\thispagestyle{empty}

\begin{center}
\vspace*{7cm}
\Large{\textit{In memoriam\\Jaap}}
\end{center}

\newpage
\chapter*{Acknowledgements}
\thispagestyle{empty}
\addcontentsline{toc}{chapter}{Acknowledgements}

I would like to thank Jan Dobrowolski, Jonathan Kirby, Rosario Mennuni and Alberto Miguel-G\'{o}mez for their feedback on these notes.

While writing these notes, the author was supported by Marie Sk\l{}odowska-Curie grant number 101130801.
\begin{flushright}
    \includegraphics[height=1.5cm]{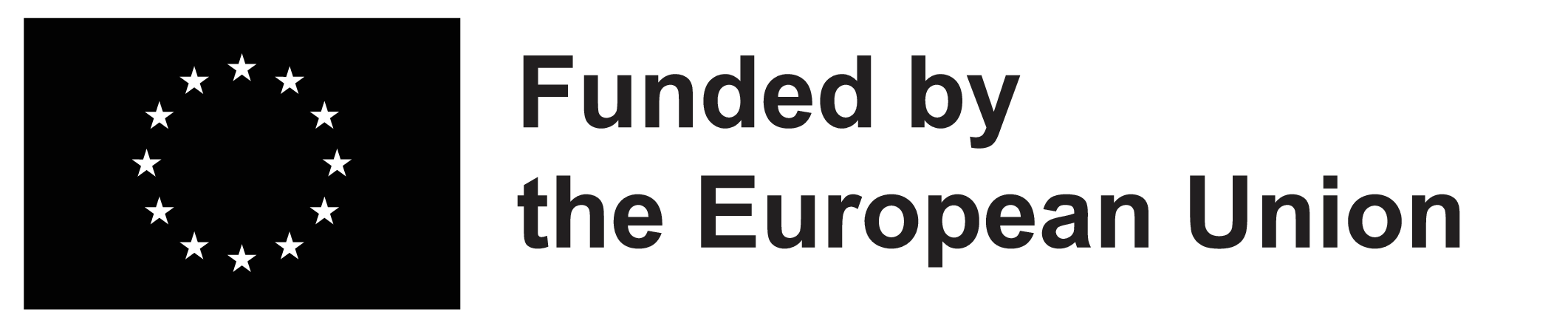}
\end{flushright}

\section*{Feedback}
Any feedback is more than welcome! Please send it to \href{mailto:mark@markkamsma.nl}{mark@markkamsma.nl}.

\chapter{Introduction}
\label{ch:introduction}
Positive logic is a generalisation of full first-order logic that does not have negation built in. That is, the allowed connectives are $\top$, $\bot$, $\wedge$ and $\vee$, and only existential quantification $\exists$ is allowed. This is a generalisation, because any desired amount of negation can be added back in through a process called Morleyisation.

Many model-theoretic ideas, tools and techniques still work perfectly fine in positive logic. Importantly, there is a compactness theorem. With some care, many classical results hold in the generality of positive logic without giving up any strength.

A notable example of the use of positive logic is that hyperimaginaries can be added to the monster model, similar to how we can add imaginaries in full first-order logic without any issue. Another important example is that positive logic also subsumes continuous logic, in the sense that any monster model of a continuous theory can be viewed as the monster model of a positive theory, allowing abstract results in positive logic to be applied in continuous logic. Both these (classes of) examples are discussed in these notes.

Only basic knowledge of model theory is assumed. Some remarks are aimed at a more advanced audience that is familiar with the full first-order version of whatever is discussed. The later chapters (simplicity and stability, Chapters \ref{ch:simple-theories} and \ref{ch:stable-theories}) technically require no prerequisite knowledge, but might be lacking in motivation if the reader is not familiar with simplicity and stability in full first-order logic.

\section{The purpose of these notes}
\label{sec:why-these-notes}
The main purpose of these notes is to provide an introduction to positive logic, and to present an overview of some of the deep model theory that can be done, and has been done, in positive logic. This is not a survey of all the work in positive logic to date. We chose to treat a positive version of what is usually referred to as the Ryll-Nardzewski theorem (a characterisation of countably categorical theories) and a positive treatment of simplicity and stability.

The main results that we present are not original. The purpose is not to present new results, but to present a self contained introduction to and overview of positive model theory. Even though these results are already present in literature, they are spread out over multiple papers, sometimes using different terminology and notation. By consolidating this existing body of work, we can give a simpler and smoother presentation. With less technical statements the hope is that this makes positive model theory accessible to a larger audience.
\section{Overview}
\label{sec:overview}
We give a brief summary of each chapter.
\begin{enumerate}
\setcounter{enumi}{1}
\item In this chapter we treat the basics of positive logic. We start by discussing the syntax and fundamental notions such as homomorphisms, immersions, positively closed models (p.c.\ models) and, importantly, compactness. We then discuss familiar constructions from full first-order logic, such as types, amalgamation and the downward L\"owenheim-Skolem theorem. We finish the chapter with a hierarchy of tameness properties that positive theories can enjoy, namely: being Boolean, Hausdorff, semi-Hausdorff or thick.
\item The goal of this chapter is to prove a positive version of what is usually referred to as the Ryll-Nardzewski theorem, a characterisation of countably categorical theories. We prove an omitting types theorem along the way and introduce positively saturated, atomic and prime models.
\item As is common in model theory, we will wish to work in a monster model. In this chapter we discuss how such monster models can be constructed, similarly to how it is done in full first-order logic. Being able to now work in a monster model, we give a positive version of some common model-theoretic tools, such as the construction of indiscernible sequences, and a treatment of Lascar strong types.
\item The goal of this chapter is to prove a Kim-Pillay style theorem: a characterisation of simple theories in terms of the existence of an independence relation, that must then be given by non-dividing and is thus unique. Throughout we will assume the positive theory we work with is thick, as this allows us to give a much simpler and smoother treatment of simplicity that is very close to the usual treatment in full first-order logic.
\item Continuing the previous chapter, we link stability to simplicity. We approach stability through independence relations, with the goal of this chapter being a Kim-Pillay style theorem for stable theories.
\item In this final chapter we give two classes of examples of positive theories. First we discuss how one can add hyperimaginaries as elements to the monster model without leaving the framework of positive logic. So we get an $(-)^\heq$ construction, similar to the $(-)^\eq$ construction from full first-order logic. After that we discuss how continuous logic can be studied through positive logic by giving an explicit description of how to turn a continuous monster model into a monster model of a positive theory. This allows us to apply all the abstract theory that has been developed for positive logic (e.g., simplicity and stability) to continuous logic. We give a brief example of how this translation can be used to get a Kim-Pillay style theorem for simple theories in continuous logic.
\end{enumerate}
Finally, in appendix \ref{ch:lazy-model-theoritician} we give an as-brief-as-possible summary of the necessities for positive logic. This can be viewed as a cheat sheet and a quick start guide for readers that wish to use positive logic.
\section{Bibliographic remarks}
\label{sec:bibliographic-remarks-introduction}
To make these notes self contained, all proofs and details are worked out. We thus refer as little as possible to other sources in the text. As mentioned before, the main results are not original, so at the end of each chapter there is a section with bibliographic remarks. In these sections we briefly discuss original sources and further reading.

\chapter{Basics}
\label{ch:basics}
We cover the basics of positive logic, assuming a basic background in full first-order logic. The main objects of study in positive logic are positively closed models (p.c.\ models, see \thref{pc-model}). Such models work well with the positive fragment of full first-order logic, but they do not work well with negations. Importantly, we still have a compactness theorem for positive formulas (\thref{compactness}).

We discuss types (Section \ref{sec:types-and-type-spaces}) and properties of the category of (p.c.) models (Section \ref{sec:properties-of-category-of-models}), which is all very similar to full first-order logic after making some natural adjustments.

An important difference between positive logic and full first-order logic is that behaviour that is always type-definable in full first-order logic is generally no longer type-definable in positive logic. In practice it is often the case that enough of such behaviour is still type-definable in a positive theory. It is thus useful to have a hierarchy of how `nice' a positive theory is in this regard, yielding the notions of Boolean, Hausdorff, semi-Hausdorff and thick theory, which are discussed in Section \ref{sec:boolean-hausdorff-semi-hausdorff-thick}.

\section{Formulas, homomorphisms, immersions and p.c.\ models}
\label{sec:very-basics}
We assume the reader is familiar with the following definitions, which are exactly the same as in the classical treatment for full first-order logic:
\begin{itemize}
\item \term{signature} or \term{language}, consisting of \emph{constant symbols}\index{constant symbol}, \emph{function symbols}\index{function symbol} and \emph{relation symbols}\index{relation symbol}, possibly multi-sorted;
\item a \term{structure} in a given language (we allow empty structures, which are essentially propositional structures);
\item the recursive definition of a formula in full first-order logic, and satisfaction of such a formula in a structure.
\end{itemize}
We also adapt some conventions that are standard in model theory:
\begin{itemize}
\item in general treatments we often leave the signature implicit and just assume to work in some fixed signature $\L$ (that is often left out of the notation);
\item the theories we consider are assumed to be consistent;
\item we will not distinguish tuples from single elements, so the notation $a \in M$ means that $a$ is some tuple in $M$;
\item usually lowercase letters $a, b, c, \ldots$ will denote tuples of elements in a structure while $x, y, z, \ldots$ will denote (tuples of) variables, uppercase letters $A, B, C, \ldots$ will denote sets where we use $M$ and $N$ for models;
\item unions are written in juxtaposition, so $AB$ just means $A \cup B$.
\end{itemize}
\begin{definition}
\thlabel{positive-formulas-and-theory}
A \term{positive formula} is one that is built from atomic formulas using the connectives $\top, \bot, \wedge, \vee$ and the existential quantifier $\exists$. Our signature will always include the symbol for equality $=$, but not necessarily the symbol for inequality.

An \term{h-inductive sentence} is one of the form $\forall x(\phi(x) \to \psi(x))$, where $\phi(x)$ and $\psi(x)$ are positive formulas. An \term{h-universal sentence} is an h-inductive sentence of the form $\forall x(\phi(x) \to \bot)$.

A \term{positive theory} is a set of h-inductive sentences.
\end{definition}
We stress that inequality is generally not a positive formula, which is relevant in certain applications of positive logic. For example, when treating hyperimagarinies (Section \ref{sec:hyperimaginaries}) or continuous logic (Section \ref{sec:continuous-logic}).

Semantically h-inductive sentences are precisely the sentences expressing that one positively definable set is included in another. The `h' in the name refers to ``homomorphism'' and comes from their interaction with homomorphisms when taking directed unions (see also the discussion before \thref{directed-unions}).

The h-universal sentences are precisely the negations of positive sentences, up to equivalence. So they can always be written in the form $\neg \exists x \phi(x)$, where $\phi(x)$ is positive quantifier-free. Such a formula is equivalent to $\forall x \neg \phi(x)$, hence the name.

In categorical logic, and mainly in topos theory, positive logic is also studied under the name of \term{coherent logic}. Note that our h-inductive sentences are then exactly \emph{coherent sequents}.
\begin{convention}[Stay positive!]
\thlabel{stay-positive}
As these notes are about positive logic, we will drop the ``positive'' from terms from now on. That is, we will just say ``formula'' and ``theory'' instead of ``positive formula'' and ``positive theory'' respectively. If we ever need to refer to full first-order logic, we will explicitly say so.
\end{convention}
\begin{remark}
\thlabel{normal-forms-of-formulas}
One easily verifies by induction that every formula $\phi(x)$ is equivalent to a formula of the form $\exists y \psi(x, y)$, where $\psi(x, y)$ is positive quantifier-free. Another useful normal form is that $\phi(x)$ is always equivalent to a formula of the form $\phi_1(x) \vee \ldots \vee \phi_n(x)$, such that for each $1 \leq i \leq n$ the formula $\phi_i(x)$ is of the form $\exists  y \psi_i(x, y)$, where $\psi_i(x, y)$ is a conjunction of atomic formulas.
\end{remark}
\begin{definition}
\thlabel{homomorphism}
A \term{homomorphism} is a function $f: M \to N$ between structures that preserves relation symbols and commutes with function symbols and constant symbols. That is:
\begin{enumerate}
\item $f(c_M) = c_N$ for every constant symbol $c$;
\item $f(g_M(a)) = g_N(f(a))$ for every function symbol $g$ and every $a \in M$;
\item for any $a \in M$ and any relation symbol $R$ we have that $M \models R(a)$ implies $N \models R(a)$.
\end{enumerate}
In this situation we also call $N$ a \term{continuation} of $M$.
\end{definition}
\begin{proposition}
\thlabel{homomorphism-preserves-truth}
A function $f: M \to N$ is a homomorphism iff for every formula $\phi(x)$ and every $a \in M$ we have
\[
M \models \phi(a) \implies N \models \phi(f(a)).
\]
\end{proposition}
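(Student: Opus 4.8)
The statement is an iff, so the plan is to prove the two directions separately, with the forward direction being routine and the converse being the point of interest.

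First I would prove the $(\Rightarrow)$ direction: assuming $f$ is a homomorphism, show by induction on the structure of the positive formula $\phi(x)$ that $M \models \phi(a)$ implies $N \models \phi(f(a))$. The atomic case is exactly the definition of homomorphism (clauses (1)--(3) of \thref{homomorphism}), after noting that commuting with function and constant symbols means $f$ sends the interpretation $t_M(a)$ of a term to $t_N(f(a))$, which one checks by a sub-induction on terms; the relation symbol clause then handles atomic formulas $R(t_1,\dots,t_n)$, and the case $t_1 = t_2$ follows since $f$ is a function. The connective cases $\top$, $\bot$, $\wedge$, $\vee$ are immediate from the induction hypothesis (this is exactly where positivity matters --- there is no negation case to worry about, and for $\wedge$ and $\vee$ implication is preserved by taking conjunctions/disjunctions). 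For $\exists y\, \psi(x,y)$: if $M \models \exists y\, \psi(a,y)$, pick a witness $b \in M$, apply the induction hypothesis to get $N \models \psi(f(a), f(b))$, and conclude $N \models \exists y\, \psi(f(a), y)$.

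For the $(\Leftarrow)$ direction, I would assume the truth-preservation property holds for all positive formulas and verify the three clauses of \thref{homomorphism} directly. Each clause is obtained by feeding a suitable atomic (hence positive) formula into the hypothesis: clause (3) is the instance $\phi(x) = R(x)$; clause (2) comes from applying the hypothesis to the atomic formula $g(x) = g(x)$ in a slightly indirect way --- more cleanly, apply the hypothesis to $\phi(x,y) \colon g(x) = y$, which holds in $M$ for $y = g_M(a)$, forcing $N \models g_N(f(a)) = f(g_M(a))$; similarly clause (1) comes from $\phi(x) \colon c = x$ evaluated at... one has to be a little careful here since $c$ is a constant and there may be no free variable, so instead use the atomic sentence $c = c$ together with the observation that in the one-variable formula $x = c$ we get $M \models c_M = c_M$ hence $N \models f(c_M) = c_N$.

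The main (mild) obstacle is the bookkeeping in the atomic case of the forward direction: one must first establish the term lemma ($f(t_M(a)) = t_N(f(a))$ for every term $t$) by its own induction before the induction on formulas can even begin, and one must be slightly attentive with constant symbols and with the equality atom. Everything else is a direct unwinding of definitions, and no compactness or other machinery is needed.
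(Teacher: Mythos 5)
Your proposal is correct and matches the paper's proof, which does exactly this: the homomorphism-implies-preservation direction by induction on the complexity of the (positive) formula, and the converse by noting it is trivial, since the homomorphism clauses are just instances of preservation of atomic formulas. The extra details you supply (the term lemma and the explicit atomic instantiations for constants, functions and relations) are precisely the routine bookkeeping the paper leaves implicit.
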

\begin{proof}
The right to left direction is trivial, and the other direction follows by induction on the complexity of the formula.
\end{proof}
\begin{remark}
\thlabel{homomorphisms-and-sentences}
\thref{homomorphism-preserves-truth} says in particular that for any sentence $\phi$ we have $M \models \phi$ implies $N \models \phi$. This holds even if $M$ is empty (the quantification ``every $a \in M$'' in \thref{homomorphism-preserves-truth} is ignored for sentences).

The contrapositive of this statement says precisely that for any h-universal sentence $\chi$ we have that $N \models \chi$ implies $M \models \chi$.
\end{remark}
\begin{definition}
\thlabel{diagram}
Let $M$ be an $\L$-structure. We write $\L(M)$\nomenclature[LM]{$\L(M)$}{Language $\L$ with additional constants for $M$} for the language that is $\L$ together with a new constant symbol for each element of $M$. We view $M$ as an $\L(M)$-structure by interpreting each constant symbol as the corresponding element.

The \term{positive diagram} of an $\L$-structure $M$ is the set $\Diag(M)$\nomenclature[DiagM]{$\Diag(M)$}{Positive diagram of $M$} of all positive quantifier-free $\L(M)$-sentences that are true in $M$.
\end{definition}
\begin{remark}
\thlabel{model-of-diagram-is-homomorphism}
We will often implicitly use the fact that a model $N$ of $\Diag(M)$ is essentially the same thing as a homomorphism $f: M \to N$.
\end{remark}
\begin{definition}
\thlabel{immersion}
A homomorphism $f: M \to N$ is called an \term{immersion} if for every formula $\phi(x)$ and every $a \in M$ we have
\[
M \models \phi(a) \Longleftrightarrow N \models \phi(f(a)).
\]
\end{definition}
Just like in \thref{homomorphisms-and-sentences} we note that nothing in the definition of an immersion excludes the case where $M$ is empty. It just means that we have $M \models \phi$ if and only if $N \models \phi$ for every sentence $\phi$.
\begin{definition}
\thlabel{pc-model}
We call a model $M$ of a theory $T$ a \term{positively closed model}, or \term{p.c.\ model}, if the following equivalent conditions hold:
\begin{enumerate}[label=(\roman*)]
\item every homomorphism $f: M \to N$ with $N \models T$ is an immersion;
\item for every $a \in M$ and $\phi(x)$, if there is a homomorphism $f: M \to N$ with $N \models T$ and $N \models \phi(f(a))$ then already $M \models \phi(a)$;
\item for every $a \in M$ and $\phi(x)$ such that $M \not \models \phi(a)$ there is $\psi(x)$ such that $T \models \neg \exists x(\phi(x) \wedge \psi(x))$ and $M \models \psi(a)$.
\end{enumerate}
\end{definition}
\begin{lemma}
\thlabel{equivalence-pc-characterisations}
The conditions in \thref{pc-model} are indeed equivalent.
\end{lemma}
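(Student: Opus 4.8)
The plan is to prove the cycle of implications $\text{(i)} \Rightarrow \text{(ii)} \Rightarrow \text{(iii)} \Rightarrow \text{(i)}$. Two of the three arrows are immediate from the definitions together with \thref{homomorphism-preserves-truth} (a homomorphism preserves truth of positive formulas), and the only substantial ingredient is the compactness theorem \thref{compactness}, which I would use in the step $\text{(ii)} \Rightarrow \text{(iii)}$.

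For $\text{(i)} \Rightarrow \text{(ii)}$: if $f: M \to N$ is a homomorphism with $N \models T$ and $N \models \phi(f(a))$, then by (i) it is an immersion, so $M \models \phi(a)$. For $\text{(iii)} \Rightarrow \text{(i)}$: let $f: M \to N$ be a homomorphism with $N \models T$. By \thref{homomorphism-preserves-truth} we already have $M \models \phi(a) \Rightarrow N \models \phi(f(a))$ for every $\phi$ and $a$, so it remains to prove the converse implication. If $N \models \phi(f(a))$ but $M \not\models \phi(a)$, apply (iii) to obtain $\psi(x)$ with $M \models \psi(a)$ and $T \models \neg\exists x(\phi(x) \wedge \psi(x))$; then $N \models \psi(f(a))$ again by \thref{homomorphism-preserves-truth}, so $N \models \exists x(\phi(x) \wedge \psi(x))$, contradicting $N \models T$. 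Hence $M \models \phi(a)$, and $f$ is an immersion.

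The substantive step is $\text{(ii)} \Rightarrow \text{(iii)}$. Fix $a \in M$ and $\phi(x)$ with $M \not\models \phi(a)$, and consider the set of sentences $T \cup \Diag(M) \cup \{\phi(a)\}$ in the language $\L(M)$ (reading each positive sentence $\sigma$ as the h-inductive sentence $\top \to \sigma$, so that \thref{compactness} applies). If this set had a model $N$, then by \thref{model-of-diagram-is-homomorphism} the fact that $N \models \Diag(M)$ yields a homomorphism $f: M \to N$, and $N \models T$ together with $N \models \phi(f(a))$ would force $M \models \phi(a)$ by (ii) — a contradiction. So the set is inconsistent, hence by compactness some finite subset of it is; since $T$ and $\phi(a)$ can be kept, this subset uses only finitely many sentences of $\Diag(M)$. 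Let $\psi_0(a, b)$ be their conjunction, where $b$ enumerates the elements of $M$ other than $a$ named by the constants occurring in it. Then $T \models \neg\exists x\exists y(\phi(x) \wedge \psi_0(x, y))$, so setting $\psi(x) := \exists y\,\psi_0(x, y)$ gives a formula with $M \models \psi(a)$ and $T \models \neg\exists x(\phi(x) \wedge \psi(x))$, as required.

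The one point that needs care is exactly this application of compactness: one must check that the shapes of the sentences involved ($T$ h-inductive, $\Diag(M)$ and $\phi(a)$ positive) are covered by the positive compactness theorem, and that ``inconsistent'' here means ``has no model'', which is precisely what \thref{model-of-diagram-is-homomorphism} lets us translate back into a statement about homomorphisms out of $M$. Everything else is routine bookkeeping with \thref{homomorphism-preserves-truth}.
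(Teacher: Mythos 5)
Your proof is correct and follows essentially the same route as the paper: both trivial arrows are handled identically, and the substantive step (ii) $\Rightarrow$ (iii) is the same argument via inconsistency of $T \cup \Diag(M) \cup \{\phi(a)\}$, extraction of a finite part of $\Diag(M)$, and existentially quantifying out the extra parameters. The one thing to adjust is the citation in that step: \thref{compactness} is proved later (via \thref{continue-to-pc-model}, which itself relies on this equivalence), so invoking it here would be circular — but all you actually need is the classical compactness theorem for full first-order logic applied to the set of sentences $T \cup \Diag(M) \cup \{\phi(a)\}$, which is exactly what the paper uses.
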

\begin{proof}
\underline{(i) $\Rightarrow$ (ii)} By definition of being an immersion.

\underline{(ii) $\Rightarrow$ (iii)} Let $a \in M$ and $\phi(x)$ be such that $M \not \models \phi(a)$. The theory $T \cup \Diag(M) \cup \{\phi(a)\}$ is inconsistent. This is because a model $N$ of this theory would be a model of $T$, admit a homomorphism $f: M \to N$ and be such that $N \models \phi(f(a))$ from which $M \models \phi(a)$ would follow by our assumption. Hence there is some $\chi(a, b) \in \Diag(M)$ such that $T$ is inconsistent with $\{\phi(a), \chi(a, b)\}$. As $a$ and $b$ do not appear in $T$ this means that $T \models \neg \exists x(\phi(x) \wedge \exists y \chi(x, y))$. Taking $\psi(x)$ to be $\exists y \chi(x, y)$ then completes the proof.

\underline{(iii) $\Rightarrow$ (i)} Let $f: M \to N$ be a homomorphism with $N \models T$. Let $a \in M$ and $\phi(x)$ be such that $N \models \phi(f(a))$. Suppose for a contradiction that $M \not \models \phi(a)$. Then by assumption there is $\psi(x)$ with $M \models \psi(a)$ and $T \models \neg \exists x(\phi(x) \wedge \psi(x))$. As $f$ is a homomorphism we must have $N \models \psi(f(a))$, but then $N \models \phi(f(a)) \wedge \psi(f(a))$ while being a model of $T$. So we arrive at a contradiction and conclude $M \models \phi(a)$, as required.
\end{proof}
\begin{remark}
\thlabel{pc-model-vs-ec-model}
Some authors also use the name ``existentially closed model'' or ``e.c.\ model'' for what we call a p.c.\ model. This terminology stems from the approach where we consider embeddings between models instead of homomorphisms (see also \thref{common-morleyisations}(i)). In that case, a model $M$ is an e.c.\ model precisely when every quantifier-free formula (potentially using negations) that has a solution in some bigger model $N \supseteq M$ already has a solution in $M$. So this is really about the \emph{existence} of solutions.

Being positively closed is similar, but more is going on: p.c.\ models make as much true as possible. For example, the only p.c.\ models of the empty theory (in the empty signature) are singletons. It is instructive to see why this happens, as it emphasises two points. Namely that being p.c.\ is not just about finding `new' solutions to equations, but also that as many things as possible have to be true about existing elements. The second point is that equality is one of these things that can be true about elements, meaning in particular that when moving from an arbitrary model to a p.c.\ model some elements might have to be identified.

So let $M$ be a p.c.\ model of the empty theory. Then $N = M \cup \{*\}$ is another model and $M \subseteq N$ is a homomorphism of models. As $N \models \exists x(x=x)$ we must have $M \models \exists x(x=x)$, so $M$ is inhabited. To see that $M$ must be a singleton, let $a,b \in M$ and consider the homomorphism of models $f: M \to N'$, where $N' = \{*\}$. Applying \thref{pc-model}(ii) to the formula $x = y$ we find $N' \models f(a) = f(b)$, and so $M \models a = b$, as required.

We also note that some authors use the name ``positively existentially closed model'' or ``pec model'' for what we call a p.c.\ model.
\end{remark}
\begin{definition}
\thlabel{negation}
Let $T$ be a theory and $\phi(x)$ be a formula. A formula $\psi(x)$ such that $T \models \neg \exists x(\phi(x) \wedge \psi(x))$ is called an \emph{obstruction of $\phi(x)$}\index{obstruction of a formula}.
\end{definition}
Using the above terminology we can rephrase \thref{pc-model}(iii) as follows: for every $a$ and $\phi(x)$ such that $M \not \models \phi(a)$ there is an obstruction $\psi(x)$ of $\phi(x)$ such that $M \models \psi(a)$.
\begin{remark}
\thlabel{negation-instead-of-obstruction}
In some literature the term ``a negation'' is used instead of ``an obstruction''. This can be confused with \emph{the} negation of a formula, hence the choice to use the current terminology. The term ``a denial'' has also appeared.
\end{remark}
\term{Geometric logic} allows infinite disjunctions. That is, a geometric formula is one that is built from atomic formulas using $\top$, $\bot$, $\wedge$, $\exists$ and infinite disjunctions, such that it has only finitely many free variables.
\begin{corollary}
\thlabel{geometric-axiomatisation}
The class of p.c.\ models of a theory $T$ can always be axiomatised using geometric logic. More explicitly, writing $\Obs(\phi)$ for the set of obstructions of $\phi$ (with respect to $T$) we have that the following geometric theory axiomatises the class of p.c.\ models of $T$:
\[
T \cup \left\{ \forall x \left(\phi(x) \vee \bigvee \Obs(\phi) \right) : \phi(x) \text{ is a formula} \right\}.
\]
\end{corollary}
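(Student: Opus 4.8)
The plan is to prove the two inclusions directly from the characterisation of p.c.\ models in \thref{pc-model}, specifically condition (iii) in its obstruction form, as reformulated right after \thref{negation}. Write $T'$ for the displayed geometric theory. Since $T' \supseteq T$, in both directions the underlying structure is automatically a model of $T$, so the only real content is matching condition (iii) against the new geometric axioms.

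For the direction that a p.c.\ model $M$ of $T$ satisfies $T'$, I would fix a formula $\phi(x)$ and a tuple $a \in M$. If $M \models \phi(a)$, the disjunct $\phi(x)$ already holds at $a$. Otherwise $M \not\models \phi(a)$, and the obstruction form of \thref{pc-model}(iii) supplies an obstruction $\psi(x)$ of $\phi(x)$ with $M \models \psi(a)$; by definition $\psi \in \Obs(\phi)$, so $M$ satisfies $\bigvee \Obs(\phi)$ at $a$. Hence $M \models \forall x(\phi(x) \vee \bigvee \Obs(\phi))$, and since $\phi$ and $a$ were arbitrary, $M \models T'$.

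For the converse I would start from $M \models T'$ and verify \thref{pc-model}(iii): given $a \in M$ and $\phi(x)$ with $M \not\models \phi(a)$, evaluating the axiom $\forall x(\phi(x) \vee \bigvee \Obs(\phi))$ at $a$ and using the semantics of geometric disjunction shows that some disjunct holds at $a$; it cannot be $\phi(x)$, so there is $\psi \in \Obs(\phi)$ with $M \models \psi(a)$. But $\psi \in \Obs(\phi)$ means exactly $T \models \neg\exists x(\phi(x) \wedge \psi(x))$, i.e.\ $\psi$ is the required obstruction, so $M$ is p.c.

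There is essentially no hard step here; the only points to keep in mind are that $\Obs(\phi)$ is a genuine set (there are only set-many formulas up to logical equivalence in a fixed language, so $\bigvee \Obs(\phi)$ is a legitimate geometric formula with finitely many free variables) and that $M \models \bigvee \Obs(\phi)$ at $a$ unwinds to ``there exists $\psi \in \Obs(\phi)$ with $M \models \psi(a)$'', which is the standard semantics of geometric logic. With these in hand the corollary is a direct translation of \thref{pc-model}(iii).
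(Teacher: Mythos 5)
Your argument is correct and is exactly the paper's approach: the paper's one-line proof simply observes that satisfying the displayed geometric theory is equivalent to condition (iii) of \thref{pc-model}, and your two directions spell out precisely that equivalence.
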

\begin{proof}
Being a model of the specified geometric theory is clearly equivalent to \thref{pc-model}(iii).
\end{proof}
\begin{definition}
\thlabel{universal-theory-kaiser-hull}
Let $T$ be a theory. The \term{h-universal theory} of $T$ is defined as\nomenclature[Tu]{$T^\u$}{The h-universal theory of $T$}
\[
T^\u = \{ \chi \text{ an h-universal sentence} : T \models \chi \}.
\]
The \term{Kaiser hull} of $T$ is defined as\nomenclature[Tec]{$T^\pc$}{Kaiser hull of $T$}
\[
T^\pc = \{ \chi \text{ an h-inductive sentence} : M \models \chi \text{ for every p.c.\ model } M \text{ of } T \}.
\]
\end{definition}
It turns out that $T^\u$ and $T^\pc$ have the same p.c.\ models as $T$, and are respectively the minimal and maximal such theories in a precise sense. This is shown later in \thref{universal-kaiser-hull-ec-models}.
\begin{lemma}
\thlabel{models-of-universal-consequences}
Let $T$ be a theory in full first-order logic. The models of $T^\u$ are precisely those $M_0$ such that admit a homomorphism $f: M_0 \to M$ into some model $M$ of $T$.
\end{lemma}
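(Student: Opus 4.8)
The plan is to prove both directions, with the interesting direction being that every model $M_0$ of $T^\u$ admits a homomorphism into some model of $T$. For the easy direction, suppose $f \colon M_0 \to M$ is a homomorphism with $M \models T$. If $\chi \in T^\u$, then $T \models \chi$, so $M \models \chi$, and since $\chi$ is h-universal, \thref{homomorphisms-and-sentences} gives $M_0 \models \chi$. Hence $M_0 \models T^\u$.

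For the converse, let $M_0 \models T^\u$. The aim is to produce a model of $T$ continuing $M_0$, and the natural candidate is a model of $T \cup \Diag(M_0)$, since by \thref{model-of-diagram-is-homomorphism} such a model is the same as a homomorphism out of $M_0$. So it suffices to show $T \cup \Diag(M_0)$ is consistent. Suppose not. Since $\Diag(M_0)$ consists of positive quantifier-free $\L(M_0)$-sentences, compactness gives finitely many $\delta_1(a), \ldots, \delta_n(a) \in \Diag(M_0)$ (with $a$ the tuple of parameters involved, and $\delta(x) = \delta_1(x) \wedge \ldots \wedge \delta_n(x)$ a positive quantifier-free formula) such that $T \cup \{\delta(a)\}$ is inconsistent. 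As the constants $a$ do not occur in $T$, this means $T \models \neg \exists x\, \delta(x)$. But $\neg \exists x\, \delta(x)$ is an h-universal sentence, so it lies in $T^\u$, whence $M_0 \models \neg \exists x\, \delta(x)$. This contradicts $M_0 \models \delta(a)$, which holds because each $\delta_i(a) \in \Diag(M_0)$. Therefore $T \cup \Diag(M_0)$ is consistent, and any model of it furnishes the required homomorphism.

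I do not expect a serious obstacle here; the argument is the standard diagram-plus-compactness manoeuvre, and the only point that requires the hypothesis that $T$ is in full first-order logic (rather than merely the positive hypothesis) is essentially bookkeeping — in full first-order logic the negation $\neg \exists x\, \delta(x)$ is literally a sentence of the logic and the deduction $T \models \neg\exists x\,\delta(x)$ is unproblematic. One should be slightly careful that $\delta(x)$ is genuinely positive quantifier-free so that $\neg\exists x\,\delta(x)$ counts as h-universal in the sense of \thref{positive-formulas-and-theory}; this is immediate since a finite conjunction of members of $\Diag(M_0)$ is positive quantifier-free by construction.
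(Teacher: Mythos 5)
Your proposal is correct and follows essentially the same route as the paper: show $T \cup \Diag(M_0)$ is consistent, and if not, extract (a conjunction of) diagram sentences whose inconsistency with $T$ yields an h-universal consequence $\neg\exists x\,\delta(x) \in T^\u$ contradicting $M_0 \models T^\u$. The paper phrases this with a single $\phi(a) \in \Diag(M_0)$ rather than an explicit finite conjunction, but that is only a cosmetic difference.
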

\begin{proof}
One direction is clear: if $f: M_0 \to M$ is a homomorphism then $M_0 \models T^\u$ because $M \models T^\u$. We prove the other direction, so let $M_0 \models T^\u$. We will show that $T \cup \Diag(M_0)$ is consistent. If it would be inconsistent there would be $\phi(a) \in \Diag(M_0)$, where $a \in M_0$, such that $T \models \neg \phi(a)$. As $a$ does not appear in $T$ this just means that $T \models \neg \exists x \phi(x)$. Hence $\neg \exists x \phi(x) \in T^\u$, contradicting $M_0 \models T^\u$.
\end{proof}
\begin{definition}
\thlabel{directed-system}
Recall that a \term{directed poset} is a poset $I$ such that for any $i_1, \ldots, i_n \in I$ there is an upper bound $j \in I$. A \term{directed system} of $\L$-structures is a functor from a directed poset $I$ into the category of $\L$-structures and homomorphisms. More precisely, we have $\L$-structures $(M_i)_{i \in I}$ together with a homomorphism $f_{ij}: M_i \to M_j$ for every $i \leq j$ such that:
\begin{enumerate}[label=(\roman*)]
\item $f_{ii}$ is the identity function,
\item $f_{jk}f_{ij} = f_{ik}$ for all $i \leq j \leq k$.
\end{enumerate}
The \emph{union of a directed system} $(M_i)_{i \in I}$ or \term{directed union} \emph{of} $(M_i)_{i \in I}$ is defined as follows. For the underlying set we take $M = \coprod_{i \in I} M_i / {\sim}$ where $\sim$ is the equivalence relation defined as follows: for $a \in M_i$ and $b \in M_j$ we have $a \sim b$ if and only if there is $k \geq i, j$ such that $f_{ik}(a) = f_{ik}(b)$. For $a \in M_i$ we write $[a]$ for the equivalence class of $a$ (so $[a] \in M$). Then we make $M$ into an $\L$-structure as follows:
\begin{itemize}
\item for every constant symbol $c$ we set $c_M = [c_{M_i}]$ for any $i \in I$;
\item for every relation symbol $R(x_1, \ldots, x_n)$ we set $M \models R([a_1], \ldots, [a_n])$ if there is some $i \in I$ and $a_1', \ldots, a_n' \in M_i$ representing $[a_1], \ldots, [a_n]$ with $M_i \models R(a_1', \ldots, a_n')$;
\item for every $n$-ary function symbol $f$ we do the following, given $[a_1], \ldots, [a_n] \in M$ we let $i$ be such that there are $a_1', \ldots, a_n' \in M_i$ representing $[a_1], \ldots, [a_n]$ and we set $f^M([a_1], \ldots, [a_n]) = [f^{M_i}(a_1', \ldots, a_n')]$.
\end{itemize}
In the special case where $I$ is a linear order we call $(M_i)_{i \in I}$ a \term{chain}. If $I$ is an ordinal and $(M_i)_{i \in I}$ is such that for every limit ordinal $\ell \in I$ the structure $M_\ell$ is the directed union of $(M_i)_{i < \ell}$ we call $(M_i)_{i \in I}$ a \term{continuous chain}.
\end{definition}
\begin{remark}
\thlabel{directed-system-remarks}
Some remarks about directed systems and unions of them, using the notation from \thref{directed-system}.
\begin{enumerate}[label=(\roman*)]
\item The structure on $M$ is well-defined because the maps in the system are homomorphisms and because the system is directed.
\item If all the homomorphisms in the directed system $(M_i)_{i \in I}$ are inclusions then the underlying set of $M$ is just the set-theoretic union of $(M_i)_{i \in I}$. We often think of homomorphisms as inclusions, so by abuse of notation we will often disregard the equivalence relation. That is, given $a \in M_i$ we will also write $a \in M$ instead of $[a] \in M$. Conversely, given $a \in M$ (which now denotes $[a]$) we will write $a \in M_i$ if there is $a' \in M_i$ such that $a' \sim a$.
\item Following the previous point, we will write $\bigcup_{i \in I} M_i$ for the union of the system $(M_i)_{i \in I}$, viewed as an $\L$-structure.
\item The obvious inclusions (technically just maps) from a directed system $(M_i)_{i \in I}$ into its union $M = \bigcup_{i \in I} M_i$ are homomorphisms. This follows immediately from how we defined the structure on $M$.
\end{enumerate}
\end{remark}
\begin{proposition}
\thlabel{formulas-in-directed-union}
Let $(M_i)_{i \in I}$ be a directed system and let $M = \bigcup_{i \in I} M_i$ be its union. Then for any formula $\phi(x)$ and any $a \in M$ we have $M \models \phi(a)$ if and only if there is some $i \in I$ such that $a \in M_i$ and $M_i \models \phi(a)$.
\end{proposition}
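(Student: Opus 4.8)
The plan is to prove the two directions separately. The right-to-left direction will be immediate and completely general, while the left-to-right direction goes by induction on the complexity of $\phi$.

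For the right-to-left direction, suppose $a \in M_i$ and $M_i \models \phi(a)$. By \thref{directed-system-remarks}(iv) the canonical map $M_i \to M$ is a homomorphism, so \thref{homomorphism-preserves-truth} gives $M \models \phi(a)$ directly. Note that this does not use the shape of $\phi$ at all.

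For the left-to-right direction I would induct on $\phi$. First, as a preliminary observation that will be used throughout: for any finite tuple $a$ from $M$ there is an index $i$ with $a \in M_i$, since writing each coordinate of $a$ as $[b_j]$ with $b_j \in M_{i_j}$ and choosing (by directedness) $i \geq i_1, \dots, i_n$, the elements $f_{i_j i}(b_j)$ represent the coordinates of $a$ in $M_i$. This takes care of the base cases: for $\phi$ atomic, unwinding the interpretations of terms and relation symbols in $M$ as defined in \thref{directed-system} shows that $M \models \phi(a)$ forces $M_i \models \phi(a)$ for some $i$ with $a \in M_i$, after collecting all the relevant representatives at a single index; the cases $\phi = \top$ and $\phi = \bot$ are trivial. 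For the inductive step: if $\phi = \psi_1 \wedge \psi_2$ and $M \models \phi(a)$, the inductive hypothesis gives indices $i_1, i_2$ with $a \in M_{i_k}$ and $M_{i_k} \models \psi_k(a)$; choosing $j \geq i_1, i_2$ and pushing forward along the homomorphisms $f_{i_k j}$ yields $M_j \models \psi_1(a) \wedge \psi_2(a)$. The case $\phi = \psi_1 \vee \psi_2$ is the same but only one disjunct is involved. Finally, if $\phi(x) = \exists y\, \psi(x, y)$ and $M \models \phi(a)$, pick a witness $c \in M$; the inductive hypothesis applied to $\psi$ and the tuple $(a, c)$ produces an index $i$ with $(a, c) \in M_i$ and $M_i \models \psi(a, c)$, whence $M_i \models \exists y\, \psi(a, y)$ and $a \in M_i$.

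The only genuinely delicate point is the atomic base case, where one must chase the way function symbols were defined on $M$ via representatives and repeatedly invoke directedness to gather all the needed representatives at one index; the rest is routine bookkeeping with the directedness of $I$. It is worth noting that positivity of the connectives is exactly what makes the argument work: the monotone use of $\wedge$, $\vee$ and $\exists$ only ever requires moving \emph{up} in the directed system, so no index-lowering step (which would fail for, say, a negation) is ever needed.
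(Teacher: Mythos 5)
Your proof is correct and follows essentially the same route as the paper: the paper's proof is a (very terse) induction on the complexity of $\phi$, with the atomic case holding by definition of the structure on $M$ and the inductive steps using directedness and the fact that the maps in the system are homomorphisms, which is exactly what you spell out. Your explicit right-to-left argument via \thref{homomorphism-preserves-truth} and the remark that positivity is what lets every step move upward in the directed system are just fuller versions of what the paper leaves implicit.
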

\begin{proof}
This is straightforward induction on the complexity of $\phi$. For atomic formulas the statement holds by definition. Then each induction step is straightforward using the fact that the system is directed and that all the maps in the system are homomorphisms.
\end{proof}
The following theorem explains the name ``h-inductive'' sentence. The `h' stands for ``homomorphism'' and directed unions are sometimes also called \emph{inductive limits}\index{inductive limit}.
\begin{theorem}
\thlabel{directed-unions}
Let $T$ be a theory in full first-order logic. Then the following are equivalent:
\begin{enumerate}[label=(\roman*)]
\item $T$ can be axiomatised using h-inductive sentences, that is: there is an equivalent theory $T'$ that contains only h-inductive sentences;
\item the union of a directed system of models of $T$ is again a model of $T$;
\item the union of a chain of models of $T$ is again a model of $T$.
\end{enumerate}
\end{theorem}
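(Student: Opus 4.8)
The plan is to prove the cycle (i)$\Rightarrow$(ii)$\Rightarrow$(iii)$\Rightarrow$(i); only the last implication needs real work. For (i)$\Rightarrow$(ii), suppose $T$ is equivalent to a set $T'$ of h-inductive sentences, let $(M_i)_{i\in I}$ be a directed system of models of $T$, and put $M=\bigcup_{i\in I}M_i$. Given $\chi=\forall x(\phi(x)\to\psi(x))$ in $T'$ and $a\in M$ with $M\models\phi(a)$, \thref{formulas-in-directed-union} provides an $i$ with $a\in M_i$ and $M_i\models\phi(a)$; then $M_i\models\psi(a)$, and since the canonical map $M_i\to M$ is a homomorphism (\thref{directed-system-remarks}) and $\psi$ is positive, \thref{homomorphism-preserves-truth} gives $M\models\psi(a)$. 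Thus $M\models\chi$ for every $\chi\in T'$, so $M\models T$. Implication (ii)$\Rightarrow$(iii) is trivial, a chain being a directed system.

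For (iii)$\Rightarrow$(i), set $T'=\{\chi:\chi\text{ is h-inductive and }T\models\chi\}$. Every model of $T$ satisfies $T'$, so it suffices to show that every model of $T'$ satisfies $T$. Fix $M_0\models T'$. The strategy is a back-and-forth ``sandwich'' construction --- the positive counterpart of the classical argument characterising the first-order theories preserved under unions of chains --- yielding a chain
\[
M_0\xrightarrow{f_0}N_0\xrightarrow{g_0}M_1\xrightarrow{f_1}N_1\xrightarrow{g_1}M_2\longrightarrow\cdots
\]
in which each $N_i$ is a model of $T$, each $M_i$ a model of $T'$, each $f_i$ an immersion, and each composite $g_if_i\colon M_i\to M_{i+1}$ a (full first-order) elementary embedding. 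Granting this, let $M_\omega$ be the union of the chain. As the $M_i$ are cofinal in it, $M_0\preceq M_1\preceq M_2\preceq\cdots$ is an elementary chain with union $M_\omega$, so $M_0\preceq M_\omega$; as the $N_i$ are also cofinal, $M_\omega$ is the union of a chain of models of $T$, whence $M_\omega\models T$ by (iii). Therefore $M_0\models T$, as required.

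The construction alternates two lemmas, both compactness arguments. \emph{Step A}: if $M\models T'$ then there is $N\models T$ and an immersion $M\to N$. One checks that $T\cup\Diag(M)\cup\{\neg\theta:\theta\text{ a positive }\L(M)\text{-sentence with }M\not\models\theta\}$ is consistent: a finite inconsistency, after collecting the $\Diag(M)$-part into a single positive quantifier-free $\delta(\bar a)$ and the offending sentences into positive $\phi_1(\bar a),\dots,\phi_n(\bar a)$, would give $T\models\forall\bar x\,(\delta(\bar x)\to\phi_1(\bar x)\vee\cdots\vee\phi_n(\bar x))$, an h-inductive sentence, hence a member of $T'$ --- contradicting $M\models\delta(\bar a)\wedge\neg\phi_1(\bar a)\wedge\cdots\wedge\neg\phi_n(\bar a)$. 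A model of this theory furnishes $N$ and a homomorphism $M\to N$ (\thref{model-of-diagram-is-homomorphism}) which, by the negated sentences, is an immersion (\thref{immersion}). \emph{Step B}: if $f\colon M\to N$ is an immersion with $M\models T'$ and $N\models T$, then there are an elementary extension $M'\succeq M$ and a homomorphism $g\colon N\to M'$ with $gf$ the elementary inclusion (and $M'\models T'$ since $M'\equiv M$). Here the theory $\mathrm{ElDiag}(M)\cup\Diag(N)$ --- identifying the constant for $f(a)$ with that for $a$, for each $a\in M$ --- is consistent: a finite inconsistency would yield a logically valid sentence $\forall\bar x\,(\chi(\bar x)\to\neg\exists\bar z\,\eta(\bar x,\bar z))$ with $M\models\chi(\bar a)$ and $N\models\exists\bar z\,\eta(f(\bar a),\bar z)$, but since $\exists\bar z\,\eta(\bar x,\bar z)$ is positive and $f$ is an immersion this forces $M\models\exists\bar z\,\eta(\bar a,\bar z)$, a contradiction.

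The point I expect to be the main obstacle is precisely why Step B must deliver a \emph{genuine} full first-order elementary extension rather than merely another immersion. Were the whole sandwich built from immersions, $M_0\to M_\omega$ would be an immersion into a model $M_\omega\models T$, but that only re-derives $M_0\models T'$ --- circular. Escaping the loop forces the move to full first-order logic at the ``going back'' steps, and the reason those work is exactly that a positive formula failing in $M$ must also fail in any immersion of $M$, a fact we use against a plain logical validity (not a consequence of some theory). (Invoking compactness for theories with negations is harmless, $T$ being a full first-order theory; otherwise Morleyise.)
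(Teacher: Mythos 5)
Your proposal is correct and follows essentially the same route as the paper: the same cycle (i)$\Rightarrow$(ii)$\Rightarrow$(iii)$\Rightarrow$(i), with (i)$\Rightarrow$(ii) via satisfaction of formulas in directed unions, and (iii)$\Rightarrow$(i) via the alternating ``sandwich'' chain $M_0\to N_0\to M_1\to\cdots$ with $g_if_i$ elementary, concluded by Tarski's chain theorem together with (iii). The only difference is presentational: the paper packages your Step B through \thref{models-of-universal-consequences} applied to the elementary diagram (using that its h-universal part is exactly the set $\Pi$ of h-universal $\L(M_0)$-sentences true in $M_0$), whereas you prove both compactness steps directly and track the immersion explicitly.
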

\begin{proof}
\underline{(i) $\Rightarrow$ (ii)} Let $\forall x(\phi(x) \to \psi(x))$ be an h-inductive sentence in $T'$ and let $(M_i)_{i \in I}$ be a directed system of models of $T$. It is enough to show that $\forall x(\phi(x) \to \psi(x))$ holds in $M = \bigcup_{i \in I} M_i$. So let $a \in M$ be such that $M \models \phi(a)$. By \thref{formulas-in-directed-union} there is $i \in I$ such that $a \in M_i$ and $M_i \models \phi(a)$. As $M_i$ is a model of $T$, and hence of $T'$, we have $M_i \models \psi(a)$ from which $M \models \psi(a)$ follows.

\underline{(ii) $\Rightarrow$ (iii)} Trivial.

\underline{(iii) $\Rightarrow$ (i)} Let $T'$ be the set of h-inductive consequences of $T$. Let $M_0 \models T'$, we will show that $M_0 \models T$. Write $\Pi$ for the set of all h-universal $\L(M_0)$-sentences that are true in $M_0$. We claim that $T \cup \Diag(M_0) \cup \Pi$ is consistent. If not then there are $\phi(a) \in \Diag(M_0)$ and $\psi(a) \in \Pi$ such that $T \models \neg(\phi(a) \wedge \psi(a))$. Note that $\psi(x)$ is of the form $\neg \chi(x)$ for some positive formula $\chi(x)$. So because $a$ does not appear in $T$ we have $T \models \forall x(\phi(x) \to \chi(x))$. This last sentence is h-inductive, hence $\forall x(\phi(x) \to \chi(x)) \in T'$. Then $M_0 \models \forall x(\phi(x) \to \chi(x))$, so, as $M_0 \models \phi(a)$, we have $M_0 \models \chi(a)$. However, this contradicts $M_0 \models \psi(a)$. So we conclude that $T \cup \Diag(M_0) \cup \Pi$ is indeed consistent.

We then find a model $N_0$ of $T \cup \Diag(M_0) \cup \Pi$. That is, $N_0$ is a model of $T$ and there is a homomorphism $f_0: M_0 \to N_0$. Now take $\Delta$ to be the elementary diagram of $M_0$ (i.e.\ all $\L(M_0)$-sentences in full first-order logic that are true in $M_0$). Then by construction $\Delta^\u = \Pi$, so by \thref{models-of-universal-consequences} and the fact that $N_0 \models \Pi$ there is a model $M_1 \models \Delta$ with a homomorphism $g_0: N_0 \to M_1$. In particular $g_0 f_0: M_0 \to M_1$ is an elementary embedding because $M_1 \models \Delta$ and thus $M_1 \models T'$.

Repeating this construction, we find an infinite chain of homomorphisms
\[
M_0 \xrightarrow{f_0} N_0 \xrightarrow{g_0} M_1 \xrightarrow{f_1} N_1 \xrightarrow{g_1} M_2 \xrightarrow{f_2} \ldots
\]
such that $M_i \models T'$, $N_i \models T$ and $g_i f_i$ is an elementary embedding for all $i < \omega$. Set $U = \bigcup_{i < \omega} M_i = \bigcup_{i < \omega} N_i$. By assumption $U$ is then a model of $T$. It is also the union of a chain of elementary embeddings, so the inclusion $M_0 \to U$ is an elementary embedding. We conclude that $M_0 \models T$, as required.
\end{proof}
\begin{proposition}
\thlabel{pc-models-closed-under-directed-unions}
Let $(M_i)_{i \in I}$ be a directed system of p.c.\ models of some theory $T$. Then $M = \bigcup_{i \in I} M_i$ is again a p.c.\ model of $T$.
\end{proposition}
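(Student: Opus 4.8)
The plan is to verify condition~(ii) of \thref{pc-model} directly for $M = \bigcup_{i \in I} M_i$, exploiting the fact that a directed union of models of $T$ is again a model of $T$ (this is the easy direction (i) $\Rightarrow$ (ii) of \thref{directed-unions}, or can be checked directly for h-inductive axioms as in that proof), so that $M \models T$ to begin with. Then, given $a \in M$, a formula $\phi(x)$, and a homomorphism $f \colon M \to N$ with $N \models T$ and $N \models \phi(f(a))$, I want to conclude $M \models \phi(a)$.

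First I would push the data down to a single index. Since $a$ is a finite tuple and the system is directed, there is some $i \in I$ with $a \in M_i$; write $h_i \colon M_i \to M$ for the canonical homomorphism (\thref{directed-system-remarks}(iv)), so that $a = h_i(a)$ under the usual abuse of notation. Composing, $f h_i \colon M_i \to N$ is a homomorphism into a model of $T$, and $N \models \phi((f h_i)(a))$. Since $M_i$ is a p.c.\ model of $T$, condition~(ii) of \thref{pc-model} applied to $M_i$ yields $M_i \models \phi(a)$. Finally, by \thref{formulas-in-directed-union} (equivalently, since $h_i$ is a homomorphism and homomorphisms preserve truth of formulas, \thref{homomorphism-preserves-truth}), $M_i \models \phi(a)$ gives $M \models \phi(h_i(a))$, i.e.\ $M \models \phi(a)$, as required.

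The only point needing a little care — and the closest thing to an obstacle — is the bookkeeping with the equivalence relation defining the directed union: one must make sure that ``$a \in M_i$'' really does mean $a$ equals the image of a tuple from $M_i$, and that the composite $f h_i$ sends that representative to $f(a)$. This is exactly the content of \thref{directed-system-remarks}(ii)--(iv), so it is routine. Everything else is a two-line application of the hypotheses; no new ideas beyond \thref{pc-model}, \thref{directed-unions}, and \thref{formulas-in-directed-union} are needed.
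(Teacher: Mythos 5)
Your proposal is correct and follows essentially the same route as the paper: both verify \thref{pc-model}(ii) for $M$ by locating $a$ in some $M_i$ via directedness, composing the inclusion $M_i \to M$ with $f$ to get a homomorphism into $N$, invoking that $M_i$ is p.c.\ to obtain $M_i \models \phi(a)$, and then transferring this to $M$ along the (homomorphism) inclusion. The bookkeeping point you flag about representatives in the directed union is handled in the paper exactly as you say, by the conventions of \thref{directed-system-remarks}.
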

\begin{proof}
By \thref{directed-unions} $M$ is a model of $T$, so it remains to verify that it is p.c. We check \thref{pc-model}(ii). Let $a \in M$, $\phi(x)$ and $f: M \to N$ be such that $N \models \phi(f(a))$ and $N \models T$. By directedness there is $i \in I$ such that $a \in M_i$. Write $g: M_i \to N$ for the composition $M_i \to M \xrightarrow{f} N$. So we have $N \models \phi(g(a))$ and hence $M_i \models \phi(a)$ because $M_i$ is p.c. We conclude that indeed $M \models \phi(a)$.
\end{proof}
\begin{definition}
\thlabel{size-of-theory}
For a theory $T$ we let $|T|$\nomenclature[T]{$|T|$}{Size of the theory $T$} be the cardinality of the set of formulas, up to logical equivalence. We call $T$ a \term{countable theory} if $|T|$ is countable.
\end{definition}
In some approaches $|T|$ is defined as the maximum between $\aleph_0$ and the cardinality of the set of formulas in the language of $T$. However, all we really care about is how many formulas there are up to equivalence modulo $T$. For example, a signature could have many constant symbols, but if the theory declares them to be all equal then there is effectively only one constant symbol. At the same time, we typically have infinitely many distinct formulas, even up to equivalence (e.g., $x_1 = x_1 \wedge \ldots \wedge x_n = x_n$ for all $n < \omega$).
\begin{theorem}[Continue to p.c.\ model]
\thlabel{continue-to-pc-model}
Every model $M$ of a theory $T$ can be continued to a p.c.\ model of $T$. That is, there is some p.c.\ model $N$ of $T$ with a homomorphism $f: M \to N$.
\end{theorem}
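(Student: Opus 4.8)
The plan is to build a continuous chain of models of $T$, starting from $M$, in which at each successor stage we force one ``missing'' positive formula to become true whenever this is consistent, and then to take $N$ to be the union of the chain. That $N$ is a model of $T$ with a homomorphism from $M$ will come for free: $T$ consists of h-inductive sentences, so by \thref{directed-unions} (applied with $T' = T$) directed unions of models of $T$ are again models of $T$, and the chain maps give a homomorphism $M \to N$. The actual work is to arrange, by bookkeeping, that $N$ satisfies \thref{pc-model}(iii).

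Concretely, fix a cardinal $\kappa \geq |T| + |M| + \aleph_0$ and a set $X$ of cardinality $\kappa^+$ with $M \subseteq X$. Using the downward L\"owenheim--Skolem theorem we build a continuous chain $(M_\alpha)_{\alpha \leq \kappa^+}$ of models of $T$ with $M_0 = M$, each $M_\alpha$ of size $\leq \kappa$ and with underlying set contained in $X$, and $M_\delta = \bigcup_{\alpha < \delta} M_\alpha$ at limits. Call a \emph{task} a pair $(a, \phi(x))$ with $a$ a finite tuple from $X$ and $\phi(x)$ an $\L$-formula; since consistency of $T \cup \Diag(M_\alpha) \cup \{\phi(a)\}$ only depends on $\phi$ up to $T$-equivalence, there are at most $\kappa^+$ relevant tasks, and (as $\kappa^+$ is regular) we may fix an enumeration $(\sigma_\alpha)_{\alpha < \kappa^+}$ of the tasks in which every task occurs cofinally. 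At stage $\alpha+1$, writing $\sigma_\alpha = (a,\phi)$: if $a \in M_\alpha$ and $T \cup \Diag(M_\alpha) \cup \{\phi(a)\}$ is consistent, let $M_{\alpha+1}$ be a model of this theory of size $\leq \kappa$ with underlying set in $X$ (it comes equipped with a homomorphism $M_\alpha \to M_{\alpha+1}$); otherwise set $M_{\alpha+1} = M_\alpha$. Finally put $N = M_{\kappa^+}$.

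To verify \thref{pc-model}(iii), take $a \in N$ and $\phi(x)$ with $N \not\models \phi(a)$; we must produce an obstruction $\psi(x)$ of $\phi(x)$ with $N \models \psi(a)$. The tuple $a$ is finite, so $a \in M_{\alpha_0}$ for some $\alpha_0$, and elements never leave the chain; since the task $(a,\phi)$ occurs cofinally there is $\beta \geq \alpha_0$ with $\sigma_\beta = (a,\phi)$ and $a \in M_\beta$. Now $T \cup \Diag(M_\beta) \cup \{\phi(a)\}$ must be inconsistent: otherwise the construction would have made $M_{\beta+1} \models \phi(a)$, and then $N \models \phi(a)$ since $\phi$ is positive and the chain maps are homomorphisms, contradicting $N \not\models \phi(a)$. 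As $N$ is a continuation of $M_\beta$, it is a model of $\Diag(M_\beta)$, so $T \cup \Diag(N) \cup \{\phi(a)\}$ is inconsistent as well. By compactness there is a finite conjunction $\chi(a,b)$ of sentences from $\Diag(N)$ (with $b$ a further finite tuple from $N$, and $\chi(x,y)$ positive quantifier-free) such that $T \models \neg\exists x\, \exists y\,(\phi(x) \wedge \chi(x,y))$. Then $\psi(x) := \exists y\, \chi(x,y)$ is an obstruction of $\phi$, and $N \models \chi(a,b)$ gives $N \models \psi(a)$, as required. Hence $N$ is positively closed.

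The point to be careful about -- and the reason a single step, or an $\omega$-length chain, will not do -- is that in positive logic $\Diag(M_\alpha)$ records only positive quantifier-free information, so two tasks $(a,\phi_1)$ and $(a,\phi_2)$ can each be consistent over $M_\alpha$ while being jointly inconsistent (take $T \models \neg\exists x(\phi_1(x) \wedge \phi_2(x))$). We therefore cannot realise all consistent tasks at once; we must realise them one at a time, and the apparent nuisance that a still-unhandled task can become \emph{permanently} inconsistent after an earlier step is precisely what supplies the needed obstruction in the verification above. Keeping the models small along the chain -- the one place where \thref{size-of-theory} and L\"owenheim--Skolem enter -- is what makes the bookkeeping over $\kappa^+$ stages terminate, and will be the main thing to get right.
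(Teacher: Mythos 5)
Your overall strategy is sound and close in spirit to the paper's proof: the paper builds an $\omega$-chain in which the step from $M_n$ to $M_{n+1}$ handles \emph{all} formulas with parameters in $M_n$ via an inner chain and then verifies \thref{pc-model}(ii) for the union, whereas you run one long transfinite chain with pre-scheduled tasks and verify \thref{pc-model}(iii) directly. Your closing verification is fine: the dichotomy (either the task was realised, contradicting $N \not\models \phi(a)$, or $T \cup \Diag(M_\beta) \cup \{\phi(a)\}$ is inconsistent, whence compactness and the diagram give an obstruction $\exists y\,\chi(x,y)$ true of $a$) is exactly the (ii)$\Rightarrow$(iii) argument of \thref{equivalence-pc-characterisations}, and preservation of $T$ under unions of chains is \thref{directed-unions}.

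The genuine gap is in the bookkeeping. You fix an ambient set $X$, keep every $M_\alpha$ inside $X$, pre-enumerate tasks $(a,\phi)$ with $a$ a tuple from $X$, and in the verification invoke ``elements never leave the chain'' to find $\beta \geq \alpha_0$ with $\sigma_\beta = (a,\phi)$ and $a \in M_\beta$. This tacitly treats the chain links as inclusions, but in positive logic they are only homomorphisms and can be forced to be non-injective: already for the empty theory, realising the task $x=y$ over a two-element model identifies the two named elements (cf.\ \thref{pc-model-vs-ec-model}), so $M_\alpha$ is not a substructure of $M_{\alpha+1}$ under any relabelling. Consequently the tuple named $a$ in $M_{\alpha_0}$ need not be named $a$ in $M_\beta$: its image may carry other names, the name $a$ may be absent, or may even have been recycled for unrelated elements, in which case the stage you appeal to says nothing about the element you care about. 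So the key step ``$\sigma_\beta=(a,\phi)$ and $a\in M_\beta$'' is unjustified as written. The repair is to schedule tasks dynamically rather than over $X$: enumerate, for each stage $\eta$, the at most $\kappa$ formulas $\phi(a)$ with parameters in $M_\eta$, use a pairing of $\kappa^+\times\kappa$ into $\kappa^+$ to treat the pair $(\eta,\phi(a))$ at some stage $\beta\geq\eta$, and there test and (if consistent) realise $\phi$ at the image $f_{\eta\beta}(a)$ of $a$ under the chain maps; each task then needs attention only once, and your verification goes through verbatim with $f_{\alpha_0\beta}(a)$ in place of $a$. This is essentially the paper's construction stretched into a single chain; note also that your final paragraph identifies the one-at-a-time realisation as the delicate point, while the identification of elements along homomorphisms is the subtlety actually left unaddressed.
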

\begin{proof}
We will construct a chain $(M_n)_{n < \omega}$ of models of $T$, with $M_0 = M$, such that the following holds: for any $a \in M_n$ and $\phi(x)$, if there is a homomorphism $f: M_{n+1} \to N$ for some $N \models T$ with $N \models \phi(f(a))$ then $M_{n+1} \models \phi(a)$.

Assume that $M_n$ has been constructed. Let $\Phi$ be the set of all formulas with parameters in $M_n$. Enumerate $\Phi$ as $(\phi_i(a_i))_{i < |T| + |M_n|}$. We construct a chain $(M_n^i)_{i < |T| + |M_n|}$ of models of $T$ as follows. We set $M_n^0 = M_n$ and at limit stages we take the union. For the successor step we assume to have constructed $M_n^i$. If there is a homomorphism $f: M_n^i \to N$ with $N \models T$ and $N \models \phi_i(f(a_i))$ we set $M_n^{i+1} = N$ and take $f$ to be the next link in the chain. If no such homomorphism exists we set $M_n^{i+1} = M_n^i$. Having constructed this chain, we set $M_{n+1} = \bigcup_{i < |T| + |M_n|} M_n^i$.

We verify the induction hypothesis for $M_{n+1}$. Let $a \in M_n$ and $\phi(x)$ be any formula. Let $i < |T| + |M_n|$ be such that $\phi = \phi_i$ and $a = a_i$. Assume there is a homomorphism $f: M_{n+1} \to N$ for some $N \models T$ with $N \models \phi(f(a))$. Composing $f$ with the obvious inclusion $M_n^i \to M_{n+1}$ we obtain a homomorphism $g: M_n^i \to N$ with $N \models \phi(g(a))$. By how we constructed $M_n^{i+1}$ this means that $M_n^{i+1} \models \phi(a)$ and thus $M_{n+1} \models \phi(a)$, as required.

Having constructed $(M_n)_{n < \omega}$ we set $N = \bigcup_{n < \omega} M_n$. To check that $N$ is p.c.\ we let $a \in N$ and $\phi(x)$ be any formula. Assume there is some homomorphism $f: N \to N'$ with $N' \models T$ and $N' \models \phi(f(a))$. Let $n < \omega$ be such that $a \in M_n$. Composing $f$ with the obvious inclusion $M_{n+1} \to N$ we find a homomorphism $g: M_{n+1} \to N'$. Then $N' \models \phi(g(a))$, so by the induction hypothesis we have $M_{n+1} \models \phi(a)$ and thus $N \models \phi(a)$, as required.
\end{proof}
\begin{lemma}
\thlabel{t-pc-u-is-t-u}
For any theory $T$ we have $(T^\pc)^\u = T^\u$.
\end{lemma}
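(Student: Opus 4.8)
The plan is to prove the two inclusions $T^\u \subseteq (T^\pc)^\u$ and $(T^\pc)^\u \subseteq T^\u$ separately. Observe first that both sides consist, by definition, only of h-universal sentences, so it is enough to compare them as sets of sentences.

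For the inclusion $T^\u \subseteq (T^\pc)^\u$ I would actually show the stronger statement $T^\u \subseteq T^\pc$. Every h-universal sentence is h-inductive, and every p.c.\ model of $T$ is in particular a model of $T$; hence if $\chi \in T^\u$, i.e.\ $\chi$ is h-universal and $T \models \chi$, then $M \models \chi$ for every p.c.\ model $M$ of $T$, which is precisely the condition defining membership in $T^\pc$. So $T^\pc \models \chi$ for every $\chi \in T^\u$, and since such $\chi$ is h-universal this yields $\chi \in (T^\pc)^\u$.

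For the reverse inclusion $(T^\pc)^\u \subseteq T^\u$ I would take an h-universal sentence $\chi$ with $T^\pc \models \chi$ and show $T \models \chi$. Let $M$ be an arbitrary model of $T$. By \thref{continue-to-pc-model} there is a homomorphism $f \colon M \to N$ with $N$ a p.c.\ model of $T$. By definition of the Kaiser hull, $N \models T^\pc$, so $N \models \chi$. Since $\chi$ is h-universal and $f$ is a homomorphism, \thref{homomorphisms-and-sentences} gives $M \models \chi$. As $M$ was an arbitrary model of $T$, we conclude $T \models \chi$, that is $\chi \in T^\u$.

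The only non-routine ingredient is \thref{continue-to-pc-model}: the backward transfer of h-universal sentences along homomorphisms is immediate from \thref{homomorphisms-and-sentences}, but one needs to know that an arbitrary model of $T$ admits a homomorphism into some p.c.\ model of $T$ in order to make the constraint imposed by $T^\pc$ bear on that model. Everything else is bookkeeping about which classes of sentences (positive sentences, h-universal sentences) are preserved in which direction along homomorphisms.
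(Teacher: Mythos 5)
Your proof is correct and follows essentially the same route as the paper: the first inclusion comes from the observation that every p.c.\ model of $T$ is a model of $T$ (so $T^\u$ is among the consequences of $T^\pc$), and the second from continuing an arbitrary model of $T$ to a p.c.\ model via \thref{continue-to-pc-model} and pulling the h-universal sentence back along the homomorphism.
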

\begin{proof}
As $T^\pc \models T$ we have $T^\pc \models T^\u$, so $T^\u \subseteq (T^\pc)^\u$. Now let $M$ be any model of $T$ and continue $M$ to a p.c.\ model $N$. Then $N \models T^\pc$ and hence $N \models (T^\pc)^\u$. By definition $(T^\pc)^\u$ contains only h-universal sentences, so $M \models (T^\pc)^\u$ because $N$ is a continuation of $M$. As $M$ was an arbitrary model of $T$ we have $T \models (T^\pc)^\u$. We conclude that $(T^\pc)^\u \subseteq T^\u$, and the result follows.
\end{proof}
\begin{theorem}
\thlabel{universal-kaiser-hull-ec-models}
Let $T$ and $T'$ be theories in the same language. Then $T$ and $T'$ have the same p.c.\ models if and only if $T^\pc \models T'$ and $T' \models T^\u$. In particular, $T^\u$ and $T^\pc$ have the same p.c.\ models as $T$ and are respectively the minimal and maximal such theories.
\end{theorem}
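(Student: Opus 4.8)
The plan is to derive the ``in particular'' clauses from the main biconditional, which I then prove in both directions. A few easy facts first: since $T$ is a set of h-inductive sentences, each of which holds in every p.c.\ model of $T$, we have $T \subseteq T^\pc$; together with \thref{t-pc-u-is-t-u} this gives $T^\pc \models T \models T^\u$, and in particular $T^\pc \models T^\u$. Granting the biconditional, taking $T' = T^\pc$ and $T' = T^\u$ (each time the required hypotheses reduce to $T^\pc \models T^\u$ and a tautology) shows that $T^\pc$ and $T^\u$ have the same p.c.\ models as $T$; and if $T'$ is any theory with the same p.c.\ models as $T$, the biconditional yields $T' \models T^\u$ and $T^\pc \models T'$, i.e.\ $T^\u$ is the weakest and $T^\pc$ the strongest such theory.

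For the forward direction, assume $T$ and $T'$ have the same p.c.\ models. The operator $(-)^\pc$ only refers to p.c.\ models, so $T^\pc$ and $(T')^\pc$ are literally the same set of sentences; applying \thref{t-pc-u-is-t-u} to both theories then gives $T^\u = (T^\pc)^\u = ((T')^\pc)^\u = (T')^\u$. Hence $T^\pc = (T')^\pc \supseteq T'$ gives $T^\pc \models T'$, and $T' \models (T')^\u = T^\u$.

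For the backward direction, assume $T^\pc \models T'$ and $T' \models T^\u$. If $M$ is a p.c.\ model of $T$, then $M \models T^\pc \models T'$, and $M$ remains p.c.\ for $T'$ by \thref{pc-model}(iii): given $M \not\models \phi(a)$, p.c.-ness of $M$ over $T$ supplies an obstruction $\psi$ with $T \models \neg\exists x(\phi(x) \wedge \psi(x))$ and $M \models \psi(a)$, and since $\neg\exists x(\phi(x) \wedge \psi(x))$ is h-universal it lies in $T^\u$, hence is implied by $T'$, so $\psi$ obstructs $\phi$ over $T'$ as well. Conversely, let $M$ be a p.c.\ model of $T'$. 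From $M \models T' \models T^\u$ and \thref{models-of-universal-consequences} there is a homomorphism from $M$ into a model of $T$; continuing it via \thref{continue-to-pc-model} produces $g\colon M \to N$ with $N$ a p.c.\ model of $T$, which by the previous paragraph is also a p.c.\ model of $T'$, so $g$ is an immersion because $M$ is p.c.\ for $T'$ (\thref{pc-model}(i)). Finally, an immersion into a p.c.\ model of $T$ makes $M$ itself a p.c.\ model of $T$: $M \models T$ because immersions reflect h-inductive sentences ($M \models \phi(a) \Rightarrow N \models \phi(g(a)) \Rightarrow N \models \psi(g(a)) \Rightarrow M \models \psi(a)$), and \thref{pc-model}(iii) holds for $M$ by pulling back along $g$ an obstruction witnessed at $g(a)$ in $N$.

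The main obstacle is this last, converse half of the backward direction: recognising that p.c.-ness of $M$ over $T'$ upgrades the homomorphism $M \to N$ from a bare homomorphism into an immersion, and that an immersion into a p.c.\ model of $T$ carries both $T$ itself and all of its obstructions back down to $M$. The remainder is routine manipulation of the definitions of $T^\u$ and $T^\pc$ and of the equivalent conditions in \thref{pc-model}.
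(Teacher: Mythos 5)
Your proof is correct, and its overall skeleton matches the paper's: prove the biconditional in both directions and read off the ``in particular'' clause. The differences are in the sub-arguments. In the forward direction you observe that $T^\pc = (T')^\pc$ literally, then apply \thref{t-pc-u-is-t-u} to both theories to get $(T')^\u = T^\u$; the paper instead proves $T' \models T^\u$ by continuing an arbitrary model of $T'$ to a p.c.\ model and using that h-universal sentences are reflected along homomorphisms. Both work, and yours is arguably slicker. In the harder half of the backward direction the paper first computes $(T^\pc)^\u = (T')^\u = T^\u$ and uses this to show that \emph{every} homomorphism from a p.c.\ model $M$ of $T'$ into a model of $T$ is an immersion (i.e.\ it verifies \thref{pc-model}(i) directly), whereas you bootstrap from the already-proved first half: the p.c.\ continuation $N$ of $T$ is also p.c.\ for $T'$, hence a model of $T'$, so the single map $g\colon M \to N$ is an immersion, and you then pull $T$ and condition \thref{pc-model}(iii) back along $g$ — in effect re-proving inline what the paper later records as \thref{immersions-coherent-and-pc-iff-immersed-in-pc}(ii). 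Your route avoids the $(T')^\u = T^\u$ computation altogether at the cost of transferring p.c.-ness along one immersion by hand; the paper's route yields the slightly stronger intermediate statement about all homomorphisms. There is no gap in either step, and no circularity in your use of \thref{models-of-universal-consequences} and \thref{continue-to-pc-model}.
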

\begin{proof}
We first prove the left to right direction. As $T$ and $T'$ have the same p.c.\ models, every p.c.\ model of $T$ is a model of $T'$. So by the definition of $T^\pc$ we have $T' \subseteq T^\pc$ and thus $T^\pc \models T'$. To prove $T' \models T^\u$ we let $M$ be any model of $T'$ and $\chi \in T^\u$. We can continue $M$ to a p.c.\ model $N$ of $T'$. So $N$ is also a p.c.\ model of $T$ and in particular a model of $T^\u$, thus $N \models \chi$. As $\chi$ is h-universal and $N$ is a continuation of $M$ we must have $M \models \chi$. Since $M$ was an arbitrary model of $T'$ we get $T' \models \chi$ and we conclude $T' \models T^\u$.

Now we prove the right to left direction. Let $M$ be a p.c.\ model of $T$. Then $M \models T^\pc$ and hence $M \models T'$ because $T^\pc \models T'$. To see that $M$ is p.c.\ we verify property \thref{pc-model}(iii). So let $a \in M$ and $\phi(x)$ be such that $M \not \models \phi(a)$. Then, because $M$ is p.c.\ for $T$, there is $\psi(x)$ with $M \models \psi(a)$ and $T \models \neg \exists x(\phi(x) \wedge \psi(x))$. We thus have $\neg \exists x(\phi(x) \wedge \psi(x)) \in T^\u$, as this is an h-universal sentence. Since $T' \models T^\u$ we thus have $T' \models \neg \exists x(\phi(x) \wedge \psi(x))$, and we conclude that $M$ is also p.c.\ for $T'$.

Now let $M$ be a p.c.\ model of $T'$. We have that $(T^\pc)^\u \models (T')^\u$ and $(T')^\u \models T^\u$. So by \thref{t-pc-u-is-t-u} we have $(T^\pc)^\u = (T')^\u = T^\u$. We first show that any homomorphism $f: M \to N$ with $N \models T$ must be an immersion. Assume for a contradiction that there is $a \in M$ and $\phi(x)$ such that $N \models \phi(f(a))$ while $M \not \models \phi(a)$. As $M$ is a p.c.\ model of $T'$ there must be $\psi(x)$ such that $M \models \psi(a)$ and $T' \models \neg \exists x(\phi(x) \wedge \psi(x))$. However, that means that $N \models \phi(f(a)) \wedge \psi(f(a))$, contradicting $\neg \exists x(\phi(x) \wedge \psi(x)) \in (T')^\u = T^\u$. We are left to show that $M$ is indeed a model of $T$. Using that $M \models T^\u$ we find a homomorphism $f: M \to N$ where $N \models T$, by \thref{models-of-universal-consequences}. By the above discussion $f$ is in fact an immersion. Let $\forall x(\phi(x) \to \psi(x)) \in T$ be an h-inductive sentence and let $a \in M$ be such that $M \models \phi(a)$. Then $N \models \phi(f(a))$ and so $N \models \psi(f(a))$. Using the fact that $f$ is an immersion we have $M \models \psi(a)$. So $M \models \forall x(\phi(x) \to \psi(x))$ and we conclude that $M$ is indeed a model of $T$, which completes the proof.
\end{proof}

When doing model theory in positive logic we are interested in the p.c.\ models. When we restrict our attention to p.c.\ models we do still have compactness, but only for positive formulas.
\begin{theorem}[Compactness for positive formulas]
\thlabel{compactness}
Let $T$ be a theory and let $\Sigma(x)$ be a set of positive formulas. Suppose that for every finite $\Sigma_0(x) \subseteq \Sigma(x)$ there is $M \models T$ with $a \in M$ such that $M \models \Sigma_0(a)$. Then there is a p.c.\ model $N$ of $T$ with $a \in N$ such that $N \models \Sigma(a)$.
\end{theorem}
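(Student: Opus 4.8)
The plan is to reduce to ordinary first-order compactness and then apply the ``continue to p.c.\ model'' theorem. Introduce a tuple of fresh constant symbols $c$ (matching the variables $x$) and consider the first-order theory $T \cup \{\sigma(c) : \sigma \in \Sigma\}$. Every sentence here is an ordinary first-order sentence (the h-inductive sentences of $T$ are in particular first-order, and each $\sigma(c)$ is a first-order formula with the free variables replaced by constants). The hypothesis says exactly that for every finite $\Sigma_0(x) \subseteq \Sigma(x)$ there is $M \models T$ and $a \in M$ with $M \models \Sigma_0(a)$; interpreting $c$ as $a$ shows $T \cup \{\sigma(c) : \sigma \in \Sigma_0\}$ is consistent. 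Since this holds for all finite $\Sigma_0$, classical compactness gives a model $M$ of the full theory $T \cup \{\sigma(c) : \sigma \in \Sigma\}$.

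Next I would extract from $M$ the data we want: letting $a \in M$ be the interpretation of $c$, we have $M \models T$ and $M \models \Sigma(a)$. This $M$ need not be a p.c.\ model, so I would apply \thref{continue-to-pc-model} to obtain a p.c.\ model $N$ of $T$ together with a homomorphism $f \colon M \to N$. By \thref{homomorphism-preserves-truth}, homomorphisms preserve satisfaction of (positive) formulas, so from $M \models \sigma(a)$ for each $\sigma \in \Sigma$ we get $N \models \sigma(f(a))$ for each $\sigma \in \Sigma$, i.e.\ $N \models \Sigma(f(a))$. Renaming $f(a)$ as $a$, the element $a \in N$ and the p.c.\ model $N$ of $T$ are as required.

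There is essentially no hard obstacle here; the only point that needs a moment's care is the asymmetry between the two directions of preservation. Homomorphisms preserve positive formulas upwards but not downwards, which is precisely why the hypothesis and conclusion are stated only for \emph{positive} $\Sigma$: if $\Sigma$ contained negations of positive formulas, the passage from $M$ to the p.c.\ continuation $N$ could destroy them (indeed $N$ ``makes as much true as possible'', so inequalities and other obstructions may fail in $N$ even when they hold in $M$). For positive $\Sigma$ this issue does not arise, and the two-step argument — classical compactness for $T \cup \Sigma(c)$, then continue to a p.c.\ model — goes through cleanly.
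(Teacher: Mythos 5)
Your proposal is correct and follows essentially the same route as the paper: apply classical first-order compactness to get a model of $T$ realising $\Sigma$, continue it to a p.c.\ model via \thref{continue-to-pc-model}, and use that homomorphisms preserve positive formulas (\thref{homomorphism-preserves-truth}) to transfer $\Sigma$ along the continuation. The added detail about fresh constants and the remark on why positivity is needed are fine but not a different argument.
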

\begin{proof}
By the compactness theorem for full first-order logic we find a model $M'$ of $T$ and $a' \in M'$ such that $M' \models \Sigma(a')$. Continue $M'$ to a p.c.\ model $N$ of $T$. Then because $\Sigma(x)$ only contains positive formulas we have $N \models \Sigma(f(a'))$. So we set $a = f(a')$, which concludes the proof.
\end{proof}
To illustrate that we can generally not get more compactness, we consider the following two examples.
\begin{example}
\thlabel{compactness-failure}
Consider the theory $T$ with a symbol for inequality and $\omega$ many disjoint unary predicates $P_n(x)$. Then p.c.\ models of $T$ are precisely those which consist of $\omega$-many disjoint infinite sets, one for each predicate. If we had full compactness then the set
\[
\Sigma(x) = \{ \neg P_n(x) : n < \omega \}
\]
would have a realisation in some p.c.\ model, which is impossible.
\end{example}
\begin{example}
\thlabel{bounded-set-compactness}
It could happen that there is a definable set that is infinite and bounded. This does not contradict compactness: it just means that inequality is not positively definable on that set. Such situations might arise when adding hyperimaginaries as parameters, which can be done in positive logic (see \thref{hyperimaginary-does-not-preserve-boolean}), but we give a simpler example here.

The signature consists of $\omega$ many constant symbols $\{c_i\}_{i < \omega}$. The theory $T$ then asserts that all of these constant symbols are distinct, i.e.\ $c_i \neq c_j$ for all $i \neq j$. There is precisely one p.c.\ model of $T$ (up to isomorphism), which consists of just the interpretations of the constant symbols. So the trivial definable set $x = x$ is bounded (i.e., it is countable in every p.c.\ model), but infinite. Again, with full compactness we would run into trouble because
\[
\Sigma((x_i)_{i < \omega_1}) = \{ x_i \neq x_j : i < j < \omega_1 \}
\]
would then yield a realisation with uncountably many elements.
\end{example}
\begin{definition}
\thlabel{positive-quantifier-elimination}
We say that a theory $T$ has \term{positive quantifier elimination} if every formula is equivalent to a positive quantifier-free formula modulo $T$.
\end{definition}
\begin{proposition}
\thlabel{regular-formulas-eliminate-quantifiers-implies-positive-quantifier-elimination}
A theory $T$ has positive quantifier elimination if for every formula $\exists y \phi(x, y)$, where $y$ is a single variable and $\phi(x, y)$ is a conjunction of atomic formulas, there is a quantifier-free formula $\psi(x)$ such that $T \models \forall x(\exists y \phi(x, y) \leftrightarrow \psi(x))$.
\end{proposition}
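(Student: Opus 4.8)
The plan is to induct on the complexity of formulas, using the normal forms of \thref{normal-forms-of-formulas} to reduce everything to the single test case supplied by the hypothesis. Recall first that every formula $\phi(x)$ is equivalent to a disjunction $\phi_1(x) \vee \ldots \vee \phi_n(x)$ in which each $\phi_i(x)$ has the form $\exists y_i\, \psi_i(x, y_i)$ with $y_i$ a tuple of variables and $\psi_i$ a conjunction of atomic formulas. Since a disjunction of positive quantifier-free formulas is again positive quantifier-free, it is enough to deal with a single such $\phi_i$, i.e.\ to show that $\exists y\, \psi(x, y)$ is equivalent modulo $T$ to a positive quantifier-free formula whenever $y = (y_1, \ldots, y_k)$ is a tuple and $\psi(x, y)$ is a conjunction of atomic formulas. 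In fact it is convenient to prove the slightly more general claim: for every \emph{positive quantifier-free} formula $\chi(x, y_1, \ldots, y_k)$ there is a positive quantifier-free $\chi'(x)$ with $T \models \forall x(\exists y_1 \ldots \exists y_k\, \chi(x, y) \leftrightarrow \chi'(x))$.

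I would prove this more general claim by induction on $k$, the length of the quantified tuple. For $k = 0$ there is nothing to do. For the inductive step, write $\exists y_1 \ldots \exists y_k\, \chi = \exists y_1(\exists y_2 \ldots \exists y_k\, \chi)$ and apply the induction hypothesis to $\chi$, viewed as a positive quantifier-free formula in the free variables $(x, y_1)$ with quantified variables $y_2, \ldots, y_k$, obtaining a positive quantifier-free $\eta(x, y_1)$ with $T \models \forall x \forall y_1(\exists y_2 \ldots \exists y_k\, \chi \leftrightarrow \eta(x, y_1))$. Next put $\eta$ into disjunctive normal form, $\eta \equiv \delta_1(x, y_1) \vee \ldots \vee \delta_m(x, y_1)$ with each $\delta_j$ a conjunction of atomic formulas (using only the distributive laws for $\wedge, \vee$ and the bookkeeping around $\top$ and $\bot$). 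Distributing $\exists y_1$ over the disjunction gives $T \models \forall x(\exists y_1\, \eta \leftrightarrow \bigvee_{j=1}^m \exists y_1\, \delta_j(x, y_1))$, and each $\exists y_1\, \delta_j(x, y_1)$ is precisely a formula of the shape covered by the hypothesis of the proposition, hence is equivalent modulo $T$ to a quantifier-free $\theta_j(x)$. Then $\chi' := \theta_1(x) \vee \ldots \vee \theta_m(x)$ works.

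Finally I would combine the claim with the normal form above: applying the claim to each $\psi_i$ yields positive quantifier-free $\psi_i'(x)$ with $T \models \forall x(\phi_i(x) \leftrightarrow \psi_i'(x))$, whence $\phi(x)$ is equivalent modulo $T$ to $\psi_1'(x) \vee \ldots \vee \psi_n'(x)$, which is positive quantifier-free. This establishes positive quantifier elimination.

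I do not expect a serious obstacle; the proposition is essentially a repackaging of the usual quantifier-elimination-via-a-single-test-case argument. The one point that needs care — and the only step that is not completely mechanical — is that eliminating a single existential quantifier from a conjunction of atomic formulas via the hypothesis generally returns an arbitrary positive quantifier-free formula rather than another conjunction of atomic formulas; this is why the induction is phrased for arbitrary positive quantifier-free $\chi$ and why one must pass back to disjunctive normal form, distributing $\exists$ over $\vee$, before the hypothesis can be applied again to peel off the next variable. It is worth noting in passing that the degenerate cases (a conjunct not mentioning $y$, or $\psi$ collapsing to $\top$ or $\bot$) are automatically absorbed, since the hypothesis only asserts a $T$-provable equivalence and thereby accounts for whatever happens in models with empty sorts.
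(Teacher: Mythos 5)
Your proposal is correct and follows essentially the same route as the paper: reduce via the normal form of \thref{normal-forms-of-formulas} to formulas $\exists \bar{y}\,\psi$ with $\psi$ a conjunction of atomics, then eliminate the quantifiers one variable at a time, each time rewriting the resulting quantifier-free formula in disjunctive normal form and distributing $\exists$ over $\vee$ so the single-variable hypothesis applies to each disjunct. The point you flag as the only delicate one (that the hypothesis returns an arbitrary quantifier-free formula, forcing the pass back to DNF before peeling off the next variable) is exactly the step the paper's proof handles the same way.
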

\begin{proof}
We claim that for every formula of the form $\exists y_1 \ldots y_k \phi(x, y_1, \ldots, y_n)$, with $\phi(x, y_1, \ldots, y_n)$ a conjunction of atomic formulas and $y_i$ a single variable for all $1 \leq i \leq n$, is equivalent to a quantifier-free formula modulo $T$. Every formula is equivalent to a disjunction of such formulas, so it is enough to prove the claim.

The proof of the claim comes down to repeatedly applying the assumption. We work out the details. We will show by induction on $1 \leq i \leq n$ that there are quantifier-free formulas $\psi_i(x, y_1, \ldots, y_{n-i})$ such that $\psi_i(x, y_1, \ldots, y_{n-i})$ is equivalent to $\exists y_{n-i+1} \ldots y_n \phi(x, y_1, \ldots, y_n)$ modulo $T$. Then $\psi_n(x)$ is equivalent to $\exists y_1 \ldots y_n \phi(x, y_1, \ldots, y_n)$ modulo $T$, as required.

For the base case we simply take $\psi_0(x, y_1, \ldots, y_n)$ to be $\phi(x, y_1, \ldots, y_n)$. Now assume we have constructed $\psi_i(x, y_1, \ldots, y_{n-i})$. We may assume that $\psi_i(x, y_1, \ldots, y_{n-i})$ is of the form
\[
\alpha_1(x, y_1, \ldots, y_{n-i}) \vee \ldots \vee \alpha_k(x, y_1, \ldots, y_{n-i}),
\]
where $\alpha_j(x, y_1, \ldots, y_{n-i})$ is a conjunction of atomic formulas for all $1 \leq j \leq k$. So $\exists y_{n-i} \psi_i(x, y_1, \ldots, y_{n-i})$ is equivalent to
\[
\exists y_{n-i} \alpha_1(x, y_1, \ldots, y_{n-i}) \vee \ldots \vee \exists y_{n-i} \alpha_k(x, y_1, \ldots, y_{n-i}).
\]
By our assumption each of the disjuncts is equivalent to a quantifier-free formula modulo $T$, so $\exists y_{n-i} \psi_i(x, y_1, \ldots, y_{n-i})$ is equivalent to a quantifier-free formula modulo $T$, which will be our $\psi_{i+1}(x, y_1, \ldots, y_{n-i-1})$. The equivalence (modulo $T$) to $\exists y_{n-i} \ldots y_n \phi(x, y_1, \ldots, y_n)$ follows immediately from the construction.
\end{proof}

\section{Types and type spaces}
\label{sec:types-and-type-spaces}
\begin{definition}
\thlabel{type}
Let $M$ be a p.c.\ model of some theory $T$ and let $a \in M$. The \term{type} of $a$ in $M$ is given by:\nomenclature[tpaM]{$\tp(a; M)$}{Type of $a$ in $M$}
\[
\tp(a; M) = \{ \phi(x) : M \models \phi(a) \}.
\]
If the elements of $a$ are indexed by $I$, we call this an \emph{$I$-type}\index{type!I-type@$I$-type}, in particular if $a$ is finite with $|a| = n$ (in a single-sorted language) we call this an \emph{$n$-type}\index{type!n-type@$n$-type}.
\end{definition}
The notation $\tp(a; M)$ makes sense for any structure $M$, not just p.c.\ models, as we see in the proposition below. We wish to reserve the terminology ``type'' for the situation in \thref{type}, see also \thref{partial-type}.
\begin{proposition}
\thlabel{pc-model-iff-types-are-maximal}
Let $M$ be a model of some theory $T$. Then $M$ is a p.c.\ model if and only if for every $a \in M$ we have that $\tp(a; M)$ is a maximal consistent set of formulas (modulo $T$).
\end{proposition}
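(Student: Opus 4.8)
The plan is to prove both directions by relating the maximality of $\tp(a; M)$ directly to condition (iii) of \thref{pc-model}. The key observation is that, for a model $M$ of $T$ and $a \in M$, the set $p = \tp(a; M)$ is always a consistent set of formulas realised in a model of $T$; the only question is whether it is \emph{maximal} among consistent sets of formulas (in the variables $x$) modulo $T$. I would first unpack what "maximal consistent modulo $T$" means: a consistent set $q(x) \supseteq p(x)$ of formulas such that $T \cup q(x)$ has a model realising $q$. Since $\tp(a;M)$ is complete in the sense that for every $\phi(x)$ either $\phi \in p$ or not, the issue is precisely whether, whenever $\phi(x) \notin p$, adding $\phi(x)$ to $p$ becomes inconsistent with $T$ — equivalently, whether there is an obstruction $\psi(x)$ of $\phi(x)$ with $\psi \in p$.

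For the forward direction, suppose $M$ is a p.c.\ model and fix $a \in M$. Consistency of $p = \tp(a;M)$ is immediate ($M$ witnesses it). For maximality, take any $\phi(x) \notin p$, so $M \not\models \phi(a)$. By \thref{pc-model}(iii) there is an obstruction $\psi(x)$ of $\phi(x)$ with $M \models \psi(a)$, i.e.\ $\psi(x) \in p$ and $T \models \neg \exists x(\phi(x) \wedge \psi(x))$. Hence $p \cup \{\phi(x)\}$ is inconsistent modulo $T$, so no proper consistent extension of $p$ exists; $p$ is maximal.

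For the converse, suppose that for every $a \in M$ the set $\tp(a;M)$ is maximal consistent modulo $T$. I would verify \thref{pc-model}(iii). Fix $a \in M$ and $\phi(x)$ with $M \not\models \phi(a)$, so $\phi(x) \notin \tp(a;M)$. By maximality, $\tp(a;M) \cup \{\phi(x)\}$ is inconsistent modulo $T$; by compactness for full first-order logic (or just finiteness of the inconsistent subset), there are finitely many formulas from $\tp(a;M)$ whose conjunction $\psi(x)$ satisfies $T \models \neg\exists x(\phi(x) \wedge \psi(x))$, and $M \models \psi(a)$ since each conjunct holds. Thus $\psi(x)$ is an obstruction of $\phi(x)$ true of $a$, giving \thref{pc-model}(iii), so $M$ is p.c.

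The main thing to be careful about is the exact meaning of "maximal consistent set of formulas (modulo $T$)" — in positive logic one must ensure the notion of consistency used is "realisable in some (p.c.) model of $T$", not "closed under some deduction system including negation". Once that is pinned down, both directions are short: the forward direction is essentially a restatement of \thref{pc-model}(iii), and the converse uses only finiteness/compactness to extract a single obstructing formula from an inconsistent set. There is no real obstacle beyond getting these definitional matters straight; I would spend a sentence in the proof making the notion of consistency explicit before running the argument.
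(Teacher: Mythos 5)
Your proof is correct. The left-to-right direction is exactly the paper's argument: take $\phi \notin \tp(a;M)$, use \thref{pc-model}(iii) to produce an obstruction $\psi$ with $M \models \psi(a)$, and conclude that $\tp(a;M) \cup \{\phi\}$ is inconsistent with $T$. For the converse you take a slightly different route from the paper: you verify \thref{pc-model}(iii) directly, using compactness (finiteness of the inconsistent subset) to extract from the inconsistency of $\tp(a;M) \cup \{\phi(x)\}$ a single conjunction $\psi(x) \in \tp(a;M)$ with $T \models \neg\exists x(\phi(x) \wedge \psi(x))$ — essentially replaying the compactness step already present in the proof of \thref{equivalence-pc-characterisations}(ii)$\Rightarrow$(iii). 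The paper instead verifies \thref{pc-model}(ii): given a homomorphism $f: M \to N$ with $N \models T$ and $N \models \phi(f(a))$, it notes $\tp(a;M) \subseteq \tp(f(a);N)$, and maximality forces equality, so $M \models \phi(a)$; this needs no compactness at all, only preservation of positive formulas under homomorphisms and maximality. Both arguments are short and both lean on the already-established equivalence of the conditions in \thref{pc-model}; yours is more syntactic and produces the obstruction explicitly, the paper's is more semantic and marginally leaner. Your concluding caution about what ``consistent modulo $T$'' means (realisable in a model of $T$, equivalently in a p.c.\ model by continuation) is exactly the right definitional point and matches \thref{partial-type}.
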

\begin{proof}
We first prove the left to right direction. Let $a \in M$ and set $p(x) = \tp(a; M)$ and let $\phi(x)$ be any formula that is not in $p(x)$. Then $M \not \models \phi(a)$, so because $M$ is p.c.\ there must be an obstruction $\psi(x)$ of $\phi(x)$ such that $M \models \psi(a)$. We thus have $\psi(x) \in p(x)$ which means that $p(x) \cup \{\phi(x)\}$ is inconsistent with $T$. We conclude that $p(x)$ is indeed maximal consistent, modulo $T$.

For the converse we let $a \in M$ and $\phi(x)$ be arbitrary. Let $f: M \to N$ be a homomorphism with $N \models T$ and $N \models \phi(f(a))$. Write $p(x) = \tp(a; M)$ and $q(x) = \tp(f(a); N)$. As $f$ is a homomorphism we have $p(x) \subseteq q(x)$, and because $q(x)$ is realised in a model of $T$ it is consistent modulo $T$. By maximality of $p(x)$ we must thus have $p(x) = q(x)$. So $\phi(x) \in q(x) = p(x)$ and we see that $M \models \phi(a)$, as required.
\end{proof}
By compactness we have that types are precisely the maximally consistent sets of formulas. Some authors use ``type'' also for any consistent set of formulas (or for those of the form $\tp(a; M)$, where $M$ is not necessarily p.c.). For us a ``type'' will always be a maximal consistent set of formulas, while we make the following definition for the other case.
\begin{definition}
\thlabel{partial-type}
Let $T$ be a theory. A \term{partial type}\index{type!partial} is any consistent set of formulas (modulo $T$). So an alternative definition for \emph{type} is: a maximal partial type.
\end{definition}
\begin{definition}
\thlabel{type-space}
Let $T$ be a theory. For an index set $I$ we define the \emph{type space of $I$-types of $T$}\index{type space of $T$} $\S_I(T)$\nomenclature[STI]{$\S_I(T)$}{Type space of $I$-types of $T$} as follows. The underlying set is the set of $I$-types of $T$. For a set of formulas $\Sigma(x)$ we write\nomenclature[Sigmax]{$[\Sigma(x)]$}{Closed set in $\S(T)$ given by the set of formulas $\Sigma(x)$}
\[
[\Sigma(x)] = \{ p(x) \in \S_I(T) : \Sigma(x) \subseteq p(x) \},
\]
and for formulas we simplify the notation $[\{\phi(x)\}]$ to $[\phi(x)]$\nomenclature[phix]{$[\phi(x)]$}{Closed set in $\S(T)$ given by the formula $\phi(x)$}. We topologise $\S_I(T)$ by taking the sets of the form $[\Sigma(x)]$ as closed sets. As we often do not care so much about the index set $I$ we may drop it from the notation and just write $\S(T)$\nomenclature[ST]{$\S(T)$}{Type space of $T$}.
\end{definition}
The topology in the above definition is well-defined. It is easy to see that the proposed closed sets are closed under arbitrary intersections and we have $[\bot] = \emptyset$ and $[\top] = \S(T)$. To see that things are closed under finite unions we note that
\[
[\Sigma(x)] \cup [\Sigma'(x)] = [\{\phi(x) \vee \phi'(x) : \phi(x) \in \Sigma(x) \text{ and } \phi'(x) \in \Sigma'(x)\}].
\]
\begin{proposition}
\thlabel{type-space-properties}
The space $\S(T)$ is a compact $T_1$ space. The latter means that given any two points $p, q \in \S(T)$ there is a closed set $A \subseteq \S(T)$ with $p \in A$ and $q \not \in A$.
\end{proposition}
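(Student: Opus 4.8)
The plan is to prove the two assertions separately: compactness via the finite intersection property together with \thref{compactness} (compactness for positive formulas), and the $T_1$ separation directly from maximality of types.

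For compactness, recall from \thref{type-space} that the closed sets of $\S(T)$ are exactly the sets of the form $[\Sigma(x)]$, and note that $\bigcap_{j \in J}[\Sigma_j(x)] = [\bigcup_{j \in J}\Sigma_j(x)]$. So it suffices to show that a family $([\Sigma_j(x)])_{j \in J}$ of closed sets with the finite intersection property has nonempty intersection. The finite intersection property says that for each finite $J_0 \subseteq J$ the set $[\bigcup_{j \in J_0}\Sigma_j(x)]$ is nonempty, i.e.\ $\bigcup_{j \in J_0}\Sigma_j(x)$ is contained in some type, and in particular is realised in some model of $T$. Any finite subset of $\Sigma(x) := \bigcup_{j \in J}\Sigma_j(x)$ is contained in $\bigcup_{j \in J_0}\Sigma_j(x)$ for a suitable finite $J_0$, hence is realised in some model of $T$. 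Since all formulas are positive (Convention \thref{stay-positive}), \thref{compactness} applies and produces a p.c.\ model $N$ of $T$ and $a \in N$ with $N \models \Sigma(a)$. By \thref{pc-model-iff-types-are-maximal}, $\tp(a; N)$ is a type, and it contains every $\Sigma_j(x)$, so it lies in $\bigcap_{j \in J}[\Sigma_j(x)]$. Hence $\S(T)$ is compact.

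For the $T_1$ property, let $p, q \in \S(T)$ with $p \neq q$. By \thref{pc-model-iff-types-are-maximal}, every type is a maximal consistent set of formulas modulo $T$; in particular $p$ is maximal and $q$ is consistent, so $p \subseteq q$ would force $p = q$. Since $p \neq q$ there is therefore a formula $\phi(x) \in p \setminus q$. Then $A = [\phi(x)]$ is closed, $p \in A$, and $q \notin A$, which is the required separation.

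Neither step is hard; the only point needing attention is the routine conversion, in the compactness argument, between ``every finite subfamily of the $[\Sigma_j(x)]$ has a common point'' and the hypothesis format of \thref{compactness}, namely ``every finite subset of $\bigcup_{j \in J} \Sigma_j(x)$ is realised in a model of $T$''. I would not attempt to show that $\S(T)$ is Hausdorff: in positive logic distinct types need not be separated by disjoint basic closed sets, so $T_1$ is genuinely the best one gets at this level of generality.
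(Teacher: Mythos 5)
Your proof is correct and follows essentially the same route as the paper: compactness via the finite intersection property reduced to the compactness theorem for positive formulas (\thref{compactness}), and $T_1$ from maximality of types by taking $A = [\phi(x)]$ for some $\phi(x) \in p \setminus q$. You simply spell out the routine details that the paper's proof leaves implicit.
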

\begin{proof}
Compactness of $\S(T)$ follows from the compactness theorem, using the following formulation of topological compactness: given a family $\F$ of closed sets with the finite intersection property (i.e.\ $\bigcap \F_0 \neq \emptyset$ for any finite $\F_0 \subseteq \F$) we have that $\bigcap \F \neq \emptyset$. The $T_1$ property follows from the maximality of the types: if $p(x)$ and $q(x)$ are types then there is some $\phi(x) \in p(x)$ with $\phi(x) \not \in q(x)$, so $A = [\phi(x)]$ is the required closed set.
\end{proof}
\section{Properties of the category of (p.c.) models}
\label{sec:properties-of-category-of-models}
In this section we consider various basic properties for the category of (p.c.) models of a fixed theory, such as the amalgamation property (\thref{amalgamation-bases}), downward L\"owenheim-Skolem (\thref{lowenheim-skolem}) and the joint continuation property (\thref{jcp}). We have already seen that these categories have directed unions (i.e., directed colimits, see \thref{directed-unions,pc-models-closed-under-directed-unions}).
\begin{lemma}[Amalgamation lemma]
\thlabel{amalgamation-lemma}
Let $M$ and $M'$ be two $\L$-structures and let $a \in M$ and $b \in M'$ be (possibly infinite) tuples of matching length. If $\tp(b; M') \subseteq \tp(a; M)$ then there is an $\L$-structure $N$ with an elementary embedding $f: M \to N$ and a homomorphism $g: M' \to N$ such that $f(a) = g(b)$.
\end{lemma}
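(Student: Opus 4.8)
The plan is to build $N$ via a compactness/consistency argument, realizing $M$ elementarily and $M'$ by a mere homomorphism over the shared tuple. First I would introduce two disjoint sets of new constants: one set naming every element of $M$ (giving the elementary diagram), and one set naming every element of $M'$ (giving only the positive diagram), with the constraint that the constant for $a_i$ equals the constant for $b_i$ for each index $i$. Concretely, consider the theory
\[
\Gamma = \Delta_{\textup{el}}(M) \cup \Diag(M') \cup \{\, \bar a = \bar b \,\},
\]
where $\Delta_{\textup{el}}(M)$ is the full first-order elementary diagram of $M$ in the language with constants for $M$, $\Diag(M')$ is the positive diagram of $M'$ in the language with constants for $M'$, and $\bar a = \bar b$ abbreviates the (possibly infinite) set of equations $a_i = b_i$. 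A model of $\Gamma$ is exactly an $\L$-structure $N$ together with an elementary embedding $f \colon M \to N$ and a homomorphism $g \colon M' \to N$ satisfying $f(a) = g(b)$, by the standard identification (cf.\ \thref{model-of-diagram-is-homomorphism} for the positive diagram, and the analogous fact for elementary diagrams in full first-order logic). So it suffices to show $\Gamma$ is consistent.

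By compactness for full first-order logic, it is enough to check that every finite subset of $\Gamma$ is consistent. A finite subset involves finitely many sentences from $\Delta_{\textup{el}}(M)$, finitely many from $\Diag(M')$, and finitely many equations $a_{i_1} = b_{i_1}, \ldots, a_{i_k} = b_{i_k}$. Collect the $M$-constants appearing into a tuple $\bar c$ naming elements $\bar e$ of $M$ (including the relevant $a_{i_j}$'s), and the $M'$-constants into a tuple $\bar d$ naming elements $\bar e'$ of $M'$ (including the relevant $b_{i_j}$'s). The finite fragment of $\Delta_{\textup{el}}(M)$ is implied by a single first-order formula $\theta(\bar c)$ with $M \models \theta(\bar e)$, and the finite fragment of $\Diag(M')$ is a single positive quantifier-free formula $\chi(\bar d)$ with $M' \models \chi(\bar e')$. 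We must find a structure interpreting $\bar c, \bar d$ making $\theta$, $\chi$, and the identifications $c_{i_j} = d_{i_j}$ all true. Since $M \models \exists \bar x\, \theta(\bar x)$ is a first-order consequence, it remains true in any elementary extension; the key point is that $\chi$, being positive quantifier-free and true of $\bar e'$ in $M'$, is in particular true of the subtuple of $\bar e'$ corresponding to the $b_{i_j}$'s, and $\tp(b; M') \subseteq \tp(a; M)$ guarantees that the positive (quantifier-free) facts about the $b_{i_j}$'s that $\chi$ asserts — together with whatever else $\chi$ says about the purely-$M'$ part of $\bar d$ — can be realized in $M$ by interpreting the $d$'s that are identified with $c$'s as the corresponding elements $a_{i_j}$ and interpreting the remaining $d$'s as fresh elements via an elementary extension of $M$ into which $M'$ maps positively on the overlap. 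More carefully: apply $\tp(b; M') \subseteq \tp(a; M)$ to the formula $\exists \bar z\, \chi(\bar b, \bar z)$ (a positive formula in the variables matching $b$), which holds in $M'$, hence holds of $a$ in $M$; take an elementary extension $M^* \succeq M$ witnessing the existential, and interpret all constants there. This makes $\theta$, $\chi$, and the identifications simultaneously true, so the finite subset is consistent.

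The main obstacle is the bookkeeping around the overlap: one must ensure that the positive facts $\Diag(M')$ records about the tuple $b$ are precisely the ones controlled by $\tp(b;M')$, and that replacing $b$ by $a$ does not conflict with the elementary diagram of $M$ — this is exactly where the hypothesis $\tp(b; M') \subseteq \tp(a; M)$ is used, and where it is essential that $\Diag(M')$ contains only \emph{positive} quantifier-free sentences (an inequality $b_i \neq b_j$ in $M'$ need not transfer to $a_i, a_j$ in $M$, and indeed the lemma would be false if we demanded $g$ be an embedding). Once finite consistency is established, compactness yields $N$, and unpacking the diagrams gives the elementary embedding $f$, the homomorphism $g$, and the equality $f(a) = g(b)$, completing the proof.
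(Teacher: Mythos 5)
Your proposal is correct and follows essentially the same route as the paper's proof: interpret $a$ and $b$ by shared (or identified) new constants, take the full elementary diagram of $M$ together with the positive diagram of $M'$, and verify finite consistency by applying the hypothesis $\tp(b;M') \subseteq \tp(a;M)$ to the existentially quantified conjunction $\exists \bar z\,\chi(\bar b,\bar z)$, whose witnesses can then be found in $M$ itself. The only cosmetic difference is your detour through an elementary extension $M^*$, which is unnecessary (and harmless) since $M \models \exists \bar z\,\chi(a,\bar z)$ already provides the witnesses.
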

\begin{proof}
Introduce a tuple of new constants $c$ matching $a$ and $b$ to form $\L'$. We extend $M$ and $M'$ to $\L'$-structures by interpreting $c$ as $a$ and $b$ respectively. Let $\Delta$ be the $\L'$-elementary diagram of $M$. That is, all $\L'(M)$-sentences in  full first-order logic that are true in $M$. Let $\Sigma$ be the positive diagram $\Diag(M')$ of $M'$, considered as an $\L'$-structure. It suffices to show that $\Delta \cup \Sigma$ is consistent.

Let $\phi(c, d) \in \Sigma$ where $d$ is some tuple of elements from $M'$, distinct from $c$. Then $\exists y \phi(x, y) \in \tp(b; M')$ and thus $M \models \exists y \phi(a, y)$ because $\tp(b; M') \subseteq \tp(a; M)$. So there is $d' \in M$ with $M \models \phi(a, d')$ and we thus see that $M$ is a model of $\Delta \cup \{\phi(c, d)\}$. We conclude that $\Delta \cup \Sigma$ is finitely consistent and hence consistent.
\end{proof}
\begin{corollary}
\thlabel{amalgamation-bases}
Let $M \xleftarrow{f} M_0 \xrightarrow{g} M'$ be a span of $\L$-structures, where $f$ is a homomorphism and $g$ is an immersion. Then there exists an amalgamation $M \xrightarrow{f'} N \xleftarrow{g'} M'$, that is $f'f = g'g$, where $f'$ is an elementary embedding and $g'$ is a homomorphism.
\end{corollary}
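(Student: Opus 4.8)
The plan is to reduce to the Amalgamation lemma (\thref{amalgamation-lemma}) by replacing $M_0$ with an enumeration of its underlying set. Fix a (possibly infinite) tuple $a$ that enumerates every element of $M_0$, and set $a_M = f(a) \in M$ and $a_{M'} = g(a) \in M'$.

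First I would check that the hypotheses of the span turn into the type inclusion required by \thref{amalgamation-lemma}. Since $g$ is an immersion, for every formula $\phi(x)$ we have $M_0 \models \phi(a)$ if and only if $M' \models \phi(g(a))$, so $\tp(g(a); M') = \tp(a; M_0)$. Since $f$ is a homomorphism, \thref{homomorphism-preserves-truth} gives $M_0 \models \phi(a) \implies M \models \phi(f(a))$, that is $\tp(a; M_0) \subseteq \tp(f(a); M)$. Combining these two facts, $\tp(a_{M'}; M') \subseteq \tp(a_M; M)$.

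Now I would apply \thref{amalgamation-lemma} with $(M, a_M)$ and $(M', a_{M'})$ playing the roles of $(M, a)$ and $(M', b)$. This produces an $\L$-structure $N$, an elementary embedding $f' \colon M \to N$, and a homomorphism $g' \colon M' \to N$ with $f'(a_M) = g'(a_{M'})$, i.e. $f'(f(a)) = g'(g(a))$. As $a$ lists all elements of $M_0$, this last equation says precisely that $f' \circ f$ and $g' \circ g$ agree on $M_0$, hence $f'f = g'g$, which is the desired amalgamation with $f'$ an elementary embedding and $g'$ a homomorphism. The degenerate case $M_0 = \emptyset$ is handled in the same way with $a$ the empty tuple: the immersion and homomorphism conditions applied to sentences still yield the inclusion of the "types" of the empty tuple, and $f'f = g'g$ holds vacuously.

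I do not expect a genuine obstacle here; the only care needed is the bookkeeping of passing between a structure and an enumeration of it, and verifying that the single hypothesis of \thref{amalgamation-lemma} — the inclusion of types — follows immediately from the definitions of homomorphism and immersion.
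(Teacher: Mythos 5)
Your proposal is correct and is essentially the paper's own proof: both verify $\tp(g(M_0); M') = \tp(M_0; M_0) \subseteq \tp(f(M_0); M)$ from the immersion/homomorphism hypotheses and then apply \thref{amalgamation-lemma} with $a = f(M_0)$, $b = g(M_0)$. Your extra bookkeeping about enumerations and the empty case just makes explicit what the paper leaves implicit.
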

\begin{proof}
By the assumptions on $f$ and $g$ we have $\tp(g(M_0); M') = \tp(M_0; M_0) \subseteq \tp(f(M_0); M)$, so we can apply \thref{amalgamation-lemma} with $a = f(M_0)$ and $b = g(M_0)$.
\end{proof}
\begin{proposition}
\thlabel{immersions-coherent-and-pc-iff-immersed-in-pc}
Some facts:
\begin{enumerate}[label=(\roman*)]
\item if $f: M_1 \to M_2$ and $g: M_2 \to M_3$ are homomorphisms such that $gf$ is an immersion then $f$ is an immersion;
\item if $f: M \to N$ is an immersion and $N$ is a p.c.\ model of $T$ then $M$ is a p.c.\ model of $T$.
\end{enumerate}
\end{proposition}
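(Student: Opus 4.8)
The plan is to prove both parts directly from the definitions, exploiting that an immersion both preserves and reflects every (positive) formula.

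For (i), the forward implication ``$M_1 \models \phi(a) \Rightarrow M_2 \models \phi(f(a))$'' holds because $f$ is a homomorphism (\thref{homomorphism-preserves-truth}). For the reverse implication, suppose $M_2 \models \phi(f(a))$ for some formula $\phi(x)$ and $a \in M_1$. Applying the homomorphism $g$ gives $M_3 \models \phi(g(f(a)))$, i.e.\ $M_3 \models \phi((gf)(a))$; since $gf$ is an immersion, we conclude $M_1 \models \phi(a)$. So $f$ is an immersion, and there is no real obstacle here.

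For (ii) there are two things to check: that $M$ is a model of $T$, and that $M$ is positively closed. For the first point I cannot simply quote the immersion property for the axioms of $T$, since h-inductive sentences $\forall x(\phi(x) \to \psi(x))$ are not themselves positive formulas; instead I unwind the quantifier. Given such an axiom in $T$ and $a \in M$ with $M \models \phi(a)$, the homomorphism $f$ yields $N \models \phi(f(a))$, then $N \models T$ yields $N \models \psi(f(a))$, and the backward direction of the immersion $f$ yields $M \models \psi(a)$; hence $M \models \forall x(\phi(x) \to \psi(x))$ and so $M \models T$. For positive closedness I verify \thref{pc-model}(iii): if $M \not\models \phi(a)$ then $N \not\models \phi(f(a))$ because $f$ is an immersion; as $N$ is a p.c.\ model of $T$ there is an obstruction $\psi(x)$ of $\phi(x)$ with $N \models \psi(f(a))$; applying the immersion $f$ once more gives $M \models \psi(a)$, which is exactly what \thref{pc-model}(iii) demands.

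The only step needing a little attention is the verification that $M \models T$, precisely because the h-inductive axioms are not positive and so the immersion property must be applied to their positive sub-formulas rather than to the axioms directly; everything else is a routine chase through the definitions. (As an alternative for positive closedness, once $M \models T$ is known one could amalgamate an arbitrary homomorphism $h\colon M \to N'$ with $N' \models T$ against $f\colon M \to N$ using \thref{amalgamation-bases}, then use that $N$ is p.c.\ to make the leg out of $N$ an immersion and part (i) to conclude $h$ is an immersion; but the direct check of \thref{pc-model}(iii) above is shorter.)
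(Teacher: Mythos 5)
Your proof is correct. Part (i) and the verification that $M \models T$ coincide with the paper's argument, but for positive closedness you take a genuinely different (and slightly more elementary) route: you check \thref{pc-model}(iii) directly, using that the immersion $f$ transfers the failure of $\phi$ at $a$ up to $N$ and then reflects the obstruction $\psi$ witnessing it in the p.c.\ model $N$ back down to $M$. The paper instead works with \thref{pc-model}(i): given an arbitrary homomorphism $g: M \to N'$ with $N' \models T$, it amalgamates the span $N' \xleftarrow{g} M \xrightarrow{f} N$ via \thref{amalgamation-bases}, notes that the leg out of $N$ is an immersion because $N$ is p.c.\ (the amalgam is a model of $T$ since it elementarily extends $N'$), and then concludes by part (i) — exactly the alternative you sketch in your closing parenthesis. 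Your direct check avoids the appeal to the amalgamation lemma (and hence to compactness and the method of diagrams hiding inside it), at the cost of leaning on the equivalence of the conditions in \thref{pc-model}; the paper's route buys a reuse of part (i) and illustrates the amalgamation technique, but is otherwise no stronger. Either argument is complete.
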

\begin{proof}
\underline{(i)} Let $a \in M_1$ and $\phi(x)$ be some formula such that $M_2 \models \phi(f(a))$. Then we have $M_3 \models \phi(gf(a))$ and hence $M_1 \models \phi(a)$, as required.

\underline{(ii)} We first prove that $M$ is a model of $T$. Let $\forall x(\phi(x) \to \psi(x)) \in T$ and let $a \in M$ such that $M \models \phi(a)$. Then $N \models \phi(f(a))$ and thus $N \models \psi(f(a))$ because $N$ is a model of $T$. As $f$ is an immersion we have $M \models \psi(a)$ and we conclude that $M$ is a model of $T$.

To show that $M$ is a p.c.\ model we let $g: M \to N'$ be any homomorphism. By \thref{amalgamation-bases} there are $N \xrightarrow{f'} U \xleftarrow{g'} N'$ where $f'$ is a homomorphism and $g'$ is an elementary embedding with $f'f = g'g$. As $N$ is p.c.\ we have in fact that $f'$ is an immersion and so $f'f$ is an immersion. By (i) we conclude that $g$ is an immersion, as required.
\end{proof}
\begin{theorem}[Downward L\"owenheim-Skolem for p.c.\ models]
\thlabel{lowenheim-skolem}
Let $M$ be a p.c.\ model of $T$ and let $A \subseteq M$. Then there is a p.c.\ model $M_0 \subseteq M$ with $A \subseteq M_0$ and $|M_0| \leq |A| + |T|$, such that the inclusion is an elementary embedding.
\end{theorem}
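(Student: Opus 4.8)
The plan is to adapt the classical Tarski--Vaught / Skolem-hull construction: I will build $M_0$ as a substructure of $M$ closed under ``witnesses'' for positive formulas, so that the inclusion $M_0 \hookrightarrow M$ becomes an immersion, and then invoke \thref{immersions-coherent-and-pc-iff-immersed-in-pc}(ii) to conclude that $M_0$ is automatically a p.c.\ model of $T$. The twist compared to the classical argument is that the witnesses must be indexed by positive formulas taken \emph{up to equivalence modulo $T$} --- of which there are at most $|T|$ many --- rather than by all formulas of the (possibly much larger) language $\L$; this is exactly what is needed to keep $|M_0|$ down to $|A| + |T|$. A pleasant by-product is that closure under function and constant symbols needs no separate treatment: it is subsumed by handling the positive formulas $\exists y\,(y = f(x))$ and $\exists y\,(y = c)$, whose witnesses are forced to be the relevant $f^M$-values and constant interpretations.

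Concretely, fix a set $\Theta$ consisting of one positive formula $\theta(x,y)$ from each equivalence class modulo $T$, where $x$ and $y$ range over finite tuples of variables, so $|\Theta| \le |T|$. I would then build an increasing $\omega$-chain of subsets $A = A_0 \subseteq A_1 \subseteq \dots \subseteq M$ where $A_{n+1}$ adjoins to $A_n$, for every $\theta(x,y) \in \Theta$ and every tuple $a$ from $A_n$ of length $|x|$ with $M \models \exists y\,\theta(a,y)$, the entries of some chosen tuple $b$ from $M$ with $M \models \theta(a,b)$. A routine cardinality count (using $|T| \ge \aleph_0$) shows by induction that $|A_n| \le |A| + |T|$ for all $n$, so $M_0 := \bigcup_{n<\omega} A_n$ satisfies $A \subseteq M_0 \subseteq M$ and $|M_0| \le |A| + |T|$.

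To finish, I would check three things. First, since $M \models T$, the formulas $\exists y\,(y=c)$ and $\exists y\,(y=f(x))$ (via their representatives in $\Theta$) force $M_0$ to be closed under all constants and function symbols, so $M_0$ is a substructure of $M$ when given the induced relations, and in particular the inclusion $\iota: M_0 \to M$ is a homomorphism --- hence preserves positive formulas by \thref{homomorphism-preserves-truth}. Second, the converse implication ``$M \models \phi(a)$ implies $M_0 \models \phi(a)$'' for $a \in M_0$ and $\phi$ positive is proved by induction on $\phi$: positive quantifier-free formulas are absolute between $M_0$ and $M$ because the two structures compute terms and (induced) relations identically on $M_0$; the steps for $\wedge$ and $\vee$ are immediate; and for $\phi = \exists y\,\phi_1(x,y)$ the chain was built precisely so that $M \models \exists y\,\phi_1(a,y)$ yields a witness $b$ already in $M_0$ (pass to the representative of $\phi_1$ in $\Theta$ and use $M \models T$), to which the induction hypothesis applies. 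So $\iota$ is an immersion, and \thref{immersions-coherent-and-pc-iff-immersed-in-pc}(ii) then gives that $M_0$ is a p.c.\ model of $T$, as required. The step needing genuine care --- the main obstacle relative to the naive classical proof --- is the cardinality bookkeeping: restricting attention to $T$-equivalence classes of positive formulas is what prevents $|M_0|$ from blowing up to $|A| + |\L|$, and this restriction is legitimate only because $M$ itself is a model of $T$, so that a witness for a representative formula also witnesses every formula in its class.
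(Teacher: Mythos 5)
Your hull construction is internally correct as far as it goes: the witnesses chosen for representatives of positive formulas modulo $T$ do make $M_0$ a substructure, the induction does show that the inclusion $M_0 \subseteq M$ is an immersion, the cardinality count works, and \thref{immersions-coherent-and-pc-iff-immersed-in-pc}(ii) then makes $M_0$ a p.c.\ model. But the theorem asserts strictly more than what you prove: the inclusion is required to be an \emph{elementary} embedding in the sense of full first-order logic, and a Skolem hull taken only with respect to positive formulas does not deliver this. An immersion between p.c.\ models need not be elementary, and your construction can genuinely produce non-elementary inclusions. Concretely, take the theory of \thref{hausdorff-not-boolean} (constants for the elements of $\Q_{(0,1)}$ and an order symbol), $M = [0,1]$ its maximal p.c.\ model, and $A = \emptyset$: every positive existential condition with rational parameters that is satisfiable in $[0,1]$ already has a rational witness (one of the parameters or constants will do), so all witnesses in your chain may be chosen rational and $M_0$ then contains no greatest element, while the full first-order sentence $\exists x \forall y\,(y \leq x)$ holds in $[0,1]$. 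So your $M_0$ is an immersed p.c.\ submodel of the right size, but the inclusion is not elementary, and nothing in your argument can rule this out, since positivity is the only tool you use to pass truth down to $M_0$.

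The paper's proof avoids the problem by simply applying the classical downward L\"owenheim--Skolem theorem of full first-order logic: choose $M_0 \subseteq M$ containing $A$ with $|M_0| \leq |A| + |T|$ such that the inclusion is elementary; an elementary embedding is in particular an immersion, and then the very proposition you invoke, \thref{immersions-coherent-and-pc-iff-immersed-in-pc}(ii), gives that $M_0$ is p.c. The repair of your argument is therefore to close under Tarski--Vaught witnesses for \emph{all} full first-order formulas rather than just the positive ones, at which point you have reproduced the paper's proof. The cardinality refinement you emphasise (counting formulas modulo $T$) is not where the difficulty lies: under the paper's definition of $|T|$ distinct atomic formulas are already pairwise inequivalent, so $|T|$ dominates the size of the language and the classical bound $|A| + |\L| + \aleph_0$ is exactly the bound claimed; and if one does prefer to count modulo $T$, that trick applies verbatim to witnesses for full first-order formulas (a $T$-equivalent formula has the same witnesses in the model $M \models T$), so it cannot compensate for restricting attention to positive formulas.
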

\begin{proof}
By the usual L\"owenheim-Skolem theorem we find $M_0 \subseteq M$ with $A \subseteq M_0$ and $|M_0| \leq |A| + |T|$, such that the inclusion is an elementary embedding. In particular the inclusion is an immersion, so $M_0$ is a p.c.\ model by \thref{immersions-coherent-and-pc-iff-immersed-in-pc}.
\end{proof}
In full first-order logic there are many equivalent definitions of a complete theory. The important ones are that all models satisfy the same set of full first-order sentences and that any two models can be jointly elementarily embedded into a third model (which allows us to build monster models, see Section \ref{sec:monster-models}). The latter is easily generalised to positive logic, and gives rise to the definition below. The positive equivalent of the former---i.e., all p.c.\ models satisfy the same set of h-inductive sentences---is equivalent condition (iv) below.
\begin{definition}
\thlabel{jcp}
A theory $T$ is said to have the \term{joint continuation property}, or \term{JCP} for short, if the following equivalent conditions hold.
\begin{enumerate}[label=(\roman*)]
\item For any two models $M$ and $M'$ of $T$ there is a model $N$ of $T$ with homomorphisms $M \to N \leftarrow M'$.
\item For any two p.c.\ models $M$ and $M'$ of $T$ there is a model $N$ of $T$ with homomorphisms $M \to N \leftarrow M'$.
\item For any two h-universal sentences $\phi$ and $\psi$ we have that $T \models \phi \vee \psi$ implies $T \models \phi$ or $T \models \psi$.
\item For every p.c.\ model $M$ of $T$, $T^\pc$ is precisely the set of all h-inductive sentences that are true in $M$.
\item For some p.c.\ model $M$ of $T$, $T^\pc$ is precisely the set of all h-inductive sentences that are true in $M$.
\item For every p.c.\ model $M$ of $T$, $T^\u$ is precisely the set of all h-universal sentences that are true in $M$.
\item For some p.c.\ model $M$ of $T$, $T^\u$ is precisely the set of all h-universal sentences that are true in $M$.
\end{enumerate}
\end{definition}
\begin{lemma}
\thlabel{equivalence-jcp-characterisations}
The conditions in \thref{jcp} are indeed equivalent.
\end{lemma}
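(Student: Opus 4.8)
The plan is to prove the seven conditions equivalent through the implications (i)$\Rightarrow$(ii)$\Rightarrow$(iii)$\Rightarrow$(i), together with (i)$\Rightarrow$(iv)$\Rightarrow$(v)$\Rightarrow$(vii)$\Rightarrow$(iii) and (i)$\Rightarrow$(vi)$\Rightarrow$(vii); reading these off, every condition implies (i) and is implied by (i), so all are equivalent. Several links are immediate: (i)$\Rightarrow$(ii) since p.c.\ models are models, and (iv)$\Rightarrow$(v), (vi)$\Rightarrow$(vii) because $T$ is consistent and so has at least one p.c.\ model by \thref{continue-to-pc-model}. Two facts will be used throughout: homomorphisms preserve positive formulas and reflect h-universal sentences (\thref{homomorphism-preserves-truth}, \thref{homomorphisms-and-sentences}), and a homomorphism out of a p.c.\ model into a model of $T$ is an immersion (\thref{pc-model}(i)), hence preserves positive formulas in both directions (\thref{immersion}).

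For (ii)$\Rightarrow$(iii): if $\phi,\psi$ are h-universal with $T\models\phi\vee\psi$ but $T\not\models\phi$ and $T\not\models\psi$, choose models $M_1,M_2$ of $T$ failing $\phi$, resp.\ $\psi$, continue them to p.c.\ models (the failures persist, as h-universal sentences are reflected by homomorphisms), amalgamate by (ii) into a model $N$ of $T$; then $N$ can satisfy neither $\phi$ nor $\psi$, contradicting $N\models T$ and $T\models\phi\vee\psi$. For (iii)$\Rightarrow$(i): given models $M,M'$ of $T$ it suffices (\thref{model-of-diagram-is-homomorphism}) to show $T\cup\Diag(M)\cup\Diag(M')$ is consistent, with disjoint new constants. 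If not, compactness yields positive quantifier-free $\sigma(a)\in\Diag(M)$ and $\tau(b)\in\Diag(M')$ with $T\models\neg(\sigma(a)\wedge\tau(b))$; since $a,b$ are disjoint and do not occur in $T$ this forces $T\models\neg\exists x\,\sigma(x)\vee\neg\exists y\,\tau(y)$, a disjunction of two h-universal sentences, so by (iii) $T$ entails one of them---impossible, as $M\models\exists x\,\sigma(x)$ and $M'\models\exists y\,\tau(y)$.

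For (i)$\Rightarrow$(vi): if $M$ is a p.c.\ model with $M\models\chi$ and $\chi$ is h-universal, then to get $T\models\chi$ take any $M'\models T$, amalgamate $M,M'$ into $N\models T$ by (i); the immersion $M\to N$ transfers $\chi$ (equivalently the negation of its underlying positive existential sentence) to $N$, and the homomorphism $M'\to N$ reflects $\chi$ down to $M'$; so $\chi\in T^\u$, the reverse inclusion $T^\u\subseteq\{\chi\text{ h-universal}:M\models\chi\}$ being automatic from $M\models T\models T^\u$. For (i)$\Rightarrow$(iv): if $M$ is a p.c.\ model with $M\models\chi$ for $\chi=\forall x(\phi(x)\to\psi(x))$ and some p.c.\ model $M'$ fails $\chi$, say $M'\models\phi(a)$ and $M'\not\models\psi(a)$, then positive-closedness of $M'$ (\thref{pc-model}(iii)) gives an obstruction $\theta(x)$ of $\psi$ with $M'\models\theta(a)$, whence $M'\models\exists x(\phi(x)\wedge\theta(x))$; pushing this positive sentence into a common model $N\models T$ (by (i)) and pulling it back along the immersion $M\to N$ gives $M\models\exists x(\phi(x)\wedge\theta(x))$, so $M\models\exists x(\psi(x)\wedge\theta(x))$ using $M\models\chi$---contradicting $T\models\neg\exists x(\psi(x)\wedge\theta(x))$ and $M\models T$. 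Hence $\chi$ holds in every p.c.\ model, i.e.\ $\chi\in T^\pc$, and the reverse inclusion is again automatic. Finally, (v)$\Rightarrow$(vii) uses $(T^\pc)^\u=T^\u$ (\thref{t-pc-u-is-t-u}) plus the observation that the h-universal members of ``the h-inductive sentences true in $M$'' are exactly ``the h-universal sentences true in $M$''; and (vii)$\Rightarrow$(iii) uses that a disjunction of h-universal sentences is again equivalent to an h-universal sentence, so $T\models\phi\vee\psi$ gives $\phi\vee\psi\in T^\u$, hence $M\models\phi\vee\psi$ for the witnessing p.c.\ model, and whichever disjunct holds in $M$ is h-universal and true in $M$, hence in $T^\u$, i.e.\ entailed by $T$.

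I expect the crux to be (i)$\Rightarrow$(iv): it is the only step that genuinely exploits positive-closedness beyond the ``transfer of positive formulas'' slogan, through the obstruction supplied by \thref{pc-model}(iii), and one must be careful that this obstruction is taken relative to $T$ (not merely something satisfied by $a$ in $M'$), so that the closing contradiction is really with $M$ being a model of $T$. Everything else is either formal bookkeeping or a routine application of the preservation and reflection properties of homomorphisms and immersions.
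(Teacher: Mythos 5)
Your proof is correct and follows essentially the same route as the paper: the substantive steps — (i)$\Rightarrow$(iv) via an obstruction supplied by positive closedness of the offending model, (i)$\Rightarrow$(vi) via amalgamation together with the immersion out of the p.c.\ model, (iii)$\Rightarrow$(i) via diagrams and compactness with disjoint constants, and the use of $(T^\pc)^\u = T^\u$ to connect (v) to the h-universal conditions — are exactly the paper's arguments. The only differences are cosmetic reroutings of the easy arrows (you prove (ii)$\Rightarrow$(iii) directly and go (v)$\Rightarrow$(vii)$\Rightarrow$(iii) instead of (ii)$\Rightarrow$(i) and (v)$\Rightarrow$(iii)), which changes nothing of substance.
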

\begin{proof}
We prove the following implications.
\[
\begin{tikzcd}
\text{(iv)} \arrow[d, Rightarrow] & \text{(ii)} \arrow[d, Rightarrow, bend left]                                  & \text{(vi)} \arrow[d, Rightarrow]   \\
\text{(v)} \arrow[rd, Rightarrow] & \text{(i)} \arrow[u, Rightarrow, bend left] \arrow[lu, Rightarrow] \arrow[ru, Rightarrow] & \text{(vii)} \arrow[ld, Rightarrow] \\
                                  & \text{(iii)} \arrow[u, Rightarrow]                                                        &                                    
\end{tikzcd}
\]
\underline{(i) $\Leftrightarrow$ (ii)} The right to left direction is trivial. The other follows after continuing each of $M$ and $M'$ into a p.c.\ model.

\underline{(i) $\Rightarrow$ (iv)} Suppose for a contradiction that there is some h-inductive sentence $\forall x(\phi(x) \to \psi(x))$ that is true in some p.c.\ model $M$ but is not in $T^\pc$. By definition of $T^\pc$ this means that there must be a p.c.\ model $M'$ such that $M' \not \models \forall x(\phi(x) \to \psi(x))$. So there is $a \in M'$ with $M' \models \phi(a)$ and $M' \not \models \psi(a)$. As $M'$ is p.c.\ there is an obstruction $\psi'(x)$ of $\psi(x)$ such that $M' \models \psi'(a)$. That is, we have $M' \models \exists x(\phi(x) \wedge \psi'(x))$. Using (i) we find a model $N$ of $T$ with homomorphisms $M \to N \leftarrow M'$. Each of these homomorphisms is an immersion, because $M$ and $M'$ are p.c. We thus have $N \models \exists x(\phi(x) \wedge \psi'(x))$ and so $M \models \exists x(\phi(x) \wedge \psi'(x))$. However, the latter contradicts $M \models \forall x(\phi(x) \to \psi(x))$.

\underline{(iv) $\Rightarrow$ (v)} Trivial.

\underline{(v) $\Rightarrow$ (iii)} Immediate, using the fact that $(T^\pc)^\u = T^\u$ (\thref{t-pc-u-is-t-u}).

\underline{(i) $\Rightarrow$ (vi)} Suppose for a contradiction that there is some h-universal sentence $\neg \phi$ that is true in some p.c.\ model $M$ but is not in $T^\u$. Then there is a model $M'$ such that $M' \models \phi$. Using (i) we find a model $N$ of $T$ with homomorphisms $M \to N \leftarrow M'$. We thus have $N \models \phi$, but then $M \models \phi$, because $M$ is a p.c.\ model so $M \to N$ is an immersion, a contradiction.

\underline{(vi) $\Rightarrow$ (vii) $\Rightarrow$ (iii)} Trivial.

\underline{(iii) $\Rightarrow$ (i)} Let $M$ and $M'$ be models of $T$. We prove that $\Diag(M) \cup \Diag(M')$ is consistent. If not, then there would be $\phi(a) \in \Diag(M)$ and $\psi(b) \in \Diag(M')$ such that $T \models \neg (\phi(a) \vee \psi(b))$. We may assume $a$ and $b$ to be disjoint, so we get $T \models \neg \exists x \phi(x) \vee \neg \exists y \psi(y)$. So by (iii) we must have either $T \models \neg \exists x \phi(x)$ contradicting $\phi(a) \in \Diag(M)$, or $T \models \neg \exists y \psi(y)$ contradicting $\psi(b) \in \Diag(M')$.
\end{proof}
\begin{definition}
\thlabel{maximal-pc-model}
We call a p.c.\ model $M$ of a theory $T$ a \term{maximal p.c.\ model} if any model $N$ of $T$ admits a homomorphism $N \to M$ into $M$.
\end{definition}
An example of a theory with a maximal model is the empty theory considered in \thref{pc-model-vs-ec-model}, where we established that the singletons are the p.c.\ models. So any singleton is a maximal model of that theory. It can also happen that the maximal p.c.\ model is infinite, see \thref{thick-not-semi-hausdorff}. This is in contrast to full first-order logic, where having a maximal model means that all models are finite.

\section{Boolean, (semi-)Hausdorff and thick}
\label{sec:boolean-hausdorff-semi-hausdorff-thick}
Even though positive logic does not have negation built in, we can add back as much as we desire. This is done through a process called Morleyisation, as described below.
\begin{definition}
\thlabel{morleyisation}
A \term{positive fragment} $\Delta$ of a language $\L$ is a set of formulas in full first-order logic that contains all atomic formulas and is closed under sub-formulas, change of variables, conjunction and disjunction. Given such a positive fragment $\Delta$ we define the ($\Delta$-)\term{Morleyisation} $\Mor(\Delta)$\nomenclature[MorDelta]{$\Mor(\Delta)$}{$\Delta$-Morleyisation} to be the following positive theory. We extend the language to include a relation symbol $R_\phi(x)$ for every $\phi(x) \in \Delta$. Then we inductively add h-inductive sentences to $\Mor(\Delta)$ so that it expresses that $R_\phi(x)$ and $\phi(x)$ are equivalent for every $\phi(x) \in \Delta$ (see below for details).
\end{definition}
\begin{remark}
\thlabel{common-morleyisations}
Fix some language $\L$ and let $\Delta$ be a positive fragment. Given any $\L$-theory $T$ whose axioms are of the form $\forall x(\phi(x) \to \psi(x))$ with $\phi, \psi \in \Delta$ we can, and will, naturally view $T$ as a positive theory by considering $\Mor(\Delta) \cup T'$. Here $T'$ is obtained from $T$ by replacing each $\forall x(\phi(x) \to \psi(x)) \in T$ by $\forall x(R_\phi(x) \to R_\psi(x))$.

There are two particularly interesting cases of the above situation. Write $\Delta_\qf$\nomenclature[Deltaqf]{$\Delta_\qf$}{Set of quantifier-free full first-order formulas} for the set of all quantifier-free full first-order formulas, and $\Delta_\fo$\nomenclature[Deltafo]{$\Delta_\fo$}{Set of all full first-order formulas} for the set of all full first-order formulas. We consider Morleyisation in each case.
\begin{enumerate}[label=(\roman*)]
\item Working with a $\Delta_\qf$-Morleyised theory $T$ is equivalent to saying that the negation of every relation symbol (including equality) is positive definable. In particular, all homomorphisms between models of such a theory $T$ are embeddings. We call such a theory a \term{Pillay theory}. If the theory furthermore has the property that every span of homomorphisms between models $M \xleftarrow{f} M_0 \xrightarrow{g} M'$ can be amalgamated (i.e., there are homomorphisms of models $M \xrightarrow{f'} N \xleftarrow{g'} M'$ such that $f'f = g'g$) then we call it a \term{Robinson theory}.

The p.c.\ models of a Pillay theory are what are classically called existentially closed models or e.c.\ models (see also \thref{pc-model-vs-ec-model}). Often one would prove that such a theory is model complete, or at least has a model companion, so that it can be studied as a full first-order theory. Using positive logic we no longer need to worry about such things. We can just study the theory as a positive theory, even when the theory is not companionable.
\item We can study full first-order logic as a special case of positive logic by working with $\Delta_\fo$-Morleyised theories. In this case every formula in full first-order logic will be equivalent to a positive formula (in fact, to an atomic one). So the notions of homomorphism, immersion and elementary embedding all coincide. This also means that every model is a p.c.\ model. See also \thref{boolean-theory}. Having JCP is now equivalent to the theory being complete (see also the discussion before \thref{jcp}).
\end{enumerate}
\end{remark}
\begin{convention}
\thlabel{full-first-order-logic-as-special-case}
Whenever we say we that we consider a theory $T$ in full first-order logic as a positive theory, we mean its Morleyised version as described in \thref{common-morleyisations}(ii). In this light we will view positive logic as a proper generalisation of full first-order logic.
\end{convention}
\begin{lemma}
\thlabel{morleyisation-does-exist}
The positive theory $\Mor(\Delta)$ described in \thref{morleyisation} does indeed exist.
\end{lemma}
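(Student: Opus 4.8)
The plan is to exhibit $\Mor(\Delta)$ explicitly, by recursion on formula complexity, and then to check by induction that it has the advertised property. Before starting, I fix the primitive logical symbols of full first-order logic to be $\top,\bot,\neg,\wedge,\vee$ and $\exists$, with $\forall$, $\to$ and $\leftrightarrow$ understood as the usual abbreviations; this is harmless and, as explained at the end, essential. Since $\Delta$ is closed under subformulas and change of variables, every immediate subformula of a formula $\phi(x)\in\Delta$ — read in this syntax — again lies in $\Delta$, and hence already carries its own relation symbol in the extended language, of the matching arity and sorts.

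Next, for each $\phi(x)\in\Delta$ I would put into $\Mor(\Delta)$ the following h-inductive sentences, selected according to the outermost connective of $\phi$:
\begin{itemize}
\item $\phi$ atomic (in particular $\top$ or $\bot$): $\forall x(R_\phi(x)\to\phi(x))$ and $\forall x(\phi(x)\to R_\phi(x))$;
\item $\phi=\psi_1\wedge\psi_2$: $\forall x(R_\phi(x)\to R_{\psi_1}(x)\wedge R_{\psi_2}(x))$ and $\forall x(R_{\psi_1}(x)\wedge R_{\psi_2}(x)\to R_\phi(x))$;
\item $\phi=\psi_1\vee\psi_2$: the analogous pair with $\vee$ in place of $\wedge$;
\item $\phi=\exists y\,\psi$: $\forall x(R_\phi(x)\to\exists y\, R_\psi(x,y))$ and $\forall x(\exists y\, R_\psi(x,y)\to R_\phi(x))$;
\item $\phi=\neg\psi$: $\forall x(R_\phi(x)\wedge R_\psi(x)\to\bot)$ and $\forall x(\top\to R_\phi(x)\vee R_\psi(x))$.
\end{itemize}
Every listed sentence is h-inductive, because in each case the hypothesis and the conclusion are positive formulas; the one clause worth pausing on is negation, where ``$R_\phi$ and $R_\psi$ are complementary'' is captured without a literal negation, namely by the two h-inductive statements ``they are jointly inconsistent'' and ``their disjunction is total''. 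Let $\Mor(\Delta)$ be the union of all these (finitely many per $\phi$) sentences as $\phi$ ranges over $\Delta$; by construction it is a set of h-inductive sentences, i.e. a positive theory.

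Finally I would verify that $\Mor(\Delta)$ does express the intended equivalences: in any $M\models\Mor(\Delta)$ one has $M\models R_\phi(a)\iff M\models\phi(a)$ for all $\phi(x)\in\Delta$ and all tuples $a$. This is a routine induction on the complexity of $\phi$. The atomic case is immediate from the first pair of axioms; for $\wedge$, $\vee$ and $\exists$ one combines the relevant pair of axioms with the induction hypothesis applied to the immediate subformulas; and for $\neg$ the inconsistency axiom gives $M\models R_\phi(a)\Rightarrow M\not\models R_\psi(a)$ while the totality axiom gives the converse, so $R_\phi(a)$ holds exactly when $R_\psi(a)$ fails, which by induction is exactly when $\phi(a)$ holds. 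There is no fixed-point or circularity issue here: $\Mor(\Delta)$ is simply a union of explicitly given sentences, and the verifying induction is well founded on formula complexity.

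The single genuine obstacle is the universal quantifier, which is precisely the reason for the syntactic convention fixed at the start. If $\forall$ were a primitive connective one would need to express $R_{\forall y\psi}(x)\leftrightarrow\forall y\, R_\psi(x,y)$, but the ``$\Leftarrow$'' direction of this has hypothesis $\forall y\, R_\psi(x,y)$, which is not a positive formula, and so cannot be turned into an h-inductive sentence. Treating $\forall y\,\psi$ as shorthand for $\neg\exists y\,\neg\psi$ sidesteps this completely: then $\forall y\,\psi$ is literally $\neg\exists y\,\neg\psi$, closure of $\Delta$ under subformulas puts $\exists y\,\neg\psi$ and $\neg\psi$ in $\Delta$, and the equivalence for $\forall y\,\psi$ simply falls out of the $\exists$- and $\neg$-clauses already present. (Alternatively one may observe that the fragments actually used in practice, $\Delta_\qf$ and $\Delta_\fo$, are closed under negation, so the difficulty never arises.)
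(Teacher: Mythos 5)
Your proposal is correct and follows essentially the same route as the paper's proof: the same recursion on the outermost connective with the identical pairs of h-inductive axioms (your negation clauses are just the paper's $\forall x(R_\phi(x)\vee R_\psi(x))$ and $\forall x\,\neg(R_\phi(x)\wedge R_\psi(x))$ written out in implication form), followed by the same induction verifying $\Mor(\Delta)\models\forall x(\phi(x)\leftrightarrow R_\phi(x))$. Your closing discussion of why $\forall$ must be treated as an abbreviation makes explicit what the paper states only briefly, but it is not a different argument.
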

\begin{proof}
We add h-inductive sentences to $\Mor(\Delta)$ based on the complexity of a formula $\phi(x) \in \Delta$. We may assume that $\phi(x)$ is built using the connectives $\vee, \wedge, \neg$ and $\exists$, as any other full first-order connectives can be treated as abbreviations for these connectives. So we split into the following cases, based on the outermost connective in $\phi(x)$.

\underline{Atomic $\phi(x)$.} We can simply add the sentences $\forall x(\phi(x) \to R_\phi(x))$ and $\forall x(R_\phi(x) \to \phi(x))$, which are h-inductive because $\phi(x)$ is atomic.

\underline{Connectives $\vee$ and $\wedge$.} Write $\bigcirc$ for the relevant connective (i.e. either $\vee$ or $\wedge$) so that $\phi(x)$ is $\phi_1(x) \bigcirc \phi_2(x)$. We then add the sentences $\forall x(R_\phi(x) \to R_{\phi_1}(x) \bigcirc R_{\phi_2}(x))$ and $\forall x(R_{\phi_1}(x) \bigcirc R_{\phi_2}(x) \to R_\phi(x))$.

\underline{Connective $\neg$.} So $\phi(x)$ is of the form $\neg \psi(x)$. We then add the sentences $\forall x(R_\phi(x) \vee R_\psi(x))$ and $\forall x \neg(R_\phi(x) \wedge R_\psi(x))$.

\underline{Connective $\exists$.} So $\phi(x)$ is of the form $\exists y \psi(x, y)$. We add the sentences $\forall x(R_\phi(x) \to \exists y R_\psi(x, y))$ and $\forall x (\exists y R_\psi(x, y) \to R_\phi(x))$.

One now easily proves by induction on the complexity of the formula $\phi(x)$ that $\Mor(\Delta) \models \forall x(\phi(x) \leftrightarrow R_\phi(x))$.
\end{proof}
\begin{definition}
\thlabel{boolean-theory}
We call a theory $T$ \emph{Boolean}\index{Boolean theory} if the following equivalent conditions hold.
\begin{enumerate}[label=(\roman*)]
\item Every model of $T$ is a p.c.\ model.
\item Every homomorphism between models of $T$ is an immersion.
\item For every positive formula $\phi(x)$ there is a positive formula $\psi(x)$ such that $T \models \forall x(\neg \phi(x) \leftrightarrow \psi(x))$.
\item For every full first-order formula $\phi(x)$ there is a positive formula $\psi(x)$ such that $T \models \forall x(\phi(x) \leftrightarrow \psi(x))$.
\item Every homomorphism between models of $T$ is an elementary embedding.
\end{enumerate}
\end{definition}
Clearly any Morleyised full first-order theory is Boolean, so following \thref{full-first-order-logic-as-special-case} we use these terms interchangeably.

The term Boolean in \thref{boolean-theory} refers to the fact that for such theories the distributive lattice of positively definable sets is in fact a Boolean algebra. Some sources use the term \term{positively model complete}, but Boolean seems more descriptive.
\begin{lemma}
\thlabel{homomorphism-preserved-formulas-are-positive}
Let $T$ be a theory. Suppose that $\phi(x)$ is a full first-order formula such that for every homomorphism $f: M \to N$ of models of $T$, and any $a \in M$ we have $M \models \phi(a)$ implies $N \models \phi(f(a))$. Then $\phi(x)$ is equivalent to a positive formula modulo $T$.
\end{lemma}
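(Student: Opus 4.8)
The plan is to use a compactness argument together with \thref{homomorphism-preserves-truth} to show that $\phi(x)$ is a consequence of the positive formulas it implies. Concretely, let $\Gamma(x)$ be the set of all positive formulas $\theta(x)$ such that $T \models \forall x(\phi(x) \to \theta(x))$. It suffices to show that $T \models \forall x(\bigwedge \Gamma(x) \to \phi(x))$, since then by compactness (\thref{compactness}, or rather ordinary full first-order compactness applied to the consistent theory $T$) a finite conjunction $\theta_1(x) \wedge \ldots \wedge \theta_n(x)$ of members of $\Gamma(x)$ will already be equivalent to $\phi(x)$ modulo $T$, and a finite conjunction of positive formulas is positive. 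So the whole proof reduces to establishing this single implication.

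To prove $T \models \forall x(\bigwedge \Gamma(x) \to \phi(x))$, I would argue by contradiction using the diagram technique, exactly in the style of the proof of \thref{models-of-universal-consequences} or \thref{equivalence-pc-characterisations}. Suppose there is a model $N \models T$ and $b \in N$ with $N \models \Gamma(b)$ but $N \not\models \phi(b)$. Introduce new constants $c$ matching $b$, and consider the theory $T \cup \Diag_{\mathrm{pos}}(b/N) \cup \{\neg\phi(c)\}$ in a suitable sense — more precisely, I want to produce a model $M \models T$ with $a \in M$ such that $M \models \phi(a)$ together with a homomorphism $f : M \to N$ sending $a$ to $b$; this will contradict the hypothesis on $\phi$, since it would force $N \models \phi(b)$. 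To build such an $M$, consider the set of positive $\L(c)$-sentences true of $b$ in $N$, call it $p(c)$; this is precisely $\{\theta(c) : N \models \theta(b), \theta \text{ positive}\}$. I claim $T \cup \{\phi(c)\} \cup p(c)$ is consistent. If not, by compactness there is a positive $\theta(x) \in p$ (taking a finite conjunction) with $T \models \forall x(\phi(x) \to \neg \theta(x))$; but $\neg\theta(x)$ need not be positive, so this does not immediately land in $\Gamma$ — here is the subtlety. Instead I should take the finitely many positive consequences: $T \models \neg \exists x(\phi(x) \wedge \theta(x))$ means... no. Let me restructure: the right move is that inconsistency gives $T \models \forall x(\phi(x) \to \bigvee \Obs)$-type information. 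Cleanly: if $T \cup \{\phi(c)\} \cup p(c)$ is inconsistent, then $T \cup \{\phi(c)\}$ proves the negation of some finite subset of $p$, i.e.\ $T \models \forall x \neg(\phi(x) \wedge \theta(x))$ for a single positive $\theta \in p$, which says $\theta(x)$ is an obstruction of $\phi(x)$ relative to $T$. But then $\neg\theta(x)$ is implied by $\phi(x)$; this still is not obviously in $\Gamma$. The correct fix is to not demand $p$ contains $\theta$ but to observe $N \models \theta(b)$ while $\theta$ obstructs $\phi$: then $T \models \forall x(\phi(x) \to \psi(x))$ where...

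Let me give the actually correct plan. The key point I should exploit is that $\Gamma(x)$ being satisfied by $b$ in $N$, combined with consistency, should be arranged so that any positive formula \emph{inconsistent with} $\phi$ relative to $T$ fails at $b$; and the set of such positive formulas, together with $T$ and $\phi(c)$ and the positive diagram of $b$, must be jointly consistent. The contradiction in the inconsistent case produces a positive $\theta$ with $T \models \neg\exists x(\phi(x) \wedge \theta(x))$ and $N \models \theta(b)$; but then $\neg\theta$ is a positive\ldots no, $\neg\theta$ is not positive. The honest resolution: membership $\theta \in \Gamma$ requires $T \models \forall x(\phi \to \theta)$ with $\theta$ \emph{positive}, and what I derive is $T \models \forall x(\phi \to \neg\theta)$ with $\neg\theta$ a \emph{negation} of positive, which is exactly an obstruction. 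So the statement to prove by compactness should instead be: $\phi(x)$ is equivalent modulo $T$ to $\bigvee$ of a finite set of positive formulas, where each disjunct is obtained by noting that for each completion $q \in \S(T)$ containing $\phi$, by positive-closedness-style maximality $q$ is determined near $\phi$ by positive formulas. I would therefore route through type spaces: show $[\phi(x)] \subseteq \S(T)$ (or rather the appropriate Morleyised type space) is a union of basic closed sets $[\theta(x)]$ for positive $\theta$, using that the homomorphism-preservation hypothesis makes $[\phi]$ "upward closed" along the specialisation preorder induced by homomorphisms, then compactness of the type space collapses this to a finite union. The main obstacle, and the step requiring the most care, is precisely this: correctly identifying that the homomorphism-preservation property translates into $[\phi(x)]$ being open in the relevant quotient/topology so that it is a (finite, by compactness) union of the positive basic closed-opens $[\theta(x)]$ — equivalently, showing that if $q \in \S(T)$ contains $\phi$ and $q' \in \S(T)$ with $q \cap \{\text{positive formulas}\} \subseteq q'$, then $\phi \in q'$, which is where the hypothesis is used via the amalgamation lemma \thref{amalgamation-lemma} to realise both $q$ and $q'$ compatibly under a homomorphism.
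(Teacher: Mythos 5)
Your first route (taking $\Gamma(x)$ to be the positive \emph{consequences} of $\phi$ and showing $T \models \forall x(\bigwedge\Gamma(x) \to \phi(x))$) is in principle viable, but the way you try to prove it breaks down at exactly the point you noticed, and the reason is that you wrote down the wrong set of sentences. To contradict the hypothesis you need a homomorphism \emph{from} a model of $T\cup\{\phi(a)\}$ \emph{into} (an elementary extension of) $N$ sending $a\mapsto b$; by \thref{amalgamation-lemma} this requires $\tp(a;M)\subseteq\tp(b;N)$ (positive types), i.e.\ you must force every positive formula true of $a$ to hold at $b$. So the set to test for consistency is not $T\cup\{\phi(c)\}\cup p(c)$ with $p$ the positive type of $b$ (that inclusion goes the wrong way and, even if consistent, only yields a homomorphism out of $N$, which the hypothesis says nothing about), but rather $T\cup\{\phi(c)\}\cup\{\neg\theta(c):\theta \text{ positive},\ N\not\models\theta(b)\}$. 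If this is inconsistent, compactness gives positive $\theta_1,\dots,\theta_k$, each false at $b$, with $T\models\forall x(\phi(x)\to\theta_1(x)\vee\dots\vee\theta_k(x))$; the disjunction is positive, hence lies in $\Gamma$, contradicting $N\models\Gamma(b)$. If it is consistent, \thref{amalgamation-lemma} gives an elementary embedding $N\to U$ and a homomorphism $M\to U$ matching $a$ with $b$, the hypothesis gives $U\models\phi(b)$, and elementarity pulls this back to $N\models\phi(b)$. You never find this repair, and without it the first half of your proposal does not close.

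Your second, ``actually correct'' plan is essentially right in outline, but as written it stops exactly at the step you yourself flag as the main obstacle, and some of the bookkeeping is off: the spaces $\S(T)$ of the paper consist of \emph{positive} types, so ``$q$ contains $\phi$'' does not make sense there; you must work with complete first-order types (the Morleyised Stone space) or argue syntactically. The key statement you isolate --- if $q\ni\phi$ and every positive formula in $q$ lies in $q'$, then $\phi\in q'$ --- is correct and does follow from \thref{amalgamation-lemma}: realise $q$ and $q'$ by $a\in M_1\models T$ and $b\in M_2\models T$, amalgamate so that the \emph{elementary} embedding is on the $M_2$ side and the homomorphism on the $M_1$ side, apply the hypothesis to that homomorphism, and pull $\phi$ back along the elementary embedding; but this argument is not in your proposal. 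You also still need two compactness steps you only gesture at: first, inside each such $q$, compactness to replace the infinite positive part by a single positive $\theta_q\in q$ with $T\models\forall x(\theta_q\to\phi)$; second, compactness (finite subcover of the clopen $[\phi]$ in the full Stone space) to get $\phi$ equivalent mod $T$ to a finite \emph{disjunction} $\theta_{q_1}\vee\dots\vee\theta_{q_n}$. Once these are filled in, your plan coincides with the paper's proof, which does exactly this in syntactic form: it first shows (via the inconsistency of $T\cup\Diag(M)\cup\{\neg\phi(a)\}$, which is where the hypothesis enters) that whenever $M\models\phi(a)$ there is a positive $\psi$ with $M\models\psi(a)$ and $T\models\forall x(\psi\to\phi)$, and then a second compactness argument over the set $\{\neg\psi:\psi\text{ positive},\ T\models\forall x(\psi\to\phi)\}$ produces the finite disjunction. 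Note in particular that the workable normal form here is a disjunction of positive formulas \emph{implying} $\phi$, not the conjunction of consequences you started with.
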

\begin{proof}
We first prove the following claim. Let $M$ be any model of $T$ and $a \in M$ such that $M \models \phi(a)$. Then there is positive $\psi(x)$ such that $T \models \forall x(\psi(x) \to \phi(x))$ and $M \models \psi(a)$.

To prove the claim we consider the set of formulas $T \cup \Diag(M) \cup \{\neg \phi(a)\}$. This cannot be consistent, as that would give us a homomorphism $f: M  \to N$ with $N \models T$ and $N \not \models \phi(f(a))$. There is thus some $\chi(a, b) \in \Diag(M)$ such that $T \models \chi(a, b) \to \phi(a)$. As $a$ and $b$ do not appear in $T$ this means that $T \models \forall x(\exists y \chi(x, y) \to \phi(x))$, and taking $\psi(x)$ to be $\exists y \chi(x, y)$ proves the claim.

Let $\Psi(x)$ be the set of all formulas $\neg \psi(x)$ such that $\psi(x)$ is positive and implies $\phi(x)$, modulo $T$. We will show that $T \cup \Psi(x) \cup \{\phi(x)\}$ is inconsistent. If it were consistent then we find a model $M$ of $T$ and $a \in M$ such that $M \models \Psi(a)$ and $M \models \phi(a)$. Using the claim there must be a positive $\psi(x)$ that implies $\phi(x)$ modulo $T$ such that $M \models \psi(a)$. However, by definition $\neg \psi(x) \in \Psi(x)$, so this contradicts $M \models \Psi(a)$.

Let $\{\neg \psi_1(x), \ldots, \neg \psi_n(x)\} \subseteq \Psi(x)$ such that $T \cup \{\neg \psi_1(x), \ldots, \neg \psi_n(x), \phi(x)\}$ is inconsistent. Then $T \models \forall x(\phi(x) \to \psi_1(x) \vee \ldots \vee \psi_n(x))$, and we conclude that $\phi(x)$ is equivalent to the positive formula $\psi_1(x) \vee \ldots \vee \psi_n(x)$ modulo $T$.
\end{proof}
\begin{lemma}
\thlabel{equivalence-boolean-characterisations}
The conditions in \thref{boolean-theory} are indeed equivalent.
\end{lemma}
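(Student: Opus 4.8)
The plan is to establish the cycle of implications (i) $\Leftrightarrow$ (ii) $\Rightarrow$ (iii) $\Rightarrow$ (iv) $\Rightarrow$ (v) $\Rightarrow$ (ii), which visibly suffices. The equivalence (i) $\Leftrightarrow$ (ii) is just a reformulation of \thref{pc-model}(i): if every model of $T$ is p.c., then any homomorphism $f\colon M\to N$ between models of $T$ is an immersion because $M$ is p.c.\ and $N\models T$; conversely, if every homomorphism between models of $T$ is an immersion, then \thref{pc-model}(i) holds for every model $M$ of $T$, so $M$ is p.c.

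For (ii) $\Rightarrow$ (iii), the main idea is to feed the full first-order formula $\neg\phi(x)$ into \thref{homomorphism-preserved-formulas-are-positive}. Its hypothesis is met: given a homomorphism $f\colon M\to N$ of models of $T$ and $a\in M$ with $M\models\neg\phi(a)$, condition (ii) says $f$ is an immersion, and since $\phi$ is in particular a positive formula this forces $N\not\models\phi(f(a))$, i.e.\ $N\models\neg\phi(f(a))$. Hence $\neg\phi(x)$ is equivalent modulo $T$ to a positive formula, which is exactly (iii).

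For (iii) $\Rightarrow$ (iv), I would induct on the complexity of a full first-order formula $\phi(x)$, treating it as built from atomic formulas by $\wedge,\vee,\neg,\exists$ only (every other connective is an abbreviation of these). Atomic formulas are already positive; if $\phi_1,\phi_2$ are equivalent modulo $T$ to positive formulas then so are $\phi_1\wedge\phi_2$, $\phi_1\vee\phi_2$ and $\exists y\,\phi_1$ directly, and $\neg\phi_1$ by one application of (iii). For (iv) $\Rightarrow$ (v), let $f\colon M\to N$ be a homomorphism of models of $T$ and $\phi(x)$ full first-order: pick positive $\psi(x)$ with $T\models\forall x(\phi(x)\leftrightarrow\psi(x))$ using (iv); then $M\models\phi(a)$ gives $M\models\psi(a)$, hence $N\models\psi(f(a))$ by \thref{homomorphism-preserves-truth}, hence $N\models\phi(f(a))$; applying the same to $\neg\phi$ (again equivalent to a positive formula by (iv)) yields the converse implication, so $f$ is elementary. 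Finally (v) $\Rightarrow$ (ii) is immediate, since an elementary embedding preserves all full first-order formulas in both directions, and in particular all positive ones, so it is an immersion.

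The argument is almost entirely bookkeeping once \thref{homomorphism-preserved-formulas-are-positive} is available, which carries all the genuine content. The only point requiring care is keeping straight which occurrences of ``formula'' refer to positive formulas (Convention \thref{stay-positive}) and which to full first-order formulas — notably in (ii) $\Rightarrow$ (iii), where the immersion hypothesis is only about positive formulas, and in (iii) and (iv) $\Rightarrow$ (v), where the induction and the ``negation'' trick both genuinely use the full first-order side.
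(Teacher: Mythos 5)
Your proposal is correct and follows essentially the same route as the paper: the trivial closing of the cycle via (i)/(ii), the application of \thref{homomorphism-preserved-formulas-are-positive} to $\neg\phi(x)$ for (ii) $\Rightarrow$ (iii), induction on complexity for (iii) $\Rightarrow$ (iv), and the one-directional preservation argument for (iv) $\Rightarrow$ (v). The only difference is cosmetic — you close the cycle at (ii) instead of (i) — so nothing of substance changes.
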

\begin{proof}
We prove (i) $\Rightarrow$ (ii) $\Rightarrow$ (iii) $\Rightarrow$ (iv) $\Rightarrow$ (v) $\Rightarrow$ (i). The first and last implication are trivial, so we prove the remaining three.

\underline{(ii) $\Rightarrow$ (iii)} Let $\phi(x)$ be a positive formula. As every homomorphism between models of $T$ is an immersion, truth of $\neg \phi(x)$ is also preserved upwards by such homomorphisms. By \thref{homomorphism-preserved-formulas-are-positive} we conclude that $\neg \phi(x)$ is equivalent to a positive formula, modulo $T$.

\underline{(iii) $\Rightarrow$ (iv)} This easily follows by induction on the complexity of the full first-order formula $\phi(x)$, where we use (iii) for the induction step with $\neg$. We treat the remaining connectives such as $\to$ and $\forall$ as abbreviations using the connectives for positive formulas together with $\neg$.

\underline{(iv) $\Rightarrow$ (v)} To verify that a homomorphism $f: M \to N$ is an elementary embedding it suffices to check that for every full first-order formula $\phi(x)$ and every $a \in M$ we have that $M \models \phi(a)$ implies $N \models \phi(f(a))$. This immediately follows from (iv) because $\phi(x)$ will be equivalent to a positive formula, modulo $T$, and hence its truth is preserved upwards by homomorphisms.
\end{proof}
The following definitions, except for being Boolean, are taken from \cite{ben-yaacov_thickness_2003}. These assumptions are very useful for developing (neo)stability theory for positive logic, while the weaker ones---thickness, and even being semi-Hausdorff---are relatively mild. Before we define them, we first need to recall the notion of an indiscernible sequence.
\begin{definition}
\thlabel{indiscernible-sequence}
Fix a theory $T$. An \term{indiscernible sequence} is an infinite sequence $(a_i)_{i \in I}$ in some p.c.\ model $M$ such that for any $i_1 < \ldots < i_n$ and $j_1 < \ldots < j_n$ in $I$ we have
\[
\tp(a_{i_1} \ldots a_{i_n}; M) = \tp(a_{j_1} \ldots a_{j_n}; M).
\]
\end{definition}
So a sequence is indiscernible precisely when any two subsequences of the same order-type have the same type.
\begin{definition}
\thlabel{hausdorff-thick}
Let $T$ be a theory. We call $T$:
\begin{itemize}
\item \emph{Boolean} if every formula in full first-order logic is equivalent to some positive formula, modulo $T$ (or any of the equivalent statements from \thref{boolean-theory});
\item \emph{Hausdorff}\index{Hausdorff theory} if for any two distinct types $p(x)$ and $q(x)$ there are $\phi(x) \not \in p(x)$ and $\psi(x) \not \in q(x)$ such that $T^\pc \models \forall x(\phi(x) \vee \psi(x))$;
\item \emph{semi-Hausdorff}\index{semi-Hausdorff theory} if equality of types is type-definable, so there is a partial type $\Omega(x, y)$ such that for any $a, b$ in any p.c.\ model $M$ we have $\tp(a; M) = \tp(b; M)$ if and only if $M \models \Omega(a, b)$;
\item \emph{thick}\index{thick theory} if being an indiscernible sequence is type-definable, so there is a partial type $\Theta((x_i)_{i < \omega})$ such that for any sequence $(a_i)_{i < \omega}$ in any p.c.\ model $M$ we have that $(a_i)_{i < \omega}$ is indiscernible if and only if $M \models \Theta((a_i)_{i < \omega})$.
\end{itemize}
\end{definition}
The reason for the name Hausdorff is that this corresponds to the type spaces being Hausdorff, see \thref{hausdorff-equivalences}.

We mentioned Boolean theories in \thref{hausdorff-thick} again because they fit very well in the hierarchy mentioned there, as is apparent from the following proposition.
\begin{proposition}
\thlabel{hausdorff-implies-thick}
Boolean implies Hausdorff implies semi-Hausdorff implies thick.
\end{proposition}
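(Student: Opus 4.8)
The plan is to prove the three implications in \thref{hausdorff-implies-thick} separately, in each case unwinding the relevant clause of \thref{hausdorff-thick} (and \thref{boolean-theory}) and, for the last two, exhibiting an explicit partial type.

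For \textbf{Boolean $\Rightarrow$ Hausdorff}: given two distinct types $p, q \in \S(T)$, maximality of types (\thref{pc-model-iff-types-are-maximal}) gives a positive formula on which they differ, and since the Hausdorff condition is symmetric under swapping $(p,\phi)\leftrightarrow(q,\psi)$ we may assume $\chi(x) \in p(x) \setminus q(x)$. Since $T$ is Boolean, \thref{boolean-theory}(iii) supplies a positive $\phi(x)$ with $T \models \forall x(\phi(x) \leftrightarrow \neg\chi(x))$. Then $\phi(x) \notin p(x)$ (as $\chi(x) \in p(x)$ and $p(x)$ is consistent), $\chi(x) \notin q(x)$ by choice, and $\forall x(\phi(x)\vee\chi(x))$ is an h-inductive consequence of $T$, hence lies in $T^\pc$. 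So $\phi$ and $\chi$ witness Hausdorffness for $p,q$.

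For \textbf{Hausdorff $\Rightarrow$ semi-Hausdorff}, I would, for tuples of variables $x,y$ of matching length, set
\[
\Omega(x,y) = \{\, \phi(x) \vee \psi(y) : \phi,\psi \text{ positive and } T^\pc \models \forall z(\phi(z)\vee\psi(z)) \,\}.
\]
This is a set of positive formulas, and it is a partial type because $(a,a)$ realises it in every p.c.\ model $M$ of $T$ (each defining condition says precisely $M \models \phi(a)\vee\psi(a)$). If $\tp(a;M)=\tp(b;M)$, then for each $\phi(x)\vee\psi(y)\in\Omega(x,y)$ we have $M \models \phi(a)\vee\psi(a)$; in the first case $M \models \phi(a)$, in the second $\psi \in \tp(a;M)=\tp(b;M)$ so $M \models \psi(b)$, and either way $M \models \phi(a)\vee\psi(b)$. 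Conversely, if $\tp(a;M)\neq\tp(b;M)$, Hausdorffness gives $\phi(x)\notin\tp(a;M)$ and $\psi(x)\notin\tp(b;M)$ with $T^\pc \models \forall z(\phi(z)\vee\psi(z))$; then $\phi(x)\vee\psi(y)\in\Omega(x,y)$ while $M \not\models \phi(a)\vee\psi(b)$, so $M\not\models\Omega(a,b)$. Hence $\Omega$ witnesses semi-Hausdorffness.

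For \textbf{semi-Hausdorff $\Rightarrow$ thick}, let $\Omega_n$ denote the semi-Hausdorff partial type applied to $n$-tuples, and set
\[
\Theta\bigl((x_i)_{i<\omega}\bigr) = \bigcup_{n<\omega}\ \bigcup_{\substack{i_1<\dots<i_n\\ j_1<\dots<j_n}} \Omega_n\bigl(x_{i_1}\dots x_{i_n},\, x_{j_1}\dots x_{j_n}\bigr),
\]
a set of positive formulas in $(x_i)_{i<\omega}$. For a sequence $(a_i)_{i<\omega}$ in a p.c.\ model $M$, $M \models \Theta((a_i)_{i<\omega})$ holds iff $\tp(a_{i_1}\dots a_{i_n};M) = \tp(a_{j_1}\dots a_{j_n};M)$ for all increasing tuples of every length $n$, which by \thref{indiscernible-sequence} is exactly indiscernibility of $(a_i)_{i<\omega}$; and $\Theta$ is consistent since any constant sequence in a p.c.\ model of $T$ realises it. The routine part is the bookkeeping with h-inductive sentences and arities; the one genuinely delicate step is choosing the right $\Omega$ in the middle implication — recognising that the positive formulas $\phi(x)\vee\psi(y)$ with $\phi\vee\psi$ valid on the diagonal are at once few enough to be forced by $\tp(a;M)=\tp(b;M)$ (via the case split) and plentiful enough that Hausdorffness separates any two distinct types.
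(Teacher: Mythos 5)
Your proposal is correct and takes essentially the same approach as the paper: Boolean $\Rightarrow$ Hausdorff via a positive formula equivalent to the negation of a separating formula, and semi-Hausdorff $\Rightarrow$ thick via the union of the $\Omega$'s over all finite increasing subtuples. The only (cosmetic) difference is in the middle implication, where the paper takes $\Omega(x,y)$ to be the set of \emph{all} formulas holding of every same-type pair and invokes Hausdorffness only for the converse direction (with witness $(\phi(x)\wedge\phi(y))\vee(\psi(x)\wedge\psi(y))$), whereas you build $\Omega$ explicitly from the disjunctions $\phi(x)\vee\psi(y)$ with $T^\pc \models \forall z(\phi(z)\vee\psi(z))$ and verify both directions directly — both arguments are sound.
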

\begin{proof}
\underline{Boolean implies Hausdorff.} Let $p(x)$ and $q(x)$ be distinct types. Pick any $\phi(x) \in q(x)$ such that $\phi(x) \not \in p(x)$. Because the theory is Boolean there must be $\psi(x)$ that is equivalent to $\neg \phi(x)$, modulo the theory. So we have $\psi(x) \not \in q(x)$ while also $T \models \forall x(\phi(x) \vee \psi(x))$, so in particular $T^\pc \models \forall x(\phi(x) \vee \psi(x))$, as required.

\underline{Hausdorff implies semi-Hausdorff.} Define
\begin{align*}
\Omega(x, y) = \{\phi(x, y) :\; &\text{for all } a, b \text{ in any p.c.\ model } M \text{ with } \tp(a) = \tp(b)\\
&\text{we have } M \models \phi(a, b)\}.
\end{align*}
Let $a, b$ be arbitrary in some arbitrary p.c.\ model $M$. By construction we have that $\tp(a) = \tp(b)$ implies $M \models \Omega(a, b)$. For the other direction we prove the contrapositive. So suppose that $\tp(a) \neq \tp(b)$. Because the theory is Hausdorff there are $\phi(x) \not \in \tp(a)$ and $\psi(x) \not \in \tp(b)$ such that $T^\pc \models \forall x(\phi(x) \vee \psi(x))$. The latter means that by definition of $\Omega(x, y)$ we then have $(\phi(x) \wedge \phi(y)) \vee (\psi(x) \wedge \psi(y)) \in \Omega(x, y)$. The former means that $M \not \models (\phi(a) \wedge \phi(b)) \vee (\psi(a) \wedge \psi(b))$, hence $M \not \models \Omega(a, b)$, as required.

\underline{Semi-Hausdorff implies thick.} Define the partial type $\Theta((x_i)_{i < \omega})$ as:
\[
\bigcup \{ \Omega(x_{i_1}, \ldots, x_{i_n}; x_{j_1}, \ldots, x_{j_n}) : n < \omega, i_1 < \ldots < i_n < \omega, j_1 < \ldots < j_n < \omega \}.
\]
Here $\Omega(x_{i_1}, \ldots, x_{i_n}; x_{j_1}, \ldots, x_{j_n})$ is the partial type expressing that $x_{i_1}, \ldots, x_{i_n}$ and $x_{j_1}, \ldots, x_{j_n}$ have the same type, which exists by the semi-Hausdorff assumption. So $\Theta((x_i)_{i < \omega})$ expresses that any two finite subsequences of $(x_i)_{i < \omega}$ of the same length have the same type, and hence it expresses indiscernibility.
\end{proof}
The following characterisations of Hausdorff theories (\thref{separate-types-equivalences}) are useful in practice.
\begin{definition}
\thlabel{aph}
We say that a theory $T$ has the \term{h-amalgamation property} or \term{APh} if any span $M \xleftarrow{f} M_0 \xrightarrow{g} M'$ of homomorphisms between models of $T$ can be amalgamated (i.e., there are homomorphisms $M \xrightarrow{f'} N \xleftarrow{g'} M'$ with $N \models T$ such that $f'f = g'g$).
\end{definition}
\begin{definition}
\thlabel{separable-types}
We call types $p(x)$ and $q(x)$ \emph{separable}\index{separable types} for a theory $T$ if there are $\phi(x) \not \in p(x)$ and $\psi(x) \not \in q(x)$ such that $T \models \forall x(\phi(x) \vee \psi(x))$. We say that a theory $T$ \term{separates types} if any two distinct types are separable for $T$.
\end{definition}
So a theory $T$ is Hausdorff iff $T^\pc$ separates types.
\begin{lemma}
\thlabel{maximal-extensions-are-realised}
Let $M$ be a model of some theory $T$ and let $a \in M$. Write $\pi(x) = \tp(a; M)$ and let $p(x) \in \S(T)$ be a maximal type with $\pi(x) \subseteq p(x)$. Then there is a homomorphism $f: M \to N$ with $N \models T$ and $N \models p(f(a))$.
\end{lemma}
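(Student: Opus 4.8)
The plan is to reduce everything to plain first-order compactness: I will show that the full first-order theory
\[
T \cup \Diag(M) \cup \{\phi(a) : \phi(x) \in p(x)\}
\]
is consistent (here, as usual, the constants naming $a$ are substituted for the free variable in each $\phi(x)\in p(x)$). A model $N$ of this theory is, by \thref{model-of-diagram-is-homomorphism}, exactly a model of $T$ equipped with a homomorphism $f\colon M\to N$, and the last component of the theory forces $N\models p(f(a))$; so such an $N$ is precisely what we want. Note that the conclusion only asks for $N\models T$, not for $N$ to be p.c., so no appeal to \thref{continue-to-pc-model} and no positive-logic subtlety is needed beyond ordinary compactness.

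By compactness for full first-order logic it suffices to check finite consistency. A finite fragment of the above theory is contained in a set of the form $T\cup\{\chi(a,b)\}\cup\{\phi_1(a),\dots,\phi_n(a)\}$, where $\chi(a,b)\in\Diag(M)$ (with $b$ a further tuple of parameters from $M$, which we may take disjoint from $a$) and $\phi_1(x),\dots,\phi_n(x)\in p(x)$. Suppose for contradiction that this set is inconsistent. Since the constants $a,b$ do not occur in $T$, existentially quantifying them out gives $T\models\neg\exists x\,\bigl(\phi_1(x)\wedge\dots\wedge\phi_n(x)\wedge\exists y\,\chi(x,y)\bigr)$; in other words, $\exists y\,\chi(x,y)$ together with the $\phi_i$ is inconsistent with $T$. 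But $M\models\chi(a,b)$, so $M\models\exists y\,\chi(a,y)$, hence $\exists y\,\chi(x,y)\in\tp(a;M)=\pi(x)\subseteq p(x)$. Thus $\{\phi_1(x),\dots,\phi_n(x),\exists y\,\chi(x,y)\}$ is a finite subset of $p(x)$, which is consistent modulo $T$ because $p(x)\in\S(T)$ — contradiction.

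There is essentially one idea here — that an obstruction (with respect to $T$) of something in $p$ which is also a consequence of $\Diag(M)$ contradicts $\pi\subseteq p$ together with the consistency of $p$ — and everything else is bookkeeping; I expect no serious obstacle. (An alternative, slightly slicker but less self-contained route: since $p(x)\in\S(T)$ it is realized, say by $b$ in a p.c.\ model $N_0\models T$, and then $\tp(b;N_0)=p(x)\supseteq\pi(x)=\tp(a;M)$, so the Amalgamation Lemma \thref{amalgamation-lemma}, applied with $b$ on the elementary side and $a$ on the homomorphism side, yields $N\models T$ and a homomorphism $g'\colon M\to N$ with $N\models p(g'(a))$. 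I would nonetheless prefer the direct compactness argument since it does not invoke realizability of types.)
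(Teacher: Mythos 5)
Your proposal is correct and follows essentially the same route as the paper's proof: both verify finite consistency of $T \cup \Diag(M) \cup p(a)$ via the key observation that $\exists y\,\chi(a,y) \in \pi(x) \subseteq p(x)$ for any $\chi(a,b) \in \Diag(M)$, then invoke compactness and the consistency of $p$ modulo $T$. Your contrapositive phrasing (and the optional amalgamation-lemma aside) is only a cosmetic difference.
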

\begin{proof}
It is enough to show that $\Diag(M) \cup p(a)$ is consistent. So let $\phi(a, b) \in \Diag(M)$ where $b$ is a tuple of parameters from $M$ disjoint from $a$. Then $\exists y \phi(a, y) \in \pi(a) \subseteq p(a)$. As $p(a)$ is consistent, we conclude by compactness that $\Diag(M) \cup p(a)$ is consistent.
\end{proof}
\begin{proposition}
\thlabel{separate-types-equivalences}
The following are equivalent for a theory $T$:
\begin{enumerate}[label=(\roman*)]
\item $T$ separates types;
\item for any $M \models T$ and $a \in M$ there is some type $p(x)$ such that for any homomorphism $f: M \to N$ where $N$ is a p.c.\ model of $T$ we have that $N \models p(f(a))$;
\item $T$ has APh.
\end{enumerate}
\end{proposition}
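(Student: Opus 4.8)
The plan is to prove the cycle (i) $\Rightarrow$ (ii) $\Rightarrow$ (iii) $\Rightarrow$ (i). The only tools needed are that homomorphisms preserve positive formulas (\thref{homomorphism-preserves-truth}), that in a p.c.\ model every type is maximal (\thref{pc-model-iff-types-are-maximal}), that every model of $T$ continues to a p.c.\ model (\thref{continue-to-pc-model}), and the Amalgamation lemma (\thref{amalgamation-lemma}); for the last implication I also use ordinary (full first-order) compactness.

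For (i) $\Rightarrow$ (ii), fix $M \models T$ and $a \in M$ and put $\pi(x) = \tp(a; M)$, which is consistent with $T$, so it extends to a maximal type $p(x) \in \S(T)$. The key point is that $p(x)$ is the \emph{unique} type extending $\pi(x)$: if $q(x) \neq p(x)$ also extended $\pi(x)$, then since $T$ separates types there are $\phi(x) \notin p(x)$ and $\psi(x) \notin q(x)$ with $T \models \forall x(\phi(x) \vee \psi(x))$, so $M \models \phi(a)$ or $M \models \psi(a)$, i.e.\ $\phi(x) \in \pi(x) \subseteq p(x)$ or $\psi(x) \in \pi(x) \subseteq q(x)$ — either way a contradiction. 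Now if $f : M \to N$ is any homomorphism into a p.c.\ model $N$ of $T$, then $\tp(f(a); N)$ is a maximal type containing $\pi(x)$, hence equals $p(x)$; thus $N \models p(f(a))$.

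For (ii) $\Rightarrow$ (iii), let $M \xleftarrow{f} M_0 \xrightarrow{g} M'$ be a span of homomorphisms between models of $T$ and let $c$ enumerate $M_0$. Apply (ii) to $M_0$ and $c$ to get a type $p(x)$ realised at the image of $c$ by every homomorphism from $M_0$ into a p.c.\ model. Continue $M$ to a p.c.\ model $M_1$ via $h : M \to M_1$ and $M'$ to a p.c.\ model $M_1'$ via $h' : M' \to M_1'$. Then $hf$ and $h'g$ are homomorphisms from $M_0$ into p.c.\ models, so $\tp(hf(c); M_1) = p(x) = \tp(h'g(c); M_1')$. By the Amalgamation lemma there is an $\L$-structure $U$ with an elementary embedding $\alpha : M_1 \to U$ and a homomorphism $\beta : M_1' \to U$ satisfying $\alpha(hf(c)) = \beta(h'g(c))$; since $\alpha$ is elementary and $M_1 \models T$ we get $U \models T$. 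Now $\alpha h : M \to U$ and $\beta h' : M' \to U$ are homomorphisms between models of $T$ that agree on the tuple $c$, hence on all of $M_0$, so $(\alpha h)f = (\beta h')g$ and APh holds.

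For (iii) $\Rightarrow$ (i), argue the contrapositive: suppose $p(x) \neq q(x)$ are types that $T$ does not separate. Using the standard fact that $\phi(x) \vee \psi(x) \in p(x)$ forces $\phi(x) \in p(x)$ or $\psi(x) \in p(x)$ (for a type $p$), a routine compactness argument shows that $T \cup \{\neg\phi(x) : \phi(x) \notin p(x)\} \cup \{\neg\psi(x) : \psi(x) \notin q(x)\}$ is consistent, so there are $M \models T$ and $c \in M$ with $\tp(c; M) \subseteq p(x) \cap q(x)$. Realise $p$ at $a$ in a p.c.\ model $N_p$ of $T$ and $q$ at $b$ in a p.c.\ model $N_q$ of $T$. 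Since $\tp(c; M) \subseteq p(x) = \tp(a; N_p)$, the Amalgamation lemma gives $N \models T$ with an elementary embedding $N_p \to N$ and a homomorphism $k : M \to N$ identifying the image of $c$ with the image of $a$; symmetrically there are $N' \models T$, an elementary embedding $N_q \to N'$, and a homomorphism $k' : M \to N'$ identifying the image of $c$ with the image of $b$. Applying APh to the span $N \xleftarrow{k} M \xrightarrow{k'} N'$ yields $U \models T$ with homomorphisms into $U$ agreeing on $M$; tracing images, the common value $d \in U$ of the image of $a$ and the image of $b$ satisfies every formula of $p(x)$ and every formula of $q(x)$ (positive formulas transfer along homomorphisms), so $p(x) \cup q(x)$ is consistent, contradicting that $p$ and $q$ are distinct maximal types. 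The main obstacle is precisely this last implication: one must extract from the failure of separation a single model that maps suitably into realisations of both $p$ and $q$, so that APh becomes applicable, and then verify that the resulting amalgam genuinely realises $p(x) \cup q(x)$; the compactness step producing $M$ with $\tp(c; M) \subseteq p(x) \cap q(x)$ is the technical heart, while the other two implications are essentially bookkeeping with the Amalgamation lemma.
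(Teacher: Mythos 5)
Your proof is correct, and two of its three legs coincide with the paper's: your (i) $\Rightarrow$ (ii) (uniqueness of the maximal type extending $\tp(a;M)$, forced by separation) is just a repackaging of the paper's direct contradiction between two homomorphisms, and your (ii) $\Rightarrow$ (iii) (continue both sides of the span to p.c.\ models, note the enumerating tuple gets the same type, apply \thref{amalgamation-lemma}) is exactly the paper's argument, only with the ``we may assume $M$, $M'$ are p.c.'' step spelled out. Where you genuinely diverge is the last leg: the paper decomposes it as (iii) $\Rightarrow$ (ii) — using APh together with the fact that homomorphisms out of p.c.\ models are immersions to see that all p.c.\ continuations give the same type — followed by (ii) $\Rightarrow$ (i), a compactness argument with the set $\{\neg\phi : \phi\notin p \text{ or } \phi \notin q\}$ and \thref{maximal-extensions-are-realised}, which actually exhibits the separating pair of formulas by splitting a disjunction. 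You instead prove (iii) $\Rightarrow$ (i) in one stroke: from non-separability extract (by the same kind of compactness, in contrapositive form) a model $M$ and $c$ with $\tp(c;M)\subseteq p\cap q$, use \thref{amalgamation-lemma} twice to build a span over $M$ of models of $T$ realising $p$ and $q$ at the image of $c$, and let APh force $p\cup q$ to be consistent, contradicting maximality of distinct types. Your route is slightly shorter and avoids proving (iii) $\Rightarrow$ (ii) separately, at the cost of being a pure contradiction argument (it never exhibits the separating formulas, which the paper's (ii) $\Rightarrow$ (i) does) and of losing the intermediate implication (iii) $\Rightarrow$ (ii) as a standalone fact. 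The only point I would ask you to make explicit is the degenerate case in your ``routine compactness argument'': if the finite inconsistent subset draws only on $\{\neg\phi:\phi\notin p\}$, take $\psi=\bot$ (which lies in no type) as the second separating formula; with that remark the step is complete, since the prime-filter property of maximal types handles the grouping of the disjuncts exactly as in the paper's $I\cup J$ argument.
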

\begin{proof}
We prove (i) $\Leftrightarrow$ (ii) $\Leftrightarrow$ (iii).

\underline{(i) $\Rightarrow$ (ii)} Suppose for a contradiction that there are homomorphisms $f: M \to N$ and $g: M \to N'$, where $N$ and $N'$ are p.c.\ models, such that $\tp(f(a); N) \neq \tp(g(a); N')$. As $T$ separates types we find $\phi(x) \not \in \tp(f(a); N)$ and $\psi(x) \not \in \tp(g(a); N')$ such that $T \models \forall x(\phi(x) \vee \psi(x))$. Then we must either have $M \models \phi(a)$ or $M \models \psi(a)$. In the first case we get $N \models \phi(f(a))$, contradicting $\phi(x) \not \in \tp(f(a); N)$, and in the second case we get $N' \models \psi(g(a))$, contradicting $\psi(x) \not \in \tp(g(a); N')$.

\underline{(ii) $\Rightarrow$ (i)} Let $p(x)$ and $q(x)$ be two distinct types and consider the set of formulas
\[
\Sigma(x) = \{ \neg \phi(x) : \phi(x) \not \in p(x) \text{ or } \phi(x) \not \in q(x) \}.
\]
We claim that $\Sigma(x)$ is inconsistent with $T$. If it were consistent then there would be a model $M$ of $T$ and some $a \in M$ with $M \models \Sigma(a)$. By construction $\tp(a; M) \subseteq p(x) \cap q(x)$. So by \thref{maximal-extensions-are-realised} there are homomorphisms $f: M \to N$ and $g: M \to N$ into models of $T$, which we may assume to be p.c.\ models by \thref{continue-to-pc-model}, such that $\tp(f(a); N) = p(x)$ and $\tp(g(a); N') = q(x)$. This contradicts (ii), and so $\Sigma(x)$ must be inconsistent with $T$.

There are thus $\neg \phi_1(x), \ldots, \neg \phi_n(x) \in \Sigma(x)$ such that $T \models \forall x(\phi_1(x) \vee \ldots \vee \phi_n(x))$. Let $I = \{i : \phi_i(x) \not \in p(x)\}$ and $J = \{j : \phi_j(x) \not \in q(x)\}$. By definition of $\Sigma(x)$ we have that $I \cup J = \{1, \ldots, n\}$. Let $\psi_I(x) = \bigvee_{i \in I} \phi_i(x)$ and $\psi_J(x) = \bigvee_{i \in J} \phi_i(x)$. Then $T \models \forall x(\psi_I(x) \vee \psi_J(x))$ while $\psi_I(x) \not \in p(x)$ and $\psi_J(x) \not \in q(x)$, so $T$ separates $p(x)$ and $q(x)$.

\underline{(ii) $\Rightarrow$ (iii)} Let $M \xleftarrow{f} M_0 \xrightarrow{g} M'$ be a span of homomorphisms. We may assume $M$ and $M'$ to be p.c.\ models. Let $a$ be a tuple that enumerates $M_0$. By assumption $\tp(f(a); M) = \tp(g(a); M')$. So we find the required amalgamation by \thref{amalgamation-lemma}.

\underline{(iii) $\Rightarrow$ (ii)} Let $M \models T$ and $a \in M$. Continue $M$ into some p.c.\ model $N$ by a homomorphism $f: M \to N$. We claim that $p(x) = \tp(f(a); N)$ is as described in (ii). Let $g: M \to N'$ be any other homomorphism with $N'$ a p.c.\ model. By APh we find $N \xrightarrow{f'} N^* \xleftarrow{g'} N'$ such that $f'f = g'g$. We thus see that $\tp(g(a); N') = \tp(g'g(a); N^*) = \tp(f'f(a); N^*) = \tp(f(a); N) = p(x)$, where we used that $f'$ and $g'$ are immersions because $N$ and $N'$ are p.c.\ models.
\end{proof}
\begin{proposition}
\thlabel{hausdorff-equivalences}
The following are equivalent for a theory $T$:
\begin{enumerate}[label=(\roman*)]
\item $T$ is Hausdorff;
\item $\S_I(T)$ is a Hausdorff space for all index sets $I$;
\item $T^\pc$ separates types;
\item $T^\pc$ has APh.
\end{enumerate}
\end{proposition}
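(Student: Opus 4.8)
I would prove the four conditions equivalent by establishing (i) $\Leftrightarrow$ (iii), (iii) $\Leftrightarrow$ (iv), and (i) $\Leftrightarrow$ (ii); the first two amount to bookkeeping with earlier results, and the real work is in (i) $\Leftrightarrow$ (ii), where the topology of $\S_I(T)$ must be unwound.

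For (i) $\Leftrightarrow$ (iii): by \thref{universal-kaiser-hull-ec-models} the theories $T$ and $T^\pc$ have exactly the same p.c.\ models, hence the same types and literally the same type spaces, since the closed sets $[\Sigma(x)]$ depend only on which sets of formulas are realised in a p.c.\ model. Reading off \thref{hausdorff-thick}, ``$T$ is Hausdorff'' says that any two distinct types $p(x), q(x)$ admit $\phi(x) \notin p(x)$ and $\psi(x) \notin q(x)$ with $T^\pc \models \forall x(\phi(x) \vee \psi(x))$, which is verbatim the statement that $T^\pc$ separates types in the sense of \thref{separable-types}. For (iii) $\Leftrightarrow$ (iv): apply \thref{separate-types-equivalences} with $T^\pc$ in place of $T$; its equivalence between ``separates types'' and ``has APh'' is precisely what is needed, since that proposition assumes nothing special about the theory.

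For (i) $\Leftrightarrow$ (ii) I would first set up a dictionary between the topology on $\S_I(T)$ and consistency. The sets $\S_I(T) \setminus [\phi(x)]$, for $\phi$ a formula, form a basis of open sets (each closed set $[\Sigma(x)]$ is the intersection $\bigcap_{\phi \in \Sigma} [\phi(x)]$); a type $p$ lies in $\S_I(T) \setminus [\phi(x)]$ exactly when $\phi(x) \notin p$; and $(\S_I(T) \setminus [\phi(x)]) \cap (\S_I(T) \setminus [\psi(x)])$ is empty exactly when $[\phi(x)] \cup [\psi(x)] = \S_I(T)$. The one point that needs a small argument is that $[\phi(x)] \cup [\psi(x)] = \S_I(T)$ if and only if $T^\pc \models \forall x(\phi(x) \vee \psi(x))$: by \thref{pc-model-iff-types-are-maximal} every type is $\tp(a; M)$ for some p.c.\ model $M$ of $T$, and every such $M$ models $T^\pc$, so the left-hand side says exactly that $M \models \phi(a) \vee \psi(a)$ for every p.c.\ model $M$ of $T$ and every matching tuple $a$, i.e.\ that the h-inductive sentence $\forall x(\phi(x) \vee \psi(x))$ holds in all p.c.\ models of $T$, i.e.\ that it lies in $T^\pc$.

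Given this dictionary, (i) $\Leftrightarrow$ (ii) is immediate: $\S_I(T)$ is Hausdorff precisely when any two distinct $I$-types $p, q$ can be separated by disjoint basic open neighbourhoods $\S_I(T) \setminus [\phi(x)] \ni p$ and $\S_I(T) \setminus [\psi(x)] \ni q$, which by the dictionary translates into the existence of $\phi(x) \notin p$ and $\psi(x) \notin q$ with $T^\pc \models \forall x(\phi(x) \vee \psi(x))$; quantifying over all index sets $I$ this is exactly ``$T$ is Hausdorff''. I do not expect a genuine obstacle here; the only mildly delicate point is the routine bookkeeping with infinite tuples of variables --- a positive formula mentions only finitely many of the variables $(x_i)_{i \in I}$, so the quantifier ``$\forall x$'' above is really over those finitely many variables, and one checks that the basis and the disjointness computations are unaffected by this reading.
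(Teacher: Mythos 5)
Your proposal is correct and takes essentially the same route as the paper: (i) $\Leftrightarrow$ (iii) as a reformulation (justified, as you note, by $T$ and $T^\pc$ having the same p.c.\ models and hence the same types), (iii) $\Leftrightarrow$ (iv) by applying \thref{separate-types-equivalences} to $T^\pc$, and (i) $\Leftrightarrow$ (ii) by unwinding the topology of $\S_I(T)$. The only cosmetic difference is that the paper handles the topological step via the closed-set reformulation of Hausdorffness (closed sets missing $p$ and $q$ whose union is the whole space) and shrinks arbitrary closed sets $[\pi], [\rho]$ to single formulas, whereas you work with disjoint basic open neighbourhoods $\S_I(T) \setminus [\phi]$, $\S_I(T) \setminus [\psi]$; these are the same argument.
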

\begin{proof}
The equivalence (i) $\Leftrightarrow$ (iii) is just a reformulation and (iii) $\Leftrightarrow$ (iv) is \thref{separate-types-equivalences}. It remains to show (i) $\Leftrightarrow$ (ii).

Being a Hausdorff space can be formulated as follows: for any distinct points $p$ and $q$ there are closed sets $A$ and $B$ such that $p \not \in A$ and $q \not \in B$ while $A \cup B$ is the entire space. With this in mind (i) $\Rightarrow$ (ii) follows easily: for distinct types $p(x)$ and $q(x)$ we $\phi(x) \not \in p(x)$ and $\psi(x) \not \in q(x)$ such that $T^\pc \models \forall x(\phi(x) \vee \psi(x))$ and consider the closed sets $[\phi(x)]$ and $[\psi(x)]$.

We prove (ii) $\Rightarrow$ (i). Let $p(x)$ and $q(x)$ be distinct types and let $[\pi(x)]$ and $[\rho(x)]$ be such that $p(x) \not \in [\pi(x)]$ and $q(x) \not \in [\rho(x)]$ while $[\pi(x)] \vee [\rho(x)]$ is the entire space. Here we used that any closed set in $\S_I(T)$ is of the form $[\sigma(x)]$ for some set of formulas $\sigma(x)$. As $p(x) \not \in [\pi(x)]$ by definition means that $\pi(x) \not \subseteq p(x)$, we find $\phi(x) \in \pi(x)$ such that $\phi(x) \not \in p(x)$. Similarly we find $\psi(x) \in \rho(x)$ such that $\psi(x) \not \in q(x)$. We have $[\pi(x)] \subseteq [\phi(x)]$ and $[\rho(x)] \subseteq [\psi(x)]$ so $[\phi(x)] \cup [\psi(x)] = \S_I(T)$ which means that $T^\pc \models \forall x(\phi(x) \vee \psi(x))$, as required.
\end{proof}
We now consider some examples to show that none of the implications in \thref{hausdorff-implies-thick} are reversible and that non-thick theories exist.
\begin{example}
\thlabel{hausdorff-not-boolean}
We give an example of a Hausdorff non-Boolean theory. Write $\Q_{(0,1)} = \{q \in \Q : 0 < q < 1\}$. Let $\L$ be the language that has a constant symbol for each element of $\Q_{(0,1)}$ and an order symbol $\leq$. Considering the obvious $\L$-structure on $\Q_{(0,1)}$ we let $T$ be the set of h-inductive sentences that are true in $\Q_{(0,1)}$.

Let $[0,1]$ be the real unit interval. We claim that any model $M$ of $T$ admits a unique homomorphism into $[0,1]$. For a singleton $a \in M$ we let $L_a = \{q \in \Q_{(0,1)} : M \models q \leq a \}$ and $R_a = \{q \in \Q_{(0,1)} : M \models a \leq q\}$. As $T$ specifies that $\leq$ is a linear order, this determines a Dedekind cut in $[0, 1]$ and so there is a unique $r_a \in [0, 1]$ such that $q_1 \leq r_a \leq q_2$ for all $q_1 \in L_a$ and $q_2 \in R_a$. We can thus define a homomorphism by $M \to [0,1]$ be sending $a$ to $r_a$, and clearly every homomorphism must send $a$ to $r_a$.

There are two consequences of the above claim. Firstly, it means that the real unit interval $[0,1]$ is a maximal p.c.\ model for this theory. This means that $T$ is not Boolean because there is an infinite maximal p.c.\ model (cf.\ \thref{bounded-set-compactness}).

Secondly, it means that $T$, and hence $T^\pc$, has APh. So $T$ is Hausdorff by \thref{hausdorff-equivalences}. Indeed, any span of homomorphisms $M \leftarrow M_0 \to M'$ between models of $T$ can be completed to a square by composing with the homomorphisms $M \to [0,1] \leftarrow M'$. This square then commutes by uniqueness of the homomorphism $M_0 \to [0,1]$.
\end{example}
\begin{example}
\thlabel{semi-hausdorff-not-hausdorff}
We give an example of a semi-Hausdorff non-Hausdorff theory. For this we use the theory $T$ from \thref{bounded-set-compactness}, which has $\omega$ many constants $c_n$ and declares them to be distinct. There is then only one p.c.\ model (up to isomorphism), namely $\omega$ with $c_n$ interpreted as $n$. Then for any tuples $a$ and $b$ we have $a \equiv b$ if and only if $a$ and $b$ are equal to the same tuple of constants if and only if $a = b$. So $T$ is semi-Hausdorff. To show that $T$ is not Hausdorff we show that APh fails for $T^\pc$. Using \thref{regular-formulas-eliminate-quantifiers-implies-positive-quantifier-elimination} one quickly checks that $T$, and hence $T^\pc$, has positive quantifier elimination, from which it quickly follows that $T^\pc$ does not specify anything more than $T$ does. That is, $T$ and $T^\pc$ are logically equivalent. Let $M$ be $\omega$ together with one extra point $*$, which is then a model of $T^\pc$. We define a homomorphism $f_1: M \to \omega$ by taking the identity on $\omega$ and setting $f_1(*) = 1$. Similarly we define $f_2: M \to \omega$ with $f_2(*) = 2$. The span $\omega \xleftarrow{f_1} M \xrightarrow{f_2} \omega$ cannot be amalgamated.
\end{example}
\begin{example}
\thlabel{thick-not-semi-hausdorff}
We give an example of a thick theory that is not semi-Hausdorff. Consider the signature $\L$ with unary relation symbols $P_n$ and $P'_n$ for all $n < \omega$, and a binary relation symbol $R$. We define the $\L$-structure $M = \{a_n, b_n : n < \omega\}$ as follows. The interpretation of $P_n$ is $\{a_n, b_n\}$ and $P'_n$ is the complement of $P_n$. We take $R$ to be the symmetric anti-reflexive relation $\{(a_n, b_n), (b_n, a_n) : n < \omega\}$, so $R$ is the inequality relation on each $P_n$. Let $T$ be the h-inductive theory of $M$. Then $M$ is a p.c.\ model of $T$. There is a maximal p.c.\ model $N$ of $T$ given by $N = M \cup \{a_\omega, b_\omega\}$, where $N \models P'_n(a_\omega) \wedge P'_n(b_\omega)$ for all $n < \omega$ and also $N \models R(a_\omega, b_\omega) \wedge R(b_\omega, a_\omega)$.

Since $N$ is maximal, the only indiscernible sequences are the constant ones. So $T$ is a thick theory. However, $T$ is not semi-Hausdorff. To see this, suppose for a contradiction that $\Omega(x_1, x_2, y_1, y_2)$ is a partial type such that for any $c_1, c_2, d_1, d_2 \in N$ we have $N \models \Omega(c_1, c_2, d_1, d_2)$ if and only if $\tp(c_1, c_2; N) = \tp(d_1, d_2; N)$. For $n < \omega$ define the set of formulas
\[
\Sigma_n(x, z_1, z_2) = \Omega(x, z_1, x, z_2) \cup \{R(z_1, z_2)\} \cup \{P'_k(x) \wedge P'_k(z_1) \wedge P'_k(z_2) : k < n\},
\]
and set $\Sigma(x, z_1, z_2) = \bigcup_{n < \omega} \Sigma_n(x, z_1, z_2)$. For any $n < \omega$ we have that $N \models \Sigma_n(a_\omega, a_n, b_n)$, so by compactness there is a p.c.\ model of $T$ that contains a realisation of $\Sigma(x, z_1, z_2)$. By maximality of $N$ such a realisation must exist in $N$. That is, there are $c, d, e \in N$ such that $N \models \Sigma(c, d, e)$. By construction of $\Sigma(x, z_1, z_2)$ we have $\{d, e\} = \{a_\omega, b_\omega\}$ and $c$ is one of $a_\omega$ or $b_\omega$. Without loss of generality we can thus assume $N \models \Sigma(a_\omega, a_\omega, b_\omega)$. Writing $p(x, y) = \tp(a_\omega, a_\omega; N)$ and $q(x, y) = \tp(a_\omega, b_\omega; N)$ we must then have $p(x, y) = q(x, y)$. This is a contradiction, because $p(x, y)$ contains the formula $x = y$, while $q(x, y)$ contains an obstruction of this formula, namely $R(x, y)$. We conclude that $T$ is not semi-Hausdorff.
\end{example}
\begin{example}
\thlabel{not-thick}
We show that the theory from \thref{compactness-failure} is not thick. Recall that our signature contains unary relation symbols $P_n$ for all $n < \omega$ and an inequality symbol $\neq$. The theory $T$ expresses that $P_n$ and $P_k$ are disjoint for all $n \neq k$ and that the inequality symbol is indeed inequality. The p.c.\ models of $T$ then consist of an infinite set for each $P_n$.

We claim that $T$ has positive quantifier elimination. We will use \thref{regular-formulas-eliminate-quantifiers-implies-positive-quantifier-elimination}, so let $\exists y \phi(x_1, \ldots, x_n, y)$ be a positive formula where $\phi(x_1, \ldots, x_n, y)$ is a conjunction of atomic formulas and $y$ is a single variable. We distinguish two cases.
\begin{itemize}
\item One of the atomic formulas in $\phi$ is of the form $x_i = y$ for some $1 \leq i \leq n$. Then we may replace all occurrences of $y$ by $x_i$ and thus eliminate the quantifier.
\item The variable $y$ does not appear in any equality in $\phi$. If the atomic formulas involving $y$ contain a contradiction (e.g., $y \neq y$ or $P_n(y) \wedge P_k(y)$ for $n \neq k$) then the entire formula is equivalent to $\bot$. Otherwise we can safely remove all atomic formulas involving $y$ from $\phi$ and thus eliminate the quantifier.
\end{itemize}

Now suppose for a contradiction that $T$ is thick. Let $\Theta((x_i)_{i < \omega})$ express (in p.c.\ models) that $(x_i)_{i < \omega}$ is an indiscernible sequence. Then $\{P_0(x_0) \wedge P_1(x_1)\} \cup \Theta((x_i)_{i < \omega})$ cannot be realised in a p.c.\ model, so by compactness there is some $\phi(x_1, \ldots, x_N)$ that is a finite conjunction of formulas in $\Theta((x_i)_{i < \omega})$, such that $\{P_0(x_0) \wedge P_1(x_1), \phi(x_1, \ldots, x_N)\}$ is inconsistent with $T$. That is, $\exists (x_i)_{2 \leq i \leq N} \phi(x_0, \ldots, x_N)$ is an obstruction of $P_0(x_0) \wedge P_1(x_1)$. By positive quantifier elimination, this obstruction is equivalent to a formula
\[
\bigvee_{i \in I} \phi_i(x_0, x_1),
\]
where each $\phi_i(x_0, x_1)$ is a conjunction of atomic formulas. As every $\phi_i(x_0, x_1)$ must be an obstruction of $P_0(x_0) \wedge P_1(x_1)$, it must contain at least one of the following atomic formulas:
\begin{itemize}
\item $P_n(x_0)$ for some $n < \omega$,
\item $P_n(x_1)$ for some $n < \omega$,
\item $x_0 = x_1$.
\end{itemize}
Let $k < \omega$ be such that $P_k$ is not mentioned in any $\phi_i(x_0, x_1)$. Let $M$ be some p.c.\ model of $T$ and let $(a_i)_{i < \omega}$ be distinct elements in $P_k(M)$. Then $M \models \Theta((a_i)_{i < \omega})$. However, by choice of $k$ we have $M \not \models \phi_i(a_0, a_1)$ for all $i \in I$ and so $M \not \models \phi(a_0, \ldots, a_N)$, contradicting that this is a finite conjunction of formulas in $\Theta((x_i)_{i < \omega})$.
\end{example}
\section{Bibliographic remarks}
\label{sec:bibliographic-remarks-basics}
The basics of positive logic (i.e., the contents of Sections \ref{sec:very-basics} and \ref{sec:properties-of-category-of-models}) appear in, for example, \cite{ben-yaacov_positive_2003, ben-yaacov_fondements_2007, poizat_positive_2018}. There is also \cite[Chapter 8]{hodges_model_1993}, which treats what we called Pillay theories (see \thref{common-morleyisations}). So the set up there is slightly less general, but the proofs are really the same and are easily adapted to our more general setting. A lot of the current terminology is based on this less general setting (e.g., ``joint continuation property'' versus ``joint embedding property'' and ``p.c.\ model''  versus ``e.c.\ model''). Some literature actually uses the older terminology, even in the more general setting for positive logic (see also \thref{pc-model-vs-ec-model}). By working in a $\Delta_\qf$-Morleyised theory the older notions are all obtained as a special case of the newer notions. For example: homomorphisms are precisely embeddings of structures, h-inductive and h-universal sentences are precisely inductive and universal sentences respectively (also called $\forall \exists$-sentences and $\forall$-sentences respectively) and a continuation of structures is just an extension. Finally, because homomorphisms can be viewed as just inclusions of structures, directed unions are genuine unions.

Our treatment of type spaces follows \cite{ben-yaacov_positive_2003}. Another approach is possible, which we discuss in the remark below.
\begin{remark}
\thlabel{prime-types}
In \cite{haykazyan_spaces_2019} a different kind of type for positive logic is considered. There all sets of formulas of the form $\tp(a; M)$ are taken, where $M$ is just some model of a fixed theory $T$. So $M$ is not necessarily p.c. This corresponds to taking prime filters on the distributive lattice of definable sets of $T$. We will call these \emph{prime types}.

This way another topological space can be defined, for which we will write $\S_n'(T)$ (to distinguish it from the space $\S_n(T)$ of maximal types). The points are prime types, and a basis of open sets is given by the subsets of the form
\[
[\phi(x)] = \{ p(x) \in \S_n'(T) : \phi(x) \in p(x) \}.
\]
This will yield a \emph{spectral space}. This is precisely the same approach as is standard in full first-order logic. The latter can be viewed as an instance of the duality Boolean algebras and Stone spaces, namely with definable sets in $n$ variables on the Boolean algebra side and the Stone space of types on the topological side. This duality generalises to one between distributive lattices and spectral spaces, of which the distributive lattice of positively definable sets together with the space $\S_n'(T)$ forms an instance.

There is a general philosophy in model theory that a theory is `the same' as the collection of its type spaces. For both $\S(T)$ and $\S'(T)$ this can be made precise.
\begin{itemize}
\item In \cite[Theorem 2.38]{ben-yaacov_positive_2003} it is made precise how a theory can be recovered from the collection of type spaces `like' ones of the form $\S_n(T)$. If we start with a theory $T$, consider its type spaces and then recover a theory $T'$ from that we can never hope that $T$ and $T'$ are exactly the same. This is because $\S_n(T)$ only detects types in p.c.\ models, and there can be many different theories with the same p.c.\ models (see e.g.\ \thref{universal-kaiser-hull-ec-models}). Another problem is that all that the type space $\S_n(T)$ sees are type-definable sets, because these correspond to the closed sets in the type space and there is no way to distinguish between formulas and sets of formulas. However, non of these things are relevant for the model-theoretic properties of the theories involved. For example, $T$ and $T'$ will have monster models with the same automorphism groups.
\item In \cite[Theorem 1.1]{kamsma_type_2023} a duality is described between positive theories and collections of types spaces `like' ones of the form $\S_n'(T)$. It turns out that this approach does offer enough detail to recover a theory up to logical equivalence (if the language is fixed, otherwise up to some appropriate isomorphism).
\end{itemize}
\end{remark}

In \thref{common-morleyisations}(i) we defined Pillay and Robinson theories. This is a very common setup in model theory and is thus deserving of its own name (see also \cite[Chapter 8]{hodges_model_1993}). In 1998, Hrushovski introduced the name Robinson theory \cite{hrushovski_simplicity_1998}, based on the substantial amount of work that Robinson had done in this setup. Later, Pillay developed simplicity theory in a similar setting \cite{pillay_forking_2000}, but an important difference was that the amalgamation property was no longer required (something that we have seen implies quite good behaviour, namely that the theory is Hausdorff, see \thref{separate-types-equivalences}). Therefore, it seemed fitting to use the name Pillay theories for Robinson theories without the amalgamation property.

The properties Hausdorff and semi-Hausdorff were defined in \cite[Definition 1.41]{ben-yaacov_positive_2003}. They were defined in topological terms, hence the terminology, and a topological proof is given that Hausdorff implies semi-Hausdorff. Our presentation focuses on the logical aspect. Thickness is introduced in \cite{ben-yaacov_thickness_2003}. The fact that being Hausdorff is equivalent to APh (\thref{hausdorff-equivalences}) is \cite[Theorem 8]{poizat_positive_2018}.

The examples of a non-Boolean Hausdorff theory (\thref{hausdorff-not-boolean}) and a thick theory that is not semi-Hausdorff (\thref{thick-not-semi-hausdorff}) are taken from \cite[Example 3.12]{dmitrieva_dividing_2023} and \cite[Section 4]{poizat_quelques_2010} respectively. Although, in both cases they served a completely different purpose. The example of a semi-Hausdorff theory that is not Hausdorff (\thref{semi-hausdorff-not-hausdorff}) is essentially \cite[Example 4]{poizat_positive_2018}. These examples, and the other examples we gave in Section \ref{sec:boolean-hausdorff-semi-hausdorff-thick}, are elementary but rather artificial. More natural examples are known.
\begin{itemize}
\item Existentially closed exponential fields are Hausdorff and non-Boolean. These are studied in \cite{haykazyan_existentially_2021}, with not being Boolean being established in \cite[Corollary 3.8]{haykazyan_existentially_2021}. The fact that this example is Hausdorff is established in \cite[Proposition 10.4]{dobrowolski_kim-independence_2022}, see also the discussion after that proposition.
\item Bilinear spaces over a fixed field can be studied as a positive theory. This is done in \cite{kamsma_bilinear_2023}. If the field is finite then the theory is Boolean, and we are in the well-known setting of studying bilinear spaces over a finite field in full first-order logic. However, if the field is infinite then the theory is semi-Hausdorff and not Hausdorff \cite[Proposition 4.14]{kamsma_bilinear_2023}.
\item Ultrametric spaces with distances in $\N$ are known to be not thick. These are first discussed in \cite[Example 4.3]{ben-yaacov_simplicity_2003}, and non-thickness is established as a consequence of the general theory of simplicity in positive logic. That is (referring forward to material from later on in these notes), \cite[Example 4.3]{ben-yaacov_simplicity_2003} establishes that the type of a single element over the empty set has no non-dividing extensions. So dividing independence does not satisfy \textsc{full existence}. At the same time, the theory is shown to be stable and thus in particular simple (\thref{stable-implies-simple}), but in thick simple theories dividing independence satisfies \textsc{full existence} (\thref{simple-thick-implies-full-existence}).
\end{itemize}

For more information about geometric logic, as mentioned in \thref{geometric-axiomatisation}, see for example \cite[Chapter D1]{johnstone_sketches_2002}. There one can also find more on the topos-theoretic approach to positive logic, under the name coherent logic, as briefly discussed after \thref{positive-formulas-and-theory}.

Another use for positive logic in the context categorical logic is in accessible categories. For example, accessible categories can be characterised as the categories of models of infinitary positive theories. More can be found in \cite[Chapter 5]{adamek_locally_1994}, where positive theories are called basic theories.

\chapter{Countable categoricity}
\label{ch:countable-categoricity}

We provide a characterisation of theories that have one countable p.c.\ model (up to isomorphism), i.e.\ \emph{countably categorical theories}, see \thref{countably-categorical-characterisation}. In full first-order logic this is usually referred to as the Ryll-Nardzewski theorem, and all the usual characterisations go through except for having finite type spaces (see \thref{finite-type-spaces}).

Along the way we establish a positive version of multiple model-theoretically important notions and results, such as omitting types (\thref{omitting-types}), back-and-forth (\thref{back-and-forth}), saturation (Section \ref{sec:saturation}), atomic models (Section \ref{sec:atomic-models}) and prime models (Section \ref{sec:prime-models}).

\section{Omitting types}
\label{sec:omitting-types}
We give a criterion for when a subset of an arbitrary structure is in fact a p.c.\ model of some given theory. This reminds of Tarski's test for full first-order logic and the positive logic version is due to Haykazyan, hence the name.
\begin{lemma}[Haykazyan's test]
\thlabel{haykazyans-test}
Let $T$ be a theory and $M \models T$. Suppose that $A \subseteq M$ is a subset such that for every $a \in A$ and all quantifier-free $\phi(x, y)$ one of the following holds:
\begin{itemize}
\item there is $b \in A$ such that $M \models \phi(a, b)$,
\item there is quantifier-free $\psi(x, z)$ and $c \in A$ such that $M \models \psi(a, c)$ and $T \models \neg \exists x y z(\phi(x, y) \wedge \psi(x, z))$.
\end{itemize}
Then $A$ is a p.c.\ model of $T$.
\end{lemma}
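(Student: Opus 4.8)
\emph{Proof proposal.} The plan is to show, in turn, that $A$ is (the universe of) a substructure of $M$, that the inclusion $\iota\colon A\to M$ is an immersion --- from which $A\models T$ is immediate --- and finally that $A$ satisfies condition \thref{pc-model}(iii). The starting observation is that, since $M\models T$, the two alternatives in the hypothesis are controlled by whether the relevant existential formula is realised in $M$: fix $a\in A$ and quantifier-free $\phi(x,y)$. If $M\models\exists y\,\phi(a,y)$, then the second alternative is impossible, because any quantifier-free $\psi(x,z)$ and $c\in A$ with $M\models\psi(a,c)$ would yield $M\models\exists x y z\,(\phi(x,y)\wedge\psi(x,z))$, contradicting $M\models T$ together with $T\models\neg\exists x y z\,(\phi\wedge\psi)$; hence the first alternative holds, i.e.\ some witness $b\in A$ has $M\models\phi(a,b)$. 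If $M\not\models\exists y\,\phi(a,y)$, then the first alternative fails outright, so the second holds. Applying the first case to formulas of the shape $y=t$ (for $t$ a term over $\L$) shows that $A$ is closed under the function and constant symbols of $\L$; since moreover $A\neq\emptyset$ (test the hypothesis on the empty tuple with $\phi(y):=(y=y)$), this makes $A$ a substructure of $M$, so $\iota$ is an embedding and preserves and reflects quantifier-free formulas.

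Next I would upgrade this to: $\iota$ is an immersion. Let $\theta(x)$ be any formula and, using \thref{normal-forms-of-formulas}, write $\theta(x)\equiv\exists\bar y\,\psi(x,\bar y)$ with $\psi$ quantifier-free. For $a\in A$: the implication $A\models\theta(a)\Rightarrow M\models\theta(a)$ is \thref{homomorphism-preserves-truth}; conversely, if $M\models\theta(a)$ then $M\models\exists\bar y\,\psi(a,\bar y)$, so by the first case above there is $\bar b\in A$ with $M\models\psi(a,\bar b)$, hence $A\models\psi(a,\bar b)$ (as $\psi$ is quantifier-free and $\iota$ an embedding), hence $A\models\theta(a)$. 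So $\iota$ preserves and reflects all formulas, i.e.\ is an immersion. Then $A\models T$ exactly as in the first part of the proof of \thref{immersions-coherent-and-pc-iff-immersed-in-pc}(ii): for $\forall x\,(\alpha(x)\to\beta(x))\in T$ and $a\in A$ with $A\models\alpha(a)$, we get $M\models\alpha(a)$, so $M\models\beta(a)$ since $M\models T$, so $A\models\beta(a)$ since $\iota$ is an immersion.

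Finally I would check \thref{pc-model}(iii) for $A$, which is where the second alternative of the hypothesis enters. Let $a\in A$ and let $\phi(x)$ be a formula with $A\not\models\phi(a)$; since $\iota$ is an immersion, also $M\not\models\phi(a)$. Writing $\phi(x)\equiv\exists\bar y\,\phi'(x,\bar y)$ with $\phi'$ quantifier-free, we have $M\not\models\exists\bar y\,\phi'(a,\bar y)$, so the dichotomy puts us in the second case: there are quantifier-free $\psi(x,\bar z)$ and $\bar c\in A$ with $M\models\psi(a,\bar c)$ and $T\models\neg\exists x\bar y\bar z\,(\phi'(x,\bar y)\wedge\psi(x,\bar z))$. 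Put $\psi^{*}(x):=\exists\bar z\,\psi(x,\bar z)$. Then $A\models\psi^{*}(a)$ (because $\bar c\in A$ and $\psi$ is quantifier-free), while, since $\bar y$ and $\bar z$ are disjoint tuples (and disjoint from $x$), $\exists x\bar y\bar z\,(\phi'\wedge\psi)$ is logically equivalent to $\exists x\,(\phi(x)\wedge\psi^{*}(x))$, so $T\models\neg\exists x\,(\phi(x)\wedge\psi^{*}(x))$. Thus $\psi^{*}$ is an obstruction of $\phi$ realised by $a$ in $A$, and \thref{pc-model}(iii) holds; hence $A$ is a p.c.\ model of $T$. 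I expect the main obstacle to be conceptual rather than computational: one must recognise that, after rewriting via the $\exists\bar y\,(\text{quantifier-free})$ normal form, the second alternative of Haykazyan's hypothesis is precisely an obstruction in the sense of \thref{pc-model}(iii), and that it is the assumption $M\models T$ --- not merely that $M$ is some structure --- which kills that alternative whenever the existential formula is already realised in $M$, and this is exactly what forces the witnesses into $A$ and makes $\iota$ an immersion. Everything else is routine bookkeeping with tuples and normal forms.
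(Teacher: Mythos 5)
Your proposal is correct, and its core is the same as the paper's: the dichotomy observation (if $M \models \exists y\,\phi(a,y)$ then, because $M \models T$, the second alternative is impossible and a witness must lie in $A$; otherwise the second alternative must hold), the substructure argument via formulas $y = t$, and the recognition that the second alternative, after passing to the $\exists\bar y(\text{quantifier-free})$ normal form, is exactly an obstruction in the sense of \thref{pc-model}(iii). Where you genuinely diverge is in how you finish. The paper never shows the inclusion $A \subseteq M$ is an immersion: it only uses that $A$ is an embedded substructure, concludes $A \models T^\u$ from \thref{models-of-universal-consequences}, verifies \thref{pc-model}(iii) for $A$ relative to $T^\u$, and then invokes \thref{universal-kaiser-hull-ec-models} (that $T$ and $T^\u$ have the same p.c.\ models) to get that $A$ is a p.c.\ model of $T$ — in particular, $A \models T$ falls out of that theorem rather than being proved directly. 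You instead reuse the dichotomy once more, for all existential formulas, to upgrade the embedding to an immersion, deduce $A \models T$ directly (as in \thref{immersions-coherent-and-pc-iff-immersed-in-pc}), and then check condition (iii) relative to $T$ itself. Your route is more self-contained — it avoids the $T^\u$/Kaiser-hull machinery entirely and yields the immersion $A \to M$ as a pleasant by-product — at the cost of one extra pass through the hypothesis; the paper's route keeps the verification at the quantifier-free/h-universal level and outsources the bookkeeping to results already proved. One cosmetic caveat: your parenthetical that $A \neq \emptyset$ (via $\phi(y) := (y=y)$) tacitly assumes $M$ is non-empty; the paper allows empty structures and handles that degenerate case in a footnote, but since non-emptiness is not actually needed for $A$ to be a substructure, nothing in your argument breaks.
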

\begin{proof}
We claim that for every quantifier-free formula $\phi(x, y)$ and all $a \in A$ we have that if $M \models \exists y \phi(a, y)$ then there is $b \in A$ such that $M \models \phi(a, b)$. Suppose for a contradiction that this is not the case. Then there is quantifier-free $\psi(x, z)$ and $c \in A$ such that $M \models \psi(a, c)$ and $T \models \neg \exists x y z(\phi(x, y) \wedge \psi(x, z))$. As $M \models T$ this in particular implies that $M \not \models \exists x y \phi(x, y)$, contradicting that $M \models \exists y \phi(a, y)$.

Using the claim, we can now easily check that $A$ is a substructure. For that we need to check that it is closed under constant symbols and function symbols\footnote{If one insists that structures are non-empty then $M$ is non-empty, so $M \models \exists y(y = y)$ and hence by the claim there is $b \in A$ (with $M \models b = b$). If we allow empty structures then $M$ can be empty, and so $A = M$ will be empty. This will still be a p.c.\ model, and this case does actually not require special treatment, but we still explain what happens. The assumption implies there will be some $\psi$, which now has to be propositional, such that $M \models \psi$ and $T \models \psi \to \neg \exists x(x = x)$. So any continuation of $M$ will be empty.}. So let $c$ be a constant symbol. Then $M \models \exists y(y = c)$, and so there is $b \in A$ with $M \models b = c$, thus $c \in A$. Similarly, if $f$ is an $n$-ary function symbol and $a_1, \ldots, a_n \in A$ then $y = f(a_1, \ldots, a_n)$ is a formula with parameters in $A$ and because $M \models \exists y(y = f(a_1, \ldots, a_n))$, there is $b \in A$ with $M \models b = f(a_1, \ldots, a_n)$ and hence $f(a_1, \ldots, a_n) \in A$.

So by construction $A \subseteq M$ is an embedding of structures. By \thref{models-of-universal-consequences} we have that $A \models T^u$. We will show that $A$ is a p.c.\ model of $T^\u$, which by \thref{universal-kaiser-hull-ec-models} implies that $A$ is a p.c.\ model of $T$. For this, we verify \thref{pc-model}(iii). Let $\chi(x)$ be any formula and write it as $\exists y \phi(x, y)$, where $\phi(x, y)$ is quantifier-free. Let $a \in A$ be such that $A \not \models \chi(a)$. Then there is no $b \in A$ such that $M \models \phi(a, b)$, as that would imply $A \models \phi(a, b)$. There must thus be quantifier-free $\psi(x, z)$ and some $c \in A$ such that $M \models \psi(a, c)$ and $T \models \neg \exists xyz(\phi(x, y) \wedge \psi(x, z))$. Therefore $A \models \psi(a, c)$ and $T^\u \models \neg \exists xyz(\phi(x, y) \wedge \psi(x, z))$. So $\exists z \psi(x, z)$ is an obstruction of $\chi(x)$ modulo $T^\u$ and $A \models \exists z \psi(a, z)$, as required.
\end{proof}
In full first-order logic we call a type \emph{isolated} if there is a formula implying the entire type. The term comes from the fact that this corresponds to the type being an isolated point in the type space. In positive logic we can use the same idea, but such types are no longer necessarily isolated points, so we change the terminology.
\begin{definition}
\thlabel{supported-type}
Let $T$ be a theory. We call a partial type $\Sigma(x)$ in finitely many variables \emph{supported}\index{supported partial type} if there is $\phi(x)$ such that $T \cup \{ \exists x \phi(x) \}$ is consistent and for all $\chi(x) \in \Sigma(x)$ we have
\[
T^\pc \models \forall x(\phi(x) \to \chi(x)).
\]
In this case we call $\phi(x)$ the \term{support} of $\Sigma(x)$.
\end{definition}
\begin{lemma}
\thlabel{supported-type-equivalent}
Let $T$ be a theory. Let $p(x)$ be a type. Then $p(x)$ is supported by $\phi(x)$ if and only if $\phi(x) \in p(x)$ and for every $\psi(x) \not \in p(x)$ we have $T \models \neg \exists x (\phi(x) \wedge \psi(x))$.
\end{lemma}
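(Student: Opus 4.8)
The plan is to prove the two implications separately, using throughout that a type is by definition a maximal consistent set of formulas modulo $T$ and that, by \thref{pc-model-iff-types-are-maximal}, the type $\tp(b;N)$ of an element $b$ of a p.c.\ model $N$ of $T$ is exactly such a maximal set. The recurring device in both directions will be: realise a suitable positive formula in some model of $T$, continue to a p.c.\ model via \thref{continue-to-pc-model} (positive formulas being preserved by \thref{homomorphism-preserves-truth}), and then use maximality of the resulting type to identify it with $p(x)$. I will also use freely that every p.c.\ model of $T$ is a model of $T^\pc$.

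For the left-to-right direction, assume $p(x)$ is supported by $\phi(x)$. First I would show $\phi(x)\in p(x)$: consistency of $T\cup\{\exists x\,\phi(x)\}$ (part of the definition of ``supported'') yields a p.c.\ model $N$ of $T$ with $b\in N$ and $N\models\phi(b)$; since $N\models T^\pc$ and the support condition gives $T^\pc\models\forall x(\phi(x)\to\chi(x))$ for each $\chi(x)\in p(x)$, we get $p(x)\subseteq\tp(b;N)$, and maximality of $p(x)$ forces $p(x)=\tp(b;N)\ni\phi(x)$. For the second clause, suppose toward a contradiction that some $\psi(x)\notin p(x)$ satisfies $T\not\models\neg\exists x(\phi(x)\wedge\psi(x))$; then $\phi(x)\wedge\psi(x)$, being positive, is realised by some $b$ in a p.c.\ model $N$ of $T$, and the same argument again gives $p(x)=\tp(b;N)$, whence $\psi(x)\in\tp(b;N)=p(x)$, a contradiction.

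For the right-to-left direction, assume $\phi(x)\in p(x)$ and that every $\psi(x)\notin p(x)$ is an obstruction of $\phi(x)$ modulo $T$. Consistency of $T\cup\{\exists x\,\phi(x)\}$ is immediate since $p(x)$ is consistent and contains $\phi(x)$. It remains to verify $T^\pc\models\forall x(\phi(x)\to\chi(x))$ for each $\chi(x)\in p(x)$; as this sentence is h-inductive, it is enough to check it in an arbitrary p.c.\ model $N$ of $T$. So let $b\in N$ with $N\models\phi(b)$ and suppose $N\not\models\chi(b)$. Since $N$ is p.c., \thref{pc-model}(iii) provides an obstruction $\psi(x)$ of $\chi(x)$ with $N\models\psi(b)$. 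Then $\psi(x)\notin p(x)$: otherwise $p(x)$ would contain both $\chi(x)$ and $\psi(x)$ with $T\models\neg\exists x(\chi(x)\wedge\psi(x))$, contradicting consistency of $p(x)$. By hypothesis $T\models\neg\exists x(\phi(x)\wedge\psi(x))$, contradicting $N\models\phi(b)\wedge\psi(b)$ while $N\models T$. Hence $N\models\chi(b)$, as required.

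The only mildly delicate point is ensuring in each step that the passage to a p.c.\ model preserves the relevant positive formula and that maximality of the new type pins it down to $p(x)$; once this pattern is in place, the rest is routine bookkeeping with obstructions, and I anticipate no genuine obstacle.
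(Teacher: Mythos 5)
Your proof is correct and follows essentially the same route as the paper's: realise the relevant positive formula in a p.c.\ model, use the support condition plus maximality of $p(x)$ to identify the resulting type with $p(x)$, and in the converse direction use \thref{pc-model}(iii) to produce an obstruction contradicting the hypothesis. The only cosmetic difference is the order of business in the left-to-right direction (you establish $\phi(x)\in p(x)$ directly from consistency of $T\cup\{\exists x\,\phi(x)\}$, whereas the paper derives it from the obstruction clause), which changes nothing of substance.
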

\begin{proof}
We first prove the left to right direction. Let $\psi(x) \not \in p(x)$ and assume for a contradiction that there is a model $M$ of $T$ with $a \in M$ such that $M \models \phi(a) \wedge \psi(a)$. We may assume that $M$ is a p.c.\ model. Write $q(x) = \tp(a; M)$. As $\phi(x)$ implies every formula in $p(x)$ modulo $T^\pc$ we have $p(x) \subseteq q(x)$, and hence $p(x) = q(x)$ by maximality. We arrive at a contradiction, as then $\psi(x) \in p(x)$. To see that $\phi(x) \in p(x)$ we again assume for a contradiction that this is not the case. By what we have just established we must then have $T \models \neg \exists x \phi(x)$, contradicting that $T \cup \{ \exists x \phi(x) \}$ is consistent.

We now prove the converse. Firstly, $T \cup \{ \exists x \phi(x) \}$ is consistent as $\phi(x) \in p(x)$ and $p(x)$ is realised in some model of $T$. Now let $\chi(x) \in p(x)$. Suppose for a contradiction that $T^\pc \not \models \forall x(\phi(x) \to \chi(x))$. Then there is a p.c.\ model $M$ of $T$ and some $a \in M$ with $M \models \phi(a)$ and $M \not \models \chi(a)$. There is thus an obstruction $\psi(x)$ of $\chi(x)$ such that $M \models \psi(a)$. As $\chi(x) \in p(x)$, we must have $\psi(x) \not \in p(x)$. So by our assumption on $\phi(x)$ we must have $T \models \neg \exists x(\phi(x) \wedge \psi(x))$, contradicting $M \models \phi(a) \wedge \psi(a)$.
\end{proof}
\begin{proposition}
\thlabel{supported-types-must-be-realised}
If a theory $T$ has JCP then every p.c.\ model of $T$ realises all supported types.
\end{proposition}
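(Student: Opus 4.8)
The plan is to leverage the characterisation of JCP in terms of the h-universal theory $T^\u$ (condition (vi) of \thref{jcp}) in order to promote the \emph{existence} of the support of $\Sigma(x)$ from ``in some p.c.\ model'' to ``in every p.c.\ model''. Once the support is realised in a given p.c.\ model, the fact that every p.c.\ model satisfies $T^\pc$ does the rest for free.

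In detail: let $\phi(x)$ be a support of $\Sigma(x)$, so that $T \cup \{\exists x\, \phi(x)\}$ is consistent and $T^\pc \models \forall x(\phi(x) \to \chi(x))$ for every $\chi(x) \in \Sigma(x)$. First I would observe that $\neg \exists x\, \phi(x)$ is an h-universal sentence, since it can be written as $\forall x(\phi(x) \to \bot)$, and that it is \emph{not} a consequence of $T$, precisely because $T \cup \{\exists x\, \phi(x)\}$ is consistent; hence $\neg\exists x\,\phi(x) \notin T^\u$.

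Now let $M$ be an arbitrary p.c.\ model of $T$. By \thref{jcp}(vi), $T^\u$ is exactly the set of h-universal sentences true in $M$. Since $\neg \exists x\, \phi(x)$ is h-universal but fails to lie in $T^\u$, it must fail in $M$, so there is $a \in M$ with $M \models \phi(a)$. Finally, as $M$ is a p.c.\ model of $T$ we have $M \models T^\pc$, and therefore $M \models \forall x(\phi(x) \to \chi(x))$ for every $\chi(x) \in \Sigma(x)$; together with $M \models \phi(a)$ this gives $M \models \chi(a)$ for all $\chi(x) \in \Sigma(x)$, i.e.\ $a$ realises $\Sigma(x)$ in $M$.

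The one step that really needs JCP, and hence the main obstacle to a naive proof, is the passage from ``$\exists x\,\phi(x)$ holds in some p.c.\ model'' to ``$\exists x\,\phi(x)$ holds in every p.c.\ model''. Continuing a model of $T \cup \{\exists x\,\phi(x)\}$ to a p.c.\ model produces only \emph{one} p.c.\ model realising $\Sigma$, and there is no homomorphism argument transporting that realisation into an arbitrary $M$, since $\Sigma$ is an infinite set of formulas and immersions only preserve the truth of single formulas. The JCP characterisation via $T^\u$ is exactly what bridges this gap.
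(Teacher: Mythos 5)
Your proof is correct, and it takes a slightly different route from the paper's. The paper realises the type in \emph{some} p.c.\ model $M'$ (invoking \thref{supported-type-equivalent} to see that the support $\phi(x)$ belongs to the type), then uses JCP in the form of \thref{jcp}(i)/(ii): a joint continuation $M \to N \leftarrow M'$ of the two p.c.\ models lets the positive sentence $\exists x\,\phi(x)$ pass from $M'$ up to $N$ along a homomorphism and then reflect back down to $M$ along the immersion $M \to N$. You instead invoke the syntactic characterisation \thref{jcp}(vi): since $T \cup \{\exists x\,\phi(x)\}$ is consistent (this is built into \thref{supported-type}, so you bypass \thref{supported-type-equivalent} and the auxiliary model $M'$ entirely), the h-universal sentence $\neg\exists x\,\phi(x)$ is not in $T^\u$, hence fails in every p.c.\ model. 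Both arguments then finish identically, using $M \models T^\pc$ and the definition of support to upgrade a witness of $\phi$ to a realisation of the whole (partial) type. The two proofs are really two faces of the same fact — the joint-continuation argument is exactly what proves (i) $\Rightarrow$ (vi) in \thref{equivalence-jcp-characterisations} — but yours is shorter given that equivalence, while the paper's is self-contained modulo only \thref{jcp}(i). One small quibble with your closing commentary: the obstacle to the naive approach is not that immersions fail to preserve infinite sets of formulas (they preserve every formula, hence any set of them, formula by formula), but that there is in general no map from the p.c.\ model realising $\Sigma$ into the arbitrary $M$ in the first place; this does not affect the correctness of your proof.
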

\begin{proof}
Let $M$ be any p.c.\ model and let $p(x)$ be a type that is supported by some $\phi(x)$. By \thref{supported-type-equivalent} we have $\phi(x) \in p(x)$. Let $M'$ be a p.c.\ model of $T$ that realises $p(x)$, so in particular $M' \models \exists x \phi(x)$. As $T$ has JCP there is a model $N$ of $T$ with immersions $M \to N \leftarrow M'$. So $N \models \exists x \phi(x)$ and hence $M \models \exists x \phi(x)$. Let $a \in M$ be such that $M \models \phi(a)$. As $M \models T^\pc$ and because $\phi(x)$ supports $p(x)$ we have $M \models p(a)$, and so $p(x)$ is realised in $M$.
\end{proof}
By the above proposition, supported types have to be realised (if the theory has JCP). As is the case with isolated types in full first-order logic, these are the only types that are necessarily realised by p.c.\ models (if $T$ is countable), as the following theorem shows.
\begin{theorem}[Omitting types]
\thlabel{omitting-types}
Let $T$ be a countable theory. If $\Sigma(x)$ is a partial type in finitely many variables that is not supported then there is a p.c.\ model $M$ of $T$ that omits $\Sigma(x)$. That is, for all $a \in M$ we have $M \not \models \Sigma(a)$.
\end{theorem}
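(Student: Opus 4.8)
The plan is to run a Henkin-style construction that produces an ordinary model $N \models T$ together with a countable subset $A \subseteq N$ of ``witnessing constants'', and then to invoke Haykazyan's test (\thref{haykazyans-test}) to conclude that $A$ is itself a p.c.\ model of $T$. The point is that one cannot build a p.c.\ model directly ``from below''; instead one builds $N$ and arranges that $A$ satisfies the hypotheses of \thref{haykazyans-test}, while simultaneously forcing every tuple from $A$ to omit $\Sigma(x)$. Work in the language $\L(C)$ with a countable set of new constants $C = \{c_i : i < \omega\}$. Since $T$ is countable and $\Sigma(x)$ has only finitely many variables, there are only countably many ``tasks'' to handle, so one can build a chain $\Gamma_0 \subseteq \Gamma_1 \subseteq \cdots$ of finite sets of positive quantifier-free $\L(C)$-sentences with each $T \cup \Gamma_n$ consistent, processing one task per stage, and set $N \models T \cup \bigcup_n \Gamma_n$ by compactness at the end, $A = \{c_i^N : i < \omega\}$.

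The \emph{Henkin tasks} are all pairs $(\bar c, \phi(x,y))$ with $\bar c$ a finite tuple from $C$ and $\phi$ quantifier-free. When such a task is processed at stage $n$: if $T \cup \Gamma_n \cup \{\exists y\,\phi(\bar c, y)\}$ is consistent, put $\phi(\bar c, \bar c')$ into $\Gamma_{n+1}$ for a fresh tuple $\bar c'$ from $C$ (which keeps $T \cup \Gamma_{n+1}$ consistent); otherwise do nothing, noting that then, writing $\bigwedge \Gamma_n = \eta(\bar c, \bar d)$ with $\bar d$ the remaining constants, compactness gives $T \models \neg \exists x y z(\phi(x,y) \wedge \eta(x,z))$ with $\eta$ quantifier-free positive --- this is precisely the second alternative in \thref{haykazyans-test}, and it persists as $\Gamma$ grows. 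Including the instances $\phi(x,y) \equiv (y = t(x))$ among these tasks makes $A$ closed under the functions and constants, so $A \subseteq N$ is an embedding. Once all Henkin tasks have been processed, the hypothesis of \thref{haykazyans-test} holds for $A$ in $N$, so $A$ is a p.c.\ model of $T$.

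The \emph{omitting tasks}, one for each tuple $\bar c$ from $C$ of length $|x|$, are the heart of the argument and the place where non-support enters. Processing such a task at stage $n$, write $\bigwedge \Gamma_n = \gamma(\bar c, \bar d)$ and put $\delta(x) = \exists w\,\gamma(x,w)$; since $T \cup \Gamma_n$ is consistent, so is $T \cup \{\exists x\,\delta(x)\}$, hence $\delta(x)$ is not a support of $\Sigma(x)$. By \thref{supported-type} there is $\chi(x) \in \Sigma(x)$ with $T^\pc \not\models \forall x(\delta(x) \to \chi(x))$, so this h-inductive sentence fails in some p.c.\ model $M^*$ of $T$: there is $a^* \in M^*$ with $M^* \models \delta(a^*)$ and $M^* \not\models \chi(a^*)$. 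Then \thref{pc-model}(iii) yields an obstruction $\psi(x)$ of $\chi(x)$ with $M^* \models \psi(a^*)$; writing $\psi(x) \equiv \exists z\,\psi'(x,z)$ with $\psi'$ quantifier-free positive (\thref{normal-forms-of-formulas}) and reading off witnesses from $M^* \models \delta(a^*) \wedge \psi(a^*)$, we see that $T \cup \Gamma_n \cup \{\psi'(\bar c, \bar c')\}$ is consistent for a fresh tuple $\bar c'$. Put $\psi'(\bar c, \bar c')$ into $\Gamma_{n+1}$ and record that $\exists z\,\psi'(x,z)$ is an obstruction of a member of $\Sigma(x)$. At the end, for any $|x|$-tuple $\bar c$ from $C$ we have $N \models \psi'(\bar c, \bar c')$ for the relevant $\psi'$, hence $A \models \psi'(\bar c, \bar c')$ since $A \subseteq N$ is an embedding and $\psi'$ is quantifier-free positive, hence $A \models \exists z\,\psi'(\bar c, z)$; as $A \models T$ and $\exists z\,\psi'(x,z)$ obstructs some $\chi \in \Sigma(x)$, we get $A \not\models \chi(\bar c)$, so $A \not\models \Sigma(\bar c)$. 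Every $|x|$-tuple from $A$ is of the form $\bar c^N$ for some tuple $\bar c$ from $C$, so $A$ omits $\Sigma(x)$.

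The main obstacle --- the step requiring the most care --- is the omitting step: converting ``$\delta$ is not a support of $\Sigma$'' into ``one may consistently add a quantifier-free positive sentence $\psi'(\bar c, \bar c')$ that forces $\bar c$ to omit $\Sigma$''. This relies on the definition of support together with \thref{pc-model}(iii) to extract from a p.c.\ model an actual obstruction formula, and on the quantifier-free normal form to produce something that can be placed into $\Gamma$; the fact that obstructions are consequences of $T$ (not merely $T^\pc$) is what makes the consistency bookkeeping go through. Everything else --- maintaining consistency at each stage, interleaving the countably many tasks, and recognising the second Haykazyan alternative --- is routine, and it is exactly the finiteness of $|x|$ that keeps the family of omitting tasks countable.
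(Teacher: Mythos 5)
Your proposal is correct and follows essentially the same route as the paper's proof: a Henkin-style construction over a countable set of new constants that simultaneously secures the dichotomy of Haykazyan's test (so the set named by the constants is a p.c.\ model of $T$) and, using non-support of $\Sigma(x)$ together with \thref{pc-model}(iii) in a p.c.\ model, plants for every tuple of constants a witnessed obstruction of some formula of $\Sigma(x)$. The only cosmetic differences are that you decide the Haykazyan alternative by testing consistency with $T \cup \Gamma_n$ and take $\bigwedge \Gamma_n$ itself as the obstructing formula, where the paper evaluates the task in a p.c.\ model realising $\Gamma_n$ and extracts the obstruction from positive closedness, and that you keep $\Gamma$ quantifier-free by adding witnessed instances rather than the obstruction formulas themselves.
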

\begin{proof}
Write $\Sigma(x) = \Sigma(x_1, \ldots, x_n)$, where $x_1, \ldots, x_n$ are all single variables.

Let $C = \{c_i\}_{i < \omega}$ be a set of new constant symbols. We will construct a set of sentences $\Gamma$ in this extended language that is consistent with $T$ and satisfies the following properties.
\begin{enumerate}[label=(\arabic*)]
\item For every quantifier-free formula $\phi(y, z)$ in the original language and any tuple $a \in C$ one of the following holds:
\begin{enumerate}[label=(\roman*)]
\item there is a tuple $b \in C$ for which we have $\phi(a, b) \in \Gamma$,
\item there is a quantifier-free $\psi(y, w)$ in the original language and a tuple $c \in C$ such that $\psi(a, c) \in \Gamma$ and $T \models \neg \exists y z w(\phi(y, z) \wedge \psi(y, w))$.
\end{enumerate}
\item For all $\{i_1, \ldots, i_n\} \subseteq \omega$ there is an obstruction $\psi(x_1, \ldots, x_n)$ of $\Sigma(x_1, \ldots, x_n)$ such that $\psi(c_{i_1}, \ldots, c_{i_n}) \in \Gamma$.
\end{enumerate}
We only defined what it means to be ``an obstruction'' of a formula, but this straightforwardly extends to sets of formulas: $\psi(x)$ is an obstruction of $\Sigma(x)$ if $\{ \psi(x) \} \cup \Sigma(x)$ is inconsistent with $T$. Equivalently, using compactness, if there are $\phi_1(x), \ldots, \phi_k(x) \in \Sigma(x)$ such that $\psi(x)$ is an obstruction of $\phi_1(x) \wedge \ldots \wedge \phi_k(x)$ modulo $T$.

We will construct $\Gamma$ as the union of a countable chain $\emptyset = \Gamma_0 \subseteq \Gamma_1 \subseteq \ldots$, such that each $\Gamma_i$ is finite. We alternate constructions between even and odd stages. For this we let $( \phi_i(a_i, z) )_{i < \omega}$ be an enumeration of all quantifier-free formulas in the language extended by the constant symbols in $C$. We also let $(\bar{c}_i)_{i < \omega}$ be an enumeration of $C^n$.

Having constructed $\Gamma_{2i}$, we let $C' \subseteq C$ be the finite subset of constants that appear in $\Gamma_{2i}$. Let $M$ be a p.c.\ model of $T$ that realises $\Gamma_{2i}$, and interpret any constants from $a_i$ that do not appear in $\Gamma_{2i}$ arbitrarily in $M$. To construct $\Gamma_{2i+1}$ we distinguish two cases.
\begin{enumerate}[label=(\roman*)]
\item If $M \models \exists z \phi_i(a_i, z)$ then let $b \in M$ be such that $M \models \phi_i(a_i, b)$. For each element in $b$ that does not correspond to the interpretation of a constant symbol from $C$, we pick a constant symbol from $C \setminus C' a_i$ so that $b$ becomes the interpretation of constants from $C$. Set $\Gamma_{2i+1} = \Gamma_{2i} \cup \{\phi_i(a_i, b)\}$, which is consistent with $T$ as $M$ is a model.
\item If $M \not \models \exists z \phi_i(a_i, z)$ then, because $M$ is a p.c.\ model, there is an obstruction $\theta(y)$ of $\exists z \phi_i(y, z)$ such that $M \models \theta(a_i)$. Write $\theta(y)$ as $\exists w \psi(y, w)$ with $\psi(y, w)$ quantifier-free, so $T \models \neg \exists y z w (\phi(y, z) \wedge \psi(y, w))$. Let $c \in M$ be such that $M \models \psi(a_i, c)$. For each element in $c$ that does not correspond to the interpretation of a constant symbol from $C$, we pick a constant symbol from $C \setminus C' a_i$ so that $c$ becomes the interpretation of constants from $C$. Set $\Gamma_{2i+1} = \Gamma_{2i} \cup \{\psi(a_i, c)\}$, which is consistent with $T$ as $M$ is a model.
\end{enumerate}
This completes the construction of $\Gamma_{2i+1}$.

Now assume that we have constructed $\Gamma_{2i+1}$. Then there is a formula $\theta(x, y)$ such that $\Gamma_{2i+1}$ is equivalent to $\theta(\bar{c}_i, d)$, where $d$ is a tuple of constants from $C$ that is disjoint from $\bar{c}_i$. As $\exists y \theta(x, y)$ does not support $\Sigma(x)$, there must be $\chi(x) \in \Sigma(x)$ and some p.c.\ model $M$ of $T$ with $\bar{a} \in M$ such that $M \models \exists y \theta(\bar{a}, y)$ and $M \not \models \chi(\bar{a})$. As $M$ is a p.c.\ model, there is an obstruction $\psi(x)$ of $\chi(x)$ (and hence of $\Sigma(x)$) such that $M \models \psi(\bar{a})$. Let $b \in M$ be such that $M \models \theta(\bar{a}, b)$. Interpret the constant symbols in $\bar{c}_i$ and $d$ as $\bar{a}$ and $b$ respectively. Setting $\Gamma_{2i+2} = \Gamma_{2i+1} \cup \{\psi(\bar{c}_i)\}$ we then have $M \models \Gamma_{2i+2}$, and so $\Gamma_{2i+2}$ is consistent with $T$.

This completes the construction of $\Gamma$. Now let $N$ be a model of $T \cup \Gamma$. Let $M \subseteq N$ be the set enumerated by constant symbols in $C$. Property (1) of $\Gamma$ tells us that \thref{haykazyans-test} applies, and so we have that $M$ is a p.c.\ model of $T$. At the same time, property (2) of $\Gamma$ tells us that no tuple in $M$ can realise $\Sigma(x)$. So $M$ is indeed the required p.c.\ model that omits $\Sigma(x)$.
\end{proof}
\section{Back-and-forth}
\label{sec:back-and-forth}
It often happens in model theory that we build isomorphisms using a back-and-forth argument. Such arguments work just as well in positive logic. In this section we give a technical setup and work out the details so that we can directly apply it in many situations.
\begin{definition}
\thlabel{partial-immersion}
Let $M$ and $N$ be two structures in the same signature. A partial function $f: M \to N$ is called a \term{partial immersion}\index{immersion!partial} if for all $a \in \dom(f)$ and for every formula $\phi(x)$ we have
\[
M \models \phi(a) \Longleftrightarrow N \models \phi(f(a)).
\]
\end{definition}
Note that if $f$ is the empty function then $f$ is a partial immersion precisely when $M \models \phi$ if and only if $N \models \phi$ for every sentence $\phi$ (see also the discussion after \thref{immersion}).
\begin{definition}
\thlabel{back-and-forth-correspondence}
Let $M$ and $N$ be two structures in the same signature, and let $\kappa$ be a cardinal. We say that $M$ and $N$ are in $\kappa$-\term{back-and-forth correspondence} if:
\begin{enumerate}[label=(\roman*)]
\item $M \models \phi$ if and only if $N \models \phi$ for every sentence $\phi$,
\item for every partial immersion $f: M \to N$ with $|\dom(f)| < \kappa$ and any singleton $a \in M$ there is $b \in N$ such that $f$ can be extended to a partial immersion that sends $a$ to $b$,
\item for every partial immersion $f: M \to N$ with $|\dom(f)| < \kappa$ and any singleton $b \in N$ there is $a \in M$ such that $f$ can be extended to a partial immersion that sends $a$ to $b$.
\end{enumerate}
\end{definition}
The difference between (ii) and (iii) in the above definition is that that in (ii) we start with $a \in M$ and find $b \in N$, and in (iii) this is the other way around.
\begin{remark}
\thlabel{pc-models-of-jcp-theory-satisfy-same-sentences}
It is often the case that two structures $M$ and $N$, which we want to prove are in $\kappa$-back-and-forth correspondence, are in fact p.c.\ models of some theory $T$ with JCP. In this case, (i) in \thref{back-and-forth-correspondence} is automatic. Indeed, the characterisation in \thref{jcp} tells us that $T^\pc$ is precisely the set of h-inductive sentences true in $M$, and the same for $N$. The claim then follows because any sentence can in particular be viewed as an h-inductive sentence.
\end{remark}
\begin{theorem}[Back-and-forth]
\thlabel{back-and-forth}
Suppose that $M$ and $N$ are two structures of cardinality at most $\kappa$ that are in $\kappa$-back-and-forth correspondence. Then $M$ and $N$ are isomorphic.
\end{theorem}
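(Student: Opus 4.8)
The proof is the usual transfinite back-and-forth argument, with \emph{partial immersion} playing the role that ``partial elementary map'' plays in full first-order logic. The plan is to fix enumerations $M = (m_i)_{i < \theta}$ and $N = (n_i)_{i < \theta}$, where $\theta = \max(|M|, |N|) \le \kappa$ and repetitions are allowed, and to build by recursion an increasing and continuous chain $(f_\alpha)_{\alpha \le \theta}$ of partial immersions $M \to N$ with $|\dom(f_\alpha)| < \kappa$ at every stage $\alpha < \theta$. I would start from $f_0 = \emptyset$, which is a partial immersion exactly by condition (i) of \thref{back-and-forth-correspondence}. At a successor stage $\alpha + 1$ I would make two extensions of $f_\alpha$: first, if $m_\alpha \notin \dom(f_\alpha)$, apply condition (ii) (available since $|\dom(f_\alpha)| < \kappa$) to reach a partial immersion whose domain contains $m_\alpha$; then, if $n_\alpha$ is still not in the range, apply condition (iii) to extend further so that $n_\alpha$ is in the range. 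At limit stages take unions. Finally set $f = f_\theta$.

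Two routine facts underpin this. First, a union of a chain of partial immersions is again a partial immersion: a formula $\phi(x)$ has only finitely many free variables, so any tuple $a$ from the domain of the union already lies in the domain of some $f_\alpha$, where $M \models \phi(a) \Leftrightarrow N \models \phi(f(a))$ is already known; this in particular handles the limit stages. Second, a partial immersion $f$ with $\dom(f) = M$ and $\operatorname{ran}(f) = N$ is an isomorphism of $\L$-structures: specialising the defining biconditional to atomic formulas shows that $f$ preserves and reflects the relation symbols, respects equality (hence is injective, and it is surjective by construction), and commutes with the function and constant symbols. Granting these, the ``forth'' part of each successor step forces $\dom(f) = M$, the ``back'' part forces $\operatorname{ran}(f) = N$, and so $f \colon M \to N$ is the desired isomorphism.

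The one place that needs care, and the main obstacle, is the cardinality bookkeeping ensuring $|\dom(f_\alpha)| < \kappa$ for all $\alpha < \theta$, since conditions (ii) and (iii) are only guaranteed below $\kappa$; this is exactly where the hypothesis $|M|, |N| \le \kappa$ enters. The domain grows by at most two elements per successor step and by a union at limit stages, so for infinite $\alpha < \theta$ we get $|\dom(f_\alpha)| \le |\alpha| < \kappa$ (using $\alpha < \theta \le \kappa$), while for finite $\alpha$ the domain is finite; and since conditions (ii), (iii) are only invoked when the relevant element is genuinely absent, once the domain of some $f_\alpha$ has size $\kappa$ it is already all of $M$ and no further extension is attempted (similarly for the range). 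If $\kappa$ is finite the same construction simply terminates after finitely many steps.
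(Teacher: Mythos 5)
Your proposal is correct and is essentially the paper's own argument: the same transfinite back-and-forth construction of an increasing chain of partial immersions, using condition (i) for the empty map, (ii) and (iii) at successor steps, unions at limits, and the same cardinality bookkeeping (the paper enumerates both structures with length $\kappa$ allowing repetitions rather than $\theta = \max(|M|,|N|)$, which is an inessential difference). The two auxiliary facts you isolate (unions of chains of partial immersions are partial immersions; a total surjective partial immersion is an isomorphism) are exactly what the paper uses implicitly.
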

\begin{proof}
Choose enumerations $(a_i)_{i < \kappa}$ and $(b_i)_{i < \kappa}$ of $M$ and $N$ respectively (possibly repeating elements if their cardinalities are less than $\kappa$). We will inductively construct bijections $f_i: A_i \to B_i$, such that for all $i \leq \kappa$:
\begin{enumerate}[label=(\arabic*)]
\item $f_i$ extends $f_j$ for all $j < i$;
\item $(a_j)_{j < i} \subseteq A_i \subseteq M$;
\item $(b_j)_{j < i} \subseteq B_i \subseteq N$;
\item $f_i$ is a partial immersion $M \to N$;
\item $|A_i| = |B_i| < \kappa$ (except for $i = \kappa$).
\end{enumerate}
Clearly, $f_\kappa$ would then be the desired isomorphism. At limit stages we take unions. For the base case we take the empty function, which trivially satisfies (1)--(3) and satisfies (4) because $M$ and $N$ are in $\kappa$-back-and-forth correspondence (see \thref{back-and-forth-correspondence}(i)).

That leaves the successor case. Let $f_i$ be constructed. Then $A_i = \dom(f_i)$ has cardinality less than $\kappa$ by the induction hypothesis. So using the $\kappa$-back-and-forth correspondence of $M$ and $N$ there is $b \in N$ such that we can extend $f_i$ to a partial immersion $g: A_i \cup \{a_i\} \to B_i \cup \{b\}$ by setting $g(a_i) = b$. It follows immediately that $g$ is surjective. Injectivity follows from being a partial immersion, and so $g$ is a bijection. Again using $\kappa$-back-and-forth correspondence of $M$ and $N$, this time applied to $g$, we find $a \in M$ such that we can extend $g$ to a partial immersion $f_{i+1}: A_i \cup \{a_i, a\} \to B_i \cup \{b, b_i\}$ by setting $f_{i+1}(a) = b_i$, which is again a bijection. We set $A_{i+1} = A_i \cup \{a_i, a\}$ and $B_{i+1} = B_i \cup \{b, b_i\}$. This finishes the construction and hence the proof.
\end{proof}
\section{Positive saturation}
\label{sec:saturation}
\begin{definition}
\thlabel{finitely-satisfiable}
Let $M$ be a structure and $A \subseteq M$ be any subset. A \term{formula over $A$} is a formula $\phi(x, a)$, where $a \in A$. A set $\Sigma(x)$ of formulas over $A$ is called \term{finitely satisfiable} in $M$ if for any finite subset of formulas $\{\phi_1(x, a_1), \ldots, \phi_n(x, a_n)\} \subseteq \Sigma(x)$ we have that $M \models \exists x(\phi_1(x, a_1) \wedge \ldots \wedge \phi_n(x, a_n))$. We say that $\Sigma(x)$ is \emph{satisfiable in $M$} if there is $b \in M$ such that $M \models \Sigma(b)$.
\end{definition}
Technically what is happening with the set $A$ is that we temporarily add constant symbols to our signature for the elements of $A$, and expand $M$ by interpreting these constant symbols as their corresponding elements.
\begin{lemma}
\thlabel{continue-to-realise-type}
Let $M$ be a p.c.\ model of some theory $T$ and let $\Sigma(x)$ be a set of formulas over $M$ in any number of variables. Then $\Sigma(x)$ is finitely satisfiable in $M$ if and only if there is a continuation of $M$ to a p.c.\ model $N$ of $T$ in which $\Sigma(x)$ is satisfiable.
\end{lemma}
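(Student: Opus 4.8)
The plan is to prove the two directions separately. The ``if'' direction is immediate from the fact that a continuation of a p.c.\ model is an immersion, and the ``only if'' direction is a consistency argument followed by an application of \thref{continue-to-pc-model}.

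First the easy direction. Suppose $f\colon M \to N$ is a continuation to a p.c.\ model $N$ of $T$ and $b \in N$ realises $\Sigma(x)$, where each parameter $a \in M$ occurring in $\Sigma(x)$ is read in $N$ as $f(a)$. Since $M$ is p.c.\ and $N \models T$, the homomorphism $f$ is an immersion. Given finitely many formulas $\phi_1(x, a_1), \ldots, \phi_n(x, a_n) \in \Sigma(x)$, the formula $\exists x \bigwedge_{i=1}^n \phi_i(x, y)$ is positive and holds of $(f(a_1), \ldots, f(a_n))$ in $N$ (witnessed by $b$), so by immersion it already holds of $(a_1, \ldots, a_n)$ in $M$. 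Hence $\Sigma(x)$ is finitely satisfiable in $M$.

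Now the main direction. Assume $\Sigma(x)$ is finitely satisfiable in $M$. Introduce a tuple $c$ of new constant symbols matching $x$ and consider the full first-order theory $T \cup \Diag(M) \cup \Sigma(c)$. I claim it is consistent. By compactness for full first-order logic it is enough that $T \cup \Diag(M) \cup \Sigma_0(c)$ is consistent for every finite $\Sigma_0(x) = \{\phi_1(x, a_1), \ldots, \phi_n(x, a_n)\} \subseteq \Sigma(x)$; but finite satisfiability yields $b \in M$ with $M \models \bigwedge_i \phi_i(b, a_i)$, and interpreting $c$ as $b$ (and any coordinate of $x$ not appearing in $\Sigma_0$ arbitrarily) makes $M$ a model of $T \cup \Diag(M) \cup \Sigma_0(c)$. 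So we obtain a model $M'$ of $T$ which, via $\Diag(M)$, comes equipped with a homomorphism $g\colon M \to M'$, together with $c^{M'} \in M'$ such that $M' \models \phi(c^{M'}, g(a))$ for every $\phi(x, a) \in \Sigma(x)$. Finally, apply \thref{continue-to-pc-model} to continue $M'$ to a p.c.\ model $N$ of $T$ by a homomorphism $h\colon M' \to N$. Then $hg\colon M \to N$ is a continuation of $M$, and since every formula in $\Sigma(x)$ is positive (recall \thref{stay-positive}) and homomorphisms preserve positive formulas (\thref{homomorphism-preserves-truth}), the element $h(c^{M'})$ realises $\Sigma(x)$ over $M$ in $N$.

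The only point that genuinely needs care is that one must not try to realise $\Sigma(x)$ directly inside a p.c.\ model of $T \cup \Diag(M)$: being p.c.\ for that $\L(M)$-theory does not imply being p.c.\ for $T$. That is why the argument first passes to an arbitrary model $M'$ of $T \cup \Diag(M) \cup \Sigma(c)$ and only then invokes \thref{continue-to-pc-model} on its $\L$-reduct, with positivity of the formulas in $\Sigma(x)$ being exactly what makes this last step preserve the realisation. (Alternatively, one could encode the parameters of $\Sigma(x)$ as variables and the positive quantifier-free diagram of $M$ as a partial type in those variables, and apply \thref{compactness} directly; the homomorphism $M \to N$ is then recovered from preservation of positive quantifier-free formulas.)
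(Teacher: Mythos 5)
Your proof is correct and follows essentially the same route as the paper: the backward direction is identical (immersion/p.c.\ pulls the existential back into $M$), and your forward direction simply unfolds the proof of \thref{compactness} (full first-order compactness on $T \cup \Diag(M) \cup \Sigma(c)$, then \thref{continue-to-pc-model}, with positivity preserving the realisation), whereas the paper cites \thref{compactness} applied directly to $\Diag(M) \cup \Sigma(x)$ --- exactly the alternative you mention in your closing parenthesis.
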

\begin{proof}
For the left to right direction we consider the set of formulas $\Diag(M) \cup \Sigma(x)$. By assumption this is finitely satisfiable in a model of $T$, namely in $M$. By compactness (\thref{compactness}) there is a p.c.\ model $N$ of $T$ and $a \in N$ such that $N \models \Diag(M) \cup \Sigma(a)$. As $N \models \Diag(M)$ there is a homomorphism $M \to N$ and so $N$ is the required continuation of $M$.

Conversely, let $a \in N$ be such that $N \models \Sigma(x)$ and let $\Sigma_0(x) \subseteq \Sigma(x)$ be a finite subset. Then $N \models \exists x \bigwedge \Sigma_0(x)$. So as $M$ is p.c.\ we have $M \models \exists x \bigwedge \Sigma_0(x)$, and we conclude that $\Sigma(x)$ is finitely satisfiable in $M$.
\end{proof}
In practice we usually care about satisfiability and realisations of sets of formulas, and so we may as well assume them to be closed under finite conjunctions. Similar to how we used $\bigwedge \Sigma_0(x)$ in the above proof. This simplifies $\Sigma(x)$ being finitely satisfiable in $M$ to: for every $\phi(x, a) \in \Sigma(x)$ we have $M \models \exists x \phi(x, a)$. Throughout, we will implicitly use this notational convenience.
\begin{definition}
\thlabel{saturated-model}
Let $\kappa$ be an infinite cardinal. A structure $M$ is called \emph{positively $\kappa$-saturated}\index{positively saturated structure} if, for every $A \subseteq M$ with $|A| < \kappa$, every set $\Sigma(x)$ of formulas over $A$ in a single variable $x$ that is finitely satisfiable in $M$ is satisfiable in $M$.
\end{definition}
\begin{lemma}
\thlabel{saturation-extend-variables}
Let $\kappa$ be an infinite cardinal. A structure $M$ is positively $\kappa$-saturated if and only if for every $A \subseteq M$ with $|A| < \kappa$, every set $\Sigma(x)$ of formulas over $A$ and with $|x| \leq \kappa$ that is finitely satisfiable in $M$ is satisfiable in $M$.
\end{lemma}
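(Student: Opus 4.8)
The plan is to prove the nontrivial direction; the other is immediate, since a set of formulas in a single variable is in particular a set of formulas in at most $\kappa$ variables. So assume $M$ is positively $\kappa$-saturated and let $\Sigma(x)$ be a set of formulas over some $A \subseteq M$ with $|A| < \kappa$ and $|x| \le \kappa$ that is finitely satisfiable in $M$. Using the notational convention discussed after \thref{continue-to-realise-type} I may assume $\Sigma(x)$ is closed under finite conjunctions. Enumerate the variables as $x = (x_i)_{i < \lambda}$ with $\lambda = |x| \le \kappa$, and for $\phi \in \Sigma$ let $F_\phi \subseteq \lambda$ be the finite set of indices of variables occurring in $\phi$ (recall formulas are finitary).

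I would construct witnesses $b_i \in M$, $i < \lambda$, by recursion on $i$, aiming for $M \models \Sigma((b_i)_{i < \lambda})$. For $j \le \lambda$ set $r_j = \{\phi((b_i)_{i \in F_\phi \cap j}, (x_i)_{i \in F_\phi \setminus j}) : \phi \in \Sigma\}$, the set of formulas over $A \cup \{b_i : i < j\}$ obtained from $\Sigma$ by substituting the already-chosen $b_i$ for $x_i$ whenever $i < j$. The invariant to maintain is that $r_j$ is finitely satisfiable in $M$. At $j = 0$ this is the hypothesis on $\Sigma$. At a limit stage $j$ it follows from the invariant at earlier stages: a finite $S \subseteq \Sigma$ has $\psi := \bigwedge S \in \Sigma$ with $F_\psi$ finite, so $F_\psi \cap j$ is bounded below $j$, say by $i_0 < j$, and then the instance of $\psi$ in $r_j$ is literally the same formula as its instance in $r_{i_0+1}$, which is finitely satisfiable by the inductive hypothesis.

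The successor step is where positive $\kappa$-saturation enters. Given that $r_j$ is finitely satisfiable, consider the single-variable set $p_j(x_j) = \{\exists (x_i)_{i \in F_\phi,\, i > j}\, \phi((b_i)_{i \in F_\phi \cap j}, x_j, (x_i)_{i \in F_\phi,\, i > j}) : \phi \in \Sigma,\ j \in F_\phi\}$ of formulas over $A \cup \{b_i : i < j\}$; each existential block is over a finite set of variables, so these really are formulas. Finite satisfiability of $p_j$ follows from that of $r_j$ together with closure under finite conjunctions: for finitely many of the $\phi$, the conjunction $\psi$ lies in $\Sigma$, and a solution of the corresponding instance of $\psi$ in $r_j$ yields both a value for $x_j$ and witnesses for the remaining quantifiers. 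Since $j < \lambda \le \kappa$ and $\kappa$ is a cardinal we have $|j| < \kappa$, hence $|A \cup \{b_i : i < j\}| < \kappa$, and positive $\kappa$-saturation provides $b_j \in M$ with $M \models p_j(b_j)$. A routine case split on whether $j \in F_\phi$ then shows $r_{j+1}$ is finitely satisfiable (for $\phi$ with $j \in F_\phi$ this is exactly what $M \models p_j(b_j)$ gives, and for $j \notin F_\phi$ the instances in $r_{j+1}$ and $r_j$ coincide). Running the recursion to stage $\lambda$, the set $r_\lambda = \{\phi((b_i)_{i \in F_\phi}) : \phi \in \Sigma\}$ consists of sentences, for which finite satisfiability just means that each one holds; so $M \models \Sigma((b_i)_{i < \lambda})$, as desired.

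The main obstacle is really just the bookkeeping with variable indices: being careful that each $\phi$ involves only finitely many of the $x_i$, so that the existential quantifiers forming $p_j$ and $r_j$ stay within finitary positive formulas, and checking at limit stages that finitely many formulas only see boundedly many variables below $\lambda$. Everything else is routine; the one conceptual point is that the bound $|j| < \kappa$ — which holds precisely because $\kappa$ is an infinite cardinal and $j$ an ordinal below it — is exactly what keeps the parameter set small enough to invoke positive $\kappa$-saturation at each successor stage.
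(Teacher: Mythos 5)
Your proof is correct and takes essentially the same route as the paper's: realize the variables one at a time by transfinite recursion, at each successor stage reducing to a finitely satisfiable single-variable set over a parameter set of size $<\kappa$ (using that $|j|<\kappa$) and invoking positive $\kappa$-saturation. The only difference is bookkeeping — the paper closes $\Sigma$ under single existential quantification and maintains that the initial segment actually realises the restricted set $\Sigma_\delta$, whereas you carry finite satisfiability of the partially substituted set $r_j$ and introduce the existentials on the fly when forming $p_j$ — which is the same idea in slightly different clothing.
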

The difference between \thref{saturated-model} and \thref{saturation-extend-variables} is that in the latter we allow $x$ to be of length $\kappa$.
\begin{proof}
Let $A \subseteq M$ with $|A| < \kappa$ and let $\Sigma((x_i)_{i < \kappa})$ be a set of formulas over $A$ in $\kappa$ many variables, where each $x_i$ is a single variable. We may assume that for any $\phi(y, z) \in \Sigma((x_i)_{i < \kappa})$, where $y$ is a single variable and $z$ potentially a tuple of variables, we also have $\exists y \phi(y, z) \in \Sigma((x_i)_{i < \kappa})$. Indeed, this does not change satisfiability (or finite satisfiability) of $\Sigma((x_i)_{i < \kappa})$.

For $\delta < \kappa$ we let $\Sigma_\delta((x_i)_{i < \delta})$ be the restriction of $\Sigma((x_i)_{i < \kappa})$ to the variables $(x_i)_{i < \delta}$. That is, we take only those formulas that mention those variables. We inductively build a sequence $(b_i)_{i < \kappa}$ such that for all $\delta < \kappa$ we have $M \models \Sigma_\delta((b_i)_{i < \delta})$. The base case and the limit stages are trivial. So now assume we have constructed $(b_i)_{i < \delta}$. Note that $\Sigma_{\delta+1}(x_\delta, (b_i)_{i < \delta})$ is a set of formulas in a single variable over $A(b_i)_{i < \delta}$, and this set of parameters has cardinality $< \kappa$. Therefore, we only need to check that $\Sigma_{\delta+1}(x_\delta, (b_i)_{i < \delta})$ is finitely satisfiable in $M$, as saturation then gives us a realisation, which is exactly the required $b_\delta$.

So let $\phi(x_\delta, (b_i)_{i < \delta}) \in \Sigma_{\delta+1}(x_\delta, (b_i)_{i < \delta})$, where $\phi(x_\delta, (x_i)_{i < \delta})$ is some formula over $A$ (only really using finitely many variables from $(x_i)_{i < \delta}$). Then $\exists x_\delta \phi(x_\delta, (x_i)_{i < \delta}) \in \Sigma((x_i)_{i < \kappa})$ by our earlier assumption. We conclude that $\exists x_\delta \phi(x_\delta, (b_i)_{i < \delta}) \in \Sigma_\delta((b_i)_{i < \delta})$ and so $M \models \exists x_\delta \phi(x_\delta, (b_i)_{i < \delta})$, as required.
\end{proof}
\begin{example}
\thlabel{saturated-does-not-imply-pc}
Positive saturation does not imply being p.c. For example, take the empty theory in the language of pure equality. So the p.c.\ models are the singletons (see also \thref{pc-model-vs-ec-model}). Let $M = \{a, b\}$ be a two element set. Then any set $\Sigma(x)$ of formulas with parameters in $M$ can only say that $x$ is equal to one of $a$ and $b$ or to both. The latter is not finitely satisfiable in $M$, and the other options are clearly satisfiable in $M$. So $M$ is positively $\kappa$-saturated for all $\kappa$, but $M$ is not p.c.
\end{example}
We will generally only be interested in positive saturatedness in p.c.\ models. In Section \ref{sec:building-positively-saturated-models} we will see that we can always construct such p.c.\ models. However, the lemma below already gives us a criterion for when every p.c.\ model of a theory is positively $\omega$-saturated, and so in particular establishing their existence.
\begin{lemma}
\thlabel{supported-n-types-implies-omega-saturated}
Let $T$ be a theory and suppose that every type in finitely many variables is supported. Then every p.c.\ model of $T$ is positively $\omega$-saturated.
\end{lemma}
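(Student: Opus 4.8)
The plan is to fix a p.c.\ model $M$ of $T$, a finite subset $A \subseteq M$ enumerated by a finite tuple $a$, and a set $\Sigma(x)$ of formulas over $A$ (in the single variable $x$) that is finitely satisfiable in $M$; as noted in the excerpt we may assume $\Sigma(x)$ is closed under finite conjunctions, so finite satisfiability just says $M \models \exists x\,\phi(x,a)$ for every $\phi(x,a) \in \Sigma(x)$. The goal is to produce a single $b \in M$ with $M \models \Sigma(b)$. Since $M$ is not yet known to be saturated, one cannot realise $\Sigma(x)$ directly in $M$; the idea is to bundle $\Sigma$ together with $\tp(a;M)$ into one type in finitely many variables, realise it in an auxiliary p.c.\ model using compactness, and then transport its support back into $M$.

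Concretely, write each $\phi(x,a) \in \Sigma(x)$ as $\phi(x,y)$ for a fresh tuple $y$ matching $a$, and consider the set of (positive) formulas
\[
\Sigma^\ast(x,y) = \{\phi(x,y) : \phi(x,a) \in \Sigma(x)\} \cup \tp(a;M).
\]
Any finite subset is satisfiable in $M$: $\tp(a;M)$ is closed under conjunction and realised by $a$, and by finite satisfiability of $\Sigma$ a witness of the form $(b_0, a)$ exists for any finite piece, so $M \models \exists x y(\chi_0(x,y))$ for the relevant finite conjunction $\chi_0$. By \thref{compactness} there is a p.c.\ model $N$ of $T$ with $b', a' \in N$ and $N \models \Sigma^\ast(b', a')$. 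Since $M$ is p.c., $\tp(a;M)$ is a maximal consistent set of formulas by \thref{pc-model-iff-types-are-maximal}, and $\tp(a;M) \subseteq \tp(a';N)$, so in fact $\tp(a';N) = \tp(a;M)$.

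Now the support hypothesis enters. The set $r(x,y) = \tp(b'a';N)$ is a type in finitely many variables (with $N$ p.c.), hence supported by some $\chi(x,y)$. Then $\chi \in r$, so $N \models \exists x\,\chi(x,a')$, whence $\exists x\,\chi(x,y) \in \tp(a';N) = \tp(a;M)$ and therefore $M \models \exists x\,\chi(x,a)$; pick $b \in M$ with $M \models \chi(b,a)$. I claim $\tp(ba;M) = r(x,y)$: by \thref{supported-type-equivalent}, any $\psi(x,y) \notin r$ satisfies $T \models \neg \exists xy(\chi(x,y) \wedge \psi(x,y))$, so since $M \models \chi(b,a)$ and $M \models T$ we get $\psi(x,y) \notin \tp(ba;M)$; thus $\tp(ba;M) \subseteq r$, and equality follows because $\tp(ba;M)$ is maximal and $r$ is consistent. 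Finally $\Sigma^\ast(x,y) \subseteq r(x,y) = \tp(ba;M)$, so $M \models \phi(b,a)$ for every $\phi(x,a) \in \Sigma(x)$, i.e.\ $M \models \Sigma(b)$, which is exactly what positive $\omega$-saturation of $M$ requires.

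The main obstacle is precisely the middle manoeuvre: one has to realise that passing out of $M$ to a p.c.\ model $N$ is unavoidable, and that the support of a finitely-variabled type together with maximality of types in p.c.\ models is exactly the tool that lets the realisation be pulled back into $M$. A minor point to check along the way is that every formula involved is positive, so that \thref{compactness} applies — which is automatic under the ``stay positive'' convention \thref{stay-positive}.
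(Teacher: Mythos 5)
Your proof is correct and follows essentially the same route as the paper's: realise $\Sigma$ together with the finite parameter tuple in an auxiliary p.c.\ model, take a support of the joint type, and use the fact that the existential of that support lies in $\tp(a;M)$ to pull a realisation back into $M$. The only differences are cosmetic: you invoke compactness plus maximality of types in p.c.\ models where the paper uses \thref{continue-to-realise-type}, and you verify $\tp(ba;M)=r$ via the obstruction characterisation \thref{supported-type-equivalent} where the paper concludes directly from the definition of support modulo $T^\pc$.
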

\begin{proof}
Let $M$ be a p.c.\ model and let $\Sigma(x, b)$ be finitely satisfiable in $M$, where $x$ and $b \in M$ are finite. Then by \thref{continue-to-realise-type} there is a realisation $a$ in some p.c.\ model $N$ that is a continuation of $M$. Set $p(x, y) = \tp(a, b; M)$ and let $\phi(x, y)$ be the support of $p(x, y)$. Then $N \models \exists x \phi(x, b)$ so because $M$ is p.c.\ we find $a' \in M$ with $M \models \phi(a', b)$. As $\phi$ supports $p$ we have that $M \models p(a', b)$ and hence $M \models \Sigma(a', b)$ because $\Sigma(x, b) \subseteq p(x, b)$ by construction.
\end{proof}
As usual, we get that positively saturated structures are isomorphic.
\begin{theorem}
\thlabel{saturated-models-isomorphic}
Let $T$ be a theory with JCP. Suppose that $M$ and $N$ are positively $\kappa$-saturated p.c.\ models of $T$ with $|M|, |N| \leq \kappa$. Then $M$ and $N$ are isomorphic.
\end{theorem}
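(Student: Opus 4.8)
The plan is to apply the Back-and-forth theorem (\thref{back-and-forth}): since $|M|, |N| \leq \kappa$, it suffices to show that $M$ and $N$ are in $\kappa$-back-and-forth correspondence in the sense of \thref{back-and-forth-correspondence}. Condition (i) there, namely that $M$ and $N$ satisfy the same sentences, is immediate from \thref{pc-models-of-jcp-theory-satisfy-same-sentences} because $T$ has JCP and $M, N$ are p.c.\ models of $T$. So the work lies in establishing the extension properties (ii) and (iii); and since $M$ and $N$ play completely symmetric roles (both are positively $\kappa$-saturated p.c.\ models of $T$), it is enough to prove (ii).

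So let $f \colon M \to N$ be a partial immersion with $|\dom(f)| < \kappa$ and let $a \in M$ be a singleton. Write $A = \dom(f)$, fix an enumeration $\bar{c}$ of $A$, and set $\bar{d} = f(\bar{c})$. Consider the set of formulas over $f(A)$
\[
\Sigma(x) = \{ \phi(x, \bar{d}) : \phi(x, \bar{y}) \text{ a formula with } M \models \phi(a, \bar{c}) \}.
\]
I would first check that $\Sigma(x)$ is finitely satisfiable in $N$: given $\phi_1(x, \bar{d}), \ldots, \phi_n(x, \bar{d}) \in \Sigma(x)$, the formula $\exists x \bigwedge_i \phi_i(x, \bar{y})$ holds of the relevant finite subtuple of $\bar{c}$ in $M$, hence of the corresponding subtuple of $\bar{d}$ in $N$ because $f$ is a partial immersion. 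Since $|f(A)| = |A| < \kappa$ and $N$ is positively $\kappa$-saturated, $\Sigma(x)$ is realised by some $b \in N$.

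It remains to verify that extending $f$ by $a \mapsto b$ yields a partial immersion. The forward direction, $M \models \phi(a, \bar{c}) \Rightarrow N \models \phi(b, \bar{d})$, is exactly $b \models \Sigma(x)$. For the backward direction, suppose $M \not\models \phi(a, \bar{c})$. Since $M$ is p.c., by \thref{pc-model}(iii) there is an obstruction $\psi(x, \bar{y})$ of $\phi(x, \bar{y})$ with $M \models \psi(a, \bar{c})$, so $\psi(x, \bar{d}) \in \Sigma(x)$ and hence $N \models \psi(b, \bar{d})$; as $T \models \neg \exists x \bar{y}(\phi(x, \bar{y}) \wedge \psi(x, \bar{y}))$ and $N \models T$, this forces $N \not\models \phi(b, \bar{d})$. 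This establishes (ii), symmetrically (iii), and then \thref{back-and-forth} gives the desired isomorphism.

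The only real point of departure from the classical Cantor back-and-forth argument — and thus the main obstacle to watch — is this last verification: in positive logic a realisation of $\{\phi(x, \bar{d}) : M \models \phi(a, \bar{c})\}$ does not automatically reflect the formulas that \emph{fail} at $a$, so one cannot conclude a partial immersion directly. One has to feed in the positive closedness of \emph{both} $M$ and $N$ (through obstructions) to upgrade the resulting homomorphism-compatible map to a genuine partial immersion.
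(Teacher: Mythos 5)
Your proposal is correct and follows essentially the same route as the paper: reduce to $\kappa$-back-and-forth correspondence, realise the pushed-forward type $f(\tp(a/\dom(f);M))$ in $N$ by saturation, and then use positive closedness of $M$ (via an obstruction) to upgrade the extension to a partial immersion. The only nuance is that for condition (ii) itself only $M$'s being p.c.\ is used (plus $N \models T$); $N$'s positive closedness enters only through the symmetric argument for (iii), but this does not affect the correctness of your proof.
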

\begin{proof}
We will prove that $M$ and $N$ are in $\kappa$-back-and-forth correspondence. Then the result follows from \thref{back-and-forth}. Following \thref{pc-models-of-jcp-theory-satisfy-same-sentences}, (i) from \thref{back-and-forth-correspondence} is automatic, so we prove (ii), and (iii) follows by symmetry.

Let $f: M \to N$ be a partial immersion with $|\dom(f)| < \kappa$ and let $a \in M$ be a singleton. Write $C = \dom(f)$, and set $p(x) = \tp(a/C; M) = \{ \phi(x, c) : c \in C \text{ and } M \models \phi(a, c) \}$. Consider the type
\[
f(p)(x) = \{ \phi(x, f(c)) : \phi(x, c) \in p(x) \}.
\]
As $p(x)$ is satisfiable in $M$, we have that $f(p)(x)$ is finitely satisfiable in $N$ because $f$ is a partial immersion. By positive $\kappa$-saturatedness of $N$ there is a realisation $b \in N$ of $f(p)(x)$. Let $g$ extend $f$ by setting $g(a) = b$. We prove that $g$ is a partial immersion. For this, we let $\phi(x, y)$ be any formula ($x$ may not appear in $\phi$) and let $c \in C$ match the length of $y$. By construction we have that $M \models \phi(a, c)$ implies $N \models \phi(b, f(c))$. It is the converse that requires an argument. We prove the contrapositive. Suppose that $M \not \models \phi(a, c)$. As $M$ is a p.c.\ model, there is an obstruction $\psi(x, y)$ of $\phi(x, y)$ modulo $T$ such that $M \models \psi(a, c)$. By the already established direction, we have $N \models \psi(b, f(c))$ and hence $N \not \models \phi(b, f(c))$. This concludes the proof.
\end{proof}
\begin{example}
\thlabel{example-saturated-models-but-not-iso}
The analogous version of the above theorem in full first-order logic is often stated as: any two $\kappa$-saturated elementary equivalent structures of cardinality $\kappa$ are isomorphic. The assumption that $M$ and $N$ are p.c.\ models of the same theory with JCP is then the analogue of being elementary equivalent, but it is slightly more subtle. The argument does not go through if we let $M$ and $N$ be two positively $\kappa$-saturated structures of cardinality at most $\kappa$ that satisfy the same h-inductive sentences. The issue is that they may not be p.c.\ models, and the $g$ constructed in the proof of \thref{saturated-models-isomorphic} will generally only be a partial homomorphism, but not a partial immersion.

As a concrete example we consider the theory from \thref{bounded-set-compactness}, which has $\omega$ many constant symbols $\{c_i\}_{i < \omega}$ and declares them all to be distinct. Then the model $M$ that consists only of interpretations for the constant symbols is the unique p.c.\ model. By the usual compactness theorem and downward L\"owenheim-Skolem, there is a countable model $N$ that is elementary equivalent to $M$ and which contains an element $a \in N$ that is not the interpretation of any of the constant symbols. One quickly checks that both $M$ and $N$ are positively $\omega$-saturated. However, they are clearly not isomorphic.
\end{example}
\section{Atomic models}
\label{sec:atomic-models}
\begin{definition}
\thlabel{atomic-prime-model}
Let $T$ be a theory. We call $M$ an \term{atomic model} of $T$ if it is a p.c.\ model of $T$ in which only supported types are realised.
\end{definition}
\begin{theorem}
\thlabel{countable-atomic-isomorphic}
Let $T$ be a theory with JCP. Then any two countable atomic models of $T$ are isomorphic.
\end{theorem}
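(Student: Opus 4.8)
The plan is to apply the Back-and-forth theorem (\thref{back-and-forth}) with $\kappa = \aleph_0$: it then suffices to show that any two countable atomic models $M$ and $N$ of $T$ are in $\aleph_0$-back-and-forth correspondence (\thref{back-and-forth-correspondence}). Condition (i) there is automatic, since $T$ has JCP and $M$, $N$ are p.c.\ models, so by \thref{pc-models-of-jcp-theory-satisfy-same-sentences} they satisfy exactly the same sentences. Conditions (ii) and (iii) are symmetric in $M$ and $N$, so I would only carry out (ii), using atomicity of $M$, and note that (iii) follows the same way with the roles of $M$ and $N$ swapped, using atomicity of $N$.

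For (ii), let $f : M \to N$ be a partial immersion with finite domain $C$, enumerated by a finite tuple $c$, and let $a \in M$ be a singleton; I must produce $b \in N$ so that $f \cup \{(a,b)\}$ is again a partial immersion. Since $M$ is atomic, the type $p = \tp(ac; M)$ --- a type in the finitely many variables $x$ (for $a$) and $y$ (for $c$) --- is supported, say by $\phi(x, y)$; in particular $\phi(x,y) \in p$ (\thref{supported-type-equivalent}), so $M \models \phi(a, c)$ and hence $\exists x\, \phi(x, y) \in \tp(c; M)$. Because $f$ is a partial immersion, $\tp(c; M) = \tp(f(c); N)$, so $N \models \exists x\, \phi(x, f(c))$, and I choose $b \in N$ with $N \models \phi(b, f(c))$.

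It remains to check that $g := f \cup \{(a,b)\}$ is a partial immersion, and this is where the support is used. Since $N$ is a p.c.\ model we have $N \models T^\pc$, and as $\phi$ supports $p$, $T^\pc \models \forall x y\, (\phi(x,y) \to \chi(x,y))$ for every $\chi \in p$; therefore $N \models \chi(b, f(c))$ for all $\chi \in p$, i.e.\ $p \subseteq \tp(bf(c); N)$, and by maximality of types (\thref{pc-model-iff-types-are-maximal}) this is an equality. Consequently, for any formula $\psi$ and subtuple $c'$ of $c$ we have $M \models \psi(a, c') \iff \psi \in p \iff N \models \psi(b, f(c'))$, while for formulas not mentioning $a$ the equivalence holds because $f$ is already a partial immersion; so $g$ is a partial immersion extending $f$ and sending $a$ to $b$, as required.

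I do not expect a genuine obstacle: this is the usual back-and-forth argument transported to positive logic, essentially the same shape as the proof of \thref{saturated-models-isomorphic}. The only place needing care is the last step --- concluding that $\phi$ forces the \emph{entire} type of $bf(c)$ in $N$ --- which works precisely because $N$ is positively closed (so $N \models T^\pc$) and because being supported is defined modulo $T^\pc$ rather than modulo $T$.
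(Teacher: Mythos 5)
Your proposal is correct and follows essentially the same route as the paper: reduce to $\omega$-back-and-forth correspondence via \thref{back-and-forth} and \thref{pc-models-of-jcp-theory-satisfy-same-sentences}, then extend a finite partial immersion by realising the support of $\tp(a,c;M)$ over $f(c)$ in $N$ and using that $N \models T^\pc$ together with maximality of types in p.c.\ models (\thref{pc-model-iff-types-are-maximal}) to get equality of types. The paper's proof is exactly this argument, phrased slightly more compactly.
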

\begin{proof}
Let $M$ and $N$ be countable atomic models of $T$. We show that $M$ and $N$ are in $\omega$-back-and-forth correspondence, so the isomorphism follows from \thref{back-and-forth}. Following \thref{pc-models-of-jcp-theory-satisfy-same-sentences}, (i) from \thref{back-and-forth-correspondence} is automatic, so we prove (ii), and (iii) follows by symmetry.

Let $f: M \to N$ be a partial immersion with $|\dom(f)|$ finite and let $a \in M$ be a singleton. Enumerate $\dom(f)$ as a finite tuple $a'$. By assumption, $\tp(a,a'; M)$ is supported. So let $\phi(x, y)$ be the support of $\tp(a,a'; M)$. Then $M \models \exists x \phi(x, a')$ and so $N \models \exists x \phi(x, f(a'))$. Let $b$ be such that $N \models \phi(b, f(a'))$. Then because $N$ is a p.c.\ model we have by definition of being a support that $\tp(a,a'; M) \subseteq \tp(b, f(a'); N)$. By maximality of types in p.c.\ models (\thref{pc-model-iff-types-are-maximal}) we have $\tp(a,a'; M) = \tp(b, f(a'); N)$, and so $f$ can be extended to a partial immersion that sends $a$ to $b$, as required.
\end{proof}
\section{Prime models}
\label{sec:prime-models}
\begin{definition}
\thlabel{prime-model}
Let $T$ be a theory. We call $M$ a \term{prime model} if it is a p.c.\ model of $T$ and every p.c.\ model of $T$ is a continuation of $M$.
\end{definition}
\begin{proposition}
\thlabel{prime-model-implies-jcp}
If a theory $T$ has a prime model then $T$ has JCP.
\end{proposition}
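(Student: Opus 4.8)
The plan is to verify condition (i) of \thref{jcp}: any two models of $T$ admit a common continuation that is itself a model of $T$. The prime model will play the role of a fixed base over which everything can be amalgamated.

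First I would take arbitrary models $M$ and $M'$ of $T$ and use \thref{continue-to-pc-model} to continue them to p.c.\ models $N$ and $N'$ of $T$, via homomorphisms $M \to N$ and $M' \to N'$. The reason for passing to p.c.\ models is that, if $P$ denotes the given prime model, then by \thref{prime-model} there are homomorphisms $P \to N$ and $P \to N'$ (since $N$ and $N'$ are p.c.), and these are automatically immersions because $P$ is p.c.\ (\thref{pc-model}(i)). So I obtain a span $N \xleftarrow{} P \xrightarrow{} N'$ whose two legs are immersions (in particular, homomorphisms).

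Next I would apply the amalgamation corollary \thref{amalgamation-bases} to this span, treating one leg as a homomorphism and the other as an immersion, to get $N \xrightarrow{f} N^{*} \xleftarrow{g} N'$ with $fh = gh'$, where $f$ is an elementary embedding and $g$ is a homomorphism. The one point requiring a word of justification — and the only real obstacle, though a minor one — is that $N^{*}$ is again a model of $T$: this holds because $f\colon N \to N^{*}$ is an elementary embedding and $N \models T$, and every h-inductive sentence in $T$ is in particular a first-order sentence, hence preserved under elementary embeddings. Composing the homomorphisms $M \to N \xrightarrow{f} N^{*}$ and $M' \to N' \xrightarrow{g} N^{*}$ then exhibits $N^{*}$ as a model of $T$ admitting homomorphisms from both $M$ and $M'$, which is exactly \thref{jcp}(i). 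Hence $T$ has JCP.
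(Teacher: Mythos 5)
Your proof is correct and follows essentially the same route as the paper: the paper amalgamates the span of immersions $N_1 \leftarrow P \to N_2$ out of the prime model via \thref{amalgamation-bases} to verify \thref{jcp}(ii), while you additionally carry out the reduction to arbitrary models (already contained in \thref{equivalence-jcp-characterisations}) and spell out why the amalgam is a model of $T$, which the paper leaves implicit in the fact that one leg of the amalgamation is an elementary embedding.
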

\begin{proof}
Let $M$ be a prime model and let $N_1$ and $N_2$ be two p.c.\ models. Then there are immersions $N_1 \leftarrow M \to N_2$. By \thref{amalgamation-bases} we can amalgamate this span to obtain $N_1 \to N \leftarrow N_2$, where $N$ is a model of $T$.
\end{proof}
\begin{proposition}
\thlabel{prime-model-is-small}
If a theory $T$ has a prime model $M$ then $|M| \leq |T|$.
\end{proposition}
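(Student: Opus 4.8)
The plan is to exploit primeness in the opposite direction to the one used in \thref{prime-model-implies-jcp}: rather than mapping $M$ into other models, I will produce a single \emph{small} p.c.\ model and map $M$ into it. First I would apply the downward L\"owenheim--Skolem theorem for p.c.\ models (\thref{lowenheim-skolem}) with $A = \emptyset$ to obtain a p.c.\ model $M_0 \subseteq M$ of $T$ with $|M_0| \leq |T|$ (recall that $|T| \geq \aleph_0$, so the $|A|$ term contributes nothing).

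Next, since $M$ is a prime model and $M_0$ is a p.c.\ model of $T$, by \thref{prime-model} there is a homomorphism $f \colon M \to M_0$; because $M$ is p.c.\ and $M_0 \models T$, this $f$ is in fact an immersion by \thref{pc-model}(i). The key observation is then that every immersion is injective: applying the defining equivalence of \thref{immersion} to the formula $x_1 = x_2$, if $f(a) = f(b)$ then $M_0 \models f(a) = f(b)$, hence $M \models a = b$, i.e.\ $a = b$. Thus $f$ is an injection $M \hookrightarrow M_0$, giving $|M| \leq |M_0| \leq |T|$, which is the claim.

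I do not expect a genuine obstacle; the only point needing a moment's care is remembering that immersions (unlike arbitrary homomorphisms, which can collapse elements as in the empty-theory example of \thref{pc-model-vs-ec-model}) are injective, so that the composite $M \xrightarrow{f} M_0 \hookrightarrow M$ forces $M$ and $M_0$ to have the same cardinality. Note also that the argument is uniform: if $|M| \leq |T|$ already there is nothing to prove, and otherwise $M_0 \subsetneq M$ and the injectivity of $f$ still yields the bound directly.
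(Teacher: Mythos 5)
Your proposal is correct and follows essentially the same route as the paper: obtain a p.c.\ model of size at most $|T|$ via downward L\"owenheim--Skolem, use primeness to get a homomorphism from $M$ into it, and note that this homomorphism is an immersion (hence injective) because $M$ is p.c. The paper's proof is exactly this argument, so no further comparison is needed.
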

\begin{proof}
By \thref{lowenheim-skolem} there is a p.c.\ model $N$ of $T$ with $|N| \leq |T|$. Since $M$ is prime, there is a homomorphism $M \to N$. As $M$ is a p.c.\ model, this is an immersion and so in particular it is an injection. Hence $|M| \leq |N| \leq |T|$.
\end{proof}
\begin{theorem}
\thlabel{prime-iff-countable-and-atomic}
Let $T$ be a countable theory with JCP, and let $M$ be a p.c.\ model of $T$. Then $M$ is prime if and only if it is countable and atomic.
\end{theorem}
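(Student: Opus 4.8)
I would prove the two implications separately, the forward one by an omitting-types argument and the backward one by a one-sided back-and-forth (``forth'') construction.

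For the forward direction, assume $M$ is prime. That $M$ is countable is immediate: \thref{prime-model-is-small} gives $|M| \le |T|$, and $|T|$ is countable by hypothesis. For atomicity, suppose towards a contradiction that some finite tuple $a \in M$ has $p(x) := \tp(a; M)$ not supported. Then $p(x)$ is a partial type in finitely many variables that is not supported, so by \thref{omitting-types} (this is where countability of $T$ is used) there is a p.c.\ model $N$ of $T$ that omits $p(x)$. Since $M$ is prime there is a homomorphism $f \colon M \to N$, and since $M$ is a p.c.\ model $f$ is in fact an immersion by \thref{pc-model}. But then $M \models \chi(a)$ implies $N \models \chi(f(a))$ for every $\chi(x) \in p(x)$, so $f(a)$ realises $p(x)$ in $N$, contradicting that $N$ omits $p(x)$. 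Hence every type of a finite tuple realised in $M$ is supported, i.e.\ $M$ is atomic.

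For the backward direction, assume $M$ is countable and atomic; I must show every p.c.\ model $N$ of $T$ is a continuation of $M$, i.e.\ admits a homomorphism $M \to N$. Fix such an $N$ and enumerate $M = \{a_i : i < \omega\}$. I would build an increasing chain of partial immersions $f_0 \subseteq f_1 \subseteq \cdots$ from $M$ to $N$ with each $\dom(f_n)$ finite and $a_n \in \dom(f_{n+1})$. Then $f := \bigcup_n f_n$ has $\dom(f) = M$ and is a partial immersion (any formula mentions only finitely many coordinates of its argument, all lying in some $\dom(f_n)$), hence in particular a homomorphism $M \to N$, which is what we need. For the base case take $f_0 = \emptyset$: since $M$ and $N$ are p.c.\ models of the theory $T$ which has JCP, they satisfy the same h-inductive sentences by \thref{jcp}, and both a positive sentence and its negation are h-inductive, so $M \models \phi \Leftrightarrow N \models \phi$ for all sentences $\phi$, i.e.\ the empty function is a partial immersion (cf.\ \thref{pc-models-of-jcp-theory-satisfy-same-sentences}).

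For the successor step, suppose $f_n$ is built and enumerate $\dom(f_n)$ as a finite tuple $a'$. Since $M$ is atomic, $\tp(a_n a'; M)$ is supported, say by $\phi(x,y)$; then $M \models \exists x\, \phi(x,a')$, and applying the partial immersion $f_n$ to the formula $\exists x\, \phi(x,y)$ yields $N \models \exists x\, \phi(x, f_n(a'))$, so pick $b \in N$ with $N \models \phi(b, f_n(a'))$. As $N$ is a p.c.\ model it satisfies $T^\pc$, so the defining property of a support gives $\tp(a_n a'; M) \subseteq \tp(b\, f_n(a'); N)$, and by maximality of types in p.c.\ models (\thref{pc-model-iff-types-are-maximal}) the two types coincide; hence extending $f_n$ by $a_n \mapsto b$ gives the required partial immersion $f_{n+1}$. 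The only genuinely substantive point is this successor step — combining ``$\phi$ supports the type'' with maximality to promote ``$\phi(b, f_n(a'))$ holds in $N$'' to ``$\tp(b\,f_n(a');N)$ equals $\tp(a_n a';M)$''; the rest is bookkeeping about chains of partial immersions and the base case.
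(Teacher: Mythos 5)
Your proof is correct and follows essentially the same route as the paper: countability via \thref{prime-model-is-small}, atomicity by omitting the unsupported type and mapping the prime model into the omitting model, and the converse by the inductive construction extending along supports of types. The only cosmetic difference is that you upgrade each step to a partial immersion via maximality of types in p.c.\ models, whereas the paper only tracks one-directional preservation of formulas, which already suffices to produce the required homomorphism.
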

\begin{proof}
Countability follow from \thref{prime-model-is-small}. Next we show that $M$ only realises supported types. Let $p(x)$ be a type that is not supported. Then by \thref{omitting-types} there is a p.c.\ model $N$ of $T$ that omits $p(x)$. As $M$ is prime there is a homomorphism $f: M \to N$. So if $p(x)$ were to be realised in $M$, say by $a \in M$, then $f(a)$ would realise $p(x)$ in $N$. Therefore, $p(x)$ cannot be realised in $M$.

For the right to left direction we let $N$ be any p.c.\ model of $T$. Enumerate $M$ as $(a_i)_{i < \omega}$. We will inductively construct an increasing chain of functions $f_i: \{a_j\}_{j < i} \to N$ such that for all tuples $a$ in $\{a_j\}_{j < i}$ and all $\phi(x)$ we have
\[
M \models \phi(a) \quad \implies \quad N \models \phi(f(a)).
\]
Then $f = \bigcup_{i < \omega} f_i$ will be the desired homomorphism $M \to N$. For the base case we take the empty function, for which we need to check that every sentence satisfied by $M$ is also satisfied by $N$ (see also \thref{homomorphisms-and-sentences}). This is indeed true, because both $M$ and $N$ are p.c.\ models of the same theory with JCP. So, by \thref{jcp}, $N$ is a model of $T^\pc$, which can be computed as the set of h-inductive sentences satisfied by $M$ (and any positive sentence can be viewed as an h-inductive sentence).

Having constructed $f_i$, we let $p(x, y) = \tp(a_i, (a_j)_{j < i}; M)$. As $M$ is atomic, there is a support $\phi(x, y)$ of $p(x, y)$. By \thref{supported-type-equivalent} we have $\phi(x, y) \in p(x, y)$ and so $M \models \exists x \phi(x,(a_j)_{j < i})$. By the induction hypothesis we then have $N \models \exists x \phi(x, f((a_j)_{j < i}))$. Let $b \in N$ be such that $N \models \phi(b, f((a_j)_{j < i}))$. As $\phi(x, y)$ supports $p(x, y)$ and $N$ is a p.c.\ model, we have $N \models p(b, f((a_j)_{j < i}))$. We can thus set $f_{i+1}(a_i) = b$. This completes the construction and thus the proof.
\end{proof}
\begin{corollary}
\thlabel{prime-models-isomorphic}
Let $T$ be a countable theory. Any two prime models of $T$ are isomorphic.
\end{corollary}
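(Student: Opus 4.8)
The plan is to reduce this to the results established immediately above. First I would dispose of the trivial case: if $T$ has no prime model at all, there is nothing to prove. So assume $T$ has at least one prime model. Then \thref{prime-model-implies-jcp} tells us that $T$ has JCP, which is the hypothesis needed to invoke the later theorems. This is the only mildly subtle point — that we get JCP \emph{for free} from the mere existence of a prime model — but it is already done for us.

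Next I would apply \thref{prime-iff-countable-and-atomic}: since $T$ is countable and has JCP, a p.c.\ model of $T$ is prime if and only if it is countable and atomic. In particular, any two prime models $M$ and $N$ of $T$ are both countable atomic models of $T$.

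Finally I would invoke \thref{countable-atomic-isomorphic}, which says that any two countable atomic models of a theory with JCP are isomorphic. Applying this to $M$ and $N$ gives $M \cong N$, completing the proof. I do not anticipate any real obstacle here: the whole argument is just the composition $\text{prime} \Rightarrow \text{JCP}$ (\thref{prime-model-implies-jcp}), $\text{prime} \Leftrightarrow \text{countable atomic}$ (\thref{prime-iff-countable-and-atomic}), and $\text{countable atomic} \Rightarrow \text{unique}$ (\thref{countable-atomic-isomorphic}); the one thing to be careful about is to make sure JCP is in hand before citing the last two, which is exactly what the first step secures.
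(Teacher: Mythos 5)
Your proposal is correct and follows exactly the paper's own argument: use \thref{prime-model-implies-jcp} to secure JCP, then \thref{prime-iff-countable-and-atomic} to see both prime models are countable and atomic, and conclude with \thref{countable-atomic-isomorphic}. The remark that JCP comes for free from the existence of a prime model is precisely the point the paper's proof relies on as well.
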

\begin{proof}
Let $M$ and $N$ be prime models of $T$. By \thref{prime-model-implies-jcp}, $T$ has JCP, so \thref{prime-iff-countable-and-atomic} applies. Hence $M$ and $N$ are both countable and atomic. The result now follows from \thref{countable-atomic-isomorphic}.
\end{proof}
\section{Characterising countably categorical theories}
\label{sec:characterising-countably-categorical-theories}
\begin{definition}
\thlabel{categoricity}
Let $\kappa$ be a cardinal. A theory $T$ is called \emph{$\kappa$-categorical}\index{categorical theory} if it has only one p.c.\ model of cardinality $\kappa$, up to isomorphism.
\end{definition}
\begin{theorem}
\thlabel{countably-categorical-characterisation}
Let $T$ be a countable theory with JCP. Then the following are equivalent:
\begin{enumerate}[label=(\roman*)]
\item $T$ is $\omega$-categorical,
\item every type in finitely many variables is supported,
\item all p.c.\ models are atomic,
\item all countable p.c.\ models are atomic,
\item every p.c.\ model is positively $\omega$-saturated,
\item there is a positively $\omega$-saturated prime model,
\item there is a positively $\omega$-saturated atomic model.
\end{enumerate}
\end{theorem}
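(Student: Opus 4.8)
The plan is to route every condition through (ii): first establish the backbone cycle (i) $\Rightarrow$ (ii) $\Rightarrow$ (iii) $\Rightarrow$ (iv) $\Rightarrow$ (i), and then graft on (v), (vi) and (vii).

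For the backbone: (ii) $\Rightarrow$ (iii) is immediate, since if every finitary type is supported then in particular every type realised in a p.c.\ model is, which is the definition of being atomic; and (iii) $\Rightarrow$ (iv) is trivial. For (iv) $\Rightarrow$ (i): by hypothesis every countable p.c.\ model is atomic, so by \thref{countable-atomic-isomorphic} any two of them are isomorphic, and since at least one countable p.c.\ model exists (continue a model of $T$ to a p.c.\ model via \thref{continue-to-pc-model} and apply \thref{lowenheim-skolem}) this is exactly the statement that $T$ is $\omega$-categorical. For (i) $\Rightarrow$ (ii) I would argue contrapositively: if some finitary type $p(x)$ is not supported, then \thref{omitting-types} produces a p.c.\ model omitting $p(x)$, which we may take countable by \thref{lowenheim-skolem} (an immersed submodel still omits $p(x)$). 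On the other hand $p(x)$, being consistent with $T$, is realised in some model of $T$; continuing that model to a p.c.\ model and using that types in p.c.\ models are maximal (\thref{pc-model-iff-types-are-maximal}) shows $p(x)$ is realised in a p.c.\ model, hence in a countable one by \thref{lowenheim-skolem} again. These two countable p.c.\ models cannot be isomorphic, contradicting $\omega$-categoricity.

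Grafting on the remaining conditions is mostly citation. The implication (ii) $\Rightarrow$ (v) is precisely \thref{supported-n-types-implies-omega-saturated}, and (v) $\Rightarrow$ (i) follows because any two countable p.c.\ models are then positively $\omega$-saturated, hence isomorphic by \thref{saturated-models-isomorphic}. For (ii) $\Rightarrow$ (vi): combining (ii) $\Rightarrow$ (iii) with \thref{supported-n-types-implies-omega-saturated}, every p.c.\ model is atomic and positively $\omega$-saturated; a countable one (which exists by \thref{lowenheim-skolem}) is then prime by \thref{prime-iff-countable-and-atomic}, giving a positively $\omega$-saturated prime model. Then (vi) $\Rightarrow$ (vii) because a prime model is atomic, again by \thref{prime-iff-countable-and-atomic}. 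Finally, (vii) $\Rightarrow$ (ii): let $M$ be a positively $\omega$-saturated atomic model and $p(x)$ a finitary type. Each finite conjunction of formulas from $p(x)$ is realised in some p.c.\ model realising $p(x)$, so the corresponding positive sentence $\exists x \bigwedge \Sigma_0(x)$ holds there; since such a sentence is (equivalent to) an h-inductive sentence and, by JCP and \thref{jcp}, the h-inductive theory of any p.c.\ model equals $T^\pc$, it holds in $M$ as well. Thus $p(x)$ is finitely satisfiable in $M$, so by positive $\omega$-saturation---invoking \thref{saturation-extend-variables} to allow the finite tuple of variables $x$---it is realised in $M$; as $M$ is atomic and types in p.c.\ models are maximal, $p(x)$ must be supported.

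The only genuinely substantial step is (i) $\Rightarrow$ (ii), which rests on the Omitting Types theorem; the rest is bookkeeping. The recurring subtlety to watch is that several arguments need a \emph{countable} witness, which is always supplied by downward L\"owenheim--Skolem (\thref{lowenheim-skolem}) together with the fact that immersed submodels inherit the relevant realised or omitted types.
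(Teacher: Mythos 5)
Your proposal is correct and follows essentially the same route as the paper: the backbone cycle (i) $\Rightarrow$ (ii) $\Rightarrow$ (iii) $\Rightarrow$ (iv) $\Rightarrow$ (i) via omitting types, downward L\"owenheim--Skolem and the isomorphism of countable atomic models, with (v)--(vii) grafted on through the saturation and prime-model lemmas exactly as in the text. The only differences are cosmetic (you derive (vi) directly from (ii) rather than from the already-established (i)--(v), and in (vii) $\Rightarrow$ (ii) you invoke JCP via the equality of h-inductive theories of p.c.\ models instead of via a joint continuation), and neither affects correctness.
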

\begin{proof}
We prove the following implications.
\[
\begin{tikzcd}
                                                               & \text{(v)} \arrow[ld, Rightarrow, bend right] \arrow[rd, dashed]          &                                      &                                    \\
\text{(i)} \arrow[r, Rightarrow] \arrow[rr, dashed, bend left] & \text{(ii)} \arrow[d, Rightarrow] \arrow[u, Rightarrow] \arrow[r, dashed] & \text{(i)-(v)} \arrow[r, Rightarrow] & \text{(iv)} \arrow[ld, Rightarrow] \\
\text{(vi)} \arrow[u, Rightarrow] \arrow[rru, dashed]          & \text{(iii)} \arrow[l, Rightarrow] \arrow[ru, dashed]                     & \text{(vii)} \arrow[lu, Rightarrow]  &                                   
\end{tikzcd}
\]
\underline{(i) $\Rightarrow$ (ii)} Suppose for a contradiction that there is a type $p(x)$ in finitely many variables that is not supported. Let $M$ be a p.c.\ model in which $p(x)$ is realised, say by $a \in M$. By \thref{lowenheim-skolem} there is a countable p.c.\ model $M' \subseteq M$ with $a \in M'$. In particular, $p(x)$ is realised in $M'$ by $a$. By \thref{omitting-types} there is also a p.c.\ model $N$ that omits $p(x)$. Again, using \thref{lowenheim-skolem}, there is a countable p.c.\ model $N' \subseteq N$. In particular, $N'$ omits $p(x)$. However, by assumption $M'$ and $N'$ must be isomorphic, which is our desired contradiction.

\underline{(ii) $\Rightarrow$ (iii)} By definition.

\underline{(iii) $\Rightarrow$ (iv)} Trivial.

\underline{(iv) $\Rightarrow$ (i)} By \thref{countable-atomic-isomorphic}.

\underline{(ii) $\Rightarrow$ (v)} By \thref{supported-n-types-implies-omega-saturated}.

\underline{(v) $\Rightarrow$ (i)} By \thref{saturated-models-isomorphic}.

Now that we have established the equivalence of (i)--(v), we prove the equivalence with the final two properties.

\underline{(i)--(v) $\Rightarrow$ (vi)} Let $M$ be the unique countable p.c.\ model. By (iii) $M$ is atomic and, so $M$ is prime by \thref{prime-iff-countable-and-atomic}. Finally, $M$ is positively $\omega$-saturated by (v). 

\underline{(vi) $\Rightarrow$ (vii)} Let $M$ be prime and positively $\omega$-saturated. By \thref{prime-model-is-small} we have that $M$ is countable. Therefore, $M$ is atomic by \thref{prime-iff-countable-and-atomic}.

\underline{(vii) $\Rightarrow$ (ii)} Let $M$ be a positively $\omega$-saturated atomic model. Let $p(x)$ be a type in finitely many variables and let $M'$ be a p.c.\ model in which $p(x)$ is realised. Using JCP of $T$ we find immersions $M \to N \leftarrow M'$ into some model $N$ of $T$. Then $N$ realises $p(x)$. So for any $\phi(x) \in p(x)$ we have $N \models \exists x \phi(x)$, and hence $M \models \exists x \phi(x)$. We thus see that $p(x)$ is finitely satisfiable in $M$. By positive $\omega$-saturation we thus have that $p(x)$ is realised in $M$, and since $M$ is atomic this means that $p(x)$ is supported.
\end{proof}
\begin{remark}
\thlabel{finite-type-spaces}
\thref{countably-categorical-characterisation} provides several equivalent characterisations of being $\omega$-categorical for positive theories. However, compared to the analogous theorem for full first-order logic one important characterisation is missing: namely that every space of types in finitely many variables is finite. This is no longer an equivalent condition in positive logic. In fact, one easily sees that having finite type spaces is equivalent to being $\omega$-categorical and Boolean, where being Boolean follows because the complement of any positively definable set is positively definable using a finite disjunction.

As a counterexample we consider the theory $T$ from \thref{bounded-set-compactness} with constant symbols $\{c_i\}_{i < \omega}$, and which asserts that $c_i \neq c_j$ for all $i \neq j$. Then $T$ has a unique p.c.\ model, namely the model that consists only of interpretations for the constant symbols. We thus see that $T$ is $\omega$-categorical, but each constant yields a different type, so we have infinitely many $1$-types.
\end{remark}
\section{Bibliographic remarks}
\label{sec:bibliographic-remarks-omega-categorical}
In \cite{haykazyan_spaces_2019} a stronger omitting types theorem is proved. Namely that any meagre set of types (in a different type space than we defined in Section \ref{sec:types-and-type-spaces}) can be omitted. The proof is based on the Baire Category Theorem. Our proof is more elementary and closer to the usual proof of the omitting types theorem for full first-order logic (e.g., \cite[Theorem 4.1.2]{tent_course_2012}). Once the omitting types theorem is proved, the proof of \thref{countably-categorical-characterisation} (characterising countable categoricity) goes as usual, using atomic models and prime models, which are also present in \cite{haykazyan_spaces_2019}. However, \cite{haykazyan_spaces_2019} does not treat positive saturation, so items (v)--(vii) in \thref{categoricity} are new compared to \cite{haykazyan_spaces_2019} and first appeared in \cite[Theorem 5.8]{kamsma_bilinear_2023}.

As explained before \thref{haykazyans-test}, the name ``Haykazyan's test'' is because that proposition was first proved by Haykazyan. The exact reference for that is \cite[Proposition 5.1]{haykazyan_spaces_2019}.

The notion of a supported type appears already in \cite[page 844]{haykazyan_spaces_2019}. That is, \cite[page 844]{haykazyan_spaces_2019} gives a topological definition of what it means for a set of formulas to be supported. We use the translation from \cite[Definition 5.2]{kamsma_bilinear_2023}.

The fact that countable categoricity in positive logic does not necessarily correspond to finite type spaces (\thref{finite-type-spaces}) can also be found in \cite[Example 6.6]{haykazyan_spaces_2019}.

The notion of positive saturation is taken from \cite[Section 2.4]{poizat_positive_2018}.

\chapter{Saturated, homogeneous and monster models}
\chaptermark{Saturation, homogeneity and the monster} 
\label{ch:saturated-homogeneous}

Just like in full first-order logic, it will be convenient to work in a monster model. In this chapter we make precise what that means and how such a model can be constructed. Compared to the full first-order setting there is really nothing new going on, as the usual constructions go through in positive logic. So if the reader wishes they can just skip to the description of the monster model and the accompanying conventions in Section \ref{sec:monster-models}. The only other thing that will be used in other places is the existence of positively saturated p.c.\ models, which is \thref{building-saturated-model}.

After having set up the conventions and notation for the monster model, we discuss some standard model-theoretic tools in Section \ref{sec:the-toolbox}. This mainly concerns indiscernible sequences. We then continue in Section \ref{sec:lascar-strong-types} with the basics concerning Lascar strong types and some subtleties that arise in positive logic.

\section{Building positively saturated models}
\label{sec:building-positively-saturated-models}
Earlier we defined what a positively saturated p.c.\ model is (\thref{saturated-model}). Now we show that they can be constructed as usual.
\begin{proposition}
\thlabel{building-saturated-model}
Let $M$ be a p.c.\ model of a theory $T$. Then for all $\kappa \geq |M| + |T|$ there is a positively $\kappa^+$-saturated p.c.\ model $N$ of $T$ with $|N| \leq 2^\kappa$, which is a continuation of $M$.
\end{proposition}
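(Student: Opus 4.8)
The plan is to build a continuous chain of p.c.\ models $M = M_0 \subseteq M_1 \subseteq \cdots$ of length $\kappa^+$, where at each successor step we realise all the "one step below" finitely satisfiable sets of formulas in a single variable, and then take $N = \bigcup_{i < \kappa^+} M_i$. Closure under directed unions (\thref{pc-models-closed-under-directed-unions}) guarantees $N$ is again a p.c.\ model of $T$ and a continuation of $M$, and a cofinality argument will give saturation. The only real work is the cardinality bookkeeping needed to ensure $|N| \le 2^\kappa$, so the construction must be arranged so that a single successor step adds at most $2^\kappa$ new elements.

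For the successor step, suppose $M_i$ is a p.c.\ model of $T$ continuing $M$ with $|M_i| \le 2^\kappa$. Enumerate as $(A_j, \Sigma_j(x_j))_{j \in J}$ all pairs consisting of a subset $A \subseteq M_i$ with $|A| \le \kappa$ and a set $\Sigma(x)$ of formulas over $A$ in a single variable that is finitely satisfiable in $M_i$, where the $x_j$ are chosen to be distinct single variables; there are at most $|M_i|^{\kappa} \cdot 2^{|T| + \kappa} \le (2^\kappa)^\kappa \cdot 2^\kappa = 2^\kappa$ such pairs, so $|J| \le 2^\kappa$. The union $\bigcup_{j \in J} \Sigma_j(x_j)$, viewed as a set of formulas over $M_i$ in the variables $(x_j)_{j \in J}$, is finitely satisfiable in $M_i$: a finite subset mentions only finitely many of the $x_j$, for each such $j$ the corresponding finitely many formulas are simultaneously satisfiable in $M_i$ since $\Sigma_j$ is finitely satisfiable, and the $x_j$ are disjoint. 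By \thref{continue-to-realise-type} there is a p.c.\ model $N'$ of $T$ continuing $M_i$ in which this union is realised, say by $(b_j)_{j \in J}$. Applying downward L\"owenheim--Skolem (\thref{lowenheim-skolem}) inside $N'$ to the set $M_i \cup \{b_j : j \in J\}$, which has size $\le 2^\kappa$, we obtain a p.c.\ model $M_{i+1} \subseteq N'$ containing $M_i$ and all the $b_j$, with $|M_{i+1}| \le 2^\kappa + |T| = 2^\kappa$ and with the inclusion $M_{i+1} \subseteq N'$ an elementary embedding, hence an immersion; thus $M_{i+1}$ is a p.c.\ model by \thref{immersions-coherent-and-pc-iff-immersed-in-pc}. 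The inclusion $M_i \subseteq M_{i+1}$ is a homomorphism (indeed an embedding), and since both are p.c.\ models it is automatically an immersion by \thref{pc-model}; so $M_{i+1}$ continues $M$ and each $\Sigma_j(x_j)$ is realised in $M_{i+1}$. At limit stages $\ell \le \kappa^+$ we put $M_\ell = \bigcup_{i < \ell} M_i$, a p.c.\ model of $T$ continuing $M$ (\thref{pc-models-closed-under-directed-unions}) of size $\le |\ell| \cdot 2^\kappa \le 2^\kappa$, using $\kappa^+ \le 2^\kappa$. Setting $N := M_{\kappa^+}$ we obtain a p.c.\ model of $T$ that continues $M$ with $|N| \le 2^\kappa$.

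Finally I would check that $N$ is positively $\kappa^+$-saturated. Let $A \subseteq N$ with $|A| \le \kappa$ and let $\Sigma(x)$ be a set of formulas over $A$ in a single variable that is finitely satisfiable in $N$. Since $\kappa^+$ is regular and $|A| \le \kappa$, there is $i < \kappa^+$ with $A \subseteq M_i$. The inclusion $M_i \to N$ is a homomorphism between p.c.\ models, hence an immersion by \thref{pc-model}, so finite satisfiability transfers down: for every finite $\Sigma_0(x) \subseteq \Sigma(x)$ we have $N \models \exists x \bigwedge \Sigma_0(x)$ and therefore $M_i \models \exists x \bigwedge \Sigma_0(x)$. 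Thus $(A, \Sigma(x))$ is among the pairs treated at stage $i$, so $\Sigma(x)$ is realised in $M_{i+1}$; as $M_{i+1} \to N$ is a homomorphism, any such realisation remains one in $N$. Hence $N$ is positively $\kappa^+$-saturated.

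I expect the main obstacle to be exactly the cardinality control in the successor step: one is forced to first pass to a potentially enormous continuation $N'$ realising everything at once, and only afterwards cut it back down to size $\le 2^\kappa$ using downward L\"owenheim--Skolem, relying on the fact (\thref{immersions-coherent-and-pc-iff-immersed-in-pc}) that an immersed submodel of a p.c.\ model is again p.c.; getting the counts $|J| \le 2^\kappa$ and $\kappa^+ \le 2^\kappa$ to line up is where all the care goes.
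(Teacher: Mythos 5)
Your proposal is correct and follows essentially the same route as the paper's proof: a continuous chain of p.c.\ models of length $\kappa^+$, where at each successor step all finitely satisfiable single-variable sets of formulas over $\leq\kappa$-sized subsets are bundled into one set of formulas in disjoint variables, realised in a continuation via \thref{continue-to-realise-type}, and then cut back down to size $\leq 2^\kappa$ by downward L\"owenheim--Skolem, with the same cardinality counting and the same cofinality argument for saturation at the end.
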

\begin{proof}
We inductively construct a continuous chain $(M_i)_{i < \kappa^+}$ of p.c.\ models of cardinality at most $2^\kappa$ with $M_0 = M$ such that: for every $i < \kappa^+$ we have that, for any $A \subseteq M_i$ with $|A| \leq \kappa$, any set of formulas $\Sigma(x)$ over $A$ in a single variable, that is finitely satisfiable in $M_i$, has a realisation in $M_{i+1}$. This can be done because for each such an $M_i$ there are at most $2^\kappa$ many sets of formulas over subsets of cardinality at most $\kappa$. The union $N = \bigcup_{i < \kappa^+} M_i$ will then be the required p.c.\ model (it is p.c.\ by \thref{pc-models-closed-under-directed-unions}). We spell this out in detail below.

Having constructed $M_i$, we note that $M_i$ has $ 2^\kappa$ subsets of cardinality at most $\kappa$. As $\kappa \geq |T|$, there are $\kappa$ many formulas over a set of cardinality at most $\kappa$ and therefore there are $2^\kappa$ many sets of formulas in a single variable over such a set. There are thus $2^\kappa \times 2^\kappa = 2^\kappa$ many sets of formulas in a single variable with parameters in some $A \subseteq M_i$ with $|A| \leq \kappa$. Enumerate all such sets of formulas that are finitely satisfiable in $M_i$ as $\{ \Sigma_i(x_i) \}_{i \leq 2^\kappa}$. Set $\Sigma((x_i)_{i \leq 2^\kappa}) = \bigcup_{i \leq 2^\kappa} \Sigma_i(x_i)$, which is then finitely satisfiable in $M_i$. By \thref{continue-to-realise-type} we find a p.c.\ model $M'$ of $T$ that is a continuation of $M_i$ such that there are $(a_i)_{i \leq 2^\kappa} \in M'$ with $M \models \Sigma((a_i)_{i \leq 2^\kappa})$. As $M'$ is a continuation of $M_i$ and $M_i$ is p.c., we may view $M_i$ as a subset of $M'$. By downward L\"owenheim-Skolem (\thref{lowenheim-skolem}) we then find a p.c.\ model $M_{i+1} \subseteq M'$ such that $M_i (a_i)_{i \leq 2^\kappa} \subseteq M_{i+1}$ and $|M_{i+1}| \leq |M_i (a_i)_{i \leq 2^\kappa}| + |T| \leq 2^\kappa$, as required.

We are left to verify that $N = \bigcup_{i < \kappa^+} M_i$ is positively $\kappa^+$-saturated. So let $A \subseteq N$ with $A < \kappa^+$ and let $\Sigma(x)$ be a set of formulas in a single variable with parameters in $A$, that is finitely satisfiable in $N$. Then there must be some $i < \kappa^+$ such that $A \subseteq M_i$. As $M_i$ is p.c., we have that $\Sigma(x)$ is finitely satisfiable in $M_i$. By construction then, there is a realisation of $\Sigma(x)$ in $M_{i+1}$, and hence in $N$.
\end{proof}
\begin{proposition}
\thlabel{saturation-implies-universal}
Let $M$ be a positively $\kappa$-saturated p.c.\ model of a theory $T$ with JCP. Then for any model $N$ of $T$ with $|N| \leq \kappa$ there is a homomorphism $f: N \to M$.
\end{proposition}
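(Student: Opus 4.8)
The plan is to turn the problem into a realisation problem inside $M$. Fix an enumeration $(a_i)_{i<\lambda}$ of $N$ with $\lambda = |N| \le \kappa$, introduce a variable $x_i$ for each $i<\lambda$, and let $\Sigma\bigl((x_i)_{i<\lambda}\bigr)$ be the set of all positive $\L$-formulas $\phi\bigl((x_i)_{i\in s}\bigr)$, for $s\subseteq\lambda$ finite, such that $N\models\phi\bigl((a_i)_{i\in s}\bigr)$; this is just $\Diag(N)$ with the new constants renamed as variables. Any tuple $(b_i)_{i<\lambda}$ realising $\Sigma$ in $M$ yields a homomorphism $N\to M$ sending $a_i\mapsto b_i$ (cf.\ \thref{model-of-diagram-is-homomorphism}): it is well-defined and commutes with constant and function symbols because the atomic formulas $x_i = x_j$, $x_i = c$ and $x_j = g(x_i,\dots)$ lie in $\Sigma$ whenever the corresponding identities hold in $N$, and it preserves relation symbols because $R(x_{i_1},\dots,x_{i_n})\in\Sigma$ whenever $N\models R(a_{i_1},\dots,a_{i_n})$.

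So it suffices to realise $\Sigma$ in $M$. Since $\Sigma$ is a set of formulas over $\emptyset$ in at most $\kappa$ variables, \thref{saturation-extend-variables} together with positive $\kappa$-saturation of $M$ reduces this to showing that $\Sigma$ is finitely satisfiable in $M$. A finite subset of $\Sigma$ mentions only finitely many variables, and after conjoining amounts to a single positive formula $\psi(x_{i_1},\dots,x_{i_n})$ with $N\models\psi(a_{i_1},\dots,a_{i_n})$, hence $N\models\exists x_{i_1}\dots x_{i_n}\,\psi$. Thus everything comes down to the following assertion: the positive (hence h-inductive) sentence $\exists\bar x\,\psi$, being true in the model $N$ of $T$, is also true in the p.c.\ model $M$ of $T$.

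This last point is where JCP is used, and it is the only step with any content. By \thref{jcp}(i) applied to $M$ and $N$ there is a model $N'$ of $T$ with homomorphisms $M\to N'$ and $N\to N'$. The latter preserves the positive sentence $\exists\bar x\,\psi$ (\thref{homomorphism-preserves-truth}), so $N'\models\exists\bar x\,\psi$; then, since there is a homomorphism $M\to N'$ with $N'\models T$ and $M$ is a p.c.\ model, \thref{pc-model}(ii) applied in the degenerate case of a sentence gives $M\models\exists\bar x\,\psi$. (Alternatively: continue $N$ to a p.c.\ model $N^\ast$ of $T$, which still satisfies the positive sentence $\exists\bar x\,\psi$; by \thref{jcp} the h-inductive sentences true in $N^\ast$ are exactly $T^\pc$, and $M\models T^\pc$ because $M$ is a p.c.\ model of $T$.) This verifies finite satisfiability of $\Sigma$ and finishes the proof. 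The main obstacle is precisely isolating this transfer step — recognising that JCP plus positive-closedness of $M$ is what moves a positive existential sentence from an arbitrary model of $T$ into $M$; the remainder is routine bookkeeping about diagrams and saturation.
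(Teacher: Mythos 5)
Your proof is correct and follows essentially the same route as the paper: view the positive diagram of $N$ (with variables in place of constants) as a set of formulas to realise in $M$, reduce to finite satisfiability via \thref{saturation-extend-variables}, and use JCP together with $M$ being p.c.\ (the homomorphism $M \to N'$ being an immersion, which is the content of your appeal to \thref{pc-model}(ii)) to transfer the positive existential sentences from $N$ into $M$. No gaps.
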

\begin{proof}
Let $a$ be a tuple of length $|N|$ that enumerates $N$, and write $\Sigma(x) = \tp(a; N)$. So $\Sigma(x)$ is essentially the positive diagram of $N$, but with variables enumerating $N$ instead of constant symbols. In particular, realisations of $\Sigma(x)$ in $M$ correspond to homomorphisms $N \to M$ (see also \thref{model-of-diagram-is-homomorphism}). As $M$ is positively $\kappa$-saturated, it is enough to show that $\Sigma(x)$ is finitely satisfiable in $M$ by \thref{saturation-extend-variables}.

As $T$ has JCP, there is a model $M'$ of $T$ with homomorphisms $N \to M' \leftarrow M$. Since $M$ is a p.c.\ model, the homomorphism $M \to M'$ is an immersion. Thus, for any $\phi(x) \in \Sigma(x)$ we have
\[
N \models \exists x \phi(x)
\implies
M' \models \exists x \phi(x)
\implies
M \models \exists x \phi(x),
\]
which establishes that $\Sigma(x)$ is finitely satisfiable in $M$, as required.
\end{proof}
\section{Homogeneity}
\label{sec:homogeneity}
\begin{definition}
\thlabel{positively-homogeneous}
Let $\kappa$ be an infinite cardinal. A structure $M$ is called \emph{strongly positively $\kappa$-homogeneous}\index{strongly positively homogeneous structure} if the following equivalent conditions hold:
\begin{enumerate}[label=(\roman*)]
\item every partial immersion $f: M \to M$ with a domain of cardinality less than $\kappa$ can be extended to an automorphism,
\item for every two tuples $a, b \in M$ of length less than $\kappa$ such that $\tp(a; M) = \tp(b; M)$ there is an automorphism $h: M \to M$ that sends $a$ to $b$, so $h(a) = b$.
\end{enumerate}
\end{definition}
We can build strongly positively homogeneous p.c.\ models as usual. The main takeaway is the following, and the rest of the section is devoted to proving it.
\begin{theorem}
\thlabel{building-saturated-homogeneous-model}
Let $M$ be a p.c.\ model of a theory $T$. Then for all $\kappa$ there is a positively $\kappa$-saturated and strongly positively $\kappa$-homogeneous p.c.\ model $N$ of $T$, which is a continuation of $M$.
\end{theorem}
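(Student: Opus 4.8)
The plan is to construct $N$ as the union of a sufficiently long continuous chain $(M_\alpha)_{\alpha<\lambda}$ of p.c.\ models of $T$ with $M_0=M$, arranging at the successor steps both a \emph{saturation task} and a \emph{homogeneity task}, so that positive $\kappa$-saturation and strong positive $\kappa$-homogeneity each hold in the union. Saturation is already under control: by (the proof of) \thref{building-saturated-model}, from any p.c.\ model $M_\alpha$ we can pass to a p.c.\ continuation in which every set of formulas over a subset of $M_\alpha$ of size $<\kappa$, in a single variable, that is finitely satisfiable in $M_\alpha$ is realised --- conjoin all these types into one type in many variables and apply \thref{continue-to-realise-type}.

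The key new ingredient is a homogeneity lemma: \emph{if $M$ is a p.c.\ model of $T$ and $f\colon A\to B$ is a partial immersion with $A,B\subseteq M$, then $f$ extends to an immersion $M\to M'$ for some p.c.\ continuation $M'$ of $M$}. To prove it, observe $\tp(B;M)=\tp(A;M)$ via $f$, so \thref{amalgamation-lemma}, applied with $a$ enumerating $A$ inside $M$ and $b$ enumerating $B$ inside a second copy of $M$, yields an $\L$-structure $N\models T$ with an elementary embedding $u\colon M\to N$ and a homomorphism $v\colon M\to N$ such that $u(a)=v(b)$. Since $M$ is p.c., $v$ is an immersion (\thref{pc-model}); identifying $M$ with $v(M)$ and continuing $N$ to a p.c.\ model $M'$ via \thref{continue-to-pc-model}, the composite $M\to N\to M'$ is a homomorphism out of the p.c.\ model $M$, hence an immersion, and under the identification it extends $f$, because $u(a_i)=v(f(a_i))=f(a_i)$.

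Now fix a regular cardinal $\lambda\geq\kappa$ together with a bookkeeping enumeration that, as $\alpha$ ranges over $\lambda$, processes every partial immersion between subsets of size $<\kappa$ of any $M_\beta$ at some later stage. Build the continuous chain so that at stage $\alpha+1$ we first realise all small types over $M_\alpha$ and then apply the homogeneity lemma to the partial immersion bookkept at stage $\alpha$ (whose domain and range then lie in $M_\alpha$), taking $M_{\alpha+1}$ to be a p.c.\ model into which that partial immersion extends as an immersion. Set $N=\bigcup_{\alpha<\lambda}M_\alpha$; it is a p.c.\ continuation of $M$ by \thref{pc-models-closed-under-directed-unions}, and since $\cf(\lambda)=\lambda\geq\kappa$ every subset of size $<\kappa$ lies in some $M_\alpha$, so $N$ is positively $\kappa$-saturated (\thref{saturated-model}). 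For strong positive $\kappa$-homogeneity, given a partial immersion $g\colon N\to N$ with $|\dom g|<\kappa$ (in particular the one sending a small tuple $a$ to a tuple $b$ of equal type, as in \thref{positively-homogeneous}), its domain and range lie in a single $M_\alpha$; a back-and-forth along the chain --- repeatedly extending the current small partial immersion to an immersion $M_\beta\to M_{\beta+1}$ via the bookkeeping, alternating between the map and its inverse so that both domain and range catch up with the chain --- yields an increasing sequence of partial immersions whose domains and ranges exhaust $N$, and whose union is the desired automorphism of $N$ extending $g$; the details of the back-and-forth follow the pattern of \thref{back-and-forth}.

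The main obstacle is exactly this last step. A naive back-and-forth adding one element at a time via saturation fails, since the partial immersion's domain soon exceeds $\kappa$ and saturation no longer applies; the homogeneity lemma circumvents this by extending a small partial immersion to an immersion of an \emph{entire} stage of the chain, so that growth is paced by the chain rather than by $\kappa$. The remaining delicate point is the cofinality bookkeeping: choosing $\lambda$ regular and organising which partial immersion is processed when, so that the stages $M_\beta$ used in the back-and-forth are cofinal in $(M_\alpha)_{\alpha<\lambda}$ and the construction genuinely closes up into an automorphism rather than merely an immersion of $N$ into itself.
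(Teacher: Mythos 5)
Your saturation bookkeeping and your homogeneity lemma are both fine: the lemma is a correct use of \thref{amalgamation-lemma} (since $f$ is a partial immersion, $\tp(f(a);M)=\tp(a;M)$; the elementary leg guarantees the amalgam is a model of $T$, so positive closedness of $M$ makes the other leg an immersion, and the composite into a p.c.\ continuation extends $f$). The gap is in the final back-and-forth. Your bookkeeping only processes partial immersions between subsets of size $<\kappa$, but already after the first step the ``current'' partial immersion is an immersion $h\colon M_\beta\to M_{\beta+1}$ whose domain has cardinality $|M_\beta|$, typically far above $\kappa$ and unbounded along the chain. Neither the bookkeeping nor positive $\kappa$-saturation of $N$ applies to $h$ or to $h^{-1}$, and your homogeneity lemma, applied to such a large partial immersion inside the p.c.\ model $N$, only produces an extension into some \emph{new} continuation $N'$ of $N$, not into $N$ itself --- which is useless for building an automorphism of $N$. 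So the increasing sequence of partial immersions you describe cannot be continued past the first step with the tools you have set up; the obstruction is not the ordering of the bookkeeping or the choice of a regular $\lambda$, but that the maps needing extension are no longer among the (small) bookkept ones.

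This is precisely the point the paper's machinery is designed for: it builds $N$ as a positively special structure, whose $i$-th layer is $\aleph_{i+1}$-saturated, so that in the uniqueness theorem (\thref{positively-special-structures-isomorphic}) the back-and-forth can realise types over the accumulated \emph{large} domains inside the appropriate layer; strong homogeneity then follows by adding constants for $A$ and $f(A)$ and applying uniqueness to $(N,A)$ and $(N,f(A))$. Your plan can be repaired, but only by strengthening the successor step of the chain: at stage $\beta+1$ extend \emph{every} partial immersion with domain and range inside $M_\beta$ --- with no cardinality restriction --- to an immersion $M_\beta\to M_{\beta+1}$ (iterate your homogeneity lemma over an enumeration of all such maps, along a subsidiary chain, and take the union). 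With that property the forth step extends the current map to all of $M_\beta$, the back step extends its inverse to all of $M_{\beta+1}$, and the union along a chain of regular length $\geq\kappa$ is indeed an automorphism extending $g$. That, however, is a substantively different construction from the one you wrote, with its own cardinality estimates to check; as written, the proposal is incomplete at exactly the step you flag as ``the remaining delicate point''.
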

\begin{definition}
\thlabel{positively-special-model}
We call a structure $M$ of cardinality $\kappa = \aleph_\alpha$ \emph{positively special}\index{positively special structure} if it is the union of a chain $(M_i)_{i < \alpha}$ of immersions such that $M_i$ is $\aleph_{i+1}$-saturated. We call such a chain $(M_i)_{i < \alpha}$ a \term{positively specialising chain}.
\end{definition}
Another formulation of \thref{positively-special-model} is saying that $M$ is a union of a chain $(M_\lambda)$, where $\lambda$ ranges over the cardinals $< \kappa$, such that each $M_\lambda$ is $\lambda^+$-saturated.
\begin{proposition}
\thlabel{building-positively-special-models}
Let $M$ be a p.c.\ model of a theory $T$, and let $\kappa$ be an uncountable cardinal such that $\kappa > |M| + |T|$ and $\lambda < \kappa$ implies $2^\lambda < \kappa$. Then there is a continuation $N$ of $M$ which is a p.c.\ model of $T$ and which is a positively special structure of cardinality $\kappa$.
\end{proposition}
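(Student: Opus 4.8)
The plan is to build a positively specialising chain $(M_i)_{i < \alpha}$ of p.c.\ models of $T$ (in the sense of \thref{positively-special-model}) with $M_0$ a continuation of $M$, and then take $N = \bigcup_{i < \alpha} M_i$. Writing $\kappa = \aleph_\alpha$, I would first observe that the hypothesis ``$\lambda < \kappa$ implies $2^\lambda < \kappa$'' forces $\kappa$ to be a limit cardinal (a successor cardinal $\mu^+$ would satisfy $\mu < \mu^+$ yet $2^\mu \geq \mu^+$); hence $\alpha$ is a limit ordinal and $\sup_{i < \alpha} \aleph_{i+1} = \kappa$, which is what makes a chain of length $\alpha$ the right target.

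The recursion itself is the familiar one, driven by \thref{building-saturated-model}. For the base step, apply \thref{building-saturated-model} to $M$ with parameter $\nu_0 = |M| + |T| < \kappa$ to get a positively $\nu_0^+$-saturated p.c.\ model $M_0 \supseteq M$ with $|M_0| \leq 2^{\nu_0} < \kappa$; as $\nu_0^+ \geq \aleph_1$, this $M_0$ is $\aleph_1$-saturated. At a successor step, given a p.c.\ model $M_i$ with $|M_i| < \kappa$, put $\nu_i = |M_i| + |T| + \aleph_{i+1}$, again $< \kappa$, and apply \thref{building-saturated-model} to $M_i$ to obtain a positively $\nu_i^+$-saturated (hence $\aleph_{i+2}$-saturated) p.c.\ model $M_{i+1} \supseteq M_i$ with $|M_{i+1}| \leq 2^{\nu_i} < \kappa$ by the strong-limit hypothesis; the continuation map $M_i \to M_{i+1}$ is a homomorphism between p.c.\ models, hence an immersion. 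At a limit step $i < \alpha$ set $M_i = \bigcup_{j < i} M_j$, which is a p.c.\ model by \thref{pc-models-closed-under-directed-unions}, receives immersions from each earlier $M_j$, and has $|M_i| \leq \sum_{j < i} |M_j| < \kappa$ since $\kappa$ is a limit cardinal.

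With the chain in hand, put $N = \bigcup_{i < \alpha} M_i$: it is a p.c.\ model of $T$ by \thref{pc-models-closed-under-directed-unions}, it continues $M_0$ and hence $M$, and $(M_i)_{i < \alpha}$ exhibits $N$ as positively special once we know $|N| = \kappa$. The bound $|N| \leq \sum_{i<\alpha}|M_i| \leq \kappa$ is clear. For the reverse inequality one uses that $M_i$ is $\aleph_{i+1}$-saturated, so its cardinality is at least $\aleph_{i+1}$, whence $|N| \geq \sup_{i<\alpha}\aleph_{i+1} = \kappa$; if necessary this is forced by arranging at each successor stage that $M_{i+1}$ genuinely adds $\aleph_{i+1}$ new elements before trimming back down with \thref{lowenheim-skolem}.

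The one genuinely delicate point — the step I expect to need the most care — is exactly this cardinality bookkeeping: the strong-limit hypothesis $2^\lambda < \kappa$ is precisely the ingredient that keeps every $M_i$ strictly below $\kappa$ so the recursion never stalls, while at the same time one must check that the $|M_i|$ are cofinal in $\kappa$ so that $|N|$ lands on $\kappa$ exactly. The remaining ingredients — directed unions of p.c.\ models are p.c., continuation maps between p.c.\ models are immersions, and $\aleph_{i+1}$-saturated continuations exist at every stage — are all supplied verbatim by earlier results (\thref{pc-models-closed-under-directed-unions}, \thref{immersions-coherent-and-pc-iff-immersed-in-pc}, \thref{building-saturated-model}).
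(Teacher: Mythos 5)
Your overall strategy is the paper's: build a chain of length $\alpha$ (where $\kappa=\aleph_\alpha$) by iterating \thref{building-saturated-model}, observe that all the maps are immersions because all the models are p.c., and take the union. But there is a concrete defect at limit stages. The definition of a positively specialising chain (\thref{positively-special-model}) requires the $i$-th member to be $\aleph_{i+1}$-saturated for \emph{every} $i<\alpha$, including limit $i$; your limit member $M_i=\bigcup_{j<i}M_j$ is just a union of increasingly saturated models and need not be $\aleph_{i+1}$-saturated (for $i$ of countable cofinality it need not even be $\aleph_1$-saturated). The paper avoids this by applying \thref{building-saturated-model} at limit stages as well, taking $M_i$ to be an $\aleph_{i+1}$-saturated continuation of $\bigcup_{j<i}M_j$. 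Your construction can be repaired without changing it much: since $\alpha$ is a limit ordinal, the shifted chain $(M_{j+1})_{j<\alpha}$ has the same union and its $j$-th member is $\aleph_{j+2}$-saturated, hence $\aleph_{j+1}$-saturated. As written, though, your assertion that the chain $(M_i)_{i<\alpha}$ you built witnesses positive speciality is not correct.

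The second problem is your argument that $|N|\geq\kappa$. In positive logic, positive $\aleph_{i+1}$-saturation does not force $|M_i|\geq\aleph_{i+1}$: a theory with a maximal p.c.\ model has that (possibly countable) model as its monster, and it is then positively $\lambda$-saturated for every $\lambda$ (\thref{maximal-pc-model-is-monster}; cf.\ \thref{saturated-does-not-imply-pc} and \thref{bounded-set-compactness}). For the same reason your fallback of ``arranging that $M_{i+1}$ genuinely adds $\aleph_{i+1}$ new elements'' is not available in general: for such bounded theories there is no p.c.\ model of cardinality $\kappa$ at all, so no amount of care in the recursion can force the union up to $\kappa$. Note that the paper's own proof does not address the lower bound on $|N|$ either --- it only maintains $|M_i|<\kappa$ and then declares the union positively special --- so this clause of the statement is a point where the source is terse; what you cannot do is recover it by the saturation-implies-large-cardinality reflex from full first-order logic, which is exactly one of the implications that fails in the positive setting.
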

\begin{proof}
Let $\alpha$ be such that $\kappa = \aleph_\alpha$. By induction we build a chain $(M_i)_{i < \alpha}$ of p.c.\ models of $T$ such that for each $i < \alpha$ we have that $M_i$ is $\aleph_{i+1}$-saturated and $|M_i| < \kappa$. Furthermore, we make it so that $M_0$ is a continuation of $M$.

Every stage $i$---whether it is the base case, successor step or limit stage---is essentially done in the same way: starting with some p.c.\ model $M'$ of $T$, with $|M'| < \kappa$, we apply \thref{building-saturated-model} to find a positively $\aleph_{i+1}$-saturated p.c.\ model $M_i$ of $T$ that is a continuation of $M'$ with $|M_i| \leq 2^{|M'| + |T| + \aleph_i} < \kappa$. Based on which case we are in, we make different choices for $M'$:
\begin{itemize}
\item in the base case we take $M' = M$,
\item for successor steps we take $M' = M_{i-1}$,
\item for limit stages we take $M' = \bigcup_{j < i} M_j$.
\end{itemize}
As the chain $(M_i)_{i < \alpha}$ consists of p.c.\ models, all maps between the structures are immersions and so it is a positively specialising chain. We can thus take $N = \bigcup_{i < \alpha} M_i$.
\end{proof}
\begin{corollary}
\thlabel{building-arbitrarily-large-positively-special-models}
For any structure $M$ and any infinite cardinal $\mu$ there exists a continuation $N$ of $M$ which is a positively special structure with $\cf(|N|) \geq \mu$.
\end{corollary}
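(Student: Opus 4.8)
The plan is to reduce the statement to \thref{building-positively-special-models}: its hypotheses on the cardinal (uncountable, a strong limit, and sufficiently large) can always be met, and with a little care the cardinal can additionally be taken of cofinality at least $\mu$. Since that proposition is phrased for p.c.\ models of a theory, I would first let $T$ be the set of all h-inductive $\L$-sentences true in $M$, so that $M \models T$, and then apply \thref{continue-to-pc-model} to obtain a p.c.\ model $M_0$ of $T$ together with a homomorphism $M \to M_0$; any continuation of $M_0$ is then automatically a continuation of $M$. Write $\nu = |M_0| + |T|$, which is infinite since there are always infinitely many inequivalent formulas.

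The heart of the argument is a purely set-theoretic construction: producing a strong limit cardinal $\kappa > \nu$ with $\cf(\kappa) \geq \mu$. Fix a regular cardinal $\lambda \geq \mu$ (for instance $\lambda = \mu^+$) and build a continuous, strictly increasing sequence of cardinals $(\kappa_i)_{i < \lambda}$ with $\kappa_0 = \nu^+$ and $\kappa_{i+1} = 2^{\kappa_i}$, taking suprema at limit stages; set $\kappa = \sup_{i < \lambda} \kappa_i$. Then $\kappa$ is an uncountable cardinal, $\kappa > \nu$, it is a strong limit (any $\rho < \kappa$ lies below some $\kappa_i$, whence $2^{\rho} \leq \kappa_{i+1} < \kappa$), and $\cf(\kappa) = \lambda \geq \mu$ because the defining sequence is continuous, strictly increasing and indexed by the regular cardinal $\lambda$. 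In particular $\kappa$ satisfies exactly the conditions imposed on the cardinal in \thref{building-positively-special-models} with respect to $M_0$.

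Applying \thref{building-positively-special-models} to $M_0$ and this $\kappa$ then yields a continuation $N$ of $M_0$ that is a positively special structure of cardinality $\kappa$; composing with the homomorphism $M \to M_0$ shows $N$ is a continuation of $M$, and $\cf(|N|) = \cf(\kappa) = \lambda \geq \mu$, which is exactly what is required. I expect the only non-routine point to be the cofinality bookkeeping in the second step — verifying the strong-limit property and that $\cf(\kappa)$ equals $\lambda$ for the chosen sequence — while the rest is just assembling \thref{continue-to-pc-model} and \thref{building-positively-special-models}.
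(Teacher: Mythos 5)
Your proof is correct and follows essentially the same route as the paper: reduce to \thref{building-positively-special-models} by producing a strong limit cardinal above $|M|+|T|$ with cofinality at least $\mu$ (the paper simply takes $\kappa = \beth_\mu(|M|+|T|)$). Two minor points in your favour: you make explicit the preliminary continuation of $M$ to a p.c.\ model via \thref{continue-to-pc-model}, which the paper leaves implicit, and your choice of a regular index $\lambda \geq \mu$ (so in effect $\kappa = \beth_{\lambda}(\nu)$ with $\lambda$ regular) guarantees $\cf(\kappa) \geq \mu$ even for singular $\mu$, where the literal choice $\beth_\mu(\nu)$ only gives $\cf(\kappa) = \cf(\mu)$.
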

\begin{proof}
Take $\kappa = \beth_\mu(|M| + |T|)$. Then $\kappa$ satisfies the assumptions of \thref{building-positively-special-models}, while we also have $\cf(\kappa) \geq \mu$.
\end{proof}
\begin{proposition}
\thlabel{positively-special-structures-isomorphic}
Let $T$ be a theory with JCP, and suppose $M$ and $N$ are p.c.\ models of $T$ of the same cardinality that are also positively special structures. Then $M$ and $N$ are isomorphic.
\end{proposition}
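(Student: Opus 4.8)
The plan is to build the isomorphism by a back‑and‑forth of length $\kappa := |M| = |N|$. Note this will \emph{not} be a direct appeal to \thref{back-and-forth}: a positively special structure of cardinality $\kappa$ need not be positively $\kappa$‑saturated (already visible when $\kappa$ is singular), so the realisations used to extend partial immersions must be found inside suitably saturated members of the two specialising chains. Two ingredients feed every step. First, the base point: since $T$ has JCP and $M,N$ are p.c., both satisfy exactly the h‑inductive sentences of $T^\pc$ (\thref{jcp}), hence the same sentences, so the empty map is a partial immersion $M \to N$ (cf.\ \thref{pc-models-of-jcp-theory-satisfy-same-sentences}). Second, the one‑point extension: given a partial immersion $f\colon M\to N$ with $\dom f = C$, an $a \in M$, and a p.c.\ model $N'$ with an immersion $N' \to N$ such that $f(C)\subseteq N'$ and $f(\tp(a/C;M))$ is realised in $N'$, any realisation $b \in N'$ yields a partial immersion $f\cup\{(a,b)\}$; this is proved exactly as in \thref{saturated-models-isomorphic}, using that $N$ is p.c.\ (maximal types and obstructions). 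The hypothesis on $N'$ is cheap, since $f(\tp(a/C;M))$ is finitely satisfiable in $N$ and its parameters lie in $N'$, hence it is finitely satisfiable in $N'$.

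For the setup, write $M = \bigcup_{i<\alpha} M_i$ and $N = \bigcup_{i<\alpha} N_i$ for positively specialising chains, where $\aleph_\alpha = \kappa$ and each $M_i, N_i$ is $\aleph_{i+1}$‑saturated of cardinality $<\kappa$; the connecting maps, and hence the inclusions $M_i\hookrightarrow M$ and $N_i\hookrightarrow N$, are immersions (for the latter use \thref{formulas-in-directed-union}), so each $M_i,N_i$ is itself a p.c.\ model by \thref{immersions-coherent-and-pc-iff-immersed-in-pc}. I would then fix an enumeration $M = \{a_\xi : \xi<\kappa\}$ \emph{adapted} to its chain — list $M_0$ first, then $M_1\setminus M_0$, and so on — so that for every $\xi<\kappa$ the set $\{a_\eta : \eta\le\xi\}$ lies in some $M_i$ with $i<\alpha$; similarly $N = \{b_\xi : \xi<\kappa\}$. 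For $\xi<\kappa$ let $i(\xi)<\alpha$ be least with $\{a_\eta:\eta\le\xi\}\subseteq M_{i(\xi)}$, $\{b_\eta:\eta\le\xi\}\subseteq N_{i(\xi)}$, and $\aleph_{i(\xi)+1} > |\xi|+\aleph_0$; such $i(\xi)$ exists (each of the three conditions holds for all sufficiently large indices below $\alpha$, using for the third that $|\xi|+\aleph_0 < \kappa$ as $\kappa$ is uncountable) and is nondecreasing in $\xi$.

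Then I would construct partial immersions $f_\xi\colon M\to N$ for $\xi\le\kappa$, increasing, continuous at limits, with $|\dom f_\xi|\le |\xi|+\aleph_0$ and the invariant $\dom f_\xi\subseteq M_{i(\xi)}$, $\operatorname{ran} f_\xi\subseteq N_{i(\xi)}$. Start with $f_0 = \emptyset$; at limits take unions (the invariant survives because $i$ is nondecreasing and the bound $\aleph_{i(\xi)+1} > |\xi|+\aleph_0$ keeps the domain small). At stage $\xi$: since $a_\xi\in M_{i(\xi)}$ and $|\operatorname{ran} f_\xi| \le |\xi|+\aleph_0 < \aleph_{i(\xi)+1}$, the $\aleph_{i(\xi)+1}$‑saturation of $N_{i(\xi)}$ realises $f_\xi(\tp(a_\xi/\dom f_\xi;M))$ by some $b\in N_{i(\xi)}$ (it is finitely satisfiable there by the remark above applied to $N_{i(\xi)}\hookrightarrow N$), giving a partial immersion $g = f_\xi\cup\{(a_\xi,b)\}$ still inside $M_{i(\xi)},N_{i(\xi)}$; then symmetrically, using $b_\xi\in N_{i(\xi)}$ and the $\aleph_{i(\xi)+1}$‑saturation of $M_{i(\xi)}$, extend $g$ to $f_{\xi+1}$ by sending a suitable $a'\in M_{i(\xi)}$ to $b_\xi$. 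Finally $f_\kappa = \bigcup_{\xi<\kappa} f_\xi$ has domain $\supseteq \{a_\xi:\xi<\kappa\} = M$ and range $\supseteq \{b_\xi:\xi<\kappa\} = N$, so $f_\kappa$ is a surjective immersion $M\to N$, hence an isomorphism (its inverse is a homomorphism because $f_\kappa$ reflects all atomic formulas). The main obstacle — and the only place where \emph{positively special}, rather than merely p.c.\ and large, is used — is the bookkeeping in the setup: arranging the enumerations so that the current domain and range always sit inside a member of the relevant chain that is saturated enough for a parameter set of the current size, and verifying this is preserved at limit stages.
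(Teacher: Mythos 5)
Your strategy is the same as the paper's: a back-and-forth of length $\kappa$ in which the current domain and range are kept inside sufficiently saturated members of the two specialising chains, with JCP giving the empty partial immersion, finite satisfiability transferred into $N_{i(\xi)}$ via the immersion $N_{i(\xi)} \subseteq N$, saturation supplying the realisation, and p.c.-ness/maximality of types upgrading the extended map to a partial immersion; you are also right that \thref{back-and-forth} cannot be cited directly. The gap is in the enumeration step, which you yourself flag as the main obstacle. Your ``adapted enumeration'' (list $M_0$, then $M_1 \setminus M_0$, and so on), together with the parenthetical assumption that each $M_i$, $N_i$ has cardinality $<\kappa$, is not justified by \thref{positively-special-model}: the definition places no bound below $\kappa$ on the cardinalities of the chain members. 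If some $M_{i_0}$ has cardinality $\kappa$ while $M_{i_0} \neq M$ --- which the definition permits; for instance, for the (Morleyised) theory of a pure infinite set with $\kappa = \aleph_\omega$, take $M_0 \subseteq M$ of cardinality $\aleph_\omega$ missing countably many points and let the chain add them one at a time, so every $M_i$ is $\aleph_\omega$-saturated but proper --- then no enumeration of $M$ of order type $\kappa$ can list all of $M_{i_0}$ before the remaining elements, so your block enumeration does not exist; and if you let the enumeration have order type $>\kappa$, then at stages $\xi$ with $|\xi| = \kappa$ there is no $i(\xi) < \alpha$ with $\aleph_{i(\xi)+1} > |\xi|$, and the recursion dies. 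In particular your claim that ``each of the three conditions holds for all sufficiently large indices below $\alpha$'' fails for the containment conditions in this situation.

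This is exactly what the Claim inside the paper's proof is for: the enumerations $(a_j)_{j<\kappa}$ and $(b_j)_{j<\kappa}$ are built by recursion, at stage $j$ picking the least not-yet-listed element of $M_i$ (where $|j| = \aleph_i$), with a separate counting argument showing that every element of $M$ is eventually listed; this interleaved enumeration needs no bound on the $|M_i|$. If you replace your block enumeration by such an interleaved one (or restrict the proposition to chains with members of cardinality $<\kappa$, which happens to hold in the paper's applications but not in the statement as given), the rest of your argument goes through and coincides with the paper's proof.
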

\begin{proof}
Write $\kappa = |M| = |N|$ and let $\alpha$ be such that $\kappa = \aleph_\alpha$. Let $(M_i)_{i < \alpha}$ and $(N_i)_{i < \alpha}$ be positively specialising chains with unions $M$ and $N$ respectively.
\begin{claim}
\thlabel{positively-special-structure-isomorphic:enumerate-structures}
There are enumerations $(a_j)_{j < \kappa}$ and $(b_j)_{j < \kappa}$ of $M$ and $N$ respectively, such that for each $j < \kappa$ we have that $a_j \in M_i$ and $b_j \in N_i$, where $i$ is such that $|j| = \aleph_i$.
\end{claim}
The enumerations $(a_j)_{j < \kappa}$ and $(b_j)_{j < \kappa}$ will possibly allow repetitions.
\begin{proof}[Proof of claim]
We prove the claim for $M$, and the enumeration of $N$ is then completely analogous. Let $(c_j)_{j < \kappa}$ be an enumeration of $M$ without repetitions. We define $(a_j)_{j < \kappa}$ by induction on $j$. Having defined $(a_k)_{k < j}$ we let $i$ be such that $|j| = \aleph_i$. Then let $j'$ be the least such that $c_{j'} \in M_i \setminus \{a_k : k < j\}$, or else take any $j'$ such that $c_{j'} \in M_0$. We set $a_j = c_{j'}$.

The constructed sequence $(a_j)_{j < \kappa}$ clearly satisfies the desired property, so we are left to verify that it actually enumerates all elements of $M$. Suppose for a contradiction that it does not. Let $j' < \kappa$ be minimal such that $c_{j'} \not \in \{ a_j : j < \kappa \}$ and let $i$ be minimal such that $c_{j'} \in M_i$. We consider the sequence $(a_j)_{\aleph_i \leq j < \kappa}$. This sequence can only contain elements from $\{c_k : k < j'\}$. It can also never repeat any elements, as that only happens in the second case of the definition of the $a_j$'s. This implies that $\kappa = |\{ a_j : \aleph_i \leq j < \kappa \}| \leq |\{c_k : k < j'\}| = |j'| < \kappa$, a contradiction.
\end{proof}

We now finish the proof by a back-and-forth argument. We cannot just apply \thref{back-and-forth}, because we need a special induction hypothesis (see (iii) below). We inductively construct an increasing chain of partial bijections $(f_j: M \to N)_{i < \kappa}$ such that for each $j < \kappa$:
\begin{enumerate}[label=(\roman*)]
\item $a_j \in \dom(f_j)$;
\item $b_j \in \cod(f_j)$;
\item $\dom(f_j) \subseteq M_i$ and $\cod(f_j) \subseteq N_i$, where $i$ is such that $|j| = \aleph_i$;
\item $f_j$ is a partial immersion.
\end{enumerate}
We first construct $f_0$. Write $p(x) = \tp(a_0; M)$. We claim that $p(x)$ is finitely satisfiable in $N_0$. Let $\phi(x) \in p(x)$, then $M \models \exists x \phi(x)$. By \thref{jcp}(vi), $T$ having JCP means that $M$ and $N$ satisfy the same positive sentences, so we also have $N \models \exists x \phi(x)$. Since $N_0 \subseteq N$ is an immersion we have that $N_0 \models \exists x \phi(x)$, as required. As $N_0$ is $\aleph_1$-saturated, there is a realisation $b$ of $p(x)$ in $N_0$. Maximality of types in p.c.\ models (\thref{pc-model-iff-types-are-maximal}) guarantees that $\tp(b; N) = \tp(b; N_0) = p$. Set $f_0(a_0) = b$. To make sure that $b_0 \in \cod(f_0)$ we proceed as in the inductive step below.

For the inductive step we assume that $(f_k)_{k < j}$ is constructed. Write $A = \bigcup_{k < j} \dom(f_k)$ and $B = \bigcup_{k < j} \cod(f_k)$. We define $f_j$ on $A$ by extending the $f_k$'s constructed so far, so for now $f_j$ is a bijective partial immersion (by (iv)) $M \to N$ with domain $A$ and codomain $B$. Let $p(x) = \tp(a_j/A; M) = \{\phi(x, a) : a \in A \text{ and } M \models \phi(a_j, a)\}$ and set $p'(x) = f_j(p(x)) = \{ \phi(x, f_j(a)) : \phi(x, a) \in p(x) \}$. Let $i$ be such that $|j| = \aleph_i$, and note that by (iii) we have that $A \subseteq M_i$ and $B \subseteq N_i$. We claim that $p'(x)$ is finitely realisable in $N_i$. Let $\phi(x, a) \in p(x)$, then $M \models \exists x \phi(x, a)$ and so $N \models \exists x \phi(x, f_j(a))$. Since $N_i \subseteq N$ is an immersion we have that $N_i \models \exists x \phi(x, f_j(a))$, as required. As $|B| \leq |j| = \aleph_i$ and $N_i$ is positively $\aleph_{i+1}$-saturated, we have that $p'(x)$ has a realisation $b$ in $N_i$. By maximality of types in p.c.\ models (\thref{pc-model-iff-types-are-maximal}) we have $\tp(b/B; N) = \tp(b/B; N_i) = p'(x)$. Set $f_j(a_j) = b$. Now let $q(x) = \tp(b_j/B b; N)$. Then by a similar argument we find a realisation $a$ of ${f_j}^{-1}(q(x))$ in $M_i$, and we set $f_j(a) = b_j$. This completes the inductive construction.

Set $f = \bigcup_{j < \kappa} f_j$. Then $\dom(f) = M$ by (i), $\cod(f) = N$ by (ii) and it is an isomorphism by (iv). We also note that the possible repetitions in the enumerations of $M$ and $N$ are no problem, as the equality symbol is always part of the language.
\end{proof}
\begin{proof}[Proof of \thref{building-saturated-homogeneous-model}]
By \thref{building-arbitrarily-large-positively-special-models} there is a continuation $N$ of $M$ which is a p.c.\ model of $T$ and which is a positively special structure with $\cf(|N|) \geq \kappa$. We claim that $N$ is the required p.c.\ model. For this we fix a positively specialising chain $(N_i)_{i < \alpha}$ for $N$, where $\alpha$ is such that $|N| = \aleph_\alpha$.

We first prove that $N$ is positively $\kappa$-saturated. Let $\Sigma(x)$ be a set of formulas over $A \subseteq N$, with $|A| < \kappa$, that is finitely satisfiable in $N$. As $\kappa \leq \cf(|N|) \leq \alpha$, there is $i < \alpha$ such that $A \subseteq N_i$. As $|A| < \kappa \leq \aleph_\alpha$, we may assume that $i$ is such that $\aleph_{i+1} \geq |A|$. Then $\Sigma(x)$ is finitely satisfiable in $N_i$ because $N_i \subseteq N$ is an immersion and as $N_i$ is positively $\aleph_{i+1}$-saturated it contains a realisation $a$ of $\Sigma(x)$. Then $a$ is also a realisation of $\Sigma(x)$ in $N$, which proves that $N$ is positively $\kappa$-saturated.

Now we prove that $N$ is strongly $\kappa$-homogeneous. Let $f: N \to N$ be a partial  immersion with domain $A \subseteq N$, where $|A| < \kappa$. As before, there is $i < \alpha$ such that $A \subseteq N_i$ and such that $\aleph_{i+1} \geq |A|$. Let $\L$ be the signature of $T$, and extend it to $\L_A$ by adding a constant symbol for each element of $A$. For each $j \geq i$ we let $(N_j, A)$ be the $\L_A$-structure where each $a \in A$ is interpreted as itself and we let $(N_j, f(A))$ be the $\L_A$-structure where $a \in A$ is interpreted as $f(a)$. By our assumption on $i$, each of $(N_j, A)$ and $(N_j, f(A))$ is positively $\aleph_{j+1}$-saturated. So we can form two positively specialising chains $(N'_j)_{j < \alpha}$ and $(N''_j)_{j < \alpha}$ by setting
\[
N'_j = \begin{cases}
(N_j, A) & \text{if } i \leq j \\
(N_i, A) & \text{else}
\end{cases}
\quad \text{and} \quad
N''_j = \begin{cases}
(N_j, f(A)) & \text{if } i \leq j \\
(N_i, f(A)) & \text{else}
\end{cases}
\]
This makes $(N, A)$ and $(N, f(A))$ into positively special structures. They are also p.c.\ models of the same theory with JCP, namely the set of h-inductive sentences in $\L_A$ that are true in $(N, A)$, which is the same as when taking those true in $(N, f(A))$. By \thref{positively-special-structures-isomorphic} we then have that $(N, A)$ and $(N, f(A))$ are isomorphic. That is, there is an automorphism of $N$ that extends $f$.
\end{proof}
\section{Monster models}
\label{sec:monster-models}
It is common in model theory to work in a so-called monster model, which is a very saturated and very homogeneous model. This is mainly a notational convenience, as it allows us to view types as automorphism orbits and to find realisations of sets of formulas we no longer need to move to a bigger model.

To give a precise definition of a monster model, we first need to fix some notion of smallness. That is, we declare when a cardinal is considered ``small'', and then we want our monster model to be saturated and homogeneous with respect to all ``small'' sets. There are various ways to make this precise, of which we name a few.
\begin{enumerate}[label=(\arabic*)]
\item Assume inaccessible cardinals exist. Fix some inaccessible cardinal $\kappa$ and let ``small'' mean $< \kappa$.
\item Work in a set theory that allows for classes and class-sized models (e.g., von Neumann-Bernays-G\"odel set theory), and let ``small'' mean ``not a proper class''.
\item At the start of every proof we fix a cardinal $\kappa$ so that everything that we need in that proof is of cardinality $< \kappa$, and we let ``small'' mean $< \kappa$.
\end{enumerate}
Each of these approaches has their own advantages and disadvantages.
\begin{enumerate}[label=(\arabic*)]
\item This approach requires us to assume the existence of large cardinals, and so we are no longer within ZFC set theory.
\item Bernays-G\"odel set theory is conservative over ZFC, meaning that everything that is proved in Bernays-G\"odel set theory is provable in ZFC. So compared to approach (1) we do not need to assume extra strength of our set theory. However, sometimes we will want to move to a bigger monster model, for which our original monster model is considered ``small'' (e.g., when considering global types, see \thref{global-type}). This would be problematic, as there is nothing `bigger' than classes in Bernays-G\"odel set theory.
\item This approach stays within ZFC and we can clearly move to bigger and bigger monster models. It only requires us to trust that at the start of every proof we could indeed guarantee that there is a big enough cardinal $\kappa$ that is bigger than anything we wish to consider.
\end{enumerate}
In practice it turns out that approach (3) is indeed viable, so that is the author's preferred approach. However, we stress once more that the monster model is purely a notational convenience, and so it does not matter which notion of smallness one prefers. We thus invite the reader to pick their favourite notion of smallness in the following definition.
\begin{definition}
\thlabel{monster-model}
Let $T$ be a theory with JCP and fix a notion of ``small'' as explained above. A \term{monster model} of $T$ is a model $\MM$\nomenclature[MM]{$\MM$}{Monster model} of $T$ that is:
\begin{itemize}
\item \underline{Positively closed}: $\MM$ is a p.c.\ model of $T$.
\item \underline{Very homogeneous}: any partial immersion $f: \MM \to \MM$ with small domain extends to an automorphism on all of $\MM$. Equivalently for any two small tuples $a$ and $b$ in $\MM$ we have $\tp(a; \MM) = \tp(b; \MM)$ if and only if there is an automorphism $f$ of $\MM$ such that $f(a) = b$.
\item \underline{Very saturated}: any small set of formulas with parameters in $\MM$ that is finitely satisfiable in $\MM$ is satisfiable in $\MM$.
\end{itemize}
\end{definition}
Whatever notion of smallness we take, \thref{building-saturated-homogeneous-model} shows that monster models exist for every theory with JCP. The point of assuming JCP is that then by \thref{saturation-implies-universal} every small model admits a homomorphism into the monster model. In particular, every small p.c.\ model admits an immersion into the monster model and may thus be viewed as a submodel.
\begin{remark}
\thlabel{maximal-pc-model-is-monster}
If $T$ has a maximal p.c.\ model $M$ then this is the monster model $\MM = M$. The preferred formalism for monster models then does not matter. In this case, the monster model is sometimes said to be bounded.

To see this, we first claim that any homomorphism $f: M \to M$ is an automorphism. As $M$ is a p.c.\ model it is an immersion, so we only need to show that $f$ is surjective. Suppose for a contradiction that it is not. We inductively build a chain $(M_i)_{i < |M|^+}$ such that $M_i = M$ for all $i < |M|^+$. For successors we let the link $M_i \to M_{i+1}$ be given by $f$. At limit stages $\ell < |M|^+$ we let $M'$ be the union of $(M_i)_{i < \ell}$, which is a p.c.\ model by \thref{pc-models-closed-under-directed-unions}, so there is an immersion $M' \to M$. For each $i < \ell$ the link $M_i \to M_\ell$ is then the composition $M_i \to M' \to M$. Let $N$ be the union of $(M_i)_{i < |M|^+}$. Since $f$ is not surjective we can pick some $a_i \in M_{i+1} \setminus f(M_i)$ for every $i < |M|^+$. We thus obtain a set $\{a_i : i < |M|^+\}$ of cardinality $|M|^+$ in $N$, but at the same time $N$ is a p.c.\ model (again, by \thref{pc-models-closed-under-directed-unions}), so it must admit an immersion $N \to M$ and hence $|N| < |M|$. We arrive at a contradiction and conclude that $f$ must be surjective.

We now show that any positively $(\aleph_0 + |M|)$-saturated p.c.\ model $N$ is (isomorphic to) $M$. Indeed, there is an immersion $g: N \to M$. By positive saturation (or more precisely, \thref{saturation-extend-variables}), $N$ also realises $\Diag(M)$, and so there is an immersion $f: M \to N$. By the above claim $gf$ is an automorphism, so $g$ is surjective and hence an isomorphism.

The claim that $M$ is the monster model then follows from the fact that positively $(\aleph_0 + |M|)$-saturated and strongly positively $(\aleph_0 + |M|)$-homogeneous p.c.\ models always exist (\thref{building-saturated-homogeneous-model}).
\end{remark}
\begin{mdframed}
\Large{\begin{convention}
\thlabel{monster-convention}
From now on we work in a monster model $\MM$, so all p.c.\ models, tuples and sets are assumed to be small and to live in $\MM$.
\end{convention}}
\end{mdframed}
We finish this section by establishing some (standard) notation for working in the monster model.
\begin{convention}
\thlabel{monster-notation}
We generally omit the monster model $\MM$ from the notation. So for example, we would write $\tp(a)$ and $\models \phi(a)$ instead of $\tp(a; \MM)$ and $\MM \models \phi(a)$. We also fix the following notation. Everything is small unless explicitly mentioned otherwise.
\begin{itemize}
\item We use lowercase Latin letters $a, b, c, \ldots$ for (possibly infinite) tuples of elements in $\MM$.
\item We use uppercase Latin letters $A, B, C, \ldots$ for arbitrary subsets of the monster. We use the letters $M$ and $N$ when these subsets are p.c.\ models.
\item For a tuple $a$ and a set $B$ we write\nomenclature[tpaB]{$\tp(a/B)$}{Type of $a$ over $B$}
\[
\tp(a/B) = \{ \phi(x, b) : b \in B \text{ and } \models \phi(a, b)  \}
\]
for the set of formulas over $B$ that are satisfied by $a$, and we call this the \emph{type of $a$ over $B$}\index{type over a parameter set}.
\item We write $a \equiv_B a'$ to mean $\tp(a/B) = \tp(a'/B)$\nomenclature[Bequiv]{$\equiv_B$}{Same type over $B$}.
\item We write $\Aut(\MM/B)$\nomenclature[Aut]{$\Aut(\MM/A)$}{Group of automorphisms over $A$} for the set of autmorphisms of $\MM$ that fix $B$ pointwise. So by homogeneity we have $a \equiv_B a'$ if and only if there is $f \in \Aut(\MM/B)$ with $f(a) = a'$.
\item For a set of formulas $\Sigma(x)$ and a tuple $a$ we write $a \models \Sigma$ to mean that $a$ satisfies $\Sigma(x)$ in $\MM$, that is $\models \Sigma(a)$.
\item For a set of formulas $\Sigma$ with parameters in $C$ and some $B \subseteq C$ we write $\Sigma|_{B}$ for the subset of $\Sigma$ consisting of those formulas with parameters from $B$.
\end{itemize}
\end{convention}
Similar to \thref{type-space} we can define type spaces, but now over fixed parameter sets.
\begin{definition}
\thlabel{type-space-with-parameters}
Let $B$ be a set of parameters and let $I$ be an index set. Then the \emph{type space of $I$-types over $B$}\index{type space over $B$}, written as $\S_I(B)$,\nomenclature[SBI]{$\S_I(B)$}{Type space of $I$-types over $B$} is defined as follows:
\[
\S_I(B) = \{ \tp(a/B) : a \text{ is indexed by } I \}.
\]
\end{definition}
Note that $\S_I(B)$ could be topologised similarly to how we topologised $\S_I(T)$, by having closed sets correspond to sets of formulas (with parameters in $B$). Though we will have no use for this.
\section{The toolbox}
\label{sec:the-toolbox}
In this section we collect some tools that are essential for advanced model theory. These are tools that we know and love from the full first-order setting, and we see that in positive logic we do not have to give up any strength.

\begin{proposition}
\thlabel{combine-partial-types}
Type-definable sets are closed under finite disjunction, infinite conjunction and existential quantification over any string of variables. More precisely, we can perform the following constructions on sets of formulas, where the tuples of variables involved can be infinite.
\begin{enumerate}[label=(\roman*)]
\item Given sets of formulas $\Sigma_1(x)$ and $\Sigma_2(x)$ with parameters, we define
\[
\Sigma(x) = \{ \phi_1(x) \vee \phi_2(x) : \phi_1(x) \in \Sigma_1(x) \text{ and } \phi_2(x) \in \Sigma_2(x) \}.
\]
Then for all $a$ we have:
\[
\models \Sigma_1(a) \text{ or } \models \Sigma_2(a) \quad \Longleftrightarrow \quad \models \Sigma(a).
\]
\item Given a (potentially infinite) family of sets of formulas $\{ \Sigma_i(x)\}_{i \in I}$ with parameters, we define
\[
\Sigma(x) = \bigcup_{i \in I} \Sigma_i(x).
\]
Then for all $a$ we have:
\[
\models \Sigma_i(a) \text{ for all } i \in I \quad \Longleftrightarrow \quad \models \Sigma(a).
\]
\item Given a set of formulas $\Sigma_0(x, y)$ with parameters, we define
\[
\Sigma(x) = \{ \exists y \phi(x, y) : \phi(x, y) \in \Sigma_0(x, y) \},
\]
where the existential quantification is each time really only over the variables that are mentioned in $\phi(x, y)$. Then for all $a$ we have:
\[
\text{there is } b \text{ with } \models \Sigma_0(a, b) \quad \Longleftrightarrow \quad \models \Sigma(a).
\]
\end{enumerate}
\end{proposition}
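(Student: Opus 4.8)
The plan is to handle the three items separately, since they are of rather different character: items (ii) and (i) hold in an arbitrary structure and need nothing beyond the definition of satisfaction, whereas (iii) is the only place where the very saturatedness of $\MM$ actually does any work. In each case both implications are short, and the forward direction is essentially bookkeeping.

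For (ii) I would simply unwind the definitions: $\models \Sigma(a)$ means $\models \phi(a)$ for every $\phi(x) \in \bigcup_{i \in I}\Sigma_i(x)$, which is the same as saying $\models \phi(a)$ for every $\phi(x) \in \Sigma_i(x)$ and every $i \in I$, i.e.\ $\models \Sigma_i(a)$ for all $i \in I$. For (i), the left-to-right direction is immediate: if, say, $\models \Sigma_1(a)$, then $\models \phi_1(a)$ for each $\phi_1(x) \in \Sigma_1(x)$, hence $\models \phi_1(a) \vee \phi_2(a)$ for all $\phi_2(x) \in \Sigma_2(x)$, so $\models \Sigma(a)$; symmetrically if $\models \Sigma_2(a)$. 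For the converse I would argue by contraposition: if $\not\models \Sigma_1(a)$ and $\not\models \Sigma_2(a)$, fix $\phi_1(x) \in \Sigma_1(x)$ with $\not\models \phi_1(a)$ and $\phi_2(x) \in \Sigma_2(x)$ with $\not\models \phi_2(a)$; then $\phi_1(x) \vee \phi_2(x) \in \Sigma(x)$, but by the clause for $\vee$ in the definition of satisfaction we have $\not\models \phi_1(a) \vee \phi_2(a)$, so $\not\models \Sigma(a)$.

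For (iii), the direction from a witness is again trivial: if $\models \Sigma_0(a, b)$ then $\models \exists y\,\phi(a, y)$ for every $\phi(x, y) \in \Sigma_0(x, y)$, so $\models \Sigma(a)$. The real content is the converse, and this is where the properties of $\MM$ are needed. Here I would invoke the standing convention that a set of formulas may be assumed closed under finite conjunctions, and consider $\Sigma_0(a, y)$ as a set of formulas over the (small) parameter set $a$. Assuming $\models \Sigma(a)$, I claim $\Sigma_0(a, y)$ is finitely satisfiable in $\MM$: given $\phi_1, \dots, \phi_n \in \Sigma_0$, their conjunction $\phi$ lies in $\Sigma_0$ by the convention, so $\exists y\,\phi(x, y) \in \Sigma(x)$ and hence $\models \exists y\,\phi(a, y)$, which provides a common realisation of $\phi_1(a, y), \dots, \phi_n(a, y)$. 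Since the tuple $y$ is small (each formula mentions only finitely many of its coordinates), very saturatedness of $\MM$, in the tuple form of \thref{saturation-extend-variables}, then produces $b$ with $\models \Sigma_0(a, b)$, as required.

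The only point that needs a little care is the bookkeeping around the variables in (iii): each $\phi(x, y) \in \Sigma_0(x, y)$ involves only finitely many coordinates of $y$, so $\exists y\,\phi(x, y)$ is genuinely a finitary quantifier, and the realisation $b$ extracted from saturation is a small (possibly infinite) tuple matching $y$. As long as we keep this straight and use closure under finite conjunctions to pass from the individual $\exists y\,\phi(a,y)$ to a common witness, I do not expect any further obstacle.
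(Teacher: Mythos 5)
Your proof is correct and follows essentially the same route as the paper: (i) by the same case analysis and contraposition, (ii) by unwinding definitions, and (iii) by reducing to finite satisfiability of $\Sigma_0(a, y)$ and invoking the saturation of $\MM$ (the paper compresses this into "by compactness", implicitly using the same closure-under-finite-conjunctions convention you cite explicitly). Your extra care about the finitely many $y$-coordinates per formula and the tuple form of saturation is exactly the bookkeeping the paper leaves tacit.
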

In light of the above proposition it makes sense to treat sets of formulas as infinitary formulas and apply positive connectives to them.
\begin{convention}
\thlabel{apply-connectives-to-sets-of-formulas}
We will often apply disjunctions, conjunctions and existential quantification to sets of formulas to form a new set of formulas. The conjunctions and existential quantification are allowed to be infinite, but the disjunctions are not. The new set of formulas is then given by the corresponding item from \thref{combine-partial-types}.

Singleton sets will be abbreviated by the formula that they contain. So we write $\phi(x) \vee \Sigma(x)$ instead of $\{\phi(x)\} \vee \Sigma(x)$.
\end{convention}
\begin{proof}
Item (ii) is immediate from the definitions. We prove the other two.

\underline{(i)} For the left to right direction we may assume, without loss of generality, that $\models \Sigma_1(a)$. So for any $\phi_1(x) \in \Sigma_1(x)$ and $\phi_2(x) \in \Sigma_2(x)$ we have $\models \phi_1(a) \vee \phi_2(a)$, and so $\models \Sigma(a)$. For the converse we prove the contrapositive. So assume that $\not \models \Sigma_1(a) \vee \Sigma_2(a)$. Then there are $\phi_1(x) \in \Sigma_1(x)$ and $\phi_2(x) \in \Sigma_2(x)$ such that $\not \models \phi_1(a)$ and $\not \models \phi_2(a)$, hence $\not \models \phi_1(a) \vee \phi_2(a)$. We conclude that $\not \models \Sigma(a)$, as required.

\underline{(iii)} The left to right direction is immediate from the definitions. For the converse, we let $a$ be such that $\models \Sigma(a)$. Let $\phi(x, y) \in \Sigma_0(x, y)$ then by assumption $\models \exists y \phi(a, y)$, and so there is $b$ such that $\models \phi(a, b)$. By compactness there must thus be $b$ such that $\models \Sigma_0(a, b)$, that is $\models \exists y \Sigma_0(a, y)$.
\end{proof}

We already gave a definition of an indiscernible sequence in \thref{indiscernible-sequence}. Now that we are working in a monster model, we can define what it means to be indiscernible over some parameter set.
\begin{definition}
\thlabel{indiscernible-sequence-over-parameters}
Let $B$ be a set of parameters. An \emph{indiscernible sequence over $B$}\index{indiscernible sequence!over parameters} is an infinite sequence $(a_i)_{i \in I}$ such that for any $i_1 < \ldots < i_n$ and $j_1 < \ldots < j_n$ in $I$ we have
\[
a_{i_1} \ldots a_{i_n} \equiv_B a_{j_1} \ldots a_{j_n}.
\]
We will also abbreviate this as a \emph{$B$-indiscernible sequence}.
\end{definition}
Indiscernible sequences are often constructed by first constructing a very long sequence and then using the lemma below to find some indiscernible sequence that is based on the very long sequence in the following sense.
\begin{definition}
\thlabel{based-on}
Let $(a_i)_{i \in I}$ and $(b_j)_{j \in J}$ be two infinite sequences and let $C$ be some parameter set. We say that $(b_j)_{j \in J}$ is \term{based on}\emph{ $(a_i)_{i \in I}$ over $C$} if for any $j_1 < \ldots < j_n$ in $J$ there are $i_1 < \ldots < i_n$ in $I$ such that $b_{j_1} \ldots b_{j_n} \equiv_C a_{i_1} \ldots a_{i_n}$.
\end{definition}
\begin{definition}
\thlabel{lambda-t}
Write $\lambda_\kappa = \beth_{(2^\kappa)^+}$ for any cardinal $\kappa$ and $\lambda_T = \lambda_{|T|}$.\nomenclature[lambdakappa]{$\lambda_\kappa$}{Abbreviation for $\beth_{(2^\kappa)^+}$}\nomenclature[lambdaT]{$\lambda_T$}{Abbreviation for $\lambda_{|T|} = \beth_{(2^{|T|})^+}$}
\end{definition}
\begin{lemma}
\thlabel{base-indiscernible-sequence-on-long-sequence}
Let $B$ be any parameter set and let $\kappa$ be any cardinal. Then for any sequence $(a_i)_{i \in I}$ of $\kappa$-tuples with $|I| \geq \lambda_{|T| + |B| + \kappa}$ there is a $B$-indiscernible sequence $(a_i')_{i < \omega}$ that is based on $(a_i)_{i \in I}$ over $B$.
\end{lemma}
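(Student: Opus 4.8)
The plan is to run the usual Erdős–Rado argument for extracting indiscernibles, now carried out inside the monster model $\MM$.

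First, a counting observation. Put $\mu = |T| + |B| + \kappa$; since there are always infinitely many formulas up to equivalence, $\mu$ is infinite. For each $n < \omega$ the set of complete types over $B$ in $n$ many $\kappa$-tuples of variables has size at most $2^\mu$: there are at most $\mu$ formulas over $B$ in those variables (each uses only finitely many of the $n\kappa \leq \mu$ variables and finitely many of the $|B|$ parameters), and a complete type is determined by which of them it contains. Set $\theta = 2^\mu$, so $\lambda_{|T|+|B|+\kappa} = \beth_{\theta^+}$. Passing to a subset we may assume $|I| = \beth_{\theta^+}$. For each $n \geq 1$ define a colouring $c_n$ of the $n$-element subsets of $I$ by $c_n(\{i_1, \dots, i_n\}) = \tp(a_{i_1} \dots a_{i_n}/B)$, where $i_1 < \dots < i_n$ lists the set in the order of $I$; this uses at most $\theta$ colours.

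Next comes the combinatorial heart. By iterating the Erdős–Rado theorem $\beth_{n-1}(\theta)^+ \to (\theta^+)^n_\theta$ over $n < \omega$ — exactly as in the construction of Ehrenfeucht–Mostowski models and indiscernible sequences in full first-order logic — one obtains an infinite $H \subseteq I$ that is homogeneous for every $c_n$ simultaneously; the hypothesis $|I| \geq \beth_{\theta^+}$ is precisely what makes all of these homogenisations possible, and this step, with its slightly delicate cardinal bookkeeping, is the main obstacle. Write $p_n$ for the constant $B$-type taken by $c_n$ on the $n$-element subsets of $H$; since these all come from increasing tuples inside one set $H$, the $p_n$ cohere (if $(d_1, \dots, d_n) \models p_n$ then $(d_1, \dots, d_m) \models p_m$ for $m \leq n$).

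Finally, let $\Gamma((x_i)_{i < \omega}) = \bigcup \{ p_n(x_{i_1}, \dots, x_{i_n}) : i_1 < \dots < i_n < \omega \}$. A finite subset of $\Gamma$ mentions only finitely many variables $x_{i_1}, \dots, x_{i_k}$ and is satisfied by sending $x_{i_\ell}$ to $e_\ell$ for any increasing tuple $e_1 < \dots < e_k$ from $H$ (which exists since $H$ is infinite), as then each relevant sub-tuple is an increasing tuple from $H$ and hence realises the appropriate $p_n$. So $\Gamma$ is finitely satisfiable in $\MM$, hence realised, say by $(a_i')_{i < \omega}$. For $j_1 < \dots < j_n$ and $k_1 < \dots < k_n$ both $a'_{j_1} \dots a'_{j_n}$ and $a'_{k_1} \dots a'_{k_n}$ realise $p_n$, so they are $\equiv_B$, showing $(a_i')_{i < \omega}$ is $B$-indiscernible; and for $j_1 < \dots < j_n$ and any increasing $e_1 < \dots < e_n$ in $H \subseteq I$ we get $a'_{j_1} \dots a'_{j_n} \equiv_B a_{e_1} \dots a_{e_n}$ since both realise $p_n$, so $(a_i')_{i < \omega}$ is based on $(a_i)_{i \in I}$ over $B$. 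All of this, apart from the Erdős–Rado step, is routine.
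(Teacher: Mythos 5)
There is a genuine gap at what you yourself call the ``combinatorial heart''. Iterating Erd\H{o}s--Rado does \emph{not} produce a single infinite $H \subseteq I$ that is homogeneous for every colouring $c_n$ simultaneously. The relation you are invoking is $|I| \to (\aleph_0)^{<\omega}_\theta$, i.e.\ one infinite set homogeneous for all finite exponents at once, and this is a large-cardinal property: the least cardinal satisfying $\kappa \to (\omega)^{<\omega}_2$ is the Erd\H{o}s cardinal, which is inaccessible and far beyond anything provable in ZFC. In particular $\beth_{\theta^+}$ does not satisfy it, and the phenomenon is not just about size --- even $\omega \to (\omega)^{<\omega}_2$ is false (colour an $n$-set by whether it contains its own cardinality). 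What Erd\H{o}s--Rado gives you at each fixed exponent $n$ are large homogeneous sets whose sizes shrink as $n$ grows, and these sets are different for different $n$; so your coherence claim for the types $p_n$, which rests entirely on their all coming from one fixed $H$, collapses. This is also not how the classical extraction of indiscernibles in full first-order logic works: there too one never finds a single homogeneous set.

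The way to repair it (and the route the paper takes) is to give up on a homogeneous set altogether and instead build the types $p_n$ directly by induction on $n$, with the inductive invariant that for \emph{every} $\mu < \lambda$ there is some $I' \subseteq I$ of size $\mu$ (depending on $n$ and $\mu$) all of whose increasing $n$-tuples realise $p_n$. In the step from $p_n$ to $p_{n+1}$ one applies Erd\H{o}s--Rado once, for each $\mu < \lambda$, to obtain a homogeneous set $I_\mu$ of size $\mu$ inside a set witnessing the invariant for $p_n$, giving a candidate type $q_\mu$; since $\lambda = \beth_{(2^{|T|+|B|+\kappa})^+}$ is a limit cardinal of cofinality greater than the number $\tau$ of $\kappa$-types over $B$, some single $q$ occurs as $q_\mu$ for a cofinal set of $\mu$'s, and taking $p_{n+1} = q$ both preserves the invariant and guarantees $p_{n+1}$ restricts to $p_n$ (coherence comes from $I_\mu$ sitting inside a witness for $p_n$, not from a global homogeneous set). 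Your final paragraph --- realising $\bigcup_n p_n$ by compactness and reading off indiscernibility and basedness --- is fine and is exactly how the paper concludes, but it only works once the $p_n$ have been produced in this cofinal-pigeonhole way.
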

The proof of \thref{base-indiscernible-sequence-on-long-sequence} uses the Erd\H{o}s-Rado theorem, which we will state here after first recalling the necessary notation. For cardinals $\kappa, \lambda, \mu$ and $n < \omega$ we write $\kappa \to (\lambda)^n_\mu$ if for every function $f: [\kappa]^n \to \mu$ we can find a subset $X \subseteq \kappa$ with $|X| = \lambda$ such that $f$ is constant on $[X]^n$. Here $[\kappa]^n$ and $[X]^n$ are the sets of subsets of size $n$ of $\kappa$ and $X$ respectively.
\begin{theorem}[{Erd\H{o}s-Rado}]
\thlabel{erdos-rado}
For all infinite cardinals $\mu$ we have
\[
\beth_n^+(\mu) \to (\mu^+)^{n+1}_\mu.
\]
\end{theorem}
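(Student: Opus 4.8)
The plan is to prove the partition relation $\beth_n^+(\mu) \to (\mu^+)^{n+1}_\mu$ by induction on $n$, following the classical Erd\H{o}s--Rado argument. For the base case $n = 0$ the statement reads $\mu^+ \to (\mu^+)^1_\mu$, which is just the pigeonhole principle: any partition of $\mu^+$ into $\mu$ classes has a class of size $\mu^+$, since $\mu^+$ is regular and a union of $\mu$ sets each of size at most $\mu$ has size at most $\mu$.

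For the inductive step, assume $\beth_n^+(\mu) \to (\mu^+)^{n+1}_\mu$, set $\kappa = \beth_{n+1}^+(\mu) = (2^{\beth_n(\mu)})^+$, and fix a colouring $f \colon [\kappa]^{n+2} \to \mu$. The heart of the argument is to extract an \emph{end-homogeneous} set for $f$: a set $E \subseteq \kappa$ with $|E| = \beth_n^+(\mu)$ such that $f(s \cup \{\alpha\}) = f(s \cup \{\beta\})$ whenever $s \in [E]^{n+1}$ and $\alpha, \beta \in E$ both exceed $\max s$. I would build $E = \{a_\xi : \xi < \beth_n^+(\mu)\}$ by transfinite recursion: having chosen $\{a_\eta : \eta < \xi\}$ together with a set $H_\xi \subseteq \kappa$ of cardinality $\kappa$ lying entirely above those points, partition $H_\xi$ according to the tuple $\langle f(s \cup \{x\}) : s \in [\{a_\eta : \eta < \xi\}]^{n+1}\rangle$; since there are at most $|\xi|^{n+1} \le \beth_n(\mu)$ indices $s$ and each coordinate is $\mu$-valued, this partition has at most $\mu^{\beth_n(\mu)} = \beth_{n+1}(\mu) < \kappa$ classes, so by regularity of $\kappa$ one class $H_{\xi+1}$ has cardinality $\kappa$; now choose $a_\xi \in H_{\xi+1}$ above all earlier points. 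End-homogeneity of $E$ is then immediate from the construction.

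Given $E$, the rest is quick. Define $g \colon [E]^{n+1} \to \mu$ by $g(s) = f(s \cup \{x\})$ for any (equivalently every) $x \in E$ with $x > \max s$; this is well-defined precisely by end-homogeneity of $E$. Since $|E| = \beth_n^+(\mu)$, the induction hypothesis applied to $g$ yields a set $H \subseteq E$ with $|H| = \mu^+$ that is homogeneous for $g$. Finally $H$ is homogeneous for $f$: any $t \in [H]^{n+2}$ can be written $t = s \cup \{x\}$ with $x = \max t$ and $s \in [H]^{n+1}$, whence $f(t) = g(s)$ (end-homogeneity, using $H \subseteq E$), and $g(s)$ is the same for all $s \in [H]^{n+1}$ (homogeneity of $H$ for $g$). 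This closes the induction.

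I expect the only real obstacle to be the construction of the end-homogeneous set, and specifically the \emph{limit} stages of the recursion: a decreasing chain $\langle H_\xi \rangle$ of subsets of $\kappa$ each of cardinality $\kappa$ need not have an intersection of cardinality $\kappa$, so one cannot naively set $H_\delta = \bigcap_{\xi < \delta} H_\xi$. Keeping the auxiliary sets large throughout is handled by the standard device of running the recursion along a continuous $\in$-chain of elementary submodels of a sufficiently large $H(\theta)$ (equivalently, by the ramification/tree bookkeeping of the original proof), exploiting that $|\delta| < \beth_n^+(\mu) \le \beth_{n+1}(\mu) < \cf(\kappa)$ for every $\delta < \beth_n^+(\mu)$. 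The cardinal computation $\mu^{\beth_n(\mu)} = \beth_{n+1}(\mu)$ together with the regularity of $\kappa = \beth_{n+1}^+(\mu)$ is exactly what makes this bookkeeping succeed; the remaining verifications are routine.
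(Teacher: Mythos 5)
A preliminary remark on the comparison: the paper does not prove this theorem at all — it is used as a black box (to extract indiscernible sequences in \thref{base-indiscernible-sequence-on-long-sequence}) and referred to \cite[Theorem 9.6]{jech_set_2003} in the bibliographic remarks — so your proposal can only be measured against the standard literature proof, which is exactly the one you are reconstructing: induction on $n$, extraction of an end-homogeneous set of size $\beth_n^+(\mu)$ inside $\kappa=\beth_{n+1}^+(\mu)$, then the induction hypothesis applied to the induced $(n{+}1)$-ary colouring. Your base case, the cardinal arithmetic $\mu^{\beth_n(\mu)}=2^{\beth_n(\mu)}=\beth_{n+1}(\mu)<\kappa$ with $\kappa$ regular, the definition of $g$, and the final homogeneity verification are all correct.

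The one step you do not carry out is the only non-routine one, and the repair you sketch would not work as described. At a limit stage $\delta$ the naive recursion needs a $\kappa$-sized set inside $\bigcap_{\xi<\delta}H_\xi$; as you note this can fail, but the failure has nothing to do with cofinality (a decreasing $\omega$-chain of subsets of $\omega_1$ of full size can have empty intersection), so ``keeping the auxiliary sets large throughout'' by ``exploiting $|\delta|<\cf(\kappa)$'' along a continuous chain of submodels is not what the standard device does. The working elementary-submodel argument abandons the decreasing chain of large sets altogether: fix a single $N\prec H(\theta)$ with $f,\kappa,\mu\in N$, $|N|=\beth_{n+1}(\mu)$, ${}^{\beth_n(\mu)}N\subseteq N$ and $N\cap\kappa\in\kappa$ (possible since $\beth_{n+1}(\mu)^{\beth_n(\mu)}=\beth_{n+1}(\mu)$ and $\kappa$ is regular), pick one point $a\in\kappa$ above $N\cap\kappa$, and recursively choose $a_\xi\in N\cap\kappa$ for $\xi<\beth_n^+(\mu)$ realising the same pattern $s\mapsto f(s\cup\{a\})$ over $[\{a_\eta:\eta<\xi\}]^{n+1}$ as $a$ does: this pattern is a $\beth_n(\mu)$-sized object, hence an element of $N$ by closure, and $a$ witnesses in $H(\theta)$ that a point with that pattern above all $a_\eta$ exists, so elementarity yields one in $N$. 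Limit stages then require nothing, end-homogeneity is ``towards $a$'' with $g(s)=f(s\cup\{a\})$, and the remainder of your argument goes through verbatim. (The original Erd\H{o}s--Rado ramification proof likewise avoids maintaining a chain of large sets, by a tree bookkeeping.) So: right strategy and right arithmetic, but the limit-stage repair must be replaced by one of these actual arguments; as stated it is a genuine gap.
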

\begin{proof}[Proof of \thref{base-indiscernible-sequence-on-long-sequence}]
For convenience, write $\lambda = \lambda_{|T| + |B| + \kappa} = \beth_{(2^{|T| + |B| + \kappa})^+}$ and $\tau = |\S_\kappa(B)|$. Then $\lambda$ has the following properties, where the second one follows from the Erd\H{o}s-Rado theorem (\thref{erdos-rado}):
\begin{enumerate}[label=(\roman*)]
\item the cofinality of $\lambda$ is strictly greater than $\tau$ and $\lambda$ is a limit cardinal;
\item for all $\mu < \lambda$ and $n < \omega$, there is some $\mu' < \lambda$ such that $\mu' \to (\mu)^n_\tau$.
\end{enumerate}
We will inductively construct a sequence of types $p_0, p_1, \ldots$ over $B$, such that for all $n < \omega$:
\begin{enumerate}[label=(\arabic*)]
\item $p_n$ has free variables $x_0, \ldots, x_{n-1}$, each of which has length $\kappa$;
\item for any $m < n$ and any $i_1 < \ldots < i_m < n$ we have that $p_n(x_0, \ldots, x_{n-1}) \models p_m(x_{i_1}, \ldots, x_{i_m})$;
\item for every $\mu < \lambda$ there is $I' \subseteq I$ with $|I'| = \mu$ such that for any $i_1 < \ldots < i_n$ in $I'$ we have $\models p_n(a_{i_1}, \ldots, a_{i_n})$.
\end{enumerate}
Property (2) can also be phrased semantically as follows: for any $m < n$ and any $a_0, \ldots, a_{n-1}$ realising $p_n$ we have that any subsequence of $a_0, \ldots, a_{n-1}$ of length $m$ realises $p_m$.

For $n = 0$ there is nothing to do. So we assume that $p_n$ has been chosen and we will find $p_{n+1}$. Let $\mu < \lambda$ be arbitrary. Then by (ii) there is $\mu' < \lambda$ such that $\mu' \to (\mu)^{n+1}_\tau$. By (3) we then find $I' \subseteq I$ with $|I'| = \mu'$ such that for any $i_1 < \ldots < i_n$ in $I'$ we have $\models p_n(a_{i_1}, \ldots, a_{i_n})$. We define $f: [I']^{n+1} \to \S_{(n+1) \times \kappa}(B)$ by
\[
f(\{ i_1 < \ldots < i_{n+1} \}) = \tp(a_{i_1} \ldots a_{i_{n+1}} / B).
\]
We thus find a subset $I_\mu \subseteq I'$ with $|I_\mu| = \mu$ such that for any $i_1 < \ldots < i_{n+1}$ and $j_1 < \ldots < j_{n+1}$ in $I_\mu$ we have $\tp(a_{i_1} \ldots a_{i_{n+1}} / B) = \tp(a_{j_1} \ldots a_{j_{n+1}} / B)$. Set $q_\mu = \tp(a_{i_1} \ldots a_{i_{n+1}} / B)$, where  $i_1 < \ldots < i_{n+1}$ is some subsequence in $I_\mu$ of length $n+1$ (by the above $q_\mu$ does not depend on the choice of the subsequence). As $I_\mu \subseteq I'$ we have that for any $i_1 < \ldots < i_n < n+1$:
\begin{equation}
\label{eq:q_mu-implies-p_n}
q_\mu(x_0, \ldots, x_n) \models p_n(x_{i_1}, \ldots, x_{i_n}).\tag{$*$}
\end{equation}
Since $\mu < \lambda$ was arbitrary, we have such an $I_\mu$ and associated $q_\mu$ for every cardinal $\mu < \lambda$. By (i) there must be a cofinal subset $J \subseteq \lambda$ of cardinals such that for any $\mu, \mu' \in J$ we have $q_\mu = q_{\mu'}$. Set $p_{n+1} = q_\mu$. The induction hypothesis is quickly verified: (1) holds by construction, (2) follows from ($*$) together with the induction hypothesis for $p_n$ and for (3) we note that for any $\mu < \lambda$ there is $\mu' \in J$ with $\mu < \mu'$ so a ($\mu$-sized subset of) $I_{\mu'}$ will then be the required $I'$.

This finishes the construction of the sequence $(p_n)_{n < \omega}$. In particular, from (2) it follows that this is an increasing sequence. So $\bigcup_{n < \omega} p_n$ has a realisation $(a'_i)_{i < \omega}$. Then $B$-indiscernibility follows from (2), while being based on $(a_i)_{i \in I}$ over $B$ follows from (3).
\end{proof}
\begin{lemma}
\thlabel{extend-base-set-of-indiscernible-sequence}
Let $(a_i)_{i \in I}$ be a $B$-indiscernible sequence and let $C$ be any parameter set. Then there is $C'$ with $C' \equiv_B C$ such that $(a_i)_{i \in I}$ is $BC'$-indiscernible.
\end{lemma}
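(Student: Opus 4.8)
The plan is to build a $BC$-indiscernible sequence $(c_i)_{i \in I}$ which has the same type over $B$ as $(a_i)_{i \in I}$, and then to transport it onto $(a_i)_{i \in I}$ by an automorphism fixing $B$; the image of $C$ under that automorphism will be the desired $C'$. Throughout, the key positive-logic point is that in the p.c.\ model $\MM$ the type of any tuple over any set is a \emph{maximal} consistent set of formulas (\thref{pc-model-iff-types-are-maximal}), which is what lets me conclude that a sequence realising an Ehrenfeucht--Mostowski type is genuinely indiscernible, even though sameness of type is not type-definable in general.

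First I would pass to a long sequence. Let $\kappa$ be the common length of the entries $a_i$. By compactness (saturation of $\MM$) I would find a $B$-indiscernible sequence $(b_j)_{j \in J}$ indexed by a linear order $J$ with $|J| \geq \lambda_{|T| + |B| + |C| + \kappa}$ and having the same type of increasing finite subtuples over $B$ as $(a_i)_{i \in I}$: the relevant partial type over $B$ is finitely satisfiable precisely because $(a_i)_{i \in I}$ is already $B$-indiscernible, and any realisation is $B$-indiscernible because each increasing $n$-subtuple has maximal type over $B$ and hence is pinned down by the part of the EM-type it satisfies. Then I would apply \thref{base-indiscernible-sequence-on-long-sequence} with parameter set $BC$ to obtain a $BC$-indiscernible sequence $(c_k)_{k < \omega}$ based on $(b_j)_{j \in J}$ over $BC$; being based over $BC$ it is based over $B$, so its type of increasing $n$-tuples over $B$ equals that of $(b_j)_{j \in J}$, and hence that of $(a_i)_{i \in I}$.

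Next I would re-index $(c_k)_{k < \omega}$ to the order type of $I$. Put $q_n = \tp(c_0 \ldots c_{n-1}/BC)$; by $BC$-indiscernibility this is the type over $BC$ of every increasing $n$-tuple of $(c_k)_{k<\omega}$, and these types cohere. The set of formulas $\Sigma((x_i)_{i \in I}) = \bigcup\{ q_n(x_{i_1}, \ldots, x_{i_n}) : n < \omega,\ i_1 < \ldots < i_n \text{ in } I \}$ over $BC$ is finitely satisfiable in $\MM$ (initial segments of $(c_k)_{k<\omega}$ witness this), so it has a realisation $(c_i)_{i \in I}$. Using maximality of types in $\MM$ once more, each increasing $n$-tuple of $(c_i)_{i \in I}$ has type exactly $q_n$ over $BC$, so $(c_i)_{i \in I}$ is $BC$-indiscernible; restricting $q_n$ to $B$ shows its type of increasing finite subtuples over $B$ agrees with that of $(a_i)_{i \in I}$, and since the type of an infinite tuple is determined by its finite subtuples, $(c_i)_{i \in I} \equiv_B (a_i)_{i \in I}$.

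Finally, by homogeneity of $\MM$ there is $\sigma \in \Aut(\MM/B)$ with $\sigma\big((c_i)_{i \in I}\big) = (a_i)_{i \in I}$ (the tuple $(c_i)_{i \in I}$ is small). Set $C' = \sigma(C)$; then $C' \equiv_B C$, and applying $\sigma$ to the $BC$-indiscernible sequence $(c_i)_{i \in I}$ shows that $(a_i)_{i \in I}$ is $\sigma(BC) = BC'$-indiscernible, as required. The only real obstacle compared with the full first-order argument is the two appeals to maximality of types needed to turn "realises an EM-type" into "is indiscernible"; everything else — extending to a long sequence, applying \thref{base-indiscernible-sequence-on-long-sequence}, and conjugating by an automorphism — is routine.
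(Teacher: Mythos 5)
Your proof is correct and follows essentially the same route as the paper: lengthen the sequence (via its EM-type over $B$), base a $BC$-indiscernible sequence on it using \thref{base-indiscernible-sequence-on-long-sequence}, re-index to $I$, observe the two sequences have the same type over $B$, and transport $C$ by an automorphism over $B$. The extra care you take with maximality of types in p.c.\ models to pass from ``realises the EM-type'' to ``indiscernible'' is a point the paper leaves implicit, but it is the same argument.
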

\begin{proof}
By compactness we may assume $|I|$ is large enough to apply \thref{base-indiscernible-sequence-on-long-sequence}. We then base a $BC$-indiscernible sequence $(a'_i)_{i \in I}$ on $(a_i)_{i \in I}$, where we applied compactness again to assume that $(a'_i)_{i \in I}$ is indexed by $I$. As $(a_i)_{i \in I}$ was already $B$-indiscernible we have $(a_i)_{i \in I} \equiv_B (a'_i)_{i \in I}$. So we find our required $C'$ by taking one such that $C'(a_i)_{i \in I} \equiv_B C(a'_i)_{i \in I}$.
\end{proof}

\section{Lascar strong types}
\label{sec:lascar-strong-types}
We discuss Lascar strong types, and the subtleties involving them compared to full first-order logic. These types will be relevant for the independence theorem in simple theories (\thref{independence-theorem}) and stationarity in stable theories (\thref{stable-theory-stationary-iff-lstp}).
\begin{definition}
\thlabel{invariant-bounded-equivalence-relation}
Let $E(x, y)$ be an equivalence relation (on the monster model), and let $B$ be a parameter set. We call it:
\begin{itemize}
\item \emph{bounded}\index{equivalence relation!bounded} if there is a bounded number of equivalence classes (i.e., small with respect to the monster);
\item \emph{$B$-invariant}\index{equivalence relation!invariant} if for every $a_1, a_2, a'_1, a'_2$ with $a_1 a_2 \equiv_B a'_1 a'_2$ we have $E(a_1, a_2)$ if and only if $E(a'_1, a'_2)$.
\end{itemize}
\end{definition}
We briefly note that the definition of bounded equivalence relation above is not very precise, as we never made precise what small means in Section \ref{sec:monster-models}. So this should really be read as follows. There is a cardinal $\kappa$ such that in every p.c.\ model (containing the parameter set $B$) there are at most $\kappa$ many equivalence classes of $E$.
\begin{definition}
\thlabel{lascar-distance-and-strong-type}
Let $a$ and $a'$ be two tuples of the same length and let $B$ be any parameter set. We say that $a$ and $a'$ have \term{Lascar distance} \emph{at most $n$ (over $B$)}, and write $\d_B(a, a') \leq n$\nomenclature[dBaan]{$\d_B(a, a') \leq n$}{Lascar distance at most $n$}, if there are $a = a_0, a_1, \ldots, a_n = a'$ such that $a_i$ and $a_{i+1}$ are on a $B$-indiscernible sequence for all $0 \leq i < n$.

We say that $a$ and $a'$ have the same \term{Lascar strong type} \emph{(over $B$)}, and write $a \equivls_B a'$\nomenclature[BequivLS]{$\equivls_B$}{Same Lascar strong type over $B$}, if the following equivalent conditions hold:
\begin{enumerate}[label=(\roman*)]
\item $\d_B(a, a') \leq n$ for some $n < \omega$;
\item for each bounded $B$-invariant equivalence relation $E(x, y)$ we have $E(a, b)$.
\end{enumerate}
If $B = \emptyset$ we omit it as a subscript from the notation.
\end{definition}
\begin{remark}
\thlabel{on-indiscernible-sequeunce-is-same-as-starting-one}
We note that the condition that $a$ and $a'$ are on some $B$-indiscernible sequence is equivalent to $a$ and $a'$ starting a $B$-indiscernible sequence. This follows from compactness. We spell the argument out below.

Let $(a_i)_{i \in I}$ be a $B$-indiscernible sequence, such that there are $j,j' \in I$ with $a = a_j$ and $a' = a_{j'}$. For each $n < \omega$ we let $\Sigma_n(x_1, \ldots, x_n)$ be the type $\tp(a_{i_1} \ldots a_{i_n}/B)$, where $i_1 < \ldots < i_n \in I$. Note that the choice of the $i_1, \ldots, i_n$ does not matter, due to $B$-indiscernibility. Define the following type:
\[
\Sigma((x_k)_{k \in \Z}) = \bigcup \{ \Sigma_n(x_{k_1}, \ldots, x_{k_n}) : k_1 < \ldots < k_n \in \Z \}.
\]
By construction and $B$-indiscernibility this type is finitely satisfiable, namely by finite subsequences of $(a_i)_{i \in I}$. Let $(a'_k)_{k \in \Z}$ be a realisation of $\Sigma((x_k)_{k \in \Z})$, then this is a $B$-indiscernible sequence. To conclude, we distinguish two cases.
\begin{itemize}
\item If $j < j'$ then $a a' = a_j a_{j'} \equiv_B a'_0 a'_1$ and so we can let $(a''_k)_{k < \omega}$ be such that $a a' (a''_i)_{i < \omega} \equiv_B a'_0 a'_1 (a'_i)_{i < \omega}$. Then $a''_0 = a$ and $a''_1 = a'$ and so $(a''_i)_{i < \omega}$ is a $B$-indiscernible sequence starting with $a, a'$.
\item If $j' < j$ then  $a a' = a_j a_{j'} \equiv_B a'_0 a'_{-1}$ and so we can let $(a''_k)_{k < \omega}$ be such that $a a' (a''_i)_{i < \omega} \equiv_B a'_0 a'_{-1} (a'_{-i})_{i < \omega}$. Then $a''_0 = a$ and $a''_1 = a'$ and so $(a''_i)_{i < \omega}$ is a $B$-indiscernible sequence starting with $a, a'$ (because a sequence remains indiscernible after inverting its order).
\end{itemize}
\end{remark}
\begin{lemma}
\thlabel{equivalence-lascar-strong-type-conditions}
The conditions in \thref{lascar-distance-and-strong-type} are indeed equivalent.
\end{lemma}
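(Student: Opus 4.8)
The plan is to prove the two implications separately, the moral being that $\equivls_B$ will turn out to be realised by a single $B$-invariant equivalence relation, namely ``having finite Lascar distance over $B$''.

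For (i)$\Rightarrow$(ii): by transitivity of an equivalence relation it suffices to show that whenever $a$ and $a'$ lie on a common $B$-indiscernible sequence and $E(x,y)$ is a bounded $B$-invariant equivalence relation, then $E(a,a')$; chaining this along a witnessing sequence of length $n$ then deduces $E(a,a')$ from $\d_B(a,a')\le n$, with $n=0$ being reflexivity. So suppose $a\neq a'$ both occur on a $B$-indiscernible sequence, which (after possibly reversing it, and using \thref{on-indiscernible-sequeunce-is-same-as-starting-one}) we may take to start with $a,a'$, so that $\tp(a a'/B)$ is the type of an increasing pair. Fix a small cardinal $\kappa$ bounding the number of $E$-classes in every p.c.\ model containing $B$. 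By compactness, stretch this sequence to a $B$-indiscernible sequence $(b_j)_{j<\kappa^+}$ with $b_jb_k\equiv_B a a'$ for all $j<k$, and by downward L\"owenheim--Skolem (\thref{lowenheim-skolem}) place it inside a small p.c.\ model $M'\supseteq B$. If $\neg E(a,a')$, then $B$-invariance of $E$ gives $\neg E(b_j,b_k)$ for all $j<k$, so $M'$ contains $\kappa^+$ pairwise $E$-inequivalent elements, contradicting the choice of $\kappa$. Hence $E(a,a')$.

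For (ii)$\Rightarrow$(i) I would argue by contraposition. Define $E_\infty(x,y)$ to hold iff $\d_B(x,y)<\omega$. This is an equivalence relation (reflexive by taking $n=0$; symmetric because lying on a common $B$-indiscernible sequence is symmetric in $x,y$; transitive by concatenating connecting chains) and it is $B$-invariant, since ``$x$ and $y$ lie on a common $B$-indiscernible sequence'' is preserved by $\Aut(\MM/B)$. If $\d_B(a,a')$ is not finite then $\neg E_\infty(a,a')$, so granting that $E_\infty$ is bounded, condition (ii) fails for $E=E_\infty$, as desired.

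The crux, and the step I expect to be the main obstacle, is the boundedness of $E_\infty$. For this I would fix a small p.c.\ model $M\supseteq B$ via downward L\"owenheim--Skolem, and prove the key lemma: if $a\equiv_M a'$ then $\d_B(a,a')<\omega$. Granting this, the assignment $\tp(c/M)\mapsto[c]_{E_\infty}$ is a well-defined surjection from $\S(M)$ onto the set of $E_\infty$-classes, so there are at most $|\S(M)|\le 2^{|M|+|T|}$ of them, which is small. The lemma is exactly where the hypothesis that $M$ is a p.c.\ model is used: one takes a long sequence of $\Aut(\MM/M)$-conjugates of $a$, extracts an $M$-indiscernible sequence by \thref{base-indiscernible-sequence-on-long-sequence} (all of whose terms still realise $\tp(a/M)$), and then — moving by automorphisms over $M$ and invoking homogeneity of $\MM$ — arranges that $a$ and $a'$ both sit on $M$-indiscernible sequences that share a common intermediate term, so that $\d_M(a,a')\le 2$ and hence $\d_B(a,a')\le 2$. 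Getting two such sequences to genuinely overlap, rather than merely to carry matching types, is the delicate point, and is what makes this the hard part of the argument.
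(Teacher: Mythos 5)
Your direction (i)$\Rightarrow$(ii) is essentially the paper's argument: stretch the indiscernible sequence beyond the bound on the number of classes and use $B$-invariance plus indiscernibility (the paper phrases it as a pigeonhole giving $E(a_i,a_j)$ for some $i<j$ and then pulling back to $E(a_0,a_1)$; your contrapositive with $\kappa^+$ pairwise inequivalent elements is the same count). That half is fine.

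The gap is in (ii)$\Rightarrow$(i), precisely at the step you flag as the crux. Your key lemma --- fix \emph{any} small p.c.\ model $M\supseteq B$ and show $a\equiv_M a'$ implies $\d_B(a,a')<\omega$ --- is not available here and is in fact false at this level of generality. It is exactly \thref{same-type-over-saturated-model-lascar-distance-2}, which in the paper needs thickness together with positive $\lambda_T$-saturation of $M$ (or semi-Hausdorffness), none of which are hypotheses of \thref{equivalence-lascar-strong-type-conditions}, and the saturation cannot be dropped: in \thref{thick-not-semi-hausdorff} the map swapping $a_\omega$ and $b_\omega$ is an automorphism fixing the p.c.\ model $M=\{a_n,b_n:n<\omega\}$ pointwise, so $a_\omega\equiv_M b_\omega$, yet the only indiscernible sequences in that theory are constant, so $a_\omega$ and $b_\omega$ lie on no common $B$-indiscernible sequence for any $B$ and their Lascar distance is infinite. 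So the assignment $\tp(c/M)\mapsto[c]_{E_\infty}$ need not be well defined, and your proof of boundedness collapses (your sketch of the overlap argument implicitly needs a global $M$-invariant extension or the type-definability of ``starting an $M$-indiscernible sequence'', i.e.\ thickness --- cf.\ \thref{non-invariant-type}). The paper's proof of boundedness avoids types over models entirely: if $E_\infty$ had unboundedly many classes, pick $\lambda_{|T|+|Ba|}$ pairwise non-connected tuples $(a_i)_{i<\lambda}$, base a $B$-indiscernible sequence $(a'_i)_{i<\omega}$ on them via \thref{base-indiscernible-sequence-on-long-sequence}, find $i<j$ with $a'_0a'_1\equiv_B a_ia_j$, and apply an automorphism over $B$ to put $a_i,a_j$ on a common $B$-indiscernible sequence --- contradicting their choice. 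Replacing your boundedness argument by this Erd\H{o}s--Rado extraction repairs the proof.
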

\begin{proof}
\underline{(i) $\Rightarrow$ (ii)} It suffices to prove that for any $B$-indiscernible sequence $(a_i)_{i < \omega}$ we have that $E(a_0, a_1)$ for any $B$-invariant bounded equivalence relation. Let $\kappa$ be the number of equivalence classes of $E$. Using compactness we elongate the sequence to $(a_i)_{i < \kappa^+}$. Then there must be $i < j < \kappa^+$ such that $E(a_i, a_j)$. Hence, by $B$-indiscernibility and $B$-invariance we get $E(a_0, a_1)$.

\underline{(ii) $\Rightarrow$ (i)} Clearly, the relation described in (i) is $B$-invariant, so we need to show that it is bounded. Suppose not, then for $\lambda = \lambda_{|T| + |Ba|}$ there are $(a_i)_{i < \lambda}$ such that $a_i$ and $a_j$ cannot be connected by $B$-indiscernible sequences as in (i), for all $i < j < \lambda$. Base a $B$-indiscernible sequence $(a'_i)_{i < \omega}$ on $(a_i)_{i < \lambda}$. Let $i < j < \lambda$ be such that $a'_0 a'_1 \equiv_B a_i a_j$. After applying an automorphism over $B$ we find a $B$-indiscernible sequence with $a_i$ and $a_j$ on it, a contradiction.
\end{proof}
\begin{proposition}
\thlabel{same-type-over-saturated-model-lascar-distance-2}
Assume thickness. Let $M$ be a positively $\lambda_T$-saturated p.c.\ model. Then $a' \equiv_M a$ implies $\d_M(a, a') \leq 2$. If we assume semi-Hausdorffness, we can drop the assumption of positive $\lambda_T$-saturatedness.
\end{proposition}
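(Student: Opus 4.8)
The plan is to prove $\d_M(a,a') \le 2$ by producing a single $c$ with $\d_M(a,c)\le 1$ and $\d_M(c,a')\le 1$; concatenating a chain of length $1$ from $a$ to $c$ with one from $c$ to $a'$ then gives a chain of length $2$ from $a$ to $a'$. So I want $c$ such that both $(a,c)$ and $(a',c)$ lie on $M$-indiscernible sequences (equivalently, by \thref{on-indiscernible-sequeunce-is-same-as-starting-one}, each pair starts an $M$-indiscernible sequence). Such a $c$ will come from a coheir. Write $p = \tp(a/M) = \tp(a'/M)$; first note that $p$ is finitely satisfiable in $M$: if $\phi(x,\bar m) \in p$ with $\bar m \in M$ then $\models \exists x\,\phi(x,\bar m)$, and since $M$ is a p.c.\ model the inclusion $M \subseteq \MM$ is an immersion, so $M \models \exists x\,\phi(x,\bar m)$ and a witness lies in $M$.

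Next I would extend $p$ to a global complete type $q$ that is finitely satisfiable in $M$. As in the proof of \thref{equivalence-lascar-strong-type-conditions}, any such $q$ is $M$-invariant: if $\phi(x,b) \in q$ and $b \equiv_M b'$ but $\phi(x,b') \notin q$, then by completeness of $q$ there is $\chi \in q$ with $\models \neg\exists x(\chi(x)\wedge\phi(x,b'))$; as $\phi(x,b)\wedge\chi(x) \in q$ it is realised by some $m \in M$, so $\phi(m,y) \in \tp(b/M) = \tp(b'/M)$ and hence $\models \phi(m,b')$, contradicting the choice of $\chi$. Granting $q$, take $c \models q|_{Maa'}$. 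Then $c \models q|_{Ma}$, so the Morley sequence $(a,c,c_2,c_3,\dots)$ of $q$ over $M$ (with $c_n \models q|_{Macc_2\cdots c_{n-1}}$) is defined; being a Morley sequence of an $M$-invariant type whose first term realises $q|_M = p$, it is $M$-indiscernible, so $\d_M(a,c)\le 1$. Symmetrically $c \models q|_{Ma'}$ produces an $M$-indiscernible sequence $(a',c,c_2',\dots)$, so $\d_M(a',c)\le 1$, and therefore $\d_M(a,a')\le 2$.

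The real work is producing $q$ (equivalently, the witness $c$), and this is exactly where the hypotheses enter. In full first-order logic a maximal partial type finitely satisfiable in $M$ is automatically complete, but in positive logic ``unsatisfiable in $M$'' is weaker than ``inconsistent'' and there is no $\neg\phi$ to adjoin, so this step can fail. Positive $\lambda_T$-saturatedness of $M$ is what restores the coheir construction: with $M$ saturated enough one carries out the usual step-by-step extension of $p$, keeping it finitely satisfiable in $M$ while forcing completeness. Under the stronger hypothesis of semi-Hausdorffness one bypasses saturation altogether: both ``$a \equiv_M a'$'' and ``being an $M$-indiscernible sequence'' are type-definable over $M$ (via the partial type $\Omega$ of \thref{hausdorff-thick}, bootstrapped with an enumeration of $M$), hence so is ``$x$ lies on an $M$-indiscernible sequence through $a$'' (quantifying out the rest of the sequence via \thref{combine-partial-types}), and one then realises $c$ directly by a compactness argument on the partial type over $Maa'$ asserting that $x$ lies on an $M$-indiscernible sequence through $a$ and on one through $a'$. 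I expect this construction of the coheir-like $c$ — not the concluding chain argument — to be the main obstacle; thickness is used throughout to keep ``indiscernible sequence'' type-definable so that compactness applies.
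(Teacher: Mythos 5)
Your skeleton matches the paper's: find a single $c$ such that $(a,c)$ and $(a',c)$ each start an $M$-indiscernible sequence, whence $\d_M(a,a')\leq 2$; and, granted a \emph{complete} global type $q\supseteq\tp(a/M)$ finitely satisfiable in $M$, your invariance argument and the Morley-sequence step are correct (this is essentially \thref{morley-sequence-in-invariant-type-is-indiscernible} plus \thref{type-extends-to-global-invariant-implies-lstp}). The gap is the existence of $q$, which you assert rather than prove. In positive logic a set of formulas that is maximal among those finitely satisfiable in $M$ need not be a maximal consistent type: to witness $\phi(x,b)\notin q$ you must adjoin a positive \emph{obstruction} of $\phi$, not $\neg\phi$, and when $\Sigma\cup\{\phi(x,b)\}$ stops being finitely satisfiable in $M$ the witnesses in $M$ may be scattered over the up to $|T|$-many obstructions of $\phi$, with no single obstruction remaining finitely satisfiable together with $\Sigma$; the ultrafilter-limit construction fails for the same reason (only finite splittings are handled). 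Crucially, positive $\lambda_T$-saturation does not repair this: saturation only realises sets of formulas \emph{with parameters in $M$}, whereas the formulas to be decided ($\phi(x,b)$, its obstructions, and the finite fragments of $\Sigma$ obstructing them) have parameters outside $M$. That global $M$-invariant extensions are genuinely delicate is shown by \thref{non-invariant-type}; their existence over $\lambda_T$-saturated models in thick theories can be established, but by an argument of comparable substance to the proposition itself (via Lascar-invariance and thickness), not by the routine coheir completion you invoke. So the main obstacle you correctly identified is not overcome.

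For contrast, the paper never builds a global type. It works formula by formula in the partial type $\Sigma(x_0,x_1)$ expressing (by thickness) that $x_0,x_1$ start an $M$-indiscernible sequence: given $\phi\in\Sigma$ with finite parameter part $m\subseteq M$, it uses $\lambda_T$-saturation to build \emph{inside $M$} a sequence $(a_i)_{i<\lambda_T}$ with $a_i(a_j)_{j<i}\equiv_m a(a_j)_{j<i}$, bases an $m$-indiscernible sequence on it, and transfers the witness back into $M$; since that witness lies in $M$ and $a\equiv_M a'$, it realises $\phi(x_0,a)\wedge\phi(x_0,a')$, giving finite satisfiability of $\Sigma(x_0,a)\cup\Sigma(x_0,a')$ and hence the desired $c$ by compactness. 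Your semi-Hausdorff remark is also incomplete as stated: type-definability of ``$x$ lies on an $M$-indiscernible sequence through $a$'' already follows from thickness, but compactness still requires proving that the relevant partial type is finitely satisfiable, which is exactly the content of the proposition. The paper handles that case by first constructing a global $M$-invariant extension from the partial type $\Omega$ (\thref{semi-hausdorff-invariant-types}) and then applying \thref{type-extends-to-global-invariant-implies-lstp} --- i.e.\ precisely your intended route, but with the existence step actually carried out, and available there only because of semi-Hausdorffness.
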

\begin{proof}
Let $a \equiv_M a'$ with $M$ a $\lambda_T$-saturated p.c.\ model. Using thickness, we let $\Sigma(x_0, x_1)$ be the partial type expressing that there are $(x_i)_{2 \leq i < \omega}$ such that $(x_i)_{i < \omega}$ is $M$-indiscernible. We show that $\Sigma(x_0, a) \cup \Sigma(x_0, a')$ is finitely satisfiable, which is enough. Let $\phi(x_0, x_1) \in \Sigma(x_0, x_1)$ and let $m$ denote the finite part of $M$ that appears in $\phi(x_0, x_1)$. As $M$ is positively $\lambda_T$-saturated, we can inductively find $(a_i)_{i < \lambda_T}$ in $M$ such that $a_i (a_j)_{j < i} \equiv_m a (a_j)_{j < i}$. Base an $m$-indiscernible sequence $(a'_i)_{i < \omega}$ on $(a_i)_{i < \lambda_T}$. Then $\models \phi(a'_0, a'_1)$. There are $i < j < \lambda_T$ with $a_i a_j \equiv_m a'_0 a'_1$, hence $\models \phi(a_i, a_j)$. By construction then $\models \phi(a_i, a)$. As $a' \equiv_M a$ and $a_i \in M$ we also have $\models \phi(a_i, a')$. So we have a realisation of $\phi(x_0, a) \wedge \phi(x_0, a')$, and we conclude that $\Sigma(x_0, a) \cup \Sigma(x_0, a')$ is finitely realisable, as required.

For the claim about semi-Hausdorff theories we refer to \thref{semi-hausdorff-same-type-over-pc-model-lascar-distance-2}.
\end{proof}
\begin{corollary}
\thlabel{same-lstp-iff-same-types-over-sequence-of-models}
Assume thickness. Then we have that $a \equivls_B a'$ if and only if there are positively $\lambda_T$-saturated p.c.\ models $M_1, \ldots, M_n$ and $a = a_0, a_1, \ldots, a_n = a'$ such that $a_i \equiv_{M_{i+1}} a_{i+1}$ for all $0 \leq i < n$.

If we assume semi-Hausdorffness, we can drop the requirement that the p.c.\ models are positively $\lambda_T$-saturated.
\end{corollary}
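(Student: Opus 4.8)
My plan is to prove the two directions separately, in each case reducing everything to \thref{same-type-over-saturated-model-lascar-distance-2} together with \thref{extend-base-set-of-indiscernible-sequence}. Throughout the argument the p.c.\ models involved are taken to contain $B$ (as the statement intends), with the additional requirement of positive $\lambda_T$-saturation in the thick case and no such requirement in the semi-Hausdorff case; accordingly \thref{same-type-over-saturated-model-lascar-distance-2} will be invoked in its thick form in the first case and in its semi-Hausdorff form in the second, so that the ``we may drop positive $\lambda_T$-saturatedness under semi-Hausdorffness'' clause comes out uniformly.

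For the ``if'' direction, suppose we are handed p.c.\ models $M_1, \ldots, M_n$ (each containing $B$, of the required kind) and $a = a_0, \ldots, a_n = a'$ with $a_i \equiv_{M_{i+1}} a_{i+1}$ for all $i$. First I would apply \thref{same-type-over-saturated-model-lascar-distance-2} to each link to get $\d_{M_{i+1}}(a_i, a_{i+1}) \leq 2$. Since $B \subseteq M_{i+1}$, any $M_{i+1}$-indiscernible sequence is in particular $B$-indiscernible, so already $\d_B(a_i, a_{i+1}) \leq 2$. Concatenating the witnessing chains of indiscernible sequences then yields $\d_B(a, a') \leq 2n < \omega$, which is condition (i) of \thref{lascar-distance-and-strong-type}, hence $a \equivls_B a'$ by \thref{equivalence-lascar-strong-type-conditions}.

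For the ``only if'' direction, assume $a \equivls_B a'$. By \thref{lascar-distance-and-strong-type}(i) there are $a = c_0, \ldots, c_n = a'$ such that for each $i < n$ both $c_i$ and $c_{i+1}$ lie on some $B$-indiscernible sequence $\sigma_i$. I would fix once and for all a p.c.\ model $M_0 \supseteq B$, which exists by \thref{lowenheim-skolem} and, in the thick case, can be taken positively $\lambda_T$-saturated via \thref{building-saturated-homogeneous-model}. For each $i$, apply \thref{extend-base-set-of-indiscernible-sequence} to $\sigma_i$ with the parameter set $M_0$, producing $M_{i+1} \equiv_B M_0$ such that $\sigma_i$ is $BM_{i+1}$-indiscernible. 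Since $B$ is fixed pointwise by the automorphism of $\MM$ witnessing $M_{i+1} \equiv_B M_0$ and $B \subseteq M_0$, we get $B \subseteq M_{i+1}$, and $M_{i+1}$, being an automorphic image of $M_0$, is again a p.c.\ model of the required kind; moreover $\sigma_i$ is then $M_{i+1}$-indiscernible because $B \subseteq M_{i+1}$. As $c_i$ and $c_{i+1}$ are tuples of matching length lying on an $M_{i+1}$-indiscernible sequence, they have the same type over $M_{i+1}$, i.e.\ $c_i \equiv_{M_{i+1}} c_{i+1}$; taking $a_i = c_i$ then completes the chain.

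All the real content is thus packaged into \thref{same-type-over-saturated-model-lascar-distance-2} and \thref{extend-base-set-of-indiscernible-sequence}, and the write-up will be short. The only places where I expect to have to be careful are bookkeeping: verifying that the models $M_{i+1}$ produced in the ``only if'' direction genuinely contain $B$ (this is exactly why one starts from $M_0 \supseteq B$ and invokes an automorphism fixing $B$ rather than applying \thref{extend-base-set-of-indiscernible-sequence} to a bare set), and being consistent about which version of \thref{same-type-over-saturated-model-lascar-distance-2} is used so the semi-Hausdorff variant of the corollary is obtained with the same proof.
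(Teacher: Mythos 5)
Your proof is correct and follows essentially the same route as the paper: the right-to-left direction is exactly the concatenation of the Lascar-distance-$\leq 2$ bounds from \thref{same-type-over-saturated-model-lascar-distance-2}, and the left-to-right direction fixes one (saturated, in the thick case) p.c.\ model containing $B$ and moves it onto each connecting indiscernible sequence via \thref{extend-base-set-of-indiscernible-sequence}, just as in the paper. Your extra bookkeeping (that $B \subseteq M_{i+1}$ and that automorphic images retain saturation) only makes explicit what the paper leaves implicit.
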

\begin{proof}
The right to left direction follows immediately from \thref{same-type-over-saturated-model-lascar-distance-2}. For the other direction, we let $a = a_0, a_1, \ldots, a_n = a'$ be such that $a_i$ and $a_{i+1}$ are on a $B$-indiscernible sequence for all $0 \leq i < n$. Let $M$ be a positively $\lambda_T$-saturated p.c.\ model containing $B$ (\thref{building-saturated-model}). Then by \thref{extend-base-set-of-indiscernible-sequence} there is $M_{i+1}$ for each $0 \leq i < n$ with $M_{i+1} \equiv_B M$ such that the $B$-indiscernible sequence connecting $a_i$ and $a_{i+1}$ is $M_{i+1}$-indiscernible, hence $a_i \equiv_{M_{i+1}} a_{i+1}$.
\end{proof}
\begin{definition}
\thlabel{lascar-strong-automorphisms}
Assume thickness. Let $A$ be a parameter set. We define $\Aut_f(\MM/A)$\nomenclature[Autf]{$\Aut_f(\MM/A)$}{Group of Lascar strong automorphisms over $A$}, the \emph{group of Lascar strong automorphisms over $A$}\index{Lascar strong automorphism} as the subgroup of $\Aut(\MM/A)$ generated by
\[
\bigcup \{ \Aut(\MM / M) : M \text{ is a positively $\lambda_T$-saturated p.c.\ model containing } A \}.
\]
\end{definition}
\begin{corollary}
\thlabel{same-lstp-iff-lascar-strong-automorphism}
Assume thickness. Then we have that $a \equivls_B a'$ if and only if $f(a) = b$ for some $f \in \Aut_f(\MM/B)$.
\end{corollary}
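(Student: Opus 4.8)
The plan is to deduce this from the characterisation of Lascar strong types in \thref{same-lstp-iff-same-types-over-sequence-of-models} together with the homogeneity of the monster model. The key observation is that both sides of the claimed equivalence unfold into the same combinatorial data: a finite chain $a = a_0, a_1, \ldots, a_n = a'$ together with positively $\lambda_T$-saturated p.c.\ models $M_1, \ldots, M_n$ containing $B$ such that $a_{i-1} \equiv_{M_i} a_i$ for each $1 \leq i \leq n$. So most of the work has already been done, and what remains is translating between chains and automorphisms.

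For the forward direction I would start from $a \equivls_B a'$ and apply \thref{same-lstp-iff-same-types-over-sequence-of-models} to obtain such a chain and such models; one checks that each $M_i$ contains $B$ (in the proof of that corollary each $M_i$ satisfies $M_i \equiv_B M$ for some p.c.\ model $M \supseteq B$, so $B \subseteq M_i$). By homogeneity of $\MM$ (see \thref{monster-model} and \thref{monster-notation}), the relation $a_{i-1} \equiv_{M_i} a_i$ yields an automorphism $g_i \in \Aut(\MM/M_i)$ with $g_i(a_{i-1}) = a_i$. Since $M_i$ is a positively $\lambda_T$-saturated p.c.\ model containing $B$, the subgroup $\Aut(\MM/M_i)$ is one of the generators of $\Aut_f(\MM/B)$ from \thref{lascar-strong-automorphisms}, so the composite $f = g_n \circ \cdots \circ g_1$ lies in $\Aut_f(\MM/B)$ and satisfies $f(a) = a'$.

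For the converse, suppose $f \in \Aut_f(\MM/B)$ with $f(a) = a'$. Writing $f$ as a word in the generators and using that each generating set $\Aut(\MM/M)$ is itself a group (so inverses of generators are again generators of the same kind), I can write $f = h_n \circ \cdots \circ h_1$ with $h_i \in \Aut(\MM/M_i)$ for positively $\lambda_T$-saturated p.c.\ models $M_i \supseteq B$. Setting $a_0 = a$ and $a_i = h_i(a_{i-1})$ gives $a_n = f(a) = a'$, and $a_{i-1} \equiv_{M_i} a_i$ because $h_i$ fixes $M_i$ pointwise. The right-to-left direction of \thref{same-lstp-iff-same-types-over-sequence-of-models} then gives $a \equivls_B a'$. (If the word is empty then $f = \mathrm{id}$ and $a' = a$, so there is nothing to prove.)

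I do not anticipate a serious obstacle: the substantive content has already been isolated in \thref{same-lstp-iff-same-types-over-sequence-of-models}, and the rest is bookkeeping with homogeneity and with the definition of $\Aut_f(\MM/B)$. The only points requiring a little care are confirming that the models produced by \thref{same-lstp-iff-same-types-over-sequence-of-models} genuinely contain $B$ (so that they are admissible generators of $\Aut_f(\MM/B)$), and keeping the composition order straight when assembling the automorphism from the chain.
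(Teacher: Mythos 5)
Your proof is correct and is exactly the argument the paper intends (the corollary is stated without proof, being immediate from \thref{same-lstp-iff-same-types-over-sequence-of-models}, homogeneity, and the definition of $\Aut_f(\MM/B)$): you rightly verify that the models produced there contain $B$, and you handle the converse correctly by writing $f$ as a word in generators closed under inverses. Note only that the statement's ``$f(a)=b$'' should read ``$f(a)=a'$'', which you have implicitly corrected.
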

\begin{proposition}
\thlabel{lascar-distance-characterisation-of-thickness}
The following are equivalent for a theory $T$:
\begin{enumerate}[label=(\roman*)]
\item $T$ is thick,
\item the property $\d(x, y) \leq n$ is type-definable for all $n \geq 1$,
\item the property $\d(x, y) \leq n$ is type-definable for some $n \geq 1$,
\item the property $\d_B(x, y) \leq n$ is type-definable (over $B$) for all sets of parameters $B$ and all $n \geq 1$,
\item the property $\d_B(x, y) \leq n$ is type-definable (over $B$) for all sets of parameters $B$ some $n \geq 1$.
\end{enumerate}
\end{proposition}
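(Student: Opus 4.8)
The plan is to prove the cycle (iii) $\Rightarrow$ (i) $\Rightarrow$ (iv) $\Rightarrow$ (ii) $\Rightarrow$ (iii) together with (iv) $\Rightarrow$ (v) $\Rightarrow$ (iii). The implications (iv) $\Rightarrow$ (ii), (ii) $\Rightarrow$ (iii), (iv) $\Rightarrow$ (v) and (v) $\Rightarrow$ (iii) are immediate, since each only weakens a statement, by specialising $B$ to $\emptyset$ or by passing from ``all $n$'' to ``some $n$''. So the real work is in (i) $\Rightarrow$ (iv) and (iii) $\Rightarrow$ (i). Throughout, as in the proof of \thref{hausdorff-implies-thick}, I read all type-definability statements uniformly in the length of the tuples, so the partial types in play may be applied to tuples of any fixed length; I will use this silently.

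For (i) $\Rightarrow$ (iv) I would fix a parameter set $B$ with an enumeration $b$, and a partial type $\Theta((x_i)_{i<\omega})$ type-defining ($\emptyset$-)indiscernibility. The first step is to note that $(a_i)_{i<\omega}$ is $B$-indiscernible if and only if the padded sequence $((a_i,b))_{i<\omega}$ is indiscernible: an automorphism realising $(a_{i_1},b)\cdots(a_{i_n},b) \equiv (a_{j_1},b)\cdots(a_{j_n},b)$ must fix $b$, hence lies in $\Aut(\MM/B)$ and realises $a_{i_1}\cdots a_{i_n} \equiv_B a_{j_1}\cdots a_{j_n}$, and conversely. So $\Theta_B((x_i)_{i<\omega}) := \Theta(((x_i,b))_{i<\omega})$, a partial type over $B$, type-defines $B$-indiscernibility. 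By \thref{on-indiscernible-sequeunce-is-same-as-starting-one}, $\d_B(a,a') \leq 1$ says exactly that $a,a'$ start a $B$-indiscernible sequence, so \thref{combine-partial-types}(iii) makes $\exists (z_i)_{2 \leq i < \omega}\,\Theta_B((x,y,z_2,z_3,\dots))$ into a partial type over $B$ type-defining $\d_B(x,y) \leq 1$. Finally $\d_B(a,a') \leq n$ is equivalent to $\exists u_1 \cdots u_{n-1}\,\bigl(\d_B(a,u_1) \leq 1 \wedge \cdots \wedge \d_B(u_{n-1},a') \leq 1\bigr)$, which is type-definable over $B$ by \thref{combine-partial-types} and \thref{apply-connectives-to-sets-of-formulas}. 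This gives (iv) for all $B$ and $n$, and in particular (ii).

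For (iii) $\Rightarrow$ (i) I would start from a partial type $\Xi(x,y)$ type-defining $\d(x,y) \leq n$. Composing it with itself, $\exists z\,(\Xi(x,z) \wedge \Xi(z,y))$ type-defines $\d(x,y) \leq 2n$; write $m = 2n \geq 2$ and let $\Xi_m$ be this partial type. The heart of the matter is the claim that $(b_i)_{i<\omega}$ is indiscernible if and only if $\d\bigl((b_{i_1},\dots,b_{i_k}),(b_{j_1},\dots,b_{j_k})\bigr) \leq m$ for all $k$ and all $i_1 < \dots < i_k$ and $j_1 < \dots < j_k$ in $\omega$. One direction is easy: $\d(\bar c,\bar c') \leq m$ forces $\bar c \equiv \bar c'$ (lying on a common indiscernible sequence already forces equal types, and $\equiv$ is transitive along a Lascar chain), so the displayed condition just says that any two increasing $k$-subtuples of equal length have the same type, i.e.\ that $(b_i)$ is indiscernible. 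For the other direction, given $(b_i)$ indiscernible and increasing subtuples $\bar b_I = (b_{i_1},\dots,b_{i_k})$ and $\bar b_J = (b_{j_1},\dots,b_{j_k})$, I would pick $N$ beyond all $i_l, j_l$, set $c = (b_{N+1},\dots,b_{N+k})$, and consider the sequence $d_0 = \bar b_I$, $d_p = (b_{N+(p-1)k+1},\dots,b_{N+pk})$ for $p \geq 1$: every increasing selection of its terms, concatenated, is a strictly increasing subtuple of $(b_i)$, and all such of a fixed length have the same type, so $(d_p)_{p<\omega}$ is indiscernible. As $c = d_1$ lies on it, $\d(\bar b_I, c) \leq 1$, and symmetrically $\d(\bar b_J, c) \leq 1$, whence $\d(\bar b_I, \bar b_J) \leq 2 \leq m$. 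Granting the claim, $\Theta((x_i)_{i<\omega}) := \bigcup\bigl\{\Xi_m(x_{i_1}\cdots x_{i_k},\, x_{j_1}\cdots x_{j_k}) : k<\omega,\ i_1<\dots<i_k<\omega,\ j_1<\dots<j_k<\omega\bigr\}$ type-defines indiscernibility by \thref{combine-partial-types}(ii), so $T$ is thick.

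I expect the main obstacle to be the non-trivial half of that claim: exhibiting, inside an arbitrary indiscernible sequence, a concrete indiscernible sequence of $k$-tuples connecting a given increasing $k$-subtuple to one far to the right by a single Lascar step, and checking its indiscernibility by reducing everything to order-types of strictly increasing subtuples of $(b_i)$. Beyond that the proof is bookkeeping, the only delicate point being the convention that ``type-definable'' is meant uniformly in tuple length, which is what licenses applying $\Xi$ at every length $k$ in the union defining $\Theta$.
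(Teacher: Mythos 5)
Your proof is correct, and its core devices are the paper's: from thickness you get type-definability of the distance by existentially quantifying the indiscernibility type and composing $n$ times, and conversely you build $\Theta$ as a union of distance conditions on subtuples, with the key trick in both arguments being comparison with a block of the sequence lying beyond all the indices in play. You reorganise two things. First, you prove (i) $\Rightarrow$ (iv) directly via the padding equivalence ``$(a_i)_{i<\omega}$ is $B$-indiscernible iff $((a_i,b))_{i<\omega}$ is indiscernible'', whereas the paper proves (i) $\Rightarrow$ (ii) and transfers to (iv) by asserting that $\d_B(x,y)\leq n$ is the same as $\d(xb,yb)\leq n$; your route only uses the clean padding fact about indiscernibility (which is exactly how the paper itself type-defines $M$-indiscernibility in \thref{same-type-over-saturated-model-lascar-distance-2}), so it sidesteps that assertion entirely. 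Second, in (iii) $\Rightarrow$ (i) your $\Theta$ imposes distance $\leq 2n$ on \emph{all} pairs of increasing equal-length subtuples, which you can afford because any two such subtuples of an indiscernible sequence are at distance $\leq 2$ via a common far block; the paper keeps the bound $n$ by restricting $\Theta$ to pairs with $i_1<\dots<i_k<j_1<\dots<j_k$ (distance $\leq 1$ there, by exactly your block construction) and instead performs the far-block comparison when deducing indiscernibility from $\Theta$, using that distance $\leq n$ already forces equal types. Both versions are sound; yours trades a doubled bound for a more symmetric $\Theta$ and an immediate converse, and your explicit uniformity-in-tuple-length reading of (iii) matches how the paper's own proof applies the distance type to $k$-tuples for every $k$.
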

\begin{proof}
The equivalences (ii) $\Leftrightarrow$ (iv) and (iii) $\Leftrightarrow$ (v) are immediate, because $\d_B(x, y) \leq n$ is the same as $\d(xb, yb) \leq n$, where $b$ is a tuple that enumerates $B$. The implication (ii) $\Rightarrow$ (iii) is trivial. We prove (i) $\Rightarrow$ (ii) and (iii) $\Rightarrow$ (i).

\underline{(i) $\Rightarrow$ (ii)} Let $\Theta((x_i)_{i < \omega})$ be the partial type that expresses that $(x_i)_{i < \omega}$ is an indiscernible sequence. Then $\d(x_0, x_1) \leq 1$ is expressed by
\[
\exists (x_i)_{2 \leq i < \omega} \Theta((x_i)_{i < \omega}),
\]
which can be expressed by a partial type. Then $\d(x, y) \leq n$ is expressed by
\[
\exists z_0 \ldots z_n \left( x = z_0 \wedge y = z_n \wedge \bigwedge_{i < n} \d(z_i, z_{i+1}) \leq 1 \right),
\]
which can again be expressed by a partial type.

\underline{(iii) $\Rightarrow$ (i)} By assumption we can define a partial type $\Theta((x_i)_{i < \omega})$ as follows:
\[
\bigcup \{ \d(x_{i_1} \ldots x_{i_k}, x_{j_1} \ldots x_{j_k}) \leq n : i_1 < \ldots < i_k < j_1 < \ldots < j_k < \omega \}.
\]
We claim that $\Theta((x_i)_{i < \omega})$ expresses that $(x_i)_{i < \omega}$ is an indiscernible sequence.

Let $(a_i)_{i < \omega}$ be such that $\models \Theta((a_i)_{i < \omega})$, and let $i_1 < \ldots < i_k < \omega$ and $j_1 < \ldots < j_k < \omega$. Define $h_1 = \max(i_k, j_k) + 1$ and $h_i = h_1 + i$ for $1 < i \leq k$. The point is that then $i_1 < \ldots < i_k < h_1 < \ldots < h_k$ and $j_1 < \ldots < j_k < h_1 < \ldots < h_k$. So by definition of $\Theta((x_i)_{i < \omega})$ we have
\[
\d(a_{i_1} \ldots a_{i_k}, a_{h_1} \ldots a_{h_k}) \leq n,
\]
which implies $a_{i_1} \ldots a_{i_k} \equiv a_{h_1} \ldots a_{h_k}$. Similarly, we find $a_{j_1} \ldots a_{j_k} \equiv a_{h_1} \ldots a_{h_k}$. We thus have $a_{i_1} \ldots a_{i_k} \equiv a_{j_1} \ldots a_{j_k}$, and we conclude that $(a_i)_{i < \omega}$ is indiscernible.

Conversely, suppose that $(a_i)_{i < \omega}$ is indiscernible. Let $i_1 < \ldots < i_k < j_1 < \ldots < j_k < \omega$. Define a sequence $(b_m)_{m < \omega}$ by $b_0 = (a_{i_1}, \ldots, a_{i_k})$, $b_1 = (a_{j_1}, \ldots, a_{j_k})$ and for $m \geq 2$ we set $b_m = (a_{j_k + mk}, \ldots, a_{j_k + mk + k-1})$. Then $(b_m)_{m < \omega}$ is sequence of $k$-tuples in $(a_i)_{i < \omega}$ that respect the original order. In particular, $(b_m)_{m < \omega}$ is indiscernible. So we have $\d(b_0, b_1) \leq 1$, and in particular $\d(b_0, b_1) \leq n$, which is just saying that
\[
\d(a_{i_1} \ldots a_{i_k}, a_{j_1} \ldots a_{j_k}) \leq n.
\]
We thus conclude $\models \Theta((a_i)_{i < \omega})$, as required.
\end{proof}

\section{Bibliographic remarks}
\label{sec:bibliographic-remarks-saturated-homogeneous}
The construction of positively saturated homogeneous models in \thref{building-saturated-homogeneous-model} is standard, we follow \cite[Section 6.1]{tent_course_2012}.

For Bernays-G\"odel set theory, as mentioned at in Section \ref{sec:monster-models}, see for example \cite[page 70]{jech_set_2003}. A reference for the Erd\H{o}s-Rado theorem (\thref{erdos-rado}) can be found in the same book \cite[Theorem 9.6]{jech_set_2003}.

Basing indiscernible sequences on very long sequences, such as in \thref{base-indiscernible-sequence-on-long-sequence}, is considered standard. A proof of this lemma in positive logic appears for example in \cite[Lemma 1.2]{ben-yaacov_simplicity_2003}, but the proof is really not different from the full first-order setting (e.g., \cite[Lemma 7.2.12]{tent_course_2012}).

The definition of Lascar strong types (\thref{lascar-distance-and-strong-type}) in positive logic is taken from \cite[Definition 1.39]{ben-yaacov_simplicity_2003}. The additional equivalent characterisation for thick theories in \thref{same-lstp-iff-same-types-over-sequence-of-models} is due to \cite{dobrowolski_kim-independence_2022}.

\chapter{Simple theories}
\label{ch:simple-theories}

In this chapter we develop dividing independence for simple theories. Much of this is similar to the treatment in full first-order logic. The main extra difficulty is proving what we call ``full existence'' for dividing independence, see Section \ref{sec:thickness-implies-full-existence}. We finish this chapter with a version of the Kim-Pillay theorem for positive logic (\thref{kim-pillay}), which summarises the results concerning dividing independence in simple theories.

Those familiar with the usual treatment in full first-order logic may wonder why there is no mention of forking. This is because the definition of forking does not generalise so well to positive logic, and there is no actual need to work with forking instead of dividing in simple theories (see also \thref{forking-vs-dividing}).

\section{Dividing}
\label{sec:dividing}
\begin{definition}
\thlabel{dividing}
Let $\Sigma(x, b)$ be a set of formulas over $Cb$. We say that $\Sigma(x, b)$ \emph{divides over $C$}\index{dividing} if there is a $C$-indiscernible sequence $(b_i)_{i < \omega}$ with $b_i \equiv_C b$ for all $i < \omega$ such that $\bigcup_{i < \omega} \Sigma(x, b_i)$ is inconsistent.

If $\Sigma(x, b)$ contains just one formula $\phi(x, b)$ then we will also say that $\phi(x, b)$ divides over $C$.
\end{definition}
Note that in the above definition, the condition $b_i \equiv_C b$ for all $i < \omega$ on the sequence $(b_i)_{i < \omega}$ can be replaced by $b_0 = b$, and we obtain an equivalent definition. We will often use this implicitly.
\begin{definition}
\thlabel{formula-holding-along-sequence}
Let $\phi(x_1, \ldots, x_n)$ be a formula, where $x_1, \ldots, x_n$ are tuples of variables of the same length. Given a sequence $(a_i)_{i \in I}$, of tuples of the same length matching that of the $x_1, \ldots, x_n$, we say that \emph{$\phi$ holds along $(a_i)_{i \in I}$}\index{formula holds along a sequence} if for any $i_1 < \ldots < i_n$ in $I$ we have $\models \phi(a_{i_1}, \ldots, a_{i_n})$.
\end{definition}
\begin{definition}
\thlabel{psi-dividing}
Let $\phi(x, b)$ be a formula over $Cb$ and let $y$ be a finite tuple of variables matching those elements of $b$ that appear in $\phi(x, b)$. Let $\psi(y_1, \ldots, y_k)$ be a formula over $C$, where the length of each of $y_1, \ldots, y_k$ matches $y$. Then $\phi(x, b)$ is said to \emph{$\psi$-divide}\index{dividing!psi-dividing@$\psi$-dividing} over $C$ if:
\begin{enumerate}[label=(\roman*)]
\item $\psi(y_1, \ldots, y_k)$ is an obstruction of $\exists x(\phi(x, y_1) \wedge \ldots \wedge \phi(x, y_k))$,
\item there is a sequence $(b_i)_{i < \omega}$ with $b_i \equiv_C b$ for all $i < \omega$, such that $\psi$ holds along $(b_i)_{i < \omega}$.
\end{enumerate}
\end{definition}
Note that in (ii) in the above definition we may equivalently require there to be a sequence $(b'_i)_{i < \omega}$ of tuples with $b'_i \equiv_C b'$ for all $i < \omega$, where $b'$ is the part of $b$ that matches $y$, such that $\psi$ holds along $(b'_i)_{i < \omega}$. So $\phi(x, b)$ $\psi$-divides over $C$ if and only if $\phi(x, b')$ $\psi$-divides over $C$.
\begin{lemma}
\thlabel{psi-dividing-lemma}
A set of formulas $\Sigma(x, b)$ over $Cb$ divides over $C$ if and only if it contains a formula $\phi(x, b)$ that $\psi$-divides over $C$ for some formula $\psi$.
\end{lemma}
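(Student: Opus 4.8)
The plan is to prove both directions by unwinding the definitions, with compactness doing the real work in the forward direction. For the right-to-left direction, suppose $\phi(x,b) \in \Sigma(x,b)$ $\psi$-divides over $C$ for some $\psi(y_1,\dots,y_k)$. Then condition (ii) of \thref{psi-dividing} gives a sequence $(b_i)_{i<\omega}$ with $b_i \equiv_C b$ along which $\psi$ holds, and condition (i) says $\psi$ is an obstruction of $\exists x(\phi(x,y_1)\wedge\dots\wedge\phi(x,y_k))$. First I would pass to a $C$-indiscernible sequence: by \thref{base-indiscernible-sequence-on-long-sequence} (after using compactness to stretch $(b_i)$ to a long enough sequence, noting $\psi$ holding along a sequence is preserved under passing to subsequences and hence to a based sequence, and $b_i \equiv_C b$ is likewise preserved) we get a $C$-indiscernible $(b'_i)_{i<\omega}$ with $b'_i \equiv_C b$ along which $\psi$ still holds. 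Now for any $i_1 < \dots < i_k$, since $\models \psi(b'_{i_1},\dots,b'_{i_k})$ and $\psi$ is an obstruction, $\not\models \exists x(\phi(x,b'_{i_1})\wedge\dots\wedge\phi(x,b'_{i_k}))$, so already $\bigcup_i \{\phi(x,b'_i)\}$ is inconsistent, hence so is $\bigcup_i \Sigma(x,b'_i)$. Thus $\Sigma(x,b)$ divides over $C$.

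For the forward direction, suppose $\Sigma(x,b)$ divides over $C$, witnessed by a $C$-indiscernible sequence $(b_i)_{i<\omega}$ with $b_i \equiv_C b$ and $\bigcup_{i<\omega}\Sigma(x,b_i)$ inconsistent (modulo $T$, i.e.\ not satisfiable in the monster). By compactness there is a finite subset which is already inconsistent; bundling finitely many formulas of $\Sigma$ at each index into a single formula (types/sets of formulas may be assumed closed under finite conjunction, as discussed after \thref{continue-to-realise-type}), we may assume the inconsistent finite piece is $\{\phi(x,b_{i_1}),\dots,\phi(x,b_{i_k})\}$ for a single $\phi(x,y)\in\Sigma(x,y)$ and indices $i_1<\dots<i_k$; by $C$-indiscernibility we may take these to be $0,1,\dots,k-1$, so $\not\models \exists x(\phi(x,b_0)\wedge\dots\wedge\phi(x,b_{k-1}))$. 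Now I need an \emph{obstruction}: since the monster $\MM$ is a p.c.\ model, \thref{pc-model-iff-types-are-maximal} (or directly \thref{pc-model}(iii)) applied to the element $b_0 \dots b_{k-1}$ and the formula $\exists x(\phi(x,y_1)\wedge\dots\wedge\phi(x,y_k))$ — which fails on this tuple — yields an obstruction $\psi(y_1,\dots,y_k)$ of $\exists x(\phi(x,y_1)\wedge\dots\wedge\phi(x,y_k))$ with $\models\psi(b_0,\dots,b_{k-1})$. By $C$-indiscernibility of $(b_i)$, $\psi$ holds along $(b_i)_{i<\omega}$, and $b_i \equiv_C b$ for all $i$. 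So $\phi(x,b)$ $\psi$-divides over $C$ by \thref{psi-dividing}, and $\phi(x,b)\in\Sigma(x,b)$, as required.

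The main obstacle — and the step deserving care — is the forward direction's extraction of a \emph{single} $\psi$-dividing formula: one must be careful that after compactness the finite inconsistent subset can be arranged (using indiscernibility and closure under conjunction) to live at consecutive indices and involve only instances of one formula $\phi$, and that the obstruction supplied by positive closedness of $\MM$ is genuinely a formula over $C$ — which it is, since the process of \thref{pc-model}(iii) produces $\psi(y_1,\dots,y_k)$ with $T \models \neg\exists y_1\dots y_k(\exists x(\phi(x,y_1)\wedge\dots\wedge\phi(x,y_k)) \wedge \psi(y_1,\dots,y_k))$, i.e.\ a $C$-formula in the sense required (no parameters beyond $C$ needed, since $\phi \in \Sigma(x,y)$ has its parameters matching $b$ split off into the variables $y$). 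A minor point to watch throughout is the standing convention that sets of formulas may be replaced by their closures under finite conjunction without changing (finite) satisfiability, which is what licenses replacing "a finite conjunction of instances of finitely many formulas of $\Sigma$" by "instances of a single formula of $\Sigma$".
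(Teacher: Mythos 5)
Your proof is correct and takes essentially the same route as the paper's: left to right, compactness together with closure of $\Sigma$ under finite conjunctions reduces the inconsistency to instances of a single formula, positive closedness of the monster supplies the obstruction $\psi$, and $C$-indiscernibility spreads it along the sequence; right to left, you elongate the witnessing sequence by compactness, base a $C$-indiscernible sequence on it, and use the obstruction to get inconsistency of $\bigcup_i \Sigma(x,b'_i)$. The only differences are cosmetic bookkeeping (the paper explicitly restricts to the finitely many variables of $b$ actually occurring so that the tuple $y$ in the definition of $\psi$-dividing is finite, which you leave implicit).
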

\begin{proof}
We first prove the left to right direction. Let $(b_i)_{i < \omega}$ be a $C$-indiscernible sequence that witnesses that $\Sigma(x, b)$ divides over $C$. By compactness there are $\phi_1(x, z), \ldots, \phi_k(x, z) \in \Sigma(x, z)$ such that $\not \models \exists x(\phi_1(x, b_1) \wedge \ldots \wedge \phi_k(x, b_k))$. We thus find some $\psi(z_1, \ldots, z_k)$ that implies $\neg \exists x(\phi_1(x, z_1) \wedge \ldots \wedge \phi_k(x, z_k))$ and $\models \psi(b_1, \ldots, b_k)$. As these formulas only mention a finite number of the variables in $z_1, \ldots, z_k$, we may omit unused variables to obtain finite subtuples of variables $y_1, \ldots, y_k$, while we keep some unused variables to guarantee that these tuples all have the same length and match the same subtuple $y \subseteq z$. Let $\phi(x, y)$ be the formula $\phi_1(x, y) \wedge \ldots \wedge \phi_k(x, y)$, so $\phi(x, y) \in \Sigma(x, z)$ and $\psi(y_1, \ldots, y_k)$ is an obstruction of $\exists x (\phi(x, y_1) \wedge \ldots \wedge \phi(x, y_k))$. At the same time we have $\models \psi(b_1, \ldots, b_k)$, which by indiscernibility implies that $\models \psi(b_{i_1}, \ldots, b_{i_k})$ for any $i_1 < \ldots < i_k < \omega$, as required.

For the other direction we let $\phi(x, b) \in \Sigma(x, b)$ be a formula that $\psi$-divides. Let $(b_i)_{i < \omega}$ be as in \thref{psi-dividing}. By compactness we may elongate the sequence $(b_i)_{i < \omega}$ to $(b_i)_{i < \lambda}$ for some big enough $\lambda$. We can then base a $C$-indiscernible sequence $(b^*_i)_{i < \omega}$ on $(b_i)_{i < \lambda}$. As $\psi$ holds along $(b_i)_{i < \lambda}$ it will hold along $(b^*_i)_{i < \omega}$, which in turn implies the inconsistency of $\{ \phi(x, b^*_i) : i < \omega \}$ and hence of $\bigcup_{i < \omega} \Sigma(x, b^*_i)$.
\end{proof}
\begin{corollary}
\thlabel{psi-dividing-indiscernible-sequence}
Let $\phi(x, y)$ and $\psi(y_1, \ldots, y_k)$ be formulas over $C$ such that $\psi(y_1, \ldots, y_k)$ is inconsistent with $\phi(x, y_1) \wedge \ldots \wedge \phi(x, y_k)$. Then $\phi(x, b)$ $\psi$-divides over $C$ if and only if there is a $C$-indiscernible sequence $(b_i)_{i < \omega}$ with $b_0 = b$ such that $\psi$ holds along it.
\end{corollary}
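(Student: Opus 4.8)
The plan is to prove the two implications separately, with the forward direction leaning on \thref{base-indiscernible-sequence-on-long-sequence} in the same way the proof of \thref{psi-dividing-lemma} does.

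For the backward direction, suppose $(b_i)_{i < \omega}$ is a $C$-indiscernible sequence with $b_0 = b$ along which $\psi$ holds. I would simply verify the two clauses of \thref{psi-dividing}. Clause (i) — that $\psi(y_1, \ldots, y_k)$ is an obstruction of $\exists x(\phi(x, y_1) \wedge \ldots \wedge \phi(x, y_k))$ — is, after unwinding \thref{negation}, exactly the standing hypothesis that $\psi(y_1, \ldots, y_k)$ is inconsistent with $\phi(x, y_1) \wedge \ldots \wedge \phi(x, y_k)$. For clause (ii), $C$-indiscernibility of $(b_i)_{i < \omega}$ forces $b_i \equiv_C b_0 = b$ for every $i$ (one-element subsequences have the same type over $C$), and $\psi$ holds along the sequence by assumption, so $(b_i)_{i < \omega}$ itself witnesses the clause.

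For the forward direction, assume $\phi(x, b)$ $\psi$-divides over $C$; by \thref{psi-dividing}(ii) there is a sequence $(b_i)_{i < \omega}$ with $b_i \equiv_C b$ for all $i$ along which $\psi$ holds. First I would elongate it: as in the proof of \thref{psi-dividing-lemma}, a compactness argument — using that $\psi$ holds along \emph{every} increasing subsequence of $(b_i)_{i < \omega}$ together with $b_i \equiv_C b$ — produces, for any cardinal $\lambda$, a sequence $(b_i)_{i < \lambda}$ with $b_i \equiv_C b$ along which $\psi$ still holds. Taking $\lambda$ large enough, I would apply \thref{base-indiscernible-sequence-on-long-sequence} to obtain a $C$-indiscernible sequence $(b_i^*)_{i < \omega}$ based on $(b_i)_{i < \lambda}$ over $C$. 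Being based on the long sequence transfers both features: for each $j$ there is $i$ with $b_j^* \equiv_C b_i$, hence $b_j^* \equiv_C b$; and for $j_1 < \ldots < j_k$ there are $i_1 < \ldots < i_k$ with $b_{j_1}^* \ldots b_{j_k}^* \equiv_C b_{i_1} \ldots b_{i_k}$, so from $\models \psi(b_{i_1}, \ldots, b_{i_k})$ and $\psi$ being over $C$ we get $\models \psi(b_{j_1}^*, \ldots, b_{j_k}^*)$, i.e. $\psi$ holds along $(b_i^*)_{i < \omega}$. Finally, since $b_0^* \equiv_C b$, pick $\sigma \in \Aut(\MM/C)$ with $\sigma(b_0^*) = b$; then $(\sigma(b_i^*))_{i < \omega}$ is $C$-indiscernible, begins with $b$, and, as $\psi$ is a formula over $C$ fixed pointwise by $\sigma$, $\psi$ still holds along it, which is exactly what is required.

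I do not expect a serious obstacle: the statement is essentially a repackaging of \thref{psi-dividing-lemma} for a single pair $(\phi, \psi)$. The only point needing a little care is making sure all three properties — each term $\equiv_C b$, $\psi$ holding along the sequence, and the sequence starting at $b$ — survive simultaneously through the elongation–base–conjugate chain; in particular, conjugating by $\sigma$ to normalise the first term to $b$ does not disturb the fact that $\psi$ holds along the sequence, precisely because $\psi$ has parameters only in $C$.
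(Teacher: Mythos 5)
Your proof is correct and follows the same route as the paper: the right-to-left direction is just unwinding \thref{psi-dividing} (with the hypothesis giving clause (i)), and the left-to-right direction is the elongate-by-compactness, base-an-indiscernible-sequence argument from the second half of the proof of \thref{psi-dividing-lemma}, followed by an automorphism over $C$ to arrange $b_0 = b$. You have merely spelled out the transfer of properties and the conjugation step that the paper leaves implicit.
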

\begin{proof}
The right to left direction is immediate. The left to right follows the same proof as the second half in \thref{psi-dividing-lemma}, after which we apply an automorphism over $C$ to the indiscernible sequence to get $b_0 = b$.
\end{proof}
\begin{remark}
\thlabel{psi-dividing-remark}
In full first-order logic there is the notion of ``$k$-dividing'', which says that a set of formulas is inconsistent along every $k$-subsequence of some infinite sequence. So this is very similar to $\psi$-dividing. In fact, if the set of formulas in question contains only one formula $\phi(x, b)$, then it is exactly the same if we take $\psi(y_1, \ldots, y_k)$ to be $\neg \exists x(\phi(x, y_1) \wedge \ldots \wedge \phi(x, y_k))$. This is a common theme in positive logic. In full first-order logic it suffices to specify how many things are inconsistent with one another and we just use a formula that says ``there does not exist ...''. In positive logic we need a positive formula witnessing this.
\end{remark}
\begin{proposition}
\thlabel{dividing-in-terms-of-automorphic-indiscernible-sequences}
The following are equivalent:
\begin{enumerate}[label=(\roman*)]
\item $\tp(a/Cb)$ does not divide over $C$;
\item for every $C$-indiscernible sequence $(b_i)_{i < \omega}$ with $b_0 = b$ there is a $Ca$-indiscernible sequence $(b'_i)_{i < \omega}$ with $(b'_i)_{i < \omega} \equiv_{Cb} (b_i)_{i < \omega}$;
\item for every $C$-indiscernible sequence $(b_i)_{i < \omega}$ with $b_0 = b$ there is $a' \equiv_{Cb} a$ such that $(b_i)_{i < \omega}$ is $Ca'$-indiscernible.
\end{enumerate}
\end{proposition}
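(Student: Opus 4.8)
The plan is to establish (ii)~$\Leftrightarrow$~(iii) by homogeneity, (i)~$\Rightarrow$~(iii) by a compactness-and-Ramsey argument, and (iii)~$\Rightarrow$~(i) by a direct contrapositive. Throughout I would write $p(x, y)$ for the partial type obtained from $\tp(a/Cb)$ by replacing the parameter tuple $b$ by a matching tuple of variables $y$, so that $p(x, b) = \tp(a/Cb)$ and $\bigcup_{i < \omega} p(x, b_i)$ is exactly the set $\bigcup_{i < \omega} \Sigma(x, b_i)$ appearing in \thref{dividing}. The argument repeatedly uses that $\tp(a/Cb)$, and more generally $p(x, b_i)$ for any $b_i \equiv_C b$, is a \emph{maximal} type over its parameter set; this is \thref{pc-model-iff-types-are-maximal} applied after naming the elements of $Cb$ by constants, and it is the positive-logic substitute for the usual ``negate a formula'' manoeuvres.

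For (ii) $\Leftrightarrow$ (iii): given a $Ca'$-indiscernible sequence $(b_i)_{i < \omega}$ with $a' \equiv_{Cb} a$, pick $\sigma \in \Aut(\MM/Cb)$ with $\sigma(a') = a$ (homogeneity) and set $b'_i = \sigma(b_i)$; then $(b'_i)_{i < \omega}$ is $Ca$-indiscernible and $(b'_i)_{i < \omega} \equiv_{Cb} (b_i)_{i < \omega}$ because $\sigma$ fixes $Cb$ pointwise, which is (ii). Conversely, from (ii) choose $\tau \in \Aut(\MM/Cb)$ with $\tau((b'_i)_{i < \omega}) = (b_i)_{i < \omega}$ and put $a' = \tau(a)$: then $a' \equiv_{Cb} a$ and $(b_i)_{i < \omega} = \tau((b'_i)_{i < \omega})$ is $Ca'$-indiscernible, which is (iii).

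For (i) $\Rightarrow$ (iii), fix a $C$-indiscernible $(b_i)_{i < \omega}$ with $b_0 = b$. By compactness extend it to a $C$-indiscernible sequence $(b_i)_{i < \lambda}$ with $\lambda$ large enough to apply \thref{base-indiscernible-sequence-on-long-sequence}. Since $\tp(a/Cb)$ does not divide over $C$, $\bigcup_{i < \omega} p(x, b_i)$ is consistent, and hence so is $\bigcup_{i < \lambda} p(x, b_i)$: an inconsistent finite subset would involve only finitely many indices and, by $C$-indiscernibility, could be shifted onto the first $\omega$ of them, contradicting non-dividing. Let $a^*$ realise $\bigcup_{i < \lambda} p(x, b_i)$ and use \thref{base-indiscernible-sequence-on-long-sequence} to obtain a $Ca^*$-indiscernible sequence $(b'_i)_{i < \omega}$ based on $(b_i)_{i < \lambda}$ over $Ca^*$. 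Being based over $Ca^*$ gives, for each $i$, an index $j$ with $a^* b'_i \equiv_C a^* b_j$, so $a^* \models p(x, b'_i)$; being based over $C$ together with $C$-indiscernibility of $(b_i)_{i < \lambda}$ gives $(b'_i)_{i < \omega} \equiv_C (b_i)_{i < \omega}$. Pick $\tau \in \Aut(\MM/C)$ with $\tau((b'_i)_{i < \omega}) = (b_i)_{i < \omega}$ and set $a' = \tau(a^*)$. Then $(b_i)_{i < \omega} = \tau((b'_i)_{i < \omega})$ is $Ca'$-indiscernible, and $a' = \tau(a^*)$ realises $\tau(p(x, b'_0)) = p(x, b_0) = p(x, b)$; maximality of $\tp(a/Cb)$ then forces $\tp(a'/Cb) = \tp(a/Cb)$, i.e.\ $a' \equiv_{Cb} a$, so (iii) holds. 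Finally, for (iii) $\Rightarrow$ (i): if $\tp(a/Cb)$ divides over $C$, witnessed by a $C$-indiscernible $(b_i)_{i < \omega}$ with $b_0 = b$ and $\bigcup_{i < \omega} p(x, b_i)$ inconsistent, then any $a' \equiv_{Cb} a$ with $(b_i)_{i < \omega}$ being $Ca'$-indiscernible would satisfy $p(x, b_0)$ and, by $Ca'$-indiscernibility, $p(x, b_i)$ for every $i$ --- impossible. So (iii) fails, proving the contrapositive.

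The main obstacle is the bookkeeping in (i) $\Rightarrow$ (iii): one must verify that the sequence extracted by \thref{base-indiscernible-sequence-on-long-sequence} simultaneously remains a witness for the copies of $p$ realised by $a^*$ and has the same type over $C$ as the original $\omega$-sequence, so that a single automorphism over $C$ can be used to pull $a^*$ back to an $a'$ with $\tp(a'/Cb) = \tp(a/Cb)$ while transporting indiscernibility onto the original sequence. Everywhere one would be tempted to argue via ``$\neg\phi$'' one instead appeals to maximality of types in p.c.\ models, i.e.\ to obstructions.
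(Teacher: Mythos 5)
Your proposal is correct and follows essentially the same route as the paper: the core direction is handled by elongating the $C$-indiscernible sequence, realising the union of copies of $\tp(a/Cb)$ by some $a^*$, basing a $Ca^*$-indiscernible sequence on the long one, and pulling everything back with an automorphism over $C$, using maximality of types in the monster (equivalently $a'b \equiv_C ab$) to get $a' \equiv_{Cb} a$. The only cosmetic differences are that you route (i)$\Rightarrow$(ii) through the trivial equivalence (ii)$\Leftrightarrow$(iii) via homogeneity, and you elongate before realising rather than after, neither of which changes the argument.
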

\begin{proof}
\underline{(i) $\Rightarrow$ (ii) and (iii)} Let $(b_i)_{i < \omega}$ be a $C$-indiscernible sequence with $b_0 = b$. Set $p(x, b) = \tp(a/Cb)$, so $\bigcup_{i < \omega} p(x, b_i)$ is consistent. Let $a^*$ be a realisation of this set of formulas. So we have $a^* b_i \equiv_C ab$ for all $i < \omega$. By compactness we can elongate this sequence to $(b_i)_{i < \lambda}$, where $\lambda = \lambda_{|T| + |Cba|}$, with that same property. Base a $Ca^*$-indiscernible sequence $(b^*_i)_{i < \omega}$ on $(b_i)_{i < \lambda}$ over $Ca^*$. Then $a^* b^*_0 \equiv_C ab$, and we obtain (ii) by letting $(b'_i)_{i < \omega}$ be such that $a^* b^*_0 (b^*_i)_{i < \omega} \equiv_C ab (b'_i)_{i < \omega}$. To obtain (iii) we note that $(b^*_i)_{i < \omega} \equiv_C (b_i)_{i < \omega}$ and we let $a'$ be such that $a^* (b^*_i)_{i < \omega} \equiv_C a' (b_i)_{i < \omega}$, so that $a' b = a' b_0 \equiv_C a^* b^*_0 \equiv_C ab$.

\underline{(ii) $\Rightarrow$ (i) and (iii) $\Rightarrow$ (i)} Write $p(x, b) = \tp(a/Cb)$ and let $(b_i)_{i < \omega}$ be a $C$-indiscernible sequence with $b_0 = b$. We show in both cases that $\bigcup_{i < \omega} p(x, b_i)$ is consistent.
\begin{itemize}
\item Assuming (ii) we find $Ca$-indiscernible $(b'_i)_{i < \omega}$ with $(b'_i)_{i < \omega} \equiv_{Cb} (b_i)_{i < \omega}$. Then $\models p(a, b'_0)$ because $b'_0 = b$. So by $Ca$-indiscernibility, $a$ is a realisation of $\bigcup_{i < \omega} p(x, b'_i)$, and the claim follows as $(b_i)_{i < \omega} \equiv_C (b'_i)_{i < \omega}$.
\item Assuming (iii), we let $a'$ be such that $a' \equiv_{Cb} a$ and $(b_i)_{i < \omega}$ is $Ca'$-indiscernible. Then $a'b_0 = a'b \equiv_C ab$, and so $\models p(a', b_0)$. So $a'$ realises $\bigcup_{i < \omega} p(x, b_i)$. \qedhere
\end{itemize}
\end{proof}

\section{Independence relations}
\label{sec:independence-relations}
\begin{definition}
\thlabel{independence-relation}
An \term{independence relation} $\ind$\nomenclature[AAAindependence]{$\ind$}{Independence relation} is a ternary relation on small subsets of the monster model. If $A$, $B$ and $C$ are in the relation we write
\[
A \ind_C B,
\]
which should be read as ``$A$ is independent from $B$ over $C$''. We also allow tuples in the relation, which are then interpreted as the set they enumerate. For example, if $a$ is a tuple enumerating $A$ then $a \ind_C B$ means the same as $A \ind_C B$.
\end{definition}
\begin{example}
\thlabel{independence-relation-examples}
As an example of a very nicely behaved independence relation we consider the theory of vector spaces over some fixed field $K$, in the signature $(\neq, 0, +, \{s \cdot (-)\}_{s \in K})$, where $s \cdot (-)$ for $s \in K$ is a unary function symbol for scalar multiplication by $s$. So this is the usual Boolean theory of $K$-vector spaces. We define an independence relation $\ind$ based on linear independence as follows
\[
A \ind_C B \quad \Longleftrightarrow \quad \linspan{(AC)} \cap \linspan{(BC)} = \linspan{(C)}.
\]
This generalises linear independence in the following sense: a set $A$ of vectors is linearly independent precisely when $a \ind_\emptyset A \setminus \{a\}$ for all $a \in A$.
\end{example}
We give a list of all the properties that an independence relation can have (and we are interested in). We will not encounter all these properties straight away, but we mention them here anyway so that this definition can also serve as a reference.
\begin{definition}
\thlabel{independence-properties}
Let $\ind$ be an independence relation. We define the following properties for $\ind$, where $a$ and $b$ are arbitrary tuples and $C$ is an arbitrary set.
\begin{description}
\item[\textsc{invariance}] For any $f \in \Aut(\MM)$ we have that $a \ind_C b$ implies $f(a) \ind_{f(C)} f(b)$.
\item[\textsc{monotonicity}] For any $a' \subseteq a$ and $b' \subseteq b$ we have that $a \ind_C b$ implies $a' \ind_C b'$.
\item[\textsc{normality}] If $a \ind_C b$ then $Ca \ind_C Cb$.
\item[\textsc{existence}] We always have $a \ind_C C$.
\item[\textsc{full existence}] There is always $b'$ with $b' \equiv_C b$ such that $a \ind_C b'$.
\item[\textsc{base monotonicity}] If $C \subseteq C' \subseteq b$ then $a \ind_C b$ implies $a \ind_{C'} b$.
\item[\textsc{extension}] If $a \ind_C b$ then for any $d$ there is $d'$ with $d' \equiv_{Cb} d$ and $a \ind_C bd'$.
\item[\textsc{symmetry}] If $a \ind_C b$ then $b \ind_C a$.
\item[\textsc{transitivity}] If $C \subseteq C'$ with $a \ind_C C'$ and $a \ind_{C'} b$ then $a \ind_C b$.
\item[\textsc{finite character}] If for all finite $a' \subseteq a$ and all finite $b' \subseteq b$ we have $a' \ind_C b'$ then $a \ind_C b$.
\item[\textsc{local character}] For every cardinal $\kappa$ there is a cardinal $\lambda$ such that for all $a$ with $|a| < \kappa$ and any $C$ there is $C' \subseteq C$ with $|C'| < \lambda$ and $a \ind_{C'} C$.
\item[\textsc{independence theorem}] If $a \ind_C b$, $a' \ind_C c$ and $b \ind_C c$ with $a \equivls_C a'$ then there is $a''$ with $a'' \equivls_{Cb} a$ and $a'' \equivls_{Cc} a'$ such that $a'' \ind_C bc$.
\item[\textsc{stationarity}] For any $C$ such that $a \equiv_C a'$ implies $a \equivls_C a'$ for all $a, a'$, we have that $a \ind_C b$, $a' \ind_C b$ and $a \equiv_C a$ implies $a \equiv_{Cb} a'$.
\end{description}
\end{definition}
We will often use the properties \textsc{invariance}, \textsc{monotonicity} and \textsc{normality} of an independence relation implicitly.

The point of dividing is that it will give us an independence relation with some nice properties. However, it is actually the negation, non-dividing, that will mean that things are independent. Compare this for example to linear independence: we first define when vectors are linearly dependent (i.e., some non-trivial linear equation between them holds) and then we say that they are linearly independent if this does not happen (i.e., no non-trivial linear equation between them holds).
\begin{definition}
\thlabel{non-dividing-independence}
Let $A, B, C$ be sets and let $a$ and $b$ enumerate $A$ and $B$ respectively. Then we write\nomenclature[AAAindependence-dividing]{$\ind^d$}{Dividing independence}
\[
A \ind^d_C B
\]
if $\tp(a/Cb)$ does not divide over $C$. We call this relation \emph{dividing independence}.
\end{definition}
It should be clear from the definition of dividing that if $a$ and $a'$ are tuples enumerating the same set in possibly different ways then we have that $\tp(a/Cb)$ divides over $C$ if and only if $\tp(a'/Cb)$ divides over $C$. Similarly, nothing changes when changing the enumeration of $b$. So $\ind^d$ is indeed a relation on subsets, and hence an independence relation.
\begin{theorem}
\thlabel{dividing-basic-properties}
Dividing independence satisfies the following properties: \textsc{invariance}, \textsc{monotonicity}, \textsc{normality}, \textsc{existence}, \textsc{base monotonicity}, \textsc{finite character} and \textsc{left transitivity}. This final property is the same as \textsc{transitivity} with the sides of the independence relation swapped: if $C \subseteq C'$ then $C' \ind^d_C b$ and $a \ind^d_{C'} b$ implies $a \ind^d_C b$.
\end{theorem}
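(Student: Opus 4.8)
The plan is to verify the seven properties one at a time, grouping them by the tool they need. The four properties \textsc{invariance}, \textsc{normality}, \textsc{existence} and \textsc{base monotonicity} I would dispatch by unwinding the definition of dividing directly. \textsc{invariance} is immediate: an automorphism $f$ sends a $C$-indiscernible sequence witnessing that $\tp(a/Cb)$ divides over $C$ to an $f(C)$-indiscernible sequence witnessing that $\tp(f(a)/f(C)f(b))=f(\tp(a/Cb))$ divides over $f(C)$, so non-dividing is $\Aut(\MM)$-invariant. For \textsc{normality} I would note that a $C$-indiscernible sequence of tuples each $\equiv_C$ to the enumeration of $Cb$ has constant $C$-part, hence is just a $C$-indiscernible sequence of copies $(b_i)$ of $b$ with a frozen copy of $C$, and $\bigcup_i\tp(Ca/Cb)(x',Cb_i)$ is consistent exactly when $\bigcup_i\tp(a/Cb)(x,b_i)$ is; so $\tp(Ca/Cb)$ divides over $C$ iff $\tp(a/Cb)$ does. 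For \textsc{existence}, the only admissible sequence (tuples $\equiv_C$ to the enumeration of $C$) is the constant one, along which $\tp(a/C)$ is trivially consistent. For \textsc{base monotonicity}, if $C\subseteq C'\subseteq b$ then $Cb=C'b=b$ as sets, so the relevant type is literally the same set of formulas, and any $C'$-indiscernible sequence witnessing dividing over $C'$ is also $C$-indiscernible and witnesses dividing over $C$; taking contrapositives gives the claim.

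For \textsc{monotonicity} and \textsc{finite character} I would route through $\psi$-dividing, \thref{psi-dividing-lemma}. If $a'\subseteq a$, $b'\subseteq b$ and $a'\nind^d_C b'$, then \thref{psi-dividing-lemma} produces $\phi(x',b')\in\tp(a'/Cb')$ that $\psi$-divides over $C$; viewing $x'$ and $b'$ as subtuples of $x$ and $b$, this same $\phi$ lies in $\tp(a/Cb)$, it mentions the same elements of $b$, so by the observation following \thref{psi-dividing} it still $\psi$-divides over $C$, and \thref{psi-dividing-lemma} then yields $\tp(a/Cb)$ divides over $C$, i.e.\ $a\nind^d_C b$ — the contrapositive of \textsc{monotonicity}. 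The same computation gives \textsc{finite character}: if $\tp(a/Cb)$ divides over $C$, \thref{psi-dividing-lemma} produces a single $\phi(x,b)$ that $\psi$-divides, and $\phi$ involves only finitely many coordinates of $x$ and finitely many elements of $b$, forming finite subtuples $a'$, $b'$; then $\phi(x',b')\in\tp(a'/Cb')$ $\psi$-divides over $C$, so $a'\nind^d_C b'$.

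The real work is \textsc{left transitivity}, and I expect this to be the main obstacle: the bare definition of dividing is clumsy for a transitivity statement. The plan is to use the reformulation in \thref{dividing-in-terms-of-automorphic-indiscernible-sequences}, namely that $\tp(a/Cb)$ does not divide over $C$ iff for every $C$-indiscernible $(b_i)_{i<\omega}$ with $b_0=b$ there is $a^*\equiv_{Cb}a$ with $(b_i)_{i<\omega}$ being $Ca^*$-indiscernible. Given $C\subseteq C'$, $C'\ind^d_C b$, $a\ind^d_{C'}b$, and a $C$-indiscernible $(b_i)$ with $b_0=b$: first apply this criterion to $C'\ind^d_C b$ to get $C''\equiv_{Cb}C'$ with $(b_i)_{i<\omega}$ being $CC''$-indiscernible, pick $\sigma\in\Aut(\MM/Cb)$ with $\sigma(C')=C''$, and set $d_i=\sigma^{-1}(b_i)$, a $CC'$-indiscernible sequence with $d_0=b$; then apply the criterion to $a\ind^d_{C'}b$ along $(d_i)$ to get $a'\equiv_{C'b}a$ with $(d_i)_{i<\omega}$ being $C'a'$-indiscernible, hence $Ca'$-indiscernible; finally push forward by $\sigma$, so $(b_i)_{i<\omega}$ is $C\sigma(a')$-indiscernible, and since $\sigma$ fixes $Cb$ pointwise we have $\sigma(a')\equiv_{Cb}a'\equiv_{Cb}a$, so $a^*=\sigma(a')$ works and $a\ind^d_C b$. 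The delicate part of this argument is keeping track of exactly which base set each automorphism fixes and how the indiscernible sequence is shuttled back and forth; everything else is bookkeeping.
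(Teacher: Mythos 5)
Your proof is correct and takes essentially the same route as the paper: the easy properties by unwinding the definition, the $\psi$-dividing lemma (\thref{psi-dividing-lemma}) for \textsc{finite character} (you also use it for \textsc{monotonicity}, where the paper instead extends the witnessing tuples and bases a new indiscernible sequence — both work), and \thref{dividing-in-terms-of-automorphic-indiscernible-sequences} for \textsc{left transitivity}, where you invoke form (iii) with explicit automorphism bookkeeping while the paper uses form (ii); the content is the same and your bookkeeping is sound.
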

\begin{proof}
We prove each property separately.
\begin{description}
\item[\textsc{invariance}] By contrapositive: suppose that $p(x, f(b)) = \tp(f(a)/f(C)f(b))$ divides over $f(C)$. Then there is an $f(C)$-indiscernible sequence $(b'_i)_{i < \omega}$ with $b'_i \equiv_{f(C)} f(b)$ for all $i < \omega$, such that $\bigcup_{i < \omega} p(x, b'_i)$ is inconsistent. Then $(f^{-1}(b'_i))_{i < \omega}$ witnesses dividing over $C$ for $\tp(a/Cb)$.

\item[\textsc{monotonicity}] By contrapositive: suppose that $p'(x', b') = \tp(a'/Cb')$ divides over $C$, as witnessed by a $C$-indiscernible sequence $(b'_i)_{i < \omega}$. Elongate this to a $C$-indiscernible sequence $(b'_i)_{i < \lambda}$ for $\lambda = \lambda_{|T| + |Cb|}$. For each $i < \lambda$ use $b'_i \equiv_C b'$ to find $b_i \supseteq b'_i$ with $b_i b_i' \equiv_C b b'$. Let $(b^*_i)_{i < \omega}$ be a $C$-indiscernible sequence based on $(b_i)_{i < \lambda}$ over $C$. The restriction of $(b^*_i)_{i < \omega}$ to the subtuples matching $b'$ has the same type over $C$ as $(b'_i)_{i < \omega}$ and so after applying an automorphism we may as well assume that $b'_i$ is the restriction of $b^*_i$ to the subtuple matching $b'$. Set $p(x, b) = \tp(a/Cb)$, then $\bigcup_{i < \omega} p'(x', b_i) \subseteq \bigcup_{i < \omega} p(x, b^*_i)$, and so inconsistency of the former implies inconsistency of the latter.

\item[\textsc{normality}] Let $c$ enumerate $C$ and set $p(x, b) = \tp(a/Cb)$ and $q(xz, bc) = \tp(ac/Cbc)$. Let $(b_i c_i)_{i < \omega}$ be any $C$-indiscernible sequence with $b_i c_i \equiv_C bc$ for all $i < \omega$. As $(b_i)_{i < \omega}$ is a $C$-indiscernible sequence with $b_i \equiv_C b$ for all $i < \omega$ and $p(x, b)$ does not divide over $C$, there is a realisation $a'$ of $\bigcup_{i < \omega} p(x, b_i)$. For each $i < \omega$ we have that $c_i = c$, and so $\models q(a'c, b_i c_i)$. We conclude that $a'c$ realises $\bigcup_{i < \omega} q(xz, b_i c_i)$, showing that this set is consistent and hence that $q(xz, bc)$ does not divide over $C$.

\item[\textsc{existence}] Let $c$ enumerate $C$ and set $p(x, c) = \tp(a/Cc)$. Let $(c_i)_{i < \omega}$ be a $C$-indiscernible sequence with $c_i \equiv_C c$ for all $i < \omega$. That is, $c_i = c$ for all $i < \omega$. Hence $\bigcup_{i < \omega} p(x, c_i) = p(x, c)$, which is consistent.

\item[\textsc{base monotonicity}] We will use the characterisation of dividing in \thref{dividing-in-terms-of-automorphic-indiscernible-sequences}(iii). Let $(b_i)_{i < \omega}$ be a $C'$-indiscernible sequence with $b_0 = b$. Then $(b_i)_{i < \omega}$ is in particular $C$-indiscernible, because $C \subseteq C'$. Since $\tp(a/Cb)$ does not divide over $C$ there is $a'$ with $a' \equiv_{Cb} a$ such that $(b_i)_{i < \omega}$ is $Ca'$-indiscernible. As $C' \subseteq b$ we have $a' \equiv_{C'b} a$, and we conclude that $\tp(a/C'b)$ does not divide over $C'$.

\item[\textsc{finite character}] By contrapositive: suppose that $p(x, b) = \tp(a/Cb)$ divides over $C$. Then by \thref{psi-dividing-lemma} there is $\phi(x, b) \in p(x, b)$ and some $\psi$ such that $\phi(x, b)$ $\psi$-divides over $C$. Let $x'$ and $b'$ be the finite parts of $x$ and $b$ respectively that appear in $\phi(x, b)$, then $\phi(x', b')$ $\psi$-divides over $C$. As $\phi(x', b') \in \tp(a'/Cb')$, where $a' \subseteq a$ matches $x'$, we can apply \thref{psi-dividing-lemma} again (now in the other direction) to see that $\tp(a'/Cb')$ divides over $C$.

\item[\textsc{left transitivity}] We will use the characterisation of dividing in \thref{dividing-in-terms-of-automorphic-indiscernible-sequences}(ii). Let $(b_i)_{i < \omega}$ be a $C$-indiscernible sequence with $b_0 = b$. As $C' \ind^d_C b$ there is a $C'$-indiscernible sequence $(b'_i)_{i < \omega}$ with $(b'_i)_{i < \omega} \equiv_{Cb} (b_i)_{i < \omega}$. In particular, $b'_0 = b$, and so because $a \ind^d_{C'} b$ there is a $C'a$-indiscernible sequence $(b''_i)_{i < \omega}$ with $(b''_i)_{i < \omega} \equiv_{C'b} (b'_i)_{i < \omega}$. In particular, we have $(b''_i)_{i < \omega} \equiv_{Cb} (b'_i)_{i < \omega} \equiv_{Cb} (b_i)_{i < \omega}$, and so we conclude that $\tp(a/Cb)$ does not divide over $C$. \qedhere
\end{description}
\end{proof}
\section{Definition of simplicity: local character and NTP}
\label{sec:definition-of-simplicity}
\begin{definition}
\thlabel{tree-notation}
Let $\alpha$ and $\beta$ be ordinals. Write $\alpha^{< \beta}$\nomenclature[alphabeta]{$\alpha^{<\beta}$}{Tree of functions $\gamma \to \alpha$ for $\gamma < \beta$} for the tree of functions $\gamma \to \alpha$ for $\gamma < \beta$. The tree structure is given by setting $\nu \unlhd \eta$\nomenclature[nueta]{$\nu \unlhd \eta$}{Tree ordering: $\nu$ is below $\eta$} if $\eta$ is an extension of $\nu$.

For a function $\eta: \gamma \to \alpha$ and some $i < \alpha$, we write $\eta^\frown i$ for the function $\gamma+1 \to \alpha$ that appends $i$ to $\eta$. Formally:
\[
\eta^\frown i(x) = \begin{cases}
\eta(x) & \text{if } x < \gamma, \\
i & \text{if } x = \gamma.
\end{cases}
\]
\end{definition}
The above notation is in line with viewing a function $\eta: \gamma \to \alpha$ as a sequence of length $\gamma$ of elements in $\alpha$. We will often take this view and as such we will often refer to the domain of $\eta$ as its \emph{length}\index{length!of a function of ordinals}.
\begin{definition}
\thlabel{tree-property}
Let $k \geq 2$ be a natural number. A formula $\phi(x,y)$ is said to have the \emph{$k$-tree property}\index{tree property} ($k$-\TP)\nomenclature[kTP]{$k$-\TP}{The $k$-tree property} if there are parameters $(a_\eta)_{\eta \in \omega^{< \omega}}$ and an obstruction $\psi(y_1, \ldots, y_k)$ of the formula $\exists x (\phi(x, y_1) \wedge \ldots \wedge \phi(x, y_k))$ such that:
\begin{enumerate}[label=(\roman*)]
\item for all $\sigma \in \omega^\omega$ the set $\{ \phi(x, a_{\sigma|_n}): n < \omega\}$ is consistent,
\item for all $\eta \in \omega^{< \omega}$ and $i_1 < \ldots < i_k < \omega$ we have $\models \psi(a_{\eta^\frown i_1}, \ldots, a_{\eta^\frown i_k})$.
\end{enumerate}
A formula $\phi(x,y)$ has the \emph{tree property} (\TP) if there exists a natural number $k \geq 2$ such that $\phi(x,y)$ has $k$-\TP.

A theory has the \emph{tree property} (\TP) if there is a formula that has the tree property, and otherwise it is \NTP.\nomenclature[NTP]{\NTP}{Not the tree property}
\end{definition}
\begin{theorem}
\thlabel{simplicity-equivalences}
The following are equivalent for a theory $T$.
\begin{enumerate}[label=(\roman*)]
\item Dividing independence $\ind^d$ satisfies \textsc{local character}.
\item For any finite $a$ and any $C$ there is $C' \subseteq C$ with $|C'| < |T|^+$ such that $a \ind^d_{C'} C$.
\item For every cardinal $\kappa$ there is a cardinal $\lambda$ such that for all $a$ with $|a| < \kappa$ and any sequence $(b_i)_{i < \mu}$ there is $i_0 < \lambda$ with $a \ind^d_{(b_j)_{j < i_0}} (b_j)_{i_0 \leq j < \mu}$.
\item For any finite $a$ and any sequence $(b_i)_{i < \mu}$ there is some $i_0 < |T|^+$ such that $a \ind^d_{(b_j)_{j < i_0}} (b_j)_{i_0 \leq j < \mu}$.
\item The theory $T$ is \NTP.
\end{enumerate}
\end{theorem}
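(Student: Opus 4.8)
The plan is to make \NTP\ (condition (v)) the hub: I would show that each of (i)--(iv) is equivalent to the nonexistence of a formula with the tree property, and handle the remaining, purely formal, equivalences among (i)--(iv) using only the structural properties of $\ind^d$ from \thref{dividing-basic-properties}. Concretely I would close the diagram with the implications $(v)\Rightarrow(ii)$, $(v)\Rightarrow(iv)$, $(v)\Rightarrow(iii)$, $(iv)\Rightarrow(ii)$, $(ii)\Rightarrow(i)$, $(i)\Rightarrow(v)$, $(iii)\Rightarrow(v)$. All the genuine content lives in two places: extracting a formula with \TP\ from a long ``dividing chain'', and conversely producing, from a formula with \TP, a type that divides over every small set.

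For the bookkeeping: $(iv)\Rightarrow(ii)$ is immediate from \textsc{normality} and \textsc{monotonicity} --- enumerate $C$ as a sequence of singletons, apply (iv) to get a short initial segment $C'$ over which $a$ is independent from the rest, then $C'a\ind^d_{C'}C$ by \textsc{normality} and $a\ind^d_{C'}C$ by \textsc{monotonicity}. For $(ii)\Rightarrow(i)$ one bootstraps from finite to arbitrary tuples: given $a$ with $|a|<\kappa$ and a set $C$, apply (ii) to each finite $a'\subseteq a$ to get $C_{a'}\subseteq C$ of size $\le|T|$ with $a'\ind^d_{C_{a'}}C$, put $C'=\bigcup_{a'}C_{a'}$, use \textsc{base monotonicity} to raise each base to $C'$, \textsc{monotonicity} to cut the right-hand side down to finite tuples, and \textsc{finite character} to conclude $a\ind^d_{C'}C$; the bound $|C'|$ depends only on $\kappa$ and $|T|$. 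The implications from (v) to (i)--(iv) all follow a single template: assuming the relevant local-character statement fails, first use \textsc{finite character} to replace the witnessing tuple by a finite one, then build an increasing chain of parameter sets of length $|T|^+$ along which the type keeps dividing (\thref{psi-dividing-lemma}), and exploit that (by \thref{size-of-theory}) there are only $|T|$ formulas up to equivalence to stabilise, by pigeonhole over the regular cardinal $|T|^+$, both the dividing formula $\phi$ and its obstruction $\psi$ on a cofinal subchain; restricting to the first $\omega$ levels produces exactly the data fed into the tree construction below.

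For $(i)\Rightarrow(v)$ and $(iii)\Rightarrow(v)$: suppose some $\phi(x,y)$ has $k$-\TP, with tree $(a_\eta)_{\eta\in\omega^{<\omega}}$ and obstruction $\psi$. By compactness the tree may be blown up to one indexed by $\kappa^{<\kappa}$ for any regular $\kappa$ of our choosing, and by a standard Ramsey-type (tree-modelling) argument it may moreover be taken indiscernible over lower levels. Take a branch $\sigma$ of length $\kappa$, realise $\{\phi(x,a_{\sigma|_\alpha}):\alpha<\kappa\}$ by a (finite) tuple $a$, and set $C=\bigcup_{\alpha<\kappa}a_{\sigma|_\alpha}$. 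For each $\alpha$ the immediate successors of $\sigma|_\alpha$ form, after reindexing so that $\sigma|_{\alpha+1}$ comes first, a sequence indiscernible over $\{a_{\sigma|_\beta}:\beta\le\alpha\}$ along which $\psi$ holds, so by \thref{psi-dividing-indiscernible-sequence} the formula $\phi(x,a_{\sigma|_{\alpha+1}})\in\tp(a/C)$ $\psi$-divides over $\{a_{\sigma|_\beta}:\beta\le\alpha\}$; hence $\tp(a/C)$ divides over every initial segment of the branch. Since $\kappa$ is regular, any $C'\subseteq C$ of size $<\kappa$ lies inside such an initial segment, and then $\tp(a/C)$ divides over $C'$ by \textsc{base monotonicity}. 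Choosing $\kappa$ above the cardinal that (i), resp.\ (iii), would provide contradicts local character.

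The main obstacle is the converse tree construction underlying $(v)\Rightarrow(ii),(iv),(iii)$. There one is handed a finite tuple $a$ and a sequence $(\bar b^n)_{n<\omega}$ with $\phi(x,\bar b^n)\in\tp(a/\bigcup_m\bar b^m)$ for all $n$ and $\phi(x,\bar b^n)$ $\psi$-dividing over $\{\bar b^m:m<n\}$ for all $n$, where $\psi$ is an obstruction of $\exists x\bigl(\phi(x,y_1)\wedge\ldots\wedge\phi(x,y_k)\bigr)$, and must assemble the full tree $(e_\eta)_{\eta\in\omega^{<\omega}}$ witnessing $k$-\TP. This is the classical Shelah-style construction: build the tree level by level (equivalently, all finite approximations, then pass to the limit by compactness), at each node invoking \thref{psi-dividing-indiscernible-sequence} together with the extension property for indiscernible sequences (\thref{extend-base-set-of-indiscernible-sequence}) to spawn an $\omega$-indiscernible family of successors along which $\psi$ holds, while the vertical consistency of each branch is inherited from all the $\phi(x,\bar b^n)$ lying in the single type $\tp(a/\cdot)$. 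Once this is done $\phi$ has $k$-\TP, contradicting \NTP, and together with the reductions above the circle of implications closes. Managing the indiscernible families across successive levels (and the harmless reindexings) is the one genuinely technical point; the rest is formal manipulation with \thref{dividing-basic-properties} and compactness.
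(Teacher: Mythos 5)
Your proposal is correct in outline but organises the proof differently from the paper, and the difference is worth spelling out. The paper dispatches (i)$\Leftrightarrow$(iii) and (ii)$\Leftrightarrow$(iv) abstractly via \thref{local-character-in-terms-of-sequences} (your (iv)$\Rightarrow$(ii) and (ii)$\Rightarrow$(i) arguments are exactly the content of that lemma, so that part is fine), and then proves only (v)$\Rightarrow$(ii) and (iii)$\Rightarrow$(v). For $\neg$(ii)$\Rightarrow$\TP the paper builds the tall tree \emph{first}, during the transfinite recursion (the siblings at level $\delta+1$ come directly from dividing of $\tp(a/C)$ over the current zero branch), and only \emph{afterwards} pigeonholes over the $|T|^+$ many levels to extract an $\omega^{<\omega}$-indexed subtree with a uniform $(\phi,\psi)$. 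You invert this: pigeonhole first, on a linear dividing chain, to get an $\omega$-length chain with uniform $(\phi,\psi)$, and then run the classical Shelah/Kim-style chain-to-tree construction (equivalently, the rank argument ``$D(x{=}x,\phi,\psi)\geq n$ for all $n$ implies \TP''). Your route is more modular and isolates a reusable lemma, but it pays for this with precisely the induction the paper's ordering avoids: be aware that for the off-branch branches, consistency is \emph{not} ``inherited from the single type $\tp(a/\cdot)$'' --- only the distinguished branch is realised by $a$; all other branches are obtained by automorphic copying over initial segments (with the chain entering the tree in reverse order), and setting up the induction hypothesis so that this copying preserves branch consistency is the real content of the step you defer to ``managing the indiscernible families''.

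Two smaller cautions on the \TP$\Rightarrow\neg$(local character) direction. First, with the blow-up indexed by $\kappa^{<\kappa}$ the branching degree equals the height, so at high levels of the chosen branch a plain pigeonhole cannot produce $\kappa$-many successors with the same type over the initial segment; your argument therefore genuinely leans on the tree-modelling claim, which is true but is external machinery not developed in these notes (the paper only provides Erd\H{o}s--Rado extraction for linear sequences). The cheap repair is the paper's own device: take branching $(2^{|T|+\lambda})^+$ much larger than the height $\lambda$ and pigeonhole at each node of the branch. Second, note that $\psi$-dividing (\thref{psi-dividing}) only requires a sequence of tuples with the same type over the base along which $\psi$ holds, not an indiscernible one, so the indiscernibility you request via tree-modelling is overkill even for your own argument. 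With these adjustments your plan goes through and establishes the theorem by an essentially equivalent, if more labour-intensive, route.
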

\begin{definition}
\thlabel{simplicity}
We call a theory $T$ \emph{simple}\index{simple theory} if the equivalent conditions from \thref{simplicity-equivalences} hold.
\end{definition}
We prove \thref{simplicity-equivalences} in the remainder of this section. A substantial part of it works for arbitrary independence relations.
\begin{proposition}
\thlabel{local-character-in-terms-of-sequences}
Suppose that $\ind$ satisfies \textsc{base monotonicity}, \textsc{normality} and \textsc{monotonicity}. Then the following are equivalent:
\begin{enumerate}[label=(\roman*)]
\item $\ind$ satisfies \textsc{local character}, i.e.\ for every cardinal $\kappa$ there is a cardinal $\lambda$ such that for all $a$ with $|a| < \kappa$ and any $C$ there is $C' \subseteq C$ with $|C'| < \lambda$ and $a \ind_{C'} C$;
\item for every cardinal $\kappa$ there is a cardinal $\lambda$ such that for all $a$ with $|a| < \kappa$, any $C$ and any sequence $(b_i)_{i < \mu}$ there is $i_0 < \lambda$ with $a \ind_{C(b_j)_{j < i_0}} (b_j)_{i_0 \leq j < \mu}$.
\end{enumerate}
Furthermore, for every $\kappa$ we can take the $\lambda$ in (i) and (ii) to be the same.

If $\ind$ also satisfies \textsc{finite character} then the above statements are further equivalent to the case where $\kappa$ is fixed to be $\omega$. That is, if there is such a $\lambda$ for $\kappa = \omega$ then there is such a $\lambda$ for every $\kappa$.
\end{proposition}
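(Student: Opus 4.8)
The plan is to handle the implication \emph{(ii)~$\Rightarrow$~(i)} first and cheaply, then \emph{(i)~$\Rightarrow$~(ii)} with some care, and finally the ``furthermore'' clause about \textsc{finite character} by gluing finite pieces. For \emph{(ii)~$\Rightarrow$~(i)}, fix $\kappa$ and let $\lambda$ be as in (ii). Given $a$ with $|a|<\kappa$ and a set $C$, enumerate $C$ as a sequence $(b_i)_{i<|C|}$ and apply (ii) with empty external parameter set to this sequence, obtaining $i_0<\lambda$ with $a\ind_{(b_j)_{j<i_0}}(b_j)_{i_0\le j<|C|}$. Put $C'=\{b_j:j<i_0\}$, so $|C'|<\lambda$; since $C=C'\cup\{b_j:i_0\le j<|C|\}$, applying \textsc{normality} and then \textsc{monotonicity} to $a\ind_{C'}\{b_j:i_0\le j<|C|\}$ gives $a\ind_{C'}C$. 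So the very same $\lambda$ works for (i).

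For \emph{(i)~$\Rightarrow$~(ii)}, fix $\kappa$ and let $\lambda$ witness (i); we may assume $\lambda$ is regular, since any cardinal above a witness is again a witness. Given $a$ with $|a|<\kappa$, a set $C$ and a sequence $(b_i)_{i<\mu}$, set $D_i=C\cup\{b_j:j<i\}$ for $i\le\mu$, an increasing continuous chain. By \textsc{normality} and \textsc{monotonicity}, $a\ind_{D_{i_0}}(b_j)_{i_0\le j<\mu}$ holds if and only if $a\ind_{D_{i_0}}D_\mu$ does, so it suffices to find $i_0<\lambda$ with $a\ind_{D_{i_0}}D_\mu$. Apply (i) to $a$ and $D_\mu$: there is $D'\subseteq D_\mu$ with $|D'|<\lambda$ and $a\ind_{D'}D_\mu$; letting $i_0=\sup\{j+1:b_j\in D'\}$ be the least ordinal with $D'\subseteq D_{i_0}$, \textsc{base monotonicity} gives $a\ind_{D_{i_0}}D_\mu$. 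Thus everything reduces to the bound $i_0<\lambda$, which when $\mu\le\lambda$ is immediate from $|D'|<\lambda$ and regularity of $\lambda$. For $\mu>\lambda$ the late entries of $D'$ may have indices at or above $\lambda$, and I would handle this by contradiction: assuming $a\nind_{D_i}D_\mu$ for all $i<\lambda$, run the preceding argument against the initial segments $D_\lambda\subseteq D_{\lambda\cdot 2}\subseteq\cdots$ of the chain and use \textsc{base monotonicity} together with continuity of the chain to push the capturing index below $\lambda$ while absorbing the residual tail $(b_j)_{\lambda\le j<\mu}$.

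I expect this last step to be the main obstacle. The difficulty is that \textsc{base monotonicity} only lets one \emph{enlarge} the base of an independence statement within $D_\mu$, never \emph{relocate} it to an earlier segment of the chain, so one must arrange that the small base delivered by \textsc{local character} already sits inside an initial segment of length $<\lambda$ rather than merely of length $<\mu$; getting this cardinal bookkeeping right is where the real work lies, and it is also the only place the regularity of $\lambda$ is genuinely used.

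For the ``furthermore'', assume in addition \textsc{finite character}, and suppose (i) (equivalently (ii)) holds in the special case $\kappa=\omega$, with bound $\lambda_\omega$. Given an arbitrary $\kappa$, an $a$ with $|a|<\kappa$ and a set $C$: for each finite $a'\subseteq a$ apply the case $\kappa=\omega$ to obtain $C'_{a'}\subseteq C$ with $|C'_{a'}|<\lambda_\omega$ and $a'\ind_{C'_{a'}}C$, and put $C'=\bigcup\{C'_{a'}:a'\subseteq a\text{ finite}\}\subseteq C$. Then $|C'|$ is bounded by a cardinal depending only on $\kappa$ and $\lambda_\omega$, say $|C'|\le\max(|a|,\omega)\cdot\lambda_\omega$, and for each finite $a'\subseteq a$ we have $C'_{a'}\subseteq C'\subseteq C$, so \textsc{base monotonicity} gives $a'\ind_{C'}C$, hence by \textsc{monotonicity} $a'\ind_{C'}b'$ for every finite $b'\subseteq C$; then \textsc{finite character} yields $a\ind_{C'}C$. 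So $\lambda=(\kappa\cdot\lambda_\omega)^+$ witnesses (i) for this $\kappa$, and the established equivalence gives (ii) as well.
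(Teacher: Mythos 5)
Your proof of (ii) $\Rightarrow$ (i) and your handling of the \textsc{finite character} clause are correct and essentially the paper's own arguments: the paper likewise enumerates $C$, takes $C'$ to be the initial segment below $i_0$ and closes with \textsc{normality}, and for the last clause it likewise takes the union of the witnesses $C_{a'}$ over finite $a' \subseteq a$ and concludes by \textsc{base monotonicity} plus \textsc{finite character}, with bound $|\lambda \times [\kappa]^{<\omega}|^+$. The genuine gap is exactly where you said it would be: in (i) $\Rightarrow$ (ii) you only prove the case in which every element of the small base supplied by \textsc{local character} already occurs at an index $<\lambda$ (in effect $\mu \le \lambda$ with $\lambda$ regular), and for $\mu > \lambda$ you offer only a sketch (``push the capturing index below $\lambda$ while absorbing the residual tail'') which cannot be carried out from the three stated hypotheses. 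Indeed, \textsc{monotonicity}, \textsc{base monotonicity} and \textsc{normality} alone do not yield (i) $\Rightarrow$ (ii) for unrestricted $\mu$: fix distinct elements $p, c$ of the monster and declare $A \ind_C B$ if and only if $p \notin A \setminus C$ or $c \notin B \setminus C$ (tuples read as the sets they enumerate). This relation satisfies all three properties and satisfies (i) with $\lambda = \omega$ for every $\kappa$ (take $C' = \{c\}$ if $c \in C$, and $C' = \emptyset$ otherwise); yet for any $\lambda$, taking $a = p$, $C = \emptyset$ and the sequence $(b_i)_{i \le \lambda}$ with the $b_i$ pairwise distinct elements different from $p, c$ for $i < \lambda$ and $b_\lambda = c$, no $i_0 < \lambda$ satisfies $a \ind_{(b_j)_{j < i_0}} (b_j)_{i_0 \le j \le \lambda}$: the base $\{b_j : j < i_0\}$ contains neither $p$ nor $c$, while the tail contains $c$. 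So the missing case is not bookkeeping you left out; it is not provable at this level of generality, and your diagnosis that \textsc{base monotonicity} can enlarge but never relocate the base is precisely the obstruction.

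Comparing with the paper: its proof of (i) $\Rightarrow$ (ii) applies \textsc{local character} to $BC$, obtains $B' \subseteq B$ and $C' \subseteq C$ with $|B'C'| < \lambda$ and $a \ind_{B'C'} BC$, and then asserts that since $|B'| < \lambda$ there is $i_0 < \lambda$ with $B' \subseteq \{b_j : j < i_0\}$ --- exactly the index-placement step you distrusted, and it is only justified when each element of $B'$ occurs at some index below $\lambda$ and (for $\mu = \lambda$) $\lambda$ is regular. In all the uses of the proposition later in the paper (\thref{simplicity-equivalences}, \thref{general-kims-lemma}, \thref{kim-pillay}) the sequence has length at most $\lambda$ and $\lambda$ can be taken regular (e.g.\ $|T|^+$), so the case you did prove --- including your remark that regularity costs at most passing to $\lambda^+$ --- is what is actually needed downstream. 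But as a proof of the proposition as stated (arbitrary $\mu$, the same $\lambda$) your proposal is incomplete, and completing it would require either restricting to sequences all of whose entries occur at indices $<\lambda$ (for instance $\mu \le \lambda$) or assuming more about $\ind$ than the three listed properties.
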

\begin{proof}
Fix $\kappa$. We first prove (i) $\Rightarrow$ (ii). Let $a$ and $(b_i)_{i < \mu}$ be as in the statement of (ii). Set $B = \{b_i : i < \mu\}$ and apply \textsc{local character} to $BC$. Then we find $B' \subseteq B$ and $C' \subseteq C$ with $|B'C'| < \lambda$ such that $a \ind_{B'C'} BC$. As $|B'| < \lambda$ there is $i_0 < \lambda$ such that $B' \subseteq \{b_j : j < i_0\}$. The result then follows by \textsc{base-monotonicity} and \textsc{monotonicity}.

For the converse we let $a$ be such that $|a| < \kappa$ and $C$ be any set. Let $(c_i)_{i < \mu}$ enumerate $C$. By assumption there is $i_0 < \lambda$ such that $a \ind_{(c_j)_{j < i_0}} (c_j)_{i_0 \leq j < \mu}$. Setting $C' = \{c_j : j < i_0\}$ then yields the required independence, after an application of \textsc{normality}.

Finally, assuming \textsc{finite character}, we will show that if (i) holds for $\kappa = \omega$ then it holds for all $\kappa$. So let $\kappa$ be arbitrary and let $a$ be such that $|a| < \kappa$. Let $\lambda$ be as in (i) for the $\omega$ case and set $\lambda' = |\lambda \times [\kappa]^{<\omega}|^+$. We claim that for any $C$ there is $C' \subseteq C$ with $|C'| < \lambda'$ such that $a \ind_{C'} C$. For each finite $a' \subseteq a$ we let $C_{a'} \subseteq C$ be such that $|C_{a'}| < \lambda$ and $a' \ind_{C_{a'}} C$. Set $C' = \bigcup \{C_{a'} : a' \subseteq a \text{ finite}\}$, then by construction $|C'| < \lambda'$. For all finite $a' \subseteq a$ we have $a' \ind_{C'} C$ by \textsc{base monotonicity}. Hence by \textsc{finite character} we have $a \ind_{C'} C$, as required.
\end{proof}
\begin{proof}[Proof of \thref{simplicity-equivalences}]
By \thref{local-character-in-terms-of-sequences} we have (i) $\Leftrightarrow$ (iii) and (ii) $\Leftrightarrow$ (iv), in the latter it is crucial that the $\lambda$ in \thref{local-character-in-terms-of-sequences} can be taken to be the same in both cases. By that same result we have that (ii) and (iv) imply (i) and (iii). It thus suffices to prove (v) $\Rightarrow$ (ii) and (iii) $\Rightarrow$ (v), both of which we prove by contrapositive.

\underline{(v) $\Rightarrow$ (ii)} Let $a$ be finite and let $C$ be such that $\tp(a/C)$ divides over $C'$ for all $C' \subseteq C$ with $|C'| < |T|^+$. We will construct an instance of \TP. We first construct a tree $(c_\eta)_{\eta \in \omega^{< |T|^+}}$ by induction on its height. Let $\zeta_\alpha \in \omega^\alpha$ denote the constant zero function. Our induction hypothesis at $\delta < \lambda$, where we have constructed $(c_\eta)_{\eta \in \omega^{\leq \delta}}$, will be as follows:
\begin{enumerate}[label=(\arabic*)]
\item $c_{\zeta_\alpha}$ is a finite tuple of elements from $C$ for all $\alpha \leq \delta$;
\item $(c_{\eta|_\alpha})_{\alpha \leq \delta} \equiv (c_{\zeta_\alpha})_{\alpha \leq \delta}$ for all $\eta \in \omega^\delta$;
\item if $\delta = \gamma + 1$ is a successor then there are $\phi_\delta(x, y)$ and an obstruction $\psi_\delta(y_1, \ldots, y_{k_\delta})$ of $\exists x(\phi_\delta(x, y_1) \wedge \ldots \wedge \phi_\delta(x, y_{k_\delta}))$ such that $\models \phi_\delta(a, c_{\zeta_\delta})$ and for any $\eta \in \omega^\gamma$ we have that $\psi_\delta$ holds along $(c_{\eta^\frown i})_{i < \omega}$.
\end{enumerate}
For $\delta < |T|^+$ a limit or zero we let all $c_\eta$, where $\eta \in \omega^\delta$, be the empty tuple. Now suppose that $(c_\eta)_{\eta \in \omega^{\leq \delta}}$ has been constructed. By (1) we have that $|\{c_{\zeta_\alpha} : \alpha \leq \delta \}| < |T|^+$. So $\tp(a/C)$ divides over $\{c_{\zeta_\alpha} : \alpha \leq \delta \}$. Let then $\phi_{\delta+1}(x, c) \in \tp(a/C)$ and $\psi(y_1, \ldots, y_k)$ be such that $\phi_{\delta+1}(x, c)$ $\psi$-divides over $\{c_{\zeta_\alpha} : \alpha \leq \delta \}$. Then there is a sequence $(d_i)_{i < \omega}$ with $d_i \equiv_{(c_{\zeta_\alpha})_{\alpha \leq \delta}} c$ for all $i < \omega$ such that $\psi$ holds along $(d_i)_{i < \omega}$. After applying an automorphism, we may assume that $d_0 = c$. Furthermore, we may assume that the part of $\{c_{\zeta_\alpha} : \alpha \leq \delta \}$ that is mentioned in $\phi_{\delta+1}(x, c)$ and $\psi(y_1, \ldots, y_k)$ is contained in $c$ (and hence in each $d_i$), and so $\phi_{\delta+1}(x, y)$ and $\psi(y_1, \ldots, y_k)$ do not contain any parameters. We now define $c_{\zeta_\delta^\frown i} = d_i$ for all $i < \omega$. This ensures (1). For $\eta \in \omega^\delta$ we have $(c_{\eta|_\alpha})_{\alpha \leq \delta} \equiv (c_{\zeta_\alpha})_{\alpha \leq \delta}$ by (2) from the induction hypothesis. We then let $(c_{\eta^\frown i})_{i < \omega}$ be such that $(c_{\eta^\frown i})_{i < \omega}(c_{\eta|_\alpha})_{\alpha \leq \delta} \equiv (d_i)_{i < \omega} (c_{\zeta_\alpha})_{\alpha \leq \delta}$. Then (2) follows because $d_i \equiv_{(c_{\zeta_\alpha})_{\alpha \leq \delta}} c = d_0 = c_{\zeta_{\delta+1}}$ for all $i < \omega$. Finally, (3) follows because for any $\eta \in \omega^\delta$ the sequence $(c_{\eta^\frown i})_{i < \omega}$ is an automorphic copy of $(d_i)_{i < \omega}$, along which $\psi$ holds. This completes the inductive construction of the tree $(c_\eta)_{\eta \in \omega^{<|T|^+}}$.

There are $|T|$ possible pairs for formulas $\phi(x, y)$ and $\psi(y_1, \ldots, y_k)$. We have $|T|^+$ successor levels in our tree, each of which is assigned a pair $\phi_\delta(x, y)$ and $\psi_\delta(y_1, \ldots, y_{k_\delta})$. Hence, by the pigeonhole principle there is an infinite set $l_0 < l_1 < l_2 < \ldots < |T|^+$ of levels to which the same $\phi_\delta$ and $\psi_\delta$ are assigned. We write just $\phi$ and $\psi$ for these formulas. We consider the following subtree $(f_\mu)_{\mu \in \omega^{<\omega}}$ that consists of the chosen levels (with the root being the leftmost point on level $l_0$). To be precise, for $\mu \in \omega^{<\omega}$ of length $n$ we define $\eta_\mu \in \omega^{l_n}$ of length $l_n$ as
\[
\eta_\mu(l) = \begin{cases}
\mu(i) & \text{if } l = l_{i+1} - 1, \\
0 & \text{otherwise.}
\end{cases}
\]
Note that $l_{i+1} - 1$ makes sense because we have only chosen successor levels. We then set $f_\mu = c_{\eta_\mu}$.

We claim that $(f_\mu)_{\mu \in \omega^{<\omega}}$ witnesses \TP for $\phi(x, y)$. Let $\sigma \in \omega^\omega$. By construction and (2), we have $(f_{\sigma|_n})_{n < \omega} \equiv (f_{\zeta_n})_{n < \omega}$. Then by (1) and (3) we have that $a$ realises $\{\phi(x, f_{\zeta_n}) : n < \omega\}$, and so we conclude that $\{\phi(x, f_{\sigma|_n}) : n < \omega\}$ is consistent. Finally, $\psi$ holds along $(f_{\eta^\frown i})_{i < \omega}$ for every $\eta \in \omega^{<\omega}$ by (3) and because we only chose successor levels for $(l_n)_{n < \omega}$.

\underline{(iii) $\Rightarrow$ (v)} We will show that (iii) is violated for $\kappa = \omega$. Let $\phi(x, y)$ have \TP as witnessed by $\psi(y_1, \ldots, y_k)$ and $(c_\eta)_{\eta \in \omega^{<\omega}}$. Let $\lambda$ be any cardinal. Set $\mu = (2^{|T| + \lambda})^+$ and, using compactness, enlarge our tree to $(c_\eta)_{\eta \in \mu^{< \lambda}}$.

We construct some $\sigma \in \mu^{\lambda}$ by induction on its length. Let $\sigma|_\gamma$ be defined for $\gamma < \lambda$. Write $C = \{c_{\sigma|_i} : i \leq \gamma\}$ and $\eta = \sigma|_\gamma$. There are at most $2^{|T| + \lambda}$  different types over $C$. So by our choice of $\mu$, there exists an infinite $I_\gamma \subseteq \mu$ such that for any $i,j \in I_\gamma$ we have $c_{\eta^\frown i} \equiv_C c_{\eta^\frown j}$. Let $i_0$ be the least element of $I_\gamma$ and set $\sigma(\gamma) = i_0$.

Having finished the construction of $\sigma$ we define $b_i = c_{\sigma|_{i+1}}$ for all $i < \lambda$. Then there is a realisation $a$ of $\{\phi(x, b_i) : i < \lambda\}$. Let $\gamma < \lambda$ then by construction we have that $c_{{\sigma|_\gamma}^\frown i} \equiv_{(b_j)_{j < \gamma}} b_\gamma$ for all $i \in I_\gamma$, while $\psi$ holds along $(c_{{\sigma|_\gamma}^\frown i})_{i \in I_\gamma}$. In particular, this means that $\phi(x, b_\gamma)$ $\psi$-divides over $(b_j)_{j < \gamma}$ and since $\phi(x, b_\gamma) \in \tp(a/(b_i)_{i < \lambda})$ we have that $a \nind^d_{(b_j)_{j < \gamma}} (b_j)_{\gamma \leq j < \lambda}$. As $\lambda$ and $\gamma$ were arbitrary, and $a$ is finite, we conclude that (iii) fails.
\end{proof}
\section{Thickness implies full existence for dividing independence}
\label{sec:thickness-implies-full-existence}
The aim of this section is to prove that dividing independence satisfies full existence in simple theories. It turns out that we also need to assume thickness (see \thref{full-existence-without-thickness}). We start with the statement of the main result of this section, and the remainder is devoted to the rather technical proof (whose tools will have no further use to us).
\begin{theorem}
\thlabel{simple-thick-implies-full-existence}
Assume thickness. If $T$ is simple then dividing independence has \textsc{full existence}. That is, for any $a, b, C$, there is $b'$ with $b' \equiv_C b$ such that $a \ind^d_C b'$.
\end{theorem}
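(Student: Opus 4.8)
The plan is to argue by contradiction and route everything through a positive-logic version of \emph{Kim's lemma}: in a simple (i.e.\ \NTP) theory, if a formula $\phi(x,d)$ divides over $C$, then $\{\phi(x,d_j) : j < \omega\}$ is inconsistent for \emph{every} $C$-indiscernible sequence $(d_j)_{j<\omega}$ with $d_0 = d$, not merely for some witnessing sequence. Granting this, suppose \textsc{full existence} fails for $a, b, C$, so that $\tp(a/Cb')$ divides over $C$ for every $b' \equiv_C b$; I will derive a contradiction, which is exactly the statement to be proved.

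\textbf{Deducing full existence from Kim's lemma.} First I would fix a sufficiently large regular cardinal $\kappa$ (bigger than $|T| + |C|$) and produce a $C$-indiscernible sequence $(b_i)_{i < \kappa}$ all of whose terms realise $\tp(b/C)$. Such a sequence exists by extracting a $C$-indiscernible $\omega$-sequence from a long sequence of realisations of $\tp(b/C)$ via \thref{base-indiscernible-sequence-on-long-sequence}, and then stretching to length $\kappa$ by compactness --- and this stretching step is the first place thickness is used, since passing from an $\omega$-indiscernible sequence to a $\kappa$-indiscernible one via compactness needs ``being an indiscernible sequence'' to be type-definable. By our hypothesis $\tp(a/Cb_i)$ divides over $C$ for each $i$, so by \thref{psi-dividing-lemma} it contains a formula $\phi_i(x, b_i)$ (with possibly extra parameters from $C$) that $\psi_i$-divides over $C$ for some $\psi_i$. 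There are at most $|T| + |C|$ possibilities for the formulas and the finitely many $C$-parameters involved, so by pigeonhole there is $S \subseteq \kappa$ of full size and a single choice $\phi(x, y, c)$, $\psi(y_1, \dots, y_k, c)$ with $c \in C$ such that, for all $i \in S$, $\models \phi(a, b_i, c)$ and $\phi(x, b_i, c)$ $\psi$-divides over $C$, with $\psi$ an obstruction of $\exists x(\phi(x, y_1, c) \wedge \dots \wedge \phi(x, y_k, c))$. Now $(b_i)_{i \in S}$ is still $C$-indiscernible, and $\{\phi(x, b_i, c) : i \in S\}$ is consistent, being realised by $a$ itself. On the other hand, $\phi(x, b_{i_0}, c)$ with $i_0 = \min S$ $\psi$-divides over $C$, hence divides over $C$ by \thref{psi-dividing-lemma}, so Kim's lemma forces $\{\phi(x, d_j, c) : j < \omega\}$ to be inconsistent for every $C$-indiscernible $(d_j)_{j < \omega}$ with $d_0 = b_{i_0}$ --- in particular for the length-$\omega$ initial segment of $(b_i)_{i \in S}$. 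This is the desired contradiction.

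\textbf{Proving Kim's lemma.} This is the technical heart and the main obstacle. I would prove it by contraposition: suppose $\phi(x, d)$ divides over $C$ and yet there is a $C$-indiscernible sequence through $d$ along which $\{\phi(x, -, c)\}$ is consistent. From $\psi$-dividing of $\phi(x, d, c)$ (and \thref{psi-dividing-indiscernible-sequence}) one also has a $C$-indiscernible sequence through $d$ along which the obstruction $\psi$ holds. One then builds a tree $(c_\eta)_{\eta \in \omega^{<\omega}}$ of parameters witnessing the $k$-tree property for $\phi(x, y, c)$ with obstruction $\psi$, contradicting \NTP (cf.\ \thref{tree-property} and \thref{simplicity-equivalences}). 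The tree has to be arranged so that each branch $\sigma \in \omega^\omega$ looks like the consistent sequence --- making $\{\phi(x, c_{\sigma|_n}, c) : n < \omega\}$ consistent --- while each family of siblings $(c_{\eta^\frown i})_{i < \omega}$ looks like the $\psi$-dividing sequence, making $\psi$ hold along siblings. Reconciling these two demands is carried out level by level with repeated applications of the Erd\H{o}s--Rado theorem (\thref{erdos-rado}) and compactness, and it is precisely here that thickness is indispensable in the positive setting: one needs ``being a $C$-indiscernible sequence of the prescribed type over a given initial segment'' to be expressible by a partial type so that the entire tree can be obtained from a single compactness argument. This also explains why \textsc{full existence} genuinely requires thickness, as recorded later in \thref{full-existence-without-thickness}.
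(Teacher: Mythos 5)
Your argument has a genuine gap at its central step: the version of ``Kim's lemma'' you invoke --- that in a simple theory a formula $\phi(x,d)$ which divides over $C$ is inconsistent along \emph{every} $C$-indiscernible sequence starting with $d$ --- is false, even for stable full first-order theories. The constant sequence $d,d,d,\ldots$ is $C$-indiscernible and $\bigcup_j\{\phi(x,d_j)\}=\{\phi(x,d)\}$ is consistent; less trivially, in the theory of an equivalence relation with infinitely many infinite classes the formula $E(x,d)$ divides over $\emptyset$ (witnessed by representatives of distinct classes) yet is consistent along the indiscernible sequence of distinct elements of the class of $d$. Kim's lemma (\thref{kims-lemma}) quantifies over \emph{Morley} sequences, i.e.\ $C$-indiscernible sequences that are in addition $\ind^d_C$-independent, not over arbitrary indiscernible sequences; your proposed tree-building proof of the stronger statement must therefore also fail (the equivalence-relation example supplies both a consistent indiscernible sequence and a $\psi$-dividing sequence in the same type, with no instance of \TP to be found).

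Repairing the statement does not rescue the deduction. With the correct Kim's lemma you would need the long sequence $(b_i)_{i<\kappa}$ of realisations of $\tp(b/C)$ to be a Morley sequence over $C$, and the Erd\H{o}s--Rado extraction gives indiscernibility but no independence. Worse, the existence of Morley sequences over an arbitrary base $C$ is derived in the paper (\thref{morley-sequences-exist}, via \thref{building-morley-sequences}) precisely from \textsc{full existence}, i.e.\ from the theorem you are trying to prove, so this route is circular; the underlying \thref{general-kims-lemma} needs only \textsc{local character} and the basic properties of $\ind^d$, but it cannot be applied without first manufacturing a Morley sequence. The paper's actual proof avoids Kim's lemma altogether: assuming \textsc{full existence} fails, it constructs arbitrarily long reversed $\zeta$-dividing sequences (\thref{forking-type-yields-arbitrarily-long-dividing-sequence}), with thickness entering through the type-definability of being a dividing sequence (\thref{dividing-sequence-type-definable}) and the base-extension lemma for finite dividing sequences (\thref{finite-dividing-sequence-extend-base-set}); a pigeonhole on the witnessing pairs $(\phi,\psi)$ and a compactness argument then produce a single type that divides over an increasing chain of parameter sets of size $<|T|^+$, contradicting \textsc{local character}. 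You would either need to reproduce that machinery or find some other non-circular way to obtain the needed independent sequence.
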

\begin{definition}
\thlabel{dividing-sequence}
A \term{sequence of dividing witnesses} \emph{over $C$} is a sequence $\zeta = (\phi_i(x, y^i), \psi_i(y^i_1, \ldots, y^i_{k_i}))_{i \in I}$ of pairs for formulas over $C$ such that $\psi_i(y^i_1, \ldots, y^i_{k_i})$ is an obstruction of $\exists x(\phi_i(x, y^i_1) \wedge \ldots \wedge \phi_i(x, y^i_{k_i}))$.

Given such a sequence of $\zeta$ of dividing witnesses, a \emph{$\zeta$-dividing sequence}\index{dividing!zeta-dividing sequence@$\zeta$-dividing sequence} \emph{(over $C$)} is a sequence of tuples $(b_i)_{i \in I}$ such that $\{\phi_i(x, b_i) : i \in I\}$ is consistent and for all $i \in I$ we have that $\phi_i(x, b_i)$ $\psi_i$-divides over $C(b_j)_{j < i}$.
\end{definition}
It is clear from the definition that dividing sequences are stable under taking subsequences. More precisely, suppose we are given a sequence of dividing witnesses $\zeta$ and a $\zeta$-dividing sequence $(b_i)_{i \in I}$. Let $I' \subseteq I$, and let $\zeta' \subseteq \zeta$ be the corresponding subsequence of dividing witnesses. Then $(b_i)_{i \in I'}$ is a $\zeta'$-dividing sequence.
\begin{lemma}
\thlabel{dividing-sequence-type-definable}
Assume thickness. Let $\zeta = (\phi_i(x, y^i), \psi_i(y^i_1, \ldots, y^i_{k_i}))_{i \in I}$ be a sequence of dividing witnesses over $C$. Then being a $\zeta$-dividing sequence over $C$ is type-definable over $C$. Moreover, the defining set of formulas is entirely determined by its restriction to finite subsequences.

More precisely, for any $I' \subseteq I$ there is a set of formulas $\Sigma_{I'}((y^i)_{i \in I'})$ over $C$ such that $\models \Sigma_{I'}((b_i)_{i \in I'})$ if and only if $(b_i)_{i \in I'}$ is a $(\phi_i(x, y^i), \psi_i(y^i_1, \ldots, y^i_{k_i}))_{i \in I'}$-dividing sequence over $C$. Moreover, for infinite $I' \subseteq I$ this set of formulas is given by $\Sigma_{I'} = \bigcup \{\Sigma_{I_0} : I_0 \subseteq I' \text{ is finite}\}$.
\end{lemma}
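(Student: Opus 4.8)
The plan is to write the partial type $\Sigma_{I'}$ down explicitly for each $I'\subseteq I$, translating the two defining clauses of a $\zeta|_{I'}$-dividing sequence — consistency of $\{\phi_i(x,b_i):i\in I'\}$, and $\psi_i$-dividing of $\phi_i(x,b_i)$ over $C(b_j)_{j<i}$ for each $i$ — into type-definable conditions over $C$, and then to observe that every formula produced mentions only finitely many of the variables $y^i$.

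First I would dispatch the consistency clause: by compactness $\{\phi_i(x,b_i):i\in I'\}$ is consistent iff $\models\exists x\,\bigwedge_{i\in I_0}\phi_i(x,b_i)$ for every finite $I_0\subseteq I'$, so it is captured by the set of formulas over $C$
\[
\Gamma_{I'}=\big\{\,\exists x\,\textstyle\bigwedge_{i\in I_0}\phi_i(x,y^i) : I_0\subseteq I'\text{ finite}\,\big\},
\]
each member of which uses finitely many $y^i$. Next, for the dividing clause I would use \thref{psi-dividing-indiscernible-sequence}: $\phi_i(x,b_i)$ $\psi_i$-divides over $C(b_j)_{j<i}$ iff there is a $C(b_j)_{j<i}$-indiscernible sequence $(c_n)_{n<\omega}$ with $c_0=b_i$ along which $\psi_i$ holds. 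This is where thickness enters. Writing $\Theta$ for the partial type defining indiscernibility of $\omega$-sequences of tuples of the relevant length, and given a finite $C_0\subseteq C$ enumerated by $c_0$ and a finite $J\subseteq\{j<i\}$, the condition ``$(x_n)_{n<\omega}$ is $C_0(b_j)_{j\in J}$-indiscernible'' is type-defined over $C_0$ by $\Theta\big((c_0{}^{\frown}(y^j)_{j\in J}{}^{\frown}x_n)_{n<\omega}\big)$, using that an automorphism of $\MM$ fixing $C_0(b_j)_{j\in J}$ pointwise is exactly one fixing the tuple $c_0{}^{\frown}(b_j)_{j\in J}$; and ``$(x_n)_{n<\omega}$ is $C(b_j)_{j<i}$-indiscernible'' is then the union of these over all finite $C_0\subseteq C$ and finite $J\subseteq\{j<i\}$, since two tuples have the same type over a set iff they have the same type over each of its finite subsets.

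Conjoining this (via \thref{combine-partial-types}(ii)) with ``$x_0=y^i$'' and with $\{\psi_i(x_{n_1},\dots,x_{n_{k_i}}):n_1<\dots<n_{k_i}<\omega\}$ (expressing ``$\psi_i$ holds along $(x_n)_n$'', per \thref{formula-holding-along-sequence}), and then existentially quantifying out the infinite string $(x_n)_{n<\omega}$ using \thref{combine-partial-types}(iii), I obtain a set of formulas $E_i$ over $C$ in the variables $(y^j)_{j\le i}$ with $\models E_i((b_j)_{j\le i})$ iff $\phi_i(x,b_i)$ $\psi_i$-divides over $C(b_j)_{j<i}$; and, formulas being finitary, each formula of $E_i$ mentions only finitely many $y^j$. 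Setting $\Sigma_{I'}=\Gamma_{I'}\cup\bigcup_{i\in I'}E_i$ then type-defines being a $\zeta|_{I'}$-dividing sequence over $C$. The ``moreover'' falls out: every formula of $\Sigma_{I'}$ uses only finitely many $y^i$, hence already occurs in $\Sigma_{I_0}$ for the finite $I_0\subseteq I'$ of indices it mentions, while conversely $\Sigma_{I_0}\subseteq\Sigma_{I'}$ for each finite $I_0\subseteq I'$ since every building block for $I_0$ refers only to a finite slice of the base $\{b_j:j\in I',j<i\}$.

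The main obstacle is bookkeeping rather than anything conceptual: one must keep careful track that the base $C(b_j)_{j<i}$ of the inner indiscernible sequence mixes genuine parameters from $C$ with the free variables $(y^j)_{j<i}$, and check that breaking its indiscernibility clause into finite sub-bases is exactly what makes $\Sigma_{I'}$ generated by its finite restrictions — equivalently, that a $\zeta|_{I'}$-dividing sequence is the same thing as a sequence all of whose finite subsequences are dividing sequences, the forward direction being stability under subsequences and the reverse being a compactness argument applied to $\Theta$.
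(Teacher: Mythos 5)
Your proof is correct and follows essentially the same route as the paper: you express, for each index $i$, the existence of an indiscernible witness sequence (via thickness and \thref{psi-dividing-indiscernible-sequence}) with $x_0 = y^i$ along which $\psi_i$ holds, quantify it out with \thref{combine-partial-types}, conjoin with the consistency clause, and read off the ``moreover'' part from the finitariness of the formulas involved. The only difference is presentational: you spell out how indiscernibility over the mixed base $C(y^j)_{j<i}$ is type-defined (appending finite pieces of $C$ and finite subtuples of the $y^j$ to the sequence elements and applying $\Theta$), a point the paper's proof leaves implicit.
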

\begin{proof}
For $i \in I$ we let $\Gamma_i(y^i, z)$ be the set of formulas over $C$ expressing
\[
\exists (y_j)_{j < \omega}(
\text{``$(y_j)_{j < \omega}$ is $Cz$-indiscernible''} \wedge
\text{``$\psi_i$ holds along  $(y_j)_{j < \omega}$''} \wedge
y_0 = y^i).
\]
Note that we used the thickness assumption here to have the first conjunct in the above be type-definable.

For $I' \subseteq I$ we define $\Sigma_{I'}((y^i)_{i \in I'})$ to be the set of formulas expressing
\[
\exists x \bigwedge_{i \in I'} \phi(x, y^i) \wedge \bigwedge_{i \in I'} \Gamma_i(y^i, (y^j)_{j \in I', j < i}).
\]
Then $\Sigma_{I'}$ defines being a $(\phi_i(x, y^i), \psi_i(y^i_1, \ldots, y^i_{k_i}))_{i \in I'}$-dividing sequence over $C$, as is seen by writing out definitions, where we used the characterisation of $\psi$-dividing from \thref{psi-dividing-indiscernible-sequence}.

To see the claim about finite subsequences, we note that for any infinite $I' \subseteq I$ we have that $\Gamma_i(y^i, (y^j)_{j \in I', j < i})$ is equivalent to $\bigcup \{ \Gamma_i(y^i, (y^j)_{j \in I_0, j < i}) : I_0 \subseteq I' \text{ is finite}\}$.
\end{proof}
\begin{lemma}
\thlabel{finite-dividing-sequence-extend-base-set}
Let $\zeta$ be some finite sequence of dividing witnesses and let $(b_1, \ldots, b_n)$ be a $\zeta$-dividing sequence, both over $C$. Then for any tuple $d$ there is $d'$ with $d' \equiv_C d$ such that $(b_1, \ldots, b_n)$ is a $\zeta$-dividing sequence over $Cd'$.
\end{lemma}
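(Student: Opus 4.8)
The plan is to prove the lemma by induction on the length $n$ of the dividing sequence. The case $n = 0$ is trivial, since the empty sequence is a $\zeta$-dividing sequence over any base set, so $d' = d$ works. Write $\zeta = (\phi_i(x, y^i), \psi_i(y^i_1, \ldots, y^i_{k_i}))_{i=1}^n$ throughout.

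For the inductive step, assume the lemma holds for $\zeta$-dividing sequences of length $n - 1$ over arbitrary base sets. Let $\zeta^+ = (\phi_i, \psi_i)_{i=2}^n$ be the tail of $\zeta$. The first observation is that $(b_2, \ldots, b_n)$ is a $\zeta^+$-dividing sequence over $Cb_1$: consistency of $\{\phi_i(x, b_i) : 2 \le i \le n\}$ is inherited from consistency of $\{\phi_i(x, b_i) : 1 \le i \le n\}$, and for each $2 \le i \le n$ the formula $\phi_i(x, b_i)$ still $\psi_i$-divides over $Cb_1 b_2 \cdots b_{i-1} = Cb_1 \cdots b_{i-1}$ by hypothesis.

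Now I would handle the first piece and the tail in turn. Since $\phi_1(x, b_1)$ $\psi_1$-divides over $C$, \thref{psi-dividing-indiscernible-sequence} provides a $C$-indiscernible sequence $\mathcal{I} = (b_1^j)_{j < \omega}$ with $b_1^0 = b_1$ along which $\psi_1$ holds. Applying \thref{extend-base-set-of-indiscernible-sequence} to $\mathcal{I}$ and the tuple $d$ yields $d^* \equiv_C d$ such that $\mathcal{I}$ is $Cd^*$-indiscernible; by \thref{psi-dividing-indiscernible-sequence} again, $\phi_1(x, b_1)$ $\psi_1$-divides over $Cd^*$. Next, applying the induction hypothesis to the $\zeta^+$-dividing sequence $(b_2, \ldots, b_n)$ over the base $Cb_1$ with the tuple $d^*$, I obtain $d'$ with $d' \equiv_{Cb_1} d^*$ such that $(b_2, \ldots, b_n)$ is a $\zeta^+$-dividing sequence over $Cb_1 d'$.

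It remains to verify that this $d'$ works, and this is where the main (mild) subtlety lies: the induction hypothesis only moves $d^*$ to $d'$ \emph{over $Cb_1$}, not over the whole configuration, so one must re-transport the first witness $\mathcal{I}$. First, $d' \equiv_{Cb_1} d^*$ gives $d' \equiv_C d^* \equiv_C d$. Second, to see that $\phi_1(x, b_1)$ $\psi_1$-divides over $Cd'$, choose $\sigma \in \Aut(\MM/Cb_1)$ with $\sigma(d^*) = d'$ (possible by homogeneity, as $d' \equiv_{Cb_1} d^*$): then $\sigma(\mathcal{I})$ is $Cd'$-indiscernible, starts at $\sigma(b_1) = b_1$ since $\sigma$ fixes $b_1$, and $\psi_1$ still holds along it since $\psi_1$ is over $C$; so \thref{psi-dividing-indiscernible-sequence} applies. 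Combining this with the previous paragraph — and noting that consistency of $\{\phi_i(x, b_i) : 1 \le i \le n\}$ does not involve the base at all — shows that $(b_1, \ldots, b_n)$ is a $\zeta$-dividing sequence over $Cd'$, as required. The reason the induction is arranged so that the tail's base is extended over $Cb_1$ (rather than over $C$) is precisely so that $\sigma$ can be taken to fix $b_1$, keeping the transported sequence through $b_1$; note also that no thickness is used in this argument, it only enters the surrounding results of the section.
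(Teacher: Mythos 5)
Your proof is correct and is essentially the paper's own argument repackaged: the paper runs the induction forward along the indices, building $d = d_0, d_1, \ldots, d_n = d'$ by applying \thref{extend-base-set-of-indiscernible-sequence} to the indiscernible sequence witnessing $\psi_{k+1}$-dividing of $\phi_{k+1}(x, b_{k+1})$ over $C(b_j)_{j \leq k}$ and preserving the earlier witnesses by invariance under automorphisms over $C(b_j)_{j \leq k}$, while you peel off the first witness and recurse on the tail over $Cb_1$, re-fixing the first witness with an automorphism over $Cb_1$ — the same two ingredients, just organised from the other end. Your closing remark is also accurate: thickness plays no role in this lemma in the paper either, entering only via \thref{dividing-sequence-type-definable}.
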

\begin{proof}
Write $\zeta = (\phi_i, \psi_i)_{1 \leq i \leq n}$. We will prove by induction on $0 \leq k \leq n$ that there is $d_k$ with $d_k \equiv_C d$ such that for all $i \geq 1$ with $i \leq k$ there is a $C(b_j)_{j < i}d_k$-indiscernible sequence starting with $b_i$ such that $\psi_i$ holds along it. Then taking $d' = d_n$ yields the desired result.

For $k = 0$ we take $d$. Now assume we have constructed $d_k$. Let $s$ be a $C(b_j)_{j \leq k}$-indiscernible sequence starting with $b_{k+1}$ such that $\psi_{k+1}$ holds along it. We apply \thref{extend-base-set-of-indiscernible-sequence} to $s$ and $d_k$ to find $d_{k+1}$ such that $d_{k+1} \equiv_{C(b_j)_{j \leq k}} d_k$ and $s$ is $C(b_j)_{j \leq k}d_{k+1}$-indiscernible. Then $d_{k+1}$ satisfies the induction hypothesis: for $i = k+1$ by construction and for $i \leq k$ because that part of the induction hypothesis is invariant under automorphisms over $C(b_j)_{j \leq k}$.
\end{proof}
\begin{lemma}
\thlabel{forking-type-yields-arbitrarily-long-dividing-sequence}
Assume thickness. Suppose that there are $a, b, C$ such that any extension of $\tp(a/C)$ to a type over $Cb$ divides over $C$. Then for any ordinal $\delta$ there is a sequence of dividing witnesses $\zeta$ and a $\zeta$-dividing sequence $(b_i)_{i \in \delta^\op}$, where $\delta^\op$ carries the opposite order of $\delta$.
\end{lemma}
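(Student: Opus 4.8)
\emph{Proof proposal.} The plan is to reduce first to a single, fixed pair of dividing witnesses, and then to build the required sequence by induction on $\delta$, adding new elements at the bottom of the $\delta^\op$-order. Since $\tp(a/Cb)$ is an extension of $\tp(a/C)$ to a type over $Cb$, the hypothesis gives that $\tp(a/Cb)$ divides over $C$, so by \thref{psi-dividing-lemma} I fix formulas $\phi_*(x,y)$ and $\psi_*(y_1,\ldots,y_k)$ over $C$ with $\phi_*(x,b)\in\tp(a/Cb)$ such that $\phi_*(x,b)$ $\psi_*$-divides over $C$; by \textsc{invariance} of $\psi$-dividing, $\phi_*(x,b')$ $\psi_*$-divides over $C$ for \emph{every} $b'\equiv_C b$. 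I then prove by induction on the ordinal $\delta$ the statement $(\star_\delta)$: there is a $\zeta_\delta$-dividing sequence $(b_i)_{i\in\delta^\op}$ over $C$, where $\zeta_\delta=(\phi_*,\psi_*)_{i\in\delta^\op}$ is the constant sequence of witnesses, together with some $\hat a\equiv_C a$ realizing $\{\phi_*(x,b_i):i\in\delta^\op\}$. Forgetting $\hat a$, this gives the lemma. (The order $\delta^\op$ is forced by the construction: each use of the witnesses produces a dividing event over $C$, placed below the previous ones, so the element introduced earliest ends up with the largest base, i.e.\ with the most $<_\op$-predecessors.)

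The base case $\delta\le 1$ is immediate (for $\delta=1$ take $b_0=b$ and $\hat a=a$). For the successor step, note that $(\delta+1)^\op$ is $\delta^\op$ with a new $<_\op$-least element $\bullet$ prepended, so I must find, over $C$, data $\hat a'$, $(b'_i)_{i\in\delta^\op}$ and $b_\bullet$ realizing the partial type $\mathcal T$ over $C$ that says: $(\hat a',(b'_i)_i)$ has the same type over $C$ as the witnessing pair $(\hat a,(b_i)_i)$ of $(\star_\delta)$ (so $\hat a'\equiv_C a$, $(b'_i)_i$ is again a $\zeta_\delta$-dividing sequence over $C$ realized by $\hat a'$); $b_\bullet\equiv_C b$ together with $\phi_*(\hat a',b_\bullet)$ (so $\phi_*(x,b_\bullet)$ $\psi_*$-divides over $C$ and $\hat a'$ realizes it); and, using \thref{dividing-sequence-type-definable} with the extra parameter $b_\bullet$ adjoined to the base, that $(b'_i)_{i\in\delta^\op}$ is a $\zeta_\delta$-dividing sequence over $Cb_\bullet$. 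Given such a realization, the concatenation $(b_\bullet)^\frown(b'_i)_{i\in\delta^\op}$, indexed by $(\delta+1)^\op$ and still carrying the constant witnesses, is a $\zeta_{\delta+1}$-dividing sequence over $C$ realized by $\hat a'\equiv_C a$, which is $(\star_{\delta+1})$. To see $\mathcal T$ is consistent I invoke \thref{dividing-sequence-type-definable}: its ``dividing sequence over $Cb_\bullet$'' part is the union of its restrictions to finite subsequences, so by compactness it suffices to realize each finite piece, and for a finite $I_0\subseteq\delta^\op$ this is exactly where \thref{finite-dividing-sequence-extend-base-set} is applied — to the finite $\zeta$-dividing sequence $(b_i)_{i\in I_0}$ over $C$ one enlarges the base by a $C$-conjugate of $b$ and then transports everything back by an automorphism over $C$.

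For a limit $\delta$, I would again argue by compactness: using \thref{dividing-sequence-type-definable}, the condition ``$(b_i)_{i\in\delta^\op}$ is a $\zeta_\delta$-dividing sequence over $C$ realized by some $\hat a\equiv_C a$'' is a partial type whose restriction to the variables indexed by any initial segment $\gamma<\delta$ is (a re-indexing of) the same condition for $\gamma^\op$, which is $(\star_\gamma)$ and holds by induction; finite satisfiability then follows and yields $(\star_\delta)$. The hard part is the bookkeeping in the successor step: one must make the re-positioned sequence $(b'_i)$, the new bottom element $b_\bullet$, and a common realizer $\hat a'\equiv_C a$ mutually compatible — $(b'_i)$ has to become a dividing sequence over the enlarged base $Cb_\bullet$ while $\phi_*(x,b_\bullet)$ together with all $\phi_*(x,b'_i)$ is still realized by an element of the right type over $C$. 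This is precisely what forces the detour through finite subsequences and the re-assembly via type-definability, and it is the only place thickness is needed, since both \thref{dividing-sequence-type-definable} and \thref{finite-dividing-sequence-extend-base-set} rely on it. Everything else is routine unwinding of the definitions of $\psi$-dividing and of a $\zeta$-dividing sequence.
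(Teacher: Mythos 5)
Your overall architecture (induction on $\delta$, new element added as the $<_\op$-least point, type-definability of dividing sequences at limits, \thref{finite-dividing-sequence-extend-base-set} for finite approximations) matches the paper, but the opening move — fixing a single pair of witnesses $(\phi_*,\psi_*)$ once and for all — creates a genuine gap at the successor step. Your partial type $\mathcal{T}$ contains the clause $\phi_*(\hat a',b_\bullet)$ tying the \emph{fixed} formula $\phi_*$ both to the new bottom element $b_\bullet$ and to a common realizer $\hat a'$ of all the old instances $\phi_*(x,b'_i)$. The finite-satisfiability argument you sketch cannot deliver this clause: \thref{finite-dividing-sequence-extend-base-set} applied to $(b_i)_{i\in I_0}$ and $d=b$ produces some $d'\equiv_C b$ over which the finite sequence divides, but gives no control whatsoever over $\tp(d'/C\hat a)$, so there is no reason that $\models\phi_*(\hat a,d')$, and transporting $d'$ back to $b$ by an automorphism over $C$ moves $\hat a$ to some $C$-conjugate whose type over $Cb$ need not contain $\phi_*(x,b)$. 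The underlying obstruction is that the hypothesis only says every extension of $\tp(a/C)$ to $Cb$ divides over $C$ via \emph{some} obstruction; the extension relevant at stage $\delta$ (namely $\tp(a/Cb_\delta)$ for the new conjugate $b_\delta$ of $b$) may divide via a completely different formula, and $\phi_*(x,b_\delta)$ need not even belong to it. So the strengthened induction statement you are running (constant witnesses fixed in advance, plus a realizer satisfying $\phi_*$ at the new element) is not supported by the hypothesis, and the step where you need it is exactly where the argument breaks.

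The paper's proof sidesteps this by letting the witnesses vary: at the successor step it only needs that $\tp(a/Cb_\delta)$ divides over $C$ — automatic from the hypothesis because $b_\delta\equiv_C b$ — and it appends to $\zeta$ \emph{whatever} pair $(\phi,\psi)$ witnesses this, keeping $a$ itself as the realizer of all the accumulated formulas. Note also that the lemma only asserts the existence of \emph{some} sequence of dividing witnesses, not a constant one; the uniformization you are after is done later, in the proof of \thref{simple-thick-implies-full-existence}, by a pigeonhole argument on a dividing sequence of length $(|C|+|T|)^+$, followed by type-definability and compactness to reshape the constant-witness subsequence. If you want to salvage your approach you would have to either prove the stronger constant-witness statement by a different mechanism or, more simply, drop the insistence on $(\phi_*,\psi_*)$ and extract constancy afterwards as the paper does.
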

\begin{proof}
We construct a sequence $\zeta = (\phi_i(x, y), \psi_i(y^i_1, \ldots, y^i_{k_i}))_{i \in \delta^\op}$ and $(b_i)_{i \in \delta^\op}$ such that $a$ realises $\{\phi_i(x, b_i) : i \in \delta^\op\}$ by induction on $\delta$. The base case is trivial and the limit stage follows from type-definability of dividing sequences (\thref{dividing-sequence-type-definable}).

So let us assume that $\zeta$ and $(b_i)_{i \in \delta^\op}$ have been constructed. We will construct $b_\delta$ and a pair $(\phi(x, y), \psi(y_1, \ldots, y_k))$ such that $(b_i)_{i \in (\delta+1)^\op}$ is a $(\phi, \psi)^\frown \zeta$-dividing sequence. Let $\Sigma((y_i)_{i \in \delta^\op})$ be the set of formulas from \thref{dividing-sequence-type-definable} that expresses that $(y_i)_{i \in \delta^\op}$ is a $\zeta$-dividing sequence over $Cb$. We claim that the set of formulas
\[
\tp((b_i)_{i \in \delta^\op}/C) \cup \Sigma((y_i)_{i \in \delta^\op})
\]
over $Cb$ is consistent. Indeed, for any finite subsequence $(b_i)_{i \in I_0} \subseteq (b_i)_{i \in I}$ we can apply \thref{finite-dividing-sequence-extend-base-set} to find $b'$ with $b' \equiv_C b$ such that $(b_i)_{i \in I_0}$ is a $\zeta_0$-dividing sequence over $Cb'$ (where $\zeta_0 \subseteq \zeta$ matches $I_0 \subseteq I$). Hence there is $(b^*_i)_{i \in I_0}$ with $b (b^*_i)_{i \in I_0} \equiv_C b' (b_i)_{i \in I_0}$, which realises the corresponding finite part of the above set of formulas.

Let $(b^*_i)_{i \in \delta^\op}$ realise the above set of formulas. Let $b_\delta$ then be such that
\[
b_\delta (b_i)_{i \in \delta^\op} \equiv_C b (b^*_i)_{i \in \delta^\op},
\]
so $(b_i)_{i \in \delta^\op}$ is a $\zeta$-dividing sequence over $Cb_\delta$ and $\tp(a/Cb_\delta)$ divides over $C$, because it is the image of an extension of $\tp(a/C)$ to $Cb$ under an automorphism over $C$. Therefore, there is some $\phi(x, b_\delta) \in \tp(a/Cb_\delta)$ and a $\psi$ such that $\phi(x, b_\delta)$ $\psi$-divides over $C$. By \thref{psi-dividing-indiscernible-sequence} these two facts together say precisely that $(b_i)_{i \in (\delta+1)^\op}$ is a $(\phi, \psi)^\frown \zeta$-dividing sequence over $C$. This completes the inductive construction, and hence the proof.
\end{proof}
\begin{proof}[Proof of \thref{simple-thick-implies-full-existence}]
Suppose for a contradiction that \textsc{full existence} fails for dividing independence. Then there is $p(x) = \tp(a/C)$ and some $b$ such that any extension of $p(x)$ to a type over $Cb$ divides over $C$. Let $\kappa = (|C| + |T|)^+$ and apply \thref{forking-type-yields-arbitrarily-long-dividing-sequence} to find a sequence $\zeta = (\phi_i, \psi_i)_{i \in \kappa^\op}$ of dividing witnesses and a $\zeta$-dividing sequence $(b_i)_{i \in \kappa^\op}$. As $\phi_i$ and $\psi_i$ are formulas over $C$ for each $i < \kappa$, we can apply the pigeonhole principle to find infinite $I \subseteq \kappa$ such that for all $i,j \in I$ we have $(\phi_i, \psi_i) = (\phi_j, \psi_j)$. Let us call this tuple of formulas $(\phi, \psi)$ and let $\zeta'$ be the constant sequence of length $|T|^+$ whose entries are $(\phi, \psi)$. Let $\Sigma((y_i)_{i < |T|^+})$ be the set of formulas from \thref{dividing-sequence-type-definable} that expresses that $(y_i)_{i < |T|^+}$ is a $\zeta'$-dividing sequence over $C$. This type is finitely satisfiable by the finite subsequences of $(b_i)_{i \in I^\op}$, and so we find a $\zeta'$-dividing sequence $(b'_i)_{i < |T|^+}$ over $C$. Let $a'$ be a realisation of $\{\phi(x, b'_i) : i < |T|^+\}$. For all $i < |T|^+$ we have that $\phi(x, b'_i)$ $\psi$-divides over $C (b'_j)_{j < i}$. As $\phi(x, b'_i) \in \tp(a'/C(b'_j)_{j \leq |T|^+})$, we have that this type divides over $C (b'_j)_{j < i}$. However, this contradicts the assumption that $T$ is simple by \thref{simplicity-equivalences}.
\end{proof}
\section{Morley sequences and Kim's lemma}
\label{sec:morley-sequences-and-kims-lemma}
\begin{definition}
\thlabel{independent-sequence}
Let $\ind$ be an independence relation. A \emph{$\ind_C$-independent sequence}\index{independent sequence!with respect to $\ind$} is a sequence $(a_i)_{i \in I}$ such that $a_i \ind_C (a_j)_{j < i}$ for all $i \in I$.
\end{definition}
\begin{proposition}
\thlabel{full-existence-yields-independent-sequences}
If $\ind$ satisfies \textsc{invariance}, \textsc{monotonicity} and \textsc{full existence} then for any $a$ and $C$ and any cardinal $\kappa$ there is a $\ind_C$-independent sequence $(a_i)_{i < \kappa}$ with $a_i \equiv_C a$ for all $i < \kappa$.
\end{proposition}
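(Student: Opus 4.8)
The plan is a straightforward transfinite recursion on $i < \kappa$. I will maintain as induction hypothesis that $(a_j)_{j<i}$ has been defined with $a_j \equiv_C a$ for all $j < i$ and with $a_j \ind_C (a_k)_{k<j}$ for all $j < i$ (so that each initial segment is already a $\ind_C$-independent sequence); by \thref{independent-sequence} the sequence $(a_i)_{i<\kappa}$ produced at the end is then exactly the desired $\ind_C$-independent sequence, all of whose terms realise $\tp(a/C)$. The base case $i = 0$ (with $(a_j)_{j<0}$ the empty tuple) and the limit stages need no separate treatment, as the recursion step below is uniform in $i$; throughout I assume, as is standard, that tuples of length $<\kappa$ formed from copies of $a$ count as small in $\MM$, so that homogeneity of $\MM$ is available (otherwise one first passes to a sufficiently saturated and homogeneous monster).

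For the recursion step, suppose $(a_j)_{j<i}$ has been constructed and set $b = (a_j)_{j<i}$. Applying \textsc{full existence} to $a$, $b$ and $C$ yields a tuple $b'$ with $b' \equiv_C b$ and $a \ind_C b'$. The subtlety is that \textsc{full existence} moves the right-hand argument, whereas I need the already-built initial segment $(a_j)_{j<i}$ to remain unchanged; this is repaired by an automorphism. Since $b' \equiv_C (a_j)_{j<i}$, homogeneity of $\MM$ provides $g \in \Aut(\MM/C)$ with $g(b') = (a_j)_{j<i}$. Then \textsc{invariance} applied to $g$ turns $a \ind_C b'$ into $g(a) \ind_{g(C)} g(b')$, that is $g(a) \ind_C (a_j)_{j<i}$, using that $g$ fixes $C$ pointwise. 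Put $a_i = g(a)$: then $a_i \equiv_C a$ (again because $g \in \Aut(\MM/C)$) and $a_i \ind_C (a_j)_{j<i}$, so the induction hypothesis is preserved at stage $i$. Iterating through all $i < \kappa$ completes the construction.

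There is no real obstacle here: the only step carrying any content is the automorphism correction via homogeneity, which is forced because \textsc{full existence} replaces the ``wrong'' side of the relation. Note that the argument uses only \textsc{full existence}, \textsc{invariance} and homogeneity of $\MM$; in particular it appeals neither to type-definability of $\ind$ (which fails for a general independence relation) nor to compactness, and \textsc{monotonicity} — though among the standing hypotheses — is not strictly required for this statement.
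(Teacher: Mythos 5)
Your proof is correct, but it follows a different route from the paper. The paper applies \textsc{full existence} exactly once: it takes a positively $(|Ca|^+ + \kappa)$-saturated p.c.\ model $N' \supseteq C$ (via \thref{building-saturated-model}), obtains $N \equiv_C N'$ with $a \ind_C N$, and then uses saturation of $N$ to find the $a_i$ inside $N$ with the stronger coherence $a_i \equiv_{C(a_j)_{j<i}} a$; the independences then follow by \textsc{monotonicity} (to pass from $N$ to $(a_j)_{j<i}$) and \textsc{invariance}. You instead run a transfinite recursion, invoking \textsc{full existence} at every stage on the initial segment built so far and then correcting with an automorphism over $C$ supplied by homogeneity of $\MM$, which is sound: from $a \ind_C b'$ and $g(b') = (a_j)_{j<i}$ you correctly get $a_i := g(a) \ind_C (a_j)_{j<i}$ and $a_i \equiv_C a$, and the base and limit stages are indeed uniform (the $i=0$ step just asserts $a_0 \ind_C \emptyset$). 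What your version buys is economy of hypotheses and machinery: it avoids the construction of positively saturated p.c.\ models entirely and, as you note, does not use \textsc{monotonicity}. What the paper's version buys is that all the $a_i$ are found inside a single model independent from $a$ over $C$, and they satisfy the stronger condition $a_i \equiv_{C(a_j)_{j<i}} a$ rather than merely $a_i \equiv_C a$ --- a pattern reused later (e.g.\ in the proof of \thref{dividing-symmetry}), though not required by the statement of this proposition.
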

\begin{proof}
Let $N' \supseteq C$ be a positively $(|Ca|^+ + \kappa)$-saturated p.c.\ model (\thref{building-saturated-model}). By \textsc{full existence} there is $N$ with $N \equiv_C N'$ such that $a \ind_C N$. Inductively and by saturation we find $(a_i)_{i < \kappa}$ in $N$ such that $a_i \equiv_{C(a_j)_{j < i}} a$ for all $i < \kappa$. This is then the required sequence, as for any $i < \kappa$ we have that $a \ind_C N$ implies $a \ind_C (a_j)_{j < i}$ by \textsc{monotonicity} and so $a_\delta \ind_C (a_i)_{i < \delta}$ by \textsc{invariance}.
\end{proof}
See \thref{dividing-basic-properties} for a precise statement of \textsc{left transitivity}.
\begin{lemma}
\thlabel{independent-sequence-cuts}
Suppose that $\ind$ satisfies \textsc{monotonicity}, \textsc{normality}, \textsc{base monotonicity}, \textsc{left transitivity} and \textsc{finite character}. If $(a_i)_{i \in I}$ is a $\ind_C$-independent sequence then for any $I_0, I_1 \subseteq I$ with $i_0 < i_1$ for all $i_0 \in I_0$ and $i_1 \in I_1$ we have $(a_i)_{i \in I_1} \ind_B (a_i)_{i \in I_0}$.
\end{lemma}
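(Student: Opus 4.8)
The plan is to first reduce to the case where $I_0$ and $I_1$ are both finite, and then prove the finite case by induction on $|I_1|$. For the reduction I would use \textsc{finite character}: any finite subtuple of $(a_i)_{i \in I_1}$ already occurs in $(a_i)_{i \in I_1'}$ for some finite $I_1' \subseteq I_1$, similarly on the $I_0$ side, the condition $\max I_0' < \min I_1'$ is inherited, and \textsc{monotonicity} passes from $(a_i)_{i \in I_1'} \ind_C (a_i)_{i \in I_0'}$ to the relevant subtuples. So it remains to handle finite $I_1 = \{k_1 < \dots < k_n\}$ and finite $I_0$ all of whose indices lie below $k_1$. Writing $W$ for a tuple enumerating $(a_i)_{i \in I_0}$, the point is that the defining property $a_{k_i} \ind_C (a_\ell)_{\ell < k_i}$ of an independent sequence gives, via \textsc{monotonicity}, $a_{k_i} \ind_C W a_{k_1} \cdots a_{k_{i-1}}$ for each $i$, since $W$ and $a_{k_1}, \dots, a_{k_{i-1}}$ all sit among the $a_\ell$ with $\ell < k_i$.

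Thus it suffices to prove the following self-contained claim by induction on $n$: for any base $D$, any tuple $W$, and any sequence $(e_1, \dots, e_n)$ with $e_i \ind_D W e_1 \cdots e_{i-1}$ for all $1 \le i \le n$, one has $(e_1, \dots, e_n) \ind_D W$. The case $n = 1$ is the hypothesis itself. For the step, I would apply \textsc{left transitivity} with intermediate base $D' = D e_1$: the instance $e_1 \ind_D W$ of the hypothesis yields $D e_1 \ind_D W$ by \textsc{normality} and \textsc{monotonicity}, so it remains to see $(e_1, \dots, e_{n+1}) \ind_{D e_1} W$, for which by \textsc{normality} and \textsc{monotonicity} it is enough to show $(e_2, \dots, e_{n+1}) \ind_{D e_1} W$. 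That is exactly the induction hypothesis applied over the base $D e_1$ to the sequence $(e_2, \dots, e_{n+1})$, once we verify its hypotheses $e_i \ind_{D e_1} W e_2 \cdots e_{i-1}$ for $2 \le i \le n+1$; and these follow from $e_i \ind_D W e_1 \cdots e_{i-1}$ by \textsc{normality}, then \textsc{base monotonicity} (using $D \subseteq D e_1 \subseteq D W e_1 \cdots e_{i-1}$), then \textsc{monotonicity}.

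The main obstacle is a conceptual one: the naive recursion ``pass to the tail $(a_i)_{i > k_1}$ over the base $C a_{k_1}$ and reapply the lemma'' fails, because the right-hand set $(a_i)_{i \in I_0}$ lies entirely below $I_1$ and so is not a subsequence of that tail. Isolating the auxiliary claim with an arbitrary right-hand tuple $W$ — one not required to come from the sequence at all — is precisely what lets the induction on $|I_1|$ close, and recognizing this as the correct generalization is the crux; the remaining steps are routine reshuffling with \textsc{monotonicity}, \textsc{normality} and \textsc{base monotonicity}.
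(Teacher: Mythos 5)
Your proof is correct. It shares the paper's skeleton --- reduce to the finite case by \textsc{finite character} and \textsc{monotonicity}, then induct on $|I_1|$ with \textsc{left transitivity} doing the real work --- but the induction itself is organized differently. The paper inducts directly on the lemma's statement: the induction hypothesis gives $a_{i_1}\ldots a_{i_n} \ind_C (a_i)_{i \in I_0}$, which after \textsc{normality} supplies the \emph{lower} leg $C a_{i_1}\ldots a_{i_n} \ind_C (a_i)_{i \in I_0}$ of \textsc{left transitivity}, while the defining property of the independent sequence, pushed up to the base $C a_{i_1}\ldots a_{i_n}$ via \textsc{base monotonicity}, supplies the \emph{upper} leg for the newly added top element $a_{i_{n+1}}$; no strengthening of the statement is needed. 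You instead generalize to an auxiliary claim with an arbitrary right-hand tuple $W$ and arbitrary base $D$, peel off the \emph{bottom} element $e_1$ and shift the base to $D e_1$: now the raw hypothesis $e_1 \ind_D W$ supplies the lower leg, and the induction hypothesis over the enlarged base supplies the upper leg, whose hypotheses $e_i \ind_{D e_1} W e_2 \cdots e_{i-1}$ you correctly re-derive via \textsc{normality}, \textsc{base monotonicity} (legitimate since $e_1$ sits inside the right-hand side) and \textsc{monotonicity}. Your abstraction to arbitrary $W$ is exactly what makes the base-shifted recursion close, as you note; the cost relative to the paper's version is the extra bookkeeping over the shifted base, the benefit is a self-contained, sequence-free claim. (Incidentally, the $\ind_B$ in the statement is a typo for $\ind_C$; both arguments prove the $\ind_C$ version.)
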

\begin{proof}
By \textsc{finite character} it is enough to prove this for finite $I_1$, and we proceed by induction on $n = |I_1|$. For $n = 1$ this follows immediately from being $\ind_C$-independent and \textsc{monotonicity}. For the induction step, assume that $I_1$ is $i_1 < \ldots < i_n < i_{n+1}$. By the induction hypothesis we have $a_{i_1} \ldots a_{i_n} \ind_C (a_i)_{i \in I_0}$ and thus by \textsc{normality}:
\[
C a_{i_1} \ldots a_{i_n} \ind_C (a_i)_{i \in I_0}.
\]
As $(a_i)_{i \in I}$ is a $\ind_C$-independent sequence we have, after an application of \textsc{monotonicity}, that $a_{i_{n+1}} \ind_C (a_i)_{i \in I_0} a_{i_1} \ldots a_{i_n}$. Then by \textsc{base monotonicity}, \textsc{monotonicity} and \textsc{normality} we have
\[
a_{i_{n+1}} a_{i_1} \ldots a_{i_n} \ind_{C a_{i_1} \ldots a_{i_n}} (a_i)_{i \in I_0}.
\]
Applying \textsc{left transitivity} to those two instances of independence then yields $a_{i_{n+1}} a_{i_1} \ldots a_{i_n} \ind_C (a_i)_{i \in I_0}$, as required.
\end{proof}
\begin{definition}
\thlabel{morley-sequence}
Let $\ind$ be an independence relation. A \emph{$\ind$-Morley sequence}\index{Morley sequence!with respect to $\ind$} \emph{(over $C$)} is an infinite $C$-indiscernible $\ind_C$-independent sequence.
\end{definition}
\begin{lemma}
\thlabel{basing-morley-sequence-on-long-independent-sequence}
Suppose that $\ind$ satisfies \textsc{invariance}, \textsc{monotonicity} and \textsc{finite character}. If $(a_i)_{i \in I}$ is a $\ind_C$-independent sequence and $(b_j)_{j \in J}$ is a sequence that is based on $(a_i)_{i \in I}$ over $C$ then $(b_j)_{j \in J}$ is $\ind_C$-independent.
\end{lemma}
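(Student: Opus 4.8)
The plan is to reduce, by \textsc{finite character} and \textsc{monotonicity}, to comparing $b_j$ with a \emph{finite} initial segment of the sequence, and then to transport the relevant instance of independence from the $a$-sequence using the ``based on'' hypothesis together with homogeneity of $\MM$ and \textsc{invariance}.

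First I would fix $j \in J$; the goal is $b_j \ind_C (b_{j'})_{j' < j}$. By \textsc{finite character} it suffices to show $e \ind_C e'$ for every finite $e \subseteq b_j$ and every finite $e' \subseteq (b_{j'})_{j' < j}$. Any such $e'$ is contained in $b_{j'_1}\dots b_{j'_m}$ for some $j'_1 < \dots < j'_m < j$ in $J$, so by \textsc{monotonicity} it is enough to prove
\[
b_j \ind_C b_{j'_1}\dots b_{j'_m}
\]
for every finite increasing tuple $j'_1 < \dots < j'_m < j$ from $J$ (the degenerate case $m = 0$, where the right-hand side is empty, is included and handled the same way).

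Now apply the hypothesis that $(b_j)_{j\in J}$ is based on $(a_i)_{i\in I}$ over $C$ to the $(m+1)$-element increasing tuple $j'_1 < \dots < j'_m < j$: there are $i_1 < \dots < i_m < i$ in $I$ with $b_{j'_1}\dots b_{j'_m} b_j \equiv_C a_{i_1}\dots a_{i_m} a_i$. Since $(a_i)_{i\in I}$ is $\ind_C$-independent we have $a_i \ind_C (a_k)_{k<i}$, hence $a_i \ind_C a_{i_1}\dots a_{i_m}$ by \textsc{monotonicity}. All tuples involved are small, so by homogeneity of $\MM$ there is $f \in \Aut(\MM/C)$ with $f(a_{i_1}\dots a_{i_m} a_i) = b_{j'_1}\dots b_{j'_m} b_j$; applying \textsc{invariance} to $a_i \ind_C a_{i_1}\dots a_{i_m}$ then yields $b_j \ind_C b_{j'_1}\dots b_{j'_m}$, which is what was needed.

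There is no genuine obstacle in this argument; the only points demanding a little care are the index bookkeeping — the ``based on'' clause must be invoked for the tuple that has $j$ as its last (largest) entry, so that the transported independence has $b_j$ on the correct side — and the observation that, under the monster-model conventions, all the tuples in play are small, so the automorphism of $\MM$ realizing $a_{i_1}\dots a_{i_m} a_i \equiv_C b_{j'_1}\dots b_{j'_m} b_j$ exists.
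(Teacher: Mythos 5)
Your proof is correct and follows essentially the same route as the paper's: fix $j$, use the based-on hypothesis on an increasing tuple ending in $j$, transfer $a_i \ind_C a_{i_1}\dots a_{i_m}$ (obtained from independence plus \textsc{monotonicity}) via \textsc{invariance}, and conclude with \textsc{finite character}. The only cosmetic difference is that you spell out the automorphism supplied by homogeneity, which the paper leaves implicit in its use of \textsc{invariance}.
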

\begin{proof}
Let $j \in J$ and let $j_1 < \ldots < j_n < j$. Then there are $i_1 < \ldots < i_n < i$ in $I$ such that $b_{j_1} \ldots b_{j_n} b_j \equiv_C a_{i_1} \ldots a_{i_n} a_i$. As $(a_i)_{i \in I}$ is $\ind_C$-independent we have by \textsc{monotonicity} that $a_i \ind_C a_{i_n} \ldots a_{i_1}$ and hence $b_j \ind_C b_{j_n} \ldots b_{j_1}$ by \textsc{invariance}. We conclude that $b_j \ind_C (b_k)_{k < j}$ by \textsc{finite character}, and so $(b_j)_{j \in J}$ is $\ind_C$-independent.
\end{proof}
\begin{remark}
\thlabel{reshape-morley-sequence}
An example application of \thref{basing-morley-sequence-on-long-independent-sequence} that we will often use is when we reshape a $\ind$-Morley sequence using compactness. Let us thus assume that $\ind$ satisfies \textsc{invariance}, \textsc{monotonicity} and \textsc{finite character}.

If $(a_i)_{i \in I}$ is a $\ind$-Morley sequence over $C$ and $J$ is any infinite linear order then by compactness we can find a sequence $(a'_j)_{j \in J}$ such that for any $j_1 < \ldots < j_n$ in $J$ we have $a'_{j_1} \ldots a'_{j_n} \equiv_C a_{i_1} \ldots a_{i_n}$, where the choice of $i_1 < \ldots < i_n$ in $I$ does not matter due to $C$-indiscernibility (as long as they are ordered in the same way). So $(a'_j)_{j \in J}$ is $C$-indiscernible and is based on $(a_i)_{i \in I}$. It follows by \thref{basing-morley-sequence-on-long-independent-sequence} that $(a'_j)_{j \in J}$ is a $\ind$-Morley sequence over $C$. Furthermore, if $I \subseteq J$ then $(a_i)_{i \in I} \equiv_C (a'_i)_{i \in I}$ and so by applying an automorphism over $C$ we may assume $a'_i = a_i$ for all $i \in I$.

In the other direction, if $\ind$ satisfies \textsc{monotonicity} then any infinite subsequence of a $\ind$-Morley sequence is still a $\ind$-Morley sequence.
\end{remark}
\begin{proposition}
\thlabel{building-morley-sequences}
Suppose that $\ind$ satisfies \textsc{invariance}, \textsc{monotonicity}, \textsc{finite character} and \textsc{full existence}. Then for any $a$ and $C$ there is a $\ind$-Morley sequence $(a_i)_{i < \omega}$ over $C$ with $a_0 = a$.
\end{proposition}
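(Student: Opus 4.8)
The plan is to assemble the proposition from lemmas already in hand: build a very long $\ind_C$-independent sequence of copies of $a$, extract from it a $C$-indiscernible (hence Morley) sequence, and then move the first term to $a$ by an automorphism.

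Concretely, I would set $\kappa = \lambda_{|T| + |C| + |a|}$ in the notation of \thref{lambda-t}. Since $\ind$ satisfies \textsc{invariance}, \textsc{monotonicity} and \textsc{full existence}, \thref{full-existence-yields-independent-sequences} gives a $\ind_C$-independent sequence $(c_i)_{i < \kappa}$ with $c_i \equiv_C a$ for all $i < \kappa$. Next I would apply \thref{base-indiscernible-sequence-on-long-sequence} with parameter set $B = C$: each $c_i$ is a tuple of length $|a|$ and $|\kappa| \geq \lambda_{|T| + |C| + |a|}$, so there is a $C$-indiscernible sequence $(b_i)_{i < \omega}$ based on $(c_i)_{i < \kappa}$ over $C$. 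Because $\ind$ also satisfies \textsc{finite character}, \thref{basing-morley-sequence-on-long-independent-sequence} applies and tells us that $(b_i)_{i < \omega}$ is $\ind_C$-independent; being moreover $C$-indiscernible, it is a $\ind$-Morley sequence over $C$. Taking a one-element subsequence in the definition of "based on" (\thref{based-on}), we also get $b_0 \equiv_C c_j \equiv_C a$ for some $j < \kappa$, so $b_0 \equiv_C a$.

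To finish, by homogeneity of the monster model (\thref{monster-model}) there is $f \in \Aut(\MM/C)$ with $f(b_0) = a$. Put $a_i = f(b_i)$. Then $(a_i)_{i < \omega}$ is $C$-indiscernible (as $f$ fixes $C$ pointwise) and $\ind_C$-independent by \textsc{invariance}, so it is a $\ind$-Morley sequence over $C$, and $a_0 = f(b_0) = a$ as required.

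I do not expect any real obstacle: every step is a direct invocation of a preceding result. The only points needing a moment's care are (i) choosing $\kappa$ large enough to license the Erd\H{o}s--Rado extraction in \thref{base-indiscernible-sequence-on-long-sequence}, and (ii) checking that the hypotheses needed by the three cited lemmas (\textsc{invariance}, \textsc{monotonicity}, \textsc{finite character}, \textsc{full existence}) are exactly the four assumed here — in particular that \textsc{full existence} is used only through \thref{full-existence-yields-independent-sequences} and plays no further role.
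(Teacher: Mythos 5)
Your proposal is correct and follows essentially the same route as the paper: build a long $\ind_C$-independent sequence of copies of $a$ via \thref{full-existence-yields-independent-sequences}, base a $C$-indiscernible sequence on it and invoke \thref{basing-morley-sequence-on-long-independent-sequence}, then move the first element to $a$ over $C$ (the paper does this by realising a type, you by an automorphism — the same thing by homogeneity). No gaps.
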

\begin{proof}
By \thref{full-existence-yields-independent-sequences} we find a $\ind_C$-independent sequence $(a'_i)_{i < \lambda_{|T|+|Ca|}}$ with $a'_i \equiv_C a$ for all $i < \lambda_{|T|+|Ca|}$. Let $(a''_i)_{i < \omega}$ be a $C$-indiscernible sequence based on $(a'_i)_{i < \lambda_{|T|+|Ca|}}$ over $C$. Then by \thref{basing-morley-sequence-on-long-independent-sequence} this sequence is a Morley sequence over $C$. We also have $a''_0 \equiv_C a$, so let $(a_i)_{i < \omega}$ be such that $a (a_i)_{i < \omega} \equiv_C a''_0 (a''_i)_{i < \omega}$. Then $a_0 = a$ and $(a_i)_{i < \omega}$ is the required Morley sequence.
\end{proof}
\begin{lemma}
\thlabel{general-kims-lemma}
Suppose that $\ind$ satisfies \textsc{invariance}, \textsc{monotonicity}, \textsc{normality}, \textsc{base monotonicity}, \textsc{left transitivity}, \textsc{finite character} and \textsc{local character}. If $(b_i)_{i < \omega}$ is a $\ind$-Morley sequence over $C$ and $\Sigma(x, y)$ is a set of formulas over $C$ such that $\bigcup_{i < \omega} \Sigma(x, b_i)$ is consistent then there is $a$ with $\models \Sigma(a, b_0)$ and $a \ind_C b_0$.
\end{lemma}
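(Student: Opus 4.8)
The plan is to run the classical proof of Kim's lemma in the abstract setting of the independence relation $\ind$. First, extend the given $\ind$-Morley sequence $(b_i)_{i<\omega}$ to a long $\ind$-Morley sequence $(b_i)_{i<\lambda}$ over $C$: by \thref{reshape-morley-sequence} (together with \thref{basing-morley-sequence-on-long-independent-sequence}) basing a longer sequence on the original one over $C$ keeps it $C$-indiscernible, and \textsc{invariance}, \textsc{monotonicity} and \textsc{finite character} keep it $\ind_C$-independent. Using $C$-indiscernibility and compactness (\thref{compactness}), $\bigcup_{i<\lambda}\Sigma(x,b_i)$ is still consistent: any finite subset mentions finitely many of the $b_i$, which by indiscernibility have the same type over $C$ as finitely many elements of the original sequence, along which $\Sigma$ is realised by hypothesis. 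Fix $a^*$ realising $\bigcup_{i<\lambda}\Sigma(x,b_i)$; in particular $\models\Sigma(a^*,b_i)$ for all $i<\lambda$, and in particular $\models\Sigma(a^*,b_0)$.

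Next, apply \textsc{local character} to $a^*$ against the sequence $(b_i)_{i<\lambda}$. Choosing $\lambda$ above the local-character bound for tuples of length $|x|$ and using the reformulation from \thref{local-character-in-terms-of-sequences}, we obtain an index $i_0$, small relative to that bound, with
\[
a^* \ind_{C(b_j)_{j<i_0}} (b_j)_{i_0\le j<\lambda},
\]
hence by \textsc{monotonicity} also $a^* \ind_{C(b_j)_{j<i_0}} b_{i_0}$. The remaining task is to push the base down to $C$. This is where the segment-wise independence of a $\ind$-Morley sequence comes in: since $(b_i)_{i<\lambda}$ is $\ind_C$-independent, \thref{independent-sequence-cuts} relates the initial segment $(b_j)_{j<i_0}$ to the tail $(b_j)_{i_0\le j<\lambda}$, and, after absorbing $C$ into the parameters so that \textsc{base monotonicity} applies and using \textsc{normality}, one feeds this together with the displayed independence into \textsc{left transitivity} to obtain $a^* \ind_C b_{i_0}$. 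Finally $b_{i_0}\equiv_C b_0$ by $C$-indiscernibility, so by homogeneity of $\MM$ there is $f\in\Aut(\MM/C)$ with $f(b_{i_0})=b_0$; then $a:=f(a^*)$ satisfies $\models\Sigma(a,b_0)$ and, by \textsc{invariance}, $a\ind_C b_0$, as required.

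The main obstacle is precisely this base-reduction step: it is the only place where the full strength of the hypothesis list is used, and it must be handled carefully because $\ind$ is not assumed symmetric — along a $\ind$-Morley sequence one only has "later independent from earlier", so the application of \textsc{left transitivity}, \textsc{base monotonicity} and the $C$-absorption bookkeeping must be arranged so that the pieces line up on the correct side of $\ind$. Everything before and after that step (building the long Morley sequence, transferring consistency of $\Sigma$ by indiscernibility, and the final automorphism) is routine.
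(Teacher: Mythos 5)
Your outline matches the paper's strategy up to the crucial step, but the base-reduction step as you describe it does not go through, and you have not actually resolved the difficulty you flag at the end. After applying \thref{local-character-in-terms-of-sequences} to a \emph{forward-ordered} elongation $(b_i)_{i<\lambda}$ you hold $a^* \ind_{C(b_j)_{j<i_0}} b_{i_0}$, and \textsc{left transitivity} (with $C' = C(b_j)_{j<i_0}$, $b = b_{i_0}$) additionally requires $C(b_j)_{j<i_0} \ind_C b_{i_0}$, i.e.\ the ordinal-initial segment independent from $b_{i_0}$ over $C$. But \thref{independent-sequence-cuts} applied to your sequence only yields the reverse orientation: the tail is independent from the initial segment, so at best $b_{i_0} \ind_C (b_j)_{j<i_0}$ (and more generally $(b_j)_{j\in I_1}\ind_C (b_j)_{j\in I_0}$ with $I_0<I_1$, so the initial segment can only ever occur on the right-hand side). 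Since \textsc{symmetry} is not among the hypotheses, there is no way to flip this, and no choice of which element of the sequence to target avoids the problem. This is a genuine gap, not mere bookkeeping.

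The paper's fix is to reverse the order of the elongation: by compactness (\thref{reshape-morley-sequence}) one takes a $\ind$-Morley sequence $(b'_i)_{i\in\lambda^\op}$ over $C$ based on $(b_i)_{i<\omega}$, where $\lambda^\op$ carries the opposite order, and then applies \thref{local-character-in-terms-of-sequences} to this sequence enumerated in the \emph{ordinal} order $<$ on $\lambda$. With this arrangement the ordinal-initial segment $(b'_j)_{j<i_0}$ consists of elements that come \emph{later} in the Morley order than $b'_{i_0}$, so \thref{independent-sequence-cuts} together with \textsc{normality} gives $C(b'_j)_{j<i_0} \ind_C b'_{i_0}$ on exactly the side that \textsc{left transitivity} needs; combined with $a' \ind_{C(b'_j)_{j<i_0}} b'_{i_0}$ this yields $a' \ind_C b'_{i_0}$, and the final step via $b'_{i_0}\equiv_C b_0$ and \textsc{invariance} is as you wrote. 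If you incorporate this reversal your argument becomes the paper's proof; without it, the step ``feed this into \textsc{left transitivity}'' is exactly where the proof breaks.
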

\begin{proof}
Let $\kappa = |x|^+$ and let $\lambda$ be the corresponding cardinal from \textsc{local character}. By compactness (see \thref{reshape-morley-sequence}) there is a $\ind$-Morley sequence $(b'_i)_{i \in \lambda^\op}$ over $C$ that is based on $(b_i)_{i < \omega}$ over $C$, where $\lambda^\op$ carries the opposite order $<^\op$ of the order $<$ on $\lambda$. As $(b'_i)_{i \in \lambda^\op}$ is based on $(b_i)_{i < \omega}$ over $C$, $\bigcup_{i \in \lambda^\op} \Sigma(x, b'_i)$ is consistent. So let $a'$ be a realisation of this set. Applying \thref{local-character-in-terms-of-sequences}, whose crucial assumption is \textsc{local character} for $\ind$, to the sequence $(b'_i)_{i < \lambda}$ we find $i_0 < \lambda$ such that (after an application of \textsc{monotonicity})
\[
a' \ind_{C(b_i)_{i < i_0}} b_{i_0}.
\]
By \thref{independent-sequence-cuts} and \textsc{normality} we also have $C(b_i)_{i >^\op i_0} \ind_C b_{i_0}$ and thus
\[
C(b_i)_{i < i_0} \ind_C b_{i_0}.
\]
So by \textsc{left transitivity} we find $a' \ind_C b_{i_0}$. Using $b_{i_0} \equiv_C b_0$ we find $a$ such that $a b_0 \equiv_C a' b_{i_0}$. Then this $a$ is as required, because $a \ind_C b_0$ follows from \textsc{invariance} and $\models \Sigma(a, b_0)$ follows from the fact that $\models \Sigma(a', b_{i_0})$.
\end{proof}
We mostly want to talk about independent sequences and Morley sequences with respect to $\ind^d$. So much so, that in those cases we drop the independence relation from the notation. Besides, this way we match the traditional use of the term Morley sequence as well as possible. The only discrepancy being that traditionally they are defined with respect to forking independence, which we avoid in positive logic (see also \thref{forking-vs-dividing}).
\begin{definition}
\thlabel{dividing-morley-sequence}
We call a $\ind^d_C$-independent sequence (respectively a $\ind^d$-Morley sequence over $C$) simply an \term{independent sequence} (respectively a \term{Morley sequence} \emph{over $C$}).
\end{definition}
\begin{corollary}
\thlabel{morley-sequences-exist}
Assume thickness. If $T$ is simple then for any $a$ and $C$ there is a Morley sequence $(a_i)_{i < \omega}$ over $C$ with $a_0 = a$.
\end{corollary}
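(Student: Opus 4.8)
The plan is to simply invoke \thref{building-morley-sequences} with the independence relation $\ind$ taken to be dividing independence $\ind^d$. That proposition produces, for any $a$ and $C$, a $\ind$-Morley sequence $(a_i)_{i<\omega}$ over $C$ with $a_0 = a$, provided $\ind$ satisfies \textsc{invariance}, \textsc{monotonicity}, \textsc{finite character} and \textsc{full existence}. So all that is required is to check these four properties for $\ind^d$ under our hypotheses.

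First I would recall that \thref{dividing-basic-properties} establishes \textsc{invariance}, \textsc{monotonicity} and \textsc{finite character} for $\ind^d$ with no assumptions at all on $T$ beyond what is standing (working in a monster model). That takes care of three of the four needed properties. For the fourth, \textsc{full existence}, I would cite \thref{simple-thick-implies-full-existence}: this is precisely where the assumptions of thickness and simplicity are used, and it gives exactly that $\ind^d$ has \textsc{full existence}. With all four hypotheses of \thref{building-morley-sequences} verified for $\ind = \ind^d$, we obtain a $\ind^d$-Morley sequence $(a_i)_{i<\omega}$ over $C$ with $a_0 = a$, which by \thref{dividing-morley-sequence} is exactly what we call a Morley sequence over $C$.

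There is essentially no obstacle here: the corollary is just the packaging of \thref{building-morley-sequences}, \thref{dividing-basic-properties} and \thref{simple-thick-implies-full-existence}, and the only thing worth stating explicitly is that thickness and simplicity are needed solely to supply \textsc{full existence}. If anything deserves a remark, it is that the construction in \thref{building-morley-sequences} is carried out inside a sufficiently saturated p.c.\ model (one of size at least $\lambda_{|T|+|Ca|}$, obtained from \thref{building-saturated-model}) and the final sequence lives in the monster after basing an indiscernible sequence on a long $\ind^d_C$-independent sequence and applying an automorphism over $C$ to arrange $a_0 = a$ — but all of this is already internal to \thref{building-morley-sequences}, so no new work is needed.
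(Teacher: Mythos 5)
Your proposal is correct and matches the paper's own proof: both simply verify the hypotheses of \thref{building-morley-sequences} for $\ind^d$, with \textsc{invariance}, \textsc{monotonicity} and \textsc{finite character} coming from \thref{dividing-basic-properties} and \textsc{full existence} from \thref{simple-thick-implies-full-existence}. No issues.
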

\begin{proof}
By \thref{simple-thick-implies-full-existence} dividing independence has \textsc{full existence} (this is where we use simplicity and thickness) and thus satisfies all the assumptions of \thref{building-morley-sequences}, from which the result immediately follows.
\end{proof}
\begin{remark}
\thlabel{dividing-independent-sequence-cuts-remark}
Note that dividing independence satisfies all the properties necessary for \thref{independent-sequence-cuts}, which therefore applies to independent sequences, and in particular to Morley sequences.
\end{remark}
We finish with the main result of this section, nowadays known as \emph{Kim's lemma}. Even though it has ``lemma'' in the name, we will state it as a theorem due to its importance. Its main use can be described as follows: to show that a type $p(x, b) = \tp(a/Cb)$ does not divide over $C$ we would have to test consistency of $p(x, y)$ along \emph{every} $C$-indiscernible sequence in $\tp(b/C)$. Kim's lemma tells us that it is in fact enough to check only \emph{one} Morley sequence in $\tp(b/C)$. The main trick we actually saw before, in \thref{general-kims-lemma}.
\begin{theorem}[Kim's lemma]
\thlabel{kims-lemma}
Suppose that $T$ is simple and let $\Sigma(x, b)$ be a set of formulas over $Cb$. If $\bigcup_{i < \omega} \Sigma(x, b_i)$ is consistent for some Morley sequence $(b_i)_{i < \omega}$ over $C$ with $b_0 = b$ then $\Sigma(x, b)$ does not divide over $C$.

In particular, assuming thickness, we have that $\Sigma(x, b)$ divides over $C$ if and only if there is a Morley sequence $(b_i)_{i < \omega}$ with $b_0 = b$ such that $\bigcup_{i < \omega} \Sigma(x, b_i)$ is inconsistent.
\end{theorem}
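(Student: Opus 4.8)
The plan is to derive Kim's lemma directly from the general Kim's lemma for abstract independence relations, \thref{general-kims-lemma}, applied to dividing independence $\ind^d$. The first step is to record that $\ind^d$ satisfies every hypothesis of \thref{general-kims-lemma}: by \thref{dividing-basic-properties} it has \textsc{invariance}, \textsc{monotonicity}, \textsc{normality}, \textsc{base monotonicity}, \textsc{left transitivity} and \textsc{finite character}, and since $T$ is simple it has \textsc{local character} by \thref{simplicity-equivalences} (this is the defining condition in \thref{simplicity}). Recall also that a Morley sequence over $C$ is, by definition, a $\ind^d$-Morley sequence over $C$ (\thref{dividing-morley-sequence}).

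Next, suppose $\bigcup_{i < \omega} \Sigma(x, b_i)$ is consistent for some Morley sequence $(b_i)_{i < \omega}$ over $C$ with $b_0 = b$. Writing $\Sigma(x, b)$ as $\Sigma(x, y)$ with all parameters in $C$ and $y$ a tuple of variables matching $b$, an application of \thref{general-kims-lemma} with $\ind = \ind^d$ produces some $a$ with $\models \Sigma(a, b_0)$ and $a \ind^d_C b_0$. Since $b_0 = b$, this means $\Sigma(x, b) \subseteq \tp(a/Cb)$ and $\tp(a/Cb)$ does not divide over $C$. To finish I would observe that a subset of a non-dividing set of formulas is itself non-dividing: any $C$-indiscernible sequence in $\tp(b/C)$ witnessing dividing of $\Sigma(x, b)$ would, after replacing $\Sigma$ by the larger $\tp(a/Cb)$, also witness dividing of $\tp(a/Cb)$, which is impossible. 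Hence $\Sigma(x, b)$ does not divide over $C$.

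For the ``in particular'' clause, assume thickness. The backward implication needs nothing beyond the definition of dividing: a Morley sequence over $C$ is in particular $C$-indiscernible, and all its terms have the same type over $C$ as $b_0 = b$, so if $\bigcup_{i < \omega} \Sigma(x, b_i)$ is inconsistent then this sequence witnesses that $\Sigma(x, b)$ divides over $C$. For the forward implication, \thref{morley-sequences-exist} (which uses both simplicity and thickness) supplies a Morley sequence $(b_i)_{i < \omega}$ over $C$ with $b_0 = b$; by the contrapositive of the main statement just proved, if $\bigcup_{i < \omega}\Sigma(x, b_i)$ were consistent for this sequence then $\Sigma(x, b)$ would not divide over $C$, so if $\Sigma(x, b)$ does divide over $C$ then $\bigcup_{i < \omega}\Sigma(x, b_i)$ is inconsistent for this particular Morley sequence. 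I do not expect a genuine obstacle: the real content sits in \thref{general-kims-lemma}, \thref{dividing-basic-properties} and \thref{morley-sequences-exist}, and the only points needing care are matching the ``$\Sigma(x,y)$ over $C$'' hypothesis of \thref{general-kims-lemma} with the ``$\Sigma(x,b)$ over $Cb$'' formulation here, and the passage from $\Sigma(x,b)$ to the larger type $\tp(a/Cb)$, both of which are routine.
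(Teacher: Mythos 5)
Your proposal is correct and follows essentially the same route as the paper: apply \thref{general-kims-lemma} to $\ind^d$ (whose hypotheses hold by \thref{dividing-basic-properties} plus \textsc{local character} from simplicity) to get $a \models \Sigma(x,b)$ with $a \ind^d_C b$, deduce non-dividing of $\Sigma(x,b)$ since it sits inside the non-dividing type $\tp(a/Cb)$, and obtain the ``in particular'' clause from \thref{morley-sequences-exist}. The extra details you spell out (matching the parameter conventions, monotonicity of non-dividing under subsets, the trivial backward direction) are exactly the routine steps the paper leaves implicit.
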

\begin{proof}
As $T$ is simple, we can apply \thref{general-kims-lemma} to find $a$ with $\models \Sigma(a, b)$ and $a \ind^d_C b$. So $\tp(a/Cb)$ does not divide over $C$ and contains $\Sigma(x, b)$, from which we conclude that $\Sigma(x, b)$ does not divide over $C$. The ``in particular'' claim then follows from \thref{morley-sequences-exist}, which guarantees the existence of Morley sequences.
\end{proof}
\section{Extension and symmetry}
\label{sec:extension-and-symmetry}
\begin{theorem}
\thlabel{dividing-extension}
Assume thickness. If $T$ is simple then dividing independence satisfies \textsc{extension}. That is, for any $a, b, d, C$, if $a \ind^d_C b$ then there is $d'$ with $d' \equiv_{Cb} d$ such that $a \ind^d_C bd'$.
\end{theorem}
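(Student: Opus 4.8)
The strategy is to produce a realisation $a'$ of $\tp(a/Cb)$ with $a' \ind^d_C bd$, and then transport it back to $a$. For the transport: given such an $a'$, pick $h \in \Aut(\MM/Cb)$ with $h(a') = a$ (which exists since $a' \equiv_{Cb} a$); by \textsc{invariance} of $\ind^d$ the statement $a' \ind^d_C bd$ becomes $a \ind^d_C b\,h(d)$, while $h(d) \equiv_{Cb} d$ because $h$ fixes $Cb$, so $d' = h(d)$ is as required. Thus it suffices to find $a'$.

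To find $a'$ I would use \thref{general-kims-lemma}, applied to $\ind^d$: by \thref{dividing-basic-properties} dividing independence satisfies \textsc{invariance}, \textsc{monotonicity}, \textsc{normality}, \textsc{base monotonicity}, \textsc{left transitivity} and \textsc{finite character}, and it satisfies \textsc{local character} because $T$ is simple (\thref{simplicity-equivalences}). Using simplicity and thickness, take a Morley sequence $(b_i d_i)_{i < \omega}$ over $C$ with $b_0 d_0 = bd$ (\thref{morley-sequences-exist}); by \textsc{monotonicity} the reduct $(b_i)_{i < \omega}$ is a Morley sequence over $C$ with $b_0 = b$. Writing $p(x, b) = \tp(a/Cb)$: since $a \ind^d_C b$, the type $p$ does not divide over $C$, hence $\bigcup_{i < \omega} p(x, b_i)$ is consistent (this is just the failure of dividing witnessed along the $C$-indiscernible sequence $(b_i)_{i < \omega}$). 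Regarding $p(x, y)$ (with $b$ replaced by a variable $y$) as a set of formulas over $C$, and letting $\Sigma(x, yz) = p(x, y)$ with dummy variables $z$ matching $d$, we get $\bigcup_{i<\omega} \Sigma(x, b_i d_i) = \bigcup_{i<\omega} p(x, b_i)$ consistent, so \thref{general-kims-lemma} yields $a'$ with $\models \Sigma(a', bd)$ and $a' \ind^d_C bd$. The first condition says $\tp(a/Cb) \subseteq \tp(a'/Cb)$, and since types over a parameter set are maximal (the monster is a p.c.\ model, cf.\ \thref{pc-model-iff-types-are-maximal}) this is an equality, i.e.\ $a' \equiv_{Cb} a$, and the transport above concludes the argument.

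The subtle point, and where the hypotheses really get used, is keeping $b$ rigid: extension for dividing genuinely fails without simplicity, so Kim's lemma (here in the form of \thref{general-kims-lemma}) must enter; but a naive application on a Morley sequence in $\tp(bd/C)$ only produces $a' \ind^d_C b''d''$ with $b''d'' \equiv_C bd$, which does not control the type of $d'$ over $Cb$. Encoding the whole type $\tp(a/Cb)$ into the set of formulas $\Sigma$ fed to \thref{general-kims-lemma}, rather than merely requesting independence, is exactly what pins $b$ down, and maximality of positive types over $Cb$ then promotes the inclusion to an equality. Thickness enters (only) through the existence of Morley sequences in \thref{morley-sequences-exist}. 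The remaining verifications --- the properties of $\ind^d$ and the bookkeeping with the variables $x, y, z$ in $\Sigma$ --- are routine.
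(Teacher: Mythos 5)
Your proof is correct. It shares the paper's two essential ingredients --- viewing $\tp(a/Cb)$ as a set of formulas in the variables $xyz$ that simply does not mention the $d$-variables, and the final transport of the independence back to $a$ by an automorphism over $Cb$ (the paper phrases this as choosing $d'$ with $ad' \equiv_{Cb} a'd$ and invoking \textsc{invariance}) --- but it takes a genuinely shorter route to the realisation $a'$. The paper first proves the intermediate \thref{dividing-extension-partial-types}: it elongates a Morley sequence in $\tp(bd/C)$ to length $\lambda^+$, realises the union of the copies of the partial type, runs a pigeonhole argument to extract an infinite subsequence along which the realisation has constant type, and only then applies Kim's lemma (\thref{kims-lemma}) to conclude that this complete type does not divide. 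You bypass all of that by applying \thref{general-kims-lemma} directly to the Morley sequence $(b_id_i)_{i<\omega}$ with $\Sigma$ the padded type $\tp(a/Cb)$: its conclusion already hands you a realisation of $\Sigma$ at $b_0d_0 = bd$ together with $a' \ind^d_C bd$, and maximality of types in the (p.c.) monster (\thref{pc-model-iff-types-are-maximal}) upgrades $\models p(a',b)$ to $a' \equiv_{Cb} a$. All hypotheses of \thref{general-kims-lemma} are indeed available ($\ind^d$ satisfies the basic properties by \thref{dividing-basic-properties} and \textsc{local character} by simplicity), and thickness enters exactly where you say, through \thref{morley-sequences-exist}. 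What the paper's longer route buys is the standalone statement about arbitrary partial types, which it reuses later (e.g.\ in \thref{independence-theorem-helper}); what yours buys is economy, at the cost of leaning on the stronger conclusion of \thref{general-kims-lemma} rather than the classical form of Kim's lemma. One tiny remark: you do not need the reduct $(b_i)_{i<\omega}$ to be a Morley sequence --- $C$-indiscernibility with $b_0=b$ already gives consistency of $\bigcup_i p(x,b_i)$ --- though the claim is true and harmless.
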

We actually prove something more general, with a more technical statement.
\begin{lemma}
\thlabel{dividing-extension-partial-types}
Assume thickness. If $T$ is simple then given a partial type $\Sigma(x, b)$ that does not divide over $C$ there is a type $p(x, b) \supseteq \Sigma(x, b)$ that does not divide over $C$.
\end{lemma}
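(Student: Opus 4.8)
The plan is to deduce this directly from the general version of Kim's lemma, \thref{general-kims-lemma}, so that essentially no new argument is required. The one observation that makes everything fit is that a Morley sequence over $C$ whose initial term is $b$ is in particular a $C$-indiscernible sequence all of whose terms satisfy $\equiv_C b$, which is exactly the class of sequences that the failure of dividing of $\Sigma(x,b)$ controls.

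Carrying this out, first I would use \thref{morley-sequences-exist} --- the step that consumes both simplicity and thickness, via \thref{simple-thick-implies-full-existence} and \thref{building-morley-sequences} --- to fix a Morley sequence $(b_i)_{i<\omega}$ over $C$ with $b_0 = b$. Since $(b_i)_{i<\omega}$ is $C$-indiscernible we have $b_i \equiv_C b_0 = b$ for all $i<\omega$, so by the definition of dividing (\thref{dividing}) the hypothesis that $\Sigma(x,b)$ does not divide over $C$ gives that $\bigcup_{i<\omega}\Sigma(x,b_i)$ is consistent; alternatively one may simply quote \thref{kims-lemma}. Now dividing independence $\ind^d$ satisfies \textsc{invariance}, \textsc{monotonicity}, \textsc{normality}, \textsc{base monotonicity}, \textsc{left transitivity} and \textsc{finite character} by \thref{dividing-basic-properties}, and it satisfies \textsc{local character} because $T$ is simple; hence all of the hypotheses of \thref{general-kims-lemma} are in force for $\ind^d$. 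Applying that lemma to $\ind^d$, the Morley sequence $(b_i)_{i<\omega}$ and the set of formulas $\Sigma(x,y)$ (viewed as a set of formulas over $C$ in variables $x$ and $y$, with $y$ matching $b$) produces a tuple $a$ with $\models \Sigma(a,b)$ and $a \ind^d_C b$. Setting $p(x,b) = \tp(a/Cb)$ then finishes the proof: it is a complete type by \thref{pc-model-iff-types-are-maximal}, it contains $\Sigma(x,b)$ since $\models\Sigma(a,b)$, and it does not divide over $C$ precisely because $a\ind^d_C b$.

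I do not expect a genuine obstacle here: all of the difficulty has already been paid for in establishing \textsc{full existence} for dividing independence under thickness (\thref{simple-thick-implies-full-existence}) and in \thref{general-kims-lemma}. The only points needing care are bookkeeping ones: checking that $\ind^d$ satisfies every property appearing in the hypothesis list of \thref{general-kims-lemma} (noting that it is exactly simplicity that contributes \textsc{local character}, and exactly thickness that is needed to produce a Morley sequence with prescribed first term), and keeping track of which variables the formulas in $\Sigma$ are allowed to mention. Finally, \thref{dividing-extension} follows from this lemma by a short conjugation argument: assuming $a \ind^d_C b$, apply the lemma to $\tp(a/Cb)$ --- which does not divide over $C$, and still does not when regarded as a partial type over $Cbd$ since the $d$-coordinate is unconstrained --- to obtain a non-dividing complete type $p(x,bd)$ over $Cbd$; realise it by some $a^\ast$ with $a^\ast \equiv_{Cb} a$ and $a^\ast \ind^d_C bd$, pick $\sigma \in \Aut(\MM/Cb)$ with $\sigma(a^\ast) = a$, and set $d' = \sigma(d)$; then $d' \equiv_{Cb} d$ and $a \ind^d_C bd'$ by \textsc{invariance}.
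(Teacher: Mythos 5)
Your proof is correct, and it reaches the conclusion by a slightly different route than the paper. You apply \thref{general-kims-lemma} directly to the Morley sequence $(b_i)_{i<\omega}$ with $b_0=b$ (whose existence, via \thref{morley-sequences-exist}, is indeed where thickness and simplicity enter, and whose hypotheses on $\ind^d$ you check correctly: \thref{dividing-basic-properties} plus \textsc{local character} from simplicity). That lemma already hands you a realisation $a$ of $\Sigma(x,b)$ with $a \ind^d_C b$, so $p(x,b)=\tp(a/Cb)$ is immediately the desired non-dividing completion. The paper instead works from the packaged statement of Kim's lemma (\thref{kims-lemma}): it elongates the Morley sequence to length $\lambda^+$ with $\lambda$ the number of types over $Cb$, realises $\bigcup_i \Sigma(x,b_i)$, applies the pigeonhole principle to find an infinite subsequence along which $ab_i$ has constant type over $C$, and then feeds the resulting complete type into \thref{kims-lemma}. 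The two arguments are essentially equivalent in content — the paper's \thref{kims-lemma} is itself proved by exactly the appeal to \thref{general-kims-lemma} you make, and its proof (though not its statement) already produces the complete non-dividing type — so your version simply avoids re-deriving the completeness via elongation and pigeonhole, at the cost of invoking the more technical general lemma rather than the cleaner packaged one. Your closing derivation of \thref{dividing-extension} by conjugation matches the paper's. (Minor remark: citing \thref{pc-model-iff-types-are-maximal} for completeness of $\tp(a/Cb)$ is unnecessary, since types over sets in the monster are maximal by definition, but this is harmless.)
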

\begin{proof}
Let $(b_i)_{i < \omega}$ be a Morley sequence over $C$ with $b_0 = b$, which exists by \thref{morley-sequences-exist}. Let $\lambda$ be the number of types over $Cb$ in variables $x$ that match $a$. By compactness we may elongate $(b_i)_{i < \omega}$ to $(b_i)_{i < \lambda^+}$. As $\Sigma(x, b)$ does not divide over $C$, there is a realisation $a$ of $\bigcup_{i < \lambda^+} \Sigma(x, b_i)$. By the pigeonhole principle we find an infinite $I \subseteq \lambda^+$ such that $ab_i \equiv_C ab_j$ for all $i,j \in I$. Pick $i_0 \in I$ and set $p(x, b_{i_0}) = \tp(a/Cb_{i_0})$. Then $a$ realises $\bigcup_{i \in I} p(x, b_i)$, and since $(b_i)_{i \in I}$ is a Morley sequence over $C$ we have by Kim's lemma (\thref{kims-lemma}) that $p(x, b_{i_0})$ does not divide over $C$. We conclude by noting that by construction $\Sigma(x, b_{i_0}) \subseteq p(x, b_{i_0})$ and by using the fact that $b_{i_0} \equiv_C b$.
\end{proof}
\begin{proof}[Proof of \thref{dividing-extension}]
Set $p(x, b) = \tp(a/Cb)$. Viewing $p(x, b)$ as a partial type over $Cbd$, that just happens to not mention the parameters in $d$ we can apply \thref{dividing-extension-partial-types} to find a type $q(x, bd) \supseteq p(x, b)$ over $Cbd$ such that $q(x, bd)$ does not divide over $C$. Let $a'$ realise $q(x, bd)$ then $a' \equiv_{Cb} a$. So we find $d'$ with $ad'  \equiv_{Cb} a'd$, which implies $a \ind^d_C bd'$ by \textsc{invariance}, as required.
\end{proof}
\begin{remark}
\thlabel{forking-vs-dividing}
In full first-order logic one often considers the notion of \term{forking}, which is defined as follows. A type $p(x, b)$ forks over $C$ if it implies a finite disjunction $\psi_1(x, d_1) \vee \ldots \vee \psi_n(x, d_n)$, where $\psi_i(x, d_i)$ divides over $C$ for each $1 \leq i \leq n$. The point of this definition is to enforce the \textsc{extension} property. That is, forking and dividing coincide exactly when $\ind^d$ satisfies \textsc{extension}.

We could consider a similar definition in positive logic. However, we would have to work with infinite disjunctions (see below), which makes the definition less practical to work with. Instead we proved directly that in simple theories $\ind^d$ satisfies \textsc{extension}. From this point on, even in full first-order logic, one uses the easier notion of dividing anyway (as forking and dividing now coincide). There is thus no need for us to even define a notion of forking.

It is instructive to see how the finite disjunction arises and why this does not work in positive logic. Fix a type $p(x, b) = \tp(a/Cb)$ and suppose that $d \supseteq b$ is such that any $q(x, d) \supseteq p(x, b)$ divides over $C$. That is, we have a failure of \textsc{extension}. Let $I$ be the set of all types in free variables $x$ over $Cd$ that contain $p(x, b)$. By assumption, for each $q \in I$, there is $\psi_q(x, d) \in q(x, d)$ such that $\psi_q(x, d)$ divides over $C$ (cf.\ \thref{psi-dividing-lemma}). By construction $p(x, d)$ implies $\bigvee_{q \in I} \psi_q(x, d)$, so $p(x, d) \cup \{\neg \psi_q(x, d) : q \in I\}$ is inconsistent. Hence there is finite $I' \subseteq I$ such that $p(x, d) \cup \{\neg \psi_q(x, d) : q \in I'\}$ is inconsistent, which means that $p(x, d)$ implies $\bigvee_{q \in I'} \psi_q(x, d)$. That is, $p(x, d)$ forks over $C$. The step using compactness to get the finite $I'$ heavily relies on being able to negate the formulas $\psi_q(x, d)$.
\end{remark}
\begin{theorem}
\thlabel{dividing-symmetry}
Assume thickness. If $T$ is simple then dividing independence satisfies \textsc{symmetry}. That is, for any $a, b, C$, if $a \ind^d_C b$ then $b \ind^d_C a$.
\end{theorem}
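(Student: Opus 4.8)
The plan is to deduce symmetry from Kim's lemma (\thref{kims-lemma}), which is available since $T$ is simple. The goal ``$b \ind^d_C a$'' says that $\tp(b/Ca)$ does not divide over $C$, so by Kim's lemma it suffices to exhibit one Morley sequence $(a_i)_{i<\omega}$ over $C$ with $a_0 = a$ together with a single tuple $b^*$ such that $\tp(b^* a_i / C) = \tp(ba/C)$ for all $i<\omega$ (equivalently, $a_i \equiv_{Cb^*} a$ for all $i$): then $\bigcup_{i<\omega}\tp(b/Ca_i)$ is realised by $b^*$, and Kim's lemma yields the conclusion. Everything thus reduces to manufacturing such a sequence out of the hypothesis $a \ind^d_C b$. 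Note that, since $T$ is simple and thick, dividing independence has \textsc{full existence} (\thref{simple-thick-implies-full-existence}) and \textsc{extension} (\thref{dividing-extension}) in addition to the properties of \thref{dividing-basic-properties}, and Morley sequences exist over any set (\thref{morley-sequences-exist}); these are the tools the construction uses.

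First I would unpack what $a \ind^d_C b$ gives. Starting from a Morley sequence over $C$ in $\tp(b/C)$ with first term $b$ (\thref{morley-sequences-exist}), Kim's lemma produces a tuple $a^*$ with $a^* b_i \equiv_C ab$ for all $i$; reshaping to a long sequence (\thref{reshape-morley-sequence}) and basing a $Ca^*$-indiscernible sequence on it over $Ca^*$ (\thref{base-indiscernible-sequence-on-long-sequence}) yields, via \thref{basing-morley-sequence-on-long-independent-sequence}, a Morley sequence $(b_i)_{i<\omega}$ over $C$ that is $Ca^*$-indiscernible with $a^* b_i \equiv_C ab$; after an automorphism over $C$ we may take $a^* = a$ and $b_0 = b$. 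Chopping a long version of $(b_i)$ into consecutive blocks of length $\omega$ gives, by \thref{independent-sequence-cuts} (cf.\ \thref{dividing-independent-sequence-cuts-remark}), a $\ind^d$-Morley sequence of $\omega$-tuples along which $a$ remains consistent, so Kim's lemma upgrades this to $a \ind^d_C (b_i)_{i<\omega}$. It then remains to ``transpose'' the picture onto the $a$-side: using \textsc{extension} (\thref{dividing-extension}) and \textsc{full existence} one builds a long $\ind^d_C$-independent sequence of realisations of $\tp(a/Cb)$, and then makes it indiscernible over $Cb$ by a Ramsey argument carried out over $Cb$ (not merely over $C$) — this last step preserves $\ind^d_C$-independence because a sequence based on another over $Cb$ is in particular based over $C$, so \thref{basing-morley-sequence-on-long-independent-sequence} still applies. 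After a final automorphism over $Cb$ we obtain the required Morley sequence $(a_i)_{i<\omega}$ over $C$ with $a_0 = a$ and all $a_i \equiv_{Cb} a$, and take $b^* = b$.

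The hard part is precisely this transposition. The hypothesis $a \ind^d_C b$ only controls Morley sequences in $\tp(b/C)$ via Kim's lemma, whereas the reduction above asks for a good Morley sequence in $\tp(a/C)$, and the naive recursion ``choose $a_i \models \tp(a/Cb)$ with $a_i \ind^d_C (a_j)_{j<i}$'' cannot be run directly: \thref{dividing-basic-properties} supplies only \textsc{left transitivity}, and upgrading $\ind^d_{Cb}$-independence to $\ind^d_C$-independence would require an instance of the very symmetry being proved. The construction must therefore be organised so that each new element is adjoined on the left-hand side of the relevant $\ind^d$-statement — this is where \textsc{extension} and \textsc{full existence} enter — while $Cb$-indiscernibility is imposed only at the end by the Ramsey step over $Cb$; reconciling these two demands is the technical core, and it is also where thickness is genuinely used, namely through the existence of Morley sequences and Kim's lemma.
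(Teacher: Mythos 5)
Your reduction and endgame are exactly the paper's: produce a Morley sequence over $C$ all of whose terms realise $\tp(a/Cb)$, observe that $b$ then realises $\bigcup_{i<\omega}\tp(b/Ca_i)$, and conclude by Kim's lemma; the observation that a sequence based on another over $Cb$ is in particular based over $C$, so \thref{basing-morley-sequence-on-long-independent-sequence} still yields a Morley sequence over $C$, is also the right one. But the pivotal step --- actually constructing a long $\ind^d_C$-independent sequence of realisations of $\tp(a/Cb)$ --- is never carried out. You correctly diagnose why the naive recursion would use the symmetry being proved, you name \textsc{extension} and say the new element must be adjoined on the left, but you explicitly leave ``reconciling these two demands'' open, and that reconciliation is the actual content of the theorem. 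The paper closes it with a single application of \textsc{extension}: take a positively $\lambda_{|T|+|Cba|}$-saturated p.c.\ model $N'\supseteq Cb$, apply \textsc{extension} to $a\ind^d_C b$ to get $N\equiv_{Cb}N'$ with $a\ind^d_C N$, and then choose $a'_i\in N$ with $a'_i\equiv_{Cb(a'_j)_{j<i}}a$ by saturation of $N$; since $a\ind^d_C (a'_j)_{j<i}$ by \textsc{monotonicity}, \textsc{invariance} gives $a'_i\ind^d_C(a'_j)_{j<i}$, which is precisely the ``new element on the left'' you were after, obtained simultaneously with $a'_i\equiv_{Cb}a$. (Equivalently, one can iterate \textsc{extension}: maintain $a\ind^d_C b(a'_j)_{j<i}$, at each step apply \textsc{extension} with $d=a$ to find $a'_i\equiv_{Cb(a'_j)_{j<i}}a$ with $a\ind^d_C b(a'_j)_{j\le i}$, and use \textsc{finite character} at limits.) Without one of these mechanisms your outline is a correct strategy but not yet a proof.

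Two smaller points. The first paragraph of your proposal, upgrading $a\ind^d_C b$ to $a\ind^d_C(b_i)_{i<\omega}$ along a $Ca$-indiscernible Morley sequence in $\tp(b/C)$, is essentially correct but is never used in the transposition; moreover the step you attribute to Kim's lemma (finding $a^*$ with $a^*b_i\equiv_C ab$) is just the definition of non-dividing, i.e.\ \thref{dividing-in-terms-of-automorphic-indiscernible-sequences}. Finally, in the transposition thickness enters through \textsc{extension} (\thref{dividing-extension}, resting on \textsc{full existence}, \thref{simple-thick-implies-full-existence}), not through Kim's lemma, whose main implication requires only simplicity.
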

\begin{proof}
We start by assuming $a \ind^d_C b$. Let $N' \supseteq Cb$ be a positively $\lambda_{|T|+|Cba|}$-saturated p.c.\ model. By \textsc{extension} there is $N$ with $N \equiv_{Cb} N'$ such that $a \ind^d_C N$. Inductively and by saturation we find $(a'_i)_{i < \lambda_{|T|+|Cba|}}$ in $N$ such that $a'_i \equiv_{Cb(a'_j)_{j < i}} a$ for all $i < \lambda_{|T|+|Cba|}$. For all $i < \lambda_{|T|+|Cba|}$ we have that $a \ind^d_C N$ implies $a \ind^d_C (a'_j)_{j < i}$ by \textsc{monotonicity} and so $a'_i \ind^d_C (a'_j)_{j < i}$ by \textsc{invariance}, so $(a'_i)_{i < \lambda_{|T|+|Cba|}}$ is a $\ind^d_C$-independent sequence. Let $(a_i)_{i < \omega}$ be a $Cb$-indiscernible sequence based on $(a'_i)_{i < \lambda_{|T|+|Cba|}}$ over $Cb$. By \thref{basing-morley-sequence-on-long-independent-sequence}, $(a_i)_{i < \omega}$ is a Morley sequence over $C$. Furthermore, for every $i < \omega$ we have that $a_i \equiv_{Cb} a$. So letting $p(y, a) = \tp(b/Ca)$, we have that $\bigcup_{i < \omega} p(y, a_i)$ is consistent, as it is realised by $b$. We conclude by Kim's lemma (\thref{kims-lemma}) that $p(y, a_0)$, and hence $p(y, a)$, does not divide over $C$. So $b \ind^d_C a$, as required.
\end{proof}
\begin{corollary}
\thlabel{dividing-transitivity}
Assume thickness. If $T$ is simple then dividing independence satisfies \textsc{transitivity}. That is, for any $a, b, C, C'$ with $C \subseteq C' \subseteq b$ we have that $a \ind^d_C C'$ and $a \ind^d_{C'} b$ implies $a \ind^d_C b$.
\end{corollary}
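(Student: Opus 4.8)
The plan is to derive \textsc{transitivity} from two things already in hand: \textsc{left transitivity}, which holds for $\ind^d$ unconditionally by \thref{dividing-basic-properties}, and \textsc{symmetry}, which holds in simple thick theories by \thref{dividing-symmetry}. The key observation is that \textsc{left transitivity} is literally \textsc{transitivity} with the two sides of the independence relation interchanged, so conjugating every instance by \textsc{symmetry} converts one into the other. This is where (and the only place where) the hypotheses that $T$ is simple and thick enter.

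Concretely, assume $C \subseteq C' \subseteq b$ with $a \ind^d_C C'$ and $a \ind^d_{C'} b$. First I would apply \textsc{symmetry} to the two hypotheses, obtaining $C' \ind^d_C a$ and $b \ind^d_{C'} a$. Then, since $C \subseteq C'$, I would invoke \textsc{left transitivity} from \thref{dividing-basic-properties} with its ``$b$'' taken to be our $a$ and its ``$a$'' taken to be our $b$: the instances $C' \ind^d_C a$ and $b \ind^d_{C'} a$ then yield $b \ind^d_C a$. A final application of \textsc{symmetry} gives $a \ind^d_C b$, as required. (The hypothesis $C' \subseteq b$ is not actually used in this argument; it is kept only to match the standard formulation.)

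I do not expect any genuine obstacle. The argument is a short symmetrization, and both ingredients are established earlier in the chapter. The one thing to be careful about is the bookkeeping when instantiating \textsc{left transitivity} — correctly matching the tuples and checking that the base-inclusion $C \subseteq C'$ is exactly what that property requires — but this is purely clerical.
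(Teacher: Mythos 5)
Your proof is correct and is essentially the paper's own argument: the corollary is proved there in one line by observing that \textsc{transitivity} is \textsc{left transitivity} (from \thref{dividing-basic-properties}) with the sides swapped, which is exactly the conjugation by \textsc{symmetry} (\thref{dividing-symmetry}) that you spell out. Your side remark that the hypothesis $C' \subseteq b$ is not needed is also accurate, since \textsc{left transitivity} only requires $C \subseteq C'$.
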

\begin{proof}
This is just \textsc{left transitivity} with the sides of the independence relation $\ind^d$ swapped, so the result follows from \textsc{symmetry}.
\end{proof}
\section{The independence theorem}
\label{sec:independence-theorem}
\begin{lemma}
\thlabel{indiscernible-sequence-becomes-first-in-morley}
Assume thickness. If $T$ is simple then for any $B$-indiscernible sequence $(a_i)_{i < \omega}$ there is a sequence $(a'_i)_{1 \leq i < \omega}$ such that $a_i, a'_1, a'_2, \ldots$ is a Morley sequence over $C$ for all $i < \omega$.
\end{lemma}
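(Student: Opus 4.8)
Since $T$ is thick and simple, dividing independence $\ind^d$ satisfies \textsc{full existence} (\thref{simple-thick-implies-full-existence}), \textsc{extension} (\thref{dividing-extension}), \textsc{symmetry} (\thref{dividing-symmetry}) and \textsc{transitivity} (\thref{dividing-transitivity}) in addition to the basic properties of \thref{dividing-basic-properties}, and Morley sequences over $C$ exist starting with any prescribed tuple (\thref{morley-sequences-exist}). Write $I = (a_i)_{i<\omega}$ for the given $C$-indiscernible sequence.

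\textbf{Reduction.} The plan is to produce a sequence $(a'_j)_{1\le j<\omega}$, of tuples matching the length of $a_0$, such that the $\mathbb{Z}$-indexed sequence $(\ldots, a'_2, a'_1) \frown I = (\ldots, a'_2, a'_1, a_0, a_1, a_2, \ldots)$ is $C$-indiscernible, while moreover $(a'_1, a'_2, \ldots)$ is a Morley sequence over $C$ and $\{a'_j : j\ge 1\} \ind^d_C I$. I claim this suffices. For each $i<\omega$, the sequence $S_i := (a_i, a'_1, a'_2, \ldots)$ is exactly the reverse of the induced-order subsequence $(\ldots, a'_2, a'_1, a_i)$ of the $C$-indiscernible $\mathbb{Z}$-sequence; since reversing an indiscernible sequence keeps it indiscernible, and since the type over $C$ of an increasing subtuple of an indiscernible sequence depends only on its length, $S_i$ is $C$-indiscernible and $S_i \equiv_C S_0$ for all $i$. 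Next, $S_0 = (a_0, a'_1, a'_2, \ldots)$ is $\ind^d_C$-independent: indeed $a'_{n+1}\ind^d_C a'_1\cdots a'_n$ by Morley-ness of $(a'_j)$, and from $\{a'_j\}\ind^d_C I$ one gets $a_0 \ind^d_C a'_1\cdots a'_{n+1}$ by \textsc{symmetry} and \textsc{monotonicity}, whence $a_0 \ind^d_{Ca'_1\cdots a'_n} a'_{n+1}$ by \textsc{base monotonicity} and \textsc{monotonicity}, so $a'_{n+1}\ind^d_{Ca'_1\cdots a'_n} a_0$ by \textsc{symmetry}, and then $a'_{n+1}\ind^d_C a_0 a'_1\cdots a'_n$ by \textsc{transitivity} with intermediate base $Ca'_1\cdots a'_n$ (and $n=0$ is immediate). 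Hence $S_0$ is a Morley sequence over $C$, and therefore so is each $S_i$ by \textsc{invariance}, as desired.

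\textbf{Construction.} It remains to extend $I$ to the left by a Morley sequence over $C$ that is $\ind^d_C$-independent from $I$. I would build $(a'_j)_{1\le j<\omega}$ by recursion, prepending one element at a time, maintaining that $(a'_n,\ldots,a'_1)\frown I$ is $C$-indiscernible and that the meta-sequence $(I, a'_1, \ldots, a'_n)$ is $\ind^d_C$-independent (so, reversing and using \thref{independent-sequence-cuts}, $(a'_1,\ldots,a'_n)$ is $\ind^d_C$-independent and $\{a'_1,\ldots,a'_n\}\ind^d_C I$). At the recursion step one must find $a'_{n+1}$ continuing the $C$-indiscernible sequence on the left with $a'_{n+1}\ind^d_C Ia'_1\cdots a'_n$. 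To obtain such an element, extend the current $C$-indiscernible sequence on the left by a very long $C$-indiscernible block, observe that this block is indiscernible over $C\cup\{a'_1,\ldots,a'_n\}\cup I$ (the ``right-shift'' property of indiscernible sequences), and use the \textsc{local character}/$\mathsf{NTP}$ characterisation of simplicity (\thref{simplicity-equivalences}) together with \thref{base-indiscernible-sequence-on-long-sequence} and \thref{basing-morley-sequence-on-long-independent-sequence} to extract, from far enough down the long block, an element independent from $Ia'_1\cdots a'_n$ over $C$ that still continues the indiscernible sequence. Taking the union over $n$ yields $(a'_j)_{1\le j<\omega}$ with the properties required above.

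\textbf{Main obstacle.} The delicate point is precisely reconciling ``valid indiscernible left-extension of $I$'' with ``$\ind^d_C$-independent from everything already built'': the obvious tools pull in opposite directions — \thref{extend-base-set-of-indiscernible-sequence} preserves indiscernibility but destroys the independence relation to the previously chosen parameters, while \textsc{full existence} produces the independence but not the indiscernible extension. Making them cooperate is where simplicity is genuinely used, through the local-character bound controlling how far into the long indiscernible block one has to reach.
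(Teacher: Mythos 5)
There is a genuine gap: the intermediate target in your \textbf{Reduction} is in general unachievable, so the \textbf{Construction} cannot be carried out. Requiring the concatenation $(\ldots,a'_2,a'_1)\frown I$ to be jointly $C$-indiscernible forces the new elements to inherit whatever dependencies the EM-type of $I$ encodes. Concretely, take the (Morleyised) random graph, $C=\emptyset$, and the constant indiscernible sequence $I=(a,a,a,\ldots)$ for a single vertex $a$: joint indiscernibility forces $a'_j=a$ for all $j$ (since $(a'_1,a_0)\equiv_C(a_0,a_1)$ and the latter satisfies $x=y$), while $x=a$ divides over $\emptyset$ (witnessed by a nonconstant indiscernible sequence of vertices through $a$), so neither ``$(a'_1,a'_2,\ldots)$ is Morley over $C$'' nor ``$\{a'_j\}\ind^d_C I$'' can hold; your recursion cannot even produce $a'_1$ with $a'_1\ind^d_C I$. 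The same example exposes why the key move in your construction fails: as you yourself observe, the long left block is indiscernible over $C\cup\{a'_1,\ldots,a'_n\}\cup I$, hence all its elements realise the \emph{same} type over that set, and whether that type divides over $C$ is an all-or-nothing matter — going ``far enough down'' the block buys nothing, and \textsc{local character} gives cut points over bases containing block elements, not independence over $C$ from a fixed external set. (The part of your reduction deriving Morley-ness of $S_0$ from the hypothesised configuration, and transferring it to $S_i$ via $S_i\equiv_C S_0$ and \textsc{invariance}, is fine; the problem is solely that the configuration need not exist.)

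The paper avoids joint indiscernibility altogether. It takes a Morley sequence $(a''_j)_{j<\omega}$ over the base with $a''_0=a_0$ (\thref{morley-sequences-exist}); by \thref{independent-sequence-cuts} the tail satisfies $(a''_j)_{1\le j<\omega}\ind^d_C a_0$, i.e.\ its type over $Ca_0$ does not divide over $C$; then \thref{dividing-in-terms-of-automorphic-indiscernible-sequences}(iii), applied to the given $C$-indiscernible sequence $(a_i)_{i<\omega}$ starting at $a_0$, yields a $Ca_0$-conjugate tail $(a'_j)_{1\le j<\omega}$ such that $(a_i)_{i<\omega}$ is indiscernible \emph{over} $C\cup\{a'_j\}$ — a much weaker requirement than your concatenated indiscernibility, and one that does not force the $a'_j$ to resemble $I$. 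Then $a_i(a'_j)_j\equiv_C a_0(a'_j)_j\equiv_C a_0(a''_j)_j$, and the last sequence is the original Morley sequence, so invariance finishes the proof. If you want to salvage your outline, replace the joint-indiscernibility demand by exactly this ``$I$ indiscernible over the new tail'' condition and obtain the tail by conjugating a Morley tail via non-dividing, rather than by extracting it from a long indiscernible extension of $I$.
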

\begin{proof}
By \thref{morley-sequences-exist} there is a Morley sequence $(a''_i)_{i < \omega}$ with $a''_0 = a_0$. By \thref{independent-sequence-cuts} (see \thref{dividing-independent-sequence-cuts-remark}) we have that $(a''_i)_{1 \leq i < \omega} \ind^d_B a_0$. So by \thref{dividing-in-terms-of-automorphic-indiscernible-sequences} there is $(a'_i)_{1 \leq i < \omega}$ with $(a'_i)_{1 \leq i < \omega} \equiv_{C a_0} (a''_i)_{1 \leq i < \omega}$ such that $(a_i)_{i < \omega}$ is $B(a'_i)_{1 \leq i < \omega}$-indiscernible. In particular, for all $i < \omega$, we have that
\[
a_i (a'_j)_{1 \leq j < \omega} \equiv_B a_0 (a'_j)_{1 \leq j < \omega} \equiv_B a_0 (a''_i)_{1 \leq i < \omega}.
\]
The result then follows because $a_0 (a''_i)_{1 \leq i < \omega}$ is just the Morley sequence $(a''_i)_{i < \omega}$ over $B$.
\end{proof}
\begin{corollary}
\thlabel{lascar-strong-type-morley-sequences}
Assume thickness. If $T$ is simple then we have that $a \equivls_B a'$ if and only if there are $a = a_0, a_1, \ldots, a_n = a'$ such that $a_i$ and $a_{i+1}$ are on a Morley sequence over $B$ for all $0 \leq i < n$.
\end{corollary}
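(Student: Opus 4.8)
The plan is to show that each of the two sides is equivalent to $\d_B(a,a') \leq n$ for some $n < \omega$, i.e.\ to characterisation (i) of $\equivls_B$ in \thref{lascar-distance-and-strong-type}. For the ``if'' direction there is nothing to prove: a Morley sequence over $B$ is in particular an infinite $B$-indiscernible sequence, so if $a = a_0, a_1, \ldots, a_n = a'$ are such that $a_i$ and $a_{i+1}$ lie on a Morley sequence over $B$ for every $0 \leq i < n$, then $a_i$ and $a_{i+1}$ lie on a $B$-indiscernible sequence, hence $\d_B(a,a') \leq n$, and so $a \equivls_B a'$ by \thref{lascar-distance-and-strong-type}.

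For the ``only if'' direction, suppose $a \equivls_B a'$. By \thref{lascar-distance-and-strong-type}(i) there are $a = a_0, a_1, \ldots, a_n = a'$ with $a_i$ and $a_{i+1}$ on a $B$-indiscernible sequence for every $0 \leq i < n$, and I would upgrade each such step separately. Fix $i$; by \thref{on-indiscernible-sequeunce-is-same-as-starting-one} there is a $B$-indiscernible sequence $(c_j)_{j < \omega}$ with $c_0 = a_i$ and $c_1 = a_{i+1}$. Applying \thref{indiscernible-sequence-becomes-first-in-morley} to this sequence (with the parameter set there being $B$) yields a sequence $(d_j)_{1 \leq j < \omega}$ such that $c_j, d_1, d_2, \ldots$ is a Morley sequence over $B$ for every $j < \omega$. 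Writing $e_i = d_1$ and reading off the cases $j = 0$ and $j = 1$, both $a_i, e_i, d_2, d_3, \ldots$ and $a_{i+1}, e_i, d_2, d_3, \ldots$ are Morley sequences over $B$; hence $a_i$ and $e_i$ lie on a Morley sequence over $B$, and so do $e_i$ and $a_{i+1}$. Concatenating these length-$2$ replacements over $i = 0, \ldots, n-1$ produces the path
\[
a = a_0,\; e_0,\; a_1,\; e_1,\; \ldots,\; a_{n-1},\; e_{n-1},\; a_n = a',
\]
each consecutive pair of which lies on a Morley sequence over $B$, which is exactly the right-hand side.

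The proof is essentially an assembly of earlier results, so I do not anticipate a real obstacle; the only points needing care are bookkeeping ones. First, ``$x$ and $y$ lie on a Morley sequence over $B$'' should be read as the symmetric statement that some Morley sequence over $B$ contains both $x$ and $y$ (consistently with how ``on a $B$-indiscernible sequence'' is used in \thref{lascar-distance-and-strong-type}); this symmetry is what lets the length-$2$ pieces $a_i \to e_i \to a_{i+1}$ glue into a legitimate path without any concern about the order of the pair. Second, one must note that the parameter set written $C$ in the conclusion of \thref{indiscernible-sequence-becomes-first-in-morley} is the same set over which the input sequence is indiscernible, so that the standing hypotheses of simplicity and thickness are all that is invoked.
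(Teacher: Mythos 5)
Your proof is correct and follows essentially the same route as the paper: the right-to-left direction is immediate since Morley sequences are $B$-indiscernible, and for left-to-right you upgrade each single Lascar step via \thref{on-indiscernible-sequeunce-is-same-as-starting-one} and \thref{indiscernible-sequence-becomes-first-in-morley}, inserting an intermediate point $e_i$ so that $a_i, e_i$ and $e_i, a_{i+1}$ each start a Morley sequence over $B$ — exactly the paper's argument. Your remark about the stray $C$ in the statement of \thref{indiscernible-sequence-becomes-first-in-morley} is also right: it is a typo for $B$.
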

Note also that by the same argument as in \thref{on-indiscernible-sequeunce-is-same-as-starting-one}, the condition of being on a Morley sequence is equivalent to starting one.
\begin{proof}
As Morley sequences over $B$ are in particular $B$-indiscernible sequences the right to left direction is immediate. For the other direction it is enough to note that \thref{indiscernible-sequence-becomes-first-in-morley} implies that $\d_B(a, a') \leq 1$ implies that there is $a^*$ such that $a, a^*$ and $a', a^*$ start a Morley sequence over $B$. Indeed, let $(a_i)_{i < \omega}$ be a $B$-indiscernible sequence with $a_0 = a$ and $a_1 = a'$. Then by \thref{indiscernible-sequence-becomes-first-in-morley} there is a sequence $(a'_i)_{1 \leq i < \omega}$ such that both $a, a'_1, a'_2, \ldots$ and $a', a'_1, a'_2, \ldots$ are Morley sequences over $B$. So we can take $a^* = a'_1$.
\end{proof}
\begin{lemma}
\thlabel{strong-extension}
Assume thickness. Suppose that $\ind$ is an independence relation that satisfies \textsc{monotonicity} and \textsc{extension} then it satisfies \textsc{strong extension}. That is, for any $a, b, d, C$, if $a \ind_C b$ then there is $d'$ with $d' \equivls_{Cb} d$ such that $a \ind_C bd'$. In particular, if $T$ is simple then $\ind^d$ satisfies \textsc{strong extension}.
\end{lemma}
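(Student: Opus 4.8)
The plan is to obtain \textsc{strong extension} from \textsc{extension} by an extra step that replaces $\equiv_{Cb}$ with $\equivls_{Cb}$. The mechanism is \thref{same-type-over-saturated-model-lascar-distance-2}: over a positively $\lambda_T$-saturated p.c.\ model $M$, two tuples with the same type over $M$ automatically have Lascar distance at most $2$ over $M$, hence (since any $M$-indiscernible sequence is $Cb$-indiscernible whenever $Cb\subseteq M$) the same Lascar strong type over $Cb$. So the idea is to first widen the given independence so that it sits over such a model $M_0\supseteq Cb$, and then apply \textsc{extension} over $M_0$ rather than over $Cb$.

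Concretely, suppose $a\ind_C b$ and let $d$ be arbitrary. By \thref{building-saturated-model} there is a positively $\lambda_T$-saturated p.c.\ model $N$ with $Cb\subseteq N$. Applying \textsc{extension} to $a\ind_C b$ with the (possibly large) tuple $N$ yields $M_0$ with $M_0\equiv_{Cb}N$ and $a\ind_C bM_0$. Since $M_0\equiv_{Cb}N$, there is an automorphism of $\MM$ over $Cb$ carrying $N$ to $M_0$, so $M_0$ is again a positively $\lambda_T$-saturated p.c.\ model and $Cb\subseteq M_0$; in particular $bM_0=M_0$, so $a\ind_C M_0$. Now apply \textsc{extension} a second time, to the independence $a\ind_C M_0$ and the tuple $d$: because $C\subseteq M_0$ this produces $d'$ with $d'\equiv_{M_0}d$ and $a\ind_C M_0 d'$. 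Then \textsc{monotonicity} gives $a\ind_C bd'$ (as $bd'\subseteq M_0 d'$), and \thref{same-type-over-saturated-model-lascar-distance-2} applied to $d'\equiv_{M_0}d$ gives $\d_{M_0}(d,d')\leq 2$, hence $\d_{Cb}(d,d')\leq 2$ and so $d'\equivls_{Cb}d$ by \thref{lascar-distance-and-strong-type}. Thus $d'$ witnesses \textsc{strong extension}.

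For the final assertion: when $T$ is simple, dividing independence satisfies \textsc{monotonicity} by \thref{dividing-basic-properties} and, using thickness, \textsc{extension} by \thref{dividing-extension}, so the statement just proved applies to $\ind^d$. The one point that must be handled correctly is the order of the two uses of \textsc{extension}: the first must absorb a $\lambda_T$-saturated model into the independence, so that the second yields a $d'$ having the same type as $d$ over that model (not merely over $Cb$); only then does the Lascar-distance estimate of \thref{same-type-over-saturated-model-lascar-distance-2} apply. I expect this to be the only real subtlety; everything else is a direct unwinding of the two assumed properties.
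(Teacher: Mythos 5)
Your proposal is correct and follows essentially the same route as the paper: continue $Cb$ to a positively $\lambda_T$-saturated p.c.\ model, use \textsc{extension} once to replace it by an automorphic copy $M_0$ over which $a$ is independent, use \textsc{extension} a second time over $M_0$ to get $d'\equiv_{M_0}d$, and finish with \textsc{monotonicity} plus the Lascar-distance fact over saturated models (the paper quotes \thref{same-lstp-iff-same-types-over-sequence-of-models}, whose relevant direction is exactly your use of \thref{same-type-over-saturated-model-lascar-distance-2}). No gaps.
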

\begin{proof}
Let $M \supseteq Cb$ be some positively $\lambda_T$-saturated p.c.\ model (\thref{building-saturated-model}). By \textsc{extension} there is $M'$ with $M' \equiv_{Cb} M$ such that $a \ind_C M'$. Applying \textsc{extension} again we find $d'$ with $d' \equiv_{M'} d$ such that $a \ind_C M'd'$. As $Cb \subseteq M'$ and $M'$ is positively $\lambda_T$-saturated, we have $d' \equivls_{Cb} d$ by \thref{same-lstp-iff-same-types-over-sequence-of-models}. So by \textsc{monotonicity} we have $a \ind_C bd'$, as required. The final line follows because $\ind^d$ always satisfies \textsc{monotonicity} (\thref{dividing-basic-properties}) and it satisfies \textsc{extension} in thick simple theories (\thref{dividing-extension}).
\end{proof}
\begin{lemma}
\thlabel{independence-theorem-swaperoo}
Suppose that $\ind$ is an independence relation that satisfies \textsc{invariance}, \textsc{monotonicity}, \textsc{normality}, \textsc{base monotonicity}, \textsc{transitivity}, \textsc{symmetry} and \textsc{strong extension}. If $a \ind_C b$ and $a \ind_C c$ then there is $c'$ with $c' \equivls_{Ca} c$ such that $a \ind_C bc'$ and $b \ind_C c'$.
\end{lemma}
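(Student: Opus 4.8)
The plan is to produce the required $c'$ by relocating $c$ via \textsc{strong extension}, keeping its Lascar strong type over $Ca$ intact (this is the data the independence theorem will later need) while making it independent from $b$. Since $a \ind_C b$, \textsc{symmetry} gives $b \ind_C a$, so \textsc{strong extension} --- applied with $b$ in the role of the ``independent'' side, $a$ in the role of the set being extended, and $c$ as the thing to be extended --- produces $c'$ with $c' \equivls_{Ca} c$ and $b \ind_C ac'$. This $c'$ will be the witness.

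It then remains to verify the two independence statements. For $b \ind_C c'$, I would just apply \textsc{monotonicity} to $b \ind_C ac'$. For $a \ind_C bc'$, I would proceed in steps. First, $c' \equivls_{Ca} c$ gives $c' \equiv_{Ca} c$, so \textsc{invariance} (transporting along an automorphism fixing $Ca$ pointwise) turns $a \ind_C c$ into $a \ind_C c'$. Next, from $b \ind_C ac'$ I would apply \textsc{normality} to get $Cb \ind_C Cac'$ and then \textsc{base monotonicity} along $C \subseteq Cc' \subseteq Cac'$ to obtain $Cb \ind_{Cc'} Cac'$, whence $b \ind_{Cc'} a$ by \textsc{monotonicity} and $a \ind_{Cc'} b$ by \textsc{symmetry}; a further use of \textsc{normality} and \textsc{monotonicity} upgrades this to $a \ind_{Cc'} bc'$. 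Finally, $a \ind_C c'$ yields $a \ind_C Cc'$ (again via \textsc{normality} and \textsc{monotonicity}), so \textsc{transitivity} with $C' = Cc'$ and $bc'$ playing the role of the right-hand side delivers $a \ind_C bc'$, completing the argument.

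The one place that needs genuine care is the step that absorbs $c'$ into the base: \textsc{base monotonicity} only permits moving parameters that appear on the \emph{right-hand side} of $\ind$ into the base, so this move must be carried out on $b \ind_C ac'$ (where $ac'$ sits on the right) rather than on its symmetric form, and a preliminary application of \textsc{normality} is needed so that $Cc'$ really is contained in the right-hand side $Cac'$. Everything else is a routine chain of \textsc{invariance}, \textsc{monotonicity}, \textsc{normality}, \textsc{symmetry} and \textsc{transitivity}, so I do not anticipate any serious obstacle beyond this bookkeeping.
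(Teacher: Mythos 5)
Your proposal is correct and follows essentially the same route as the paper: symmetry plus \textsc{strong extension} to produce $c'$ with $c' \equivls_{Ca} c$ and $b \ind_C ac'$, then \textsc{monotonicity} for $b \ind_C c'$, and finally the base-change manoeuvre (normality, base monotonicity, symmetry) combined with \textsc{invariance} giving $a \ind_C c'$ and \textsc{transitivity} to obtain $a \ind_C bc'$. The only cosmetic difference is that the paper derives $bc' \ind_{Cc'} a$ in one step and symmetrises, whereas you symmetrise first and re-attach $c'$ by normality; this is an inessential reordering.
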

\begin{proof}
By \textsc{symmetry} we have $b \ind_C a$ and so by \textsc{strong extension} there is $c'$ with $c' \equivls_{Ca} c$ and $b \ind_C ac'$. Thus $b \ind_C c'$ by \textsc{monotonicity}.

By \textsc{monotonicity}, \textsc{base monotonicity} and \textsc{normality} we also have $bc' \ind_{Cc'} a$ and so by \textsc{symmetry} we find
\[
a \ind_{Cc'} bc'.
\]
By \textsc{invariance} we have $a \ind_C c'$, so by \textsc{normality} we have
\[
a \ind_C Cc',
\]
and so $a \ind_C bc'$ by \textsc{transitivity}.
\end{proof}
\begin{lemma}
\thlabel{independence-theorem-helper}
Suppose that $T$ is simple and that we are given $b$, $C$, a Morley sequence $(c_i)_{i < \omega}$ over $C$ and types $p(x, b)$ and $q(x, c_0)$ such that $b \ind^d_C c_0 c_1$ and $p(x, b) \cup q(x, c_0)$ does not divide over $C$. Then $p(x, b) \cup q(x, c_1)$ does not divide over $C$.
\end{lemma}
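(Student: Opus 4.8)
The plan is to produce an explicit realisation and then transport it along the Morley sequence from $c_0$ to $c_1$, the hypothesis $b\ind^d_C c_0c_1$ being what makes the transport land back on $b$ rather than on a mere copy of it.

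\textbf{Realisation and reductions.} Since $p(x,b)\cup q(x,c_0)$ does not divide over $C$, \thref{dividing-extension-partial-types} yields a type $r(x,bc_0)\supseteq p(x,b)\cup q(x,c_0)$ over $Cbc_0$ that does not divide over $C$; choosing $a\models r$ gives $a\models p(x,b)\cup q(x,c_0)$ and $a\ind^d_C bc_0$. Note that proving ``$p(x,b)\cup q(x,c_1)$ does not divide over $C$'' only requires exhibiting \emph{some} $a'$ with $a'\models p(x,b)\cup q(x,c_1)$ and $a'\ind^d_C bc_1$ (then $\tp(a'/Cbc_1)$ witnesses non-dividing). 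Moreover the data may be loosened before starting: replacing $b$ by any $b'\equiv_{Cc_0c_1}b$ keeps $p(x,b')\cup q(x,c_0)$ non-dividing over $C$ and $b'\ind^d_C c_0c_1$, and the conclusion for $b'$ transfers to $b$ by conjugating with an automorphism fixing $Cc_1$; likewise $c_1$ may be replaced by any $c_1'\equiv_{Cbc_0}c_1$ (keeping $(c_0,c_1')$ at the start of a Morley sequence over $C$), and the $c_i$ with $i\geq 2$ are irrelevant. Using these freedoms and \textsc{extension} (\thref{dividing-extension}) repeatedly: first extend $a\ind^d_C bc_0$ to $a\ind^d_C bc_0c_1$; combining with $b\ind^d_C c_0c_1$ via \textsc{symmetry} and \textsc{transitivity} gives $ab\ind^d_C c_0c_1$; then extend the right-hand side of this along the tail of the Morley sequence, so that (after replacing the $c_i$, $i\geq 1$, by suitable copies) we may assume $ab\ind^d_C(c_i)_{i<\omega}$ while still $a\models p(x,b)\cup q(x,c_0)$ and $a\ind^d_C bc_0c_1$.

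\textbf{The shift.} A $C$-indiscernible sequence has the same type over $C$ as its one-step shift, so there is $\tau\in\Aut(\MM/C)$ with $\tau(c_i)=c_{i+1}$ for all $i<\omega$. Then $\tau(a)\models p(x,\tau(b))\cup q(x,c_1)$ and $\tau(a)\ind^d_C \tau(b)\,(c_i)_{1\le i<\omega}$, so in particular $\tau(a)\ind^d_C \tau(b)c_1$; thus $p(x,\tau(b))\cup q(x,c_1)$ does not divide over $C$, and $\tau(a)$ is our candidate realisation — except that it works over $\tau(b)$, not $b$. Since $\tau(b)c_1=\tau(bc_0)\equiv_C bc_0$, what remains is to show $\tau(b)\equiv_{Cc_1}b$, after which conjugating by an automorphism fixing $Cc_1$ transfers everything from $\tau(b)$ to $b$. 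This is where $b\ind^d_C c_0c_1$ is used: applying \thref{dividing-in-terms-of-automorphic-indiscernible-sequences} (equivalently Kim's lemma, \thref{kims-lemma}) to $\tp(b/Cc_0c_1)$ along the Morley sequence of pairs $\bigl((c_{2i},c_{2i+1})\bigr)_{i<\omega}$ — possibly after one further harmless replacement of $b$ by a $\equiv_{Cc_0c_1}$-copy — makes $b$ ``see'' every pair of the Morley sequence the way it sees $(c_0,c_1)$, which is exactly the rigidity needed to force $\tau(b)$ and $b$ to share their type over $Cc_1$.

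\textbf{Main obstacle.} The hard point is precisely the last reconciliation: the shift only delivers an automorphic copy $\tau(b)$ of $b$ with $\tau(b)c_1\equiv_C bc_0$ (not $\equiv_C bc_1$), and simple theories have no stationarity, so $\tau(b)\equiv_{Cc_1}b$ is not automatic. Making it precise requires genuinely combining the joint independence $b\ind^d_C c_0c_1$ with the structure of the Morley sequence, and entails careful bookkeeping of which indiscernible and Morley sequences are in play and over which base — this is where the bulk of the work lies, the rest being the routine manipulations of \thref{dividing-basic-properties}, \thref{dividing-extension}, and Kim's lemma.
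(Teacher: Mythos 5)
Your reduction steps and the shift automorphism $\tau$ are fine as far as they go, but the proof collapses at exactly the point you flag as the "main obstacle": the claim $\tau(b)\equiv_{Cc_1}b$ is not just hard to prove, it is false in general. Since $\tau$ fixes $C$ and sends $c_0\mapsto c_1$, we have $\tau(b)c_1=\tau(bc_0)\equiv_C bc_0$, so $\tau(b)\equiv_{Cc_1}b$ is \emph{equivalent} to $bc_0\equiv_C bc_1$ — and that does not follow from $b\ind^d_C c_0c_1$ together with $(c_i)_{i<\omega}$ being a Morley sequence. Take the random graph, $C=\emptyset$, $(c_i)_{i<\omega}$ a Morley sequence of pairwise non-adjacent vertices, and $b$ a new vertex adjacent to $c_0$ but not to $c_1$ (and, if you like, adjacent to every $c_{2i}$ and to no $c_{2i+1}$, so that the sequence of pairs is even $b$-indiscernible): then $b\ind^d_\emptyset c_0c_1$, yet $bc_0\not\equiv bc_1$. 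Your two proposed repairs cannot close this gap. Replacing $b$ by some $b'\equiv_{Cc_0c_1}b$ changes nothing, because $b'c_0\equiv_C b'c_1$ holds iff $bc_0\equiv_C bc_1$ does. And applying \thref{dividing-in-terms-of-automorphic-indiscernible-sequences} to $\tp(b/Cc_0c_1)$ along the sequence of pairs $(c_{2i}c_{2i+1})_{i<\omega}$ only yields $bc_0c_1\equiv_C bc_2c_3\equiv_C\cdots$, i.e.\ comparisons \emph{across} pairs; it never identifies the two coordinates \emph{within} a pair, which is what $bc_0\equiv_C bc_1$ would require. Indeed, the whole point of the lemma is that $b$ may relate to $c_0$ and $c_1$ differently, so any strategy that moves $b$ along with the shift and then tries to land back on $b$ over $Cc_1$ is conflating $\tp(bc_0/C)$ with $\tp(bc_1/C)$.

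The paper's proof avoids this by never shifting $b$. Using $b\ind^d_{Cc_0}c_1$ it first arranges the tail $(c_i)_{1\le i<\omega}$ to be $Cc_0b$-indiscernible, then builds an auxiliary sequence $(b_i)$ with $b_i(c_j)_{i\le j}\equiv_C b(c_j)_{j<\lambda}$, so that $b_ic_i\equiv_C bc_0$ while $b_ic_j\equiv_C bc_1$ for $i<j$; after extracting a $C$-indiscernible sequence $(b^*_ic^*_i)_{i<\omega}$ it realises $p(x,b^*_0)\cup q(x,c^*_0)$ by some $a'$ with $a'\ind^d_C b^*_0c^*_0$ and the whole sequence $Ca'$-indiscernible, applies Kim's lemma to the Morley sequence $(c^*_i)_{1\le i<\omega}$ over $C$ to get $a'b^*_0\ind^d_C c^*_1$, and only then transfers, using $b^*_0c^*_1\equiv_C bc_1$ — an equivalence that holds by construction, precisely because $b^*_0$ was chosen to see later terms of the sequence the way $b$ sees $c_1$. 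So the asymmetry between $c_0$ and $c_1$ is absorbed into the construction of the $b_i$'s rather than wished away by an automorphism; your argument needs this (or a comparable) device to go through.
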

\begin{proof}
By \textsc{base monotonicity} we have $b \ind^d_{C c_0} c_1$. So by \thref{dividing-in-terms-of-automorphic-indiscernible-sequences} there is a $C c_0 b$-indiscernible sequence $(c'_i)_{1 \leq i < \omega}$ with $(c'_i)_{1 \leq i < \omega} \equiv_{C c_0 c_1} (c_i)_{1 \leq i < \omega}$, so $c'_1 = c_1$. Hence, after replacing $(c_i)_{1 \leq i < \omega}$ with $(c'_i)_{1 \leq i < \omega}$, we may as well assume that $(c_i)_{1 \leq i < \omega}$ is $C c_0 b$-indiscernible. Let $\lambda = \lambda_{|T| + |C b c_0|}$ and use compactness to elongate this sequence to $(c_i)_{i < \lambda}$ (still such that $(c_i)_{1 \leq i < \lambda}$ is $C c_0 b$-indiscernible). For each $i < \lambda$ we let $b_i$ be such that $b_i (c_j)_{i \leq j < \lambda} \equiv_C b (c_j)_{j < \lambda}$. Then, for all $i < j < \lambda$:
\begin{enumerate}[label=(\roman*)]
\item $b_i c_i \equiv_C b c_0$;
\item $b_i c_j \equiv_C b c_1$, by $Cb$-indiscernibility of $(c_j)_{1 \leq j < \lambda}$.
\end{enumerate}
Base a $C$-indiscernible sequence $(b^*_i c^*_i)_{i < \omega}$ on $(b_i c_i)_{i < \lambda}$. Then properties (i) and (ii) are carried over to this new sequence, and $(c^*_i)_{i < \omega} \equiv_C (c_i)_{i < \omega}$.

By (i) and the assumption that $p(x, b) \cup q(x, c_0)$ does not divide over $C$, there is a realisation $a$ of $p(x, b^*_0) \cup q(x, c^*_0)$ with $a \ind^d_C b^*_0 c^*_0$ (see \thref{dividing-extension-partial-types}) and so by \thref{dividing-in-terms-of-automorphic-indiscernible-sequences} there is $a' \equiv_{C b^*_0 c^*_0} a$ such that $(b^*_i c^*_i)_{i < \omega}$ is $Ca'$-indiscernible. Therefore $(c^*_i)_{1 \leq i < \omega}$ is a Morley sequence over $C$ that is $Ca'b^*_0$-indiscernible. By Kim's lemma (\thref{kims-lemma}), we have $a' b^*_0 \ind^d_C c^*_1$. Indeed, set $\Sigma(x, y, c^*_1) = \tp(a' b^*_0/Cc^*_1)$ and note that $\bigcup_{1 \leq i < \omega} \Sigma(x, y, c^*_i)$ is consistent as it is realised by $a' b^*_0$. By (ii) we have $b^*_0 c^*_1 \equiv_C b c_1$ and so there is $a''$ with $a'' b c_1 \equiv_C a' b^*_0 c^*_1$. In particular $a'' b \equiv_C a' b^*_0 \equiv_C a b^*_0$ and $a'' c_1 \equiv_C a' c^*_1 \equiv_C a' c^*_0 \equiv_C a c^*_0$, and so $a'' \models p(x, b) \cup q(x, c_1)$. We have that $a''b \ind^d_C c_1$ implies $a'' \ind^d_{Cb} b c_1$ by \textsc{symmetry}, \textsc{base monotonicity}, \textsc{monotonicity} and \textsc{normality}. As $p(x, b)$ does not divide over $C$ we also have $a'' \ind^d_C Cb$ (after an application of \textsc{normality}), and so $a'' \ind^d_C b c_1$ by \textsc{transitivity}. We thus conclude that $\tp(a''/C b c_1)$, which contains $p(x, b) \cup q(x, c_1)$, does not divide over $C$.
\end{proof}
\begin{theorem}
\thlabel{independence-theorem}
Assume thickness. Suppose that $T$ is a simple theory, then dividing independence satisfies the \textsc{independence theorem}. That is, suppose we are given $a, a', b, c, C$, such that $a \ind^d_C b$, $a' \ind^d_C c$ and $b \ind^d_C c$ with $a \equivls_C a'$. Then there is $a''$ with $a'' \equivls_{Cb} a$ and $a'' \equivls_{Cc} a'$ such that $a'' \ind^d_C bc$.
\end{theorem}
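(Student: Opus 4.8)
The plan is to reformulate the conclusion: by \thref{dividing-extension-partial-types} together with compactness, it suffices to produce a single tuple $a''$ with $a'' \equivls_{Cb} a$, $a'' \equivls_{Cc} a'$ and $a'' \ind^d_C bc$; once the two Lascar-strong conditions are secured, extending the corresponding (type-definable, by thickness and \thref{lascar-distance-characterisation-of-thickness}) partial type over $Cbc$ to a type that still does not divide over $C$ and realising it is routine. Throughout I would use freely that in a thick simple theory $\ind^d$ satisfies all the properties occurring in the hypotheses of the auxiliary lemmas of this section: \textsc{invariance}, \textsc{monotonicity}, \textsc{normality}, \textsc{base monotonicity}, \textsc{finite character} and \textsc{left transitivity} (\thref{dividing-basic-properties}), \textsc{extension} (\thref{dividing-extension}), \textsc{symmetry} (\thref{dividing-symmetry}), \textsc{transitivity} (\thref{dividing-transitivity}), \textsc{strong extension} (\thref{strong-extension}) and \textsc{full existence} (\thref{simple-thick-implies-full-existence}).

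First I would apply \thref{independence-theorem-swaperoo} with the roles of the first two tuples interchanged: from $b \ind^d_C c$ and $b \ind^d_C a$ (the latter being \thref{dividing-symmetry} applied to $a \ind^d_C b$) one gets $a^\dagger$ with $a^\dagger \equivls_{Cb} a$, $b \ind^d_C ca^\dagger$ and $c \ind^d_C a^\dagger$. A short computation with \textsc{symmetry}, \textsc{base monotonicity}, \textsc{normality} and \textsc{transitivity} turns $b \ind^d_C ca^\dagger$ and $c \ind^d_C a^\dagger$ into $a^\dagger \ind^d_C bc$. So $a^\dagger$ already satisfies two of the three required conditions, and since $a^\dagger \equivls_{Cb} a \equivls_C a'$ we also have $a^\dagger \equivls_C a'$, together with $a^\dagger \ind^d_C c$ and $a' \ind^d_C c$. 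What remains is to modify $a^\dagger$ over $Cb$ — keeping $a^\dagger \equivls_{Cb} a$ and $a^\dagger \ind^d_C bc$ — so that in addition $a^\dagger \equivls_{Cc} a'$.

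For this last step I would bring in the Morley-sequence transport of \thref{independence-theorem-helper}. Using \thref{morley-sequences-exist} build a Morley sequence $(c_i)_{i < \omega}$ over $Cb$ with $c_0 = c$; it is then Morley over $C$, and a transitivity argument as in the proof of \thref{independence-theorem-helper} gives $b \ind^d_C c_i c_{i+1}$ for every $i$. Pushing the configuration far out along this sequence — selecting, via \textsc{extension} and \thref{extend-base-set-of-indiscernible-sequence}, an index $j$ for which $c_j$ is generic enough over $a^\dagger$, over $b$, and over a $C$-indiscernible sequence witnessing $a^\dagger \equivls_C a'$ — one arranges simultaneously $a^\dagger \ind^d_C bc_j$ and $a^\dagger \equivls_{Cc_j} a'$, so that $\tp(a^\dagger/Cb) \cup \Lstp(a'/Cc_j)$ is realised (by $a^\dagger$ itself) by a tuple independent from $bc_j$ over $C$, i.e.\ does not divide over $C$. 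Iterating \thref{independence-theorem-helper} along $(c_i)$, re-indexed so that the transport runs from $c_j$ down to $c_0$, then yields that $\tp(a^\dagger/Cb) \cup \Lstp(a'/Cc)$ does not divide over $C$; extending to a non-dividing type over $Cbc$ by \thref{dividing-extension-partial-types}, realising it, and upgrading the resulting equalities of types over $Cb$ and $Cc$ to equalities of Lascar strong types with \thref{strong-extension} (or \thref{same-lstp-iff-same-types-over-sequence-of-models}) produces the desired $a''$.

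The hard part is exactly this final step: one must control the Lascar strong type of $a''$ over $Cb$ and over $Cc$ at the same time while retaining $a'' \ind^d_C bc$, and this cannot be done by moving $b$ or $c$ with automorphisms over $C$. After \thref{independence-theorem-swaperoo} the remaining gap is essentially a ``stationarity over Lascar strong types'' statement, which is a stable, not merely simple, phenomenon; \thref{independence-theorem-helper} sidesteps it by transporting a non-dividing union of types between two terms of a Morley sequence without disturbing $b$. Thickness enters twice: it guarantees that the Morley sequences used in \thref{morley-sequences-exist} and \thref{independence-theorem-helper} exist, and it makes the conditions $\d_{Cb}(x,a)\le n$ type-definable, which is what legitimises the compactness reduction in the first paragraph.
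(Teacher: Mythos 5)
Your first two moves are sound: applying \thref{independence-theorem-swaperoo} with the roles permuted does produce $a^\dagger$ with $a^\dagger \equivls_{Cb} a$ and (after the transitivity computation you indicate) $a^\dagger \ind^d_C bc$, and your side claims about reversing a Morley sequence and getting $b \ind^d_C c_ic_{i+1}$ are repairable. But the step that carries the entire weight of the theorem is exactly the one you leave unargued: the existence of an index $j$ with \emph{simultaneously} $a^\dagger \ind^d_C bc_j$ (with $c_j$ tied to $c$ through the Morley sequence over $Cb$) and $a^\dagger \equivls_{Cc_j} a'$. The tools you cite do not deliver this. \textsc{extension}/\textsc{strong extension} let you prescribe the new parameter's independence and its (Lascar) type over $Cb$, but say nothing about the Lascar strong type of the fixed pair $a^\dagger, a'$ over it; and \thref{extend-base-set-of-indiscernible-sequence} only moves $c_j$ by an automorphism over the base of indiscernibility $C$, so while it can make the sequences witnessing $a^\dagger \equivls_C a'$ indiscernible over the image $c_j'$, it destroys $c_j' \equiv_{Cb} c$, $a^\dagger \ind^d_C bc_j'$ and the placement on the Morley sequence. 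Prescribing at once the behaviour of a new element over $Cba^\dagger$ and over $Ca^\dagger a'$ together with the witnessing sequences is itself an independence-theorem-style amalgamation over $C$, so as written the argument is circular at its core.

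There is a second obstruction even if such a $c_j$ existed: \thref{independence-theorem-helper} transports types $q(x,c_i)$ \emph{all of whose parameters lie in $Cc_i$}; its proof moves $b$ and the $c_i$'s around by automorphisms over $C$ and needs $q$ to travel with $c_i$. The condition $x \equivls_{Cc_i} a'$ is a partial type with the fixed tuple $a'$ as an extra parameter, which those automorphisms do not respect, so after transport you would only control the Lascar strong type relative to some image of $a'$, not $a'$ itself. This is precisely why the paper begins with the reduction you skip: replace $b$ and $c$ by positively $\lambda_T$-saturated p.c.\ models containing them (and $a,a'$ by Lascar-equivalent copies independent from these models), so that by \thref{same-lstp-iff-same-types-over-sequence-of-models} equality of complete types over $b$ and $c$ already yields the required Lascar strong type equalities; and then use $a \equivls_C a'$ to replace $(a',c)$ by $(a,c')$ so that a single tuple realises both complete types, after which the swaperoo lemma, the chain from \thref{lascar-strong-type-morley-sequences} and the helper lemma apply cleanly. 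Without that reduction (or a genuinely new argument for the missing amalgamation), your final step does not go through.
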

\begin{proof}
We first argue that we may assume $b$ and $c$ to enumerate positively $\lambda_T$-saturated p.c.\ models containing $C$. Let $M$ be a positively $\lambda_T$-saturated p.c.\ model containing $Cb$ (\thref{building-saturated-model}). By \textsc{extension} (and \textsc{symmetry}) applied to $b \ind^d_C c$ there is $M'$ with $M' \equiv_{Cb} M$ such that $M' \ind^d_C c$. Applying \textsc{strong extension} (\thref{strong-extension}), this time to $a \ind^d_C b$, together with \textsc{invariance} yields $a^*$ such that $a^* \equivls_{Cb} a$ and $a^* \ind^d_C M'$. We now replace $a$ with $a^*$ and $b$ with $M'$. Analogously, we can replace $a'$ with a tuple that has the same Lascar strong type over $Cc$ and replace $c$ with some positively $\lambda_T$-saturated p.c.\ model containing $Cc$. Finding an $a''$ as in the conclusion of the theorem now also works for our original $a, a', b$ and $c$.

Set $p(x, b) = \tp(a/Cb)$ and $q(x, c) = \tp(a'/Cc)$. By \thref{dividing-extension-partial-types}, and our assumption on $b$ and $c$, it then suffices to prove that $p(x, b) \cup q(x, c)$ does not divide over $C$. Using $a \equivls_C a'$ we let $c'$ be such that $ac' \equivls_C a'c$. Applying \thref{independence-theorem-swaperoo} to $a \ind^d_C b$ and $a \ind^d_C c'$ we find $c''$ with $c'' \equivls_{Ca} c'$, $a \ind^d_C bc''$ and $b \ind^d_C c''$. Applying \thref{independence-theorem-swaperoo} again, this time to $b \ind^d_C c$ and $b \ind^d_C c''$, we find $c^*$ with $c^* \equivls_{Cb} c''$ and $b \ind^d_C c c^*$.

Since $c^* \equivls_C c'' \equivls_C c' \equivls_C c$, there are $c^* = c_0, \ldots, c_n = c$ such that $c_i$ and $c_{i+1}$ start a Morley sequence over $C$ for all $i < n$ by \thref{lascar-strong-type-morley-sequences}. By \textsc{extension} applied to $b \ind^d_C c c^*$ we may assume that $b \ind^d_C c_0 \ldots c_n$. It then follows by induction on $i \leq n$ that $p(x, b) \cup q(x, c_i)$ does not divide over $C$. Indeed, for the base case we have that $a \ind^d_C bc''$, so $p(x, b) \cup q(x, c'')$ does not divide over $C$, and it follows that $p(x, b) \cup q(x, c_0)$ does not divide over $C$ because $b c_0 = b c^* \equiv_C b c''$. The induction step is precisely \thref{independence-theorem-helper}. This concludes our proof, because $c_n = c$.
\end{proof}
\section{The Kim-Pillay style theorem}
\label{sec:kim-pillay}
\begin{theorem}[Kim-Pillay style theorem]
\thlabel{kim-pillay}
Assume thickness. A theory $T$ is simple if and only if there is an independence relation $\ind$ satisfying \textsc{invariance}, \textsc{monotonicity}, \textsc{normality}, \textsc{existence}, \textsc{full existence}, \textsc{base monotonicity}, \textsc{extension}, \textsc{symmetry}, \textsc{transitivity}, \textsc{finite character}, \textsc{local character} and \textsc{independence theorem}. Furthermore, in this case, $\ind = \ind^d$.
\end{theorem}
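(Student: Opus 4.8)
The direction ``$\Rightarrow$'' is pure bookkeeping: if $T$ is simple then $\ind^d$ is the required independence relation. Indeed \textsc{invariance}, \textsc{monotonicity}, \textsc{normality}, \textsc{existence}, \textsc{base monotonicity} and \textsc{finite character} hold by \thref{dividing-basic-properties}; \textsc{local character} is precisely the definition of simplicity, via \thref{simplicity-equivalences}; \textsc{full existence} is \thref{simple-thick-implies-full-existence} (this is where thickness enters); and \textsc{extension}, \textsc{symmetry}, \textsc{transitivity} and the \textsc{independence theorem} are \thref{dividing-extension}, \thref{dividing-symmetry}, \thref{dividing-transitivity} and \thref{independence-theorem} respectively. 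The uniqueness clause $\ind = \ind^d$ will follow from the ``$\Leftarrow$'' argument.

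For ``$\Leftarrow$'', fix an independence relation $\ind$ satisfying all the listed properties. Since \textsc{symmetry} together with \textsc{transitivity} yields \textsc{left transitivity}, $\ind$ meets the hypotheses of \thref{general-kims-lemma}, \thref{building-morley-sequences} and \thref{independent-sequence-cuts}; in particular $\ind$-Morley sequences over any set exist. \emph{Step 1: $\ind \subseteq \ind^d$.} Given $a \ind_C b$ we must show $\tp(a/Cb)$ does not divide over $C$. Using \textsc{symmetry}, \textsc{full existence} and \textsc{extension} one first builds a $\ind$-Morley sequence $(b_i)_{i<\omega}$ over $C$ with $b_0 = b$ along which $\bigcup_{i<\omega} \tp(a/Cb)(x,b_i)$ is consistent, i.e.\ this union is consistent along \emph{some} $\ind$-Morley sequence. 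One then upgrades this to consistency along an \emph{arbitrary} $C$-indiscernible sequence $(c_i)_{i<\omega}$ with $c_0 = b$ by the usual ``amalgamation of two Morley sequences'' argument, which is exactly where the \textsc{independence theorem} is used (with the Lascar-strong-type bookkeeping that is available because $T$ is thick). By \thref{dividing-in-terms-of-automorphic-indiscernible-sequences} this yields $a \ind^d_C b$.

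\emph{Step 2: $T$ is simple.} By Step 1 and \textsc{local character} of $\ind$, dividing independence inherits \textsc{local character}: for $a$ and $C$, pick $C' \subseteq C$ with $|C'|$ bounded and $a \ind_{C'} C$, whence $a \ind^d_{C'} C$; so $T$ is simple by \thref{simplicity-equivalences}. \emph{Step 3: $\ind^d \subseteq \ind$.} Now $T$ is simple and thick, so the ``$\Rightarrow$'' direction applies to $\ind^d$, and moreover any $\ind$-Morley sequence is a $\ind^d$-Morley sequence, since each step $b_i \ind_C (b_j)_{j<i}$ upgrades via Step 1. Given $a \ind^d_C b$, build a $\ind$-Morley sequence $(b_i)_{i<\omega}$ over $C$ with $b_0 = b$ (\thref{building-morley-sequences}); it is a $\ind^d$-Morley sequence, so Kim's lemma (\thref{kims-lemma}) gives that $\bigcup_{i<\omega} \tp(a/Cb)(x,b_i)$ is consistent, and then \thref{general-kims-lemma} applied to $\Sigma(x,y) = \tp(a/Cb)(x,y)$ (a set of formulas over $C$) produces $a''$ with $a'' \equiv_{Cb} a$ and $a'' \ind_C b$, hence $a \ind_C b$ by \textsc{invariance}. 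Combined with Step 1 this gives $\ind = \ind^d$.

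\textbf{Main obstacle.} The genuine work is Step 1, and within it the passage from consistency of $\bigcup_{i<\omega}\tp(a/Cb)(x,b_i)$ along one suitably chosen $\ind$-Morley sequence to consistency along \emph{every} $C$-indiscernible sequence starting at $b$ — an abstract form of Kim's lemma for $\ind$, which must be proved \emph{before} we know $T$ is simple and so cannot appeal to \thref{kims-lemma}. This is driven entirely by the \textsc{independence theorem}, and the care needed with Lascar strong types there (cf.\ \thref{same-lstp-iff-same-types-over-sequence-of-models}) is the reason thickness is assumed.
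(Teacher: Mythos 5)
Your ``$\Rightarrow$'' bookkeeping and your Steps 2--3 are fine and essentially coincide with the paper's argument: the paper also obtains $\ind^d\subseteq\ind$ by taking a $\ind$-Morley sequence $(b_i)_{i<\omega}$ over $C$ with $b_0=b$ (along which $\bigcup_i\tp(a/Cb)(x,b_i)$ is consistent simply by the definition of non-dividing, since the sequence is $C$-indiscernible with $b_i\equiv_C b$ --- neither Kim's lemma nor your Step 1 is needed there) and then applying \thref{general-kims-lemma} together with \textsc{invariance}; simplicity is read off from \textsc{local character} once $\ind=\ind^d$. So the ordering of your steps is harmless.

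The genuine gap is Step 1, which is the entire content of the hard direction, and the mechanism you gesture at would not run as described. To realise $p(x,c_0)\cup\dots\cup p(x,c_{n+1})$ inductively via the \textsc{independence theorem} you need, at each stage, independence between the parameter sides, i.e.\ $c_{n+1}\ind_D c_0\dots c_n$ over whatever base $D$ you amalgamate over; but an arbitrary $C$-indiscernible sequence $(c_i)$ with $c_0=b$ is not known to be $\ind$-independent over $C$ (nor over any base you have produced), and your auxiliary $\ind$-Morley sequence $(b_i)$ does nothing to repair this, so the ``amalgamation of two Morley sequences'' cannot even start. (The Lascar bookkeeping you flag is the lesser issue: terms of a $C$-indiscernible sequence automatically have the same Lascar strong type over $C$.) The missing idea --- and the place where \textsc{local character} must be used in this direction, which your sketch never does --- is a base change: elongate $(c_i)$ to length $\lambda+1$, build an increasing chain of positively $\lambda_T$-saturated p.c.\ models $M_i\supseteq C(c_j)_{j<i}$ with each tail $M_i$-indiscernible (\thref{building-saturated-model}, \thref{extend-base-set-of-indiscernible-sequence}), and apply \textsc{local character} in its sequence form (\thref{local-character-in-terms-of-sequences}) to $c_\lambda$ against the chain to find $M_{i_0}$ over which a tail of the given sequence becomes a $\ind$-Morley sequence. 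Only then does the \textsc{independence theorem} apply inductively over $M_{i_0}$ (where types and Lascar strong types coincide, cf.\ \thref{same-lstp-iff-same-types-over-sequence-of-models}, which is also why the weakening in \thref{kim-pillay-weaken-independence-theorem} suffices), starting from a copy of $a$ made independent from $M_{i_0}c_{i_0}$ via \textsc{extension}; $C$-indiscernibility of the original sequence then yields consistency of $\bigcup_i p(x,c_i)$. Without this base-change engine, Step 1 as written is a hand-wave over exactly the point the theorem turns on.
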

\begin{remark}
\thlabel{kim-pillay-weaken-independence-theorem}
One direction of \thref{kim-pillay} can be strengthened as follows. To conclude that $T$ is simple we can assume a weaker version of \textsc{independence theorem}. More precisely, suppose that $T$ is thick and that we are given an independence relation $\ind$ satisfying all the properties in \thref{kim-pillay}, except that instead of \textsc{independence theorem} we assume the following. For any positively $\lambda_T$-saturated p.c.\ model $M$ and any $a,a',b,c$ such that $a \ind_M b$, $a' \ind_M c$ and $b \ind_M c$ with $a \equiv_M a'$ there is $a''$ with $a'' \equiv_{Mb} a$ and $a'' \equiv_{Mc} a'$ such that $a'' \ind_M bc$. Then $T$ is simple and $\ind = \ind^d$. So in particular, the full version of \textsc{independence theorem} then holds for $\ind$.

We also note in the weakened version of the \textsc{independence theorem} we only assume $a \equiv_M a'$ and not $a \equivls_M a'$. However, the latter is automatic by \thref{same-lstp-iff-same-types-over-sequence-of-models}, so this is truly a weakening of \textsc{independence theorem}.
\end{remark}
\begin{remark}
\thlabel{kim-pillay-leave-out-properties}
The properties \textsc{normality} and \textsc{full existence} could be left out from \thref{kim-pillay}. This is because they follow from the remaining properties.

If $\ind$ satisfies \textsc{extension} and \textsc{symmetry} then it satisfies \textsc{normality}. Indeed, if $a \ind_C b$ then we apply \textsc{extension} to find $c'$ with $C' \equiv_{Cb} C$ such that $a \ind_C C'b$, but then $C' = C$ and so $a \ind_C Cb$. We get $Ca \ind_C Cb$ by applying \textsc{symmetry}, repeating the argument and then applying \textsc{symmetry} again.

If $\ind$ satisfies \textsc{existence} and \textsc{extension} then it satisfies \textsc{full existence}. Indeed, let $a$, $b$ and $C$ be arbitrary. Then by \textsc{existence} we have $a \ind_C C$. So by \textsc{extension} we find the required $b'$ with $b' \equiv_C b$ and $a \ind_C b'$.
\end{remark}
\begin{proof}[Proof of \thref{kim-pillay}]
Earlier in this chapter, we have shown that $\ind^d$ satisfies all the properties listed in the theorem, provided that $T$ is simple. We will now prove that given an independence relation $\ind$ satisfying the above list of properties (with \textsc{independence theorem} adjusted as in \thref{kim-pillay-weaken-independence-theorem}), we have that $\ind = \ind^d$. Simplicity of $T$ then follows, because this means in particular that $\ind^d$ satisfies \textsc{local character}.

We first prove that $a \ind^d_C b$ implies $a \ind_C b$. By \thref{building-morley-sequences} there is a $\ind$-Morley sequence $(b_i)_{i < \omega}$ over $C$ with $b_0 = b$. In particular, $(b_i)_{i < \omega}$ is a $C$-indiscernible sequence, so, writing $p(x, b) = \tp(a/Cb)$, we have that $\bigcup_{i < \omega} p(x, b_i)$ is consistent. Applying \thref{general-kims-lemma} we find $a'$ with $\models p(a', b)$ and $a' \ind_C b$. So $a'b \equiv_C ab$ and $a \ind_C b$ follows from \textsc{invariance}.

Now assume $a \ind_C b$. We prove that $a \ind^d_C b$. Let $(b_i)_{i < \omega}$ be a $C$-indiscernible sequence with $b_0 = b$, and write $p(x, b) = \tp(a/Cb)$. We will show that $\bigcup_{i < \omega} p(x, b_i)$ is consistent.

By compactness we can elongate $(b_i)_{i < \omega}$ to $(b_i)_{i \leq \lambda}$, taking $\lambda$ big enough to apply \textsc{local character} for $\ind$ with respect to $\kappa = |b|$. We inductively construct a chain $(M_i)_{i < \lambda}$ of positively $\lambda_T$-saturated p.c.\ models such that for all $i < \lambda$:
\begin{enumerate}[label=(\arabic*)]
\item $C(b_j)_{j < i} \subseteq M_i$,
\item $(b_j)_{i \leq j \leq \lambda}$ is $M_i$-indiscernible.
\end{enumerate}
The base case and limit stage follow the same argument. Let $\ell < \lambda$ be a limit (or $\ell = 0$). Let $M$ be any $\lambda_T$-saturated p.c.\ model containing $C(M_i)_{i < \ell}$. By the induction hypothesis $(b_i)_{\ell \leq i \leq \lambda}$ is $C(M_i)_{i < \ell}$-indiscernible. So by \thref{extend-base-set-of-indiscernible-sequence} we find $M_\ell$ with $M_\ell \equiv_{C(M_i)_{i < \ell}} M$ such that $(b_i)_{\ell \leq i \leq \lambda}$ is $M_\ell$-indiscernible.

For the successor step we assume $M_i$ has been constructed. Let $M$ be any $\lambda_T$-saturated p.c.\ model containing $M_i b_i$. By the induction hypothesis $(b_j)_{i \leq j \leq \lambda}$ is $M_i$-indiscernible, and so $(b_j)_{i+1 \leq j \leq \lambda}$ is $M_i b_i$-indiscernible. So by \thref{extend-base-set-of-indiscernible-sequence} we find $M_{i+1}$ with $M_{i+1} \equiv_{M_i b_i} M$ such that $(b_j)_{i+1 \leq j \leq \lambda}$ is $M_{i+1}$-indiscernible. This finishes the construction of the chain $(M_i)_{i < \lambda}$.

Set $M = \bigcup_{i < \lambda} M_i$ and apply \thref{local-character-in-terms-of-sequences} to $(M_i)_{i < \lambda}$ viewed as a sequence to find $i_0 < \lambda$ with $b_\lambda \ind_{M_{i_0}} M$. Let $(b'_n)_{n < \omega}$ be the sequence defined by $b'_n = b_{i_0 + n}$, then $(b'_n)_{n < \omega}$ is a $\ind$-Morley sequence over $M_{i_0}$. Indeed, this sequence is $M_{i_0}$-indiscernible by construction. To see that it is a $\ind_{M_{i_0}}$-independent sequence we use that for any $n < \omega$ we have $b_\lambda \ind_{M_{i_0}} (b_j)_{i_0 \leq j < i_0 + n}$ by \textsc{monotonicity} and conclude by $b_\lambda (b_j)_{i_0 \leq j < i_0 + n} \equiv_{M_{i_0}} b_{i_0 + n} (b_j)_{i_0 \leq j < i_0 + n}$ and \textsc{invariance}. By $C$-indiscernibility of $(b_i)_{i < \lambda}$, of which $(b'_n)_{n < \omega}$ is a subsequence, it suffices now to prove that $\bigcup_{n < \omega} p(x, b'_n)$ is consistent.

As $b'_0 \equiv_C b$ there is $a'$ with $a'b'_0 \equiv_C ab$ and so $a' \ind_C b'_0$. Applying \textsc{extension} to this last independence, and after applying an automorphism, we find $a''$ with $a'' \equiv_{Cb'_0} a'$ with $a'' \ind_C M_{i_0} b'_0$. By \textsc{base monotonicity} and \textsc{monotonicity} we then have $a'' \ind_{M_{i_0}} b'_0$. By induction on $n < \omega$ we will prove that there is $a_n$ that realises $\bigcup_{i \leq n} p(x, b'_i)$ with $a_n \equiv_{M_{i_0}} a''$ and $a_n \ind_{M_{i_0}} b'_0 \ldots b'_n$. This suffices, because then by compactness $\bigcup_{n < \omega} p(x, b'_n)$ is consistent.

For the base case we take $a_0 = a''$, which can be done because $\models p(a', b'_{i_0})$ and so $\models p(a'', b'_{i_0})$. For the inductive step we assume that we have constructed $a_n$. As $b'_{n+1}$ is on some $M_{i_0}$-indiscernible sequence that $b'_0$ is on as well, we have $b'_{n+1} \equiv_{M_{i_0}} b'_0$. Let $a^*$ be such that $a^* b'_{n+1} \equiv_{M_{i_0}} a'' b'_0$, so $a^* \ind_D b'_{n+1}$. Using $a_n \equiv_{M_{i_0}} a'' \equiv_{M_{i_0}} a^*$ we apply the weakened version of \textsc{independence theorem} for $\ind$ to $a_n \ind_{M_{i_0}} b'_0 \ldots b'_n$, $a^* \ind_{M_{i_0}} b'_{n+1}$ and $b'_{n+1} \ind_{M_{i_0}} b'_0 \ldots b'_n$ to find $a_{n+1}$ with $a_{n+1} \ind_{M_{i_0}} b'_0 \ldots b'_{n+1}$, as well as $a_{n+1} \equiv_{M_{i_0} b'_0 \ldots b'_n} a_n$ and $a_{n+1} \equiv_{M_{i_0} b'_{n+1}} a^*$. Either of these last two equalities of types then implies $a_{n+1} \equiv_{M_{i_0}} a''$, and together they imply that $a_{n+1}$ realises $\bigcup_{i \leq n+1} p(x, b'_i)$. This completes the inductive construction and thus the argument.
\end{proof}

\section{Bibliographic remarks}
\label{sec:bibliographic-remarks-simple}
Simplicity in positive logic was first developed by Pillay in 2000 for what we call Pillay theories \cite{pillay_forking_2000}. Later, Ben-Yaacov generalised this in 2003 to the full generality of positive logic \cite{ben-yaacov_simplicity_2003}. In our treatment we have emphasised the semantic side of independence and work as much as possible with abstract independence relations $\ind$ and general properties of $\ind^d$.

In Pillay's approach, simplicity was defined as forking independence having \textsc{local character} (so he does treat forking in positive logic, cf.\ \thref{forking-vs-dividing}), whereas Ben-Yaacov defined it as dividing independence having \textsc{local character}. Since both forking independence and dividing independence always satisfy \textsc{base monotonicity}, Pillay's approach automatically gives \textsc{full existence}. In Ben-Yaacov's approach \textsc{full existence} is not automatic, and as we have seen it requires a lot of work in thick theories (see Section \ref{sec:thickness-implies-full-existence}). There is even a stable positive theory where \textsc{full existence} for dividing independence fails \cite[Example 4.3]{ben-yaacov_simplicity_2003} (see also the discussion of ultrametric spaces with distances in $\N$ in Section \ref{sec:bibliographic-remarks-basics}), so this theory is not simple in Pillay's sense. We therefore view Ben-Yaacov's definition as the right notion of simplicity, which is furthermore confirmed by it being equivalent to \NTP (\thref{simplicity-equivalences}). Of course, for thick theories, the two definitions coincide because assuming either it can be proved that forking and dividing coincide.

We have decided to only treat simplicity for thick theories, which simplifies the treatment and allows us to stay closer to the treatment in full first-order logic. This still captures a large class of positive theories, see for example \cite[Section 2]{kamsma_positive_2024} for a list of (classes of) examples of thick theories.
\begin{remark}
\thlabel{full-existence-without-thickness}
The proof strategy for \thref{simple-thick-implies-full-existence}, \textsc{full existence} for dividing independence in a thick simple theory, is very similar to the usual approach in full first-order logic. In both cases one constructs dividing sequences, which have to keep track of some witness of dividing. In full first-order logic this is the $k$ in $k$-dividing, but in positive logic this becomes a formula, namely the $\psi$ in $\psi$-dividing (see also \thref{psi-dividing-remark}). This makes it so that we have to consider longer sequences. However, the real use for thickness is \thref{dividing-sequence-type-definable}, ensuring that being a dividing sequences is type-definable, which is also heavily used in the full first-order approach. This allows us to reshape the dividing sequences, and to reduce to finite dividing sequences, which is important for \thref{finite-dividing-sequence-extend-base-set}.

Without thickness not all is lost, but the theory becomes more subtle and complicated. For example, we no longer get \textsc{full existence} for dividing over any set, even in stable theories (see also the discussion of ultrametric spaces with distances in $\N$ in Section \ref{sec:bibliographic-remarks-basics}). However, we do get it over positively $\lambda$-saturated p.c.\ models for big enough $\lambda$. This is all worked out in \cite{ben-yaacov_simplicity_2003}.
\end{remark}

Adapting the definition of $k$-\TP to positive logic (\thref{tree-property}) is now a standard trick that is due to \cite{haykazyan_existentially_2021}, where they give positive versions of \TP[2] and \SOP[1]. The particular definition of $k$-\TP first appears in \cite[Definition 4.3]{dmitrieva_dividing_2023}, where its equivalence to simplicity (in terms of local character for dividing) was also proved \cite[Theorem 6.14]{dmitrieva_dividing_2023}.

The reason for calling \thref{kim-pillay} a ``Kim-Pillay style theorem'' is because Kim and Pillay first characterised simple theories in terms of the existence of a (unique) independence relation \cite[Theorem 4.2]{kim_simple_1997}. Their theorem is for full first-order logic. \thref{kim-pillay} can be pieced together from Ben-Yaacov's work \cite{ben-yaacov_simplicity_2003,ben-yaacov_thickness_2003}, but the addition of the thickness assumption allows for a simpler statement similar to the original \cite[Theorem 4.2]{kim_simple_1997}.

In the next chapter we treat stability in positive logic and connect it to simplicity as treated in this chapter. Another direction would be to treat the development of Kim-independence in thick \NSOP[1] theories, for which we refer the reader to \cite{dobrowolski_kim-independence_2022} (with a correction to the proof of the independence theorem in \cite{dobrowolski_correction_2024}).

\chapter{Stable theories}
\label{ch:stable-theories}
In this chapter we view stable theories through the lens of dividing independence. We build on the results for simple theories from Chapter \ref{ch:simple-theories}, and use this to show that the main difference between stability and simplicity is the \textsc{stationarity} property for dividing independence. This results in a Kim-Pillay style theorem for stable theories (\thref{stable-kim-pillay}).

Classically there are many equivalent definitions of stability. We use the type counting definition (\thref{stability}), but we establish the usual equivalence with definability of types and the non-existence of binary trees (\thref{stable-characterisations}). Notably, we do not consider the order property. This is solely because we have no use for it in these notes, and not because it causes problems in positive logic. In fact, when correctly formulated, not having the order property is further equivalent to the conditions we give here (see also \thref{stable-iff-no-order-property}).

\section{Invariant types}
\label{sec:invariant-types}
\begin{definition}
\thlabel{global-type}
A \term{global type}\index{type!global} is a type $q(x)$ over the monster model. That is, it is a maximally consistent set of formulas over the monster model.
\end{definition}
A global type will generally not have a realisation in the monster model. However, it will often be convenient to work with realisations of global types. This can be done in roughly two different ways:
\begin{enumerate}
\item the realisation lives in some bigger monster model;
\item we restrict the global type to some sufficiently saturated p.c.\ submodel of the monster, containing all the other parameters we are interested in, and realise this restriction.
\end{enumerate}
We will take the first view, but if the reader is not comfortable with this due to their preferred formalism for the monster model then using the second approach does not change the arguments whatsoever.
\begin{convention}
\thlabel{global-type-realisations}
We generally use Greek lowercase letters $\alpha, \beta, \ldots$ for realisations of global types.
\end{convention}
\begin{definition}
\thlabel{invariant-type}
Let $p(x) = \tp(a/B)$ be a type and let $C \subseteq B$. We say that $p(x)$ is \emph{$C$-invariant}\index{invariant type}\index{type!invariant} if the following equivalent conditions hold:
\begin{enumerate}[label=(\roman*)]
\item for any $b, b' \in B$ with $b \equiv_C b'$ we have $ab \equiv_C ab'$;
\item for any $b, b' \in B$ with $b \equiv_C b'$ and any formula $\phi(x, y)$ we have $\phi(x, b) \in p(x)$ if and only if $\phi(x, b') \in p(x)$.
\end{enumerate}
\end{definition}
The equivalence of the conditions in the above definition should be obvious.

We are particularly interested in global invariant types. In full first-order logic there are many of these (as also follows from \thref{semi-hausdorff-invariant-types}). However, in positive logic they may not exist.
\begin{example}
\thlabel{non-invariant-type}
The theory from \thref{thick-not-semi-hausdorff} is also an example of a theory where a type over a p.c.\ model $M$ does not necessarily extend to a global $M$-invariant type. In light of \thref{semi-hausdorff-invariant-types} this yields another proof that this theory is not semi-Hausdorff. We use the same notation as in \thref{thick-not-semi-hausdorff}.

We claim that the type $p(x) = \tp(a_\omega/M)$ does not extend to a global $M$-invariant type. Suppose that there is a global $M$-invariant extension $q(x) \supseteq p(x)$. Then either $q(x) = \tp(a_\omega / N)$ or $q(x) = \tp(b_\omega / N)$, as $N$ is the monster model of this theory (see \thref{maximal-pc-model-is-monster}). So since $a_\omega \equiv_M b_\omega$ we must have by $M$-invariance that $R(x, a_\omega)$ and $R(x, b_\omega)$ are both in $q(x)$. The only possible realisations of $q(x)$ are $a_\omega$ and $b_\omega$, so this contradicts the fact that $T \models \neg \exists x R(x, x)$.
\end{example}
\begin{proposition}
\thlabel{semi-hausdorff-invariant-types}
Assume semi-Hausdorffness. Then any type over any p.c.\ model $M$ extends to a global $M$-invariant type.
\end{proposition}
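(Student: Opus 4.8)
The plan is to build a global $M$-invariant type extending a given $p(x) = \tp(a/M)$ by a coherent-choice argument over formulas, using the semi-Hausdorff assumption to guarantee that the partial type $\Omega(y, y')$ expressing ``$\tp(y/M) = \tp(y'/M)$'' is available and type-definable. Concretely, I want to define, for each formula $\phi(x, y)$ (with $y$ of any finite length) and each tuple $b$ from the monster, whether $\phi(x, b)$ belongs to the sought type $q(x)$, in a way that depends only on $\tp(b/M)$, is consistent, and agrees with $p$ when $b \in M$.

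First I would recall that, $M$ being a p.c.\ model, $\tp(a/M)$ is maximal, and I want to extend it to a maximal type over the whole monster. The natural candidate is
\[
q(x) = \{ \phi(x, b) : \phi(x, y) \text{ a formula}, \ b \text{ from } \MM, \text{ and } \models \exists y'(\Omega(b, y') \wedge \text{``}\phi(x, y') \in p\text{''}) \},
\]
but this needs to be made precise since ``$\phi(x, y') \in p$'' is not a formula. The clean way: declare $\phi(x, b) \in q(x)$ if and only if for \emph{some} (equivalently, by semi-Hausdorffness, for \emph{every}) $b' \in M$ with $\models \Omega(b, b')$ we have $\phi(x, b') \in p(x)$ — and this only makes sense once we know such a $b'$ exists. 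That last point is the crux and is \emph{not} automatic: an arbitrary tuple $b$ from the monster need not have the same type over $M$ as any tuple \emph{in} $M$. So the argument must proceed differently, or must first replace $M$ by a sufficiently saturated p.c.\ model (the ``second view'' of global types from Convention~\ref{global-type-realisations}), so that every small tuple's type over $M$ is realised in $M$.

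So the key steps, in order: (1) Using the monster-model conventions, it suffices to extend $p(x) = \tp(a/M)$ to $\tp(a/\MM)$ for the ``large'' monster, and we may assume $M$ is $|\MM|$-saturated enough that every tuple $b \in \MM$ has $\tp(b/M) = \tp(b'/M)$ for some $b' \in M$ — or, taking the restriction view, it suffices to produce, for each small $B \supseteq M$, an $M$-invariant extension to $B$ and then take a union/compactness argument. I would take the restriction view: fix small $B \supseteq M$ and extend $p$ to an $M$-invariant $p_B \in \S(B)$; then check these cohere. (2) For the extension to $B$: by downward L\"owenheim-Skolem and saturation we can assume $M$ is positively $|B|^+$-saturated, so every $b \in B$ realises over $M$ a type realised in $M$; hence the recipe above is well-defined, and semi-Hausdorffness (via type-definability of $\Omega$) ensures the choice $\phi(x, b) \in q \iff \phi(x, b') \in p$ is independent of which $b' \in M$ with $b \equiv_M b'$ we pick. (3) Check $q$ is consistent: any finite subset $\phi_1(x, b_1), \dots, \phi_n(x, b_n)$ of $q$ can be pulled back, using an automorphism, to formulas $\phi_i(x, b_i')$ with $b_i' \in M$ and $b_i' \equiv_M b_i$ simultaneously (here one needs that the $b_i$ can be handled together — take $b = b_1 \dots b_n$ and $b' \in M$ with $b \equiv_M b'$), and these lie in $p$, which is consistent. (4) Check $q$ is maximal: given $\phi(x, b) \notin q$, pull back to $\phi(x, b') \notin p$ with $b' \equiv_M b$; since $M$ is p.c.\ there is an obstruction $\psi(x, b')$ of $\phi(x, b')$ with $\psi(x, b') \in p$, and then $\psi(x, b) \in q$ by invariance of the recipe, witnessing that $q \cup \{\phi(x,b)\}$ is inconsistent. (5) Invariance is built in by construction: if $b \equiv_M b''$ then both reduce to the same $b' \in M$, so $\phi(x,b) \in q \iff \phi(x,b'') \in q$.

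\textbf{Main obstacle.} The delicate point is the well-definedness of the recipe, i.e.\ guaranteeing that every relevant tuple's type over $M$ is already realised \emph{in} $M$; this is exactly where we invoke saturation of $M$ (harmless in the restriction-to-$B$ approach) and where semi-Hausdorffness is genuinely used — without type-definability of $\Omega$, the ``independent of the choice of $b'$'' step and the verification of maximality (step~4, producing $\psi(x,b) \in q$) could fail, and indeed Example~\ref{non-invariant-type} shows the statement is false for merely thick theories. I expect the bookkeeping to glue the $p_B$ together into a genuine global type (or, in the bigger-monster view, to phrase everything directly over $\MM$ after enlarging) to be routine once the single-step extension is in hand.
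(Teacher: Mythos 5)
There is a genuine gap, and it sits exactly at the point you flag as the crux. Your recipe (declare $\phi(x,b)\in q$ iff $\phi(x,b')\in p$ for some $b'\in M$ with $b\equiv_M b'$) is only defined when every relevant tuple $b$ has its type over $M$ realised \emph{in} $M$, and your fix — ``by downward L\"owenheim-Skolem and saturation we can assume $M$ is positively $|B|^+$-saturated'' — is not available. The model $M$ is given in the statement and cannot be replaced: $M$-invariance is a \emph{stronger} condition than $M'$-invariance for any larger model $M'\supseteq M$, so producing an invariant type over a saturated enlargement proves a weaker statement (and also requires first extending $p$ to the enlargement, which is the problem you started with). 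Nor can a fixed $M$ be assumed positively $|B|^+$-saturated for $B\supseteq M$: that would mean $M$ is saturated over parameter sets of size $|M|$, i.e.\ saturated in its own cardinality, which is not attainable in general. The failure is already visible in full first-order logic (hence Boolean, hence semi-Hausdorff): for a countable model $M$ of DLO and $b$ realising a cut omitted by $M$, no $b'\in M$ has $b'\equiv_M b$, so your transfer recipe, and likewise your consistency and maximality checks in steps (3)--(4), never get off the ground. (A smaller point: the ``independence of the choice of $b'$'' does not need semi-Hausdorffness at all — two tuples of $M$ with the same type over $M$ are equal, since $y=b'$ is a positive formula over $M$.)

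The paper's proof avoids defining the extension explicitly. It writes down the partial type $\Sigma(x)=p(x)\cup\bigcup\{\Omega(xb,xb') : b\equiv_M b'\}$, where $\Omega$ is the partial type defining equality of types given by semi-Hausdorffness — this is the real (and only) place semi-Hausdorffness is used: it makes the invariance condition type-definable so that compactness applies. Finite satisfiability is then checked by a witness \emph{inside} $M$: for $\phi(x,m)\in p(x)$ we have $\models\exists x\,\phi(x,m)$, so, $M$ being p.c.\ (\thref{pc-model}), there is $a'\in M$ with $M\models\phi(a',m)$; and since $a'\in M$, any $b\equiv_M b'$ automatically satisfies $a'b\equiv_M a'b'$, so $a'$ realises all the $\Omega$-conditions for free. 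Any completion of $\Sigma(x)$ (realised in a bigger monster, or obtained by Zorn's lemma) is then a global $M$-invariant extension of $p$. So the well-definedness problem your construction runs into simply never arises; if you want to repair your write-up, replace the explicit transfer recipe by this compactness argument.
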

\begin{proof}
Let $p(x) = \tp(a/M)$ be a type over a p.c.\ model $M$. Recall that semi-Hausdorffness means that equality of types is type-definable. In particular, there is a partial type $\Omega(xy, xy')$ over $M$ such that $\models \Omega(a'b, a'b')$ if and only if $a' b \equiv_M a' b'$ for any $a', b, b'$. Let $\Sigma(x)$ be the (large) set of formulas
\[
p(x) \cup \bigcup \{\Omega(xb, xb') : b, b' \text{ finite tuples in the monster with } b \equiv_M b' \}.
\]
We claim that this set is finitely satisfiable. Indeed, let $\phi(x, m) \in p(x)$. As $\models \exists x \phi(x, m)$ and $M$ is p.c., there must be $a' \in M$ with $M \models \phi(a', m)$. Now for any $b, b'$ with $b \equiv_M b'$ we have in particular $a'b \equiv_M a'b'$. So $a'$ realises
\[
\{\phi(x, m)\} \cup \bigcup \{\Omega(xb, xb') : b, b' \text{ finite tuples in the monster with } b \equiv_M b' \},
\]
which establishes finite realisability of $\Sigma(x)$. By compactness there is a realisation $\alpha$ of $\Sigma(x)$. So $\tp(\alpha/\MM)$ extends $p(x)$ and is a global $M$-invariant type\footnote{If the reader insists on not allowing a bigger monster model then alternatively Zorn's lemma can be used to get a maximally consistent set of formulas containing $\Sigma(x)$, which will be the desired global $M$-invariant type extending $p(x)$.}.
\end{proof}
\begin{lemma}
\thlabel{morley-sequence-in-invariant-type-is-indiscernible}
Let $p(x) = \tp(a/B)$ be a type and let $C \subseteq B$. Suppose that $p(x)$ is $C$-invariant and $(a_i)_{i < \omega} \subseteq B$ is a sequence such that $a_i \models p|_{C (a_j)_{j < i}}$ for all $i < \omega$. Then $(a_i)_{i < \omega}$ is $C$-indiscernible.
\end{lemma}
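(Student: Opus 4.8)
The plan is a routine induction on the length of tuples, with $C$-invariance of $p$ doing the essential work; the only genuinely positive-logic-flavoured point is isolated at the end. Concretely, I would prove by induction on $n\geq 1$ that $a_{i_1}\ldots a_{i_n}\equiv_C a_{j_1}\ldots a_{j_n}$ for all $i_1<\ldots<i_n$ and $j_1<\ldots<j_n$ in $\omega$, which is exactly the assertion that $(a_i)_{i<\omega}$ is $C$-indiscernible (\thref{indiscernible-sequence-over-parameters}). The base case $n=1$ is immediate once we know the restriction fact below: both $a_{i_1}$ and $a_{j_1}$ realise $p|_C=\tp(a/C)$, so they have the same type over $C$.

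For the inductive step, fix $i_1<\ldots<i_{n+1}$ and $j_1<\ldots<j_{n+1}$ and set $b=a_{i_1}\ldots a_{i_n}$ and $b'=a_{j_1}\ldots a_{j_n}$, which are tuples in $B$ with $b\equiv_C b'$ by the induction hypothesis. By $C$-invariance of $p$ (\thref{invariant-type}) we get $\phi(x,b)\in p\iff\phi(x,b')\in p$ for every formula $\phi(x,y)$. Since $i_1,\ldots,i_n<i_{n+1}$ we have $Cb\subseteq C(a_k)_{k<i_{n+1}}$, so $a_{i_{n+1}}\models p|_{Cb}$, and likewise $a_{j_{n+1}}\models p|_{Cb'}$; as $Cb,Cb'\subseteq B$ these restrictions are $\tp(a/Cb)$ and $\tp(a/Cb')$, so $\tp(a_{i_{n+1}}/Cb)=\tp(a/Cb)$ and $\tp(a_{j_{n+1}}/Cb')=\tp(a/Cb')$. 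I would then, for an arbitrary formula $\phi$ over $C$, chain the equivalences $\models\phi(a_{i_1},\ldots,a_{i_{n+1}})$ iff $\phi(a_{i_1},\ldots,a_{i_n},x)\in\tp(a_{i_{n+1}}/Cb)=\tp(a/Cb)$ iff $\phi(x,b)\in p$ iff $\phi(x,b')\in p$ iff, symmetrically, $\models\phi(a_{j_1},\ldots,a_{j_{n+1}})$. This gives $a_{i_1}\ldots a_{i_{n+1}}\equiv_C a_{j_1}\ldots a_{j_{n+1}}$ and closes the induction.

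The step that needs care, and the only place where positive logic could a priori cause trouble, is the identity $\tp(a_{i_{n+1}}/Cb)=\tp(a/Cb)$: a priori we only know $a_{i_{n+1}}$ realises the partial type $p|_{Cb}$, and upgrading this to equality of full types over $Cb$ requires $p|_{Cb}$ to be a maximal type over $Cb$. I expect this to be the main (mild) obstacle, and I would dispatch it by noting that in the monster, which is a p.c.\ model, $\tp(a/D)$ is maximal over $D$ for every small $D$: if $d\in D$ and $\not\models\phi(a,d)$, then positive closedness of $\MM$ yields an obstruction $\psi$ of $\phi(x,d)$, and existentially quantifying away those parameters of $\psi$ lying outside $D$ produces an obstruction of $\phi(x,d)$ already over $D$, which therefore belongs to $\tp(a/D)$; hence $\tp(a/D)$ is maximal (this extends \thref{pc-model-iff-types-are-maximal}). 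With this observation in hand the equivalences above go through verbatim and I do not anticipate any further difficulty.
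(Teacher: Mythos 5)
Your proof is correct and follows essentially the same route as the paper's: induction on the length of the subtuples, using the induction hypothesis together with $C$-invariance of $p$ and the fact that the later terms realise the relevant restrictions of $p$. The only differences are cosmetic: you compare two arbitrary increasing subtuples rather than comparing each to the initial segment $a_0\ldots a_n$, and you make explicit the maximality of $\tp(a/D)$ in the monster (the parameter-set version of \thref{pc-model-iff-types-are-maximal}), which the paper's proof uses implicitly.
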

\begin{proof}
We prove by induction on $n < \omega$ that for any $i_0 < \ldots < i_n < \omega$ we have $a_{i_0} \ldots a_{i_n} \equiv_C a_0 \ldots a_n$. For $n = 0$ this is immediate as $a_{i_0}$ and $a_0$ both realise $p|_C$. Now assume the induction hypothesis for $n$. Then by $C$-invariance and the fact that $a_{i_{n+1}}$ and $a_{n+1}$ both realise $p|_{C a_0 \ldots a_n}$ we have that
\[
a_{i_0} \ldots a_{i_n} a_{i_{n+1}} \equiv_C a_0 \ldots a_n a_{i_{n+1}} \equiv_C a_0 \ldots a_n a_{n+1},
\]
as required.
\end{proof}
\begin{proposition}
\thlabel{type-extends-to-global-invariant-implies-lstp}
Suppose that a type $p(x) = \tp(a/B)$ extends to a global $B$-invariant type. Then $a' \equiv_B a$ implies $\d_B(a', a) \leq 2$.
\end{proposition}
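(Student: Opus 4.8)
The plan is to use the global $B$-invariant extension to produce a single ``generic'' element $c$ that lies on a $B$-indiscernible sequence with $a$ and, at the same time, on a $B$-indiscernible sequence with $a'$; the chain $a, c, a'$ then witnesses $\d_B(a, a') \leq 2$.

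First I would fix a global $B$-invariant type $q(x) \supseteq p(x)$. Since $p(x) = \tp(a/B)$ is already complete over $B$, we have $q|_B = p$, and because $a' \equiv_B a$ both $a$ and $a'$ realise $q|_B$. Next, using saturation of the monster, I would pick $c \models q|_{Baa'}$ (if one prefers not to enlarge the monster, one realises the restriction of $q$ to a sufficiently saturated p.c.\ submodel containing $Baa'$, in line with \thref{global-type-realisations}); in particular then $c \models q|_{Ba}$ and $c \models q|_{Ba'}$.

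The key step is to build a sequence $c_0 = a$, $c_1 = c$, and $c_i \models q|_{B c_0 \ldots c_{i-1}}$ for $2 \leq i < \omega$, where each $c_i$ exists by saturation. Since $a \models q|_B$, since $c \models q|_{Ba} = q|_{B c_0}$, and since each later $c_i$ realises $q$ over $B$ together with the earlier terms, \thref{morley-sequence-in-invariant-type-is-indiscernible}, applied with the $B$-invariant global type $q$ and $C = B$, yields that $(c_i)_{i < \omega}$ is $B$-indiscernible. Hence $a$ and $c$ lie on a $B$-indiscernible sequence, so $\d_B(a, c) \leq 1$. Repeating the construction with $c_0 = a'$ and $c_1 = c$ (legitimate because $a' \models q|_B$ and $c \models q|_{Ba'}$) gives a $B$-indiscernible sequence containing $a'$ and $c$, so $\d_B(a', c) \leq 1$. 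Concatenating the two witnesses along $a, c, a'$ gives $\d_B(a, a') \leq 2$.

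I do not expect a serious obstacle. The only points that need care are choosing $c$ to realise $q$ over $a$ and $a'$ simultaneously — this is precisely what keeps the bound at $2$ — and being mildly careful about where realisations of the global type $q$ live, which is absorbed by the monster-model conventions of Section~\ref{sec:monster-models}.
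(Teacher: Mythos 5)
Your proof is correct and is essentially the paper's argument: both rely on realising the global $B$-invariant extension $q$ over $Baa'$ and over growing initial segments, and then invoke \thref{morley-sequence-in-invariant-type-is-indiscernible} to get two $B$-indiscernible sequences through $a$ and through $a'$ sharing a common point. The only (immaterial) difference is that the paper builds one common tail $(a_i)_{1\leq i<\omega}$ with $a_i \models q|_{Baa'(a_j)_{j<i}}$ serving both sequences, whereas you fix the single pivot $c \models q|_{Baa'}$ and then grow two separate tails.
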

\begin{proof}
By assumption there is a global $B$-invariant $q(x) \supseteq p(x)$. Inductively construct $(a_i)_{1 \leq i < \omega}$ such that $a_i \models q|_{M a a' (a_j)_{j < i}}$. Then by \thref{morley-sequence-in-invariant-type-is-indiscernible} we have that the sequences $a, a_1, a_2, \ldots$ and $a', a_1, a_2, \ldots$ are both $B$-indiscernible, and so $\d_B(a, a') \leq 2$.
\end{proof}
We now obtain as a corollary what was already claimed in \thref{same-type-over-saturated-model-lascar-distance-2}.
\begin{corollary}
\thlabel{semi-hausdorff-same-type-over-pc-model-lascar-distance-2}
Assume semi-Hausdorffness. Then for any p.c.\ model $M$ we have that $a \equiv_M a'$ implies $\d_M(a, a') \leq 2$.
\end{corollary}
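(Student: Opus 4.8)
The plan is simply to chain together the two results proved immediately above. Fix a p.c.\ model $M$ and tuples $a, a'$ with $a \equiv_M a'$. Since $T$ is semi-Hausdorff, \thref{semi-hausdorff-invariant-types} applies to the type $p(x) = \tp(a/M)$, so $p(x)$ extends to a global $M$-invariant type. This is precisely the hypothesis of \thref{type-extends-to-global-invariant-implies-lstp} with the parameter set $B$ taken to be $M$. Applying that proposition gives $\d_M(a, a') \leq 2$, which is the desired conclusion.

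I do not expect any real obstacle: the corollary is a one-line consequence of \thref{semi-hausdorff-invariant-types} and \thref{type-extends-to-global-invariant-implies-lstp}, which dovetail exactly. It is worth noting in passing that one only gets Lascar distance $\leq 2$ rather than $\leq 1$ because the argument inside \thref{type-extends-to-global-invariant-implies-lstp} realises a common tail $a_1, a_2, \ldots$ of the global invariant type over growing parameter sets containing both $a$ and $a'$, producing two $M$-indiscernible sequences $a, a_1, a_2, \ldots$ and $a', a_1, a_2, \ldots$ that share this tail; there is no reason to expect $a$ and $a'$ to lie on a single $M$-indiscernible sequence. Finally, this corollary discharges the forward reference to \thref{semi-hausdorff-same-type-over-pc-model-lascar-distance-2} made in \thref{same-type-over-saturated-model-lascar-distance-2} for the semi-Hausdorff case, so no positive $\lambda_T$-saturation of $M$ is needed there.
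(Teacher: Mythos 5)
Your proof is correct and is exactly the paper's argument: apply \thref{semi-hausdorff-invariant-types} to $\tp(a/M)$ and then \thref{type-extends-to-global-invariant-implies-lstp} with $B = M$. Nothing further is needed.
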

\begin{proof}
Combine \thref{semi-hausdorff-invariant-types} and \thref{type-extends-to-global-invariant-implies-lstp}.
\end{proof}
Invariant types are non-dividing types, in the following sense.
\begin{lemma}
\thlabel{invariant-types-do-not-divide}
Suppose that $p(x) = \tp(a/M)$ is a $C$-invariant type and $M \supseteq C$ is a positively $(\aleph_0 + |C|)^+$-saturated p.c.\ model. Then $p(x)$ does not divide over $C$. In particular, any global $C$-invariant type does not divide over $C$.
\end{lemma}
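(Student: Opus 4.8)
The plan is to argue by contradiction, first reducing dividing of the (infinitary) type $p(x)=\tp(a/M)$ to $\psi$-dividing of a single formula with finitely many parameters, and then using the saturation of $M$ to pull the parameter configuration witnessing $\psi$-dividing \emph{into} $M$, where $C$-invariance of $p$ produces a contradiction. So first I would suppose $p(x)$ divides over $C$. By \thref{psi-dividing-lemma} it then contains a formula $\phi(x,m)$, where (using the remark after \thref{psi-dividing}, i.e.\ that $\psi$-dividing depends only on the finite part of the parameters) we may take $m$ to be a finite tuple from $M$, such that $\phi(x,m)$ $\psi$-divides over $C$ for some $\psi(y_1,\dots,y_k)$ over $C$. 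In particular $\psi(y_1,\dots,y_k)$ is an obstruction of $\exists x(\phi(x,y_1)\wedge\dots\wedge\phi(x,y_k))$, and by \thref{psi-dividing}(ii) there are $m_1,\dots,m_k$ with $m_j\equiv_C m$ for each $j$ (take the first $k$ consecutive terms of the witnessing sequence) and $\models\psi(m_1,\dots,m_k)$. Also $\phi(x,m)\in p$ means $\models\phi(a,m)$.

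Next I would realise $q(\bar y):=\tp(m_1\cdots m_k/Cm)$ inside $M$. The parameter set $Cm$ has cardinality at most $\aleph_0+|C|$, and $q$ uses only finitely many variables, so by the positive $(\aleph_0+|C|)^+$-saturation of $M$ together with \thref{saturated-model} (in its finite-variable form, cf.\ the finite-conjunction convention and \thref{saturation-extend-variables}) it is enough to check that $q$ is finitely satisfiable in $M$. This is immediate: any formula of $q$ has the form $\theta(\bar y,\bar c,m)$ with $\bar c\in C$ and $\models\theta(m_1\cdots m_k,\bar c,m)$, hence $\models\exists\bar y\,\theta(\bar y,\bar c,m)$; since this is a positive formula with parameters $\bar c m\in M$ and $M$ is immersed in $\MM$, it already holds in $M$ and therefore has a witness in $M$. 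Let $n_1,\dots,n_k\in M$ realise $q$.

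Then I would close the argument. From $n_1\cdots n_k\equiv_C m_1\cdots m_k$ we get $\models\psi(n_1,\dots,n_k)$, and from $n_j\equiv_C m_j\equiv_C m$ with $n_j,m\in M$ the $C$-invariance of $p$ (\thref{invariant-type}) gives $\phi(x,n_j)\in p$, i.e.\ $\models\phi(a,n_j)$, for every $j\le k$. Hence $\models\exists x(\phi(x,n_1)\wedge\dots\wedge\phi(x,n_k))$, which together with $\models\psi(n_1,\dots,n_k)$ contradicts $\psi$ being an obstruction of that formula. For the ``in particular'' clause I would observe that if a global $C$-invariant type $q$ divided over $C$ then, again by \thref{psi-dividing-lemma}, this would be witnessed by a single $\phi(x,b)\in q$ with $b$ finite; choosing by \thref{building-saturated-model} a positively $(\aleph_0+|C|)^+$-saturated p.c.\ model $M\supseteq Cb$, the restriction $q|_M$ is a $C$-invariant type over $M$ containing $\phi(x,b)$, so $q|_M$ would divide over $C$, contradicting the statement just proved.

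I expect the main obstacle to be the two reductions that make the argument go through: one must use \thref{psi-dividing-lemma} to replace the infinitary, $M$-sized parameter of $p$ by a \emph{finite} tuple $m$ (so that ``$\equiv_C m$'' and the $\psi$-configuration lie within the reach of the $(\aleph_0+|C|)^+$-saturation of $M$), and one must keep in mind that $C$-invariance of $\tp(a/M)$ is only available for parameters lying in $M$ — which is precisely why the configuration $m_1,\dots,m_k$ must be transported into $M$ rather than being used as it stands on an external indiscernible sequence. Everything else (finite satisfiability of $q$ in $M$, the bookkeeping of lengths so that $\psi(n_1,\dots,n_k)$ is well-formed) is routine.
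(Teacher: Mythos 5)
Your proof is correct and rests on the same mechanism as the paper's: use the positive $(\aleph_0+|C|)^+$-saturation of $M$ to pull the parameters witnessing dividing into $M$, where the $C$-invariance of $p$ can then be applied. The only difference is packaging — the paper reduces via \textsc{finite character} and copies an entire $C$-indiscernible sequence into $M$ to realise $\bigcup_{i<\omega} p'(x,b_i)$ directly, whereas you argue by contradiction through \thref{psi-dividing-lemma} and transport only a finite $\psi$-configuration (a slightly lighter use of saturation); likewise your treatment of the ``in particular'' clause, restricting the global type to a small saturated model, is a valid if more roundabout alternative to simply applying the lemma with $M=\MM$.
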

\begin{proof}
By \textsc{finite character} of dividing it is enough to show that for every finite $b \in M$, the type $p'(x, b) = \tp(a/Cb)$ does not divide over $C$. Let $(b_i)_{i < \omega}$ be a $C$-indiscernible sequence with $b_i \equiv_C b$ for all $i < \omega$. By saturation of $M$ we find $(b'_i)_{i < \omega} \subseteq M$ such that $(b'_i)_{i < \omega} \equiv_C (b_i)_{i < \omega}$. For each $i < \omega$ we have $b'_i \equiv_C b$ and so by $C$-invariance $a b'_i \equiv_C ab$. Hence $\models p'(a, b'_i)$ for all $i < \omega$, and so $\bigcup_{i < \omega} p'(x, b'_i)$ is realised by $a$. We conclude that $\bigcup_{i < \omega} p'(x, b_i)$ is consistent, as required.
\end{proof}
\section{Stability}
\label{sec:stability}
\begin{definition}
\thlabel{stability}
Let $\lambda$ be an infinite cardinal. A theory $T$ is called \emph{$\lambda$-stable}\index{stable!lambda-stable theory@$\lambda$-stable theory} if for all parameter sets $B$ with $|B| \leq \lambda$ and all finite index sets $I$ we have $|\S_I(B)| \leq \lambda$. We call $T$ \emph{stable}\index{stable!stable theory} if it is $\lambda$-stable for some $\lambda$.
\end{definition}
If $T$ is single-sorted then $\lambda$-stability is simply saying that $|\S_n(B)| \leq \lambda$ for all $n < \omega$ and all $B$ with $|B| \leq \lambda$.

The following is a more practical condition to check.
\begin{proposition}
\thlabel{stable-iff-1-types-stable}
Let $\lambda$ be an infinite cardinal. A theory $T$ is $\lambda$-stable if and only if for all $B$ with $|B| \leq \lambda$ and all singleton index sets $I$ we have $|\S_I(B)| \leq \lambda$.
\end{proposition}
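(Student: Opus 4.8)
The plan is to prove the two directions; the left-to-right direction is trivial, so the content is the converse: assuming that $|\S_I(B)| \le \lambda$ for all singleton $I$ and all $B$ with $|B| \le \lambda$, deduce the same bound for all finite $I$. I would do this by induction on $|I| = n$, the case $n = 1$ being the hypothesis. For the inductive step, consider an $(n+1)$-type $\tp(a_0 a_1 \ldots a_n / B)$ over a parameter set $B$ with $|B| \le \lambda$. The idea is to split a tuple as (head)$\frown$(tail): the type of $a_0 a_1 \ldots a_n$ over $B$ is determined by the type of $a_0$ over $B$ together with the type of $a_1 \ldots a_n$ over $B a_0$.

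Concretely, first I would fix for each $1$-type $p(x) \in \S_1(B)$ a realisation $a_p$ (a representative), of which there are at most $\lambda$ by hypothesis. Now given any $(n+1)$-tuple $a_0 \ldots a_n$, let $p = \tp(a_0/B)$; by homogeneity of the monster model there is an automorphism $f \in \Aut(\MM/B)$ with $f(a_0) = a_p$, and then $\tp(a_0 \ldots a_n/B)$ is determined by $p$ together with $\tp(f(a_1) \ldots f(a_n)/Ba_p)$, since applying $f^{-1}$ recovers the original type. The latter is an $n$-type over the parameter set $Ba_p$, which has cardinality at most $\lambda + |a_p|$. Here I need a small subtlety: $a_p$ is a single element (the index set $I$ in the hypothesis is a singleton, so in a single-sorted language $a_p$ is one element, and more generally it is a tuple indexed by a fixed singleton index set), so $|Ba_p| \le \lambda$ still holds. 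Then by the induction hypothesis $|\S_n(Ba_p)| \le \lambda$. Counting: the number of $(n+1)$-types over $B$ is at most (number of choices of $p$) $\times$ $\sup_p |\S_n(Ba_p)| \le \lambda \cdot \lambda = \lambda$.

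Finally I would handle the multi-sorted / general index-set bookkeeping. The statement of $\lambda$-stability quantifies over \emph{finite} index sets $I$, and a singleton index set still allows the single variable to be of any sort. The induction should really be phrased as: for every $n$, every choice of $n$ sorts, and every $B$ with $|B| \le \lambda$, the space of types of $n$-tuples of those sorts over $B$ has size at most $\lambda$. The base case is exactly \thref{stable-iff-1-types-stable}'s hypothesis applied sort by sort, and the inductive step is the head/tail splitting above, where the head $a_0$ is a single element of one of the sorts. The main (and only mildly delicate) obstacle is making sure the enlarged parameter set $Ba_p$ still has cardinality $\le \lambda$ and that the representative-plus-automorphism argument correctly shows the original type is recovered; once that is in place the cardinal arithmetic $\lambda \cdot \lambda = \lambda$ closes the induction. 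I do not expect any genuine difficulty, since this is the standard reduction from $n$-stability to $1$-stability, and all the tools (homogeneity of $\MM$, \thref{type-space-with-parameters}) are already available.
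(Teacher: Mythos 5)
Your proof is correct and is essentially the paper's argument: an induction on the length of the tuple, using fixed representative realisations so that the full $(n+1)$-type is recovered from a pair of shorter types (you via an automorphism over $B$, the paper via maximality of types and restriction), followed by the cardinal arithmetic $\lambda \cdot \lambda = \lambda$. The only, immaterial, difference is a mirror image in the bookkeeping: you bound the head by the one-variable hypothesis over $B$ and the tail by the induction hypothesis over the enlarged sets $Ba_p$, whereas the paper adjoins representatives of all $n$-types at once to form a single set $B'$ with $|B'| \leq \lambda$, applies the one-variable hypothesis over $B'$, and uses the induction hypothesis over $B$ itself.
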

If $T$ is single-sorted then the above simplifies to: $T$ is $\lambda$-stable if and only if for all $B$ with $|B| \leq \lambda$ we have $|\S_1(B)| \leq \lambda$. For notational convenience, we give the proof for the single-sorted case. It should be clear that the same proof works in the multi-sorted case, and the only difference is purely notational.
\begin{proof}
The variables in this proof are all single variables. The left to right direction is trivial. For the converse we prove by induction on $n < \omega$ that $|\S_n(B)| \leq \lambda$ for all $B$ with $|B| \leq \lambda$. The base case $n = 0$ follows because there is only one type in no variables over $B$: the set of formulas over $B$ with no variables that hold in the monster model.

For the successor step we assume that the induction hypothesis holds for types in at most $n$ variables. Let $B$ be a parameter set with $|B| \leq \lambda$. By the induction hypothesis there are at most $\lambda$ many types in $n$ variables over $B$. Enumerate those types as $(p_i(x_1, \ldots, x_n))_{i < \lambda}$. For each $i < \lambda$ let $b_i$ be a realisation of $p_i(x_1, \ldots, x_n)$. So each $b_i$ is a finite tuple, hence $B' = B \cup \bigcup_{i < \lambda} b_i$ has cardinality at most $\lambda$. By assumption, there are at most $\lambda$ many types in one variable over $B'$. Enumerate those types as $(q_i(x))_{i < \lambda}$. Set $P = \{p_i(x_1, \ldots, x_n) : i < \lambda\}$ and $Q = \{q_i(x) : i < \lambda\}$. We will define a surjection $f: P \times Q \to \S_{n+1}(B)$, from which $|\S_{n+1}(B)| \leq \lambda$ follows, thus concluding the induction and hence the proof.

Let $(p_i(x_1, \ldots, x_n), q_j(x)) \in P \times Q$. Recall that $q_j|_{B b_i}(x)$ is the restriction of $q_j(x)$ to the parameters $B b_i$. Let $q(x, x_1, \ldots, x_n)$ be the result of replacing the tuple $b_i$ in $q_j|_{B b_i}(x)$ by the tuple of variables $(x_1, \ldots, x_n)$. Then $q(x, x_1, \ldots, x_n)$ is the type over $B$ such that $q(x, b_i) = q_j|_{B b_i}(x)$. We let $f$ send $(p_i(x_1, \ldots, x_n), q_j(x))$ to $q(x, x_1, \ldots, x_n)$.

Having defined $f$, we now show that it is a surjection. Let $q(x, x_1, \ldots, x_n)$ be any type over $B$ in $n+1$ variables. Let $p(x_1, \ldots, x_n)$ be the restriction of $q(x, x_1, \ldots, x_n)$ to the variables $x_1, \ldots, x_n$. Then $p(x_1, \ldots, x_n)$ is a type over $B$ and so there is $i < \lambda$ such that $p(x_1, \ldots, x_n) = p_i(x_1, \ldots, x_n)$. There is then also $j < \lambda$ such that $q(x, b_i) = q_j(x)$. By construction $f$ sends $(p_i(x_1, \ldots, x_n), q_j(x))$ to $q(x, x_1, \ldots, x_n)$, and we conclude that $f$ is surjective.
\end{proof}
\begin{proposition}
\thlabel{maximal-model-implies-stable}
Any theory with a maximal p.c.\ model is stable.
\end{proposition}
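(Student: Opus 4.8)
The plan is to show directly that $T$ is $\lambda$-stable for $\lambda = \aleph_0 + |M|$, where $M$ is a maximal p.c.\ model of $T$. The crucial observation is that by \thref{maximal-pc-model-is-monster} the model $M$ is, up to isomorphism, the monster model $\MM$, so by \thref{monster-convention} every parameter set and every tuple under consideration lives inside $M$. In particular, any parameter set $B$ automatically satisfies $|B| \leq |M| \leq \lambda$, so the cardinality restriction appearing in the definition of $\lambda$-stability (\thref{stability}) is never an actual constraint here; the content is purely in bounding the number of types.

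With this in hand I would invoke \thref{stable-iff-1-types-stable}, which reduces $\lambda$-stability to bounding $|\S_I(B)|$ for singleton index sets $I$ and all $B$ with $|B| \leq \lambda$. But by \thref{type-space-with-parameters} every element of $\S_I(B)$ is of the form $\tp(a/B)$ for some singleton $a \in \MM = M$, and there are at most $|M| \leq \lambda$ such singletons, hence $|\S_I(B)| \leq |M| \leq \lambda$. Therefore $T$ is $\lambda$-stable, and in particular stable. (Alternatively, one can bypass \thref{stable-iff-1-types-stable}: for any finite index set $I$, the set $\S_I(B)$ is a quotient of the collection of $I$-indexed tuples from $M$, which has cardinality at most $|M|^{|I|} = \aleph_0 + |M| = \lambda$ since $|I|$ is finite.)

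There is essentially no real obstacle in this argument; the only point deserving a moment's care is the bookkeeping around the monster-model formalism, namely that for a bounded monster all small parameter sets genuinely sit inside the fixed set $M$. But \thref{maximal-pc-model-is-monster} is exactly the statement that the maximal p.c.\ model \emph{is} the monster and that the choice of smallness formalism is irrelevant in this case, so no subtlety arises and the bound $|\S_I(B)| \leq |M|$ is unconditional.
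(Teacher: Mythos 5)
Your proof is correct and follows essentially the same route as the paper: identify the maximal p.c.\ model $M$ with the monster via \thref{maximal-pc-model-is-monster}, and bound the number of types over any parameter set by the number of (finite) tuples in $M$, giving $(\aleph_0 + |M|)$-stability. The detour through \thref{stable-iff-1-types-stable} is harmless but unnecessary, as your parenthetical alternative — which is exactly the paper's argument — already shows.
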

\begin{proof}
Let $M$ be the maximal p.c.\ model. We claim that $T$ is $|M| + \aleph_0$-stable. By \thref{maximal-pc-model-is-monster} we have that $M$ is the monster model. There are at most $|M| + \aleph_0$ many distinct finite tuples in $M$. So for any set $B$ there are at most $|M| + \aleph_0$ many types over $B$ in finitely many free variables, since each of these types is realised in $M$.
\end{proof}
\begin{definition}
\thlabel{definable-type}
Let $p(x)$ be a type over $B$ and let $\phi(x, y)$ be a formula without parameters. A \emph{$\phi$-definition over $C$}\index{definition@$\phi$-definition} of $p(x)$ is a set of formulas $\d_p \phi(y)$ over $C$ with $|\d_p \phi(y)| \leq |T|$ such that for all $b \in B$ we have
\[
\phi(x, b) \in p(x) \quad \Longleftrightarrow \quad \models \d_p \phi(b).
\]
We say that $p(x)$ is \emph{definable over $C$}\index{definable type}\index{type!definable} if it has a $\phi$-definition over $C$ for every formula $\phi(x, y)$. If $p(x)$ is definable over $B$ then we just say that $p(x)$ is \emph{definable}.
\end{definition}
In full first-order logic the $\phi$-definition $\d_p \phi(y)$ is only a formula, not a set of formulas. Even after this relaxation of the definition of a definable type to allow for sets of formulas, we can still prove many of the important results that we know from full first-order logic. For example, that a theory is stable if and only if every type is definable (see \thref{stable-characterisations}). At the same time it is really necessary to relax the definition, see \thref{definable-type-needs-set-of-formulas}.
\begin{example}
\thlabel{definable-type-needs-set-of-formulas}
We give an example of a stable theory $T$, a type $p(x)$ and a formula $\phi(x, y)$ such that the $\phi$-definition $\d_p \phi(y)$ has to be an infinite set of formulas and can thus not be given by a single formula.

We recall the theory from \thref{hausdorff-not-boolean}. Our language $\L$ has a constant symbol for each element of $\Q_{(0,1)} = \{q \in \Q : 0 < q < 1\}$, as well as an order symbol $\leq$. The theory $T$ is the set of h-inductive sentences true in $\Q_{(0,1)}$, viewed as an obvious $\L$-structure. There is a maximal model given by the real unit interval $[0,1]$. So $T$ is stable by \thref{maximal-model-implies-stable}.

We claim that for $p(x) = \tp(\sqrt{2}/\Q_{(0,1)})$ and $\phi(x, y)$ the formula $x \leq y$ the $\phi$-definition $\d_p \phi(y)$ cannot be finite. Using \thref{regular-formulas-eliminate-quantifiers-implies-positive-quantifier-elimination} it is straightforward to see that $T$ has positive quantifier elimination. So if $\d_p \phi(y)$ were to be a single formula then it would be a finite union of closed intervals with rational endpoints. At the same time we have $\d_p \phi([0,1]) = [\sqrt{2}, 1]$, which cannot be written as such a finite union. We conclude that $\d_p \phi(y)$ cannot be finite.

The $\phi$-definition that is constructed in \thref{definable-type-needs-set-of-formulas} is rather abstract. Just to give some extra intuition we give an explicit construction for the above $p(x)$ and $\phi(x, y)$. Let $(q_i)_{i < \omega}$ be a sequence in $\Q_{(0,1)}$ that approaches $\sqrt{2}$ from below. Then we can take
\[
\d_p \phi(y) = \{ q_i \leq y : i < \omega \},
\]
and we clearly have $\d_p \phi([0,1]) = [\sqrt{2}, 1]$, as required.
\end{example}
\begin{definition}
\thlabel{binary-tree-rank}
For contradictory formulas $\phi(x, y)$ and $\psi(x, y)$ we define the \emph{$(\phi, \psi)$-rank}\index{rank@$(\phi, \psi)$-rank} $R_{\phi, \psi}(-)$\nomenclature[Rphipsi]{$R_{\phi, \psi}(-)$}{The $(\phi, \psi)$-rank} as follows. Its input is a set of formulas, possibly with parameters, in free variables $x$. Then $R_{\phi, \psi}(-)$ is the least function into the ordinals (augmented by $-1$ and a ``biggest ordinal'' $\infty$) such that:
\begin{itemize}
\item $R_{\phi,\psi}(\Sigma) \geq 0$ if $\Sigma(x)$ is consistent;
\item $R_{\phi,\psi}(\Sigma) \geq \alpha + 1$ if there is some $b$ such that $R_{\phi,\psi}(\Sigma \cup \{ \phi(x, b) \} ) \geq \alpha$ and $R_{\phi,\psi}(\Sigma \cup \{ \psi(x, b) \} ) \geq \alpha$;
\item $R_{\phi,\psi}(\Sigma) \geq \ell$ if $R_{\phi,\psi}(\Sigma) \geq \alpha$ for all $\alpha < \ell$, where $\ell$ is a limit ordinal.
\end{itemize}
\end{definition}
So in the above definition we have $R_{\phi,\psi}(\Sigma) = -1$ if and only if $\Sigma(x)$ is inconsistent, while $R_{\phi,\psi}(\Sigma) = \infty$ means that $R_{\phi,\psi}(\Sigma) \geq \alpha$ for all ordinals $\alpha$.
\begin{lemma}
\thlabel{binary-tree-rank-lemma}
Let $\phi(x, y)$ and $\psi(x, y)$ be contradictory formulas.
\begin{enumerate}[label=(\roman*)]
\item If $\Sigma(x)$ implies $\Sigma'(x)$ then $R_{\phi,\psi}(\Sigma) \leq R_{\phi,\psi}(\Sigma')$.
\item The property $R_{\phi, \psi}(\Sigma) \geq n$ is type-definable by
\[
\exists (y_\eta)_{\eta \in 2^{< n}} \left( \bigwedge_{\sigma \in 2^n} \exists x \left( \Sigma(x) \wedge \bigwedge_{k < n} \chi_{\sigma(k)}(x, y_{\sigma|_k}) \right) \right),
\]
where $\chi_0$ and $\chi_1$ are $\phi$ and $\psi$ respectively. In particular, if $\Sigma$ is finite (i.e.\ a formula), then this is just a formula.
\end{enumerate}
\end{lemma}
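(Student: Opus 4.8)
Both parts are proved by induction following the recursive clauses defining $R_{\phi,\psi}$.

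For (i), I would prove by transfinite induction on the ordinal $\alpha$ the statement: for all sets of formulas $\Sigma(x), \Sigma'(x)$ (possibly with parameters), if $\Sigma(x)$ implies $\Sigma'(x)$ and $R_{\phi,\psi}(\Sigma) \geq \alpha$, then $R_{\phi,\psi}(\Sigma') \geq \alpha$. Taking $\alpha = R_{\phi,\psi}(\Sigma)$, and treating the boundary values $-1$ and $\infty$ separately (both immediate), this gives $R_{\phi,\psi}(\Sigma) \leq R_{\phi,\psi}(\Sigma')$. The case $\alpha = 0$ is just the remark that any realisation of $\Sigma(x)$ realises $\Sigma'(x)$, so consistency of $\Sigma$ yields consistency of $\Sigma'$. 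For a successor $\alpha + 1$: a $b$ witnessing $R_{\phi,\psi}(\Sigma\cup\{\phi(x,b)\}) \geq \alpha$ and $R_{\phi,\psi}(\Sigma\cup\{\psi(x,b)\}) \geq \alpha$ also works for $\Sigma'$, since $\Sigma\cup\{\phi(x,b)\}$ implies $\Sigma'\cup\{\phi(x,b)\}$ and likewise for $\psi$, so the induction hypothesis applies. The limit case is read off directly from the third clause of the definition.

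For (ii), write $\Theta_n(z)$ for the displayed set of formulas; when $\Sigma$ is finite it only involves finite conjunctions, a single block $\exists x$, and the $2^n - 1$ many quantifiers $\exists(y_\eta)_{\eta \in 2^{<n}}$, hence is itself a formula. I would show by induction on $n < \omega$ that for every parameter tuple $c$ one has $R_{\phi,\psi}(\Sigma(x,c)) \geq n$ if and only if $\models \Theta_n(c)$, where throughout ``$\Psi$ consistent'' is read as ``$\models \exists x\,\Psi(x)$'' via the conventions of \thref{apply-connectives-to-sets-of-formulas} and \thref{combine-partial-types}. For $n = 0$ the formula $\Theta_0(z)$ unwinds to $\exists x\,\Sigma(x,z)$, matching $R_{\phi,\psi}(\Sigma(x,c)) \geq 0$. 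For the step, $R_{\phi,\psi}(\Sigma(x,c)) \geq n+1$ means there is $b$ with $R_{\phi,\psi}(\Sigma(x,c)\cup\{\phi(x,b)\}) \geq n$ and $R_{\phi,\psi}(\Sigma(x,c)\cup\{\psi(x,b)\}) \geq n$; applying the induction hypothesis to the parametrised families $\Sigma(x,z)\cup\{\phi(x,w)\}$ and $\Sigma(x,z)\cup\{\psi(x,w)\}$ rewrites these as $\models \Theta_n^{\phi}(c,b)$ and $\models \Theta_n^{\psi}(c,b)$ for the corresponding depth-$n$ formulas. It then remains to identify $\exists w\,(\Theta_n^{\phi}(z,w) \wedge \Theta_n^{\psi}(z,w))$ with $\Theta_{n+1}(z)$. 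This is a purely combinatorial reindexing of the binary tree: the root variable $y_{\langle\rangle}$ of the depth-$(n+1)$ tree is identified with $w$, and a node $\eta \in 2^{<(n+1)}$ of positive length is split into its first coordinate $i = \eta(0) \in \{0,1\}$ and its tail of length $|\eta| - 1$, with $y_\eta$ identified with the variable indexed by that tail in the $i$-th of the two depth-$n$ subtrees; one checks that under this bijection the branch condition $\exists x(\Sigma(x,z) \wedge \bigwedge_{k < n+1}\chi_{\sigma(k)}(x, y_{\sigma|_k}))$ for a branch $\sigma$ with $\sigma(0) = i$ decomposes exactly as the branch condition (for the tail of $\sigma$) of the $i$-th subtree formula together with the extra conjunct $\chi_i(x, w)$ under the $\exists x$.

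The only step requiring genuine care is this last reindexing in (ii): one must keep track of the fact that the indices $\sigma|_k$ are initial segments, so that the root is shared between the two subtrees and the first coordinate selects which subtree a node belongs to, and one must make sure the translation between ``consistency of a set of formulas'' and ``satisfiability in the monster'' is applied uniformly, so that the existential quantifier over $x$ (possibly applied to infinitely many of the formulas of $\Sigma$) is handled by compactness via \thref{combine-partial-types}(iii). Everything else is a routine induction on the defining clauses of the rank.
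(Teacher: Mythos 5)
Your proposal is correct and follows essentially the same route as the paper: part (i) by transfinite induction on the rank using the same witness $b$, and part (ii) by induction on $n$, decomposing the depth-$(n+1)$ tree into a root parameter together with two depth-$n$ subtrees indexed by the first coordinate (the paper phrases this as constructing/extracting trees of parameters $(b_{0^\frown\eta})$, $(b_{1^\frown\eta})$ rather than as a variable reindexing, but it is the same argument). The only bookkeeping point, which you already handle implicitly, is that the induction statement in (ii) must be quantified over all parametrised sets $\Sigma$ so that the hypothesis can be applied to $\Sigma(x,z)\cup\{\phi(x,w)\}$ and $\Sigma(x,z)\cup\{\psi(x,w)\}$.
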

Note that \thref{binary-tree-rank-lemma}(ii) makes sense, even for infinite $\Sigma$, by \thref{combine-partial-types}. What this is really saying is that $R_{\phi, \psi}(\Sigma) \geq n$ is witnessed by a binary tree of parameters, represented by the variables $(y_\eta)_{\eta \in 2^{< n}}$. For each branch we then form a set of formulas as follows. At every node along the branch, we pick either $\phi$ or $\psi$ based on how the branch continues, and we plug in the variable corresponding to that node. We then require this set of formulas to be consistent, for every branch. This is depicted in figure \ref{fig:binary-tree} for $n = 3$.
\begin{figure}[ht]
\centering
\begin{tikzpicture}[x=0.75pt,y=0.75pt,yscale=-1,xscale=1]

\draw  [draw opacity=0][fill={rgb, 255:red, 0; green, 0; blue, 0 }  ,fill opacity=1 ] (155,300) .. controls (155,297.24) and (157.24,295) .. (160,295) .. controls (162.76,295) and (165,297.24) .. (165,300) .. controls (165,302.76) and (162.76,305) .. (160,305) .. controls (157.24,305) and (155,302.76) .. (155,300) -- cycle ;
\draw  [draw opacity=0][fill={rgb, 255:red, 0; green, 0; blue, 0 }  ,fill opacity=1 ] (75,240) .. controls (75,237.24) and (77.24,235) .. (80,235) .. controls (82.76,235) and (85,237.24) .. (85,240) .. controls (85,242.76) and (82.76,245) .. (80,245) .. controls (77.24,245) and (75,242.76) .. (75,240) -- cycle ;
\draw  [draw opacity=0][fill={rgb, 255:red, 0; green, 0; blue, 0 }  ,fill opacity=1 ] (235,240) .. controls (235,237.24) and (237.24,235) .. (240,235) .. controls (242.76,235) and (245,237.24) .. (245,240) .. controls (245,242.76) and (242.76,245) .. (240,245) .. controls (237.24,245) and (235,242.76) .. (235,240) -- cycle ;
\draw    (80,240) -- (160,300) ;
\draw    (39.63,180) -- (80,240) ;
\draw    (80,240) -- (120,180) ;
\draw    (160,300) -- (240,240) ;
\draw  [draw opacity=0][fill={rgb, 255:red, 0; green, 0; blue, 0 }  ,fill opacity=1 ] (34.63,180) .. controls (34.63,177.24) and (36.87,175) .. (39.63,175) .. controls (42.39,175) and (44.63,177.24) .. (44.63,180) .. controls (44.63,182.76) and (42.39,185) .. (39.63,185) .. controls (36.87,185) and (34.63,182.76) .. (34.63,180) -- cycle ;
\draw  [draw opacity=0][fill={rgb, 255:red, 0; green, 0; blue, 0 }  ,fill opacity=1 ] (115,180) .. controls (115,177.24) and (117.24,175) .. (120,175) .. controls (122.76,175) and (125,177.24) .. (125,180) .. controls (125,182.76) and (122.76,185) .. (120,185) .. controls (117.24,185) and (115,182.76) .. (115,180) -- cycle ;
\draw    (20,120) -- (39.63,180) ;
\draw    (39.63,180) -- (60,120) ;
\draw    (100,120) -- (120,180) ;
\draw    (120,180) -- (140,120) ;
\draw    (199.63,180) -- (240,240) ;
\draw    (240,240) -- (280,180) ;
\draw  [draw opacity=0][fill={rgb, 255:red, 0; green, 0; blue, 0 }  ,fill opacity=1 ] (194.63,180) .. controls (194.63,177.24) and (196.87,175) .. (199.63,175) .. controls (202.39,175) and (204.63,177.24) .. (204.63,180) .. controls (204.63,182.76) and (202.39,185) .. (199.63,185) .. controls (196.87,185) and (194.63,182.76) .. (194.63,180) -- cycle ;
\draw  [draw opacity=0][fill={rgb, 255:red, 0; green, 0; blue, 0 }  ,fill opacity=1 ] (275,180) .. controls (275,177.24) and (277.24,175) .. (280,175) .. controls (282.76,175) and (285,177.24) .. (285,180) .. controls (285,182.76) and (282.76,185) .. (280,185) .. controls (277.24,185) and (275,182.76) .. (275,180) -- cycle ;
\draw    (180,120) -- (199.63,180) ;
\draw    (199.63,180) -- (220,120) ;
\draw    (260,120) -- (280,180) ;
\draw    (280,180) -- (300,120) ;

\draw (171,292.4) node [anchor=north west][inner sep=0.75pt]  [font=\small]  {$y_{\emptyset }$};
\draw (53,232.4) node [anchor=north west][inner sep=0.75pt]  [font=\small]  {$y_{0}$};
\draw (251,232.4) node [anchor=north west][inner sep=0.75pt]  [font=\small]  {$y_{1}$};
\draw (131,172.4) node [anchor=north west][inner sep=0.75pt]  [font=\small]  {$y_{01}$};
\draw (7,172.4) node [anchor=north west][inner sep=0.75pt]  [font=\small]  {$y_{00}$};
\draw (291,172.4) node [anchor=north west][inner sep=0.75pt]  [font=\small]  {$y_{11}$};
\draw (167,172.4) node [anchor=north west][inner sep=0.75pt]  [font=\small]  {$y_{10}$};
\draw (12.07,104.58) node [anchor=north west][inner sep=0.75pt]  [font=\scriptsize,rotate=-330]  {$\Sigma ( x) \land \varphi ( x,y_{\emptyset }) \land \varphi ( x,y_{0}) \land \varphi ( x,y_{00})$};
\draw (52.12,105.08) node [anchor=north west][inner sep=0.75pt]  [font=\scriptsize,rotate=-330]  {$\Sigma ( x) \land \varphi ( x,y_{\emptyset }) \land \varphi ( x,y_{0}) \land \psi ( x,y_{00})$};
\draw (92.12,105.08) node [anchor=north west][inner sep=0.75pt]  [font=\scriptsize,rotate=-330]  {$\Sigma ( x) \land \varphi ( x,y_{\emptyset }) \land \psi ( x,y_{0}) \land \varphi ( x,y_{01})$};
\draw (132.12,105.08) node [anchor=north west][inner sep=0.75pt]  [font=\scriptsize,rotate=-330]  {$\Sigma ( x) \land \varphi ( x,y_{\emptyset }) \land \psi ( x,y_{0}) \land \psi ( x,y_{01})$};
\draw (172.12,105.08) node [anchor=north west][inner sep=0.75pt]  [font=\scriptsize,rotate=-330]  {$\Sigma ( x) \land \psi ( x,y_{\emptyset }) \land \varphi ( x,y_{1}) \land \varphi ( x,y_{10})$};
\draw (212.12,105.08) node [anchor=north west][inner sep=0.75pt]  [font=\scriptsize,rotate=-330]  {$\Sigma ( x) \land \psi ( x,y_{\emptyset }) \land \varphi ( x,y_{1}) \land \psi ( x,y_{10})$};
\draw (252.12,105.08) node [anchor=north west][inner sep=0.75pt]  [font=\scriptsize,rotate=-330]  {$\Sigma ( x) \land \psi ( x,y_{\emptyset }) \land \psi ( x,y_{1}) \land \varphi ( x,y_{11})$};
\draw (292.12,105.08) node [anchor=north west][inner sep=0.75pt]  [font=\scriptsize,rotate=-330]  {$\Sigma ( x) \land \psi ( x,y_{\emptyset }) \land \psi ( x,y_{1}) \land \psi ( x,y_{11})$};
\draw (101,272.4) node [anchor=north west][inner sep=0.75pt]  [font=\small]  {$\varphi $};
\draw (206,272.4) node [anchor=north west][inner sep=0.75pt]  [font=\small]  {$\psi $};
\draw (41,202.4) node [anchor=north west][inner sep=0.75pt]  [font=\small]  {$\varphi $};
\draw (11,142.4) node [anchor=north west][inner sep=0.75pt]  [font=\small]  {$\varphi $};
\draw (266,202.4) node [anchor=north west][inner sep=0.75pt]  [font=\small]  {$\psi $};
\draw (296,142.4) node [anchor=north west][inner sep=0.75pt]  [font=\small]  {$\psi $};
\draw (216,142.4) node [anchor=north west][inner sep=0.75pt]  [font=\small]  {$\psi $};
\draw (136,142.4) node [anchor=north west][inner sep=0.75pt]  [font=\small]  {$\psi $};
\draw (56,142.4) node [anchor=north west][inner sep=0.75pt]  [font=\small]  {$\psi $};
\draw (201,202.4) node [anchor=north west][inner sep=0.75pt]  [font=\small]  {$\varphi $};
\draw (91,142.4) node [anchor=north west][inner sep=0.75pt]  [font=\small]  {$\varphi $};
\draw (171,142.4) node [anchor=north west][inner sep=0.75pt]  [font=\small]  {$\varphi $};
\draw (251,142.4) node [anchor=north west][inner sep=0.75pt]  [font=\small]  {$\varphi $};
\draw (106,202.4) node [anchor=north west][inner sep=0.75pt]  [font=\small]  {$\psi $};

\end{tikzpicture}
\caption{Picturing $R_{\phi,\psi}(\Sigma) \geq 3$ as a binary tree.}
\label{fig:binary-tree}
\end{figure}
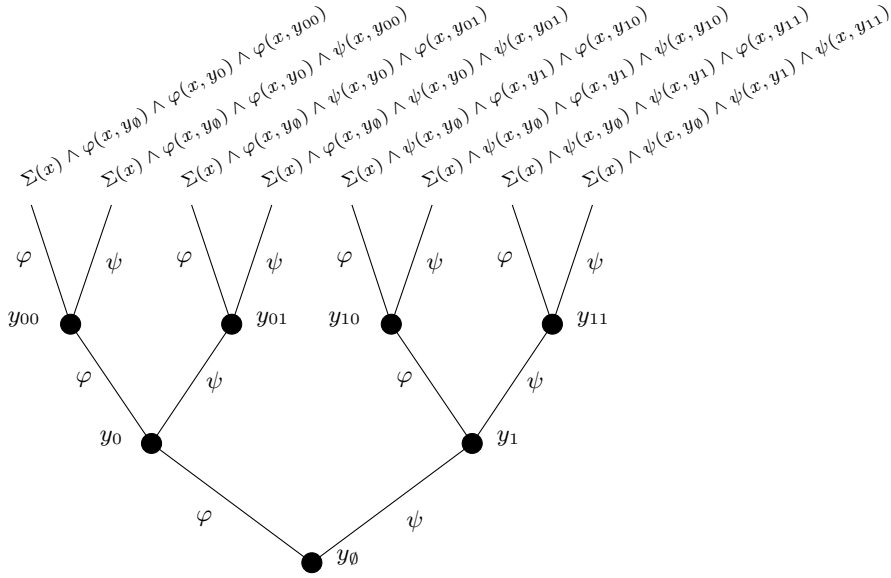
\begin{proof}
We prove (i) by induction: if $R_{\phi, \psi}(\Sigma) \geq \alpha$ then $R_{\phi, \psi}(\Sigma') \geq \alpha$. The base step and limit step are trivial. Now let $R_{\phi, \psi}(\Sigma) \geq \alpha + 1$, so there is $b$ with $R_{\phi, \psi}(\Sigma \cup \{\phi(x, b)\}) \geq \alpha$ and $R_{\phi, \psi}(\Sigma \cup \{\psi(x, b)\}) \geq \alpha$. As $\Sigma(x) \cup \{\phi(x, b)\}$ implies $\Sigma'(x) \cup \{\phi(x, b)\}$ we have by the induction hypothesis that $R_{\phi, \psi}(\Sigma' \cup \{\phi(x, b)\}) \geq \alpha$. Similarly we get $R_{\phi, \psi}(\Sigma' \cup \{\psi(x, b)\}) \geq \alpha$. We conclude that $R_{\phi, \psi}(\Sigma') \geq \alpha + 1$, as required.

For (ii) we first prove that $R_{\phi, \psi}(\Sigma) \geq n$ implies the given set of formulas, by induction on $n$. For $n = 0$ the type just says $\exists x \Sigma(x)$. Now if $R_{\phi, \psi}(\Sigma) \geq n+1$ then there is $b$ such that $R_{\phi, \psi}(\Sigma \cup \{\phi(x, b)\}) \geq n$ and $R_{\phi, \psi}(\Sigma \cup \{\psi(x, b)\}) \geq n$. By the induction hypothesis we then find trees of parameters $(b'_\eta)_{\eta \in 2^{<n}}$ and $(b''_\eta)_{\eta \in 2^{<n}}$. Then we define a new tree of parameters $(b_\eta)_{\eta \in 2^{<n+1}}$ by making $b$ the root, so $b_\emptyset = b$. We then set $b_{0^\smallfrown \eta} = b'_\eta$ and $b_{1^\smallfrown \eta} = b''_\eta$ for all $\eta \in 2^{<n}$. Now $(b_\eta)_{\eta \in 2^{<n+1}}$ is the required realisation of $(y_\eta)_{\eta \in 2^{<n+1}}$.

For the converse of (ii) we again proceed by induction on $n$. The base case is trivial. For the induction step, let $(b_\eta)_{\eta \in 2^{<n+1}}$ realise $(y_\eta)_{\eta \in 2^{<n+1}}$. Then by the induction hypothesis $(b_{0^\smallfrown \eta})_{\eta \in 2^{<n}}$ witnesses $R_{\phi, \psi}(\Sigma \cup \{\phi(x, b_\emptyset)\}) \geq n$, as it realises $(y_\eta)_{\eta \in 2^{<n}}$. Similarly $(b_{1^\smallfrown \eta})_{\eta \in 2^{<n}}$ witnesses $R_{\phi, \psi}(\Sigma \cup \{\psi(x, b_\emptyset)\}) \geq n$. So we conclude that indeed $R_{\phi, \psi}(\Sigma) \geq n+1$.
\end{proof}
\begin{theorem}
\thlabel{stable-characterisations}
The following are equivalent for a theory $T$:
\begin{enumerate}[label=(\roman*)]
\item $T$ is stable,
\item $R_{\phi,\psi}(x=x) < \omega$ for all contradictory formulas $\phi(x, y)$ and $\psi(x, y)$,
\item every type is definable,
\item $T$ is $\lambda$-stable for every $\lambda$ with $\lambda^{|T|} = \lambda$.
\end{enumerate}
\end{theorem}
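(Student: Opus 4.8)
The plan is to establish the cycle of implications (i) $\Rightarrow$ (ii) $\Rightarrow$ (iii) $\Rightarrow$ (iv) $\Rightarrow$ (i), with the rank $R_{\phi,\psi}$ and its type-definability (\thref{binary-tree-rank-lemma}) as the main engine and the p.c.\ property of the monster model as the substitute for negation. For (iv) $\Rightarrow$ (i) there is nothing to do: $\lambda = 2^{|T|}$ satisfies $\lambda^{|T|} = 2^{|T|\cdot|T|} = \lambda$, so (iv) already yields $\lambda$-stability for this $\lambda$, which is (i).

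For (i) $\Rightarrow$ (ii) I would argue by contraposition. Suppose $R_{\phi,\psi}(x=x) \geq \omega$ for some contradictory $\phi(x,y),\psi(x,y)$, i.e.\ $R_{\phi,\psi}(x=x) \geq n$ for all $n < \omega$. By \thref{binary-tree-rank-lemma}(ii) each statement ``$R_{\phi,\psi}(x=x) \geq n$'' is a single formula, and any finite fragment of the (infinitary) partial type asserting that $(y_\eta)_{\eta \in 2^{<\mu}}$ is a binary tree all of whose branch-types $\{\chi_{\sigma(k)}(x,y_{\sigma|_k}) : k < \omega\}$ are consistent only involves a finite subtree; hence by compactness, for every cardinal $\mu$ there is a tree $(b_\eta)_{\eta \in 2^{<\mu}}$ with all $2^\mu$ branch-types consistent. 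Realising each branch-type by a tuple gives $2^\mu$ distinct types over $B = \{b_\eta : \eta \in 2^{<\mu}\}$ — distinct because $\chi_0 = \phi$ and $\chi_1 = \psi$ are contradictory — while $|B| \leq 2^{<\mu}$. Taking $\mu$ to be the least cardinal with $2^\mu > \lambda$ gives $2^{<\mu} \leq \lambda < 2^\mu$, so $T$ is not $\lambda$-stable; as $\lambda$ was arbitrary, $T$ is not stable.

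For (ii) $\Rightarrow$ (iii), fix a type $p(x)$ over $B$ and a formula $\phi(x,y)$; I must produce $\d_p\phi(y)$, a set of at most $|T|$ formulas over $B$, with $\phi(x,b) \in p \Leftrightarrow \models \d_p\phi(b)$ for $b \in B$. The crucial point is that if $\phi(x,b) \notin p$, then applying the p.c.\ property of the monster to a realisation $a \models p$ (so $\models \neg\phi(a,b)$) produces an obstruction \emph{of $\phi$ in the same pair of variables}, i.e.\ a formula $\psi(x,y)$ contradictory to $\phi(x,y)$ with $\psi(x,b) \in p$ — this is what replaces the use of $\neg\phi$ in full first-order logic. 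There are at most $|T|$ obstructions $\psi$ of $\phi$, and for each, (ii) gives $R_{\phi,\psi}(x=x) < \omega$. I would pick a finite conjunction $\theta_\psi(x) \in p$ of formulas of $p$ minimising $n_\psi := R_{\phi,\psi}(\theta_\psi)$, and set $\d_p\phi(y)$ to be the set whose members are the formulas ``$R_{\phi,\psi}(\theta_\psi(x)\wedge\phi(x,y)) \geq n_\psi$'', one per obstruction $\psi$ (these are genuine formulas over $B$ by \thref{binary-tree-rank-lemma}(ii) precisely because each $\theta_\psi$ is finite). If $\phi(x,b) \in p$, then $\theta_\psi(x)\wedge\phi(x,b) \in p$, so by minimality of $n_\psi$ its rank is still $\geq n_\psi$; if $\phi(x,b) \notin p$, choose the obstruction $\psi$ with $\psi(x,b) \in p$: then $\theta_\psi(x)\wedge\psi(x,b) \in p$ also has rank $\geq n_\psi$, so if $\theta_\psi(x)\wedge\phi(x,b)$ had rank $\geq n_\psi$ too, the successor clause in the definition of the rank would force $R_{\phi,\psi}(\theta_\psi) \geq n_\psi+1$, a contradiction.

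For (iii) $\Rightarrow$ (iv): a type over $B$ is completely determined by its defining scheme, that is, by a choice for each of the at most $|T|$ formulas $\phi$ of an at most $|T|$-sized set of formulas over $B$; when $|B| \leq \lambda$ there are at most $\lambda$ formulas over $B$, hence at most $(\lambda^{|T|})^{|T|} = \lambda^{|T|} = \lambda$ such schemes, so $|\S_I(B)| \leq \lambda$ for every finite $I$, giving $\lambda$-stability whenever $\lambda^{|T|} = \lambda$. The step I expect to be the main obstacle is (ii) $\Rightarrow$ (iii): one must cope with the fact that in positive logic obstructions are neither unique nor of a canonical ``$\neg\phi$'' shape, so the whole argument hinges on extracting an obstruction of $\phi$ itself (rather than merely of the instance $\phi(x,b)$) via the p.c.\ property, and on keeping $\theta_\psi$ finite so that \thref{binary-tree-rank-lemma}(ii) really delivers a formula and not an infinitary one; a minor secondary point is the cardinal-arithmetic check that $2^{<\mu}\le\lambda$ for $\mu$ least with $2^\mu>\lambda$ in the proof of (i) $\Rightarrow$ (ii).
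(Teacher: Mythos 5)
Your proposal is correct and follows essentially the same route as the paper: the cycle (i)$\Rightarrow$(ii)$\Rightarrow$(iii)$\Rightarrow$(iv)$\Rightarrow$(i), with the tree-of-parameters construction for (i)$\Rightarrow$(ii), the rank-based $\phi$-definitions indexed by obstructions of $\phi$ (using the p.c.\ property to get an obstruction in the variables $x,y$) for (ii)$\Rightarrow$(iii), and the counting of definition schemes for (iii)$\Rightarrow$(iv). The only difference is cosmetic: you obtain the finite formula $\theta_\psi\in p$ of minimal rank by direct minimisation, where the paper extracts $\chi_\psi$ via compactness from $R_{\phi,\psi}(p)=n_\psi$; both yield the same $n_\psi$ and the same verification.
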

\begin{proof}
We prove (i) $\Rightarrow$ (ii) $\Rightarrow$ (iii) $\Rightarrow$ (iv), and (iv) $\Rightarrow$ (i) is immediate as there is at least one such $\lambda$ (e.g., $\lambda = 2^{|T|}$).

\underline{(i) $\Rightarrow$ (ii)} We prove the contrapositive. So let $\phi(x, y)$ and $\psi(x, y)$ be contradictory formulas such that $R_{\phi,\psi}(x=x) \geq \omega$. Let $\lambda$ be any cardinal, we will prove that $T$ is not $\lambda$-stable. Let $\mu$ be minimal such that $2^\mu > \lambda$. Then $|2^{<\mu}| \leq \lambda$. Write $\chi_0$ and $\chi_1$ for $\phi$ and $\psi$ respectively. Following \thref{binary-tree-rank-lemma}(ii), we can use compactness to find $(b_\eta)_{\eta \in 2^{< \mu}}$ such that for all $\sigma \in 2^\mu$ set
\[
\Sigma_\sigma(x) = \{ \chi_{\sigma(i)}(x, b_{\sigma|_i}) : i < \mu \}
\]
is consistent. For each such $\sigma$ we thus find a type $p_\sigma(x) \supseteq \Sigma_\sigma(x)$ over $B = (b_\eta)_{\eta \in 2^{< \mu}}$. By construction $p_\sigma \neq p_{\sigma'}$ whenever $\sigma \neq \sigma'$. We conclude that there are more than $2^\mu > \lambda$ types over $B$ in variables $x$ (and $x$ is a finite tuple). At the same time, $|B| \leq \lambda$, and so $T$ is not $\lambda$-stable.

\underline{(ii) $\Rightarrow$ (iii)} Let $p(x)$ be a type over some parameter set $B$ and let $\phi(x, y)$ be any formula. We will show that there is a $\phi$-definition of $p(x)$ over $B$. Let $\psi(x, y)$ be an obstruction of $\phi(x, y)$. By \thref{binary-tree-rank-lemma}(i) we have $R_{\phi,\psi}(p) \leq R_{\phi,\psi}(x = x) < \omega$, and so there is $n_\psi < \omega$ such that $R_{\phi,\psi}(p) = n_\psi$. In particular, $R_{\phi,\psi}(p) \not \geq n_\psi + 1$. So the corresponding set of formulas in \thref{binary-tree-rank-lemma}(ii) is inconsistent and by compactness we find $\chi_\psi(x) \in p(x)$ such that $R_{\phi,\psi}(\chi_\psi) \not \geq n_\psi + 1$. Again, using \thref{binary-tree-rank-lemma}(ii), we let $\theta_\psi(y)$ be a formula equivalent to $R_{\phi,\psi}(\chi_\psi(x) \wedge \phi(x, y)) \geq n_\psi$. Note that this formula does indeed have a free variable $y$ and it has the same parameters as $\chi_\psi$. Set $\d_p \phi(y) = \{ \theta_\psi(y) : \psi \text{ is an obstruction of } \phi \}$, so clearly $|\d_p \phi(y)| \leq |T|$ and it only has parameters in $B$.

We verify that $\d_p \phi(y)$ is indeed a $\phi$-definition of $p(x)$. First assume $\phi(x, b) \in p(x)$. Let $\psi$ be an obstruction of $\phi$. We have $\chi_\psi(x) \wedge \phi(x, b) \in p(x)$, so $R_{\phi, \psi}(\chi_\psi(x) \wedge \phi(x, b)) \geq R_{\phi, \psi}(p) = n_\psi$ and thus $\models \theta_\psi(b)$. As $\psi$ was an arbitrary obstruction we have $\models \d_p \phi(b)$. We prove the contrapositive of the converse. So assume that $\phi(x, b) \not \in p(x)$. Then there must be some $\psi(x, b) \in p(x)$, such that $\psi(x, y)$ is an obstruction of $\phi(x, y)$. So we have $R_{\phi, \psi}(\chi_\psi \wedge \psi(x, b)) \geq R_{\phi, \psi}(p) = n_\psi$. We must thus have $R_{\phi, \psi}(\chi_\psi \wedge \phi(x, b)) < n_\psi$ as otherwise $R_{\phi, \psi}(\chi_\psi) \geq n_\psi + 1$. Hence we have $\not \models \theta_\psi(b)$ and thus $\not \models \d_p \phi(b)$, as required.

\underline{(iii) $\Rightarrow$ (iv)} Let $\lambda$ be such that $\lambda^{|T|} = \lambda$, and note that this implies $\lambda > |T|$. We will prove that $T$ is $\lambda$-stable. Let $B$ be any set of parameters with $|B| \leq \lambda$. There are at most $|B| + |T|$ many formulas over $B$ and so there are at most $(|B| + |T|)^{|T|} \leq \lambda^{|T|} = \lambda$ many sets of formulas of cardinality $\leq |T|$ over $B$. By assumption, every type $p(x)$ over $B$ is definable and is thus fully determined by its $\phi$-definitions, where $\phi(x, y)$ ranges over all formulas. As there are at most $\lambda$ many different possibilities for $\phi$-definitions, we have that there are at most $|T| \times \lambda = \lambda$ many types over $B$, as required.
\end{proof}
\begin{lemma}
\thlabel{definable-types-are-invariant}
Let $p(x)$ be a type over $B$ and suppose that $p(x)$ is definable over $C \subseteq B$. Then $p(x)$ is $C$-invariant.
\end{lemma}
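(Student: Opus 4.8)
The plan is to simply unwind the two relevant definitions and use the elementary fact that truth of a formula with parameters from $C$ is preserved under $\equiv_C$. First I would fix tuples $b, b'$ in $B$ with $b \equiv_C b'$ and an arbitrary formula $\phi(x, y)$; the goal is to show $\phi(x, b) \in p(x)$ if and only if $\phi(x, b') \in p(x)$, which is precisely condition (ii) of \thref{invariant-type}. By the symmetry of the hypothesis $b \equiv_C b'$ it suffices to prove one implication, so assume $\phi(x, b) \in p(x)$.

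Since $p(x)$ is definable over $C$, by \thref{definable-type} there is a $\phi$-definition $\d_p\phi(y)$ of $p(x)$ over $C$, and the assumption $\phi(x, b) \in p(x)$ gives $\models \d_p\phi(b)$. The key step is that every formula in $\d_p\phi(y)$ is of the form $\theta(y, c)$ for some $c \in C$. Hence for each such formula, $\models \theta(b, c)$ means $\theta(y, c) \in \tp(b/C)$, and since $\tp(b/C) = \tp(b'/C)$ this is equivalent to $\models \theta(b', c)$. Applying this to every formula of $\d_p\phi(y)$ yields $\models \d_p\phi(b')$, and then the defining property of $\d_p\phi$ gives $\phi(x, b') \in p(x)$, as required.

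There is essentially no real obstacle here; the statement is a routine consequence of the definitions. The only minor point to keep in mind is that $b$ and $b'$ may be infinite tuples, but each formula in $\d_p\phi(y)$ involves only finitely many of their coordinates, so $b \equiv_C b'$ still transfers its truth value; and the cardinality bound $|\d_p\phi(y)| \leq |T|$ is not needed for the argument.
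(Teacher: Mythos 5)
Your proof is correct and follows essentially the same route as the paper: apply the $\phi$-definition over $C$ and observe that its truth value transfers from $b$ to $b'$ because $\tp(b/C)=\tp(b'/C)$. The paper states this transfer in one line ("the middle equivalence follows from $b \equiv_C b'$"); your version merely spells out why each formula of $\d_p\phi(y)$, having parameters only in $C$ and mentioning finitely many coordinates, is preserved, which is a fine elaboration of the same argument.
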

\begin{proof}
Let $\phi(x, y)$ be any formula without parameters and let $b,b' \in B$ with $b \equiv_C b'$. By assumption there is a $\phi$-definition $\d_p \phi(y)$ of $p(x)$ over $C$. Then
\[
\phi(x, b) \in p(x) \quad \Longleftrightarrow \quad
\models \d_p \phi(b) \quad \Longleftrightarrow \quad
\models \d_p \phi(b') \quad \Longleftrightarrow \quad
\phi(x, b') \in p(x),
\]
where the middle equivalence follows from $b \equiv_C b'$.
\end{proof}
\begin{corollary}
\thlabel{definable-types-do-not-divide}
Let $M \supseteq C$ be a positively $(\aleph_0 + |C|)^+$-saturated p.c.\ model. Suppose that $p(x)$ is a type over $M$ that is definable over $C$.  Then $p(x)$ does not divide over $C$.
\end{corollary}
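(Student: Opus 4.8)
The plan is to simply combine the two immediately preceding results. Since $p(x)$ is a type over $M$ that is definable over $C$, \thref{definable-types-are-invariant} tells us that $p(x)$ is $C$-invariant. Now $M \supseteq C$ is a positively $(\aleph_0 + |C|)^+$-saturated p.c.\ model, so the hypotheses of \thref{invariant-types-do-not-divide} are met, and we conclude directly that $p(x)$ does not divide over $C$.

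There is no real obstacle here: the work has already been done in establishing \thref{invariant-types-do-not-divide} (which in turn rests on \textsc{finite character} of dividing and the saturation of $M$ to pull indiscernible sequences into $M$) and in \thref{definable-types-are-invariant} (which is a one-line unwinding of the definition of a $\phi$-definition). So the proof is a two-step citation, exactly as the "In particular" phrasing of \thref{invariant-types-do-not-divide} anticipates for global invariant types — here we just need the relative version over the specific saturated model $M$ rather than over the whole monster. I would not expand it beyond a single sentence pointing to both lemmas.
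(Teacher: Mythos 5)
Your proposal is correct and is exactly the paper's proof: cite \thref{definable-types-are-invariant} to get $C$-invariance of $p(x)$ and then apply \thref{invariant-types-do-not-divide} using the saturation hypothesis on $M$. Nothing further is needed.
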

\begin{proof}
Combine \thref{definable-types-are-invariant,invariant-types-do-not-divide}.
\end{proof}
\begin{lemma}
\thlabel{failure-of-simplicity-dividing-chain-of-saturated-models}
Suppose that $T$ is not simple. Then there is a chain $(M_i)_{i < |T|^+}$ of positively $|T|^+$-saturated p.c.\ models and some $a$ such that, for $M = \bigcup_{i < |T|^+} M_i$, the type $\tp(a/M)$ divides over $M_i$ for all $i < |T|^+$.
\end{lemma}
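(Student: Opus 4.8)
The plan is to extract from non-simplicity a tree witnessing \TP and then interleave it with an increasing chain of positively $|T|^+$-saturated p.c.\ models, so that the ``same formula'' that divides at each tree level divides over the corresponding model. The point of using the tree form of the failure of local character, rather than the ``$\tp(a/C)$ divides over every small $C'\subseteq C$'' form, is that the latter gives dividing over small \emph{sets} and, without thickness, we cannot enlarge the base of a dividing witness up to a model (there is no type-definability of indiscernibility to invoke, as in \thref{extend-base-set-of-indiscernible-sequence}); the tree, by contrast, has arbitrarily wide sibling-families that keep witnessing $\psi$-dividing no matter how large a model we sit below.

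\textbf{The witness.} Since $T$ is not simple, by \thref{simplicity-equivalences} it has \TP, so fix $\phi(x,y)$, an obstruction $\psi(y_1,\dots,y_k)$ of $\exists x(\phi(x,y_1)\wedge\dots\wedge\phi(x,y_k))$, and a tree $(a_\eta)_{\eta\in\omega^{<\omega}}$ as in \thref{tree-property} (putting any parameters of $\phi,\psi$ into the base model below, we may take $\phi,\psi$ over $\emptyset$). Let $\mu^{*}=\beth_{|T|^{+}}$; iterating \thref{building-saturated-model} and \thref{lowenheim-skolem} shows the models $M_i$ built below can be kept of size $\le\mu^{*}$. Using compactness, blow the tree up to $(a_\eta)_{\eta\in\theta^{<|T|^{+}}}$ for a cardinal $\theta>\lambda_{|T|+\mu^{*}}$, preserving: (i$'$) $\{\phi(x,a_{\sigma|_\gamma}):\gamma<|T|^{+}\}$ is consistent for every $\sigma\in\theta^{|T|^{+}}$; (ii$'$) $\models\psi(a_{\eta^\frown\xi_1},\dots,a_{\eta^\frown\xi_k})$ whenever $\operatorname{length}(\eta)<|T|^{+}$ and $\xi_1<\dots<\xi_k<\theta$.

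\textbf{The interleaving.} Recursively construct an increasing chain $(M_i)_{i<|T|^{+}}$ of positively $|T|^{+}$-saturated p.c.\ models with $|M_i|\le\mu^{*}$, and a branch $\eta_0\unlhd\eta_1\unlhd\cdots$ with $\eta_i$ of length $i$ and $\eta_\ell=\bigcup_{j<\ell}\eta_j$ at limits, so that $a_{\eta_j}\in M_i$ for $j\le i$ and $\phi(x,a_{\eta_{j+1}})$ $\psi$-divides over $M_j$ for $j<i$. At $i=0$ and at limits $\ell$, set $\eta_\ell=\bigcup_{j<\ell}\eta_j$ and let $M_\ell$ be a positively $|T|^{+}$-saturated p.c.\ model of size $\le\mu^{*}$ containing $a_{\eta_\ell}$ and $\bigcup_{j<\ell}M_j$ (a p.c.\ model by \thref{pc-models-closed-under-directed-unions}); such $M_\ell$ exists by \thref{lowenheim-skolem} and \thref{building-saturated-model}. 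At a successor $\ell+1$: the siblings $(a_{\eta_\ell^\frown\xi})_{\xi<\theta}$ form a sequence of $\le|y|$-tuples of length $\theta\ge\lambda_{|T|+|M_\ell|}$ along every increasing finite subtuple of which $\psi$ holds (by (ii$'$)), so \thref{base-indiscernible-sequence-on-long-sequence} gives an $M_\ell$-indiscernible sequence $(b_n)_{n<\omega}$ based on them over $M_\ell$; then $\psi$ holds along $(b_n)_{n<\omega}$ (it is over $\emptyset$ and ``based on'' transfers types of finite tuples over $M_\ell$), and $b_0\equiv_{M_\ell}a_{\eta_\ell^\frown\xi}$ for some $\xi=\xi_\ell<\theta$. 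Set $\eta_{\ell+1}=\eta_\ell^\frown\xi_\ell$; an automorphism of $\MM$ fixing $M_\ell$ and sending $b_0$ to $a_{\eta_{\ell+1}}$ turns $(b_n)_{n<\omega}$ into an $M_\ell$-indiscernible sequence starting with $a_{\eta_{\ell+1}}$ along which $\psi$ holds, so by \thref{psi-dividing-indiscernible-sequence} the formula $\phi(x,a_{\eta_{\ell+1}})$ $\psi$-divides over $M_\ell$. Finally let $M_{\ell+1}$ be a positively $|T|^{+}$-saturated p.c.\ model of size $\le\mu^{*}$ containing $M_\ell$ and $a_{\eta_{\ell+1}}$.

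\textbf{Conclusion and main obstacle.} Put $M=\bigcup_{i<|T|^{+}}M_i$ and $\sigma=\bigcup_{i<|T|^{+}}\eta_i\in\theta^{|T|^{+}}$, so $\sigma|_i=\eta_i$; by (i$'$) pick $a\models\{\phi(x,a_{\sigma|_\gamma}):\gamma<|T|^{+}\}$. For each $i<|T|^{+}$ we have $a_{\eta_{i+1}}=a_{\sigma|_{i+1}}\in M_{i+1}\subseteq M$, so $\phi(x,a_{\eta_{i+1}})\in\tp(a/M)$, and it $\psi$-divides, hence divides, over $M_i$; by \thref{psi-dividing-lemma}, $\tp(a/M)$ divides over $M_i$. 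This $(M_i)_{i<|T|^{+}}$ and $a$ are as required. The genuinely delicate step is the successor stage of the interleaving: one needs a formula dividing over the \emph{whole} model $M_\ell$, and the device making this work without thickness is that the siblings of a tree node form an arbitrarily wide $\psi$-witnessing family, from which \thref{base-indiscernible-sequence-on-long-sequence} extracts an $M_\ell$-indiscernible $\psi$-witnessing sequence, whose first term can be matched (up to an automorphism over $M_\ell$) with an actual sibling so that the branch stays inside the tree and property (i$'$) survives to the end. The rest — bounding $|M_i|$ in advance to fix $\theta$, and the routine preservation of the invariants at limits — is bookkeeping.
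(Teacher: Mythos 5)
Your proof is correct, and its skeleton is the same as the paper's: extract a \TP witness, blow the tree up by compactness to height $|T|^+$, walk down a branch while interleaving an increasing chain of positively $|T|^+$-saturated p.c.\ models so that at each successor level the chosen child's $\phi$-instance $\psi$-divides over the current model, then realise the branch and conclude via \thref{psi-dividing-lemma}. Where you differ is the mechanism at the successor step. The paper only widens the tree to $\lambda^+$ with $\lambda=\beth_{|T|^+}$ and uses a plain pigeonhole: since there are at most $2^{|M_\gamma|}<\lambda$ types over $M_\gamma$, among the $\lambda^+$ siblings some infinite family is constant in type over $M_\gamma$, and such a family along which $\psi$ holds is \emph{already} a witness of $\psi$-dividing by \thref{psi-dividing} --- no indiscernibility is needed. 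You instead widen the tree to $\theta>\lambda_{|T|+\beth_{|T|^+}}$ and invoke Erd\H{o}s--Rado through \thref{base-indiscernible-sequence-on-long-sequence} to extract an $M_\ell$-indiscernible $\psi$-witnessing sequence based on the siblings, then relocate it by an automorphism over $M_\ell$ so it starts at an actual node and quote \thref{psi-dividing-indiscernible-sequence}. Both are valid; your route costs a substantially larger tree and heavier combinatorics, and buys an indiscernible witnessing sequence that the definition of $\psi$-dividing does not actually require, while the paper's counting argument is the more economical one. Your cardinal bookkeeping also checks out: since $\beth_{|T|^+}$ is a strong limit of cofinality $|T|^+$, the chain can indeed be kept of size at most (in fact below) $\beth_{|T|^+}$ at every stage, so fixing $\theta$ in advance as you do is legitimate.
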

\begin{proof}
The construction is the same as in the proof of (iii) $\Rightarrow$ (v) in \thref{simplicity-equivalences}, with a bit more care taken to end up with the desired chain of models.

Let $\phi(x, y)$ have \TP as witnessed by $\psi(y_1, \ldots, y_k)$ and $(c_\eta)_{\eta \in \omega^{<\omega}}$. Let $\lambda = \beth_{|T|^+}$ and, using compactness, enlarge our tree to $(c_\eta)_{\eta \in (\lambda^+)^{< |T|^+}}$.

We construct some $\sigma \in (\lambda^+)^{< |T|^+}$ by induction on its length, at the same time as a chain $(M_i)_{i < |T|^+}$ of positively $|T|^+$-saturated p.c.\ models, such that for each $\gamma < |T|^+$ we have:
\begin{enumerate}[label=(\arabic*)]
\item $\{c_{\sigma|_i} : i < \gamma\} \subseteq M_\gamma$,
\item $|M_\gamma| < \lambda$,
\item there is an infinite $I_\gamma \subseteq (\lambda^+)$ such that $\{c_{{\sigma|_\gamma}^\frown i} : i \in I_\gamma\}$ all have the same type over $M_\gamma$ and $\sigma(\gamma) \in I_\gamma$.
\end{enumerate}
Suppose that we have constructed $\sigma|_\gamma$ and $(M_i)_{i < \gamma}$, for $\gamma < |T|^+$. Let $M_\gamma$ be any positively $|T|^+$-saturated p.c.\ model of cardinality $< \lambda$ containing $\{c_{\sigma|_i} : i < \gamma\} \cup \bigcup_{i < \gamma} M_i$. As $|\{c_{\sigma|_i} : i < \gamma\} \cup \bigcup_{i < \gamma} M_i| < \lambda$ by the induction hypothesis, such an $M_\gamma$ exists by \thref{building-saturated-model}. There are at most $2^{|M_\gamma|} < \lambda$ many types over $M_\gamma$. So there is infinite $I_\gamma \subseteq \lambda^+$ as in property (3). We finish the construction by letting $\sigma(\gamma)$ be any element (say, the least one) in $I_\gamma$.

Let $a$ be a realisation of $\{ \phi(x, \sigma|_i) : i < |T|^+ \}$, and set $M = \bigcup_{i < |T|^+} M_i$. Fix $\gamma < |T|^+$, it remains to prove that $\tp(a/M)$ divides over $M_\gamma$. Let $I_\gamma$ be as in (3) of the induction hypothesis. Then $c_{\sigma|_{\gamma+1}} \in \{c_{{\sigma|_\gamma}^\frown i} : i \in I_\gamma\}$, and so $c_{{\sigma|_\gamma}^\frown i} \equiv_{M_\gamma} c_{\sigma|_{\gamma+1}}$ for all $i \in I_\gamma$. At the same time, by definition of \TP, we have that $\psi$ holds along $(c_{{\sigma|_\gamma}^\frown i})_{i \in I_\gamma}$. Therefore, $\phi(x, c_{\sigma|_{\gamma+1}})$ $\psi$-divides over $M_\gamma$. As $c_{\sigma|_{\gamma+1}} \in M_{\gamma+1} \subseteq M$ we have by choice of $a$ that $\phi(x, c_{\sigma|_{\gamma+1}}) \in \tp(a/M)$, and we conclude that $\tp(a/M)$ divides over $M_\gamma$.
\end{proof}
\begin{theorem}
\thlabel{stable-implies-simple}
Every stable theory is simple.
\end{theorem}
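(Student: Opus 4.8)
The plan is to argue by contradiction: assume $T$ is stable but not simple and derive a contradiction. The entry point is \thref{failure-of-simplicity-dividing-chain-of-saturated-models}, which is tailored for exactly this purpose: from failure of simplicity it produces a chain $(M_i)_{i < |T|^+}$ of positively $|T|^+$-saturated p.c.\ models together with an element $a$ such that, writing $M = \bigcup_{i < |T|^+} M_i$, the type $\tp(a/M)$ divides over $M_i$ for every $i < |T|^+$. The goal is to show that stability makes this impossible.

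First I would use stability to control the type $p := \tp(a/M)$. By \thref{stable-characterisations} every type is definable, so $p$ is definable over $M$. I would then extract a small set of witnessing parameters: for each formula $\phi(x,y)$ fix a $\phi$-definition $\d_p\phi(y)$ over $M$, of cardinality $\leq |T|$, each of whose members mentions only finitely many parameters from $M$; let $C \subseteq M$ be the union of all parameters occurring in all of these sets, as $\phi$ ranges over all formulas (up to equivalence). A routine cardinality count gives $|C| \leq |T|$, and by construction $p$ is definable over $C$. Since $|T|^+$ is regular and $|C| \leq |T| < |T|^+$, there is $i_0 < |T|^+$ with $C \subseteq M_{i_0}$.

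Next I would check that $M$ itself is positively $|T|^+$-saturated: it is a p.c.\ model by \thref{pc-models-closed-under-directed-unions}, and for any $A \subseteq M$ with $|A| \leq |T|$ the regularity of $|T|^+$ places $A$ inside some $M_j$; finite satisfiability of a set of formulas over $A$ transfers from $M$ to $M_j$ along the immersion $M_j \hookrightarrow M$ (as $M_j$ is p.c.), so a realisation exists already in $M_j$, hence in $M$. Since $|C| \leq |T|$ we have $(\aleph_0 + |C|)^+ \leq |T|^+$, so $M$ is in particular positively $(\aleph_0 + |C|)^+$-saturated. Now \thref{definable-types-do-not-divide}, applied to the type $p$ over $M$, definable over $C \subseteq M$, yields that $p$ does not divide over $C$. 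Finally, \textsc{base monotonicity} for dividing independence (\thref{dividing-basic-properties}), applied with $C \subseteq M_{i_0} \subseteq M$, upgrades this to: $p = \tp(a/M)$ does not divide over $M_{i_0}$ — contradicting the defining property of the chain.

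The one place needing genuine care, rather than bookkeeping, is matching the hypotheses of \thref{definable-types-do-not-divide} exactly: packaging the definability data into a parameter set $C$ with $|C| \leq |T|$, and then arguing that $M$ (not merely $M_{i_0}$) is sufficiently saturated, since the type in question lives over $M$ and \textsc{monotonicity} would push the wrong direction otherwise. Everything else is cardinal arithmetic at the regular cardinal $|T|^+$ and invocation of machinery already developed in this chapter and the previous one.
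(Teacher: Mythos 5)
Your proposal is correct and follows essentially the same route as the paper: extract a small definability base $C$ via \thref{stable-characterisations}, locate it inside some $M_{i_0}$, and contradict the dividing chain from \thref{failure-of-simplicity-dividing-chain-of-saturated-models} using \thref{definable-types-do-not-divide} and \textsc{base monotonicity}. The only (harmless) difference is at the end: the paper uses \textsc{finite character} to descend to some $M_j$, which is already known to be positively $|T|^+$-saturated, whereas you verify directly (correctly, via regularity of $|T|^+$ and the inclusions being immersions) that the union $M$ is itself positively $|T|^+$-saturated and apply the lemma there.
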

\begin{proof}
Suppose for a contradiction that $T$ is stable and not simple. Let $a$ and $(M_i)_{i < |T|^+}$ be as in \thref{failure-of-simplicity-dividing-chain-of-saturated-models}. Set $M = \bigcup_{i < |T|^+} M_i$. By \thref{stable-characterisations} $p(x) = \tp(a/M)$ is definable. So for each formula $\phi(x, y)$ there is a $\phi$-definition $\d_p \phi(y)$ of $p(x)$ over $M$. Let $C$ be the union of all parameters mentioned in $\d_p \phi(y)$, as $\phi(x, y)$ ranges over all possible formulas. Then $C \subseteq M$ and $|C| \leq |T|$ as $|\d_p \phi(y)| \leq |T|$ for all $\phi(x, y)$. So there is $i < |T|^+$ such that $C \subseteq M_i$. As $\tp(a/M)$ divides over $M_i$, we have by finite character of dividing that there is some $i < j < |T|^+$ such that $\tp(a/M_j)$ divides over $M_i$. In particular, by \textsc{base monotonicity} of dividing, we have that $\tp(a/M_j)$ divides over $C$. At the same time, $\tp(a/M_j)$ is by construction definable over $C$, and so \thref{definable-types-do-not-divide} implies that it does not divide over $C$. We have arrived at our contradiction and conclude that $T$ must be simple.
\end{proof}

\section{Stationarity}
\label{sec:stationarity}
\begin{definition}
\thlabel{stationary-type}
A \term{stationary type}\index{type!stationary} is a type $p(x) = \tp(a/C)$ that admits exactly one non-dividing extension to any parameter set. That is, for any $B \supseteq C$, there is a type $p'(x) \supseteq p(x)$ over $B$ such that:
\begin{enumerate}[label=(\roman*)]
\item $p'(x)$ does not divide over $C$;
\item for any type $r(x) \supseteq p(x)$ over $B$ that does not divide over $C$ we have $r(x) = p'(x)$.
\end{enumerate}
\end{definition}
We can reformulate the above definition in terms of the $\ind^d$ notation. A type $p(x) = \tp(a/C)$ is stationary if for any $B$ there is some $a'$ with $a' \equiv_C a$ and $a' \ind^d_C B$. Furthermore, for any $B$ and any $a'$ and $a''$ with $a' \equiv_C a'' \equiv_C a$, $a' \ind^d_C B$ and $a'' \ind^d_C B$ we have that $a' \equiv_{CB} a''$.
\begin{theorem}
\thlabel{stable-theory-stationary-iff-lstp}
Assume thickness. If $T$ is a stable theory then $\tp(a/C)$ is stationary if and only if we have for all $a'$ that $a \equiv_C a'$ implies $a \equivls_C a'$. In particular, $\ind^d$ satisfies \textsc{stationarity} in stable theories.
\end{theorem}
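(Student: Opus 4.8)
The plan is to prove the two implications separately. The forward direction (stationarity of $\tp(a/C)$ implies that $\equiv_C$ already determines $\equivls_C$) is the quicker half and rests on the material on invariant types from Section~\ref{sec:invariant-types}; the converse is the substantial half, and it is where stability — through definability of types, \thref{stable-characterisations} — does the real work. At the end, the ``in particular'' clause about \textsc{stationarity} is just the converse direction applied to every type over a suitable $C$.

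For the forward direction, assume $p=\tp(a/C)$ is stationary and, for each $B\supseteq C$, write $p_B$ for its unique non-dividing extension to $B$. First I would check these cohere: for $C\subseteq B\subseteq B'$ the restriction $p_{B'}\!\restriction_B$ is again a non-dividing-over-$C$ extension of $p$ (by \textsc{monotonicity} of $\ind^d$, \thref{dividing-basic-properties}), hence equals $p_B$ by uniqueness. Thus $q:=\bigcup_{B\supseteq C}p_B$ is a well-defined global type (consistency is directedness, maximality holds because each $p_B$ is maximal over $B$). The key observation is that $q$ is $C$-invariant: for $f\in\Aut(\MM/C)$ and any $B\supseteq C$ one has $f(q)\!\restriction_B=f\big(p_{f^{-1}(B)}\big)$, which is a non-dividing-over-$C$ extension of $f(p)=p$ to $B$ by \textsc{invariance} of $\ind^d$, hence equals $p_B$; so $f(q)=q$. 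Now \thref{type-extends-to-global-invariant-implies-lstp} applied with parameter set $C$ gives that $a'\equiv_C a$ implies $\d_C(a',a)\leq 2$, and in particular $a\equivls_C a'$.

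For the converse, assume $a\equiv_C a'\Rightarrow a\equivls_C a'$ (write $(\dagger)$ for this), and fix $B\supseteq C$. Existence of a non-dividing extension is \textsc{full existence} for $\ind^d$ (\thref{simple-thick-implies-full-existence}, using stable $\Rightarrow$ simple, \thref{stable-implies-simple}). For uniqueness, suppose $a_1,a_2\models p$ with $a_i\ind^d_C B$; by $(\dagger)$ and transitivity of $\equivls_C$ we have $a_1\equivls_C a_2$, and the goal is $a_1\equiv_B a_2$. I would first reduce, via \textsc{extension} (\thref{dividing-extension}) together with an automorphism over $B$ — and here $(\dagger)$ is invoked once more to restore $a_1\equivls_C a_2$ after applying that automorphism — to the case where $B=M$ is a positively $\lambda_T$-saturated p.c.\ model containing $C$, with $a_i\ind^d_C M$. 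So the problem becomes: over such an $M$, two realizations of $p$ that are $\ind^d_C$-independent from $M$ and have the same Lascar strong type over $C$ agree over $M$. Stability now enters: $\tp(a_i/M)$ is definable (\thref{stable-characterisations}), and the plan is to show, for each formula $\phi(x,y)$ and each obstruction $\psi$ of $\phi$, that a $\phi$-definition (\thref{definable-type}) of any non-dividing-over-$C$ extension of $p$ is pinned down by the data over $C$ up to the $\equivls_C$-relation — which $(\dagger)$ trivialises — using the $(\phi,\psi)$-rank and the type-definability of $R_{\phi,\psi}(\cdot)\geq n$ from \thref{binary-tree-rank-lemma}. Once the two $\phi$-definitions coincide for all $\phi$, we get $a_1\equiv_M a_2$, hence $a_1\equiv_B a_2$.

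I expect this last step — the rank computation pinning down the $\phi$-definition — to be the main obstacle: it is the positive-logic incarnation of ``strong types are stationary in stable theories'', and although $(\dagger)$ removes the finite-equivalence-relation ambiguity that is present in the general stable case (this is exactly why unstable examples such as the random graph satisfy $(\dagger)$ but fail stationarity), carrying the bookkeeping out cleanly without recourse to imaginaries still takes care. Finally, the ``in particular'' is immediate: \textsc{stationarity} of $\ind^d$ (\thref{independence-properties}) asks precisely that over a set $C$ for which $\equiv_C$ implies $\equivls_C$ for all tuples, $a\equiv_C a'$ together with $a\ind^d_C B$ and $a'\ind^d_C B$ forces $a\equiv_{CB}a'$; since $\tp(a/C)$ then satisfies $(\dagger)$, the converse direction gives that it is stationary, and the unique non-dividing extension to $CB$ is exactly the required conclusion.
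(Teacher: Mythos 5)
Your left-to-right direction is fine and is essentially the paper's argument: build a global non-dividing extension of $p=\tp(a/C)$ (the paper simply takes one; your coherent-union construction via uniqueness and \textsc{monotonicity} is a valid way to produce it), observe that uniqueness/stationarity forces it to be $C$-invariant, and conclude with \thref{type-extends-to-global-invariant-implies-lstp}. The ``in particular'' clause is also handled as in the paper. The problem is the converse, which is where all the content of the theorem lies.

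For the right-to-left direction you have a plan, not a proof. The step you yourself flag as ``the main obstacle'' --- that the $\phi$-definitions of any non-dividing-over-$C$ extension of $p$ are ``pinned down by the data over $C$ up to $\equivls_C$'' --- is precisely the positive-logic analogue of ``strong types are stationary'' (the finite equivalence relation theorem in disguise), and neither your sketch nor the paper's toolkit delivers it. \thref{stable-characterisations} gives definability of $\tp(a_i/M)$ over $M$, not over $C$ or over anything bounded over $C$; getting a definition over (a bounded closure of) the base is exactly the hard part, and in this framework it essentially requires the hyperimaginary machinery you say you want to avoid. Note also that your reduction to a positively $\lambda_T$-saturated model $M$ buys nothing: the independence $a_i \ind^d_C M$ is still over $C$, not over $M$, so you cannot invoke stationarity of types over saturated models --- which the paper in any case only obtains as a corollary (\thref{stable-stationarity-over-lambda-t-saturated-models}) of the very theorem you are proving, so appealing to it would be circular. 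Even the full first-order trick that two definitions agreeing on a model agree globally is unavailable here, since the relevant statement is an h-inductive assertion with parameters that need not transfer from $M$ to the monster.

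The paper sidesteps all of this by proving the contrapositive with a counting argument. Assuming $(\dagger)$ and that $p$ has two distinct non-dividing extensions $p_0(x,b)$ and $p_1(x,b)$ (existence of non-dividing extensions being \thref{simple-thick-implies-full-existence}), it fixes $\lambda=(|C|+2)^{|T|}$, takes a Morley sequence $(b_i)_{i<\lambda}$ over $C$ with $b_0=b$, and constructs by induction on $2^{\le\lambda}$ a tree of types $q_\eta(x)$ over $C(b_i)_{i<\lambda}$, each non-dividing over $C$ and containing $p_{\eta(i)}(x,b_i)$ for all $i$. The successor step is an application of the \textsc{independence theorem} for $\ind^d$, and this is exactly where $(\dagger)$ enters: it upgrades $a'\equiv_C a''$ to $a'\equivls_C a''$, which the independence theorem requires. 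The result is $2^\lambda$ pairwise distinct types over a set of size $\le\lambda$, contradicting $\lambda$-stability via \thref{stable-characterisations}(iv). If you want to complete your proposal, either switch to this contrapositive/tree argument, or first prove a genuine definability-over-the-base (canonical base) statement --- but the latter is a substantial extra development, not a bookkeeping exercise.
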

\begin{proof}
First we note that $T$ is simple, by \thref{stable-implies-simple}. So we can, and will, use the properties for $\ind^d$ summarised in \thref{kim-pillay}. That being said, the left to right direction actually goes through for any theory $T$.

We first prove the left to right direction, so assume that $\tp(a/C)$ is stationary. Let $q(x) \supseteq p(x)$ be a global non-dividing extension and let $\alpha$ be a realisation of $q(x)$. Let $b, b'$ be any two tuples with $b \equiv_C b'$ and let $\alpha^*$ be such that $\alpha^* b \equiv_C \alpha b'$. As $q$ does not divide over $C$ we have that $\alpha \ind^d_C b b'$. Hence $\alpha \ind^d_C b$ and $\alpha^* \ind^d_C b$. So by stationarity we have that $\alpha \equiv_{Cb} \alpha^*$, and thus
\[
\alpha b \equiv_C \alpha^* b \equiv_C \alpha b'.
\]
As $b$ and $b'$ were arbitrary with $b \equiv_C b'$, we see that $q(x)$ is global $C$-invariant. So $p(x)$ extends to a global $C$-invariant type, and we conclude by \thref{type-extends-to-global-invariant-implies-lstp}.

We prove the contrapositive of the right to left direction. So suppose that there is a non-stationary type $p(x) = \tp(a/C)$ such that $a' \models p$ implies $a' \equivls_C a$. As $p(x)$ is not stationary and every type has non-dividing extensions in a thick simple theory (\thref{simple-thick-implies-full-existence}), there must be two distinct non-dividing extensions of $p(x)$. That is, there is $b$ and distinct extensions $p_0(x) = \tp(a_0/Cb)$ and $p_1(x) = \tp(a_1/Cb)$ of $p(x)$, so that both $p_0(x)$ and $p_1(x)$ do not divide over $C$. We may assume that $b$ is finite.

By \thref{stable-characterisations}(iv) it is enough to prove that $T$ is not $\lambda$-stable for $\lambda = (|C| + 2)^{|T|}$, as this satisfies $\lambda^{|T|} = \lambda$. Let $(b_i)_{i < \lambda}$ be a Morley sequence over $C$ with $b_0 = b$, which exists by simplicity (\thref{morley-sequences-exist}). We will construct types $(q_\eta(x))_{\eta \in 2^{\leq \lambda}}$ by induction on the length of $\eta$, such that for $\eta \in 2^\gamma$ with $\gamma \leq \lambda$:
\begin{enumerate}[label=(\arabic*)]
\item for all $\nu \unlhd \eta$ we have $q_\nu(x) \subseteq q_\eta(x)$,
\item $q_\eta(x)$ is a type over $C (b_i)_{i < \gamma}$,
\item $q_\eta(x) \supseteq p_{\eta(i)}(x, b_i)$ for all $i < \gamma$,
\item $q_\eta(x)$ does not divide over $C$.
\end{enumerate}
For the base case, $\gamma = 0$, we simply set $q_\emptyset(x) = p(x)$. Now assume that we have constructed $(q_\eta(x))_{\eta \in 2^{< \gamma}}$. If $\gamma$ is a limit then for $\eta \in 2^\gamma$ we set $q_\eta(x) = \bigcup_{i < \gamma} q_{\eta|_i}(x)$. Then (1)--(3) are immediate, and (4) follows from \textsc{finite character} of dividing independence.

This leaves the successor step. So suppose that $\gamma = \delta + 1$ and let $\eta \in 2^\gamma$. As $b_\delta \equiv_C b_0 = b$, we find $a'$ with $a' b_\delta \equiv_C a_{\eta(\delta)} b$. Let $a''$ be a realisation of $q_{\eta|_\delta}$. Then $a'' \ind^d_C (b_i)_{i < \delta}$, $a' \ind^d_C b_\delta$ and $b_\delta \ind^d_C (b_i)_{i < \delta}$. The first independence is (4) from the induction hypothesis, the second follows from our choice of $a'$ and the fact that $p_{\eta(\delta)}(x)$ does not divide over $C$ and the third is immediate from $(b_i)_{i < \lambda}$ being a Morley sequence over $C$. Furthermore, we have $a' \equiv_C a \equiv_C a''$, where the second equivalence follows because $p(x) = q_\emptyset(x) \subseteq q_{\eta|_\delta}(x)$. So by assumption $a' \equivls_C a''$. We can thus apply the \textsc{independence theorem} for dividing independence to find $a^*$ with $a^* \ind^d_C (b_i)_{i < \gamma}$, $a^* \equivls_{C b_\delta} a'$ and $a^* \equivls_{C (b_i)_{i < \delta}} a''$. We set $q_\eta(x) = \tp(a^*/C(b_i)_{i < \gamma})$, which immediately takes care of property (2). Properties (4), (3) and (1) follow from the respective properties that the \textsc{independence theorem} gives for $a^*$ (as well as the induction hypothesis).

This finishes the construction. Now for any distinct $\eta, \eta' \in 2^\lambda$, property (3) of the inductive construction guarantees that $q_\eta(x) \neq q_{\eta'}(x)$. So $\{ q_\eta(x) : \eta \in 2^\lambda \}$ is a set of $2^\lambda$ distinct types over $C(b_i)_{i < \lambda}$, while $|C(b_i)_{i < \lambda}| \leq \lambda$. Therefore, $T$ is not $\lambda$-stable.
\end{proof}
\begin{corollary}
\thlabel{stable-stationarity-over-lambda-t-saturated-models}
Assume thickness. If $T$ is stable then any type over any positively $\lambda_T$-saturated p.c.\ model is stationary. If we assume semi-Hausdorffness then any type over any p.c.\ model is stationary.
\end{corollary}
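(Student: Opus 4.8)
The statement is a direct corollary of \thref{stable-theory-stationary-iff-lstp} once we feed in the right fact about Lascar strong types over (saturated) p.c.\ models. The plan is as follows. Fix a p.c.\ model $M$ and a type $p(x) = \tp(a/M)$. Since $T$ is stable, \thref{stable-theory-stationary-iff-lstp} tells us that $p(x)$ is stationary if and only if for all $a'$ we have that $a \equiv_M a'$ implies $a \equivls_M a'$. So the whole task reduces to verifying this implication, under the respective hypotheses.

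For the first part, assume $M$ is positively $\lambda_T$-saturated. Then \thref{same-type-over-saturated-model-lascar-distance-2} applies and gives $\d_M(a, a') \leq 2$ whenever $a \equiv_M a'$. By condition (i) in \thref{lascar-distance-and-strong-type}, having $\d_M(a, a') \leq n$ for some $n < \omega$ is exactly what $a \equivls_M a'$ means, so in particular $\d_M(a,a') \leq 2$ yields $a \equivls_M a'$. Combining with \thref{stable-theory-stationary-iff-lstp} we conclude that $p(x)$ is stationary. For the second part, assume semi-Hausdorffness. Now \thref{semi-hausdorff-same-type-over-pc-model-lascar-distance-2} gives the same conclusion $\d_M(a, a') \leq 2$ from $a \equiv_M a'$ for \emph{any} p.c.\ model $M$, with no saturation hypothesis, and the rest of the argument is identical.

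There is really no technical obstacle here: all the substantive work has already been done, in the stability-vs-stationarity equivalence \thref{stable-theory-stationary-iff-lstp} (whose proof uses the independence theorem and a type-counting tree construction) and in the Lascar-distance bounds \thref{same-type-over-saturated-model-lascar-distance-2} and \thref{semi-hausdorff-same-type-over-pc-model-lascar-distance-2} (thickness plus saturation, or semi-Hausdorffness via global invariant extensions). The only point that needs a word of justification when writing it up is the trivial passage from $\d_M(a,a') \leq 2$ to $a \equivls_M a'$ via the definition in \thref{lascar-distance-and-strong-type}. So the write-up should be short: invoke \thref{stable-theory-stationary-iff-lstp}, then close the gap with the appropriate Lascar-distance lemma in each of the two cases.
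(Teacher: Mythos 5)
Your proof is correct and follows essentially the same route as the paper: reduce via \thref{stable-theory-stationary-iff-lstp} to showing that equality of types over $M$ implies equality of Lascar strong types, and obtain this from the Lascar-distance bound (the paper cites \thref{same-lstp-iff-same-types-over-sequence-of-models}, which is itself an immediate consequence of \thref{same-type-over-saturated-model-lascar-distance-2} and \thref{semi-hausdorff-same-type-over-pc-model-lascar-distance-2} that you invoke directly). No gap.
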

\begin{proof}
By \thref{same-lstp-iff-same-types-over-sequence-of-models} having the same type over a positively $\lambda_T$-saturated p.c.\ model $M$ implies having the same Lascar strong type over $M$, and so the result follows from \thref{stable-theory-stationary-iff-lstp}. If we assume semi-Hausdorffness then we can drop the saturatedness assumption in \thref{same-lstp-iff-same-types-over-sequence-of-models} and hence in this result.
\end{proof}
\begin{example}
\thlabel{non-stationary-type-over-pc-model}
The assumption in \thref{stable-stationarity-over-lambda-t-saturated-models} that the p.c.\ model is positively $\lambda_T$-saturated is necessary. Consider the theory from \thref{thick-not-semi-hausdorff}. We will use the same notation. As that theory has a maximal model $N$ (which is thus the monster model, see \thref{maximal-pc-model-is-monster}), the theory is clearly $|N|$-stable. Furthermore, it is thick. However, the type $p(x) = \tp(a_\omega/M)$ is not stationary. As $N$ is a maximal model, all indiscernible sequences are constant sequences and so no type divides (over any base set). In particular, $p_1(x) = \tp(a_\omega/N)$ and $p_2(x) = \tp(b_\omega/N)$ are two distinct non-dividing extensions of $p(x)$, showing that $p(x)$ is not stationary (cf.\ \thref{non-invariant-type}).
\end{example}
\begin{theorem}
\thlabel{simple-is-stable-iff-stationarity}
Assume thickness. If $T$ is simple then the following are equivalent:
\begin{enumerate}[label=(\roman*)]
\item $T$ is stable;
\item $\ind^d$ satisfies \textsc{stationarity};
\item $\ind^d$ satisfies \textsc{stationarity} over positively $\lambda_T$-saturated p.c.\ models: for any positively $\lambda_T$-saturated p.c.\ model $M$ and any $a, a', b$ with $a \ind^d_M b$, $a' \ind^d_M b$ and $a \equiv_M a'$ we have $a \equiv_{Mb} a'$.
\end{enumerate}
\end{theorem}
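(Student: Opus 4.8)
## Proof proposal

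The plan is to prove the cycle $(i) \Rightarrow (ii) \Rightarrow (iii) \Rightarrow (i)$, mostly by citing results already established in the chapter. The implication $(i) \Rightarrow (ii)$ is immediate from \thref{stable-theory-stationary-iff-lstp}, which directly states that $\ind^d$ satisfies \textsc{stationarity} in stable theories (under the standing thickness assumption). The implication $(ii) \Rightarrow (iii)$ is also essentially trivial: \textsc{stationarity} is stated in \thref{independence-properties} as a property quantifying over all parameter sets $C$ such that $a \equiv_C a'$ implies $a \equivls_C a'$, and by \thref{same-lstp-iff-same-types-over-sequence-of-models} every positively $\lambda_T$-saturated p.c.\ model $M$ has exactly this property (having the same type over $M$ implies having the same Lascar strong type over $M$). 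So $(iii)$ is just the restriction of $(ii)$ to such models, with the hypothesis $a \equiv_M a'$ automatically upgrading to $a \equivls_M a'$.

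The real content is in $(iii) \Rightarrow (i)$. Here I would run essentially the same argument as the right-to-left direction of \thref{stable-theory-stationary-iff-lstp}, but being careful to use only \textsc{stationarity} over positively $\lambda_T$-saturated p.c.\ models. Suppose $T$ is not stable; since $T$ is simple, I want to derive a contradiction with $(iii)$. First I would pick a positively $\lambda_T$-saturated p.c.\ model $M$ (via \thref{building-saturated-model}) and observe that, since $T$ is not stable, $T$ is not $\lambda$-stable for $\lambda = (|M| + 2)^{|T|}$ (using \thref{stable-characterisations}(iv), since this $\lambda$ satisfies $\lambda^{|T|} = \lambda$ — I should choose $M$ small enough, or rather just fix $\lambda$ with $\lambda^{|T|}=\lambda$ first and take $M$ of size $\le\lambda$). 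Then I build a binary tree of types $(q_\eta(x))_{\eta \in 2^{\le \lambda}}$ over $M(b_i)_{i<\lambda}$, where $(b_i)_{i<\lambda}$ is a Morley sequence over $M$, exactly as in the proof of \thref{stable-theory-stationary-iff-lstp}: at successor stages I use \textsc{full existence}/non-dividing extensions (\thref{simple-thick-implies-full-existence}) together with the \textsc{independence theorem} (\thref{independence-theorem}), and the point where \textsc{stationarity} gets used is now replaced by the observation that over $M$ (hence over the saturated model $M$, which is all that matters since the base of all the independences is $M$) same type implies same Lascar strong type — so actually the argument is cleaner: I don't even need to invoke $(iii)$ to run the tree construction, I only need it to get a failure of stability out, OR rather I need to argue that failure of stationarity follows. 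Let me restructure: assume $(iii)$ holds and $T$ is not stable; I will contradict $(iii)$. Non-stability gives, via \thref{stable-characterisations}, a type over some $M$ (taking $M$ positively $\lambda_T$-saturated and large enough) with two distinct non-dividing extensions to some finite $b$ — this is the negation of stationarity of a type over $M$ — and then the tree construction as in \thref{stable-theory-stationary-iff-lstp} produces $2^\lambda$ types over a set of size $\lambda$, contradicting a cardinality bound; but wait, that's circular.

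Cleaner approach for $(iii) \Rightarrow (i)$: assume $(iii)$; I claim every type over a positively $\lambda_T$-saturated p.c.\ model is stationary, which is immediate from $(iii)$ combined with \thref{simple-thick-implies-full-existence} (existence of a non-dividing extension) — $(iii)$ gives the uniqueness. Now suppose for contradiction $T$ is not stable. Then fix $\lambda$ with $\lambda^{|T|} = \lambda$ and $\lambda \geq \lambda_T$ such that $T$ is not $\lambda$-stable (one exists, e.g.\ by iterating, since if $T$ were $\lambda$-stable for all such $\lambda$ it would be stable). Take a positively $\lambda_T$-saturated p.c.\ model $M$ with $|M| \leq \lambda$; there are more than $\lambda$ types over some $B \supseteq M$ with $|B| \leq \lambda$ — actually I should choose things so $B$ itself can be taken to be $M(b_i)_{i<\lambda}$. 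The clean way is: run the binary-tree construction of \thref{stable-theory-stationary-iff-lstp} using $(iii)$'s stationarity-over-$\lambda_T$-saturated-models as the substitute for full \textsc{stationarity} (it is used there only with base a positively $\lambda_T$-saturated model after the reductions — I should double-check the proof of \thref{stable-theory-stationary-iff-lstp} does its appeal to stationarity over such a model; if not, I reduce to that case using \thref{same-lstp-iff-same-types-over-sequence-of-models}), producing $2^\lambda$ distinct types over $C(b_i)_{i<\lambda}$ where $C$ is a positively $\lambda_T$-saturated model of size $\le \lambda$, contradicting $\lambda$-stability after noting $|C(b_i)_{i<\lambda}| \le \lambda$. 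The main obstacle I anticipate is verifying that the tree construction in \thref{stable-theory-stationary-iff-lstp} genuinely only requires stationarity-type information over a positively $\lambda_T$-saturated base (so that $(iii)$ suffices and we avoid circularity) — I expect this is fine because in that proof the base of every independence statement is the fixed set $C$, and one can simply take $C$ itself to be a positively $\lambda_T$-saturated p.c.\ model from the outset; then \textsc{independence theorem} and non-dividing extension over $C$ plus $(iii)$ over $C$ is all that is needed, and the hypothesis "$a' \models p$ implies $a' \equivls_C a$" is automatic by \thref{same-lstp-iff-same-types-over-sequence-of-models}.
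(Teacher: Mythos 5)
Your (i) $\Rightarrow$ (ii) and (ii) $\Rightarrow$ (iii) are fine: the former is exactly \thref{stable-theory-stationary-iff-lstp}, and the latter follows, as you say, from \thref{same-lstp-iff-same-types-over-sequence-of-models} together with the definition of \textsc{stationarity}. The gap is in (iii) $\Rightarrow$ (i), and it is a directional one. The binary-tree construction in the proof of \thref{stable-theory-stationary-iff-lstp} takes as input a \emph{non-stationary} type --- its seed is the pair of distinct non-dividing extensions $p_0, p_1$ --- and outputs many types; it proves ``a non-stationary type over a suitable base yields non-$\lambda$-stability'', i.e.\ it is the tool for the direction stable $\Rightarrow$ stationarity, and it does not invoke \textsc{stationarity} anywhere, so there is nothing in it for (iii) to substitute for. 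To run it under hypothesis (iii) you would need a non-stationary type over a positively $\lambda_T$-saturated p.c.\ model, which (iii) says does not exist, and ``$T$ is not stable'' does not hand you one: that inference is precisely the contrapositive of (iii) $\Rightarrow$ (i), the circularity you flagged yourself and never actually resolved in the ``cleaner approach''. Moreover, even if the construction could be run, its output --- more than $\lambda$ types over a set of size $\lambda$ --- would merely restate the assumed failure of $\lambda$-stability rather than contradict anything.

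What is needed instead is an argument that uses (iii) positively to \emph{bound} the number of types, and this is what the paper does. It shows $T$ is $\kappa$-stable for $\kappa = 2^{2^{\lambda_T}}$: by \thref{set-contained-in-lambda-directed-union-of-lambda-saturated} and \thref{reduce-checking-stability-to-certain-models} it suffices to bound $|\S_I(M)|$ for $M$ a $\lambda_T$-directed union of positively $\lambda_T$-saturated p.c.\ models $M_j$, each of cardinality at most $2^{\lambda_T}$. Given $p \in \S_I(M)$, \textsc{local character} (simplicity) yields $C \subseteq M$ with $|C| \leq |T| < \lambda_T$ over which $p$ does not divide; by $\lambda_T$-directedness $C$ is contained in some $M_{j_p}$, by \textsc{base monotonicity} $p$ does not divide over $M_{j_p}$, and then (iii) says $p$ is completely determined by the pair $(j_p, p|_{M_{j_p}})$, so that $|\S_I(M)| \leq \kappa$ by counting such pairs. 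This localisation-plus-counting step (local character into a saturated piece of a directed union, then stationarity over that piece) is the missing idea in your proposal; the tree construction cannot replace it.
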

The proof of \thref{simple-is-stable-iff-stationarity} rests mainly on the following notion and lemma, that allows us to reduce checking $\lambda$-stability from types over arbitrary parameter sets to types over certain p.c.\ models.
\begin{definition}
\thlabel{lambda-directed-system}
Fix an infinite cardinal $\lambda$. A \emph{$\lambda$-directed poset}\index{directed poset!lambda-directed poset@$\lambda$-directed poset} is a poset $I$ such that any $I' \subseteq I$ with $|I'| < \lambda$ has an upper bound in $I$. A \emph{$\lambda$-directed system}\index{directed system!lambda-directed system@$\lambda$-directed system} is a functor from a $\lambda$-directed poset $I$ into the category of $\L$-structures.
\end{definition}
Note that this generalises \thref{directed-system} to higher cardinals in the sense that a directed poset/system is just an $\omega$-directed poset/system. This then immediately gives rise to the notion of \emph{union of a $\lambda$-directed system} or \emph{$\lambda$-directed union}\index{directed union!lambda-directed union@$\lambda$-directed union} as in that definition, because every $\lambda$-directed poset is in particular directed. However, we will only be interested in the case where all structures in the system are p.c.\ models and live in the monster model, resulting in the following simplified definition.
\begin{definition}
\thlabel{lambda-directed-union}
Let $I$ be a $\lambda$-directed poset and let $(M_i)_{i \in I}$ be a family of p.c.\ submodels of the monster model, such that for all $i \leq j$ in $I$ we have $M_i \subseteq M_j$. We then say that $M = \bigcup_{i \in I} M_i$ is the \emph{$\lambda$-directed union} of the $\lambda$-directed system $(M_i)_{i \in I}$.
\end{definition}
Note that in the above definition $M$ is in particular a p.c.\ model, because it is the directed union of a directed system.
\begin{lemma}
\thlabel{set-contained-in-lambda-directed-union-of-lambda-saturated}
Let $\lambda$ be an infinite cardinal with $\lambda \geq |T|$, and let $\kappa$ be any cardinal such that $\kappa^{<\lambda} = \kappa$ and $\kappa \geq 2^\lambda$. Then any parameter set $B$ with $|B| \leq \kappa$ is contained in a p.c.\ model $M$ that is a $\lambda$-directed union of positively $\lambda$-saturated p.c.\ models, each of cardinality $\leq 2^\lambda$, such that $|M| \leq \kappa$.
\end{lemma}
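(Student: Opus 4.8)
The plan is to manufacture a monotone ``$\lambda$-saturated hull'' operator and then assemble $M$ from the hulls of the small subsets of $B$. Working inside the monster $\MM$ (chosen large enough that everything below is small) I would first fix a well-ordering $\lhd$ of $\MM$ and define, for every small $A\subseteq\MM$, a set $\mathrm{Sk}_\lambda(A)\subseteq\MM$ by a transfinite iteration of length $\lambda^+$: at each step, for every subset $S$ of the current set with $|S|<\lambda$ and every complete one-variable type $p$ over $S$ that is finitely satisfiable in $\MM$, adjoin the $\lhd$-least realisation of $p$ in $\MM$; and, for every element $a$ of the current set and every quantifier-free $\phi(x,y)$, adjoin the $\lhd$-least witness of $\exists y\,\phi(a,y)$ when this holds in $\MM$, and otherwise the $\lhd$-least witness of the $\lhd$-least obstruction of it; take unions at limits, and set $A_0=A$. (Taking $\phi$ of the form $y=c$ or $y=f(\bar a)$ shows the result is automatically a substructure, so no separate closure under function symbols is needed.)

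The point is that $\mathrm{Sk}_\lambda(A)$ then has four properties. \emph{(a)} It is a p.c.\ model: every element, one step after it appears, acquires either a witness or an obstruction-witness, so the criterion of \thref{haykazyans-test} is met. \emph{(b)} It is positively $\lambda$-saturated: since $\lambda^+$ is regular, any $S$ with $|S|<\lambda$ already lies inside some stage $A_\xi$ with $\xi<\lambda^+$, and at the next stage every type over $S$ gets realised — and a partial type over $S$ finitely satisfiable in $\mathrm{Sk}_\lambda(A)$ is finitely satisfiable in $\MM$ because the inclusion of the p.c.\ model $\mathrm{Sk}_\lambda(A)$ into $\MM$ is an immersion, hence extends to a complete type over $S$ realised in $\MM$. \emph{(c)} It is monotone, $A\subseteq A'\Rightarrow\mathrm{Sk}_\lambda(A)\subseteq\mathrm{Sk}_\lambda(A')$, because every choice made in the construction depends only on the datum $(S,p)$ or $(a,\phi)$ and on $\lhd$ and $\MM$, never on $A$, so $A_\xi\subseteq A'_\xi$ follows by an easy induction on $\xi<\lambda^+$. \emph{(d)} It has cardinality $\le|A|^{<\lambda}+2^\lambda$: each stage has at most $|{\cdot}|^{<\lambda}$ relevant subsets $S$, at most $2^{<\lambda}\cdot 2^{|T|}\le 2^\lambda$ types over each (here $|T|\le\lambda$), and at most $|{\cdot}|\cdot|T|$ Haykazyan-pairs, so using $(\mu^{<\lambda})^{<\lambda}=\mu^{<\lambda}$, $(2^\lambda)^{<\lambda}=2^\lambda$ and $\lambda^+\le 2^\lambda$ the cardinality stays $\le\max(|A|^{<\lambda},2^\lambda)$ throughout. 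This operator is really just \thref{building-saturated-model} with a fixed well-order making the construction functorial and with finer book-keeping; I expect the only genuinely delicate points of the whole proof are the careful verification of \emph{(c)} and of the cardinal arithmetic in \emph{(d)} — in particular the observation that iterating $\lambda^+$ (not $\lambda$) times, together with regularity of $\lambda^+$, makes \emph{(b)} work uniformly, including for singular $\lambda$.

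With $\mathrm{Sk}_\lambda$ available the lemma is immediate. Enumerate $B$ as $(b_\alpha)_{\alpha<\kappa}$ (repeating elements if $|B|<\kappa$), let $I=[\kappa]^{\le\lambda}$ be ordered by inclusion, and for $s\in I$ put $M_s=\mathrm{Sk}_\lambda(\{b_\alpha:\alpha\in s\})$. Then $I$ is $\lambda$-directed: a family of fewer than $\lambda$ members of $I$ has union of cardinality $\le\lambda$, hence still in $I$, giving an upper bound. By \emph{(a)} and \emph{(b)} each $M_s$ is a positively $\lambda$-saturated p.c.\ model, and by \emph{(d)} $|M_s|\le\lambda^{<\lambda}+2^\lambda\le 2^\lambda$ (using $\lambda^{<\lambda}\le\lambda^\lambda=2^\lambda$). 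Monotonicity \emph{(c)} gives $M_s\subseteq M_t$ whenever $s\subseteq t$, so $(M_s)_{s\in I}$ is a $\lambda$-directed system of p.c.\ models; its union $M=\bigcup_{s\in I}M_s$ is then a p.c.\ model by \thref{pc-models-closed-under-directed-unions}, it contains $B$ since $b_\alpha\in M_{\{\alpha\}}$, and by \emph{(c)} again $M\subseteq\mathrm{Sk}_\lambda(B)$, so $|M|\le|\mathrm{Sk}_\lambda(B)|\le|B|^{<\lambda}+2^\lambda\le\kappa^{<\lambda}+2^\lambda=\kappa$, where both hypotheses $\kappa^{<\lambda}=\kappa$ and $\kappa\ge 2^\lambda$ are used. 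Thus $M$ is the required $\lambda$-directed union of positively $\lambda$-saturated p.c.\ models, each of cardinality $\le 2^\lambda$, with $B\subseteq M$ and $|M|\le\kappa$.
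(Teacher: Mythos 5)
Your proposal is correct, and it follows the same basic skeleton as the paper's proof --- cover $B$ by its small subsets, attach to each a positively $\lambda$-saturated p.c.\ model of cardinality $\leq 2^\lambda$, and take the union --- but your implementation is genuinely different and in places more careful. The paper simply invokes \thref{building-saturated-model} to pick \emph{some} $M_{B'}$ for each $B' \in [B]^{<\lambda}$ and bounds $|M| \leq |[B]^{<\lambda}| \cdot 2^\lambda \leq \kappa$. Your canonical hull operator $\mathrm{Sk}_\lambda$ (whose verification via \thref{haykazyans-test} and the $\lambda^+$-length iteration is fine) buys two things that the paper's choices do not automatically provide: first, inclusion-monotonicity, so that $B' \subseteq B''$ really gives $M_{B'} \subseteq M_{B''}$, which is literally required by \thref{lambda-directed-union} and is used downstream (in the proof of \thref{simple-is-stable-iff-stationarity} one needs $c \in M_{j_c} \subseteq M_{j_p}$ for an upper bound $j_p$), whereas arbitrarily chosen $M_{B'}$ need not be nested; second, your index poset $[\kappa]^{\leq\lambda}$ is $\lambda$-directed even for singular $\lambda$ (the relevant case, since $\lambda_T$ has cofinality $(2^{|T|})^+ < \lambda_T$), while $[B]^{<\lambda}$ is only $\lambda$-directed when $\lambda$ is regular; the price is $|[\kappa]^{\leq\lambda}| = \kappa^\lambda$, which may exceed $\kappa$, and your trick of bounding $|M|$ by $|\mathrm{Sk}_\lambda(B)|$ rather than by $|I|\cdot 2^\lambda$ handles exactly that. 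One small caveat: the blanket identity $(\mu^{<\lambda})^{<\lambda} = \mu^{<\lambda}$ in your step (d) is not a ZFC theorem for singular $\lambda$ (it fails when $\mu^{<\lambda}$ has cofinality $<\lambda$); however, the only two instances you actually use are $\mu \leq \lambda$, where the bound collapses to $(2^\lambda)^{<\lambda} = 2^\lambda$, and $\mu = |B| \leq \kappa$, where $\kappa^{<\lambda} = \kappa \geq 2^\lambda$ is exactly the hypothesis of the lemma, so the cardinality estimates you need do go through; it would be cleaner to state (d) only in these two instances.
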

We also note that $M$ in the above lemma is also positively $\lambda$-saturated. This follows from the general fact that the $\lambda$-directed union of a system of positively $\lambda$-saturated models is itself always positively $\lambda$-saturated. However, we will have no further use for these facts.
\begin{proof}
For $B' \subseteq B$ with $|B'| < \lambda$ we let $M_{B'}$ be some positively $\lambda$-saturated p.c.\ model containing $B'$ with $|M_{B'}| \leq 2^\lambda$. This exists by \thref{building-saturated-model}. Let $[B]^{< \lambda}$ be the set of subsets of $B$ of cardinality $< \lambda$. Then $[B]^{< \lambda}$ is a $\lambda$-directed poset, ordered by inclusion. Thus $M = \bigcup_{B' \in [B]^{< \lambda}} M_{B'}$ is a $\lambda$-directed union. It remains to check that $M$ satisfies the required cardinality bound. Indeed, $|[B]^{<\lambda}| \leq \kappa^{< \lambda} = \kappa$ and so $|M| \leq \kappa \times 2^\lambda = \kappa$.
\end{proof}
\begin{corollary}
\thlabel{reduce-checking-stability-to-certain-models}
Let $\lambda$ be an infinite cardinal with $\lambda \geq |T|$, and let $\kappa$ be any cardinal such that $\kappa^{<\lambda} = \kappa$ and $\kappa \geq 2^\lambda$. Suppose that for every p.c.\ model $M$, which is a $\lambda$-directed union of positively $\lambda$-saturated p.c.\ models of cardinality $\leq 2^\lambda$ and with $|M| \leq \kappa$, we have that $|\S_I(M)| \leq \kappa$ for all finite index sets $I$. Then $T$ is $\kappa$-stable.
\end{corollary}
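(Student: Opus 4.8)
The plan is to deduce this directly from \thref{set-contained-in-lambda-directed-union-of-lambda-saturated} together with the elementary observation that restricting types can only decrease the size of a type space. Recall that $T$ being $\kappa$-stable means $|\S_I(B)| \leq \kappa$ for every parameter set $B$ with $|B| \leq \kappa$ and every finite index set $I$. So fix such a $B$ and $I$.

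First I would apply \thref{set-contained-in-lambda-directed-union-of-lambda-saturated} with the given $\lambda$ and $\kappa$ (whose hypotheses $\lambda \geq |T|$, $\kappa^{<\lambda} = \kappa$ and $\kappa \geq 2^\lambda$ are exactly what is assumed here) to find a p.c.\ model $M \supseteq B$ that is a $\lambda$-directed union of positively $\lambda$-saturated p.c.\ models, each of cardinality $\leq 2^\lambda$, with $|M| \leq \kappa$. By the hypothesis of the corollary applied to this particular $M$, we get $|\S_I(M)| \leq \kappa$.

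It then remains to bound $|\S_I(B)|$ by $|\S_I(M)|$. Since $B \subseteq M$, the restriction map $\S_I(M) \to \S_I(B)$ sending $p(x)$ to $p|_B$ is well-defined: the restriction $p|_B$ is the set of formulas in $p$ with parameters in $B$, hence determined by $p$, and it is realised by any realisation of $p$, so it genuinely lies in $\S_I(B)$. Moreover it is surjective, because any type over $B$ is of the form $\tp(a/B)$ for some tuple $a$ indexed by $I$ in the monster, and then $\tp(a/M)$ is a type over $M$ with $\tp(a/M)|_B = \tp(a/B)$. Hence $|\S_I(B)| \leq |\S_I(M)| \leq \kappa$, and since $B$ (with $|B| \leq \kappa$) and the finite $I$ were arbitrary, $T$ is $\kappa$-stable.

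There is no real obstacle here: once \thref{set-contained-in-lambda-directed-union-of-lambda-saturated} is in hand, the only thing to verify is surjectivity of the restriction map, which is immediate from the definition of $\S_I(-)$ as a set of realised types. The substantive content of the corollary is entirely in the cardinal arithmetic packaged into \thref{set-contained-in-lambda-directed-union-of-lambda-saturated}, which guarantees that every sufficiently small parameter set sits inside a p.c.\ model of the special shape over which we are assuming the type-counting bound.
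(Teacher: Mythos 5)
Your proposal is correct and follows essentially the same route as the paper: apply \thref{set-contained-in-lambda-directed-union-of-lambda-saturated} to embed $B$ in a p.c.\ model $M$ of the special shape and conclude $|\S_I(B)| \leq |\S_I(M)| \leq \kappa$. The only difference is that you spell out the surjectivity of the restriction map $\S_I(M) \to \S_I(B)$, which the paper leaves implicit.
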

\begin{proof}
Let $B$ be any parameter set of cardinality at most $\kappa$. By \thref{set-contained-in-lambda-directed-union-of-lambda-saturated} there is a p.c.\ model $M$ containing $B$ that satisfies the description in the statement. Therefore $|\S_I(B)| \leq |\S_I(M)| \leq \kappa$, and we conclude that $T$ is $\kappa$-stable.
\end{proof}
\begin{proof}[{Proof of \thref{simple-is-stable-iff-stationarity}}]
The implications (i) $\Rightarrow$ (ii) $\Rightarrow$ (iii) are \thref{stable-theory-stationary-iff-lstp} and \thref{stable-stationarity-over-lambda-t-saturated-models} respectively. So we prove (iii) $\Rightarrow$ (i).

Let $\kappa = 2^{2^{\lambda_T}}$, then $\kappa^{<\lambda_T} = \kappa$ and $\kappa \geq 2^{\lambda_T}$. So we can, and will, use \thref{reduce-checking-stability-to-certain-models} to prove that $T$ is $\kappa$-stable. So let $M$ be a p.c.\ model of cardinality $\leq \kappa$ that is a $\lambda_T$-directed union of positively $\lambda_T$-saturated p.c.\ models, each of cardinality $\leq 2^{\lambda_T}$. Let $(M_j)_{j \in J}$ be this $\lambda_T$-directed system. For any type $p(x) \in \S_I(M)$, where $I$ is a finite index set, there is by \textsc{local character} some $C \subseteq M$ with $|C| \leq |T|$ such that $p(x)$ does not divide over $C$. For each $c \in C$ there is $j_c \in J$ such that $c \in M_{j_c}$. As $|C| \leq |T| < \lambda_T$, there is an upper bound $j_p \in J$ of $\{j_c : c \in C\}$, and so $C \subseteq M_{j_p}$. By \textsc{base monotonicity} we have that $p(x)$ does not divide over $M_{j_p}$. As $M_{j_p}$ is a positively $\lambda_T$-saturated p.c.\ model, we have by the stationarity assumption that $p(x)$ is completely determined by its restriction to $M_{j_p}$. We have thus shown that the following assignment is an injection:
\begin{align*}
\S_I(M) &\to \coprod_{j \in J} \S_I(M_j), \\
p & \mapsto (j_p, p|_{M_{j_p}}).
\end{align*}
As $|M_j| \leq 2^{\lambda_T}$ for all $j \in J$, we have that $|J| \leq |M|^{2^{\lambda_T}} \leq \kappa^{2^{\lambda_T}} = \kappa$, as well as $|\S_I(M_j)| \leq 2^{|M_j|} \leq 2^{2^{\lambda_T}} = \kappa$. We thus conclude that the above disjoint union has cardinality at most $\kappa$ and so $|\S_I(M)| \leq \kappa$, as required.
\end{proof}
\section{Stable independence}
\label{sec:stable-independence}
\begin{definition}
\thlabel{stationarity-over-fixed-set}
Let $\ind$ be an independence relation and let $C$ be a parameter set. We say that $\ind$ satisfies \emph{stationarity over $C$}\index{stationarity over parameter set} if for all $a, a', b$ we have that if $a \equiv_C a'$ and $a \ind_C b$ and $a' \ind_C b$ then $a \equiv_{Cb} a'$.
\end{definition}
With this terminology \thref{simple-is-stable-iff-stationarity}(iii) can be rephrased as ``$\ind^d$ satisfies stationarity over positively $\lambda_T$-saturated p.c.\ models'' and the \textsc{stationarity} property becomes stationarity over those sets $C$ over which types and Lascar-strong types coincide.
\begin{lemma}
\thlabel{stationary-over-c-implies-independence-theorem-over-c}
Let $\ind$ be an independence relation satisfying \textsc{invariance}, \textsc{monotonicity} and \textsc{extension}. Suppose that $\ind$ satisfies stationarity over some parameter set $C$. Then for any $a, a', b, c$ with $a \ind_C b$, $a' \ind_C c$ and $a \equiv_C a'$ there is $a''$ with $a'' \equiv_{Cb} a$ and $a'' \equiv_{Cc} a'$ such that $a'' \ind_C bc$.
\end{lemma}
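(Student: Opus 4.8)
The statement is essentially a "combinatorial" consequence of stationarity over $C$: we are given $a \ind_C b$, $a' \ind_C c$ with $a \equiv_C a'$, and we want a single $a''$ that simultaneously has $a$'s type over $Cb$ and $a'$'s type over $Cc$, independently from $bc$. The natural idea is to first reduce to the case where we only have one "target" parameter on each side and a common independent witness, and then glue. The plan is to use \textsc{extension} to move $b$ and $c$ so that they become independent over $C$, and then use stationarity to conclude the types match up.

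First I would apply \textsc{invariance} to replace $a'$ by $a$: since $a \equiv_C a'$ there is an automorphism over $C$ sending $a'$ to $a$; applying it to $a' \ind_C c$ gives $a \ind_C c_1$ for some $c_1 \equiv_C c$. So now we have $a \ind_C b$ and $a \ind_C c_1$. Next I would use \textsc{extension} (applied to $a \ind_C b$, extending by $c$) to find $c_2$ with $c_2 \equiv_{Cb} ?$ — but more carefully, the cleanest route is: starting from $a \ind_C b$, apply \textsc{extension} with the extra data being a copy of $c_1$ to obtain $c' \equiv_{C} c_1$ (in fact we can arrange $c' \equiv_{Ca} c_1$ by extending over $Ca$, re-indexing appropriately) with $a \ind_C bc'$, hence by \textsc{monotonicity} $b \ind_C \ldots$ is not quite what we need; instead I want $c'$ independent from $b$ over $C$. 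The right formulation: use \textsc{extension} on $b \ind_C ?$... but we are not given $b \ind_C$ anything. So the correct move is to extend $a \ind_C b$ to $a \ind_C bc'$ where $c' \equiv_{Cb}$-arbitrary but matching $\tp(c_1/Ca)$ — this is possible because $a \ind_C b$ lets us realize, over $Cb$, any type consistent with $\tp(a/C)$-compatibility; more precisely \textsc{extension} says: given $a \ind_C b$ and any $d$, there is $d'$ with $d' \equiv_{Cb} d$ and $a \ind_C bd'$. Taking $d = c_1$ we get $c'$ with $c' \equiv_{Cb} c_1$ and $a \ind_C bc'$.

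Now $a \ind_C bc'$ gives, by \textsc{monotonicity}, both $a \ind_C b$ and $a \ind_C c'$. Also $c' \equiv_{Cb} c_1 \equiv_C c$, so in particular $c' \equiv_C c$. The candidate $a''$ is simply $a$ itself (with the parameters $b, c'$), and we need to check $a'' \equiv_{Cb} a$ (trivially, $a'' = a$) and $a'' \equiv_{Cc} a'$. For the latter: we know $a \equiv_C a'$, $a \ind_C c'$, and $a' \ind_C c$. Pull back along an automorphism over $C$ carrying $c'$ to $c_1$: we had $c' \equiv_{Cb} c_1$, hence $c' \equiv_C c_1$, so there is $f \in \Aut(\MM/C)$ with $f(c') = c_1$; then $f(a) \ind_C c_1$ and $f(a) \equiv_C a$. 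We also have $a' \ind_C c$ and we arranged $c_1 \equiv_C c$ via the automorphism producing $c_1$ from $c$; composing, we can assume WLOG (after relabeling) that the "second side" data is $a' \ind_C c_1$ and $f(a) \ind_C c_1$ with $f(a) \equiv_C a \equiv_C a'$. By stationarity over $C$ applied with the independent parameter $c_1$, we get $f(a) \equiv_{Cc_1} a'$, hence $a \equiv_{Cc'} $ (the $f$-pullback) $ \equiv $ ... and untwisting the automorphism, $a \equiv_{Cc'} a'$. Finally rename $c'$ back to play the role of $c$ in the conclusion — but wait, we need the conclusion over the *original* $b$ and $c$. Here the subtlety: we have produced $a'' = a$ with $a'' \ind_C bc'$, $a'' \equiv_{Cb} a$, and $a'' \equiv_{Cc'} a'$ where $c' \equiv_{Cb} c_1 \equiv_C c$; this is not literally $c$. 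To fix this I would instead run the construction so that the roles are symmetric: apply \textsc{extension} to $a' \ind_C c$ (after transporting $a$ to $a'$) to get $b' \equiv_{Cc} b$ with $a' \ind_C cb'$; then transport back via an automorphism over $C$ sending $b'$ to $b$.

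**The main obstacle.** The genuine difficulty is bookkeeping with the automorphisms so that the final $a''$ lives over the *original* $b$ and the *original* $c$, not over moved copies. Stationarity over $C$ only directly compares two tuples independent from the *same* parameter, so the art is to arrange, via \textsc{extension} and \textsc{invariance}, a single parameter (a moved copy of $c$, say $c'$) that is simultaneously independent from $a$ and "attached" to $b$ via $c' \equiv_{Cb} c$, so that after transporting by an automorphism over $C$ fixing enough, we recover the conclusion over $bc$. I expect this to go through cleanly — it is exactly the first half of the proof of the classical independence theorem, stopping before Lascar strong types are needed — and the write-up will be short: one application of \textsc{invariance}, one or two applications of \textsc{extension}, and one application of stationarity over $C$, with \textsc{monotonicity} used to pass between $a \ind_C bc'$ and its projections.
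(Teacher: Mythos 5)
Your main route, as written, does not prove the stated lemma. The very first move---replacing $a'$ by $a$ via an automorphism over $C$---drags $c$ along to a copy $c_1$ with $ac_1 \equiv_C a'c$, and \textsc{extension} then only yields $c' \equiv_{Cb} c_1$, hence $c' \equiv_C c$ but \emph{not} $c' \equiv_{Cb} c$. So at the end you hold $a'' = a$ with $a'' \ind_C bc'$, $a'' \equiv_{Cb} a$ and $a''c' \equiv_C a'c$, which is the conclusion over $(b, c')$ rather than over the original $(b,c)$; and you cannot untwist $c'$ back to $c$ without moving $b$, precisely because the $\equiv_{Cb}$ link between $c'$ and $c$ was lost in the first step. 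There is also a local problem in your stationarity step: you apply it to $f(a)$ and $a'$ over the parameter $c_1$, but $a' \ind_C c_1$ is never established (what you have is $a \ind_C c_1$, the image of $a' \ind_C c$); stationarity needs both tuples independent from the \emph{same} parameter, so as written that comparison is unjustified, and repairing it only gives the weaker ``up to moving $c$ over $C$'' conclusion just described.

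The symmetric fix you sketch in the last lines is, however, correct and is essentially the paper's proof with the roles of $(a,b)$ and $(a',c)$ interchanged: apply \textsc{extension} to $a' \ind_C c$ with $d = b$ to get $b' \equiv_{Cc} b$ and $a' \ind_C cb'$, take $h \in \Aut(\MM/Cc)$ with $h(b') = b$ and set $a'' = h(a')$; then \textsc{invariance} gives $a'' \ind_C bc$ and $a'' \equiv_{Cc} a'$, and \textsc{monotonicity} plus stationarity over $C$ (applied to $a'' \ind_C b$, $a \ind_C b$, $a'' \equiv_C a$) gives $a'' \equiv_{Cb} a$. The paper does the mirror image: extend $a \ind_C b$ by $d = c$ to get $c' \equiv_{Cb} c$ with $a \ind_C bc'$, choose $a''$ with $a''c \equiv_{Cb} ac'$, and use stationarity on the $c$-side against $a' \ind_C c$. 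The moral is that the one application of \textsc{extension} must be made with the \emph{original} second parameter, so that the resulting copy is equivalent to it over $C$ together with the first parameter; no preliminary automorphism replacing $a'$ by $a$ is needed, and introducing one is what created the obstruction you then had to work around.
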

\begin{proof}
Let $a, a', b, c$ be as in the statement. By \textsc{extension} there is $c'$ with $c' \equiv_{Cb} c$ and $a \ind_C bc'$. Let $a''$ be such that $ac' \equiv_{Cb} a''c$. We claim that this is the desired $a''$. By \textsc{invariance} we indeed have $a'' \ind_C bc$, and by construction $a'' \equiv_{Cb} a$. For the final equality of types we apply \textsc{monotonicity} to see $a'' \ind_C c$ and together with $a' \ind_C c$ this implies $a'' \equiv_{Cc} a'$ by stationarity over $C$.
\end{proof}
\begin{corollary}
\thlabel{stationarity-implies-independence-theorem}
Let $\ind$ be an independence relation satisfying \textsc{invariance}, \textsc{monotonicity} and \textsc{extension}. If $\ind$ satisfies stationarity over positively $\lambda_T$-saturated p.c.\ models then it satisfies independence theorem over positively $\lambda_T$-saturated p.c.\ models.
\end{corollary}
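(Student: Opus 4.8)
The plan is to reduce the statement directly to \thref{stationary-over-c-implies-independence-theorem-over-c}. First I would fix an arbitrary positively $\lambda_T$-saturated p.c.\ model $M$ and unwind what ``independence theorem over $M$'' should mean, following the template of the weakened hypothesis in \thref{kim-pillay-weaken-independence-theorem}: we are given $a, a', b, c$ with $a \ind_M b$, $a' \ind_M c$, $b \ind_M c$ and $a \equiv_M a'$, and we must produce $a''$ with $a'' \equiv_{Mb} a$ and $a'' \equiv_{Mc} a'$ such that $a'' \ind_M bc$.

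Next, observe that the hypothesis ``$\ind$ satisfies stationarity over positively $\lambda_T$-saturated p.c.\ models'' says exactly that $\ind$ satisfies stationarity over $C = M$ in the sense of \thref{stationarity-over-fixed-set}, and by assumption $\ind$ also satisfies \textsc{invariance}, \textsc{monotonicity} and \textsc{extension}. Hence \thref{stationary-over-c-implies-independence-theorem-over-c} applies with this $C = M$ to the given $a, a', b, c$: its hypotheses are precisely $a \ind_M b$, $a' \ind_M c$ and $a \equiv_M a'$, all of which we have, and it outputs exactly an $a''$ of the required form. The extra premise $b \ind_M c$ is simply not used, so what we get is if anything slightly stronger than the independence theorem over $M$.

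Since $M$ was an arbitrary positively $\lambda_T$-saturated p.c.\ model, this yields the independence theorem over positively $\lambda_T$-saturated p.c.\ models. There is no genuine obstacle here; the only point requiring care is bookkeeping, namely checking that the intended reading of ``independence theorem over a parameter set'' is the one phrased with $\equiv$ (rather than $\equivls$) and including the redundant $b \ind_M c$ premise, matching \thref{kim-pillay-weaken-independence-theorem} — which is precisely the conclusion format delivered by \thref{stationary-over-c-implies-independence-theorem-over-c}.
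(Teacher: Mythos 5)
Your proposal is correct and matches the paper's intent exactly: the corollary is just \thref{stationary-over-c-implies-independence-theorem-over-c} applied with $C = M$ for each positively $\lambda_T$-saturated p.c.\ model $M$, with the hypothesis $b \ind_M c$ simply unused and the conclusion already in the $\equiv$-form of \thref{kim-pillay-weaken-independence-theorem}. The paper treats this as immediate and gives no separate proof, so your bookkeeping is the whole argument.
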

By ``independence theorem over positively $\lambda_T$-saturated p.c.\ models'' we mean the weakening described in \thref{kim-pillay-weaken-independence-theorem}.
\begin{theorem}[Kim-Pillay style characterisation of stable theories]
\thlabel{stable-kim-pillay}
Assume thickness. A theory $T$ is stable if and only if there is an independence relation $\ind$ satisfying \textsc{invariance}, \textsc{monotonicity}, \textsc{normality}, \textsc{existence}, \textsc{full existence}, \textsc{base monotonicity}, \textsc{extension}, \textsc{symmetry}, \textsc{transitivity}, \textsc{finite character}, \textsc{local character} and \textsc{stationarity}. Furthermore, in this case, $\ind = \ind^d$.
\end{theorem}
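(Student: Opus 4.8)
The plan is to prove the two directions separately, leaning heavily on the already-established Kim-Pillay theorem for simple theories (\thref{kim-pillay}) together with the characterisation \thref{simple-is-stable-iff-stationarity}. For the left-to-right direction, assume $T$ is stable. By \thref{stable-implies-simple}, $T$ is simple, so by \thref{kim-pillay} the relation $\ind^d$ satisfies \textsc{invariance}, \textsc{monotonicity}, \textsc{normality}, \textsc{existence}, \textsc{full existence}, \textsc{base monotonicity}, \textsc{extension}, \textsc{symmetry}, \textsc{transitivity}, \textsc{finite character} and \textsc{local character}. It remains only to note that $\ind^d$ also satisfies \textsc{stationarity}, which is exactly \thref{stable-theory-stationary-iff-lstp}. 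Thus $\ind = \ind^d$ witnesses the right-hand side.

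For the right-to-left direction, suppose $\ind$ is an independence relation with all the listed properties. The key observation is that \textsc{stationarity} in particular gives stationarity over every positively $\lambda_T$-saturated p.c.\ model, because over such a model $M$ having the same type implies having the same Lascar strong type by \thref{same-lstp-iff-same-types-over-sequence-of-models}, which is precisely the hypothesis needed to invoke the \textsc{stationarity} clause. Then by \thref{stationarity-implies-independence-theorem} (whose hypotheses \textsc{invariance}, \textsc{monotonicity}, \textsc{extension} are among our assumptions) $\ind$ satisfies the independence theorem over positively $\lambda_T$-saturated p.c.\ models, i.e.\ the weakened version of \textsc{independence theorem} from \thref{kim-pillay-weaken-independence-theorem}. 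Hence $\ind$ satisfies all the properties required in that weakened form of \thref{kim-pillay}, so $T$ is simple and $\ind = \ind^d$. Finally, since $\ind^d = \ind$ satisfies \textsc{stationarity}, the equivalence (ii) $\Rightarrow$ (i) of \thref{simple-is-stable-iff-stationarity} (which applies as $T$ is now known to be simple and thick) yields that $T$ is stable, completing the proof.

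The bulk of the argument is thus bookkeeping: checking that the list of properties we are handed lines up exactly with the hypotheses of \thref{kim-pillay-weaken-independence-theorem} and of \thref{stationarity-implies-independence-theorem}, and then chaining the implications. The one place that requires a moment's care — and the main conceptual obstacle — is the passage from the abstract \textsc{stationarity} property (stated for sets $C$ where types and Lascar strong types agree) to stationarity over $\lambda_T$-saturated p.c.\ models; this rests on \thref{same-lstp-iff-same-types-over-sequence-of-models}, which is where thickness is genuinely used. I would also remark, as the paper does in \thref{kim-pillay-leave-out-properties}, that \textsc{normality} and \textsc{full existence} are redundant given \textsc{extension}, \textsc{symmetry} and \textsc{existence}, but since they are stated in the theorem there is no harm in simply assuming them.
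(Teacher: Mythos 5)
Your proposal is correct and follows essentially the same route as the paper: stable $\Rightarrow$ simple plus \thref{kim-pillay} and \thref{stable-theory-stationary-iff-lstp} for one direction, and for the converse passing from \textsc{stationarity} to stationarity over positively $\lambda_T$-saturated p.c.\ models, invoking \thref{stationarity-implies-independence-theorem} to get the weakened \textsc{independence theorem} of \thref{kim-pillay-weaken-independence-theorem}, and concluding with \thref{simple-is-stable-iff-stationarity}. The only cosmetic difference is that the paper runs the converse directly under the weakened hypothesis of \thref{stable-kim-pillay-weaken-stationarity}, whereas you derive that weakening as an intermediate step — the content is identical.
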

\begin{remark}
\thlabel{stable-kim-pillay-weaken-stationarity}
Similar to \thref{kim-pillay-weaken-independence-theorem}, we can strengthen one direction of \thref{stable-kim-pillay}. To conclude stability of $T$ and that $\ind = \ind^d$ we can replace \textsc{stationarity} by stationarity over positively $\lambda_T$-saturated p.c.\ models.
\end{remark}
\begin{proof}
If $T$ is stable then it is in particular simple (\thref{stable-implies-simple}). So from \thref{kim-pillay} we know that $\ind^d$ satisfies all the listed properties, except for \textsc{stationarity}, which follows from \thref{stable-theory-stationary-iff-lstp}. Conversely, suppose that $\ind$ is an arbitrary independence relation, satisfying the listed properties (with stationarity only over positively $\lambda_T$-saturated p.c.\ models, as per \thref{stable-kim-pillay-weaken-stationarity}). By \thref{stationarity-implies-independence-theorem}, $\ind$ satisfies \textsc{independence theorem} over positively $\lambda_T$-saturated p.c.\ models, and so following \thref{kim-pillay-weaken-independence-theorem} we have that \thref{kim-pillay} applies. We thus have that $T$ is simple and $\ind = \ind^d$. As $\ind^d$ satisfies stationarity over positively $\lambda_T$-saturated p.c.\ models we conclude $T$ is stable by \thref{simple-is-stable-iff-stationarity}.
\end{proof}
\begin{remark}
\thlabel{stable-kim-pillay-left-out-properties}
Note that \thref{stable-kim-pillay} does not list \textsc{independence theorem}, as that has been replaced by the stronger \textsc{stationarity} (see \thref{stationary-over-c-implies-independence-theorem-over-c,stationarity-implies-independence-theorem}). Following \thref{kim-pillay-leave-out-properties} we could also have left out \textsc{normality} and \textsc{full existence}, but as in \thref{kim-pillay} we chose to include all basic properties.
\end{remark}

\section{Bibliographic remarks}
\label{sec:bibliographic-remarks-stable}
Shelah already proved results for stable theories in positive logic \cite{shelah_lazy_1975}. Later, Ben-Yaacov established the connection of stability with simplicity in positive logic \cite[Section 2]{ben-yaacov_simplicity_2003}, similar to the contents of this chapter. Just like in Chapter \ref{ch:simple-theories}, we assume thickness in various places to simplify the treatment.

The reader might be familiar with the definition of stability in terms of the order property. This also works in positive logic, after adjusting the definition of the order property similarly to how we adjusted the definition of the tree property (\thref{tree-property}). We chose not to treat the order property, because we have no use for it in these notes and the proof of its equivalence to the other characterisations of stability (\thref{stable-characterisations}) is long and technical. Instead, we just state what is true here and give references to \cite{dmitrieva_dividing_2023}, which is the first place where they appear in print in the modern terminology of positive logic, but is hardly the original source of the arguments.
\begin{definition}[{\cite[Definition 3.5]{dmitrieva_dividing_2023}}]
\thlabel{order-property}
A formula $\phi(x, y)$ has the \term{order property} (\OP)\nomenclature[OP]{\OP}{The order property} if there are sequences $(a_i)_{i < \omega}$ and $(b_i)_{i < \omega}$ and an obstruction $\psi(x, y)$ of $\phi(x, y)$ such that for all $i,j < \omega$, we have
\begin{align*}
&\models \phi(a_i, b_j) &\text{if } i < j, \\
&\models \psi(a_i, b_j) &\text{if } i \geq j.
\end{align*}
\end{definition}
In fact, the usual results for local stability go through in positive logic. That is, we can call a formula stable if that particular formula does not have the order property. This then has many equivalent conditions, such as type counting, which are listed in \cite[Theorem 3.11]{dmitrieva_dividing_2023}. Linking this back to our treatment of stability, we have the following.
\begin{theorem}
\thlabel{stable-iff-no-order-property}
A theory $T$ is stable if and only if no formula has the order property.
\end{theorem}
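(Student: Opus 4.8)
The plan is to prove the equivalence by combining the previously established characterisations of stability (\thref{stable-characterisations}) with an analysis of the order property in terms of the binary tree rank $R_{\phi,\psi}(-)$ introduced in \thref{binary-tree-rank}. For one direction, I would show that if some formula has the order property then $T$ is not stable, and for the other I would show that if $T$ is unstable then some formula has the order property. By \thref{stable-characterisations} it is equivalent to work with the condition $R_{\phi,\psi}(x=x) < \omega$ for all contradictory pairs $\phi(x,y)$ and $\psi(x,y)$.

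First, suppose $\phi(x,y)$ has \OP, witnessed by sequences $(a_i)_{i<\omega}$, $(b_i)_{i<\omega}$ and an obstruction $\psi(x,y)$ as in \thref{order-property}. I would extract an unstable configuration directly: by compactness, stretch the witnessing sequences to length $\lambda^+$ for an arbitrary $\lambda$, and observe that for every subset $S$ of the index set that is an initial segment (or, after using Ramsey/Erd\H{o}s-Rado to pass to an indiscernible pair of sequences, every cut) one obtains a distinct type over the parameter set $\{b_i\}$, namely the type containing $\phi(x,b_j)$ for $j$ above the cut and $\psi(x,b_j)$ for $j$ below. Since $\phi$ and $\psi$ are contradictory this really does give distinct types, and there are more than $\lambda$ of them over a set of size $\leq\lambda$; hence $T$ is not $\lambda$-stable for any $\lambda$, so $T$ is unstable. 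The cleanest route here is probably to first apply \thref{base-indiscernible-sequence-on-long-sequence} (Erd\H{o}s-Rado) to the concatenated sequence $(a_ib_i)_i$ to obtain an indiscernible sequence still exhibiting the $\phi/\psi$ alternation, so that cuts genuinely produce consistent (finitely satisfiable) types.

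For the converse, suppose $T$ is not stable. By \thref{stable-characterisations}(ii) there are contradictory formulas $\phi(x,y)$ and $\psi(x,y)$ with $R_{\phi,\psi}(x=x) \geq \omega$. Using \thref{binary-tree-rank-lemma}(ii), for each $n$ there is a binary tree of parameters $(b_\eta)_{\eta\in 2^{<n}}$ such that along each branch $\sigma\in 2^n$ the set $\{\chi_{\sigma(k)}(x,b_{\sigma|_k}) : k<n\}$ is consistent (with $\chi_0=\phi$, $\chi_1=\psi$). By compactness I would obtain an infinite binary tree $(b_\eta)_{\eta\in 2^{<\omega}}$ with this property for all branches simultaneously. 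From such an infinite binary tree one standardly extracts the order property: enumerate a branch, say the all-zero branch together with the ``switch points'', more precisely set $a_i$ to be a realisation of the branch that goes $0$ for $k<i$ then $1$ at $k=i$ and continues (any way) afterward, and let $c_i = b_{0^i}$; then $\phi(a_i, c_j)$ holds when $i \geq j$ (since $a_i$ passes through $0$ at level $j$) while $\psi(a_i, c_j)$ holds when $i < j$ (since $a_i$ switched to $1$ at level $i < j$, and the node $b_{0^j}$ lies on the $0$-side below $b_{0^i}$... ). Here one must be slightly careful about which node indexes the parameter and in which order the inequality falls, possibly reindexing or swapping the roles of $\phi$ and $\psi$; after reindexing this is exactly an instance of \OP for $\phi$ (or for $\psi$, which also counts since then $T$ has a formula with \OP).

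\textbf{Main obstacle.} The genuinely delicate step is the converse direction: turning an infinite binary tree witnessing $R_{\phi,\psi}=\infty$ into a clean linearly-ordered configuration exhibiting \OP. In full first-order logic one can negate formulas freely, so the bookkeeping is light; in positive logic we must keep track of the positive obstruction $\psi$ at every node and make sure the extracted $a_i, b_j$ satisfy $\phi$ on one side and the \emph{same} $\psi$ on the other side of the diagonal — this is why \OP was formulated with a fixed obstruction $\psi$ rather than a negation. I expect the argument needs either a careful direct choice of branches through the tree together with an application of \thref{base-indiscernible-sequence-on-long-sequence} to homogenise which $\psi$ appears, or an intermediate lemma (in the spirit of the full first-order ``unstable formula theorem'') showing that $R_{\phi,\psi}=\infty$ is equivalent to a single obstruction $\psi$ witnessing the order property. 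The rest is routine compactness and type-counting, already packaged in \thref{stable-characterisations} and \thref{binary-tree-rank-lemma}.
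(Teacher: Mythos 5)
The first direction of your proposal (\OP{} implies instability) is right in spirit, but the counting as written does not work: stretching the sequences to length $\lambda^+$ gives a parameter set of size $\lambda^+$ over which you only exhibit $\lambda^+$ many cut types, which contradicts no instance of $\lambda$-stability. The repair is the same device the paper uses for (i)$\Rightarrow$(ii) of \thref{stable-characterisations}: given $\lambda$, take $\mu$ minimal with $2^\mu>\lambda$, spread a $\phi/\psi$-alternating indiscernible sequence (obtained via \thref{base-indiscernible-sequence-on-long-sequence}) along the lexicographic order on $2^{<\mu}$, and realise the $2^\mu$ many cut types; these are finitely satisfiable and pairwise distinct because $\phi$ and $\psi$ are contradictory. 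With that adjustment this half matches what the paper intends (``construct many types'').

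The genuine gap is the converse, and your closing paragraph names the obstacle without overcoming it. Your extraction from the infinite binary tree controls only the lower triangle and the diagonal: a realisation $a_i$ of the branch $0^i 1\ldots$ satisfies $\phi(x,c_j)$ for $j<i$ and $\psi(x,c_i)$, but for $j>i$ nothing at all is determined about the pair $(a_i,c_j)$ --- and in positive logic $a_i$ need satisfy \emph{neither} $\phi(x,c_j)$ nor $\psi(x,c_j)$ there, since no negation forces a dichotomy. Consequently no reindexing or swapping of $\phi$ and $\psi$ yields the pattern of \thref{order-property}, and the classical rescue (homogenise by Ramsey, then repair using $\neg\phi$ or a Boolean combination) is unavailable; even classically, ``binary tree $\Rightarrow$ \OP{}'' is not the one-line branch argument you sketch. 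This is precisely the step the paper itself declines to prove: the theorem is stated with a pointer to \cite[Lemma 3.18]{dmitrieva_dividing_2023}, and both the bibliographic remarks and the appendix sketch single out the direction ``no order property implies stable'' as the long and technical one. So your proposal establishes the easy implication (after fixing the cardinal bookkeeping), but the hard implication still requires a genuinely new argument --- an intermediate lemma relating the rank $R_{\phi,\psi}$, or dividing chains, to a linearly ordered configuration with a \emph{fixed} obstruction --- and cannot be dismissed as routine compactness.
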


The observation that the theory in \thref{non-invariant-type} contains a type over a p.c.\ model $M$ that does not extend to a global $M$-invariant type is due to Mennuni.

We defined invariant types in Section \ref{sec:invariant-types} and proved that these exist over p.c.\ models in semi-Hausdorff theories (\thref{semi-hausdorff-invariant-types}). One can also define a notion of \emph{Lascar-invariant type}, which can be proved to exist over p.c.\ models in thick theories. We refer the reader to \cite[Section 3]{dobrowolski_kim-independence_2022} for more details.

Just as was the case for the Kim-Pillay style theorem for simple theories (\thref{kim-pillay}), the stable version (\thref{stable-kim-pillay}) can be pieced together from Ben-Yaacov's work \cite{ben-yaacov_simplicity_2003,ben-yaacov_thickness_2003}. Though, as before, the addition of the thickness assumption allows for a much simpler statement and easier treatment.

\chapter{Examples}
\label{ch:examples}

We consider two classes of examples that can be studied in the framework of positive logic, but generally not in the framework of full first-order logic.

The first class of examples (Section \ref{sec:hyperimaginaries}) describes how hyperimaginaries can be added to any positive theory (and so in particular to a full first-order theory) while preserving the important properties of the theory. So this yields an $(-)^\heq$ construction, analogous to the $(-)^\eq$ construction we know from full first-order logic.

The second class of examples (Section \ref{sec:continuous-logic}) describes how continuous logic can be studied in positive logic. More precisely, it describes how to turn a monster model of a continuous theory into a monster model of a positive theory in such a way that the automorphisms remain the same. In particular, this means that both perspectives agree on types (which correspond to automorphisms orbits). This means that the model theory in both perspectives remains the same, in the sense that both perspectives agree on things such as dividing, stability, simplicity, etc. In fact, because of the explicit description of the construction, one obtains an explicit dictionary to translate model-theoretic definitions and results from positive logic to continuous logic (e.g., \thref{continuous-tree-property,continuous-tp-iff-positive-tp,continuous-kim-pillay}).

\section{Hyperimaginaries}
\label{sec:hyperimaginaries}
We still work in a monster model, which we recall is denoted by $\MM$, as per \thref{monster-convention}. We will extend the monster model with new sorts to a new monster model that contains hyperimaginary elements. To distinguish between these two different structures we will no longer omit them from the notation.
\begin{definition}
\thlabel{type-definable-equivalence-relation}
A \term{type-definable equivalence relation}\index{equivalence relation!type-definable} is a set of formulas $E(x, y)$ without parameters, where $x$ and $y$ are (possibly infinite, but small) tuples of variables, such that $E$ defines an equivalence relation in $\MM$.
\end{definition}
The idea is that we fix a set of such type-definable equivalence relations and add a new sort for each type-definable equivalence relation in this set. Then we will extend the monster model so that the elements of each of these new sorts are the equivalence classes of the corresponding equivalence relation. To capture the interaction between these equivalence classes and their representatives, we also need to add further symbols to the language. However, to stay within a finitary first-order language we cannot add projection function symbols, as is commonly done in the $(-)^\eq$-construction, because the representatives of these equivalence classes can be infinite. We will thus add relation symbols that capture all the possible finitary interactions between the original sort(s) and the new sorts. We then recover (the graphs of) the projection functions as partial types (see \thref{hyperimaginary-projection}).
\begin{definition}
\thlabel{hyperimaginary-language}
Given a set $\E$ of type-definable equivalence relations, we define the \term{hyperimaginary signature} $\L_\E$ as a multisorted extension of $\L$. The sort(s) already in $\L$ will be called the \term{real sort}\emph{(s)}. Then for each $E \in \E$, we add a sort $S_E$, called a \term{hyperimaginary sort}. For a hyperimaginary sort $S_E$, we write $S_{E,r}$ for the tuple of real sorts, matching the sorts of the representatives of the $E$-equivalence classes. For a variable $y$ of sort $S_E$, we write $y_r$ for a tuple of variables of sort $S_{E,r}$.

Furthermore, $\L_\E$ contains the following relation symbols. Let $E_1, \ldots, E_n \in \E$ and let $y_i$ be a variable of sort $S_{E_i}$ for each $1 \leq i \leq n$. Let $\phi(x, y_{1,r}, \ldots, y_{n,r})$ be an $\L$-formula, and write $S_x$ for the sort of $x$. Then we add a relation symbol $R_\phi(x, y_1, \ldots, y_n)$ of sort $S_x \times S_{E_1} \times \ldots \times S_{E_n}$.
\end{definition}
We note that in the above definition, not all variables in $\phi(x, y_{1,r}, \ldots, y_{n,r})$ need to actually appear in the formula. So it is not a problem for the $y_{i,r}$ to be infinite tuples. Similarly, when we write something like $\exists y_r \phi(y_r)$, then we really only quantify over the variables that actually appear in $\phi$, so this is still a finitary formula. We also point out that $x$ can be a tuple of variables, so that $S_x$ is a tuple of the corresponding real sorts (which is what we mean by ``the sort of $x$'').
\begin{definition}
\thlabel{hyperimaginary-extension-of-monster}
Let $\E$ be a set of type-definable equivalence relations.
We extend $\MM$ to an $\L_\E$-structure $\MM^\E$\nomenclature[MME]{$\MM^\E$}{Hyperimaginary extension of the monster model} as follows. For each $E \in \E$ the sort $S_E$ is interpreted as the collection of $E$-equivalence classes in $\MM$, and its elements are called \term{hyperimaginary element}\emph{s}. For $E_1, \ldots, E_n \in \E$ and $\phi(x, y_{1,r}, \ldots, y_{n,r})$, where $y_i$ is a variable of sort $E_i$ for all $1 \leq i \leq n$, we interpret the symbol $R_\phi$ as follows. We let $\MM^\E \models R_\phi(a, c_1, \ldots, c_n)$ if and only if there are representatives $b_1, \ldots, b_n$ of $c_1, \ldots, c_n$ respectively such that $\MM \models \phi(a, b_1, \ldots, b_n)$.

We define the $\L_\E$-theory $T^\E$\nomenclature[T]{$T^\E$}{Hyperimaginary extension of the theory $T$} to be the set of all h-inductive $\L_\E$-sentences that are true in $\MM^\E$, where $T$ is the theory for which $\MM$ is a monster model.
\end{definition}
The current setup allows for flexibility in which hyperimaginary sorts are being added. Often one wants to add hyperimaginary sorts for all type-definable equivalence relations. However, we cannot take $\E$ to be all type-definable equivalence relations, as that would be large with respect to the monster $\MM$. For example, for every $\lambda$ there is the equality relation in variables $(x_i)_{i < \lambda}$. To solve this, we show that we can effectively restrict to hyperimaginaries of length $\leq |T|$.
\begin{lemma}
\thlabel{set-of-formulas-implies-formula-then-small-set-implies-it}
Let $\Sigma(x)$ be any set of formulas, and suppose that $\phi(x)$ is such that $\models \Sigma(a)$ implies $\models \phi(a)$. Then there is $\Sigma'(x) \subseteq \Sigma(x)$ with $|\Sigma'(x)| \leq |T|$ such that $\models \Sigma'(a)$ implies $\models \phi(a)$.
\end{lemma}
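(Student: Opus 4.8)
The statement is a compactness argument in disguise. The hypothesis that $\models \Sigma(a)$ implies $\models \phi(a)$ for all $a$ says precisely that the set of formulas $\Sigma(x) \cup \{\psi(x)\}$ is inconsistent, where $\psi(x)$ is some obstruction of $\phi(x)$; more carefully, since we are in a positive setting, we should phrase it as: $\Sigma(x)$ together with any element $a$ with $\not\models \phi(a)$ cannot both hold, and in a p.c.\ model $\not\models \phi(a)$ is witnessed by some obstruction of $\phi$. The plan is first to reduce to showing that a certain set of formulas is inconsistent, then apply compactness (\thref{compactness}) to extract a finite inconsistent subset, and finally bound the number of formulas involved by $|T|$.

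Concretely, I would argue by contrapositive combined with compactness. Suppose no such $\Sigma'(x)$ of size $\le |T|$ exists. Consider, for each obstruction $\psi(x)$ of $\phi(x)$, whether $\Sigma(x) \cup \{\psi(x)\}$ is consistent. The key observation is: $\models \Sigma(a)$ implies $\models \phi(a)$ for all $a$ is equivalent to saying that $\Sigma(x) \cup \{\psi(x)\}$ is inconsistent for every obstruction $\psi(x)$ of $\phi(x)$ --- this uses \thref{pc-model}(iii), namely that in a p.c.\ model, if $\not\models \phi(a)$ then some obstruction of $\phi$ holds at $a$, together with the fact (\thref{compactness}) that consistency of a set of positive formulas can always be witnessed in a p.c.\ model. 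So for each obstruction $\psi$ there is, by \thref{compactness}, a finite $\Sigma_\psi(x) \subseteq \Sigma(x)$ with $\Sigma_\psi(x) \cup \{\psi(x)\}$ inconsistent, i.e.\ $T \models \neg \exists x(\bigwedge \Sigma_\psi(x) \wedge \psi(x))$; equivalently $\bigwedge \Sigma_\psi(x)$ implies $\phi(x)$ modulo $T$ restricted to avoiding this one obstruction. Taking the union $\Sigma'(x) = \bigcup_\psi \Sigma_\psi(x)$ over all obstructions $\psi$ of $\phi$ still might be too large, so this needs a second compactness step.

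Here is the cleaner route, which I expect is the intended one. Let $\Psi(x)$ be the set of \emph{all} obstructions of $\phi(x)$ modulo $T$ (there are at most $|T|$ of these up to equivalence). By \thref{pc-model-iff-types-are-maximal} and \thref{compactness}, the hypothesis ``$\models \Sigma(a) \Rightarrow \models \phi(a)$'' says exactly that $\Sigma(x) \cup \{\psi(x)\}$ has no realisation in any p.c.\ model, for each $\psi \in \Psi(x)$ --- equivalently $\Sigma(x) \cup \{\psi(x)\}$ is inconsistent with $T$ for each such $\psi$. For each $\psi \in \Psi(x)$, compactness for positive formulas (\thref{compactness}) gives a finite $\Sigma_\psi(x) \subseteq \Sigma(x)$ already inconsistent with $\{\psi(x)\}$ modulo $T$. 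Now set $\Sigma'(x) = \bigcup\{\Sigma_\psi(x) : \psi \in \Psi(x)\}$. Since there are at most $|T|$ formulas $\psi$ up to equivalence, and each $\Sigma_\psi$ is finite, we get $|\Sigma'(x)| \le |T|$. Finally, for any $a$ with $\models \Sigma'(a)$: if $\not\models \phi(a)$, then (working in a p.c.\ model containing $a$, or after continuing to one via \thref{continue-to-pc-model} and noting positive formulas are preserved) some obstruction $\psi \in \Psi(x)$ satisfies $\models \psi(a)$, but then $\models \Sigma_\psi(a) \wedge \psi(a)$ contradicts the inconsistency of $\Sigma_\psi(x) \cup \{\psi(x)\}$. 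Hence $\models \phi(a)$, as required.

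\textbf{Main obstacle.} The delicate point is the equivalence ``$\models \Sigma(a) \Rightarrow \models \phi(a)$ for all $a$'' $\iff$ ``$\Sigma(x) \cup \{\psi(x)\}$ is $T$-inconsistent for every obstruction $\psi$ of $\phi$''. The forward direction requires knowing that whenever $\not\models \phi(a)$ we can \emph{positively} witness this, which is exactly \thref{pc-model}(iii) --- so one must be careful that $a$ lives in (or is pushed into) a p.c.\ model, which is harmless since $\Sigma$ and $\phi$ and obstructions are all positive and hence preserved under the homomorphism into a p.c.\ continuation (\thref{continue-to-pc-model}, \thref{homomorphism-preserves-truth}). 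The reverse direction is immediate. Once this equivalence is in hand, the two compactness applications and the cardinality bookkeeping (using $|T|$ as the number of formulas up to equivalence, \thref{size-of-theory}) are routine. I would expect the write-up to be short, maybe half a page.
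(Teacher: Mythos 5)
Your "cleaner route" is essentially the paper's own proof: for each obstruction $\psi$ of $\phi$ (of which there are at most $|T|$ up to equivalence), the hypothesis makes $\Sigma(x) \cup \{\psi(x)\}$ inconsistent with $T$, compactness extracts a finite (in the paper, single) piece of $\Sigma$ obstructing $\psi$, and the union of these pieces is the desired $\Sigma'$, with the final verification using \thref{pc-model}(iii) exactly as you do. Your worry about moving $a$ into a p.c.\ model is moot since $\models$ already refers to the monster, which is p.c., but this does not affect correctness.
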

\begin{proof}
Let $\psi(x)$ be any obstruction of $\phi(x)$. Then $\Sigma(x) \cup \{\psi(x)\}$ is inconsistent, so by compactness there is $\chi_\psi(x) \in \Sigma(x)$ such that $\chi_\psi(x)$ is an obstruction of $\psi(x)$. Define
\[
\Sigma'(x) = \{ \chi_\psi(x) : \psi(x) \text{ is an obstruction of } \phi(x) \}.
\]
As $\phi(x)$ has $\leq |T|$ obstructions, we see that $|\Sigma'(x)| \leq |T|$. Now suppose for a contradiction that $\models \Sigma'(a)$ and $\not \models \phi(a)$. Then there is an obstruction $\psi(x)$ of $\phi(x)$ such that $\models \psi(a)$. However, $\models \Sigma'(a)$ implies $\models \chi_\psi(a)$, contradicting that $\chi_\psi(x)$ is an obstruction of $\psi(x)$.
\end{proof}
\begin{lemma}
\thlabel{type-definable-equivalence-relation-containing-formula}
Let $E(x, y)$ be a type-definable equivalence relation and let $\phi(x, y) \in E(x, y)$. Then there is a type-definable equivalence relation $E_\phi(x, y)$, such that $\phi(x, y) \in E_\phi(x, y)$, $E_\phi(x, y) \subseteq E(x, y)$ and $|E_\phi(x, y)| \leq |T|$.
\end{lemma}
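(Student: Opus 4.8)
The plan is to build $E_\phi(x,y)$ as the largest type-definable equivalence relation that refines $E(x,y)$ and still contains the formula $\phi(x,y)$ (or rather, the largest one we can get to be contained in $E$ and of small size), by a closure process combined with the small-support lemma \thref{set-of-formulas-implies-formula-then-small-set-implies-it}. Concretely, I would first replace $\phi(x,y)$ by a symmetric version: since $E$ is symmetric, $\phi(y,x)\wedge\phi(x,y)$ is still in $E(x,y)$ (up to the usual abuse, after noting $E(x,y)\models E(y,x)$), so without loss of generality we may assume $\phi(x,y)$ is symmetric in the sense that $\models\phi(a,b)\leftrightarrow\phi(b,a)$. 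Similarly, since $E$ is reflexive, the partial type $E(x,x)$ holds of everything, and in particular $\models\phi(a,a)$ for all $a$ once we intersect with finitely many formulas from $E$; but to be safe I would also conjoin in a formula from $E(x,y)$ witnessing $\models\phi(a,a)$ — this is automatic because $\top$-type formulas are in $E$, so we can arrange $\phi$ to be reflexive as well.

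The key step is transitivity. The partial type $E(x,z)$ is implied by $E(x,y)\cup E(y,z)$ (this is just transitivity of $E$ as an equivalence relation in $\MM$, read as a statement about partial types with $y$ existentially quantified — formally, for each $\chi(x,z)\in E(x,z)$ we have that $E(x,y)\cup E(y,z)\models\chi(x,z)$ after quantifying out $y$). So for each $\chi(x,z)\in E(x,z)$, by compactness there are finitely many formulas from $E(x,y)$ and from $E(y,z)$ whose conjunction, with $y$ quantified out, implies $\chi(x,z)$. The idea is then to iterate: start with $\Sigma_0(x,y)=\{\phi(x,y)\}$; given $\Sigma_n(x,y)\subseteq E(x,y)$ with $|\Sigma_n|\le|T|$, use \thref{set-of-formulas-implies-formula-then-small-set-implies-it} to find, for each formula $\psi(x,y)$ that is "needed" to witness reflexivity/symmetry/transitivity relative to the finitely many formulas currently in play, a small subset of $E$ implying it, and throw all of these into $\Sigma_{n+1}(x,y)$. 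After $\omega$ steps take $E_\phi(x,y)=\bigcup_{n<\omega}\Sigma_n(x,y)$. Since at each stage we add at most $|T|$ many formulas (there are $\le|T|$ formulas to begin with, hence $\le|T|$ "transitivity obligations" of the form $\chi(x,z)$ to close off, each requiring a small subset by the lemma), and we iterate $\omega$ times, the result has size $\le|T|$; it is contained in $E(x,y)$ by construction; it contains $\phi(x,y)$; and it is an equivalence relation by the closure: any instance of reflexivity, symmetry, or transitivity that could be violated is caught at some finite stage.

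The main obstacle I expect is bookkeeping the transitivity closure cleanly: transitivity couples three tuples $x,y,z$, so one must be careful that the "obligations" generated stay within a language of size $|T|$ — this is exactly where \thref{set-of-formulas-implies-formula-then-small-set-implies-it} is essential, since it lets us replace the full (large) type $E(x,z)$ in the implication $E(x,y)\cup E(y,z)\models E(x,z)$ by a $\le|T|$-sized approximation, and then pull each of those finitely-generated-up-to-$|T|$ consequences back down to a $\le|T|$-sized subset of $E(x,y)$ and of $E(y,z)$. A secondary point to handle carefully is that $E(y,z)$ is a relabelled copy of $E(x,y)$, so the small subset of $E(y,z)$ we extract corresponds to a small subset of $E(x,y)$ under the obvious change of variables, keeping everything uniformly bounded by $|T|$. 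Once the closure is set up, verifying that $E_\phi$ defines an equivalence relation in $\MM$ is routine: reflexivity and symmetry are immediate from the reflexive/symmetric choice of $\phi$ and the closure, and for transitivity, if $\models E_\phi(a,b)$ and $\models E_\phi(b,c)$ then for any $\chi(x,z)\in E_\phi(x,z)$ the finitely many formulas witnessing $\chi$ via transitivity were added at some stage, so $\models\chi(a,c)$.
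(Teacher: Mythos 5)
Your construction is essentially the paper's proof: for each formula currently in play you apply \thref{set-of-formulas-implies-formula-then-small-set-implies-it} to the implication $E(x,y)\cup E(y,z)\models\psi(x,z)$, split the resulting $\leq|T|$-sized set into its $E(x,y)$-part and a relabelled $E(y,z)$-part, close under this operation (and under swapping variables) in $\omega$ stages, and verify the equivalence-relation axioms by catching each instance at a finite stage, with the same $\aleph_0\cdot|T|\cdot|T|=|T|$ bookkeeping. One small correction: your aside that ``by compactness there are finitely many formulas from $E(x,y)$ and $E(y,z)$ whose conjunction implies $\chi(x,z)$'' is not available in positive logic (there is no negation of $\chi$ to play compactness against), which is precisely why the lemma only yields a $\leq|T|$-sized subset rather than a finite one -- but since your construction actually uses the lemma and not this finiteness claim, and since reflexivity is automatic from $E_\phi\subseteq E$ (so symmetrising $\phi$ is unnecessary), the argument goes through as in the paper.
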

\begin{proof}
For any $\psi(x, y) \in E(x, y)$ we have that $\models E(a, b) \wedge E(b, c)$ implies $\models \psi(a, c)$. We can thus apply \thref{set-of-formulas-implies-formula-then-small-set-implies-it} to $E(x, y) \cup E(y, z)$ and $\psi(x, z)$ to find $\Sigma'(x, y, z)$ with $|\Sigma'(x, y, z)| \leq |T|$ and $\models \Sigma'(a, b, c)$ implies $\models \psi(a, c)$. Pick $E'_\psi(x, y) \subseteq E(x, y)$ such that $|E'_\psi(x, y)| \leq |T|$ and $\Sigma'(x, y, z) \subseteq E'_\psi(x, y) \cup E'_\psi(y, z)$.

We now inductively define $E^0_\phi(x, y) = \{ \phi(x, y) \}$ and
\[
E^{n+1}_\phi(x, y) = E^n_\phi(x, y) \cup E^n_\phi(y, x) \cup \bigcup_{\psi \in E^n_\phi} E'_\psi(x, y),
\]
and we set $E_\phi(x, y) = \bigcup_{n < \omega} E^n_\phi(x, y)$. We claim that this is the desired set of formulas. By construction we have $\phi(x, y) \in E_\phi(x, y) \subseteq E(x, y)$ and $|E_\phi(x, y)| \leq |T|$, so we are left to check that it defines an equivalence relation. Reflexivity is immediate from $E_\phi(x, y) \subseteq E(x, y)$. Symmetry follows because step $E^n_\phi(y, x) \subseteq E^{n+1}_\phi(x, y) \subseteq E_\phi(x, y)$ for all $n < \omega$. We check transitivity, so suppose that $\models E_\phi(a, b) \wedge E_\phi(b, c)$ and let $\psi(x, y) \in E_\phi(x, y)$ be arbitrary. Then $\psi(x, y) \in E^n_\phi(x, y)$ for some $n < \omega$, and so $E'_\psi(x, y) \subseteq E^{n+1}_\phi(x, y) \subseteq E_\phi(x, y)$. We thus have $\models E'_\psi(a, b) \wedge E'_\psi(b, c)$, which by construction of $E'_\psi(x, y)$ implies that $\models \psi(a, c)$, as required.
\end{proof}
The following corollary can be summarised as ``every hyperimaginary is interdefinable with a set of hyperimaginaries whose representing tuples have length $\leq |T|$''.
\begin{corollary}
\thlabel{hyperimaginary-definable-with-small-hyperimaginaries}
Let $E(x, y)$ be a type-definable equivalence relation. Then there is a set $\{E_i(x_i, y_i)\}_{i \in I}$ of type-definable equivalence relations such that for each $i \in I$ the tuples $x_i$ and $y_i$ have length $\leq |T|$ and are subtuples of $x$ and $y$ respectively, and such that $\models E(a, b)$ if and only if $\models E_i(a, b)$ for all $i \in I$.
\end{corollary}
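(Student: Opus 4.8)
The plan is to apply \thref{type-definable-equivalence-relation-containing-formula} to each formula appearing in $E(x,y)$ and then check that the resulting family does the job. Concretely: for each $\phi(x,y)\in E(x,y)$, use \thref{type-definable-equivalence-relation-containing-formula} to obtain a type-definable equivalence relation $E_\phi(x,y)$ with $\phi(x,y)\in E_\phi(x,y)$, $E_\phi(x,y)\subseteq E(x,y)$ and $|E_\phi(x,y)|\le|T|$. Index this family by $I=E(x,y)$ (i.e.\ one relation $E_\phi$ for each $\phi\in E(x,y)$). Since each $E_\phi$ has at most $|T|$ formulas, and every formula in $E_\phi(x,y)$ already occurs in $E(x,y)$, each $E_\phi$ only involves those (finitely many, for each individual formula) variables among $x,y$ that actually appear; taking the union of the finitely many variables used across the $\le|T|$ formulas of $E_\phi$ shows $E_\phi$ genuinely uses only subtuples $x_\phi\subseteq x$, $y_\phi\subseteq y$ of length $\le|T|$. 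Strictly speaking one should replace $E_\phi$ by the equivalent relation in these subtuples of variables, which is routine since variables not appearing are irrelevant.

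The remaining point is the biconditional $\models E(a,b)\iff\models E_\phi(a,b)$ for all $\phi\in E(x,y)$. The forward direction is immediate: if $\models E(a,b)$ then since $E_\phi(x,y)\subseteq E(x,y)$ we get $\models E_\phi(a,b)$ for every $\phi$. For the converse, suppose $\models E_\phi(a,b)$ for all $\phi\in E(x,y)$. Let $\psi(x,y)\in E(x,y)$ be arbitrary; then in particular $\models E_\psi(a,b)$, and since $\psi(x,y)\in E_\psi(x,y)$ by construction, we conclude $\models\psi(a,b)$. As $\psi$ was an arbitrary formula of $E(x,y)$, this gives $\models E(a,b)$.

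I do not expect a serious obstacle here: the statement is essentially a packaging of \thref{type-definable-equivalence-relation-containing-formula} applied formula-by-formula, together with the trivial observation that $E(x,y)=\bigcup_{\phi\in E(x,y)}\{\phi\}\subseteq\bigcup_{\phi}E_\phi(x,y)\subseteq E(x,y)$ forces all three to have the same realisations. The only mildly fussy part is the bookkeeping about variables --- making sure that ``$x_i$ and $y_i$ have length $\le|T|$ and are subtuples of $x$ and $y$'' is justified, which amounts to noting that an equivalence relation defined by $\le|T|$ formulas mentions $\le|T|$ variables and that one may harmlessly restrict to those. If one wants to be fully careful about $E_\phi$ actually \emph{defining} an equivalence relation on the subtuple (rather than on all of $x,y$), one invokes that a formula is insensitive to the values of variables it does not contain, so the restriction of $E_\phi$ to its occurring variables still defines an equivalence relation and has the same realisation set on the relevant coordinates.
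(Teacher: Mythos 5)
Your proposal is correct and is essentially the paper's own proof: apply \thref{type-definable-equivalence-relation-containing-formula} to each $\phi \in E$, restrict each $E_\phi$ to the variables it actually mentions, and use $E_\phi \subseteq E$ for one direction and $\phi \in E_\phi$ for the other. The variable-bookkeeping remark matches the paper's parenthetical about restricting to the mentioned variables, so nothing further is needed.
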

\begin{proof}
Using \thref{type-definable-equivalence-relation-containing-formula}, we take $\{E_i(x_i, y_i)\}_{i \in I}$ to be an enumeration of $\{ E_\phi(x, y) : \phi(x, y) \in E(x, y) \}$, where we restrict the variables each time to those that are actually mentioned. Then by construction $E_i(x_i, y_i) \subseteq E(x, y)$ for all $i \in I$, so $\models E(a, b)$ implies $\models E_i(a, b)$ for all $i \in I$. For the converse we let $\phi(x, y) \in E(x, y)$ and let $i \in I$ be such that $E_i = E_\phi$. So $\phi \in E_i$, therefore $\models E_i(a, b)$ implies $\models \phi(a, b)$. As $\phi$ was arbitrary, we conclude $\models E(a, b)$.
\end{proof}
\begin{definition}
\thlabel{heq-construction}
Define\nomenclature[heq]{$\heq$}{Hyperimaginaries of length $\leq |T|$}
\[
\heq = \{ E(x, y) \text{ a type-definable equivalence relation} : |x| = |y| \leq |T| \}.
\]
\end{definition}
\begin{convention}
\thlabel{fixed-hyperimaginaries}
For the remainder of this section we fix a set $\E$ of hyperimaginaries.
\end{convention}
Taking $\E = \heq$ we thus will be considering $T^\heq$ and $\MM^\heq$, which by \thref{hyperimaginary-definable-with-small-hyperimaginaries} effectively means we have added all hyperimaginaries.

Many arguments will be easier and smoother if we can treat real elements and hyperimaginary elements notationally in the same way. We formalise this in the following definition. In particular, we extend the notation $S_{E, r}$ and $y_r$ for hyperimaginary sorts and variables to tuples, and we introduce notation for the projection functions that send tuples to the classes they represent. In doing so, one might prefer to think of the real sorts as hyperimaginary sorts themselves (namely modulo the equivalence relation $x = y$).
\begin{definition}
\thlabel{hyperimaginary-projection-notation}
Let $S = (S_i)_{i \in I}$ be a tuple of sorts and let $x = (x_i)_{i \in I}$ be a tuple of corresponding variables. We write $S_r = (S_{i,r})_{i \in I}$ and $x_r = (x_{i,r})_{i \in I}$, where $S_{i,r} = S_i$ and $x_{i,r} = x_i$ whenever $S_i$ is already a real sort.

Given a tuple $a = (a_i)_{i \in I}$ in $\MM$ of sort $S_r$, we write $[a]$ for the corresponding tuple of equivalence classes of sort $S$. That is, $[a] = ([a_i])_{i \in I}$, where $[a_i]$ is defined as follows: if $S_i = S_E$ is a hyperimaginary sort then $[a_i]$ is the $E$-equivalence class represented by $a_i$, otherwise $[a_i] = a_i$.
\end{definition}
With the above definition the description of the new relation symbols in $\L_\E$ becomes simpler: for every tuple of variables $x$ and every $\L$-formula $\phi(x_r)$ we have a relation symbol $R_\phi(x)$, and we have $\MM^\E \models R_\phi([a])$ if and only if there is $b$ such that $[b] = [a]$ and $\MM \models \phi(b)$.
\begin{lemma}
\thlabel{hyperimaginary-formulas-as-real-partial-types}
Let $\phi(x)$ be an $\L_\E$-formula. Then there is a set of $\L$-formulas $\Sigma_\phi(x_r)$ such that $\MM \models \Sigma_\phi(a)$ if and only if $\MM^\E \models \phi([a])$.
\end{lemma}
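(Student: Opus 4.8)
The statement asserts that every $\L_\E$-formula $\phi(x)$ can be ``pulled back'' to a set of $\L$-formulas $\Sigma_\phi(x_r)$ on the representatives, in the sense that $\MM \models \Sigma_\phi(a) \iff \MM^\E \models \phi([a])$. The natural approach is induction on the complexity of $\phi$, building $\Sigma_\phi$ from the pieces. The base case splits: for an atomic $\L$-formula (in real variables only) we simply take $\Sigma_\phi = \{\phi\}$; for one of the new relation symbols $R_\psi(x)$ the definition of $\MM^\E$ tells us $\MM^\E \models R_\psi([a])$ iff there is $b$ with $[b]=[a]$ and $\MM \models \psi(b)$, so we take $\Sigma_{R_\psi}(x_r)$ to be the set of $\L$-formulas expressing ``there exists $y_r$ with $\bigwedge_i E_i(x_{i,r}, y_{i,r})$ and $\psi(y_r)$'', which is a partial type by \thref{combine-partial-types}(iii) applied to the type-definable equivalence relations witnessing $[b]=[a]$ together with $\psi$.

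For the inductive step, conjunction is handled by $\Sigma_{\phi_1 \wedge \phi_2} = \Sigma_{\phi_1} \cup \Sigma_{\phi_2}$ (using \thref{combine-partial-types}(ii)); disjunction by the disjunction of sets of formulas from \thref{combine-partial-types}(i); $\top$ and $\bot$ are trivial. The one genuinely delicate case is the existential quantifier. Suppose $\phi(x)$ is $\exists z\, \chi(x, z)$ where $z$ is a single $\L_\E$-variable (of real or hyperimaginary sort) and we have $\Sigma_\chi(x_r, z_r)$ by induction. We want $\MM^\E \models \exists z\, \chi([a], z)$ iff there is some class $[c]$ with $\MM^\E \models \chi([a],[c])$ iff there is a representative $c$ with $\MM \models \Sigma_\chi(a, c)$, so the correct definition is $\Sigma_\phi(x_r) = \{\exists z_r\, \sigma(x_r, z_r) : \sigma \in \Sigma_\chi(x_r, z_r)\}$, the existential quantification of a set of formulas from \thref{combine-partial-types}(iii). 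One must check that quantifying over an arbitrary representative $c$ is harmless: if $[c]$ is an $E$-class and $\MM^\E \models \chi([a],[c])$, then for \emph{any} representative $c'$ of $[c]$ we still have $\MM^\E \models \chi([a],[c'])$ since $[c'] = [c]$, so by induction $\MM \models \Sigma_\chi(a, c')$; conversely any witness to $\exists z_r\, \Sigma_\chi(a, z_r)$ supplies a representative and hence a class.

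The main obstacle I anticipate is purely bookkeeping rather than conceptual: keeping track of which variables are real and which are hyperimaginary, ensuring the substitution $x \mapsto x_r$ interacts correctly with the recursion (a single hyperimaginary variable $z$ corresponds to a possibly infinite tuple $z_r$, but the quantifiers in $\Sigma_\phi$ only ever mention finitely many of those variables at a time because each individual $\sigma \in \Sigma_\chi$ is a finitary $\L$-formula), and justifying at each step that the operations on sets of formulas produce something with the claimed defining property. All of these are precisely the constructions catalogued in \thref{combine-partial-types}, so once the induction is set up carefully the verification is routine. I would also remark that, by \thref{set-of-formulas-implies-formula-then-small-set-implies-it} or the general bounds in the excerpt, one could arrange $|\Sigma_\phi(x_r)| \le |T|$ if desired, though the statement as given does not require this.
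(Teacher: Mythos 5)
Your proposal is correct and uses the same two key ingredients as the paper: the observation that each occurrence of an $R_\chi$ may be witnessed by \emph{different} representatives of the same classes (so one introduces fresh copies of the real variables linked by the partial types expressing $[x_r]=[x'_r]$), and the closure of type-definable conditions under $\vee$, $\wedge$ and (infinitary) $\exists$ from \thref{combine-partial-types}. The difference is organisational: you run a structural induction on $\phi$, whereas the paper avoids the induction by first normalising $\phi$ to a finite disjunction of formulas of the form $\exists y(\varepsilon(x,y)\wedge\bigwedge_{i\in I}R_{\chi_i}(x,y))$ and then writing down $\Sigma_\phi$ for such a formula in one step (your inductive construction, once the quantifiers are pulled to the front, produces essentially the paper's $\Gamma_\phi$). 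Your route is slightly more modular and generalises without change if one later enlarges the signature; the paper's route keeps all the bookkeeping about which representatives go with which conjunct in a single displayed definition, at the cost of a preliminary normal-form remark.

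One small base case is missing from your induction as written: atomic $\L_\E$-formulas of the form $z=z'$ where $z,z'$ are variables of a hyperimaginary sort $S_E$ (these are neither atomic $\L$-formulas nor of the form $R_\psi$). For these one takes $\Sigma_{z=z'}(z_r,z'_r)=E(z_r,z'_r)$, since $\MM^\E\models [c]=[c']$ iff $\MM\models E(c,c')$; this is exactly the role played by the conjunct $\varepsilon$ and the partial type $E_\varepsilon$ in the paper's normal form. With that case added (and mixed real/hyperimaginary equalities excluded by sorting), your induction goes through as you describe.
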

\begin{proof}
We first assume that $\phi(x)$ is of the form
\[
\exists y \left( \varepsilon(x, y) \wedge \bigwedge_{i \in I} R_{\chi_i}(x, y) \right),
\]
where $\varepsilon(x, y)$ is a conjunction of equalities and $\chi_i(x_r, y_r)$ is an $\L$-formula for each $i \in I$.

We define a set of formulas $\Gamma_\phi$ as follows. For each $i \in I$ we let $x_i$ and $y_i$ be copies of $x_r$ and $y_r$ respectively. We let $E_\varepsilon(x_r, y_r)$ be the union of partial types in $\E$ expressing $\varepsilon([x_r], [y_r])$. Finally, let $E_x(x_r, x'_r)$ and $E_y(y_r, y'_r)$ be the partial types expressing $[x_r] = [x'_r]$ and $[y_r] = [y'_r]$ respectively. Then we let $\Gamma_\phi(x_r, y_r, (x_i)_{i \in I}, (y_i)_{i \in I})$ be
\[
\left\{ \bigwedge_{i \in I} \chi_i(x_i, y_i) \right\} \cup E_\varepsilon(x_r, y_r) \cup \bigcup \{E_x(x_r, x_i) : i \in I\} \cup \bigcup \{E_y(y_r, y_i) : i \in I\}.
\]
Let $\Sigma_\phi(x_r)$ express the following (see also \thref{combine-partial-types})
\[
\exists y_r (x_i)_{i \in I} (y_i)_{i \in I} \Gamma_\phi(x_r, y_r, (x_i)_{i \in I}, (y_i)_{i \in I}).
\]
We claim that $\Sigma_\phi$ is as required. So suppose that $\MM \models \Sigma_\phi(a)$, then we find $b$, $(a_i)_{i \in I}$ and $(b_i)_{i \in I}$ such that $\MM \models \Gamma_\phi(a, b, (a_i)_{i \in I}, (b_i)_{i \in I})$. Let $i \in I$, then by construction $\MM^\E \models R_{\chi_i}([a_i], [b_i])$. As $\MM \models E_x(a, a_i)$ and $\MM \models E_y(b, b_i)$, we have $[a] = [a_i]$ and $[b] = [b_i]$, and so $\MM^\E \models R_{\chi_i}([a], [b])$. Since $\MM \models E_\varepsilon(a, b)$, we also have $\MM^\E \models \varepsilon([a], [b])$, and so indeed $\MM^\E \models \phi([a])$.

For the converse, we assume that $\MM^\E \models \phi([a])$. Then there is $b$ such that $\MM^\E \models \varepsilon([a], [b]) \wedge \bigwedge_{i \in I} R_{\chi_i}([a], [b])$. So $\MM \models E_\varepsilon(a, b)$. Furthermore, for every $i \in I$ there are $a_i$ and $b_i$ such that $\MM \models \chi_i(a_i, b_i)$ with $[a_i] = [a]$ and $[b_i] = [b]$. The latter means that $\MM \models E_x(a, a_i)$ and $\MM \models E_y(b, b_i)$. Hence $\MM \models \Gamma_\phi(a, b, (a_i)_{i \in I}, (b_i)_{i \in I})$ and so $\MM \models \Sigma_\phi(a)$.

We assumed $\phi$ to be of a particular form. Recall that a regular formula is one that is built from atomic formulas, conjunction and existential quantification. Any regular $\L_\E$-formula is logically equivalent to a $\phi$ of the assumed form. This is quickly seen because any atomic $\L$-formula $\chi(x, y)$ is equivalent to $R_\chi(x, y)$. As every positive formula is logically equivalent to a finite disjunction of regular formulas, we only need to define $\Sigma_{\phi_1 \vee \phi_2}(x)$, where $\phi_1(x)$ and $\phi_2(x)$ are of the above form, which can be done using \thref{combine-partial-types}(i).
\end{proof}
\begin{lemma}
\thlabel{hyperimaginary-partial-types-as-real-partial-types}
Let $\Gamma(x)$ be a set of $\L_\E$-formulas. Then there is a set of $\L$-formulas $\Sigma_\Gamma(x_r)$ such that $\MM \models \Sigma_\Gamma(a)$ if and only if $\MM^\E \models \Gamma([a])$.
\end{lemma}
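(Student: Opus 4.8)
The plan is to reduce the statement for arbitrary sets of formulas to the already-established case of a single formula, \thref{hyperimaginary-formulas-as-real-partial-types}, by taking unions. First I would apply \thref{hyperimaginary-formulas-as-real-partial-types} to each $\phi(x) \in \Gamma(x)$ to obtain a set of $\L$-formulas $\Sigma_\phi(x_r)$ with the property that $\MM \models \Sigma_\phi(a)$ if and only if $\MM^\E \models \phi([a])$. Then I would simply set
\[
\Sigma_\Gamma(x_r) = \bigcup_{\phi(x) \in \Gamma(x)} \Sigma_\phi(x_r).
\]

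Next I would verify that this $\Sigma_\Gamma$ works, which is now essentially bookkeeping using \thref{combine-partial-types}(ii). For any $a$ in $\MM$ we have $\MM \models \Sigma_\Gamma(a)$ if and only if $\MM \models \Sigma_\phi(a)$ for every $\phi(x) \in \Gamma(x)$, which by the choice of each $\Sigma_\phi$ holds if and only if $\MM^\E \models \phi([a])$ for every $\phi(x) \in \Gamma(x)$, which is precisely $\MM^\E \models \Gamma([a])$. One small point worth spelling out is that the tuple of variables $x_r$ should be understood uniformly: each $\Sigma_\phi$ only really mentions the (finitely many) variables actually occurring in $\phi$, but all of these are among the coordinates of the single tuple $x_r$ associated to $x$ via \thref{hyperimaginary-projection-notation}, so the union makes sense as a set of formulas in the variables $x_r$.

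I do not expect any genuine obstacle here; the real work was already done in \thref{hyperimaginary-formulas-as-real-partial-types}, and passing to sets of formulas is routine because both sides of the equivalence are closed under arbitrary conjunctions in the appropriate sense (semantically, $\MM^\E \models \Gamma([a])$ just means all members of $\Gamma$ hold, and likewise for $\Sigma_\Gamma$). The only thing to be a little careful about is not claiming that $\Sigma_\Gamma$ is small: it need not be, but that is fine, as nothing in the statement requires it. So the proof is short: invoke \thref{hyperimaginary-formulas-as-real-partial-types} formula-by-formula, take the union, and check the two directions of the equivalence using \thref{combine-partial-types}(ii).
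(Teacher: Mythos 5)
Your proof is correct and is exactly the paper's argument: apply \thref{hyperimaginary-formulas-as-real-partial-types} to each $\phi \in \Gamma$ and take $\Sigma_\Gamma = \bigcup_{\phi \in \Gamma} \Sigma_\phi$, with the verification being immediate. Nothing more is needed.
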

\begin{proof}
Define
\[
\Sigma_\Gamma(x_r) = \bigcup_{\phi \in \Gamma} \Sigma_\phi(x_r),
\]
where $\Sigma_\phi$ is as in \thref{hyperimaginary-formulas-as-real-partial-types}.
\end{proof}
\begin{lemma}
\thlabel{hyperimaginary-types-equal-upgrades-to-real-types-equal}
We have $\tp_{\L_\E}([a]; \MM^\E) = \tp_{\L_\E}([b]; \MM^\E)$ if and only if there is $b'$ such that $\tp_{\L}(a; \MM) = \tp_{\L}(b'; \MM)$ and $[b'] = [b]$. In particular, if $a$ and $b$ consist only of real elements then $\tp_{\L_\E}(a; \MM^\E) = \tp_{\L_\E}(b; \MM^\E)$ if and only if $\tp_{\L}(a; \MM) = \tp_{\L}(b; \MM)$.
\end{lemma}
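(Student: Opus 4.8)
The plan is to reduce an equality of $\L_\E$-types to an equality of $\L$-types by using the translation of $\L_\E$-formulas into sets of $\L$-formulas provided by \thref{hyperimaginary-formulas-as-real-partial-types} and \thref{hyperimaginary-partial-types-as-real-partial-types}, combined with a compactness argument to realise the right $\L$-type among the representatives of $[b]$.

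First I would handle the easy direction. Suppose there is $b'$ with $\tp_\L(a; \MM) = \tp_\L(b'; \MM)$ and $[b'] = [b]$. For any $\L_\E$-formula $\phi(x)$, \thref{hyperimaginary-formulas-as-real-partial-types} gives a set of $\L$-formulas $\Sigma_\phi(x_r)$ with $\MM \models \Sigma_\phi(c)$ iff $\MM^\E \models \phi([c])$, for any real tuple $c$. Then
\[
\MM^\E \models \phi([a]) \iff \MM \models \Sigma_\phi(a) \iff \MM \models \Sigma_\phi(b') \iff \MM^\E \models \phi([b']) = \phi([b]),
\]
where the middle equivalence is because $a$ and $b'$ have the same $\L$-type. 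Hence $\tp_{\L_\E}([a]; \MM^\E) = \tp_{\L_\E}([b]; \MM^\E)$.

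For the converse, assume $\tp_{\L_\E}([a]; \MM^\E) = \tp_{\L_\E}([b]; \MM^\E)$. I want a real tuple $b'$ representing $[b]$ with $\tp_\L(a; \MM) = \tp_\L(b'; \MM)$. Consider the set of $\L$-formulas $\Sigma(y_r) = \tp_\L(a; \MM) \cup E_{[b]}(y_r, b)$, where $E_{[b]}(y_r, b)$ is the partial type over $b$ expressing $[y_r] = [b]$ (i.e.\ the union of the relevant type-definable equivalence relations applied coordinatewise, asserting $y_r$ represents the same hyperimaginary tuple as $b$). The claim is that $\Sigma(y_r)$ is finitely satisfiable in $\MM$; then by saturation (or compactness, moving to a bigger monster and pulling back, but $\MM$ is already saturated enough) there is $b' \in \MM$ realising $\Sigma$, and this $b'$ is the required tuple. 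To check finite satisfiability, take finitely many $\chi_1(y_r), \dots, \chi_n(y_r) \in \tp_\L(a; \MM)$ and a finite piece $\varepsilon(y_r, b)$ of $E_{[b]}(y_r, b)$. Let $\phi(x)$ be the $\L_\E$-formula $\exists y\,(\varepsilon(x_r, y_r) \wedge R_{\chi_1 \wedge \dots \wedge \chi_n}(x_r, \dots))$ — more precisely, the $\L_\E$-formula asserting $R_{\chi_1 \wedge \dots \wedge \chi_n \wedge \varepsilon^{\mathrm{real}}}$ holds at $[b]$, built so that $\MM^\E \models \phi([b])$ exactly because $b$ itself witnesses $\chi_i(a)$-type transported: concretely, $\MM^\E \models R_{\chi_1\wedge\cdots\wedge\chi_n}([a])$ since $a$ realises each $\chi_i$, hence $\MM^\E \models R_{\chi_1\wedge\cdots\wedge\chi_n}([b])$ by equality of $\L_\E$-types, so there is a representative $b''$ of $[b]$ with $\MM \models \chi_1(b'') \wedge \dots \wedge \chi_n(b'')$; since $[b''] = [b]$ it also satisfies the finite piece $\varepsilon(b'', b)$, giving finite satisfiability. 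Then the last sentence of the lemma is the special case where $a, b$ are real, so $[a] = a$, $[b] = b$, and $[b'] = b$ forces $b' = b$ (equality is in the language), yielding $\tp_\L(a;\MM) = \tp_\L(b;\MM)$ directly.

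The main obstacle I anticipate is bookkeeping with the $R_\phi$ relation symbols: one must be careful that finite conjunctions of real formulas $\chi_i$ together with the finite equality piece $\varepsilon$ can indeed be packaged into a single $R_\psi([b])$-statement in $\L_\E$ (combining $\bigwedge \chi_i$ with the real formula underlying $\varepsilon$ into one $\L$-formula $\psi$), and that "$\MM^\E \models R_\psi([a])$" unwinds via \thref{hyperimaginary-extension-of-monster} to "some representative of $[a]$ — namely $a$ — satisfies $\psi$ in $\MM$". Once that unwinding is done cleanly the finite satisfiability is immediate, and the rest is a routine application of saturation of $\MM$. No genuinely hard step is involved; it is the translation lemmas doing all the work.
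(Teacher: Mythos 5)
Your proposal is correct and follows essentially the same route as the paper: the converse direction is exactly the paper's compactness/saturation argument, realising $\tp_\L(a)\cup E(y_r,b)$ by transferring $R_{\chi_1\wedge\cdots\wedge\chi_n}$ from $[a]$ to $[b]$ and noting a representative $b''$ of $[b]$ automatically satisfies all of $E(y_r,b)$, while the easy direction is the paper's use of the translation lemma (per formula $\Sigma_\phi$ rather than per type $\Sigma_q$, which is immaterial). Your passing attempt to build an $\L_\E$-formula containing $\varepsilon(x_r,y_r)$ is not needed (and would not typecheck), but you then give the correct argument, so there is no gap.
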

\begin{proof}
We first prove the left to right direction. Write $p(x_r) = \tp_\L(a; \MM)$ and
\[
\Sigma(x_r) = p(x_r) \cup E(x_r, b),
\]
where $E(x_r, x'_r)$ is the set of formulas expressing $[x_r] = [x'_r]$. It is enough to show that $\Sigma(x_r)$ is finitely satisfiable. Let $\phi(x_r) \in p(x_r)$. Then $\MM^\E \models R_\phi([a])$, so $\MM^\E \models R_\phi([b])$. So there is $b'$ with $[b'] = [b]$, that is $\MM \models E(b', b)$, and $\MM \models \phi(b')$, as required.

Conversely, let $b'$ be such that $\tp_{\L}(a; \MM) = \tp_{\L}(b'; \MM)$ and $[b'] = [b]$. It suffices to prove that $\tp_{\L_\E}([a]; \MM^\E) = \tp_{\L_\E}([b']; \MM^\E)$. Write $q(x) = \tp_{\L_\E}([a]; \MM^\E)$ and let $\Sigma_q(x_r)$ be as in \thref{hyperimaginary-types-equal-upgrades-to-real-types-equal}. Then $\Sigma_q(x_r) \subseteq \tp_{\L}(a; \MM) = \tp_{\L}(b'; \MM)$, and so $\MM^E \models q([b'])$. We thus have $\tp_{\L_\E}([a]; \MM^\E) = q(x) \subseteq \tp_{\L_\E}([b']; \MM^\E)$. A symmetric argument shows that $\tp_{\L_\E}([b']; \MM^\E) \subseteq \tp_{\L_\E}([a]; \MM^\E)$, which concludes our proof.
\end{proof}
\begin{lemma}
\thlabel{hyperimaginary-projection}
For every tuple of variables $x$ there is a partial $\L_\E$-type $\Xi(x_r, x)$ such that $\MM^\E \models \Xi(a, [a'])$ if and only if $[a'] = [a]$.
\end{lemma}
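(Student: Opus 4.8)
The goal is to produce, for each tuple of variables $x$, a partial $\L_\E$-type $\Xi(x_r, x)$ that defines the graph of the projection sending a real tuple $a$ of sort $S_{x,r}$ to the tuple of classes $[a]$ of sort $S_x$. The plan is to build $\Xi$ componentwise and then take a union, using \thref{combine-partial-types}(ii). For a single variable $x_i$ of a real sort the required condition $[a'] = [a]$ is just $x_{i,r} = x_i$, which is a single formula. So the work is all in the hyperimaginary-sort case: fix $E \in \E$ and a variable $y$ of sort $S_E$; I want a partial $\L_\E$-type $\Xi_E(y_r, y)$ such that $\MM^\E \models \Xi_E(b, [b'])$ iff $[b'] = [b]$ (as $E$-classes), equivalently iff $\MM \models E(b, b')$.

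First I would observe that the equivalence relation $E(u, v)$, being a type-definable equivalence relation, is a set of $\L$-formulas $\{\varepsilon_j(u, v)\}_{j \in J}$. For each such $\L$-formula $\varepsilon_j(u, v)$ — where $u, v$ have the real sort $S_{E,r}$ matching representatives of $E$-classes — the signature $\L_\E$ contains the relation symbol $R_{\varepsilon_j}(u, v)$ of sort $S_{E,r} \times S_E$ (taking the first argument real and the second of sort $S_E$; this is an instance of the general schema in \thref{hyperimaginary-language}, since $\varepsilon_j$ is an $\L$-formula in the representing variables). By the interpretation in \thref{hyperimaginary-extension-of-monster}, $\MM^\E \models R_{\varepsilon_j}(b, [b'])$ iff there is a representative $b''$ of the class $[b']$ with $\MM \models \varepsilon_j(b, b'')$, i.e.\ iff $\MM \models \varepsilon_j(b, b')$ for some (equivalently, by $E$-invariance of each $\varepsilon_j$ — wait, $\varepsilon_j$ need not itself be $E$-invariant, only the whole set $E$ is). So I must be slightly careful: $\MM^\E \models R_{\varepsilon_j}(b, [b'])$ says $\exists b'' \, (E(b', b'') \wedge \varepsilon_j(b, b''))$. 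Then set
\[
\Xi_E(y_r, y) = \{\, R_{\varepsilon_j}(y_r, y) : j \in J \,\}.
\]
I claim $\MM^\E \models \Xi_E(b, [b'])$ iff $\MM \models E(b, b')$. If $\MM \models E(b, b')$ then for each $j$, taking $b'' = b'$ gives $\MM \models E(b', b') \wedge \varepsilon_j(b, b')$ (using $\varepsilon_j \in E$ so $\MM \models \varepsilon_j(b, b')$), hence $\MM^\E \models R_{\varepsilon_j}(b, [b'])$. Conversely, if $\MM^\E \models \Xi_E(b, [b'])$, then for each $j \in J$ there is $b''_j$ with $\MM \models E(b', b''_j) \wedge \varepsilon_j(b, b''_j)$; since $E$ is an equivalence relation and $\varepsilon_j \in E$ for all $j$, and $E(b', b''_j)$ holds, a compactness/finite-satisfiability argument as in \thref{hyperimaginary-formulas-as-real-partial-types} shows $\{E(b', v)\} \cup \{\varepsilon_j(b, v) : j \in J\}$ is finitely satisfiable, so there is a single $b''$ with $\MM \models E(b', b'')$ and $\MM \models E(b, b'')$ (the latter because $\{\varepsilon_j(b, b'') : j \in J\} = E(b, b'')$), whence $\MM \models E(b, b')$ by symmetry and transitivity.

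Then I would assemble the full type: for a general tuple $x = (x_i)_{i \in I}$ with corresponding $x_r = (x_{i,r})_{i \in I}$, set
\[
\Xi(x_r, x) = \bigcup_{i \in I} \Xi_i(x_{i,r}, x_i),
\]
where $\Xi_i$ is $\{x_{i,r} = x_i\}$ if $S_{x_i}$ is real and $\Xi_{E}$ from above if $S_{x_i} = S_E$. By \thref{combine-partial-types}(ii) a tuple $(a, [a'])$ satisfies $\Xi$ iff each component satisfies $\Xi_i$, which by the above happens iff $[a'_i] = [a_i]$ for all $i$, i.e.\ iff $[a'] = [a]$. Note this also silently packages the claim that $[a']$ ranges over exactly the hyperimaginary tuples of sort $S_x$, which is automatic since any element of a sort $S_E$ in $\MM^\E$ is by definition an $E$-class with a representative.

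\textbf{Main obstacle.} The one genuinely delicate point is the "conversely" direction above: from $\MM^\E \models \Xi_E(b, [b'])$ one gets, for \emph{each} formula $\varepsilon_j$ separately, a representative $b''_j$ of $[b']$ witnessing $\varepsilon_j(b, b''_j)$, and these representatives could a priori differ for different $j$. Pulling them together into a single $b''$ (so as to conclude $E(b, b')$ genuinely holds, not just that $b$ is "approximately" in the class) requires a compactness argument exactly in the style already used in \thref{hyperimaginary-formulas-as-real-partial-types}: one checks the set $\{E(b', v)\} \cup \{\varepsilon_j(b, v) : j\}$ is finitely satisfiable (each finite piece is satisfied by a suitable $b''_j$, or rather by $b'$ itself after noting the relevant finite conjunction of $\varepsilon_j$'s is implied by $E(b, v) \wedge E(v, b')$ together with $E(b,b')$ — but we are trying to prove $E(b,b')$, so instead: each finite subset $\{E(b', v), \varepsilon_{j_1}(b,v), \dots, \varepsilon_{j_n}(b,v)\}$; replace it, using that $E$ is generated as a type-definable relation, by finitely many $\varepsilon$'s and note finite satisfiability follows from the hypothesis applied to $\varepsilon_{j_1} \wedge \dots \wedge \varepsilon_{j_n} \in E$ if the set $E$ is closed under finite conjunction, which we may assume w.l.o.g.). So w.l.o.g.\ I would first replace $E$ by a logically equivalent set of $\L$-formulas closed under finite conjunctions, which legitimizes treating one $\varepsilon_j$ at a time; then finite satisfiability of $\{E(b',v)\} \cup \{\varepsilon_j(b,v)\}_j$ is immediate and compactness yields the single representative. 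Everything else is bookkeeping with \thref{hyperimaginary-language} and \thref{hyperimaginary-extension-of-monster}.
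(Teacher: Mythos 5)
Your proposal is correct and is essentially the paper's own proof: the paper likewise takes $\Xi(x_r,x) = \{R_\varepsilon(x_r,x) : \varepsilon \in E\}$, where $E(x_r,x_r')$ is the partial type expressing $[x_r]=[x_r']$, gets the forward direction by using the representative itself as witness, and handles the converse by exactly your compactness step, showing $E(a,x_r) \cup E(x_r,a')$ is finitely satisfiable and extracting a single common representative. Your componentwise assembly via \thref{combine-partial-types}(ii) and your explicit closing of $E$ under finite conjunctions (which legitimately changes only the choice of $\Xi$, not the relation it must detect) are just a slightly more spelled-out packaging of the same argument, with the latter making explicit the point that lets one pass from one witness per formula to a single witness.
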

\begin{proof}
Let $E(x_r, x'_r)$ be the set of formulas that expresses $[x_r] = [x'_r]$. Define
\[
\Xi(x_r, x) = \{ R_\varepsilon(x_r, x) : \varepsilon \in E \}.
\]
We prove that $\Xi$ is as required. Suppose that $[a'] = [a]$. Then $\MM \models E(a, a')$. Let $\varepsilon \in E$, then $\MM \models \varepsilon(a, a')$, and so $\MM^\E \models R_\varepsilon(a, [a'])$. Thus indeed $\MM^\E \models \Xi(a, [a'])$.

Conversely, suppose that $\MM^\E \models \Xi(a, [a'])$. Consider the partial type
\[
\Gamma(x_r) = E(a, x_r) \cup E(x_r, a').
\]
For any $\varepsilon(a, x_r) \in E(a, x_r)$ we have $\MM^\E \models R_\varepsilon(a, [a'])$. So there is $a^*$ with $[a^*] = [a']$ and $\MM \models \varepsilon(a, a^*)$. Therefore, $\MM \models \varepsilon(a, a^*) \wedge E(a^*, a')$. We thus see that $\Gamma(x_r)$ is finitely satisfiable, so there is a realisation $a''$, and we conclude that $[a] = [a''] = [a']$.
\end{proof}
\begin{lemma}
\thlabel{automorphism-extends-to-hyperimaginaries-uniquely}
Any automorphism $f: \MM \to \MM$ extends uniquely to an automorphism $f^\E: \MM^\E \to \MM^\E$ by setting $f^\E([a]) = [f(a)]$.
\end{lemma}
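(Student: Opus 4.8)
The plan is to verify that the formula $f^\E([a]) = [f(a)]$ gives a well-defined bijection on each sort, that it is an $\L_\E$-automorphism, and that any extension of $f$ must agree with it. First I would check well-definedness: if $[a] = [a']$ as elements of a hyperimaginary sort $S_E$, then $\MM \models E(a, a')$; since $E$ is a type-definable equivalence relation without parameters and $f$ is an automorphism of $\MM$, we get $\MM \models E(f(a), f(a'))$, hence $[f(a)] = [f(a')]$. The same applies componentwise to tuples, using \thref{hyperimaginary-projection-notation}. On the real sorts $f^\E$ restricts to $f$, so it is the identity-preserving extension on those coordinates. Bijectivity is immediate: $(f^{-1})^\E$ is a two-sided inverse, since $[f^{-1}(f(a))] = [a]$ and $[f(f^{-1}(a))] = [a]$.

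Next I would verify that $f^\E$ is an $\L_\E$-isomorphism. There are no new constant or function symbols in $\L_\E$ (the projection maps are only recovered as partial types, cf.\ \thref{hyperimaginary-projection}), so it suffices to check that $f^\E$ preserves and reflects each relation symbol $R_\phi$. Given $\L$-formula $\phi(x_r)$ and a tuple $[a]$ of the appropriate sorts, by definition $\MM^\E \models R_\phi([a])$ iff there is $b$ with $[b] = [a]$ and $\MM \models \phi(b)$. Applying $f$: such a $b$ exists iff $f(b)$ witnesses $\MM \models \phi(f(b))$ with $[f(b)] = [f(a)]$, i.e.\ iff $\MM^\E \models R_\phi([f(a)]) = R_\phi(f^\E([a]))$; the reverse implication uses $f^{-1}$ in the same way. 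Hence $f^\E$ is a homomorphism both ways, so an isomorphism, and since its domain and codomain are both $\MM^\E$ it is an automorphism. (That $\MM^\E \models T^\E$ is automatic since $T^\E$ was \emph{defined} as the h-inductive theory of $\MM^\E$, so there is nothing extra to check for it to land in the right category.)

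Finally, uniqueness: suppose $g: \MM^\E \to \MM^\E$ is an automorphism with $g \restriction \MM = f$. For any hyperimaginary element $[a] \in S_E$ with representative $a$ in $\MM$, apply the partial type $\Xi(x_r, x)$ from \thref{hyperimaginary-projection}: we have $\MM^\E \models \Xi(a, [a])$. Since $g$ is an automorphism fixing the setup of $\Xi$ (it is a partial $\L_\E$-type without parameters) and $g(a) = f(a)$ on the real coordinates, applying $g$ yields $\MM^\E \models \Xi(f(a), g([a]))$. But $\Xi(f(a), -)$ is satisfied by exactly one element, namely $[f(a)]$, so $g([a]) = [f(a)] = f^\E([a])$. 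Componentwise this determines $g$ on all of $\MM^\E$, so $g = f^\E$.

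I expect the only mildly delicate point to be the bookkeeping in the $R_\phi$-preservation step — making sure the existential witness $b$ transports correctly under $f$ in both directions — but this is routine once one unwinds \thref{hyperimaginary-extension-of-monster}; there is no real obstacle.
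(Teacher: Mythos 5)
Your proposal is correct and follows essentially the same route as the paper: well-definedness via invariance of $E$ under $f$, preservation of the $R_\phi$ by transporting a representative (with the inverse handled through $(f^{-1})^\E$), and uniqueness via the projection partial type $\Xi$ of \thref{hyperimaginary-projection}. No gaps.
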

\begin{proof}
We first prove that $f^\E$ is well defined and preserves the new relation symbols in $\L_\E$. The fact that $f^\E$ is an automorphism then follows from applying the same construction to $f^{-1}$, yielding an inverse $(f^{-1})^\E$ to $f^\E$ that also preserves the new relation symbols in $\L_\E$.
\begin{itemize}
\item \underline{Well defined}. Let $E \in \E$ and let $a$ and $a'$ be tuples in $\MM$ matching the variables in $E$ such that $[a] = [a']$. Then $\MM \models E(a, a')$, and so $\MM \models E(f(a), f(a'))$. Therefore $[f(a)] = [f(a')]$, showing that $f^\E$ is well defined.
\item \underline{Preservation of relation symbols}. Preservation of equality is just being well defined. Suppose that $\MM^\E \models R_\phi([a])$. Then there is $a'$ such that $[a'] = [a]$ and $\MM \models \phi(a')$. So $\MM \models \phi(f(a'))$ and thus $\MM^\E \models R_\phi([f(a')])$. We conclude by $[f(a')] = f^\E([a']) = f^\E([a])$.
\end{itemize}
We are left to prove that $f^\E$ is unique. Suppose that $g: \MM^\E \to \MM^\E$ is an automorphism extending $f$. For any tuple $a$ we have that $\MM^\E \models \Xi(a, [a])$, by \thref{hyperimaginary-projection}. So since $g$ is an automorphism, we must have $\MM^\E \models \Xi(g(a), g([a]))$. That is, $g([a]) = [g(a)] = [f(a)] = f^\E([a])$, as required.
\end{proof}
\begin{theorem}
\thlabel{hyperimaginary-monster}
The structure $\MM^\E$ is a monster model of $T^\E$.
\end{theorem}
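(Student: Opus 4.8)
The plan is to verify directly the three defining properties of a monster model from \thref{monster-model}---being positively closed, very saturated, and very homogeneous---for $\MM^\E$ viewed as a model of $T^\E$, pushing each question down to the corresponding (already settled) question about $\MM$ via the translation results \thref{hyperimaginary-formulas-as-real-partial-types}, \thref{hyperimaginary-partial-types-as-real-partial-types} and \thref{hyperimaginary-types-equal-upgrades-to-real-types-equal}. That $\MM^\E \models T^\E$ is immediate from the definition of $T^\E$ as the set of h-inductive $\L_\E$-sentences true in $\MM^\E$; moreover, for an h-universal $\L_\E$-sentence $\chi$ we have $T^\E \models \chi$ if and only if $\MM^\E \models \chi$, which is what makes verifying obstructions tractable. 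I would also remark that $T^\E$ has JCP, as required for \thref{monster-model} to even apply: once $\MM^\E$ is known to be a p.c.\ model of $T^\E$, one checks that $(T^\E)^\pc$ is precisely the set of h-inductive sentences true in $\MM^\E$, so that \thref{jcp}(v) applies.

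For positive closedness I would use \thref{pc-model}(iii). Given a tuple $[a]$ in $\MM^\E$ and an $\L_\E$-formula $\phi(x)$ with $\MM^\E \not\models \phi([a])$, take the set of $\L$-formulas $\Sigma_\phi(x_r)$ from \thref{hyperimaginary-formulas-as-real-partial-types}, so that $\MM \models \Sigma_\phi(c)$ if and only if $\MM^\E \models \phi([c])$ for real tuples $c$; in particular $\Sigma_\phi$ is invariant under the equivalence relations defining the hyperimaginary sorts. Choosing a representative $a$ of $[a]$, there is $\chi(x_r) \in \Sigma_\phi(x_r)$ with $\MM \not\models \chi(a)$, and since $\MM$ is p.c.\ of $T$ there is an $\L$-obstruction $\theta(x_r)$ of $\chi(x_r)$ with $\MM \models \theta(a)$. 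Then $R_\theta(x)$ is the required obstruction of $\phi(x)$: clearly $\MM^\E \models R_\theta([a])$, and if some $[c]$ satisfied both $\phi(x)$ and $R_\theta(x)$ in $\MM^\E$, then picking a representative $c'$ of $[c]$ with $\MM \models \theta(c')$ and using the $E$-invariance of $\Sigma_\phi$ to deduce $\MM \models \chi(c')$ would contradict $T \models \neg\exists x_r(\chi \wedge \theta)$. Hence $\MM^\E \models \neg\exists x(\phi(x) \wedge R_\theta(x))$, so this h-universal sentence lies in $T^\E$, and \thref{pc-model}(iii) gives that $\MM^\E$ is p.c.

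For very saturation, given a small $A \subseteq \MM^\E$ and a small finitely satisfiable set of $\L_\E$-formulas $\Sigma(x)$ over $A$, I would replace the (small tuple of) parameters $[b]$ used in $\Sigma$ by variables $y$, apply \thref{hyperimaginary-partial-types-as-real-partial-types} to the resulting parameter-free $\L_\E$-type $\Sigma(x,y)$ to get an $\L$-type $\Sigma'(x_r,y_r)$, and substitute for $y_r$ a small real tuple $b$ of representatives. Using that every element of $\MM^\E$ is of the form $[a']$ for a real tuple $a'$, one checks $\Sigma'(x_r,b)$ is finitely satisfiable in $\MM$; very saturation of $\MM$ together with \thref{saturation-extend-variables} then yields $c$ in $\MM$ with $\MM \models \Sigma'(c,b)$, and $[c]$ realises $\Sigma(x)$ in $\MM^\E$. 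For very homogeneity, given a partial immersion $f\colon \MM^\E \to \MM^\E$ with small domain, enumerate $\dom(f)$ as $[d]$ with $d$ a small real tuple and write $f([d]) = [d']$; since $f$ is a partial immersion, $\tp_{\L_\E}([d];\MM^\E) = \tp_{\L_\E}([d'];\MM^\E)$, so by \thref{hyperimaginary-types-equal-upgrades-to-real-types-equal} there is a real $d''$ with $\tp_\L(d;\MM) = \tp_\L(d'';\MM)$ and $[d''] = [d']$. Very homogeneity of $\MM$ gives an automorphism $g$ of $\MM$ with $g(d) = d''$, and by \thref{automorphism-extends-to-hyperimaginaries-uniquely} it extends to an automorphism $g^\E$ of $\MM^\E$ with $g^\E([d]) = [g(d)] = [d''] = [d'] = f([d])$, so $g^\E$ extends $f$.

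I expect the main obstacle to be the positive-closedness step---specifically, transporting an obstruction correctly between $\MM$ and $\MM^\E$. The naive lift of an $\L$-obstruction of $\chi$ need not obstruct $\phi$ in $\MM^\E$, because passing to $R_\theta$ allows one to change representatives; the fix is to exploit that $\Sigma_\phi$ (unlike a single formula $\chi$) is closed under the hyperimaginary equivalence relations, which is the one genuinely non-formal point. The saturation and homogeneity arguments, by contrast, are essentially bookkeeping layered on top of the translation lemmas already established.
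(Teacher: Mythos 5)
Your proof is correct and follows essentially the same route as the paper's: positive closedness via \thref{pc-model}(iii) by lifting a real obstruction through $\Sigma_\phi$ (your ``$E$-invariance'' of $\Sigma_\phi$ is exactly the biconditional in \thref{hyperimaginary-formulas-as-real-partial-types}, which is how the paper derives the contradiction), saturation by pulling the partial type back to $\MM$ via \thref{hyperimaginary-partial-types-as-real-partial-types}, and homogeneity via \thref{hyperimaginary-types-equal-upgrades-to-real-types-equal} together with \thref{automorphism-extends-to-hyperimaginaries-uniquely}. Your additional observation that $T^\E$ has JCP (via \thref{jcp}(v), once $\MM^\E$ is known to be p.c.) is correct and a sensible supplement that the paper leaves implicit.
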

\begin{proof}
We prove that $\MM^\E$ is a p.c.\ model of $T^\E$, and that it is just as saturated and homogeneous as $\MM$. So whatever formalism one prefers for the monster model (see \thref{monster-model}), $\MM^\E$ will be a monster model of $T^\E$ according to the same formalism. Let $\kappa$ (possibly not `small') be such that $\MM$ is positively $\kappa$-saturated and strongly positively $\kappa$-homogeneous. We may assume that $\kappa$ is bigger than the length of any tuple representing a hyperimaginary.

\underline{Positively closed}. We will prove (iii) in \thref{pc-model}. Suppose that $\MM^\E \not \models \phi([a])$. Then $\MM \not \models \Sigma_\phi(a)$, where $\Sigma_\phi$ is as in \thref{hyperimaginary-formulas-as-real-partial-types}. There is thus $\psi(x_r) \in \Sigma_\phi(x_r)$ such that $\MM \not \models \psi(a)$. Because $\MM$ is p.c.\ we find $\chi(x_r)$ with $T \models \neg \exists x_r(\psi(x_r) \wedge \chi(x_r))$ and $\MM \models \chi(a)$. Thus $\MM^\E \models R_\chi([a])$. We will conclude by proving that $T^\E \models \neg \exists x(\phi(x) \wedge R_\chi(x))$. Suppose for a contradiction that there is $[b]$ such that $\MM^\E \models \phi([b]) \wedge R_\chi([b])$. Then there is $b'$ with $[b'] = [b]$ and $\MM \models \chi(b')$. So $\MM^\E \models \phi([b'])$ and thus $\MM \models \Sigma_\phi(b')$. We then get $\MM \models \psi(b') \wedge \chi(b')$, contradicting our choice of $\chi$.

\underline{Saturation}. Let $\Gamma(x, y)$ be a set of $\L_\E$-formulas, and let $[b]$ be a tuple with $|[b]| < \kappa$ such that $\Gamma(x, [b])$ is finitely satisfiable in $\MM^\E$. Let $\Sigma_\Gamma(x_r, y_r)$ be as in \thref{hyperimaginary-partial-types-as-real-partial-types}. By the construction there we have
\[
\Sigma_\Gamma(x_r, y_r) = \bigcup_{\phi \in \Gamma} \Sigma_\phi(x_r, y_r),
\]
where $\Sigma_\phi$ is as in \thref{hyperimaginary-formulas-as-real-partial-types}. So finite satisfiability of $\Gamma(x, [b])$ implies finite satisfiability of $\Sigma_\Gamma(x_r, b)$. Since $|b| < \kappa$ we find a realisation $a$ in $\MM$ with $\MM \models \Sigma_\Gamma(a, b)$, and hence $\MM^\E \models \Gamma([a], [b])$.

\underline{Homogeneity}. Suppose that $\tp_{\L_\E}([a]) = \tp_{\L_\E}([b])$, with $|[a]| = |[b]| < \kappa$. By \thref{hyperimaginary-types-equal-upgrades-to-real-types-equal} there is $b'$ such that $\tp_{\L}(a) = \tp_{\L}(b')$ and $[b'] = [b]$. Let $f: \MM \to \MM$ be an automorphism such that $f(a) = b'$. By \thref{automorphism-extends-to-hyperimaginaries-uniquely} we find an automorphism $f^\E: \MM^\E \to \MM^\E$ such that $f^\E([a]) = [f(a)] = [b'] = [b]$, as required.
\end{proof}
By \thref{hyperimaginary-types-equal-upgrades-to-real-types-equal} the $\L_\E$-type of a tuple of real elements is determined by its $\L$-type. In particular, any sequence of (tuples of) real elements is indiscernible in $\MM$ if and only if it is indiscernible in $\MM^\E$, so there is no ambiguity in the statement (and proof) below.
\begin{lemma}
\thlabel{hyperimaginary-indiscernible-transfer}
A sequence $([a_i])_{i \in I}$ is indiscernible if and only if there is an indiscernible sequence $(b_i)_{i \in I}$ such that $[b_i] = [a_i]$ for all $i \in I$.
\end{lemma}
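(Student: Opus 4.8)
\textbf{Proof proposal for \thref{hyperimaginary-indiscernible-transfer}.}

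The plan is to prove both directions using the machinery already set up, especially \thref{hyperimaginary-types-equal-upgrades-to-real-types-equal} (which says that the $\L_\E$-type of a tuple of hyperimaginaries is determined by the $\L$-type of a suitable representing tuple of reals) and \thref{hyperimaginary-projection} (the partial $\L_\E$-type $\Xi(x_r, x)$ witnessing that a real tuple represents a given hyperimaginary tuple). The right-to-left direction is essentially immediate: if $(b_i)_{i \in I}$ is indiscernible (in $\MM$, equivalently in $\MM^\E$, as noted just before the statement), then for any $i_1 < \ldots < i_n$ and $j_1 < \ldots < j_n$ in $I$ we have $b_{i_1} \ldots b_{i_n} \equiv b_{j_1} \ldots b_{j_n}$ in $\MM$, hence also in $\MM^\E$, and applying the automorphism witnessing this (or just using that $f^\E$ maps $[b]$ to $[f(b)]$, \thref{automorphism-extends-to-hyperimaginaries-uniquely}) we get $[b_{i_1}] \ldots [b_{i_n}] \equiv [b_{j_1}] \ldots [b_{j_n}]$; since $[b_i] = [a_i]$ this shows $([a_i])_{i \in I}$ is indiscernible. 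Strictly one should use \thref{hyperimaginary-types-equal-upgrades-to-real-types-equal} or the projection $\Xi$ to pass from equality of $\L$-types of the $b$'s to equality of $\L_\E$-types of the $[b]$'s, but that is routine.

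For the left-to-right direction, suppose $([a_i])_{i \in I}$ is indiscernible in $\MM^\E$. I would first use compactness (or rather the saturation of $\MM$) together with the projection type $\Xi$ to choose real representatives $b_i$ with $[b_i] = [a_i]$ for each $i \in I$, and then fix the type I want the indiscernible sequence to have: let $p_n(x_1, \ldots, x_n) = \tp_\L(b_{i_1} \ldots b_{i_n})$ for some (any) increasing $n$-tuple $i_1 < \ldots < i_n$ from $I$ — but this is not yet well-defined, since different choices of reals representing $[a_{i_1}], \ldots, [a_{i_n}]$ can have different $\L$-types. The fix is to work not with a fixed $p_n$ but with the partial type asserting, for a fresh sequence $(x_i)_{i \in I}$ of real-sorted tuples, that $[x_{i_1}] \ldots [x_{i_n}] \equiv_{\L_\E} [a_{j_1}] \ldots [a_{j_n}]$ for all increasing $n$-tuples and all $n$ — this is expressible as an $\L$-partial type in the variables $x_i$ by combining the $\Xi$-type with the $\L_\E$-type of the $[a]$'s and then translating back to reals via \thref{hyperimaginary-partial-types-as-real-partial-types}. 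Call this partial type $\Pi((x_i)_{i \in I})$.

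The key step is then: $\Pi$ is finitely satisfiable (the $b_i$ themselves witness any finite fragment, since $[b_{i_1}] \ldots [b_{i_n}] = [a_{i_1}] \ldots [a_{i_n}] \equiv [a_{i_1}] \ldots [a_{i_n}]$ trivially and by indiscernibility of $([a_i])_{i \in I}$ this equals the $\L_\E$-type occurring in $\Pi$), so by saturation of $\MM$ there is a realisation $(b_i')_{i \in I}$ in $\MM$; it satisfies $[b_i'] = [a_i]$ (by the $\Xi$-part of $\Pi$) and is $\L$-indiscernible (by the part of $\Pi$ asserting all $n$-subtuples have the same $\L_\E$-type, hence — using \thref{hyperimaginary-types-equal-upgrades-to-real-types-equal} once more, restricted to real-sorted tuples — the same $\L$-type). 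I expect the main obstacle to be bookkeeping rather than conceptual: one must be careful that ``having the same $\L_\E$-type'' for real-representative tuples is genuinely equivalent to ``having the same $\L$-type'' (which is the ``in particular'' clause of \thref{hyperimaginary-types-equal-upgrades-to-real-types-equal}), and that the translation of the relevant $\L_\E$-partial type into an $\L$-partial type via \thref{hyperimaginary-formulas-as-real-partial-types} and \thref{hyperimaginary-partial-types-as-real-partial-types} interacts correctly with the simultaneous presence of the $\Xi$-constraints forcing $[x_i] = [a_i]$. If $I$ is uncountable one may also want to first elongate via compactness and then reindex, exactly as in \thref{extend-base-set-of-indiscernible-sequence}, but since the $[a_i]$ are already given on all of $I$ this should not be needed.
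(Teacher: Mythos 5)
Your right-to-left direction is fine and matches the paper. The problem is in the left-to-right direction: the partial type $\Pi$ you write down is too weak to force indiscernibility of its realisation. The constraints ``$[x_{i_1}]\ldots[x_{i_n}] \equiv_{\L_\E} [a_{j_1}]\ldots[a_{j_n}]$'' only speak about the hyperimaginary classes of the $x_i$'s, and given the $\Xi$-constraints $[x_i]=[a_i]$ they are automatically satisfied (they just restate the assumed indiscernibility of $([a_i])_{i\in I}$). So any choice of representatives of the $[a_i]$ --- for instance the original $a_i$ themselves --- realises $\Pi$, and there is no reason such a sequence is $\L$-indiscernible. The final step of your argument, passing from ``all increasing tuples $[x_{i_1}]\ldots[x_{i_n}]$ have the same $\L_\E$-type'' to ``all increasing tuples $x_{i_1}\ldots x_{i_n}$ have the same $\L$-type'', misapplies \thref{hyperimaginary-types-equal-upgrades-to-real-types-equal}: its ``in particular'' clause concerns tuples of real elements, whereas here the known type-equality is between hyperimaginary tuples, and in that case the lemma only gives the existence of \emph{some} representatives with matching $\L$-type, not that your chosen representatives match. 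Note also that you cannot repair $\Pi$ by simply adding ``$(x_i)_{i\in I}$ is $\L$-indiscernible'', since indiscernibility is not type-definable in general (that is exactly thickness), and the lemma carries no thickness hypothesis.

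The missing idea, which is how the paper proceeds, is to manufacture a coherent candidate Ehrenfeucht--Mostowski type first: elongate $I$ by compactness and base an $\L$-indiscernible sequence $(a'_i)_{i\in I}$ on the representatives $(a_i)_{i\in I}$ via \thref{base-indiscernible-sequence-on-long-sequence}, then take $\Gamma$ to be the \emph{full} type $p((x_{i,r})_{i\in I}) = \tp((a'_i)_{i\in I})$ together with the constraints $\Xi(x_{i,r},[a_i])$. Because types in the monster are maximal, any realisation of $p$ has type exactly $p$ and is therefore indiscernible, which is the mechanism your $\Pi$ lacks. Finite satisfiability of $\Gamma$ is then where the hypothesis and \thref{hyperimaginary-types-equal-upgrades-to-real-types-equal} enter, in the way you intended: a finite fragment of $p$ is the type of some $a_{j_1}\ldots a_{j_n}$ (by basing), indiscernibility of $([a_i])$ gives $[a_{i_1}]\ldots[a_{i_n}] \equiv [a_{j_1}]\ldots[a_{j_n}]$, and the lemma produces representatives $a''_{i_k}$ of the $[a_{i_k}]$ with $a''_{i_1}\ldots a''_{i_n} \equiv a_{j_1}\ldots a_{j_n}$, realising that fragment together with the relevant $\Xi$-conditions.
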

\begin{proof}
We first prove the left to right direction. By compactness we may assume $I$ to be as long as we need, and at the end we simply forget about the extra elements. We thus find an indiscernible sequence $(a'_i)_{i \in I}$ based on $(a_i)_{i \in I}$. Let $p((x_{i,r})_{i \in I}) = \tp((a'_i)_{i \in I})$ and define
\[
\Gamma((x_{i,r})_{i \in I}) = p((x_{i,r})_{i \in I}) \cup \{ \Xi(x_{i,r}, [a_i]) : i \in I \}.
\]
Then a realisation of $\Gamma$ is precisely what we need. So we prove that $\Gamma$ is finitely satisfiable. That is, for $i_1 < \ldots < i_n \in I$, we will produce a realisation of $\Gamma$ restricted to the variables $x_{i_1,r}, \ldots, x_{i_n, r}$ and parameters $[a_{i_1}], \ldots, [a_{i_n}]$. By construction there are $j_1 < \ldots < j_n \in I$ such that $a'_{i_1} \ldots a'_{i_n} \equiv a_{j_1} \ldots a_{j_n}$. As $[a_{i_1}] \ldots [a_{i_n}] \equiv [a_{j_1}] \ldots [a_{j_n}]$, we have by \thref{hyperimaginary-types-equal-upgrades-to-real-types-equal} that there are $a''_{i_1}, \ldots, a''_{i_n}$ such that $a''_{i_1}, \ldots, a''_{i_n} \equiv a_{j_1} \ldots a_{j_n}$, while also $[a''_{i_k}] = [a_{i_k}]$ for all $1 \leq k \leq n$. So $a''_{i_1}, \ldots, a''_{i_n}$ is the desired realisation of the restriction of $\Gamma$.

For the right to left direction, we have for any $i_1 < \ldots < i_n \in I$ and $j_1 < \ldots < j_n \in I$ that $b_{i_1} \ldots b_{i_n} \equiv b_{j_1} \ldots b_{j_n}$. By \thref{hyperimaginary-types-equal-upgrades-to-real-types-equal} this implies
\[
[a_{i_1}] \ldots [a_{i_n}] = [b_{i_1} \ldots b_{i_n}] \equiv [b_{j_1} \ldots b_{j_n}] = [a_{j_1}] \ldots [a_{j_n}],
\]
and we conclude that $([a_i])_{i \in I}$ is indeed indiscernible.
\end{proof}
\begin{theorem}
\thlabel{hyperimaginary-hausdorff-thick-transfer}
The following properties are preserved under $(-)^\E$:
\begin{itemize}
\item Hausdorff,
\item semi-Hausdorff,
\item thick.
\end{itemize}
That is, if $T$ has the property then $T^\E$ has it as well.
\end{theorem}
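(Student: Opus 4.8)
The plan is to treat the three properties in turn, from easiest to hardest, in each case reducing a statement about $\MM^\E$ and the language $\L_\E$ to the corresponding statement about $\MM$ and $\L$ by means of the transfer lemmas already at hand — chiefly \thref{hyperimaginary-types-equal-upgrades-to-real-types-equal}, the projection types of \thref{hyperimaginary-projection}, the real-partial-type translations \thref{hyperimaginary-formulas-as-real-partial-types} and \thref{hyperimaginary-partial-types-as-real-partial-types}, and \thref{hyperimaginary-indiscernible-transfer}. Throughout I would use that $T^\E$ has JCP (it does, as $\MM^\E$ is a monster model of $T^\E$ by \thref{hyperimaginary-monster}), so that by \thref{jcp}(iv) the theory $(T^\E)^\pc$ is exactly the set of h-inductive $\L_\E$-sentences true in $\MM^\E$, which lets me pass freely between the two viewpoints. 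I also note the elementary fact (needed several times) that $\tp_\L(c)=\tp_\L(c')$ for real tuples implies $\tp_{\L_\E}([c])=\tp_{\L_\E}([c'])$: an $\L$-automorphism of $\MM$ moving $c$ to $c'$ extends to $\MM^\E$ by \thref{automorphism-extends-to-hyperimaginaries-uniquely}.

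For thickness, assume $T$ is thick with $\Theta((x_i)_{i<\omega})$ the partial $\L$-type defining indiscernibility of real sequences, and let $y$ be the relevant tuple of (hyperimaginary) sorts with projection partial type $\Xi(x_r,y)$. I would define the partial $\L_\E$-type
\[
\Theta^\E((y_i)_{i<\omega}) \;=\; \exists (x_{i,r})_{i<\omega}\Bigl( \Theta\bigl((x_{i,r})_{i<\omega}\bigr) \wedge \bigwedge_{i<\omega}\Xi(x_{i,r}, y_i) \Bigr),
\]
which is a legitimate partial type by \thref{combine-partial-types}. Then $\MM^\E\models\Theta^\E(([a_i])_{i<\omega})$ holds iff there is a real sequence $(b_i)_{i<\omega}$, indiscernible in $\MM$ (equivalently in $\MM^\E$), with $[b_i]=[a_i]$ for all $i$; by \thref{hyperimaginary-indiscernible-transfer} this is precisely indiscernibility of $([a_i])_{i<\omega}$, so $T^\E$ is thick. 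For semi-Hausdorffness the argument is parallel: taking $\Omega(x_r,x'_r)$ the partial $\L$-type defining equality of $\L$-types, set $\Omega^\E(y,y')=\exists x_r\, x'_r\bigl(\Xi(x_r,y)\wedge\Xi(x'_r,y')\wedge\Omega(x_r,x'_r)\bigr)$ and check, using both directions of \thref{hyperimaginary-types-equal-upgrades-to-real-types-equal} together with the remark above, that $\MM^\E\models\Omega^\E([a],[b])$ iff $\tp_{\L_\E}([a])=\tp_{\L_\E}([b])$.

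For Hausdorffness — the genuinely delicate case — I would invoke \thref{hausdorff-equivalences} and show that $\S_I(T^\E)$ is a Hausdorff space for every index set $I$. Writing $J$ for the real-ification of $I$ (each hyperimaginary coordinate replaced by a tuple of real coordinates matching its representatives), the map $\pi\colon\S_J(T)\to\S_I(T^\E)$, $\tp_\L(a)\mapsto\tp_{\L_\E}([a])$, is well defined (by the remark above) and surjective, and it is continuous with the stronger property that the preimage of a closed set $[\Gamma(y)]$ is the closed set $[\Sigma_\Gamma(y_r)]$ of \thref{hyperimaginary-partial-types-as-real-partial-types}. Moreover, by \thref{hyperimaginary-types-equal-upgrades-to-real-types-equal} the fibre $\pi^{-1}(p)$ over $p=\tp_{\L_\E}([a])$ is exactly $\{\tp_\L(a'):[a']=[a]\}$, and since $\S_I(T^\E)$ is $T_1$ (\thref{type-space-properties}) this fibre is closed. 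Given distinct $p\neq q$, their fibres $A=\pi^{-1}(p)$ and $B=\pi^{-1}(q)$ are then disjoint closed subsets of $\S_J(T)$, which is compact Hausdorff since $T$ is (\thref{hausdorff-equivalences}), and any $r\in A$, $s\in B$ are distinct; the standard compactness argument proving that compact Hausdorff spaces are normal, fed with the fact that $T^\pc$ separates types, produces parameter-free $\L$-formulas $\alpha(y_r),\beta(y_r)$ with $T^\pc\models\forall y_r(\alpha\vee\beta)$, $[\alpha]\cap A=\emptyset$ and $[\beta]\cap B=\emptyset$. Finally set $\phi(y)=R_\alpha(y)$ and $\psi(y)=R_\beta(y)$: by \thref{hyperimaginary-extension-of-monster}, $R_\alpha([a])$ holds iff some representative of $[a]$ satisfies $\alpha$, i.e.\ iff $[\alpha]$ meets the fibre $A$, so $\phi\notin p$ and $\psi\notin q$, while for every $[c]$ one has $\tp_\L(c)\in[\alpha]\cup[\beta]=\S_J(T)$, so $c$ witnesses $R_\alpha([c])$ or $R_\beta([c])$; thus $\MM^\E\models\forall y(R_\alpha(y)\vee R_\beta(y))$, and by JCP this sentence lies in $(T^\E)^\pc$, showing $(T^\E)^\pc$ separates types.

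The routine part of the proof is unwinding the definitions of the constructed partial types and relation symbols through the transfer lemmas. The hard part is the Hausdorff paragraph: realising $\S_I(T^\E)$ as the quotient of $\S_J(T)$ by $\pi$, verifying that fibres are closed, and — the real crux — pushing a separation of two fibres back down to a single pair $R_\alpha,R_\beta$, which is exactly where the mismatch between a hyperimaginary and its many representatives must be absorbed (this is why $R_\alpha$ is the "exists a representative" predicate). I expect that paragraph to need the most care to write out correctly.
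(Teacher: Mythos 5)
Your proposal is correct and takes essentially the same route as the paper: the thick and semi-Hausdorff cases use exactly the partial types $\Theta^\E$ and $\Omega^\E$ built from $\Xi$, and in the Hausdorff case your fibres $\pi^{-1}(p)$, $\pi^{-1}(q)$ are precisely the closed sets $[\Sigma_p(x_r)]$, $[\Sigma_q(x_r)]$ that the paper separates, your ``normality'' step being the paper's explicit double-compactness construction of $\alpha$, $\beta$, followed by the same final passage to $R_\alpha$, $R_\beta$. The topological packaging via $\pi$ and \thref{hausdorff-equivalences} is a difference of presentation only, not of substance.
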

\begin{proof}
\underline{Hausdorff}. Let $p(x) = \tp_{\L_\E}([a]; \MM^\E)$ and $q(x) = \tp_{\L_\E}([b]; \MM^\E)$ be distinct types. Let $\Sigma_p(x_r)$ and $\Sigma_q(x_r)$ be as in \thref{hyperimaginary-partial-types-as-real-partial-types}.

Let $s(x_r)$ be any $\L$-type such that $\Sigma_q(x_r) \subseteq s(x_r)$. We will produce formulas $\alpha_s(x_r)$ and $\beta_s(x_r)$ such that $\Sigma_p(x_r) \cup \{\alpha_s(x_r)\}$ is inconsistent, $\beta_s(x_r) \not \in s(x_r)$ and $T \models \forall x_r(\alpha_s(x_r) \vee \beta_s(x_r))$. Let $t(x_r) \supseteq \Sigma_p(x_r)$ be an $\L$-type. Then $t(x_r) \neq s(x_r)$, because $\Sigma_p(x_r) \cup \Sigma_q(x_r)$ is inconsistent. As $T$ is Hausdorff, there are $\chi_t(x_r)$ and $\theta_t(x_r)$ such that $\chi_t(x_r) \not \in t(x_r)$ and $\theta_t(x_r) \not \in s(x_r)$, while $T \models \forall x_r(\chi_t(x_r) \vee \theta_t(x_r))$. Then $\Sigma_p(x_r) \cup \{\chi_t(x_r) : t(x_r) \supseteq \Sigma_p(x_r)\}$ is inconsistent. There are thus $t_1(x_r), \ldots, t_n(x_r)$ such that $\Sigma_p(x_r) \cup \{\chi_{t_1}(x_r) \wedge \ldots \wedge \chi_{t_n}(x_r)\}$ is inconsistent. We can now take $\alpha_s(x_r)$ to be $\chi_{t_1}(x_r) \wedge \ldots \wedge \chi_{t_n}(x_r)$ and $\beta_s(x_r)$ to be $\theta_{t_1}(x_r) \vee \ldots \vee \theta_{t_n}(x_r)$.

By construction $\Sigma_q(x_r) \cup \{\beta_s(x_r) : s(x_r) \supseteq \Sigma_q(x_r)\}$ is inconsistent. So there are $s_1(x_r), \ldots, s_k(x_r)$ such that $\Sigma_q(x_r) \cup \{\beta_{s_1}(x_r) \wedge \ldots \wedge \beta_{s_n}(x_r)\}$ is inconsistent. Let $\beta(x_r)$ be the formula $\beta_{s_1}(x_r) \wedge \ldots \wedge \beta_{s_k}(x_r)$ and let $\alpha(x_r)$ be the formula $\alpha_{s_1}(x_r) \vee \ldots \vee \alpha_{s_k}(x_r)$. Then $\Sigma_p(x_r) \cup \{\alpha(x_r)\}$ and $\Sigma_q(x_r) \cup \{\beta(x_r)\}$ are both inconsistent and $T \models \forall x_r(\alpha(x_r) \vee \beta(x_r))$.

We now consider the formulas $R_\alpha(x)$ and $R_\beta(x)$. By construction we have $T^\E \models \forall x(R_\alpha(x) \vee R_\beta(x))$. We claim that $R_\alpha(x) \not \in p(x)$. Suppose for a contradiction that $\MM^\E \models R_\alpha([a])$. Then there is $a'$ with $[a'] = [a]$ and $\MM \models \alpha(a')$. At the same time $\MM^\E \models p([a'])$ and so $\MM \models \Sigma_p(a')$, contradicting the inconsistency of $\Sigma_p(x_r) \cup \{\alpha(x_r)\}$. Analogously we have that $R_\beta(x) \not \in q(x)$, and so we conclude that $T^\E$ is Hausdorff.\\
\\
\noindent \underline{Semi-Hausdorff}. Suppose that equality of $\L$-types is type-definable by a partial $\L$-type $\Omega$. Then for any tuple $x$ of variables in $\L_\E$, we let $\Omega^\E(x, x')$ be the set of $\L_\E$-formulas that expresses
\[
\exists x_r x'_r(\Xi(x_r, x) \wedge \Xi(x'_r, x') \wedge \Omega(x_r, x'_r)).
\]
We claim that $\Omega^\E$ expresses equality of $\L_\E$-types.

If $\MM^\E \models \Omega^\E([a], [b])$ then this is saying that there are $a'$ and $b'$ such that $[a'] = [a]$, $[b'] = [b]$ and $\MM \models \Omega(a', b')$. So we have $a' \equiv b'$ and hence $[a] = [a'] \equiv [b'] = [b]$ by \thref{hyperimaginary-types-equal-upgrades-to-real-types-equal}.

Conversely, suppose that $[a] \equiv [b]$. Then by \thref{hyperimaginary-types-equal-upgrades-to-real-types-equal} there is $b'$ such that $[b'] = [b]$ and $a \equiv b'$. In particular $\MM \models \Xi(a, [a]) \wedge \Xi(b', [b]) \wedge \Omega(a, b')$, as required.\\
\\
\noindent \underline{Thick}. Let $\Theta$ express indiscernibility of a sequence of (tuples of) real elements. Then
\[
\exists (x_{i,r})_{i < \omega} \left( \Theta((x_{i,r})_{i < \omega}) \wedge \bigwedge_{i < \omega} \Xi(x_{i,r}, x_i) \right)
\]
expresses indiscernibility of $(x_i)_{i < \omega}$ in $\MM^\E$. Here we use that a sequence in $\MM^\E$ is indiscernible if and only if there is an indiscernible sequence of representatives, see \thref{hyperimaginary-indiscernible-transfer}.
\end{proof}
\begin{example}
\thlabel{hyperimaginary-does-not-preserve-boolean}
Being Boolean is not preserved when moving from $T$ to $T^\E$. For example, start with any theory $T$ in full first-order logic (considered as a positive theory through Morleyisation, see \thref{full-first-order-logic-as-special-case}). Then $T$ is by construction Boolean. Assume that $T$ is complete with an infinite model, and consider the type-definable equivalence relation
\[
E(x, y) = \{\phi(x) \leftrightarrow \phi(y) : \phi \text{ is a formula in } T\}.
\]
Then $E$ expresses that $x$ and $y$ have the same type. Set $\E = \{E\}$, then in $\MM^\E$ the elements of the sort $S_E$ are exactly the types (with free variables $x$) in $\MM$. If there are infinitely many types with free variables $x$ (e.g., $x$ is an infinite tuple of variables in the sort whose underlying set is infinite) then $\MM^\E$ has a bounded infinite definable set. So $T^\E$ cannot be Boolean: inequality on $S_E$ is not equivalent to a positive formula (modulo $T^\E$), because otherwise by compactness we would find arbitrarily many elements in $S_E$.

Another example is to consider the full first-order theory of the real numbers as an ordered field $T = \Th(\R; 0, 1, +, \cdot, -, \leq)$. This is also known as the theory of \emph{real closed fields}. Consider the type-definable equivalence relation
\[
E(x, y) = \{ -1/n < x - y < 1/n : n < \omega \}.
\]
Then $E$ expresses that $x$ and $y$ are infinitesimally close. Set $\E = \{E\}$, then the real unit interval $[0, 1]$ is definable in the sort $S_E$ in $\MM^\E$ by the formula $[0] \leq x \wedge x \leq [1]$. Here $[0]$ and $[1]$ are the $E$-equivalence classes of $0$ and $1$ respectively, and the relation $\leq$ is an abbreviation for $R_{x_r \leq y_r}(x, y)$.
\end{example}
Adding hyperimaginaries does not impact many model-theoretic properties of the theory, such as simplicity or stability. We will prove this for those two here, based on the characterisations in these notes (\thref{simplicity} and \thref{stability} respectively).
\begin{theorem}
\thlabel{hyperimaginaries-respect-simplicity}
The theory $T$ is simple if and only if $T^\E$ is simple.
\end{theorem}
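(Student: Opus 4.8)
The plan is to characterise simplicity through \NTP (\thref{simplicity-equivalences}) and to prove the contrapositive in both directions, transferring instances of the tree property between $T$ and $T^\E$. Throughout I would work in the monster models $\MM$ and $\MM^\E$, using that $\MM^\E$ is a monster model of $T^\E$ (\thref{hyperimaginary-monster}), that $\MM^\E$ interprets the symbols of $\L$ exactly as $\MM$ does (so $\L$-formulas are also $\L_\E$-formulas with the same meaning on real elements), and that for an h-inductive $\L_\E$-sentence $\chi$ we have $T^\E \models \chi$ if and only if $\MM^\E \models \chi$, since $T^\E$ consists precisely of the h-inductive $\L_\E$-sentences true in $\MM^\E$. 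The easy direction is that $T$ not simple implies $T^\E$ not simple: if an $\L$-formula $\phi(x,y)$ has $k$-\TP for $T$, witnessed by an obstruction $\psi(y_1,\ldots,y_k)$ of $\exists x(\phi(x,y_1)\wedge\ldots\wedge\phi(x,y_k))$ and a tree $(a_\eta)_{\eta\in\omega^{<\omega}}$ of tuples of real elements, then the very same data witnesses $k$-\TP for $T^\E$. Indeed, the h-universal sentence expressing that $\psi$ is an obstruction holds in $\MM$, hence in $\MM^\E$, hence lies in $T^\E$; consistency of $\{\phi(x,a_{\sigma|_n}):n<\omega\}$ along each branch $\sigma$ is preserved because a realisation of it in $\MM$ is still a realisation in $\MM^\E$; and $\models\psi(a_{\eta^\frown i_1},\ldots,a_{\eta^\frown i_k})$ is preserved because these are tuples of real elements.

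For the converse I would show that $T^\E$ not simple implies $T$ not simple. Suppose an $\L_\E$-formula $\phi(x,y)$ has $k$-\TP for $T^\E$, witnessed by an obstruction $\psi(y_1,\ldots,y_k)$ and a tree $(c_\eta)_{\eta\in\omega^{<\omega}}$ in $\MM^\E$; choose tuples of real elements $b_\eta$ with $[b_\eta]=c_\eta$. By \thref{hyperimaginary-formulas-as-real-partial-types}, applied once to $\phi$ and once to $\psi$, there are sets of $\L$-formulas $\Sigma_\phi$ (in the representative variables of $x$ and $y$) and $\Sigma_\psi$ (in the representative variables of $y_1,\ldots,y_k$) such that a tuple of real elements satisfies $\Sigma_\phi$ (respectively $\Sigma_\psi$) in $\MM$ exactly when the corresponding tuple of equivalence classes satisfies $\phi$ (respectively $\psi$) in $\MM^\E$. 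Translating the statement that $\psi$ is an obstruction, the union of $k$ copies of $\Sigma_\phi$ (one per $y_i$, all sharing the $x$-variables) together with $\Sigma_\psi$ is inconsistent with $T$, so by \thref{compactness} there are finite $\Sigma_\phi^0\subseteq\Sigma_\phi$ and $\Sigma_\psi^0\subseteq\Sigma_\psi$ for which the corresponding finite union is already inconsistent. Put $\phi'=\bigwedge\Sigma_\phi^0$ and $\psi'=\bigwedge\Sigma_\psi^0$, both genuine $\L$-formulas in (tuples of) real variables. Then $\psi'$ is an obstruction of the existential formula built from $\phi'$, and the tree $(b_\eta)_{\eta\in\omega^{<\omega}}$ witnesses $k$-\TP for $\phi'$: along any branch $\sigma$, a realisation $[a]$ of $\{\phi(x,c_{\sigma|_n}):n<\omega\}$ in $\MM^\E$ yields, via $\Sigma_\phi$, a realisation $a$ of $\{\phi'(x,b_{\sigma|_n}):n<\omega\}$ in $\MM$; and $\models\psi(c_{\eta^\frown i_1},\ldots,c_{\eta^\frown i_k})$ gives $\MM\models\Sigma_\psi(b_{\eta^\frown i_1},\ldots,b_{\eta^\frown i_k})$, hence $\models\psi'(b_{\eta^\frown i_1},\ldots,b_{\eta^\frown i_k})$. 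Thus $T$ has \TP, and combining the two directions proves the theorem.

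The only place where real work happens is this converse direction, and the main obstacle is that \thref{hyperimaginary-formulas-as-real-partial-types} produces merely a partial $\L$-type $\Sigma_\phi$ rather than a single $\L$-formula, so the tree property cannot be transferred symbolically and a single witnessing formula $\phi'$ must be carved out by compactness from the inconsistency furnished by the obstruction $\psi$. One has to take some care that a single finite approximation $\Sigma_\phi^0$ can be chosen to serve simultaneously for all $k$ displayed occurrences of $\phi$, and that the finite approximation $\Sigma_\psi^0$, being a subset of $\Sigma_\psi$, still holds along the siblings of the chosen tree of representatives $(b_\eta)$; both points are routine bookkeeping once the translation is set up.
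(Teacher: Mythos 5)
Your proposal is correct and follows essentially the same route as the paper: characterise simplicity via \NTP, transfer an instance of \TP from $T$ up to $T^\E$ directly, and for the converse use the translation of $\L_\E$-formulas into partial $\L$-types (\thref{hyperimaginary-formulas-as-real-partial-types}) together with compactness applied to the obstruction to carve out a single witnessing $\L$-formula evaluated on representatives of the tree. The only difference is cosmetic: you take conjunctions of finite subsets of $\Sigma_\phi$ and $\Sigma_\psi$, where the paper extracts single formulas from these sets, and your version is if anything the more careful bookkeeping of that compactness step.
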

\begin{proof}
We will prove that $T$ is \NTP if and only if $T^\E$ is \NTP. The left to right direction is trivial. We prove the converse.

Suppose that $\phi(x, y)$ has \TP in $T^\E$, as witnessed by parameters $([a_\eta])_{\eta \in \omega^{<\omega}}$ and some formula $\psi(y_1, \ldots, y_k)$. Let $\Sigma_\phi(x_r, y_r)$ and $\Sigma_\psi(y_{1,r}, \ldots, y_{k,r})$ be as in \thref{hyperimaginary-formulas-as-real-partial-types}. Then
\[
\Sigma_\phi(x_r, y_{1,r}) \cup \ldots \cup \Sigma_\phi(x_r, y_{1,k}) \cup \Sigma_\psi(y_{1,r}, \ldots, y_{1,k})
\]
is inconsistent. Hence there are $\phi'(x_r, y_r) \in \Sigma_\phi$ and $\psi'(y_{1,r}, \ldots, y_{k,r}) \in \Sigma_\psi$ such that
\[
\phi'(x_r, y_{1,r}) \wedge \ldots \wedge \phi'(x_r, y_{k,r}) \wedge \psi'(y_{1,r}, \ldots, y_{k,r})
\]
is inconsistent modulo $T^\E$. As the above is an $\L$-formula, it is also inconsistent modulo $T$. We claim that $\phi'(x_r, y_r)$ has \TP, as witnessed by $(a_\eta)_{\eta \in \omega^{<\omega}}$ and $\psi'(y_1, \ldots, y_k)$. We check \thref{tree-property}.
\begin{enumerate}[label=(\roman*)]
\item Let $\sigma \in \omega^\omega$. Then $\{\phi(x, [a_{\sigma|_n}]) : n < \omega\}$ is consistent. So there is $[b]$ such that $\MM^\E \models \phi([b], [a_{\sigma|_n}])$ for all $n < \omega$. That is, we have $\MM \models \Sigma_\phi(b, a_{\sigma|_n})$ for all $n < \omega$. In particular, $\{\phi'(x, a_{\sigma|_n}) : n < \omega\}$ is consistent.
\item Let $\eta \in \omega^{<\omega}$ and let $i_1 < \ldots < i_k < \omega$. Then $\MM^\E \models \psi([a_{\eta^\frown i_1}], \ldots, [a_{\eta^\frown i_k}])$, so $\MM \models \Sigma_\psi(a_{\eta^\frown i_1}, \ldots, a_{\eta^\frown i_k})$ and in particular $\MM \models \psi'(a_{\eta^\frown i_1}, \ldots, a_{\eta^\frown i_k})$.
\end{enumerate}
\end{proof}
\begin{theorem}
\thlabel{hyperimaginaries-respect-stability}
The theory $T$ is stable if and only if $T^\E$ is stable.
\end{theorem}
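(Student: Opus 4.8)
The plan is to mimic the proof of \thref{hyperimaginaries-respect-simplicity}, but using the type-counting characterisation of stability (\thref{stability}) together with the translation lemmas for $\L_\E$-types that were set up before \thref{hyperimaginary-monster}. The left-to-right direction is essentially trivial, since an $\L$-formula is (equivalent to) an $\L_\E$-formula and $\L$-types over real parameters inject into $\L_\E$-types, so $\lambda$-stability of $T^\E$ immediately gives $\lambda$-stability of $T$. The content is in the converse, so assume $T$ is $\lambda$-stable for some $\lambda$; I would actually aim to show $T^\E$ is $\lambda'$-stable for a suitable $\lambda'$, most conveniently $\lambda' = \lambda^{|T|}$ (or any $\lambda'$ with $(\lambda')^{|T|} = \lambda'$ and $\lambda' \geq \lambda$), since by \thref{stable-characterisations} we know stability is insensitive to the exact cardinal and we have room to inflate.

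The key steps are as follows. First, fix a parameter set $B$ of hyperimaginary (and real) elements with $|B| \leq \lambda'$, and a finite index set $I$; I want to bound $|\S_I(B)|$. Using \thref{hyperimaginary-projection}, each hyperimaginary element of $B$ is the class $[b]$ of some real tuple $b$ of length $\leq |T|$ (here one invokes \thref{hyperimaginary-definable-with-small-hyperimaginaries} / the fact that elements of a hyperimaginary sort $S_E$ with $E \in \E$ are classes of tuples; for general $\E$ one works with the actual representing tuples). Collecting representatives, we obtain a real parameter set $B_r \subseteq \MM$ with $|B_r| \leq |T| \cdot |B| \leq \lambda'$. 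Now the crucial point is \thref{hyperimaginary-types-equal-upgrades-to-real-types-equal}: the $\L_\E$-type of an $I$-tuple $[a]$ over $B$ is determined by the $\L$-type over $B_r$ of a suitable real representative. More precisely, I would argue that the map sending an $I$-type $p \in \S_I^{\L_\E}(B)$ to the $\L$-type over $B_r$ of some real representative of a realisation of $p$ is well-defined on a set of representatives and has image of size at most $|\S_J^{\L}(B_r)|$, where $J$ indexes the real coordinates of the representing tuples. Since each hyperimaginary coordinate is represented by a tuple of length $\leq |T|$ and $I$ is finite, $|J| \leq |T|$, which is still ``finitely many'' in the multi-sorted sense needed for \thref{stability}; hence $|\S_J^{\L}(B_r)| \leq \lambda'$ by $\lambda'$-stability of $T$ (using $\lambda' \geq \lambda$ and \thref{stable-characterisations} to pass from $\lambda$-stability to $\lambda'$-stability). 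This gives $|\S_I^{\L_\E}(B)| \leq \lambda'$, as required.

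To make the map honest I would spell out the following: fix for each hyperimaginary sort appearing a choice of length-$\leq|T|$ representing tuple for the parameters in $B$; then a realisation of $p$ is an $I$-tuple $[a]$ of classes, choose any real $a'$ with $[a'] = [a]$, and record $\tp_\L(a'/B_r)$. By \thref{hyperimaginary-types-equal-upgrades-to-real-types-equal} (applied with the parameters in $B$ replaced by their chosen real representatives, which is legitimate since those representatives lie in $B_r$ and determine the $\L_\E$-type over $B$), two realisations of the same $p$ yield $\L$-types over $B_r$ that, while possibly not equal, lie in the same fibre of a bounded-to-one relation — more cleanly, I would instead define the map $\S_I^{\L_\E}(B) \to \S_J^\L(B_r)$ by: pick a realisation and a representative once and for all (using a choice function on the small set $\S_I^{\L_\E}(B)$), and check injectivity, namely that distinct $\L_\E$-types give distinct $\L$-types of representatives — this is exactly the forward direction of \thref{hyperimaginary-types-equal-upgrades-to-real-types-equal}: if $\tp_\L(a'/B_r) = \tp_\L(b'/B_r)$ then there is $b''$ with $[b''] = [b']$ and $\tp_\L(a'/B_r) = \tp_\L(b''/B_r)$, forcing $\tp_{\L_\E}([a']/B) = \tp_{\L_\E}([b'']/B) = \tp_{\L_\E}([b']/B)$. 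Hence the map is injective and $|\S_I^{\L_\E}(B)| \leq |\S_J^\L(B_r)| \leq \lambda'$.

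The main obstacle is purely bookkeeping rather than conceptual: carefully handling that a hyperimaginary element is represented by a tuple whose length is $\leq |T|$ (so that $J$ stays ``finite'' for the purposes of \thref{stability}, which only counts types in finite index sets), and making sure the reduction to real parameters $B_r$ respects the cardinality bound $\lambda'$ — this is why one works with $\lambda' = \lambda^{|T|}$, since $|B| \leq \lambda'$ forces $|B_r| \leq |T| \cdot \lambda' = \lambda'$, and $(\lambda')^{|T|} = \lambda'$ keeps the various products under control. Once these cardinal arithmetic points are checked against \thref{stable-characterisations}, the proof is a direct application of \thref{hyperimaginary-types-equal-upgrades-to-real-types-equal} and \thref{hyperimaginary-projection}, exactly parallel to the simplicity case.
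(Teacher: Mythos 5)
Your strategy is the same as the paper's: both directions reduce to type counting via \thref{hyperimaginary-types-equal-upgrades-to-real-types-equal}. The only structural difference is that the paper bounds $|\S_x(B)|$ by exhibiting a surjection $\tp_\L(a/b) \mapsto \tp_{\L_\E}([a]/[b])$ from real types onto hyperimaginary types, whereas you build the injection in the other direction; that is a cosmetic difference, and your injectivity argument (same real type of representatives over $B_r$ forces the same $\L_\E$-type of the classes over $B$, via the lemma together with \thref{automorphism-extends-to-hyperimaginaries-uniquely}) is sound and at the same level of detail as the paper.

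There is, however, a genuine gap in the cardinal bookkeeping. The step ``$|J| \leq |T|$, which is still `finitely many' in the multi-sorted sense needed for \thref{stability}; hence $|\S_J^\L(B_r)| \leq \lambda'$ by $\lambda'$-stability'' does not work: \thref{stability} only bounds the number of types in \emph{finite} index sets, and $J$ is infinite. Moreover, for the fixed $\E$ of \thref{fixed-hyperimaginaries} the representing tuples need not have length $\leq |T|$ at all; \thref{hyperimaginary-definable-with-small-hyperimaginaries} does not let you replace a representative of an element of a sort $S_E$, $E \in \E$, by a length-$\leq |T|$ tuple, since the small hyperimaginaries produced there need not be sorts of $\MM^\E$. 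So $\lambda'$-stability gives no direct bound on $\S_J^\L(B_r)$, and your $|T|$-based arithmetic only covers the case $\E \subseteq \heq$. The repair is the device the paper uses at exactly this point: let $\mu$ be an upper bound for the lengths of the tuples of variables occurring in $\E$ (so $|J| \leq \mu$ and $|B_r| \leq \mu \cdot |B|$), note that a type in the infinite variable tuple $J$ is determined by the family of its restrictions to finite subtuples, so that $|\S_J^\L(B_r)| \leq (\lambda')^{|J|^{<\omega}}$, and choose $\lambda'$ large enough that $(\lambda')^{\mu} = \lambda'$ (for $\E = \heq$ your $\lambda' = \lambda^{|T|}$ does satisfy this, since then $\mu = |T|$). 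With that correction your map yields $|\S_I^{\L_\E}(B)| \leq \lambda'$ for all $B$ with $|B| \leq \lambda'$, i.e.\ $\lambda'$-stability of $T^\E$, which is all the theorem needs.
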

\begin{proof}
If $T^\E$ is $\lambda$-stable then $T$ is $\lambda$-stable by \thref{hyperimaginary-types-equal-upgrades-to-real-types-equal}. For the converse we let $\lambda$ be an upper bound for the lengths of the tuples of variables appearing in the equivalence relations in $\E$. Note that $\lambda$ is still small compared to the monster. We will prove that if $T$ is $\lambda$-stable then $T^\E$ is $\lambda$-stable.

Let $B$ be a set of parameters from $\MM^\E$ with $|B| \leq \lambda$ and let $x$ be a finite tuple of hyperimaginary variables. Write $\S_x(B)$ for the set of types over $B$ in free variables $x$. We need to show that $|\S_x(B)| \leq \lambda$. Enumerate $B$ as a tuple $[b]$. Then $[b]$ has length $\leq \lambda$ and since every hyperimaginary is represented by at most $\lambda$ elements, we have $|b| \leq \lambda$. Let $\S_{x_r}(b)$ be the set of $\L$-types over $b$ in free variables $x_r$. For every finite subtuple $x' \subseteq x_r$ we have by assumption that there are at most $\lambda$ many $\L$-types over $b$ in free variables $x'$. Since types are determined by their finite restrictions, we have that $|\S_{x_r}(b)| \leq |x_r|^{<\omega} \times \lambda \leq \lambda^{<\omega} \times \lambda = \lambda$. Define a map $\S_{x_r}(b) \to \S_x(B)$ by
\[
\tp_\L(a/b) \mapsto \tp_{\L_\E}([a]/[b]),
\]
which is well-defined by \thref{hyperimaginary-types-equal-upgrades-to-real-types-equal}. By construction and saturation of $\MM^\E$ this map is surjective and so $|\S_x(B)| \leq \lambda$, as required.
\end{proof}
\section{Continuous logic}
\label{sec:continuous-logic}
In this section we will see how continuous logic can be studied using positive logic. Unlike the situation with full first-order logic, positive logic is not a direct generalisation of continuous logic (see also \thref{continuous-translation-needs-saturation}). However, for many abstract model-theoretic purposes, positive logic is more general and developing the abstract theory in the generality of positive logic allows us to immediately apply it to continuous logic.

The framework for continuous logic that we will consider is that of \cite{ben-yaacov_model_2008}. We will use the notation and terminology from there, which we will assume the reader to be familiar with. We just note that what they call a \emph{$\kappa$-universal domain} is what we call a monster model (at least, when $\kappa$ is bigger than all ``small'' cardinals, see Section \ref{sec:monster-models}), and we will omit the $\kappa$ from the notation. For simplicity of notation we will assume that all bounded intervals in our continuous logic are simply $[0, 1]$.
\begin{definition}
\thlabel{continuous-to-positive-translation}
Let $\U$ be a universal domain of some continuous theory $T$ in some metric signature $\L$. We define $\L_\pos$\nomenclature[Lpos]{$\L_\pos$}{Positive version of the metric signature $\L$} to be the following relational signature (in the first-order sense). For every $\L$-formula $\phi(x)$ we introduce a relation symbol $R_\phi(x)$ of the same arity. We then turn $\U$ into an $\L_\pos$-structure $\MM_\pos$\nomenclature[MMpos]{$\MM_\pos$}{Positive monster obtained from a continuous theory} by interpreting $R_\phi$ as the set $\{a \in \U : \phi(a) = 0\}$. Let $T_\pos$ be the positive theory of $\MM_\pos$. That is, $T_\pos$\nomenclature[Tpos]{$T_\pos$}{Positive version of the continuous theory $T$} is the set of all h-inductive sentences in $\L_\pos$ that are true in $\MM_\pos$.
\end{definition}
\begin{convention}
\thlabel{fixed-continuous-universal-domain}
For the remainder of this section, $\U$ is a universal domain of some fixed continuous theory $T$ in some metric signature $\L$, and we let $\L_\pos$, $\MM_\pos$ and $T_\pos$ be as in \thref{continuous-to-positive-translation}.
\end{convention}
\begin{remark}
\thlabel{continuous-bounded-formula}
We write $\dotminus$ for the truncated subtraction operation $[0, 1]^2 \to [0,1]$. That is, for $r, s \in [0, 1]$
\[
r \dotminus s = \begin{cases}
r - s & \text{if } r \geq s, \\
0 & \text{else.}
\end{cases}
\]
Then given any $\L$-formula $\phi(x)$ and any $r \in [0, 1]$ we get an $\L$-formula $\varphi(x) \dotminus r$. Then $R_{\phi \dotminus r}$ is interpreted in $\MM_\pos$ as
\[
\{a \in \U : \phi(a) \leq r \}.
\]
We will thus use the notation $R_{\phi \leq r}$ for $R_{\phi \dotminus r}$. Similarly, we write $R_{\phi \geq r}$ for $R_{r \dotminus \phi}$.
\end{remark}
\begin{proposition}
\thlabel{continuous-logic-formula-translation}
Modulo $T_\pos$, we have the following equivalences of formulas:
\begin{enumerate}[label=(\roman*)]
\item $\bot$ is equivalent to $R_r$, where $r$ is the $\L$-formula with constant value $r$, for any $r > 0$;
\item $x = y$ is equivalent to $R_d(x, y)$, where $d(x, y)$ is the metric on $\U$;
\item $R_\phi(x) \wedge R_\psi(x)$ is equivalent to $R_{\max(\phi, \psi)}(x)$;
\item $R_\phi(x) \vee R_\psi(x)$ is equivalent to $R_{\min(\phi, \psi)}(x)$;
\item $\exists y R_\phi(x, y)$ is equivalent to $R_{\inf_y \phi}(x)$.
\end{enumerate}
In particular, every $\L_\pos$-formula is equivalent to $R_\phi$ for some $\L$-formula $\phi$.
\end{proposition}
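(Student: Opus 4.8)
The plan is to verify each equivalence separately by unwinding the interpretation of the relation symbols in $\MM_\pos$, and then conclude the ``in particular'' clause by structural induction, appealing to \thref{normal-forms-of-formulas}. Throughout I would use the fact that $T_\pos$ is by definition the h-inductive theory of $\MM_\pos$, so to check that two positive $\L_\pos$-formulas $\alpha(x)$ and $\beta(x)$ are equivalent modulo $T_\pos$ it suffices to show that $\MM_\pos \models \forall x(\alpha(x) \leftrightarrow \beta(x))$, i.e. that they define the same set in $\MM_\pos$. Indeed, $T_\pos$ is complete enough for this purpose: the h-inductive sentences $\forall x(\alpha(x) \to \beta(x))$ and $\forall x(\beta(x) \to \alpha(x))$ are true in $\MM_\pos$, hence lie in $T_\pos$.

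For the individual items: (i) $R_r$ is interpreted as $\{a \in \U : r = 0\}$, which is empty when $r > 0$, so $\MM_\pos \models \forall x \neg R_r(x)$; hence $R_r$ is an obstruction of $\top$ and is equivalent to $\bot$ modulo $T_\pos$. (ii) $R_d(x,y)$ is interpreted as $\{(a,b) : d(a,b) = 0\}$, which is exactly the diagonal since $d$ is a metric (using that $\U$ is a metric structure, not merely a pseudmetric one), giving equivalence with $x = y$. (iii) and (iv) follow from the identities $\max(\phi,\psi)(a) = 0 \iff \phi(a) = 0 \text{ and } \psi(a) = 0$ and $\min(\phi,\psi)(a) = 0 \iff \phi(a) = 0 \text{ or } \psi(a) = 0$, which hold pointwise in $[0,1]$. (v) follows from the fact that $\inf_y \phi(x, y) = 0$ holds at $a$ if and only if there exists $b$ with $\phi(a, b) = 0$; here the direction requiring a witness uses that $\U$ is a universal domain, so that the infimum over $\U$ is actually attained (this is where saturation of $\U$ enters, cf.\ the remark before \thref{continuous-to-positive-translation} and \thref{continuous-translation-needs-saturation}). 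Each of these is a short computation once the interpretation is spelled out.

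For the ``in particular'' statement, I would argue by induction on the construction of an $\L_\pos$-formula. Every atomic $\L_\pos$-formula is either $x = y$, which is $R_d(x,y)$ by (ii), or of the form $R_\phi(x)$, which is already of the required form; and $\top$ is $R_0$ while $\bot$ is $R_1$ by (i). The connectives $\wedge$, $\vee$ and the quantifier $\exists$ preserve the property by (iii), (iv) and (v) respectively, where at each step one uses the continuous-logic closure of $\L$-formulas under $\max$, $\min$ and $\inf_y$. By \thref{normal-forms-of-formulas} every positive formula is built from atomic formulas using exactly these operations, so the induction covers all cases.

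The main obstacle is item (v), the quantifier case: the equivalence $\exists y R_\phi(x,y) \equiv R_{\inf_y \phi}(x)$ genuinely relies on the infimum being realised in $\U$, which is precisely the saturation hypothesis baked into the notion of universal domain. In a non-saturated continuous structure one would only get $R_{\inf_y \phi}(a)$ whenever $\MM_\pos \models \exists y R_\phi(a,y)$, but not conversely, since $\inf_y \phi(a,y)$ could be $0$ without being attained. So the proof should make explicit that we are working inside the monster $\U$ and invoke the approximate-witness-plus-saturation argument (or directly the definition of universal domain) to obtain an exact witness. Everything else is routine pointwise arithmetic in $[0,1]$.
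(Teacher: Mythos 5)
Your proposal is correct and follows essentially the same route as the paper: items (i)--(iv) by unwinding the interpretations pointwise, item (v) via the approximate-witness set $\{\phi(a,y) \dotminus \tfrac{1}{n} = 0 : 1 \leq n < \omega\}$ realised by saturation of $\U$, and the final claim by induction on the construction of $\L_\pos$-formulas. Your explicit remark that equivalence of positive formulas modulo $T_\pos$ reduces to their defining the same set in $\MM_\pos$ (since both implications are h-inductive sentences true in $\MM_\pos$) is a useful point the paper leaves implicit, but it does not change the argument.
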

\begin{proof}
Items (i)--(iv) follow immediately from the definitions. The final claim follows by induction on the construction of $\L_\pos$-formulas, using (i) and (ii) for the base case and using (iii)--(v) for the inductive steps. So we are left to prove (v).

Suppose that $\MM_\pos \models R_{\inf_y \phi}(a)$. Then $\U \models \inf_y \phi(a, y) = 0$ and so the set $\L$-conditions $\Sigma(y) = \{ \phi(a, y) \dotminus \frac{1}{n} = 0 : 1 \leq n < \omega \}$ is finitely satisfiable in $\U$. By saturation of $\U$ there must be $b \in \U$ such that $\U \models \Sigma(b)$, which is exactly saying that $\U \models \phi(a, b) = 0$. That is, $\MM_\pos \models R_\phi(a, b)$, and so $\MM_\pos \models \exists y R_\phi(a, y)$.

Conversely, suppose that there is $b \in \MM_\pos$ such that $\MM_\pos \models R_\phi(a, b)$. Then $\U \models \phi(a, b) = 0$, and so $\U \models \inf_y \phi(a, y) = 0$. We conclude that $\MM_\pos \models R_{\inf_y \phi}(a)$, as required.
\end{proof}
\begin{theorem}
\thlabel{continuous-universal-domain-yields-positive-monster}
We have the following properties for $\MM_\pos$ and $T_\pos$.
\begin{enumerate}[label=(\roman*)]
\item The structure $\MM_\pos$ is a monster model of $T_\pos$.
\item The structures $\MM_\pos$ and $\U$ have the same automorphisms.
\item The theory $T_\pos$ is Hausdorff.
\end{enumerate}
\end{theorem}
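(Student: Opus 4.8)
The plan is to prove the three items in \thref{continuous-universal-domain-yields-positive-monster} essentially independently, each relying on \thref{continuous-logic-formula-translation} to reduce $\L_\pos$-formulas to conditions $R_\phi$ with $\phi$ an $\L$-formula.

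For \textbf{(i)}, I would verify the three defining clauses of a monster model from \thref{monster-model}: positively closed, very homogeneous, very saturated. For positive closedness, use \thref{pc-model}(iii): given $a\in\MM_\pos$ and $\L_\pos$-formula $\phi(x)$ with $\MM_\pos\not\models\phi(a)$, rewrite $\phi$ as $R_\psi$ for an $\L$-formula $\psi$ by \thref{continuous-logic-formula-translation}; then $\U\models\psi(a)>0$, so $\U\models\psi(a)\geq\frac1n$ for some $n$, and $R_{\psi\geq 1/n}(x)$ is an obstruction of $R_\psi(x)$ satisfied by $a$ (here I would spell out that $\inf_x\max(\psi,\frac1n\dotminus\psi)>0$ in $\U$, which uses only that $\psi$ and the constant are bounded and that a condition ``$=0$'' holds in a model of $T_\pos$ iff the corresponding $\L$-formula can be made $0$). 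For saturation and homogeneity, I would transfer the $\kappa$-saturation and strong $\kappa$-homogeneity of the universal domain $\U$: a finitely satisfiable set of $\L_\pos$-formulas over a small parameter set corresponds, via \thref{continuous-logic-formula-translation}, to a set of $\L$-conditions of the form $\psi\dotminus\frac1n=0$ that is finitely approximately satisfiable in $\U$ hence satisfiable by saturation of $\U$; and since $\L_\pos$-types correspond to the metric types over the same parameters (again by the formula translation), two tuples with the same $\L_\pos$-type have the same $\U$-type, so an automorphism of $\U$ moving one to the other exists. This is the same pattern used for $\MM^\E$ in \thref{hyperimaginary-monster}, so I would follow that structure closely.

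For \textbf{(ii)}, the underlying set of $\MM_\pos$ is literally $\U$, and any $\L_\pos$-structure automorphism preserves each $R_\phi$, in particular $R_d$, so it is an isometry, and conversely an $\L$-automorphism of $\U$ preserves every $\L$-formula's zero-set, hence every $R_\phi$; so the two automorphism groups coincide as subgroups of the symmetric group on the common underlying set. For \textbf{(iii)}, I would use \thref{hausdorff-equivalences}: it suffices to show $T_\pos^\pc$ separates types, or equivalently (and more simply) that $T_\pos$ separates types and then note $T_\pos^\pc\models T_\pos^\u$. Given distinct $\L_\pos$-types $p\neq q$, by \thref{continuous-logic-formula-translation} there is an $\L$-formula $\phi$ with $R_\phi\in p$, $R_\phi\notin q$ (or symmetrically); realising $p$ by $a$ and $q$ by $b$ in $\MM_\pos$, we get $\phi(a)=0$ while $\phi(b)=r>0$. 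Then for a suitable rational $s\in(0,r)$ the conditions $R_{\phi\leq s}$ and $R_{\phi\geq s}$ cover the whole type space (since in any model of $T_\pos$, i.e.\ any continuation, every element has $\phi$-value $\leq s$ or $\geq s$ — this is where I need the h-universal sentence $\forall x(R_{\min(\phi\dotminus s,\, s\dotminus\phi)}(x))$, valid because $\min(\phi\dotminus s, s\dotminus\phi)$ is identically $0$ on $[0,1]$-valued $\phi$), while $R_{\phi\leq s}\notin q$ and $R_{\phi\geq s}\notin p$. Hence $T_\pos$, and a fortiori $T_\pos^\pc$, separates types.

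The main obstacle I anticipate is clause (i), specifically making the saturation transfer airtight: continuous logic's compactness/saturation is about approximate satisfiability of conditions, and I must be careful that a finitely satisfiable set of $\L_\pos$-formulas $\{R_{\psi_j}\}$ translates to the family $\{\psi_j\dotminus\frac1n=0 : j, n\}$ being finitely \emph{approximately} satisfiable in $\U$ — which is exactly what finite satisfiability in $\MM_\pos$ gives, since $\MM_\pos\models\exists x\bigwedge_j R_{\psi_j}(x)$ means there is a genuine zero — and then invoke $\kappa$-saturation of $\U$ correctly at the level of $\L$-conditions. I would also need to confirm that $\MM_\pos$ is at least as saturated and homogeneous as whatever ``small'' cardinal the monster formalism demands, which is immediate since $\U$ is a $\kappa$-universal domain for $\kappa$ above all small cardinals. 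Everything else is bookkeeping via \thref{continuous-logic-formula-translation}.
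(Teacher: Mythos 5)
Your proposal is correct and follows essentially the same route as the paper's proof: reduce every $\L_\pos$-formula to some $R_\phi$ via \thref{continuous-logic-formula-translation}, witness failures of positive formulas by obstructions of the form $R_{\phi\geq\varepsilon}$, transfer saturation and homogeneity directly from the universal domain $\U$, and separate distinct types by $R_{\phi\leq s}$ and $R_{\phi\geq s}$ using $T_\pos\models\forall x(R_{\phi\leq s}(x)\vee R_{\phi\geq s}(x))$. Two small points of wording to tighten: in (ii) preserving the relations $R_\phi$ gives more than an isometry — since $R_{\phi\leq r}$ is among them for every $r$, an $\L_\pos$-automorphism preserves the exact value of every $\L$-formula, which is exactly what being an automorphism of $\U$ requires (this is how the paper argues it); and the covering sentence $\forall x(R_{\phi\leq s}(x)\vee R_{\phi\geq s}(x))$ is h-inductive but not h-universal, and all you need for Hausdorffness is that it lies in $T_\pos\subseteq T_\pos^\pc$.
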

\begin{proof}
\underline{(i)} We first prove that $\MM_\pos$ is a p.c.\ model of $T_\pos$. Suppose that $\MM_\pos \not \models \theta(a)$ for some $\L_\pos$-formula $\theta(x)$ and some $a \in \MM_\pos$. By \thref{continuous-logic-formula-translation} there is an $\L$-formula $\phi(x)$ such that $\theta(x)$ is equivalent to $R_\phi(x)$. Set $r = \phi(a)$, then $r \neq 0$ because $\MM_\pos \not \models R_\phi(a)$. Hence $0 < r/2 < r$ and so $R_{\phi \geq r/2}(\MM_\pos) \cap R_\phi(\MM_\pos) = \emptyset$. Therefore $R_{\phi \geq r/2}(x)$ is an obstruction of $R_\phi(x)$ modulo $T_\pos$, while $\MM_\pos \models R_{\phi \geq r/2}(a)$. We thus conclude that $\MM_\pos$ is a p.c.\ model of $T_\pos$.

Next we check that $\MM_\pos$ is as saturated as $\U$. Let $\Sigma(x)$ be a small set of $\L_\pos$-formulas with parameters in $\MM_\pos$, which is finitely satisfiable in $\MM_\pos$. By \thref{continuous-logic-formula-translation} $\Sigma(x)$ is (equivalent to) a set of relation symbols of the form $R_\phi(x)$, where $\phi(x)$ is an $\L$-formula. Define the following set of $\L$-conditions
\[
\Sigma'(x) = \{\phi(x) = 0 : \phi(x) \text{ is an $\L$-formula s.t.\ } R_\phi(x) \in \Sigma(x) \}.
\]
Then $\Sigma'(x)$ is finitely satisfiable in $\U$ and so by saturation there is $a \in \U$ with $\U \models \Sigma'(a)$. By construction $\MM_\pos \models \Sigma(a)$, as required.

Finally, we check that $\MM_\pos$ is as homogeneous as $\U$. Let $a$ and $b$ be small tuples in $\MM_\pos$ be such that $\tp(a; \MM_\pos) = \tp(b; \MM_\pos)$. Then they satisfy the same set of $\L_\pos$-relation symbols, which is precisely saying that they satisfy the same set of $\L$-conditions, and so by homogeneity of $\U$ there is an automorphism $f$ of $\U$ that sends $a$ to $b$. By (ii), the proof of which stands on itself, $f$ is also an automorphism of $\MM_\pos$.

\underline{(ii)} Let $f$ be an automorphism of the underlying set of $\U$ and $\MM_\pos$. First, suppose that $f$ is an automorphism of $\U$. Then $f$ preserves null-sets of formulas set-wise. So it respects the relation symbols in $\L_\pos$, and it is thus an automorphism of $\MM_\pos$. Conversely, suppose that $f$ is an automorphism of $\MM_\pos$. Let $\phi(x)$ be an $\L$-formula, let $a \in \U$ and set $r = \phi(a)$. Then $\MM_\pos \models R_{\phi \leq r}(a)$ and $\MM_\pos \not \models R_{\phi \leq s}(a)$ for all $s < r$. Hence $\MM_\pos \models R_{\phi \leq r}(f(a))$ and $\MM_\pos \not \models R_{\phi \leq s}(f(a))$ for all $s < r$. This says exactly that $\phi(f(a)) \leq r$ and $\phi(f(a)) > s$ for all $s < r$. So $\phi(f(a)) = r$, and we conclude that $f$ is an automorphism of $\U$.

\underline{(iii)} Let $p(x)$ and $q(x)$ be two distinct types. As $\MM_\pos$ is a monster model (see item (i)), there are realisations $a \in \MM_\pos$ and $b \in \MM_\pos$ of $p$ and $q$ respectively. Let $\theta(x) \in p(x)$ such that $\theta(x) \not \in q(x)$. By \thref{continuous-logic-formula-translation}, $\theta(x)$ is (equivalent to) $R_\phi(x)$ for some $\L$-formula $\phi(x)$. Set $r = \phi(b)$, so $r > 0$. Clearly we have $T_\pos \models \forall x(R_{\phi \leq r/2}(x) \vee R_{\phi \geq r/2}(x))$, and by construction $R_{\phi \leq r/2}(x) \not \in q(x)$ while $R_{\phi \geq r/2}(x) \not \in p(x)$. We conclude that $T_\pos$ is Hausdorff.
\end{proof}
The fact that $\MM_\pos$ and $\U$ have the same automorphisms means that they agree on what a type is, and so they enjoy the same model-theoretic properties. For example, one is simple if and only if the other is (see also \thref{continuous-tp-iff-positive-tp,continuous-kim-pillay}).
\begin{remark}
\thlabel{continuous-translation-needs-saturation}
We have made essential use of the fact that $\U$ is very saturated. In particular, in proving that $\exists y R_\phi(x, y)$ is equivalent to $R_{\inf_y \phi}(x)$ modulo $T_\pos$ (i.e., \thref{continuous-logic-formula-translation}(v)). Without saturation we still get that $\exists y R_\phi(x, y)$ implies $R_{\inf_y \phi}(x)$, but for the other direction we only get values for $y$ that take $\phi(x, y)$ arbitrarily close to $0$, but we might never actually reach $0$.

Of course, to make the argument in \thref{continuous-logic-formula-translation}(v) work, being $\omega$-saturated would be enough. So we could prove that every $\omega$-saturated metric model of $T$ becomes a positively $\omega$-saturated p.c.\ model of $T_\pos$.

We have not considered the converse. Fully investigating this is beyond our scope, but we briefly discuss the obstacles and possibilities here. Firstly, any p.c.\ model $M$ of $T_\pos$ can quickly be turned it into a metric structure because the exact values of the metric, function symbols and relation symbols are captured by the positive formulas. For example, for elements $a$ and $b$ their distance will be the smallest $r$ such that $M \models R_{d \leq r}(a, b)$, where $d(x, y)$ is the metric symbol. The theory $T_\pos$ captures that all symbols behave as required (e.g., $d$ is a metric and the function symbols all have the correct modulus of uniform continuity, etc.). The only issue is that the underlying metric space of a metric structure must be complete. This will obviously generally fail when starting with an arbitrary p.c.\ model and can be fixed by requiring $M$ to be $\omega_1$-saturated. Indeed, given a Cauchy sequence $(a_n)_{n < \omega}$ in $M$ one easily writes down a set of formulas $\Sigma(x)$ with parameters $(a_n)_{n < \omega}$ that expresses that $x$ is the limit of $(a_n)_{n < \omega}$. The fact that $(a_n)_{n < \omega}$ is a Cauchy sequence means that $\Sigma(x)$ is finitely satisfiable in $M$ (in fact, by elements from $(a_n)_{n < \omega}$). By saturation $\Sigma(x)$ is satisfiable in $M$, which means that $(a_n)_{n < \omega}$ has a limit in $M$.

In summary, in the above we have essentially shown the following. For any infinite $\kappa$, a $\kappa$-saturated metric model of $T$ yields a positively $\kappa$-saturated p.c.\ model of $T_\pos$. At the same time, we have sketched a proof of: if $\kappa \geq \omega_1$ then every positively $\kappa$-saturated p.c.\ model of $T_\pos$ yields a $\kappa$-saturated metric model of $T$.
\end{remark}
\begin{example}
\thlabel{continuous-logic-not-boolean}
While $T_\pos$ is always Hausdorff, this is the best we can do. That is, $T_\pos$ will generally not be Boolean. For example, consider $[0, 1]$ with the Euclidean metric as a metric structure (so $\L$ is the empty language, i.e., we only have the symbol $d$ for the metric). Let $T$ be the continuous theory of $[0, 1]$ and let $\U$ be the corresponding universal domain. Suppose for a contradiction that $T_\pos$ is Boolean. Then we can consider the following set of $\L_\pos$-formulas:
\[
\Sigma(x) = \{x \neq 0\} \cup \{R_{d \leq 1/n}(x, 0) : 1 \leq n < \omega\}.
\]
Clearly $\Sigma(x)$ is finitely satisfiable, and so there is $a \in \MM_\pos$ that realises $\Sigma(x)$. However, for such $a$ we have both $a \neq 0$ and $d(a, 0) = 0$, a contradiction. So we conclude that $T_\pos$ cannot be Boolean.
\end{example}
In the proof that $\MM_\pos$ is a p.c.\ model we have already seen that $\MM_\pos \not \models R_\phi(a)$ is always witnessed by an obstruction of the form $R_{\phi \geq \varepsilon}(x)$ for some $\varepsilon > 0$. In fact, these are essentially the only possible obstructions, as is made precise below.
\begin{proposition}
\thlabel{continuous-negations}
Let $\theta(x)$ be an $\L_\pos$-formula and let $\phi(x)$ be an $\L$-formula, such that $\theta(x)$ is an obstruction of $R_\phi(x)$ modulo $T_\pos$. Then there is $\varepsilon > 0$ such that $T_\pos \models \forall x(\theta(x) \to R_{\phi \geq \varepsilon}(x))$.
\end{proposition}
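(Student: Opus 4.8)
The statement says that any obstruction $\theta(x)$ of $R_\phi(x)$ must, modulo $T_\pos$, already imply one of the canonical obstructions $R_{\phi \geq \varepsilon}(x)$ for some $\varepsilon > 0$. The plan is to first use \thref{continuous-logic-formula-translation} to rewrite $\theta(x)$ as $R_\psi(x)$ for some $\L$-formula $\psi(x)$. Then ``$\theta(x)$ is an obstruction of $R_\phi(x)$'' means $T_\pos \models \neg \exists x(R_\phi(x) \wedge R_\psi(x))$, i.e.\ $T_\pos \models \neg \exists x\, R_{\max(\phi,\psi)}(x)$ using \thref{continuous-logic-formula-translation}(iii). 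Interpreting this in $\MM_\pos$: there is no $a \in \U$ with $\max(\phi(a),\psi(a)) = 0$, i.e.\ no $a$ with $\phi(a) = 0$ and $\psi(a) = 0$ simultaneously.

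The heart of the argument is a compactness/saturation step in $\U$. I claim there is $\varepsilon > 0$ such that $\U \models \forall x(\psi(x) = 0 \to \phi(x) \geq \varepsilon)$, equivalently $\U \models \forall x(\phi(x) < \varepsilon \to \psi(x) > 0)$. Suppose not. Then for every $n$ there is $a_n \in \U$ with $\phi(a_n) \leq 1/n$ and $\psi(a_n) = 0$. So the set of $\L$-conditions $\{\phi(x) \dotminus \tfrac1n = 0 : 1 \leq n < \omega\} \cup \{\psi(x) = 0\}$ is finitely satisfiable in $\U$ (any single instance is realised by an appropriate $a_n$, using that the first family of conditions is decreasing). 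By saturation of $\U$ there is $a \in \U$ realising all of them, which forces $\phi(a) = 0$ and $\psi(a) = 0$, contradicting the displayed fact that no such $a$ exists. Hence such an $\varepsilon$ exists. Translating back: for any $a \in \U$, if $\MM_\pos \models R_\psi(a)$ then $\psi(a) = 0$, so $\phi(a) \geq \varepsilon$, so $\MM_\pos \models R_{\phi \geq \varepsilon}(a)$ (recall $R_{\phi \geq \varepsilon}$ is $R_{\varepsilon \dotminus \phi}$, interpreted as $\{a : \phi(a) \leq \varepsilon\}$ — here I should be slightly careful: $\phi(a) \geq \varepsilon$ gives $\varepsilon \dotminus \phi(a) = 0$, which is exactly $\MM_\pos \models R_{\phi \geq \varepsilon}(a)$, so the notation matches). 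Since this holds for every $a$ in the monster model $\MM_\pos$ of $T_\pos$, and $T_\pos$ is the set of h-inductive sentences true in $\MM_\pos$ (together with the fact that the h-inductive sentence $\forall x(R_\psi(x) \to R_{\phi \geq \varepsilon}(x))$ then holds in $\MM_\pos$), we get $T_\pos \models \forall x(R_\psi(x) \to R_{\phi \geq \varepsilon}(x))$, i.e.\ $T_\pos \models \forall x(\theta(x) \to R_{\phi \geq \varepsilon}(x))$, as required.

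The main obstacle — really the only non-bookkeeping point — is the saturation argument showing that the ``gap'' $\varepsilon$ can be chosen uniformly: without the saturation of $\U$ one would only get, for each $a$ with $\psi(a) = 0$, that $\phi(a) > 0$, with no lower bound, and indeed the statement would fail for a non-saturated model (this is the same phenomenon noted in \thref{continuous-translation-needs-saturation} and \thref{continuous-logic-not-boolean}). Everything else is a routine translation between $\L_\pos$-formulas and $\L$-conditions via \thref{continuous-logic-formula-translation}, plus the observation that h-inductive sentences true in the monster model $\MM_\pos$ are exactly $T_\pos$.
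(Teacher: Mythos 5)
Your proof is correct and follows essentially the same route as the paper: translate $\theta(x)$ to $R_\psi(x)$ via \thref{continuous-logic-formula-translation}, observe that the set of $\L$-conditions $\{\psi(x)=0\}\cup\{\phi(x)\dotminus\tfrac1n=0 : n\geq 1\}$ cannot be satisfiable in $\U$, and use saturation (which you phrase contrapositively, the paper phrases as extracting an unsatisfiable finite subset) to obtain a uniform $\varepsilon = 1/n$. The only cosmetic difference is that you spell out the final bookkeeping step that the resulting h-inductive sentence, being true in $\MM_\pos$, lies in $T_\pos$, which the paper leaves implicit.
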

\begin{proof}
By \thref{continuous-logic-formula-translation} there is an $\L$-formula $\psi(x)$ such that $\theta(x)$ is equivalent to $R_\psi(x)$ modulo $T_\pos$. Consider the set of $\L$-conditions
\[
\Sigma(x) = \{\psi(x) = 0\} \cup \{\phi(x) \dotminus \frac{1}{n} = 0 : 1 \leq n < \omega\}.
\]
Then $\Sigma(x)$ must be unsatisfiable in $\U$, as a realisation $a$ would satisfy both $\U \models \psi(a) = 0$ and $\U \models \phi(a) = 0$, contradicting that $R_\psi(x)$ is an obstruction of $R_\phi(x)$ modulo $T_\pos$. There is thus some $1 \leq n < \omega$ such that $\{\psi(x) = 0, \phi(x) \dotminus \frac{1}{n} = 0\}$ is unsatisfiable in $\U$. That is, for all $a \in \U$ we have that if $\psi(a) = 0$ then $\phi(a) > \frac{1}{n}$. In other words, $R_\psi(x)$ implies $R_{\phi \geq 1/n}(x)$ modulo $T_\pos$, as required.
\end{proof}
As an example of how results in positive logic can be applied to continuous logic, using the above translation, we will treat simplicity in continuous logic (i.e., the main results from Chapter \ref{ch:simple-theories}).
\begin{definition}
\thlabel{continuous-tree-property}
Let $k \geq 2$ be a natural number. An $\L$-formula $\phi(x, y)$ is said to have the \emph{$k$-tree property} ($k$-\TP) if there are $(a_\eta)_{\eta \in \omega^{<\omega}}$ in $\U$ and some $\varepsilon > 0$ such that:
\begin{enumerate}[label=(\roman*)]
\item for all $\sigma \in \omega^\omega$ the set $\{ \phi(x, a_{\sigma|_n}) : n < \omega \}$ is consistent,
\item for all $\eta \in \omega^{<\omega}$ and $i_1 < \ldots < i_k < \omega$ we have that
\[
\inf_x(\max(\phi(x, a_{\eta^\frown i_1}), \ldots, \phi(x, a_{\eta^\frown i_1}))) \geq \varepsilon.
\]
\end{enumerate}
An $\L$-formula $\phi(x, y)$ has the \emph{tree property} (\TP) if there exists some natural number $k \geq 2$ such that $\phi(x, y)$ has $k$-\TP.

The theory $T$ has the \emph{tree property} (\TP) if there is a formula that has the tree property, and otherwise we say that $T$ is \NTP or \emph{simple}.
\end{definition}
\begin{proposition}
\thlabel{continuous-tp-iff-positive-tp}
An $\L$-formula $\phi(x, y)$ has $k$-\TP (in the sense of \thref{continuous-tree-property}) if and only if the $\L_\pos$-formula $R_\phi(x, y)$ has $k$-\TP (modulo $T_\pos$, in the sense of \thref{tree-property}).

In particular $T$ is simple if and only if $T_\pos$ is simple.
\end{proposition}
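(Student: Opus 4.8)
The plan is to prove the biconditional at the level of individual formulas, and then deduce the statement about theories. Concretely, I would show: an $\L$-formula $\phi(x,y)$ has $k$-\TP in the continuous sense (\thref{continuous-tree-property}) if and only if the $\L_\pos$-formula $R_\phi(x,y)$ has $k$-\TP modulo $T_\pos$ in the positive sense (\thref{tree-property}). The key dictionary entry is this: for a fixed $\varepsilon>0$, the $\L_\pos$-formula $\psi_\varepsilon(y_1,\ldots,y_k) := R_{(\inf_x \max(\phi(x,y_1),\ldots,\phi(x,y_k)))\,\geq\,\varepsilon}(y_1,\ldots,y_k)$ (using the $R_{(-)\geq\varepsilon}$ notation from \thref{continuous-bounded-formula}) is an obstruction of $\exists x(R_\phi(x,y_1)\wedge\ldots\wedge R_\phi(x,y_k))$ modulo $T_\pos$. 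Indeed, by \thref{continuous-logic-formula-translation}(iii) and (v), $\exists x(R_\phi(x,y_1)\wedge\ldots\wedge R_\phi(x,y_k))$ is equivalent modulo $T_\pos$ to $R_{\inf_x\max(\phi(x,y_1),\ldots,\phi(x,y_k))}(y_1,\ldots,y_k)$, and a tuple cannot satisfy both the condition ``$\inf_x\max(\ldots)=0$'' and the condition ``$\inf_x\max(\ldots)\geq\varepsilon$'' when $\varepsilon>0$.

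For the forward direction, suppose $\phi(x,y)$ has $k$-\TP in the continuous sense, witnessed by $(a_\eta)_{\eta\in\omega^{<\omega}}$ and $\varepsilon>0$. I would use the very same tree $(a_\eta)$ and the obstruction $\psi_{\varepsilon}(y_1,\ldots,y_k)$ defined above. Condition (i) of \thref{tree-property} is literally condition (i) of \thref{continuous-tree-property} after noting that $\{\phi(x,a_{\sigma|_n})=0 : n<\omega\}$ being satisfiable in $\U$ translates (via saturation of $\U$, exactly as in \thref{continuous-logic-formula-translation}(v), and the fact that $\MM_\pos=\U$ as a set) to $\{R_\phi(x,a_{\sigma|_n}) : n<\omega\}$ being consistent in $\MM_\pos$. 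Condition (ii) of \thref{tree-property}, namely $\MM_\pos\models\psi_\varepsilon(a_{\eta^\frown i_1},\ldots,a_{\eta^\frown i_k})$, is precisely the inequality $\inf_x\max(\phi(x,a_{\eta^\frown i_1}),\ldots,\phi(x,a_{\eta^\frown i_k}))\geq\varepsilon$ from condition (ii) of \thref{continuous-tree-property}, unwound through the definition of $R_{(-)\geq\varepsilon}$.

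For the reverse direction, suppose $R_\phi(x,y)$ has $k$-\TP modulo $T_\pos$, witnessed by a tree $(a_\eta)$ and some $\L_\pos$-obstruction $\psi(y_1,\ldots,y_k)$ of $\exists x(R_\phi(x,y_1)\wedge\ldots\wedge R_\phi(x,y_k))$. By \thref{continuous-logic-formula-translation}, $\psi$ is equivalent modulo $T_\pos$ to some $R_\chi(y_1,\ldots,y_k)$, and applying \thref{continuous-negations} to the $\L$-formula $\inf_x\max(\phi(x,y_1),\ldots,\phi(x,y_k))$ (whose associated relation symbol is the formula being obstructed, by \thref{continuous-logic-formula-translation}(iii),(v)), there is $\varepsilon>0$ with $T_\pos\models\forall \bar y\,(\psi(\bar y)\to\psi_\varepsilon(\bar y))$ in the notation above. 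Then $(a_\eta)$ together with this $\varepsilon$ witnesses $k$-\TP for $\phi$ in the continuous sense: condition (i) transfers back using saturation of $\U$ as before, and condition (ii) follows since $\models\psi(a_{\eta^\frown i_1},\ldots,a_{\eta^\frown i_k})$ implies $\models\psi_\varepsilon(a_{\eta^\frown i_1},\ldots,a_{\eta^\frown i_k})$, which unwinds to the required inequality. Finally, the ``in particular'' claim: $T$ is simple iff no $\L$-formula has \TP iff (by the formula-wise equivalence just proved, ranging over all $k\geq 2$ and all $\L$-formulas, noting every $\L_\pos$-formula is equivalent to some $R_\phi$ by \thref{continuous-logic-formula-translation}) no $\L_\pos$-formula has \TP modulo $T_\pos$ iff $T_\pos$ is simple, where simplicity of $T_\pos$ is \NTP by \thref{simplicity-equivalences}.

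The main obstacle I anticipate is bookkeeping around two points: first, making sure the ``consistency'' in condition (i) matches up on both sides, which genuinely requires the saturation of $\U$ — consistency of $\{R_\phi(x,a_{\sigma|_n})\}$ in the monster $\MM_\pos$ must be tied to satisfiability of $\{\phi(x,a_{\sigma|_n})=0\}$ in $\U$, and here one leans on the argument pattern of \thref{continuous-logic-formula-translation}(v) (and on \thref{continuous-universal-domain-yields-positive-monster}(i) that $\MM_\pos$ really is a monster model, so consistency equals realizability); second, pinning down that an arbitrary positive obstruction can be replaced by one of the clean form $\psi_\varepsilon$, which is exactly what \thref{continuous-negations} delivers but must be invoked for the correct $\L$-formula. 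Neither is deep, but the translation between the metric-valued and the relational presentations needs to be stated carefully so that the two definitions of $k$-\TP line up verbatim.
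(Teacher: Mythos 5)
Your proposal is correct and follows essentially the same route as the paper: the same tree of parameters is reused in both directions, the obstruction $R_{\psi\geq\varepsilon}$ (with $\psi$ the $\L$-formula $\inf_x\max(\phi(x,y_1),\ldots,\phi(x,y_k))$) handles the forward direction, \thref{continuous-negations} converts an arbitrary positive obstruction into one of that form for the converse, and the simplicity claim follows since every $\L_\pos$-formula is equivalent to some $R_\phi$ by \thref{continuous-logic-formula-translation}. Your extra care about matching the two notions of consistency in condition (i) via saturation of $\U$ is a point the paper leaves implicit, but it is the same argument.
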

\begin{proof}
Throughout this proof we freely use the translation between $\L$-formulas and $\L_\pos$-formulas from \thref{continuous-logic-formula-translation}. In particular, for $k \geq 2$ we write $\psi(y_1, \ldots, y_k)$ for the $\L$-formula
\[
\inf_x(\max(\phi(x, y_1), \ldots, \phi(x, y_k))),
\]
so $R_{\psi}(y_1, \ldots, y_k)$ is equivalent to $\exists x(R_\phi(x, y_1) \wedge \ldots \wedge R_\phi(x, y_k))$ modulo $T_\pos$.

If $\phi(x, y)$ has $k$-\TP then the same tree of parameters witnesses $k$-\TP for $R_\phi(x, y)$. We take $R_{\psi \geq \varepsilon}(y_1, \ldots, y_k)$ as the witnessing obstruction of the formula $\exists x(R_\phi(x, y_1) \wedge \ldots \wedge R_\phi(x, y_k))$, where $\varepsilon$ is as in \thref{continuous-tree-property}.

Conversely, if $R_\phi(x, y)$ has $k$-\TP then this is witnessed by some tree of parameters and an obstruction $\theta(y_1, \ldots, y_k)$ for $\exists x(R_\phi(x, y_1) \wedge \ldots \wedge R_\phi(x, y_k))$. By \thref{continuous-negations} there is $\varepsilon > 0$ such that $T_\pos \models \forall y_1, \ldots, y_k(\theta(y_1, \ldots, y_k) \to R_{\psi \geq \varepsilon}(y_1, \ldots, y_k))$, which shows that the same tree of parameters, together with $\varepsilon$, witnesses $k$-\TP for $\phi(x, y)$.

The final claim about simplicity follows because in both cases being simple is defined as being \NTP, and because every $\L_\pos$-formula is equivalent (modulo $T_\pos$) to one of the form $R_\phi$.
\end{proof}
\begin{theorem}[Kim-Pillay for continuous logic]
\thlabel{continuous-kim-pillay}
The theory $T$ is simple if and only if there is an independence relation $\ind$ on $\U$ satisfying \textsc{invariance}, \textsc{monotonicity}, \textsc{normality}, \textsc{existence}, \textsc{full existence}, \textsc{base monotonicity}, \textsc{extension}, \textsc{symmetry}, \textsc{transitivity}, \textsc{finite character}, \textsc{local character} and \textsc{independence theorem}. Furthermore, in this case $\ind = \ind^d$.
\end{theorem}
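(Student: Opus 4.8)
The idea is to reduce \thref{continuous-kim-pillay} entirely to the positive-logic Kim--Pillay theorem \thref{kim-pillay} via the translation set up in \thref{continuous-universal-domain-yields-positive-monster}. First I would observe that $T_\pos$ is Hausdorff (\thref{continuous-universal-domain-yields-positive-monster}(iii)), hence in particular thick by \thref{hausdorff-implies-thick}, so \thref{kim-pillay} applies to $T_\pos$. By \thref{continuous-tp-iff-positive-tp}, $T$ is simple (in the sense of \thref{continuous-tree-property}) if and only if $T_\pos$ is simple, and by \thref{continuous-universal-domain-yields-positive-monster}(i) the structure $\MM_\pos$ is a monster model of $T_\pos$ whose underlying set is exactly $\U$. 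The key point, recorded in \thref{continuous-universal-domain-yields-positive-monster}(ii), is that $\MM_\pos$ and $\U$ have the same automorphism group, so they have the same notion of type (automorphism orbit over a parameter set), the same notion of Lascar strong type, and the same notion of indiscernible sequence --- hence the same notion of dividing, and the same twelve properties in \thref{independence-properties} mean literally the same thing whether phrased for $\U$ or for $\MM_\pos$.

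\textbf{Key steps.} Concretely I would argue as follows. ($\Rightarrow$) Suppose $T$ is simple. Then $T_\pos$ is simple by \thref{continuous-tp-iff-positive-tp}, and since $T_\pos$ is thick, \thref{kim-pillay} gives an independence relation $\ind$ on small subsets of $\MM_\pos$ --- equivalently on small subsets of $\U$ --- satisfying all the listed properties, with $\ind = \ind^d$. This $\ind$ is the required relation on $\U$. ($\Leftarrow$) Conversely, suppose there is an independence relation $\ind$ on $\U$ with all the listed properties. Transporting along the identification of $\U$ with $\MM_\pos$ (legitimate because, by (ii), the two structures have the same automorphisms, so \textsc{invariance} and all orbit/type-based notions transfer verbatim, and dividing is computed identically since indiscernible sequences agree), we obtain an independence relation on $\MM_\pos$ satisfying the hypotheses of the backward direction of \thref{kim-pillay}. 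That theorem then yields that $T_\pos$ is simple and $\ind = \ind^d$; applying \thref{continuous-tp-iff-positive-tp} once more gives that $T$ is simple, and the equality $\ind = \ind^d$ descends back to $\U$. Finally the "furthermore" clause is immediate: in the simple case \thref{kim-pillay} forces $\ind = \ind^d$ in $\MM_\pos$, which under the identification is exactly $\ind^d$ computed in $\U$.

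\textbf{Main obstacle.} The only genuine content beyond quoting earlier results is checking that the dictionary between $\U$ and $\MM_\pos$ is tight enough that each of the twelve properties, and the relation $\ind^d$ itself, is preserved in both directions. The properties \textsc{invariance}, \textsc{monotonicity}, \textsc{normality}, \textsc{existence}, \textsc{full existence}, \textsc{base monotonicity}, \textsc{extension}, \textsc{symmetry}, \textsc{transitivity}, \textsc{finite character} are all phrased purely in terms of tuples, sets, automorphisms and types, so they transfer immediately from \thref{continuous-universal-domain-yields-positive-monster}(ii). For \textsc{local character} and \textsc{independence theorem} one also needs that "small" means the same thing (it does, by the way the monster is built in \thref{continuous-universal-domain-yields-positive-monster}(i)) and that Lascar strong types agree --- which again follows from the equality of automorphism groups together with \thref{same-lstp-iff-lascar-strong-automorphism} applied in the thick theory $T_\pos$. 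Finally, one must check that $\ind^d$ as computed in $\U$ (with dividing defined via continuous indiscernible sequences) coincides with $\ind^d$ as computed in $\MM_\pos$; this is exactly the statement that a sequence is indiscernible in $\U$ iff it is indiscernible in $\MM_\pos$, which holds because the two structures have the same automorphisms and hence the same types of finite tuples. I expect this verification to be short but it is the place where a careful proof must actually say something.
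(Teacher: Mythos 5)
Your proposal is correct and follows essentially the same route as the paper: apply \thref{kim-pillay} to $T_\pos$ (which is thick since it is Hausdorff by \thref{continuous-universal-domain-yields-positive-monster}(iii)), transfer across the identification of $\U$ with $\MM_\pos$ using the equality of automorphism groups for types, Lascar strong types and dividing, and use \thref{continuous-tp-iff-positive-tp} to identify simplicity of $T$ with simplicity of $T_\pos$. The paper's proof is just a terser version of exactly this reduction, with the dividing-agreement point delegated to \thref{continuous-kim-pillay-notes}(iii).
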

\begin{remark}
\thlabel{continuous-kim-pillay-notes}
We make the following notes about \thref{continuous-kim-pillay}.
\begin{enumerate}[label=(\roman*)]
\item We have not defined what it means for two tuples of $\U$ to have the same type, but this can be understood as being in the same automorphism orbit.
\item If one does not wish to translate the definition of Lascar strong types to continuous logic then the \textsc{independence theorem} property can be weakened by only allowing $\lambda_T$-saturated models in the base, see also \thref{kim-pillay-weaken-independence-theorem}. By \thref{continuous-translation-needs-saturation} it does not matter here if we speak about saturation in the continuous sense or in the positive sense.
\item We have not defined dividing for continuous logic. There is a definition in \cite[Definition 14.11]{ben-yaacov_model_2008}, which is what one would expect. Importantly, both $\U$ and $\MM_\pos$ agree on when a type divides (pointing out, once more, that they agree on types in the first place). So there is no difference between computing the $\ind^d$ relation in $\U$ or $\MM_\pos$. This also means that simplicity of $T$ can be characterised as local character of $\ind^d$, see \thref{simplicity-equivalences}.
\end{enumerate}
\end{remark}
\begin{proof}[Proof of \thref{continuous-kim-pillay}]
Apply \thref{kim-pillay} to $T_\pos$, noting that $T_\pos$ is Hausdorff (\thref{continuous-universal-domain-yields-positive-monster}(iii)) and thus thick and $\MM_\pos$ and $\U$ agree on types (so $\ind$ has the same properties in either structure), while \thref{continuous-tp-iff-positive-tp} tells us that $T$ is simple if and only if $T_\pos$ is simple. The final claim follows because $\MM_\pos$ and $\U$ agree on what dividing means (see \thref{continuous-kim-pillay-notes}(iii)).
\end{proof}
\section{Bibliographic remarks}
\label{sec:bibliographic-remarks-examples}
We discuss the bibliographic remarks separately for each of the two sections in this chapter.

\subsection{Hyperimaginaries, Section \ref{sec:hyperimaginaries}}
The original motivation of \cite{ben-yaacov_positive_2003} was to create a model-theoretic framework where ``hyperimaginary elements could be adjoined as parameters to the language, the same way we used to do it with real and imaginary ones since the dawn of time''. In \cite[Example 2.16]{ben-yaacov_positive_2003} a brief description is given of how to add hyperimaginaries, which is exactly the construction that was worked out in this chapter. The first place where these details appear is in \cite[Section 10C]{dobrowolski_kim-independence_2022} (\thref{set-of-formulas-implies-formula-then-small-set-implies-it,type-definable-equivalence-relation-containing-formula,hyperimaginary-definable-with-small-hyperimaginaries} come from \cite[Section 3]{ben-yaacov_thickness_2003}). The present version is slightly easier and smoother because we do not notationally distinguish between real and hyperimaginary sorts (see \thref{hyperimaginary-projection-notation}).

The proof that simplicity is respected by adding hyperimaginaries (\thref{hyperimaginaries-respect-simplicity}) is essentially the same as \cite[Theorem 10.18]{dobrowolski_kim-independence_2022}, where a similar result is proved for \NSOP[1]. As is noted there as well, this proof works for any model-theoretic dividing line that is defined like \TP, \SOP[1], etc. So \thref{hyperimaginaries-respect-stability} could be proved in a similar way using the order property (see also \thref{stable-iff-no-order-property}). However, since we have not worked out the equivalence between not having the order property and stability, we chose to work with the type counting definition that we gave.

\subsection{Continuous logic, Section \ref{sec:continuous-logic}}
In \cite{ben-yaacov_model_2008} it is already mentioned that a continuous theory can be studied as a \emph{compact abstract theory}, which is the name that \cite{ben-yaacov_positive_2003,ben-yaacov_simplicity_2003,ben-yaacov_thickness_2003} use for what is essentially positive logic. In fact, \cite[Section 2]{ben-yaacov_positive_2003} gives us a recipe that allows us to cook up a positive theory from the data of a continuous theory. In Section \ref{sec:continuous-logic} we just work this out explicitly.

There is nothing special about the tree property in \thref{continuous-tp-iff-positive-tp}. We could translate various other combinatorial model-theoretic properties (such as \OP, \SOP[1], \SOP[2], \IP, \TP[1], \TP[2], etc.) in a similar fashion. That is, one takes their definition for positive logic (see e.g., \cite{dmitrieva_dividing_2023}) and replaces the existence of an obstruction $\chi'$ of some formula $\chi$ by the existence of an $\varepsilon > 0$ such that $\chi$ has value at least $\varepsilon$ wherever $\chi'$ was supposed to hold.

We could also translate the results concerning stability (i.e., Chapter \ref{ch:stable-theories}) similarly to how we translated those for simplicity (e.g., \thref{continuous-kim-pillay}), but this already appears in print in \cite[Section 14]{ben-yaacov_model_2008}.

\begin{appendices}
\renewcommand\chaptername{Appendix}
\newcommand{\lazybegin}[2]{\begin{#1}[\thref{#2}]}
\newcommand{\lazyend}[1]{\end{#1}}

\chapter{The lazy model-theoretician's guide to positive logic}
\chaptermark{Lazy guide to positive logic} 
\label{ch:lazy-model-theoritician}
We give an as-brief-as-possible summary of the necessities for positive logic. The ``proofs'' here are just sketches or indications of the main ingredients in the actual arguments. Each definition, lemma, proposition and theorem refers to the relevant counterpart within the notes, where the details can be found.

\section{Basics}
Definitions of signature (or language), structure and first-order formula are the same as usual. We work in some fixed signature $\L$, which we often drop from the notation. We will not distinguish tuples from single elements, so the notation $a \in M$ means that $a$ is some tuple in $M$ (and similarly for variables).
\lazybegin{definition}{positive-formulas-and-theory}
A \emph{positive formula} is one that is built from atomic formulas using $\top, \bot, \wedge, \vee$ and $\exists$.

An \emph{h-inductive sentence} is one of the form $\forall x(\phi(x) \to \psi(x))$, where $\phi(x)$ and $\psi(x)$ are positive formulas. An \emph{h-universal sentence} is an h-inductive sentence of the form $\forall x(\phi(x) \to \bot)$.

A \emph{positive theory} is a set of h-inductive sentences.
\lazyend{definition}
The signature does not necessarily contain a symbol for inequality.
\begin{convention}[Stay positive!]
We will drop the ``positive'' from terms from now on. That is, we will just say ``formula'' and ``theory'' instead of ``positive formula'' and ``positive theory'' respectively.
\end{convention}
\lazybegin{definition}{homomorphism-preserves-truth,immersion}
A function $f: M \to N$ between structures is called a \emph{homomorphism} if for every formula $\phi(x)$ and every $a \in M$ we have:
\[
M \models \phi(a) \implies N \models \phi(f(a)).
\]
We call $f$ an \emph{immersion} if for every formula $\phi(x)$ and every $a \in M$ we have:
\[
M \models \phi(a) \Longleftrightarrow N \models \phi(f(a)).
\]
\lazyend{definition}
The main objects of study in positive model theory are the positively closed models. In some literature these are also called existentially closed models (abbreviated as e.c.\ models) or positively existentially closed models (abbreviated as pec models).
\lazybegin{definition}{pc-model,equivalence-pc-characterisations}
We call a model $M$ of a theory $T$ a \emph{positively closed model}, or \emph{p.c.\ model}, if the following equivalent conditions hold:
\begin{enumerate}[label=(\roman*)]
\item every homomorphism $f: M \to N$ with $N \models T$ is an immersion;
\item for every $a \in M$ and $\phi(x)$ the following holds, if there is a homomorphism $f: M \to N$ with $N \models T$ and $N \models \phi(f(a))$ then already $M \models \phi(a)$;
\item for every $a \in M$ and $\phi(x)$ such that $M \not \models \phi(a)$ there is $\psi(x)$ such that $T \models \neg \exists x(\phi(x) \wedge \psi(x))$ and $M \models \psi(a)$.
\end{enumerate}
\lazyend{definition}
\begin{proof}[Proof of the above equivalence.]
Proving (iii) $\Rightarrow$ (i) $\Rightarrow$ (ii) is straightforward. For (ii) $\Rightarrow$ (iii), use compactness and the method of diagrams.
\end{proof}
It is useful to have a name for a formula like $\psi$ in (iii) above.
\lazybegin{definition}{negation}
Let $T$ be a theory and $\phi(x)$ be a formula. A formula $\psi(x)$ such that $T \models \neg \exists x(\phi(x) \wedge \psi(x))$ is called an \emph{obstruction of $\phi(x)$}.
\lazyend{definition}
What is called ``an obstruction of a formula'' here is called ``a negation of a formula'' in some literature.
\lazybegin{theorem}{directed-unions,pc-models-closed-under-directed-unions}
The class of models and the class of p.c.\ models of a theory $T$ are both closed under unions of chains.
\lazyend{theorem}
\begin{proof}
Show that a formula holds in the union of a chain of structures if and only if it holds in some structure in the chain. The statement for the class of models then follows, after which the statement about p.c.\ models is an easy consequence.
\end{proof}
The links in the chains in the above statement are homomorphisms. For p.c.\ models these are then automatically immersions (and thus injective maps), for arbitrary models these may not be injective so ``union'' should not be taken literally (see \thref{directed-system} for a precise definition). Furthermore, ``chains'' can equivalently be replaced by ``directed systems''.
\lazybegin{theorem}{continue-to-pc-model}
Every model $M$ of a theory $T$ can be continued to a p.c.\ model of $T$. That is, there is some p.c.\ model $N$ of $T$ with a homomorphism $f: M \to N$.
\lazyend{theorem}
\begin{proof}
Enumerate all possible formulas with parameters in $M$ as $(\phi_i)_{i < \kappa}$. Inductively construct a chain of models $(M_i)_{i < \kappa}$ with $M_0 = M$ as follows: if there is a homomorphism $M_i \to N$ with $N \models T$ and $N \models \phi_i$ then set $M_{i+1} = N$, otherwise set $M_{i+1} = M_i$. Call the union of this chain $M^1$ and repeat the process to find a chain $(M^i)_{i < \omega}$ such that its union is the desired $N$.
\end{proof}
\lazybegin{theorem}{compactness}
Let $T$ be a theory and let $\Sigma(x)$ be a set of positive formulas. Suppose that for every finite $\Sigma_0(x) \subseteq \Sigma(x)$ there is $M \models T$ with $a \in M$ such that $M \models \Sigma_0(a)$. Then there is a p.c.\ model $N$ of $T$ with $a \in N$ such that $N \models \Sigma(a)$.
\lazyend{theorem}
\begin{proof}
First use the usual compactness and then continue to a p.c.\ model.
\end{proof}
The category of p.c.\ models satisfies the amalgamation property.
\lazybegin{proposition}{amalgamation-bases}
Let $M \xleftarrow{f} M_0 \xrightarrow{g} M'$ be a span of p.c.\ models of some theory $T$. Then there exists a p.c.\ model $N$ of $T$ and $M \xrightarrow{f'} N \xleftarrow{g'} M'$ such that $f'f = g'g$.
\lazyend{proposition}
\begin{proof}
Use compactness and the method of diagrams to find an amalgamation and then continue to a p.c.\ model.
\end{proof}
\lazybegin{definition}{size-of-theory}
For a theory $T$ we let $|T|$ be the cardinality of the set of formulas, up to logical equivalence.
\lazyend{definition}
\lazybegin{theorem}{lowenheim-skolem}
Let $M$ be a p.c.\ model of $T$ and let $A \subseteq M$. Then there is a p.c.\ model $M_0 \subseteq M$ with $A \subseteq M_0$ and $|M_0| \leq |A| + |T|$, such that the inclusion is an elementary embedding.
\lazyend{theorem}
\begin{proof}
The $M_0$ exists by the usual L\"owenheim-Skolem. Combine this with the fact that if $M_1 \to M_2$ is an immersion with $M_2$ a p.c.\ model then $M_1$ is a p.c.\ model.
\end{proof}
There are many equivalent characterisations of a complete theory in full first-order logic. These are no longer equivalent in positive logic, but the important analogous property the following.
\lazybegin{definition}{jcp,equivalence-jcp-characterisations}
A theory $T$ is said to have the \emph{joint continuation property}, or \emph{JCP}, if the following equivalent conditions hold.
\begin{enumerate}[label=(\roman*)]
\item For any two models $M$ and $M'$ of $T$ there is a model $N$ of $T$ with homomorphisms $M \to N \leftarrow M'$.
\item For any two p.c.\ models $M$ and $M'$ of $T$ there is a model $N$ of $T$ with homomorphisms $M \to N \leftarrow M'$.
\item For any two h-universal sentences $\phi$ and $\psi$ we have that $T \models \phi \vee \psi$ implies $T \models \phi$ or $T \models \psi$.
\end{enumerate}
\lazyend{definition}
There are further characterisations similar to ``$T = \Th(M)$ for some model $M$'' (see \thref{jcp}), but they require definitions that we skip here.
\begin{proof}[Proof of the above equivalence.]
For (i) $\Leftrightarrow$ (ii), use that every model continues to a p.c.\ model. For (iii) $\Rightarrow$ (i) use compactness and the method of diagrams. For (i) $\Rightarrow$ (iii) use that truth of h-universal sentences is reflected by homomorphisms. 
\end{proof}
\section{Monster model and indiscernible sequences}
As is common in model theory, it is convenient to work in a monster model. The construction from full first-order logic of such a model goes through in positive logic (see \thref{building-saturated-homogeneous-model}). As usual, we need to fix a notion of smallness, for which we invite the reader to pick their favourite one (e.g., smaller than some fixed inaccessible cardinal).
\lazybegin{definition}{monster-model}
Let $T$ be a theory with JCP. A \emph{monster model} of $T$ is a model $\MM$ of $T$ that is:
\begin{itemize}
\item \underline{Positively closed}: $\MM$ is a p.c.\ model of $T$.
\item \underline{Very homogeneous}: any partial immersion $f: \MM \to \MM$ with small domain extends to an automorphism on all of $\MM$, equivalently any two small tuples $a$ and $b$ in $\MM$ satisfy the same set of formulas if and only if there is an automorphism $f$ of $\MM$ such that $f(a) = b$.
\item \underline{Very saturated}: any small set of formulas with parameters in $\MM$ that is finitely satisfiable in $\MM$ is satisfiable in $\MM$.
\end{itemize}
\lazyend{definition}
\begin{mdframed}
\Large{\begin{convention}
From now on we work in a monster model $\MM$, so all p.c.\ models, tuples and sets are assumed to be small and to live in $\MM$.
\end{convention}}
\end{mdframed}
\begin{convention}
We generally omit the monster model $\MM$ from the notation. We also fix the following notation. Everything is small unless explicitly mentioned otherwise.
\begin{itemize}
\item For a tuple $a$ and a set $B$ we write
\[
\tp(a/B) = \{ \phi(x, b) : b \in B \text{ and } \models \phi(a, b)  \}
\]
for the set of formulas over $B$ that are satisfied by $a$, and we call this the \emph{type of $a$ over $B$}.
\item We write $a \equiv_B a'$ to mean $\tp(a/B) = \tp(a'/B)$.
\item We write $\Aut(\MM/B)$ for the set of autmorphisms of $\MM$ that fix $B$ pointwise. So by homogeneity we have $a \equiv_B a'$ if and only if there is $f \in \Aut(\MM/B)$ with $f(a) = a'$.
\end{itemize}
\end{convention}
We stress that a \emph{type} for us is a maximally consistent set of formulas. That is, a set of formulas $p(x)$ of the form $\tp(a/B)$. An arbitrary consistent set of formulas is called a \emph{partial type} (see also Section \ref{sec:types-and-type-spaces}).

Sometimes smaller saturated p.c.\ models will be useful.
\lazybegin{definition}{saturated-model,saturation-extend-variables}
Let $\kappa$ be an infinite cardinal. A structure $M$ is called \emph{positively $\kappa$-saturated} if, for every $A \subseteq M$ with $|A| < \kappa$, every set $\Sigma(x)$ of formulas over $A$, with $|x| \leq \kappa$, that is finitely satisfiable in $M$ is satisfiable in $M$.
\lazyend{definition}
\lazybegin{proposition}{building-saturated-model}
Let $A$ be any parameter set. Then for all $\kappa \geq |A| + |T|$ there is a positively $\kappa^+$-saturated p.c.\ model $M$ of $T$ with $|M| \leq 2^\kappa$ and $A \subseteq M$.
\lazyend{proposition}
\begin{proof}
By downward L\"owenheim-Skolem we may assume $A$ to be a p.c.\ model. Then inductively construct a chain of p.c.\ models such that the next link realises all finitely satisfiable sets of formulas over $< \kappa$ parameters in the current link. A proper choice of cardinals in this process yields $M$ as the union of this chain.
\end{proof}
\lazybegin{definition}{indiscernible-sequence-over-parameters}
Let $B$ be a set of parameters. An \emph{indiscernible sequence over $B$} is an infinite sequence $(a_i)_{i \in I}$ such that for any $i_1 < \ldots < i_n$ and $j_1 < \ldots < j_n$ in $I$ we have
\[
a_{i_1} \ldots a_{i_n} \equiv_B a_{j_1} \ldots a_{j_n}.
\]
We will also abbreviate this as a \emph{$B$-indiscernible sequence}.
\lazyend{definition}
Indiscernible sequences are often constructed by first constructing a very long sequence and then using the lemma below to find some indiscernible sequence that is \emph{based on} the very long sequence.
\lazybegin{definition}{lambda-t}
Write $\lambda_\kappa = \beth_{(2^\kappa)^+}$ for any cardinal $\kappa$ and $\lambda_T = \lambda_{|T|}$.
\lazyend{definition}
\lazybegin{lemma}{base-indiscernible-sequence-on-long-sequence}
Let $B$ be any parameter set and let $\kappa$ be any cardinal. Then for any sequence $(a_i)_{i \in I}$ of $\kappa$-tuples with $|I| \geq \lambda_{|T| + |B| + \kappa}$ there is a $B$-indiscernible sequence $(a_i')_{i < \omega}$ such that for all $n < \omega$ there are $i_1 < \ldots < i_n$ in $I$ with $a'_1 \ldots a'_n \equiv_B a_{i_1} \ldots a_{i_n}$.
\lazyend{lemma}
\begin{proof}
Erd\H{o}s-Rado.
\end{proof}

\section{Boolean, (semi-)Hausdorff and thick}
\lazybegin{definition}{morleyisation}
Given a \emph{positive fragment} $\Delta$ of our signature (i.e., a set of full first-order formulas, closed under sub-formulas, change of variables, conjunction and disjunction) we define the \emph{($\Delta$-)Morleyisation} $\Mor(\Delta)$ to be the following theory. We extend the signature by a relation symbol $R_\phi(x)$ for each $\phi(x) \in \Delta$ and let $\Mor(\Delta)$ express that $R_\phi(x)$ and $\phi(x)$ are equivalent.
\lazyend{definition}
To make sure $\Mor(\Delta)$ is a \emph{positive} theory, the equivalence of $\phi(x)$ and $R_\phi(x)$ has to axiomatised by induction on the complexity of $\phi(x)$ (see \thref{morleyisation-does-exist}).

By taking $\Delta$ to be the set of all full first-order formulas, every full first-order formula becomes a positive formula (modulo $\Mor(\Delta)$). In this way we can study full first-order logic using positive logic.
\lazybegin{definition}{boolean-theory,equivalence-boolean-characterisations}
We call a theory $T$ \emph{Boolean} if the following equivalent conditions hold.
\begin{enumerate}[label=(\roman*)]
\item Every model of $T$ is a p.c.\ model.
\item Every homomorphism between models of $T$ is an immersion.
\item For every positive formula $\phi(x)$ there is a positive formula $\psi(x)$ such that $T \models \forall x(\neg \phi(x) \leftrightarrow \psi(x))$.
\item For every full first-order formula $\phi(x)$ there is a positive formula $\psi(x)$ such that $T \models \forall x(\phi(x) \leftrightarrow \psi(x))$.
\item Every homomorphism between models of $T$ is an elementary embedding.
\end{enumerate}
\lazyend{definition}
\begin{proof}[Proof of the above equivalence.]
The implications (i) $\Rightarrow$ (ii) and (iv) $\Rightarrow$ (v) $\Rightarrow$ (i) are straightforward. For (ii) $\Rightarrow$ (iii) one proves the following using compactness and the method of diagrams: if $\phi(x)$ is a full first-order formula whose truth is preserved by all homomorphisms of models of $T$ then $\phi(x)$ is equivalent to a positive formula modulo $T$. Finally, (iii) $\Rightarrow$ (iv) is by induction on the complexity of the formula.
\end{proof}
\lazybegin{definition}{hausdorff-thick}
Let $T$ be a theory with JCP. We call $T$:
\begin{itemize}
\item \emph{Hausdorff} if for any two distinct types $p(x)$ and $q(x)$ there are $\phi(x) \not \in p(x)$ and $\psi(x) \not \in q(x)$ such that $\models \forall x(\phi(x) \vee \psi(x))$;
\item \emph{semi-Hausdorff} if equality of types is type-definable, so there is a partial type $\Omega(x, y)$ such that $\tp(a) = \tp(b)$ if and only if $\models \Omega(a, b)$;
\item \emph{thick} if being an indiscernible sequence is type-definable, so there is a partial type $\Theta((x_i)_{i < \omega})$ such that a sequence $(a_i)_{i < \omega}$ is indiscernible if and only if $\models \Theta((a_i)_{i < \omega})$.
\end{itemize}
\lazyend{definition}
The above definitions can be made sense of for any theory $T$, so without assuming JCP. In this case we need to refer to all p.c.\ models of $T$ rather than just the single monster model.
\lazybegin{proposition}{hausdorff-implies-thick,hausdorff-not-boolean,semi-hausdorff-not-hausdorff,thick-not-semi-hausdorff,not-thick}
Boolean implies Hausdorff implies semi-Hausdorff implies thick. None of these implications are reversible and there are non-thick positive theories.
\lazyend{proposition}
\begin{proof}
For Boolean $\Rightarrow$ Hausdorff pick any formula $\phi(x)$ that is in one type and not in the other and then $\neg \phi(x)$ takes the role of $\psi(x)$. To prove Hausdorff $\Rightarrow$ semi-Hausdorff $\Rightarrow$ thick we take
\[
\{\phi(x, y) :\; \text{for all } a, b \text{ with } \tp(a) = \tp(b) \text{ we have } \models \phi(a, b)\}
\]
and
\[
\bigcup \{ \Omega(x_{i_1}, \ldots, x_{i_n}; x_{j_1}, \ldots, x_{j_n}) : n < \omega, i_1 < \ldots < i_n < \omega, j_1 < \ldots < j_n < \omega \}.
\]
for $\Omega(x, y)$ and $\Theta((x_i)_{i < \omega})$ respectively. Finally, the referenced examples show that none of the implications are reversible.
\end{proof}
\lazybegin{proposition}{hausdorff-equivalences}
A theory $T$ with JCP is Hausdorff if and only if the following amalgamation property holds. For any span $M \xleftarrow{f} M_0 \xrightarrow{g} M'$ of models of the theory
\[
T' = \{ \chi \text{ an h-inductive sentence} : \MM \models \chi \}
\]
there is an amalgam $M \xrightarrow{f'} N \xleftarrow{g'} M'$, so $f'f = g'g$, with $N \models T'$.
\lazyend{proposition}
As before, there is a version of the above without assuming JCP.
\begin{proof}
First use compactness to prove the following intermediate statement (see \thref{separate-types-equivalences}). For any $M \models T'$ and $a \in M$ there is a type $p(x)$ such that for any homomorphism $f: M \to N$ where $N$ is a p.c.\ model of $T'$ we have that $N \models p(f(a))$. The equivalence of this intermediate statement with the amalgamation property for models of $T'$ is a straightforward argument involving compactness and the method of diagrams.
\end{proof}

\section{Simple theories}
\lazybegin{definition}{dividing}
Let $\Sigma(x, b)$ be a set of formulas over $Cb$. We say that $\Sigma(x, b)$ \emph{divides over $C$} if there is a $C$-indiscernible sequence $(b_i)_{i < \omega}$ with $b_i \equiv_C b$ for all $i < \omega$ such that $\bigcup_{i < \omega} \Sigma(x, b_i)$ is inconsistent.
\lazyend{definition}
\lazybegin{proposition}{dividing-in-terms-of-automorphic-indiscernible-sequences}
The following are equivalent:
\begin{enumerate}[label=(\roman*)]
\item $\tp(a/Cb)$ does not divide over $C$;
\item for every $C$-indiscernible sequence $(b_i)_{i < \omega}$ with $b_0 = b$ there is a $Ca$-indiscernible sequence $(b'_i)_{i < \omega}$ with $(b'_i)_{i < \omega} \equiv_{Cb} (b_i)_{i < \omega}$;
\item for every $C$-indiscernible sequence $(b_i)_{i < \omega}$ with $b_0 = b$ there is $a' \equiv_{Cb} a$ such that $(b_i)_{i < \omega}$ is $Ca'$-indiscernible.
\end{enumerate}
\lazyend{proposition}
\begin{proof}
This mostly comes down to moving things around with automorphisms. For (i) $\Rightarrow$ (ii) and (iii) we also use compactness to elongate sequences so that we can base new indiscernible sequences on them.
\end{proof}
\lazybegin{definition}{lascar-distance-and-strong-type,same-lstp-iff-same-types-over-sequence-of-models}
Assume thickness. Let $a$ and $a'$ be two tuples of the same length and let $B$ be any parameter set. We say that $a$ and $a'$ have the same \emph{Lascar strong type over $B$} and write $a \equivls_B a'$ if the following equivalent conditions hold.
\begin{enumerate}[label=(\roman*)]
\item There are $a = a_0, \ldots, a_n = a'$ such that $a_i$ and $a_{i+1}$ are on a $B$-indiscernible sequence for all $0 \leq i < n$.
\item There are $a = a_0, \ldots, a_n = a'$ and positively $\lambda_T$-saturated p.c.\ models $M_1, \ldots, M_n$ such that $a_i \equiv_{M_i} a_{i+1}$ for all $0 \leq i < n$.
\end{enumerate}
\lazyend{definition}
\begin{proof}[Proof of the above equivalence.]
We need the following fact: given $C \supseteq B$ and any $B$-indiscernible sequence $(a_i)_{i < \omega}$ there is $C'$ with $C' \equiv_B C$ such that $(a_i)_{i < \omega}$ is $C'$-indiscernible (see \thref{extend-base-set-of-indiscernible-sequence}). This is proved by elongating the original sequence, basing a new indiscernible sequence on it and applying an automorphism. To prove (i) $\Rightarrow$ (ii) we let $M \supseteq B$ be a positively $\lambda_T$-saturated p.c.\ model and repeatedly apply the preceding fact with $M$ in the role of $C$. The converse is a compactness argument using thickness (see \thref{same-type-over-saturated-model-lascar-distance-2}).
\end{proof}
We assumed thickness because we will only need Lascar strong types in that context. Another equivalent definition of Lascar strong types can be given in terms of bounded invariant equivalence relations. The equivalence of that condition to (i) above holds in any theory (see \thref{equivalence-lascar-strong-type-conditions}).
\lazybegin{definition}{independence-relation}
An \emph{independence relation} $\ind$ is a ternary relation on small subsets of the monster model. If $A$, $B$ and $C$ are in the relation we write
\[
A \ind_C B,
\]
which should be read as ``$A$ is independent from $B$ over $C$''. We also allow tuples in the relation, which are then interpreted as the set they enumerate.
\lazyend{definition}
\lazybegin{definition}{independence-properties}
Let $\ind$ be an independence relation. We define the following properties for $\ind$, where $a$ and $b$ are arbitrary tuples and $C$ is an arbitrary set.
\begin{description}
\item[\textsc{invariance}] For any $f \in \Aut(\MM)$ we have that $a \ind_C b$ implies $f(a) \ind_{f(C)} f(b)$.
\item[\textsc{monotonicity}] For any $a' \subseteq a$ and $b' \subseteq b$ we have that $a \ind_C b$ implies $a' \ind_C b'$.
\item[\textsc{normality}] If $a \ind_C b$ then $Ca \ind_C Cb$.
\item[\textsc{existence}] We always have $a \ind_C C$.
\item[\textsc{full existence}] There is always $b'$ with $b' \equiv_C b$ such that $a \ind_C b'$.
\item[\textsc{base monotonicity}] If $C \subseteq C' \subseteq b$ then $a \ind_C b$ implies $a \ind_{C'} b$.
\item[\textsc{extension}] If $a \ind_C b$ then for any $d$ there is $d'$ with $d' \equiv_{Cb} d$ and $a \ind_C bd'$.
\item[\textsc{symmetry}] If $a \ind_C b$ then $b \ind_C a$.
\item[\textsc{transitivity}] If $C \subseteq C'$ with $a \ind_C C'$ and $a \ind_{C'} b$ then $a \ind_C b$.
\item[\textsc{finite character}] If for all finite $a' \subseteq a$ and all finite $b' \subseteq b$ we have $a' \ind_C b'$ then $a \ind_C b$.
\item[\textsc{local character}] For every cardinal $\kappa$ there is a cardinal $\lambda$ such that for all $a$ with $|a| < \kappa$ and any $C$ there is $C' \subseteq C$ with $|C'| < \lambda$ and $a \ind_{C'} C$.
\item[\textsc{independence theorem}] If $a \ind_C b$, $a' \ind_C c$ and $b \ind_C c$ with $a \equivls_C a'$ then there is $a''$ with $a'' \equivls_{Cb} a$ and $a'' \equivls_{Cc} a'$ such that $a'' \ind_C bc$.
\item[\textsc{stationarity}] For any $C$ such that $a \equiv_C a'$ implies $a \equivls_C a'$ for all $a, a'$, we have that $a \ind_C b$, $a' \ind_C b$ and $a \equiv_C a$ implies $a \equiv_{Cb} a'$.
\end{description}
\lazyend{definition}
\lazybegin{definition}{non-dividing-independence}
Let $A, B, C$ be sets and let $a$ and $b$ enumerate $A$ and $B$ respectively. Then we write
\[
A \ind^d_C B
\]
if $\tp(a/Cb)$ does not divide over $C$. We call this relation \emph{dividing independence}.
\lazyend{definition}
\lazybegin{theorem}{dividing-basic-properties}
Dividing independence satisfies the following properties: \textsc{invariance}, \textsc{monotonicity}, \textsc{normality}, \textsc{existence}, \textsc{base monotonicity}, \textsc{finite character} and \textsc{left transitivity}. This final property is the same as \textsc{transitivity} with the sides of the independence relation swapped: if $C \subseteq C'$ then $C' \ind^d_C b$ and $a \ind^d_{C'} b$ implies $a \ind^d_C b$.
\lazyend{theorem}
\begin{proof}
All of this is standard manipulation of indiscernible sequences.
\end{proof}
\lazybegin{definition}{tree-property}
\thlabel{lazy-tree-property}
Let $k \geq 2$ be a natural number. A formula $\phi(x,y)$ is said to have the \emph{$k$-tree property} ($k$-\TP) if there are $(a_\eta)_{\eta \in \omega^{< \omega}}$ and an obstruction $\psi(y_1, \ldots, y_k)$ of the formula $\exists x (\phi(x, y_1) \wedge \ldots \wedge \phi(x, y_k))$ such that:
\begin{enumerate}[label=(\roman*)]
\item for all $\sigma \in \omega^\omega$ the set $\{ \phi(x, a_{\sigma|_n}): n < \omega\}$ is consistent,
\item for all $\eta \in \omega^{< \omega}$ and $i_1 < \ldots < i_k < \omega$ we have $\models \psi(a_{\eta^\frown i_1}, \ldots, a_{\eta^\frown i_k})$.
\end{enumerate}

A formula $\phi(x,y)$ has the \emph{tree property} (\TP) if there exists a natural number $k \geq 2$ such that $\phi(x,y)$ has $k$-\TP.

A theory has the \emph{tree property} (\TP) if there is a formula that has the tree property, and otherwise it is \NTP.
\lazyend{definition}
\lazybegin{theorem}{simplicity-equivalences}
\thlabel{lazy-simplicity-equivalences}
A theory $T$ is \NTP if and only if dividing independence $\ind^d$ satisfies \textsc{local character}.
\lazyend{theorem}
There are more detailed statements in \thref{simplicity-equivalences}, referring to the exact cardinals involved in \textsc{local character}.
\begin{proof}
Both directions are proved by contrapositive. Assuming \TP, say as witnessed by $\phi(x, y)$, one first uses compactness to make the tree as big as necessary (this is why (ii) in \thref{lazy-tree-property} is important). Then inductively construct a branch in the tree, which results in a sequence $(b_i)_{i < \lambda}$ and a realisation $a$ of $\{\phi(x, b_i) : i < \lambda\}$ (see \thref{lazy-tree-property}(i)) such that $\tp(a/(b_i)_{i < \lambda})$ divides over $(b_i)_{i < \gamma}$ for all $\gamma < \lambda$, contradicting \textsc{local character}. The sequence witnessing dividing is found every time as a subsequence of the immediate successors of each $b_i$.

Conversely, failure of \textsc{local character} implies that there are $a$ and $C$ such that $\tp(a/C)$ does not divide over $C'$ for all $C' \subseteq C$ with $|C'| < |T|^+$. We use this to inductively build a tree of height $|T|^+$ whose zero branch consists of tuples in $C$ and such that all branches have the same type. In the induction step we let $C'$ be the zero branch constructed so far, so $\tp(a/C)$ divides over $C'$. This yields an indiscernible sequence along which some $\psi(y_1, \ldots, y_k)$ holds that is an obstruction of $\exists x(\phi(x, y_1) \wedge \ldots \wedge \phi(x, y_k))$ for some $\phi(x, C) \in \tp(a/C)$. This indiscernible sequence will form the next level above the zero branch, and automorphic copies complete the levels above the other branches. As the tree has height $|T|^+$ we can use the pigeonhole principle the restrict to a subtree with a uniform choice of $\phi$ and $\psi$, which is exactly saying that $\phi(x, y)$ has \TP.
\end{proof}
\lazybegin{definition}{simplicity}
We call a theory $T$ \emph{simple} if the equivalent conditions from \thref{lazy-simplicity-equivalences} hold.
\lazyend{definition}
\lazybegin{theorem}{simple-thick-implies-full-existence}
Assume thickness. If $T$ is simple then dividing independence has \textsc{full existence}.
\lazyend{theorem}
\begin{proof}
This is a long and technical proof. The key is that the thickness assumption makes it so that a certain technical concept, namely that of a dividing sequence, type-definable. We need infinite dividing sequences, but to construct these we only need to deal with finite dividing sequences by type-definability and compactness.
\end{proof}
The importance of \textsc{full existence} is that we can build Morley sequences and prove Kim's lemma (\thref{lazy-kims-lemma}).
\lazybegin{definition}{dividing-morley-sequence}
A \emph{Morley sequence (over $C$)} is a $C$-indiscernible sequence $(a_i)_{i \in I}$ such that $a_i \ind^d_C (a_j)_{j < i}$ for all $i \in I$.
\lazyend{definition}
\lazybegin{proposition}{morley-sequences-exist}
\thlabel{lazy-morley-sequences-exist}
Assume thickness. If $T$ is simple then for any $a$ and $C$ there is a Morley sequence $(a_i)_{i < \omega}$ over $C$ with $a_0 = a$.
\lazyend{proposition}
\begin{proof}
Use \textsc{full existence} to find a long enough $\ind^d_C$-independent sequence (i.e., a Morley sequence without the indiscernibility). Then base a $C$-indiscernible sequence on it.
\end{proof}
\lazybegin{theorem}{kims-lemma}
\thlabel{lazy-kims-lemma}
Suppose that $T$ is simple and let $\Sigma(x, b)$ be a set of formulas over $Cb$. If $\bigcup_{i < \omega} \Sigma(x, b_i)$ is consistent for some Morley sequence $(b_i)_{i < \omega}$ over $C$ with $b_0 = b$ then $\Sigma(x, b)$ does not divide over $C$.

In particular, assuming thickness, we have that $\Sigma(x, b)$ divides over $C$ if and only if there is a Morley sequence $(b_i)_{i < \omega}$ with $b_0 = b$ such that $\bigcup_{i < \omega} \Sigma(x, b_i)$ is inconsistent.
\lazyend{theorem}
\begin{proof}
The first part is proved by usual argument that relies on \textsc{local character}. The second part then follows from the existence of Morley sequences (\thref{lazy-morley-sequences-exist}).
\end{proof}
\lazybegin{theorem}{dividing-extension,dividing-extension-partial-types}
Assume thickness. If $T$ is simple then given a partial type $\Sigma(x, b)$ that does not divide over $C$ there is a type $p(x, b) \supseteq \Sigma(x, b)$ that does not divide over $C$.

In particular, dividing independence satisfies \textsc{extension}.
\lazyend{theorem}
\begin{proof}
Take a Morley sequence over $C$ starting with $b$ and use compactness to elongate it to $(b_i)_{i < \lambda}$. By non-dividing of $\Sigma(x, b)$ there is a realisation $a$ of $\bigcup_{i < \lambda} \Sigma(x, b_i)$. By the pigeonhole principle there is an infinite subsequence $I \subseteq \lambda$ such that $a b_i \equiv_C a b_j$ for all $i,j \in I$, so taking $\tp(a/Cb_i)$ for $i \in I$ results in the required type.
\end{proof}
The properties \textsc{symmetry}, \textsc{transivitiy} and \textsc{independence theorem} also hold in thick simple theories (see \thref{dividing-symmetry,dividing-transitivity,independence-theorem}). We summarise everything in a Kim-Pillay style theorem. The proofs of all these theorems are analogous to the full first-order setting.
\lazybegin{theorem}{kim-pillay}
\thlabel{lazy-kim-pillay}
Assume thickness. A theory $T$ is simple if and only if there is an independence relation $\ind$ satisfying \textsc{invariance}, \textsc{monotonicity}, \textsc{normality}, \textsc{existence}, \textsc{full existence}, \textsc{base monotonicity}, \textsc{extension}, \textsc{symmetry}, \textsc{transitivity}, \textsc{finite character}, \textsc{local character} and \textsc{independence theorem}. Furthermore, in this case, $\ind = \ind^d$.
\lazyend{theorem}
We have decided to only treat simplicity for thick theories, which simplifies the treatment and allows us to stay closer to the treatment in full first-order logic. This still captures a large class of positive theories, see for example \cite[Section 2]{kamsma_positive_2024} for a list of (classes of) examples of thick theories. Even though much is still possible without the thickness assumption, \textsc{full existence} may fail \cite[Example 4.3]{ben-yaacov_simplicity_2003} and the treatment of simplicity becomes much more complicated (see also \thref{full-existence-without-thickness}).
\section{Stable theories}
\lazybegin{definition}{stability}
Let $\lambda$ be an infinite cardinal. A theory $T$ is called \emph{$\lambda$-stable} if for all parameter sets $B$ with $|B| \leq \lambda$ there are at most $\lambda$ many types in finitely many variables over $B$. We call $T$ \emph{stable} if it is $\lambda$-stable for some $\lambda$.
\lazyend{definition}
\lazybegin{definition}{definable-type}
Let $p(x)$ be a type over $B$ and let $\phi(x, y)$ be a formula without parameters. A \emph{$\phi$-definition over $C$} of $p(x)$ is a set of formulas $\d_p \phi(y)$ over $C$ with $|\d_p \phi(y)| \leq |T|$ such that for all $b \in B$ we have
\[
\phi(x, b) \in p(x) \quad \Longleftrightarrow \quad \models \d_p \phi(b).
\]
We say that $p(x)$ is \emph{definable over $C$} if it has a $\phi$-definition over $C$ for every formula $\phi(x, y)$. If $p(x)$ is definable over $B$ then we just say that $p(x)$ is \emph{definable}.
\lazyend{definition}
\lazybegin{definition}{binary-tree-rank,binary-tree-rank-lemma}
A formula $\phi(x, y)$ is said to have the \emph{binary tree property} if there is an obstruction $\psi(x, y)$ and parameters $(b_\eta)_{\eta \in 2^{< \omega}}$ such that for all $\sigma \in 2^\omega$ the set
\[
\{ \phi(x, b_{\sigma|_n}) : \sigma(n) = 0 \} \cup \{ \psi(x, b_{\sigma|_n}) : \sigma(n) = 1 \}
\]
is consistent.
\lazyend{definition}
\lazybegin{definition}{order-property}
A formula $\phi(x, y)$ has the \emph{order property} (\OP) if there are sequences $(a_i)_{i < \omega}$ and $(b_i)_{i < \omega}$ and an obstruction $\psi(x, y)$ of $\phi(x, y)$ such that for all $i,j < \omega$, we have
\begin{align*}
&\models \phi(a_i, b_j) &\text{if } i < j, \\
&\models \psi(a_i, b_j) &\text{if } i \geq j.
\end{align*}
\lazyend{definition}
\lazybegin{theorem}{stable-characterisations,stable-iff-no-order-property}
The following are equivalent for a theory $T$:
\begin{enumerate}[label=(\roman*)]
\item $T$ is stable,
\item no formula has the binary tree property,
\item every type is definable,
\item $T$ is $\lambda$-stable for every $\lambda$ with $\lambda^{|T|} = \lambda$,
\item no formula has the order property.
\end{enumerate}
\lazyend{theorem}
\begin{proof}
The proofs closely follow the standard proofs from full first-order logic, only (v) $\Rightarrow$ (i) is much more involved (and is not treated in these notes, see Section \ref{sec:bibliographic-remarks-stable} and \cite[Lemma 3.18]{dmitrieva_dividing_2023}). For the proofs of (i) $\Rightarrow$ (ii) and (i) $\Rightarrow$ (v) one proves the contrapositive by constructing many different types. For (ii) $\Rightarrow$ (iii) the key insight is that having the binary tree property (with respect to a fixed $\psi$) is type-definable. So by compactness we get a finite bound on the height of such trees. The existence of such a finite binary tree (again, with respect to fixed $\psi$) can be expressed by just a formula. Taking these formulas, while $\psi$ ranges over the obstructions of $\phi$, yields the required definition $\d_p \phi(y)$. Finally, for (iii) $\Rightarrow$ (iv) we simply count the number of possible definitions over a fixed parameter set, which bounds the number of possible types.
\end{proof}
It is also possible to establish the above equivalences on a formula-by-formula basis (e.g., a formula does not have the binary tree property if and only if it does not have the order property), which is done in \cite[Theorem 3.11]{dmitrieva_dividing_2023}.
\lazybegin{theorem}{stable-implies-simple}
Every stable theory is simple.
\lazyend{theorem}
\begin{proof}
First show that, roughly, definable types do not divide over the parameter set over which they are definable (see \thref{definable-types-do-not-divide}). Then note that there is a bound on the cardinality of the parameters that are needed to define a type, and so we get \textsc{local character} for dividing independence.
\end{proof}
\lazybegin{definition}{stationary-type}
A \emph{stationary type} is a type $p(x) = \tp(a/C)$ that admits exactly one non-dividing extension to any parameter set. That is, for any $B \supseteq C$, there is a type $p'(x) \supseteq p(x)$ over $B$ such that:
\begin{enumerate}[label=(\roman*)]
\item $p'(x)$ does not divide over $C$;
\item for any type $r(x) \supseteq p(x)$ over $B$ that does not divide over $C$ we have $r(x) = p'(x)$.
\lazyend{enumerate}
\end{definition}
\lazybegin{theorem}{stable-theory-stationary-iff-lstp}
Assume thickness. If $T$ is a stable theory then $\tp(a/C)$ is stationary if and only if we have for all $a'$ that $a \equiv_C a'$ implies $a \equivls_C a'$. In particular, $\ind^d$ satisfies \textsc{stationarity} in stable theories.
\lazyend{theorem}
\begin{proof}
First we note that $T$ is simple, and so we can use all the properties of $\ind^d$ as summarised in \thref{lazy-kim-pillay}. For the left to right we extend $\tp(a/C)$ to a global type, which can be shown to be $C$-invariant (see \thref{invariant-type}), from which the conclusion follows in a standard manner (see \thref{type-extends-to-global-invariant-implies-lstp}). For the converse we assume for a contradiction that there are two distinct non-dividing extensions of $\tp(a/C)$. Then, mainly using the \textsc{independence theorem} for $\ind^d$, we can inductively build a binary tree of non-dividing distinct types, ultimately yielding many distinct types and contradicting stability.
\end{proof}
\lazybegin{theorem}{simple-is-stable-iff-stationarity}
Assume thickness. If $T$ is simple and $\ind^d$ satisfies \textsc{stationarity} then $T$ is stable.
\lazyend{theorem}
\begin{proof}
By \textsc{local character} of $\ind^d$ there is a bound $\lambda$, such that any type (in finitely many variables) does not divide over some parameter set of cardinality $\leq \lambda$. At the same time, for any type $p(x)$ over a parameter set of cardinality $\leq \lambda$ there is a bound on the number of non-dividing extensions, by \textsc{stationarity}. Combining this yields the type counting definition of stability.
\end{proof}
\lazybegin{theorem}{stable-kim-pillay}
Assume thickness. A theory $T$ is stable if and only if there is an independence relation $\ind$ satisfying \textsc{invariance}, \textsc{monotonicity}, \textsc{normality}, \textsc{existence}, \textsc{full existence}, \textsc{base monotonicity}, \textsc{extension}, \textsc{symmetry}, \textsc{transitivity}, \textsc{finite character}, \textsc{local character} and \textsc{stationarity}. Furthermore, in this case, $\ind = \ind^d$.
\lazyend{theorem}
\begin{proof}
This is just piecing together previous results, mainly relying on \thref{lazy-kim-pillay}, with as the only new ingredient that \textsc{stationarity} implies \textsc{independence theorem} (see \thref{stationarity-implies-independence-theorem}).
\end{proof}
\end{appendices}

\clearpage
\phantomsection
\addcontentsline{toc}{chapter}{Bibliography}
\bibliographystyle{alpha}
\bibliography{bibfile}

\begin{thebibliography}{BYBHU08}

\bibitem[AR94]{adamek_locally_1994}
Ji\v{r}\'{i} Ad\'{a}mek and Ji\v{r}\'{i} Rosick\'{y}.
\newblock {\em Locally {Presentable} and {Accessible} {Categories}}.
\newblock Cambridge University Press, March 1994.

\bibitem[BY03a]{ben-yaacov_positive_2003}
Itay Ben-Yaacov.
\newblock Positive model theory and compact abstract theories.
\newblock {\em Journal of Mathematical Logic}, 03(01):85--118, May 2003.

\bibitem[BY03b]{ben-yaacov_simplicity_2003}
Itay Ben-Yaacov.
\newblock Simplicity in compact abstract theories.
\newblock {\em Journal of Mathematical Logic}, 03(02):163--191, November 2003.

\bibitem[BY03c]{ben-yaacov_thickness_2003}
Itay Ben-Yaacov.
\newblock Thickness, and a categoric view of type-space functors.
\newblock {\em Fundamenta Mathematicae}, 179:199--224, 2003.

\bibitem[BYBHU08]{ben-yaacov_model_2008}
Ita\"{i} Ben-Yaacov, Alexander Berenstein, C.~Ward Henson, and Alexander
  Usvyatsov.
\newblock Model theory for metric structures.
\newblock In Zo\'e Chatzidakis, Dugald Macpherson, Anand Pillay, and Alex
  Wilkie, editors, {\em Model {Theory} with {Applications} to {Algebra} and
  {Analysis}}, volume~2, pages 315--427. Cambridge University Press, 2008.

\bibitem[BYP07]{ben-yaacov_fondements_2007}
Ita\"{i} Ben-Yaacov and Bruno Poizat.
\newblock Fondements de la logique positive.
\newblock {\em The Journal of Symbolic Logic}, 72(4):1141--1162, December 2007.

\bibitem[DGK23]{dmitrieva_dividing_2023}
Anna Dmitrieva, Francesco Gallinaro, and Mark Kamsma.
\newblock Dividing {Lines} between {Positive} {Theories}.
\newblock {\em The Journal of Symbolic Logic}, December 2023.
\newblock Published online.

\bibitem[DK22]{dobrowolski_kim-independence_2022}
Jan Dobrowolski and Mark Kamsma.
\newblock Kim-independence in positive logic.
\newblock {\em Model Theory}, 1(1):55--113, June 2022.

\bibitem[DK24]{dobrowolski_correction_2024}
Jan Dobrowolski and Mark Kamsma.
\newblock Correction to the article {Kim}-independence in positive logic.
\newblock {\em Model Theory}, 3(3):883--895, August 2024.

\bibitem[Hay19]{haykazyan_spaces_2019}
Levon Haykazyan.
\newblock Spaces of types in positive model theory.
\newblock {\em The Journal of Symbolic Logic}, 84(2):833--848, June 2019.

\bibitem[HK21]{haykazyan_existentially_2021}
Levon Haykazyan and Jonathan Kirby.
\newblock Existentially closed exponential fields.
\newblock {\em Israel Journal of Mathematics}, 241(1):89--117, March 2021.

\bibitem[Hod93]{hodges_model_1993}
Wilfrid Hodges.
\newblock {\em Model {Theory}}.
\newblock Cambridge University Press, March 1993.

\bibitem[Hru98]{hrushovski_simplicity_1998}
Ehud Hrushovski.
\newblock Simplicity and the {Lascar} group, 1998.

\bibitem[Jec03]{jech_set_2003}
Thomas Jech.
\newblock {\em Set {Theory}}.
\newblock Springer {Monographs} in {Mathematics}. Springer-Verlag, Berlin
  Heidelberg, 3 edition, 2003.

\bibitem[Joh02]{johnstone_sketches_2002}
Peter Johnstone.
\newblock {\em Sketches of an {Elephant}: {A} {Topos} {Theory} {Compendium}},
  volume~2.
\newblock Oxford University Press, 2002.

\bibitem[Kam23a]{kamsma_bilinear_2023}
Mark Kamsma.
\newblock Bilinear spaces over a fixed field are simple unstable.
\newblock {\em Annals of Pure and Applied Logic}, 174(6), June 2023.

\bibitem[Kam23b]{kamsma_type_2023}
Mark Kamsma.
\newblock Type space functors and interpretations in positive logic.
\newblock {\em Archive for Mathematical Logic}, 62(1):1--28, February 2023.

\bibitem[Kam24]{kamsma_positive_2024}
Mark Kamsma.
\newblock Positive indiscernibles.
\newblock {\em Archive for Mathematical Logic}, 63(7-8):921--940, November
  2024.

\bibitem[KP97]{kim_simple_1997}
Byunghan Kim and Anand Pillay.
\newblock Simple theories.
\newblock {\em Annals of Pure and Applied Logic}, 88(2):149--164, November
  1997.

\bibitem[Pil00]{pillay_forking_2000}
Anand Pillay.
\newblock Forking in the category of existentially closed structures.
\newblock In {\em Connections {Between} {Model} {Theory} and {Algebraic} and
  {Analytic} {Geometry}}, volume~6 of {\em Quaderni di {Matematica}}, pages
  23--42. Dipartimento di Matematica della Seconda Università di Napoli,
  Caserta, 2000.

\bibitem[Poi10]{poizat_quelques_2010}
Bruno Poizat.
\newblock Quelques effets pervers de la positivité.
\newblock {\em Annals of Pure and Applied Logic}, 161(6):812--816, March 2010.

\bibitem[PY18]{poizat_positive_2018}
Bruno Poizat and Aibat Yeshkeyev.
\newblock Positive {Jonsson} {Theories}.
\newblock {\em Logica Universalis}, 12(1):101--127, May 2018.

\bibitem[She75]{shelah_lazy_1975}
Saharon Shelah.
\newblock The lazy model-theoretician's guide to stability.
\newblock {\em Logique et Analyse}, 18(71/72):241--308, 1975.

\bibitem[TZ12]{tent_course_2012}
Katrin Tent and Martin Ziegler.
\newblock {\em A {Course} in {Model} {Theory}}.
\newblock Cambridge University Press, March 2012.

\end{thebibliography}

\clearpage
\phantomsection
\addcontentsline{toc}{chapter}{\nomname}
\printnomenclature[3.25cm]

\clearpage
\phantomsection
\addcontentsline{toc}{chapter}{Index of terms}
\printindex


\end{document}